\newtheorem{thm}{Theorem}[section]
\newtheorem{prop}[thm]{Proposition}
\newtheorem{lem}[thm]{Lemma}
\newtheorem{lem-def}[thm]{Lemma-Definition}
\newtheorem{cor}[thm]{Corollary}
\theoremstyle{definition}
\newtheorem{ex}[thm]{Example}
\newtheorem{rmk}[thm]{Remark}
\newtheorem{dfn}[thm]{Definition}
\newtheorem{nota}[thm]{Notation}
\newcommand{\Aa}{\mathcal{A}}
\newcommand{\Bb}{\mathcal{B}}
\newcommand{\Cc}{\mathcal{C}}
\newcommand{\Ff}{\mathcal{F}}
\newcommand{\Gg}{\mathcal{G}}
\newcommand{\Hh}{\mathcal{H}}
\newcommand{\Ii}{\mathcal{I}}
\newcommand{\Jj}{\mathcal{J}}
\newcommand{\Mm}{\mathcal{M}}
\newcommand{\Oo}{\mathcal{O}}
\newcommand{\Pp}{\mathcal{P}}
\newcommand{\Qq}{\mathcal{Q}}
\newcommand{\Ss}{\mathcal{S}}
\newcommand{\Tt}{\mathcal{T}}
\newcommand{\Uu}{\mathcal{U}}
\newcommand{\IA}{\mathbb{A}}
\newcommand{\IF}{\mathbb{F}}
\newcommand{\IG}{\mathbb{G}}
\newcommand{\IN}{\mathbb{N}}
\newcommand{\IP}{\mathbb{P}}
\newcommand{\IQ}{\mathbb{Q}}
\newcommand{\IR}{\mathbb{R}}
\newcommand{\IZ}{\mathbb{Z}}
\newcommand{\CC}{\mathfrak{C}}
\newcommand{\FF}{\mathfrak{F}}
\newcommand{\ff}{\mathbf{f}}
\let\gge\gg
\renewcommand{\gg}{\mathfrak{g}}
\newcommand{\qq}{\mathfrak{q}}
\numberwithin{equation}{section}
\newcommand{\pot}[1]{ [\hspace{-0,5mm}[ {#1} ]\hspace{-0,5mm}] }
\newcommand{\rpot}[1]{ (\hspace{-0,7mm}( {#1} )\hspace{-0,7mm}) }
\DeclareMathOperator{\Gr}{Gr} 
\DeclareMathOperator{\Fl}{Fl} 
\DeclareMathOperator{\Spec}{Spec} 
\DeclareMathOperator{\Spa}{Spa} 
\DeclareMathOperator{\Spd}{Spd} 
\newcommand{\perf}{{\operatorname{perf}}} 
\newcommand{\aff}{{\operatorname{af}}}
\DeclareMathOperator{\identity}{id} 
\DeclareMathOperator{\DM}{DM} 
\DeclareMathOperator{\DTM}{DTM} 
\DeclareMathOperator{\DATM}{DATM} 
\DeclareMathOperator{\MTM}{MTM} 
\DeclareMathOperator{\MATM}{MATM} 
\DeclareMathOperator{\Sch}{Sch} 
\DeclareMathOperator{\ft}{ft} 
\DeclareMathOperator{\op}{op} 
\DeclareMathOperator{\St}{St} 
\renewcommand{\Pr}{\operatorname{Pr}} 
\DeclareMathOperator{\et}{\acute{e}t} 
\DeclareMathOperator{\PreStk}{PreStk} 
\DeclareMathOperator{\Ani}{An} 
\DeclareMathOperator{\Fun}{Fun} 
\DeclareMathOperator{\Gal}{Gal} 
\DeclareMathOperator{\pr}{pr} 
\DeclareMathOperator{\CT}{CT} 
\DeclareMathOperator{\coMod}{coMod} 
\DeclareMathOperator{\Mod}{Mod} 
\DeclareMathOperator{\coav}{coav} 
\DeclareMathOperator{\Cent}{Cent} 
\DeclareMathOperator{\Norm}{Norm} 
\DeclareMathOperator{\der}{der} 
\DeclareMathOperator{\im}{im} 
\DeclareMathOperator{\nd}{nd} 
\DeclareMathOperator{\proj}{proj} 
\DeclareMathOperator{\adj}{ad} 
\DeclareMathOperator{\Aut}{Aut} 
\DeclareMathOperator{\Rep}{Rep} 
\DeclareMathOperator{\IC}{IC} 
\DeclareMathOperator{\Stab}{Stab} 
\DeclareMathOperator{\sico}{sc} 
\DeclareMathOperator{\Res}{Res} 
\DeclareMathOperator{\pfp}{pfp} 
\DeclareMathOperator{\AffSch}{AffSch} 
\DeclareMathOperator{\res}{res} 
\DeclareMathOperator{\incl}{incl} 
\DeclareMathOperator{\anti}{anti} 
\newcommand{\xanti}{(\operatorname{anti})} 
\DeclareMathOperator{\pos}{pos} 
\DeclareMathOperator{\fil}{fil} 
\DeclareMathOperator{\Char}{char} 
\DeclareMathOperator{\dom}{dom} 
\DeclareMathOperator{\sgn}{sgn} 
\DeclareMathOperator{\LS}{LS} 
\DeclareMathOperator{\Perf}{Perf} 
\DeclareMathOperator{\Perfd}{Perfd} 
\DeclareMathOperator{\BD}{BD} 
\DeclareMathOperator{\Grp}{Grp} 
\DeclareMathOperator{\Set}{Set} 
\DeclareMathOperator{\Sat}{Sat} 
\DeclareMathOperator{\Hck}{Hck} 
\DeclareMathOperator{\bd}{bd} 
\DeclareMathOperator{\fd}{fd} 
\DeclareMathOperator{\fg}{fg} 
\DeclareMathOperator{\Perv}{Perv} 
\DeclareMathOperator{\Hom}{Hom} 
\DeclareMathOperator{\Maps}{Maps} 
\DeclareMathOperator{\Irr}{Irr} 
\DeclareMathOperator{\tr}{tr} 
\DeclareMathOperator{\cl}{cl} 
\DeclareMathOperator{\Ad}{Ad} 
\DeclareMathOperator{\gr}{gr} 
\DeclareMathOperator{\As}{As} 
\DeclareMathOperator{\tors}{tors} 
\DeclareMathOperator{\coker}{coker} 
\newcommand{\into}{\hookrightarrow}
\newcommand{\onto}{\twoheadrightarrow}
\newcommand{\conv}{{}^{\mathrm{p}}\star} 
\newcommand{\pH}{{}^{\mathrm{p}}\mathrm{H}} 
\newcommand{\twext}{{}^{\mathrm{p}}\widetilde{\boxtimes}} 
\newcommand{\IHom}{\underline{\operatorname{Hom}}} 
\newcommand{\unit}{\mathbbm{1}} 
\newcommand{\app}{\mathscr{A}} 
\newcommand{\buil}{\mathscr{B}} 
\newcommand{\wall}{\mathbf{H}} 
\newcommand{\QVect}{\IQ\text{-}\operatorname{Vect}} 
\newcommand{\QlVect}{\IQ_\ell\text{-}\operatorname{Vect}} 
\newcommand{\GL}{\mathrm{GL}} 
\newcommand{\SL}{\mathrm{SL}} 
\newcommand{\PGL}{\mathrm{PGL}} 
\newcommand{\SU}{\mathrm{SU}} 
\newcommand{\PU}{\mathrm{PU}} 
\newcommand{\FibFunctor}{\mathrm{H}^*} 
\newcommand{\grQVect}{\operatorname{gr}\text{-}\IQ\text{-}\operatorname{Vect}} 
\newcommand{\grZMod}{\operatorname{gr}\text{-}\IZ[\frac{1}{p}]\text{-}\operatorname{Mod}} 
\newcommand{\CG}{{}^CG} 
\newcommand{\vinberg}{V_{\widehat{G},\rho_{\adj}}} 
\newcommand{\restrvinberg}{V_{\widehat{G},\rho_{\adj}\mid d_{\rho_{\adj}}=q}} 
\newcommand{\BdR}{B_{\mathrm{dR}}} 
\newcommand{\dR}{\mathrm{dR}} 
\newcommand{\comp}{\mathrm{c}} 
\renewcommand{\phi}{\varphi}
\begin{document}

\title[Ramified motivic Satake]{The integral motivic Satake equivalence \\for ramified groups}
\author[Thibaud van den Hove]{Thibaud van den Hove}
	
\address{Max Planck Insitut für Mathematik, Vivatsgasse 7, 53111 Bonn, Germany}
\email{vandenhove@mpim-bonn.mpg.de}
	
	
\begin{abstract}
	We construct the geometric Satake equivalence for quasi-split reductive groups over nonarchimedean local fields, using étale Artin-Tate motives with $\mathbb{Z}[\frac{1}{p}]$-coefficients.
	We consider local fields of both equal and mixed characteristic.
	Along the way, we extend the work of Gaussent--Littelmann on the connection between LS galleries and MV cycles to the case of residually split reductive groups.
	As an application, we generalize Zhu's integral Satake isomorphism for spherical Hecke algebras to ramified groups.
	Moreover, for residually split groups, we define generic spherical Hecke algebras, and construct generic Satake and Bernstein isomorphisms.
\end{abstract}

\maketitle

\setcounter{tocdepth}{1}
\tableofcontents
\setcounter{section}{0}

	
\thispagestyle{empty}

\section{Introduction}

\subsection{Motivation and main results}

The geometric Satake equivalence is a cornerstone of modern mathematics.
For a reductive group \(G\) over an algebraically closed field \(k\), it provides an equivalence between \(L^+G\)-equivariant perverse sheaves on the affine Grassmannian \(\Gr_G\) with representations of the Langlands dual group \(\widehat{G}\). 
Building on work of Lusztig, Beilinson--Drinfeld, and Ginzburg, the first complete proof was given by Mirkovic--Vilonen in \cite{MirkovicVilonen:Geometric}. 
This bridge between geometric and algebraic worlds has led to many applications, for example in the Langlands program and geometric representation theory.
Since \cite{MirkovicVilonen:Geometric}, several variants have been proven, using different groups or cohomology theories; we will come back to this later in the introduction, and refer to the introduction of \cite{CassvdHScholbach:Geometric} for further references.

The Langlands correspondence, which among other things relates automorphic forms with Galois representations, is conjectured to be of motivic origin.
For this reason, it is desirable to extend the geometric Satake equivalence, by replacing the choice of cohomology theory with motives.
In particular, this resolves certain independence-of-\(\ell\) type questions, arising from the use of e.g.~\(\ell\)-adic cohomology.
For split groups over local function fields, such motivic Satake equivalences have already been constructed, first by Richarz--Scholbach \cite{RicharzScholbach:Satake} for rational coefficients, and afterwards in joint work of the author with Cass and Scholbach \cite{CassvdHScholbach:Geometric} for integral coefficients.
These equivalences are part of ongoing projects aiming to provide motivic enhancements of the (geometric) Langlands program for function fields, and we refer to loc.~cit.~for more details.

Still in the situation of local function fields, Zhu \cite{Zhu:Ramified} and Richarz \cite{Richarz:Affine} have constructed versions of the Satake equivalence, where the group \(G\) is allowed to be ramified (but still assumed quasi-split).
In that case, equivariant sheaves on the affine Grassmannian are not controlled by \(\widehat{G}\), but rather by the inertia-invariants \(\widehat{G}^I\).
While \cite{Zhu:Ramified,Richarz:Affine} work with \(\overline{\IQ}_\ell\)-coefficients, the ramified Satake equivalence has recently been extended to \(\IZ_\ell\)-, and hence also modular, coefficients by Achar--Lourenço--Richarz--Riche \cite{ALRR:Modular}.

In another direction, Zhu \cite{Zhu:Affine} has also constructed a Satake equivalence for (unramified) groups over local fields of mixed characteristic.
This allows for more arithmetic applications, such as the construction of cycles on the special fibers of Shimura varieties, verifying instances of the Langlands and Tate conjectures; we refer to \cite{XiaoZhu:Cycles} for more applications and details.

The current paper is a first step in providing motivic enhancements of such applications in the arithmetic Langlands program.
Although the motivic Satake equivalence in mixed characteristic has already appeared in \cite{RicharzScholbach:Witt} for split groups with rational coefficients, we upgrade it to allow integral coefficients (compare with \cite{Yu:Integral}), and quasi-split reductive groups.
We note that the ramified Satake equivalence for mixed characteristic local fields has not appeared yet, even for \(\ell\)-adic cohomology.
Hence, certain results of this article should be of interest even outside of the motivic setting.
In particular, our main result can be seen as a common generalization of \cite{Zhu:Ramified,Richarz:Affine,Zhu:Affine,Yu:Integral,RicharzScholbach:Satake,RicharzScholbach:Witt, ALRR:Modular}.

\begin{thm}\thlabel{thm.intro-Satake}
	Let \(F\) be a nonarchimedean local field with finite residue field of characteristic \(p\), or the completion of a maximal unramified extension thereof.
	Denote by \(\Oo\subseteq F\) its ring of integers, and by \(k\) its residue field.
	Let \(G/F\) be a connected reductive group, and \(\Gg/\Oo\) a very special parahoric model.
	For any coefficient ring \(\Lambda\) in \(\{\IZ[\frac{1}{p}],\IQ,\IF_\ell\mid \ell\neq p\}\), there is a canonical monoidal equivalence
	\[(\MATM_{L^+\Gg}(\Gr_{\Gg},\Lambda), \conv) \cong (\Rep_{\widehat{G}^I} \MATM(\Spec k,\Lambda),\otimes).\]
\end{thm}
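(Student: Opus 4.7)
The plan is to adapt the Mirkovic--Vilonen--Ginzburg template to the motivic, integral, and ramified setting by combining the techniques of \cite{CassvdHScholbach:Geometric} (integral Artin--Tate motives in equal characteristic), \cite{RicharzScholbach:Witt} (motives on the Witt vector Grassmannian), and \cite{Zhu:Ramified,Richarz:Affine,ALRR:Modular} (ramified quasi-split groups). Since the theorem allows \(F\) to be the completion of a maximal unramified extension, for which \(k\) is algebraically closed, I would first treat this residually split case and then deduce the result over the original \(F\) by unramified Galois descent of the Tannakian data.

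In the residually split case, the first step is to construct a monoidal structure on \(\MATM_{L^+\Gg}(\Gr_\Gg,\Lambda)\) by convolution, using the standard twisted product diagram \(\Gr_\Gg \times \Gr_\Gg \leftarrow L\Gg \times^{L^+\Gg} \Gr_\Gg \to \Gr_\Gg\). Stability of Artin--Tate motives under the relevant pushforwards follows from the existence of Demazure-type resolutions of Schubert varieties for very special parahorics, which have pavings by affine spaces. Commutativity is then upgraded to a symmetric monoidal structure via a Beilinson--Drinfeld Grassmannian: spreading out over a smooth curve in equal characteristic, or using the Witt vector BD Grassmannian in mixed characteristic, with ramification incorporated by pullback along a suitably ramified cover as in \cite{Zhu:Ramified}.

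The fiber functor \(F\) is constructed via hyperbolic localization for a regular cocharacter of a maximal \(F\)-split torus \(S \subseteq G\), yielding a canonical grading \(F = \bigoplus_{\mu \in X_*(S)} F_\mu\) indexed by \(X_*(S) \cong X^*(\widehat{T}^I)\). Symmetric monoidality of \(F\) is established via a Braden-type contraction on the BD Grassmannian, and conservativity on strata. The central technical input is then the integral numerical criterion: \(F_\mu\) applied to the perverse motivic intersection complex of the Schubert variety indexed by \(\lambda\) is finite free of rank equal to the \(\mu\)-weight multiplicity of the Weyl module of \(\widehat{G}^I\) with highest weight \(\lambda\). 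For this I would invoke the paper's extension of Gaussent--Littelmann's LS gallery combinatorics to residually split ramified groups, which positively parametrizes the MV cycles by LS galleries and matches their count with the integral weight multiplicities of \(\widehat{G}^I\)-representations.

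Finally, Tannakian reconstruction (in the motivic form of \cite{CassvdHScholbach:Geometric}) identifies the affine group scheme acting on \(F\) with \(\widehat{G}^I\): the weight grading places it inside the centralizer of a maximal torus, while the root datum of \(\widehat{G}^I\) is recovered by Levi reduction arguments on corank-one subgroups, comparing their minuscule/fundamental Schubert cells with the standard generators. The principal obstacle is the integral numerical criterion just outlined: over \(\IZ[\frac{1}{p}]\) there is no Decomposition Theorem to rely on, so one cannot simply invoke semisimplicity of intersection cohomology, and one must produce precisely the expected number of irreducible MV cycles in a motivically controlled way. The extension of LS galleries to residually split groups, announced as an independent result of this paper, is exactly what enables this step.
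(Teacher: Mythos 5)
Your overall skeleton (convolution, constant terms and fiber functor, LS-gallery control of MV cycles, Tannakian reconstruction, reduction to the residually split case and descent) matches the paper's strategy, but there is a genuine gap in the step where you claim to obtain symmetric monoidality ``via a Beilinson--Drinfeld Grassmannian: spreading out over a smooth curve in equal characteristic, or using the Witt vector BD Grassmannian in mixed characteristic, with ramification incorporated by pullback along a suitably ramified cover.'' A motivic theory supporting fusion on such a BD-type object is not available in the mixed-characteristic ramified setting, and the paper explicitly flags this: the theorem asserts only a monoidal (not symmetric monoidal) equivalence, and the introduction states that one ``currently lack[s] the fusion product to directly equip the convolution product with a commutativity constraint.'' Your proposed route through the BD Grassmannian is precisely what is missing, so as written your argument for commutativity does not go through.

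What the paper does instead is the following. After Tannakian reconstruction one has \(\MATM_{L^+\Gg}(\Gr_\Gg)\cong \coMod_{H_\Gg}(\MATM(\Spec k))\) for a \emph{bialgebra} \(H_\Gg\), and commutativity of \(H_\Gg\) (hence the Hopf structure, hence the identification with a group scheme) is not deducible motivically. The paper proves commutativity by passing to the \(\ell\)-adic \'etale realization, where \(H_\Gg\) becomes dualizable and flat, and then invoking the \(B_{\mathrm{dR}}^+\)-affine Grassmannian nearby cycles of Ansch\"utz--Gleason--Louren\c{c}o--Richarz together with the Fargues--Scholze Satake equivalence: the nearby cycles functor is monoidal and compatible with fiber and constant-term functors, which produces a \emph{surjection} of bialgebras \(\IQ_\ell[\widehat{G}]\to H_{\Gg,\et}\) (first for adjoint groups where \(X_*(T)^+\to X_*(T)_I^+\) is onto, then in general by quotienting to \(G_{\mathrm{ad}}\)), and commutativity follows. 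Without this \(\ell\)-adic detour the bialgebra need not be visibly commutative.

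Two further, more minor points. First, for the identification step you say the root datum of \(\widehat{G}^I\) is recovered by Levi reduction on corank-one subgroups; this is right in spirit, but you should note that \(\widehat{G}^I\) is not reductive with integral or modular coefficients, so one cannot argue as in the rational case, and the paper leans on the flatness and structure results for pinning-preserving fixed points from \cite{ALRR:Fixed}, plus a rank-one ramified \(\SU_3/\PU_3\) computation, to make the identification work over \(\IZ[\frac{1}{p}]\). Second, the claim that the grading is indexed by \(X_*(S)\) should be indexed by \(X_*(T)_I\) (for \(T\) the centralizer of \(S\)); these agree only for residually split groups, and the paper handles the general quasi-split case by descending the residually split statement along \(\Gamma=\Gal(k'/k)\).
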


We refer to the main text, and in particular Theorems \ref{Hopf algebra thm} and \ref{identification hopf algebra}, for details, but let us make some preliminary comments.
Recall that, roughly speaking, very special parahorics are exactly those parahorics for which the associated affine flag variety behaves like usual affine Grassmannians for split reductive groups, cf.~\cite[Theorem B]{Richarz:Affine}.
By \cite[Lemma 6.1]{Zhu:Ramified}, they exist if and only if \(G\) is quasi-split, so that the theorem above covers all quasi-split groups.
Second, we will define a full subcategory of Artin-Tate motives \(\DATM\subseteq \DM\) on a scheme (or a more general geometric object), together with a suitable t-structure with heart \(\MATM\), cf.~Section \ref{sect-MATM}.
This is in order to accomodate the fact that the general motivic t-structure is still conjectural.
Our proof can also be used (and simplified) to construct Satake equivalences for étale cohomology with \(\IQ_\ell\)-, or even \(\IZ_\ell\)-coefficients, cf.~\thref{Hopf algebra for ladic sheaves} and \thref{Satake for etale cohomology}.
Note that we only assert the existence of a monoidal equivalence.
Although this theorem equips \(\MATM_{L^+\Gg}(\Gr_{\Gg},\Lambda)\) with a symmetric monoidal structure making this equivalence symmetric monoidal, we currently lack the fusion product to directly equip the convolution product with a commutativity constraint.
Finally, we point out that in the motivic setting, it is subtle to actually define \(\Rep_{\widehat{G}^I} \MATM(\Spec k,\Lambda)\).
We proceed by constructing a monoidal functor from \(\IZ\)-graded \(\Lambda\)-modules equipped with a \(\Gamma\)-action, where \(\Gamma\) is a suitable Galois group, to \(\MATM(\Spec k,\Lambda)\).
By equipping the Hopf algebra of global sections of \(\widehat{G}^I\) with a suitable grading and \(\Gamma\)-action, we can consider its image in \(\MATM(\Spec k,\Lambda)\), which is still a Hopf algebra.
We then denote by \(\Rep_{\widehat{G}^I} \MATM(\Spec k,\Lambda)\) the category of comodules in \(\MATM(\Spec k,\Lambda)\) under this Hopf algebra.
In contrast to \cite{Zhu:Ramified, Richarz:Affine}, but similar to \cite{ALRR:Modular}, there are also subtleties arising from the fact that \(\widehat{G}^I\) is not necessarily reductive when working with integral or modular coefficients.
Most of these issues can be overcome by the results from \cite{ALRR:Fixed}.

\subsection{Galleries and MV cycles}

When using Artin-Tate motives as above, technical difficulties arise since these are in general not preserved by the six functors.
Hence, we have to flesh out the geometry of the affine Grassmannian, in order to show certain functors indeed preserve Artin-Tate motives.
Of particular interest for us are the constant term functors, which are used to reduce question for general groups \(G\) to similar questions for tori, which are easier to handle.
This leads us to study the intersections of Schubert cells and semi-infinite orbits in the affine Grassmannian. 
We will understand these intersections by generalizing methods of Gaussent--Littelmann \cite{GaussentLittelmann:LS}.
Our main result in this direction is the following, cf.~\thref{Intersections of Schubert cells and semi-infinite orbits}.

\begin{thm}\thlabel{thm.intro-decomposition}
	Keep the notation from \thref{thm.intro-Satake}, but now assume that \(G\) is residually split.
	Consider a Schubert cell \(\Gr_{\Gg,\mu}\subseteq \Gr_{\Gg}\) for \(\mu\in X_*(T)_I^+\) and a semi-infinite orbit \(\Ss_{\nu}^+\subseteq \Gr_{\Gg}\) for \(\nu\in X_*(T)_I\), where \(T\subseteq G\) is a suitable maximal torus.
	Then, the intersection \(\Gr_{\Gg,\mu}\cap \Ss_{\nu}^+\) admits a filtrable decomposition into schemes of the form \(\IA_k^{n,\perf}\times \IG_{m,k}^{r,\perf}\).
\end{thm}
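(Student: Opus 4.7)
The plan is to adapt the strategy of Gaussent--Littelmann \cite{GaussentLittelmann:LS} to the residually split setting, where the key observation is that, by Richarz's analysis of very special parahorics, the Schubert stratification of \(\Gr_{\Gg}\) is indexed by dominant cocharacters in \(X_*(T)_I^+\) and the associated affine Weyl group acts on the standard apartment much as in the split case. We may therefore base change so that \(F\) is strictly henselian, and work with the échelonnage root system on \(X_*(T)_I\otimes\IR\), together with its associated Coxeter complex and fundamental alcove \(\mathfrak{a}_0\).

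First, given \(\mu\in X_*(T)_I^+\), I would choose a minimal gallery of alcoves \(\gamma_{\mu}=(\mathfrak{a}_0,\mathfrak{a}_1,\dots,\mathfrak{a}_r)\) from \(\mathfrak{a}_0\) to an alcove containing \(\mu\) (in the sense of the apartment for \(\Gg\)), and build the associated Bott--Samelson--Demazure scheme \(\hat{\Sigma}(\gamma_{\mu})\), constructed as an iterated \(\IP^1\)-bundle using the wall-crossing data of \(\gamma_\mu\). In equal characteristic this is a standard construction, and in mixed characteristic the perfected version due to Zhu and Bhatt--Scholze provides the analogous object; in both cases, the convolution morphism \(\pi\colon \hat{\Sigma}(\gamma_\mu)\to \overline{\Gr_{\Gg,\mu}}\) is a resolution that is an isomorphism over \(\Gr_{\Gg,\mu}\).

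Next, I would stratify \(\hat{\Sigma}(\gamma_{\mu})\) by combinatorial galleries of the same type as \(\gamma_\mu\): namely, at each step \(i\) the \(\IP^1\)-fiber splits as \(\IA^{1,\perf}\sqcup \{\infty\}\) according to whether one crosses the wall or gets folded. This gives a locally closed decomposition of \(\hat{\Sigma}(\gamma_\mu)\) indexed by the set of galleries \(\delta\) of type \(\gamma_\mu\), with the stratum \(C(\delta)\) isomorphic to \(\IA_k^{n(\delta),\perf}\) where \(n(\delta)\) is the number of positive (non-folded) steps. To obtain the intersection with \(\Ss_\nu^+\), I would then intersect further with preimages of semi-infinite orbits, which on each stratum is controlled by whether the \(\IA^{1,\perf}\)-coordinate lies in a sub-affine-space (when the corresponding wall does or does not separate the relevant chambers): this cuts each \(C(\delta)\) along coordinate hyperplanes and open complements, producing subschemes of the form \(\IA_k^{n,\perf}\times \IG_{m,k}^{r,\perf}\). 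The galleries contributing to \(\pi^{-1}(\Gr_{\Gg,\mu}\cap \Ss_\nu^+)\) are exactly the positively folded LS galleries of type \(\gamma_\mu\) and weight \(\nu\), and the resulting decomposition is filtrable by ordering the galleries according to the Bruhat-type order on their wall-crossing sequences.

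Finally, I would verify that \(\pi\) restricts to an isomorphism on the union of these LS strata onto \(\Gr_{\Gg,\mu}\cap \Ss_\nu^+\). This is where residual splitness is used: it guarantees that the root subgroups appearing along \(\gamma_\mu\) behave like \(\IG_a\) or \(\IG_m\) torsors over \(k\) (possibly after Weil restriction), so the injectivity and surjectivity arguments of Gaussent--Littelmann translate verbatim to the échelonnage root datum. The main obstacle is this last step: in the ramified, mixed-characteristic setting one must carefully identify the root subgroups attached to affine roots of \(\Gg\) and their interaction with the retraction \(\Gr_{\Gg}\to \mathfrak{a}\) centered at \(\Ss_\nu^+\), and show that the combinatorics of positively folded galleries really cuts out the scheme-theoretic intersection (rather than just a set-theoretic one). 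Granting this identification, the filtrable decomposition follows by induction on the length of \(\gamma_\mu\), using the Demazure tower structure of \(\hat{\Sigma}(\gamma_\mu)\).
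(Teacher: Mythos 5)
Your high-level plan — build a Bott--Samelson resolution from a gallery, stratify it by the Bialynicki--Birula decomposition corresponding to the retraction at $-\infty$, and identify the strata with positively folded combinatorial galleries — is exactly the strategy the paper follows (after Gaussent--Littelmann). However, as written the argument only works for \emph{regular} $\mu$, and the two places you gloss over are precisely where the nonregular case needs real input.

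First, you take a minimal gallery of \emph{alcoves} from $\mathfrak{a}_0$ to ``an alcove containing $\mu$'', so your $\hat\Sigma(\gamma_\mu)$ is an iterated $\IP^{1,\perf}$-bundle. For nonregular $\mu$ this is the wrong combinatorial object. Concretely, if $\mu$ is minuscule then the vertex $\mathbf{f}_\mu$ already lies in $\overline{\Delta_f}$, so the minimal gallery of alcoves has length $0$ and your Bott--Samelson is a point, which cannot resolve $\Gr_{\Gg,\le\mu}$. The paper instead uses a minimal gallery of \emph{facets of dimension $\dim\wall_\mu$}; the resulting $\Sigma(\gamma_\mu)$ is an iterated fibration whose fibers are partial flag varieties $L^+\Pp'_j/L^+\Pp_j$, and in particular the first step $L^+\Gg/L^+\Pp_0$ absorbs the stabiliser of $\mu$. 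Only for this choice is $\pi\colon\Sigma(\gamma_\mu)\to\Gr_{\Gg,\le\mu}$ birational with an isomorphism over $\Gr_{\Gg,\mu}$, which is the structural input you need in the last paragraph.

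Second, you assert that the minimality conditions cut each BB stratum $C(\delta)$ ``along coordinate hyperplanes and open complements'', producing $\IA^n\times\IG_m^r$ directly. That is true for regular $\mu$, where each step is a $\IP^{1,\perf}$ and the condition at each step is literally ``lie in a line or in its complement minus a point''. For nonregular $\mu$, the condition at a step is an intersection of the form $\IA^+(w)w\IA^-(w)\cap\IA^+(\tau)\tau$ inside a partial flag variety $L^+\Pp'/L^+\Pp$ — i.e.\ the intersection of a Bruhat cell with a big cell centred at a possibly different torus-fixed point — and such an intersection is in general \emph{not} a product of affine spaces and tori. The paper handles this via Deodhar's decomposition of these intersections into cells (Cor.~1.2 of Deodhar), together with Dudas's lemma to show that the resulting decomposition is filtrable. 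This is the genuinely new input needed beyond ``carefully identify the root subgroups''; without it the step-by-step decomposition simply does not land in the required class of schemes.

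A smaller point: the galleries contributing to $\pi^{-1}(\Gr_{\Gg,\mu}\cap\Ss_\nu^{\pm})$ are \emph{all} positively folded combinatorial galleries of type $\gamma_\mu$ and the appropriate weight, not only the LS galleries. The LS galleries are those of maximal dimension $\langle\rho,\mu+\nu\rangle$; the non-LS positively folded galleries still contribute (lower-dimensional) pieces to the filtrable decomposition and cannot be discarded.

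Once you replace alcove galleries by facet galleries and supply the Deodhar/Dudas step, the rest of your sketch matches the paper's argument.
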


Recall that a filtrable decomposition of a scheme \(X\) is a disjoint union \(X=\bigsqcup_w X_w\) such that there exists an increasing sequence of closed subschemes \(\varnothing = X_0 \subsetneq X_1 \subsetneq \ldots \subsetneq X_n = X\), for which each successive complement \(X_i\setminus X_{i-1}\) is one of the \(X_w\)'s.
In particular, any stratification is a filtrable decomposition, although the latter is already enough for the purpose of inductively applying localization.
We also note that the schemes appearing in the theorem above are perfect, since the mixed characteristic affine Grassmannians from \cite{Zhu:Affine} are only defined up to perfection.
In case \(F\) is of equal characteristic and the affine Grassmannians are defined as in \cite{PappasRapoport:Twisted}, the proof of \thref{thm.intro-decomposition} shows that one does indeed get a filtrable decomposition into \(\IA_k^{n}\times \IG_{m,k}^{r}\)'s.

The main strategy to prove \thref{thm.intro-decomposition} is to realize Schubert cells as varieties of certain minimal galleries in the Bruhat-Tits building of \(G\).
This allows us to consider retractions, which make the semi-infinite orbits appear.
On the other hand, considering more general galleries, which are not necessarily minimal, gives rise to certain smooth resolutions of the Schubert varieties \(\Gr_{\Gg,\leq \mu}\).
These resolutions admit a filtrable decomposition by the attractors for a certain \(\IG_m\)-action, refining the preimage stratification defined by the semi-infinite orbits.
Hence, it suffices to understand the minimal galleries appearing in each locally closed subscheme of this decomposition, for which we use results of Deodhar \cite{Deodhar:Geometric}.

Not all of Section \ref{sect-LSMV} is strictly necessary to prove \thref{thm.intro-decomposition}.
However, we take the opportunity to generalize results from \cite{GaussentLittelmann:LS} to the case of non-split groups and mixed characteristic local fields, which is of independent interest to us. 
(Strictly speaking, \cite{GaussentLittelmann:LS} use semisimple complex groups, where \(\mathbb{C}\) is the residue field of the complete discretely valued field that implicitly appears. However, their methods work verbatim when replacing \(\mathbb{C}\) by arbitrary algebraically closed fields, which was already used in \cite{CassvdHScholbach:Geometric}.)
For example, we obtain the following building-theoretic interpretation of the representation theory of \(\widehat{G}^I\), defined over a field of characteristic 0.

\begin{thm}\thlabel{thm.intro-character formula}
	Assume \(G\) is residually split, let \(\mu\in X_*(T)_I^+\), and denote by \(V(\mu)\) the irreducible \(\widehat{G}^I\)-representation of highest weight \(\mu\).
	Then, there is a combinatorially defined set \(\Gamma_{\LS}^+(\gamma_\mu)\) of positively folded combinatorial LS galleries, for which
	\[\Char V(\mu) = \sum_{\gamma\in \Gamma_{\LS}^+(\gamma_\mu)} \exp(e(\gamma)).\]
\end{thm}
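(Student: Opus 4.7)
The plan is to combine the ramified geometric Satake equivalence with a Bialynicki-Birula type decomposition of a gallery resolution of the Schubert variety, transporting the Gaussent-Littelmann argument from the split complex case to residually split ramified groups.

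As a starting point, I invoke \thref{thm.intro-Satake} with coefficients in a field of characteristic zero (or equivalently the classical ramified Satake equivalence of Zhu and Richarz). Under this equivalence, the irreducible \(\widehat{G}^I\)-representation \(V(\mu)\) corresponds to the intersection cohomology sheaf \(\IC_\mu\) of \(\Gr_{\Gg,\leq\mu}\). The weight decomposition of \(V(\mu)\) is then realized via the constant term, i.e.~hyperbolic localization along a regular dominant cocharacter, whose attractor loci on \(\Gr_\Gg\) are precisely the semi-infinite orbits \(\Ss_\nu^+\). Consequently, \(\dim V(\mu)_\nu\) equals the number of top-dimensional irreducible components of \(\Gr_{\Gg,\leq\mu}\cap\Ss_\nu^+\), i.e.~the MV cycles of coweight \((\mu,\nu)\).

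To count these MV cycles, I fix a minimal type-gallery \(\gamma_\mu\) from the base alcove to \(\mu\) in the affine Bruhat-Tits building of \(G\) and consider the associated Bott-Samelson gallery variety \(\hat{\Sigma}(\gamma_\mu)\), which provides a (perfectly) smooth resolution of \(\Gr_{\Gg,\leq\mu}\). The key point, underlying \thref{thm.intro-decomposition}, is that \(\hat{\Sigma}(\gamma_\mu)\) carries a \(\IG_m\)-action whose attractor decomposition is indexed by the set of combinatorial galleries of the same type as \(\gamma_\mu\). Each attractor cell \(C_\delta\) maps into a single semi-infinite orbit \(\Ss_{e(\delta)}^+\), is nonempty exactly when \(\delta\) is positively folded, and maps birationally onto the closure of its image in \(\Gr_{\Gg,\leq\mu}\cap\Ss_{e(\delta)}^+\). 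A dimension computation generalizing the Gaussent-Littelmann bound to the residually split ramified setting then identifies the cells attaining the MV dimension as precisely those indexed by positively folded LS galleries, yielding a bijection between \(\{\delta\in\Gamma_{\LS}^+(\gamma_\mu):e(\delta)=\nu\}\) and the MV cycles in \(\Gr_{\Gg,\leq\mu}\cap\Ss_\nu^+\). Summing the resulting equality \(\dim V(\mu)_\nu=\#\{\delta\in\Gamma_{\LS}^+(\gamma_\mu):e(\delta)=\nu\}\) weighted by \(\exp(\nu)\) yields the claimed character formula.

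The hard part is verifying that the combinatorics of positively folded galleries in the Bruhat-Tits building of the ramified group \(G/F\) controls the geometry of \(\hat{\Sigma}(\gamma_\mu)\) in the same way as in the split complex case handled by Gaussent-Littelmann. Concretely, one must match folding positions of \(\delta\) with Deodhar-type distinguished subexpressions in the Iwahori-Weyl group of \(\Gg\) and check that the LS condition singles out exactly those subexpressions whose Deodhar cells have maximal dimension and thus give rise to MV cycles. A secondary, more technical issue is that in mixed characteristic all constructions live on perfect schemes, but since dimensions and component counts are unaffected by perfection this does not interfere with the enumeration; one merely needs to phrase the attractor decomposition in a form compatible with the perfect setting of \cite{Zhu:Affine}.
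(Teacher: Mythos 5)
Your proposal takes a genuinely different route from the paper. The paper proves the character formula \emph{purely combinatorially}: it first establishes \thref{character formula 1}, which says that \(\Char\Gamma(\gamma_\mu)=\sum_{\gamma\in\Gamma(\gamma_\mu,\dom)}\Char V(e(\gamma))\), via Weyl's character formula together with an explicit sign-reversing involution built from the root operators \(e_\alpha\) and \(f_\alpha\). It then checks (\thref{character formula 2}) that \(\Gamma_{\LS}^+(\gamma_\mu)\) is precisely the set generated from \(\gamma_\mu\) by the \(f_\alpha\), that the dimension estimate \(\dim\gamma\le\langle\rho,\mu+e(\gamma)\rangle\) (\thref{dimension estimate}) forces LS galleries to be stable under root operators, and that \(\gamma_\mu\) is the only LS gallery with dominant target --- so re-running the involution argument restricted to LS galleries singles out \(V(\mu)\) alone. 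No geometric Satake, MV cycles, or Bialynicki--Birula decomposition is used. You instead follow the ``MV cycle'' half of Gaussent--Littelmann: identify weight spaces of \(V(\mu)\) with MV cycles via the Satake fiber functor, then enumerate MV cycles by the attractor cells \(C_\delta\) of the Bott--Samelson resolution and argue that only LS galleries give top-dimensional cells. The combinatorial route is shorter and uses only building combinatorics, while your geometric route gives more (an actual bijection between LS galleries and MV cycles, which is interesting in its own right and closer to the original motivation of \cite{GaussentLittelmann:LS}).

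That said, there are real gaps in your sketch as written. First, a bookkeeping error: the cell \(C_\delta\subset\Sigma(\gamma_\mu)\) is nonempty for \emph{every} combinatorial gallery \(\delta\) (it is a cell of the Bialynicki--Birula decomposition and contains the fixed point \(\delta\)); it is the open subscheme \(C_\delta\cap\Gr_{\Gg,\mu}\) of minimal galleries whose nonemptiness forces \(\delta\) to be positively folded (\thref{only care about positively folded}), and the cells land in \(\Ss^-_{w_0(e(\delta))}\) rather than \(\Ss^+_{e(\delta)}\) in the paper's conventions. Second, and more seriously, your count \(\dim V(\mu)_\nu=\#\{\delta\in\Gamma_{\LS}^+(\gamma_\mu):e(\delta)=\nu\}\) needs both inequalities. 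The inequality \(\le\) follows from the dimension estimate: every irreducible component of \(\Gr_{\Gg,\mu}\cap\Ss_\nu^-\) of full dimension \(\langle\rho,\mu+\nu\rangle\) is open in some \(C_\delta\cap\Gr_{\Gg,\mu}\) with \(\dim\delta\ge\langle\rho,\mu+\nu\rangle\), so \(\delta\) must be LS. The reverse inequality \(\ge\) requires showing that \(C_\delta\cap\Gr_{\Gg,\mu}\) is nonempty (equivalently dense in \(C_\delta\)) for \emph{every} LS gallery \(\delta\), so that each LS gallery genuinely contributes a distinct MV cycle. This is precisely the Deodhar matching you allude to, and it is not established anywhere in the paper (the paper sidesteps it for the character formula, using it implicitly only via the combinatorics). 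You would need to either carry out that matching in the residually split ramified setting, or fall back on an auxiliary counting argument --- for instance, \(\#\Gamma_{\LS}^+(\gamma_\mu)=\dim V(\mu)\) via the crystal structure from root operators, which at that point is essentially the paper's combinatorial proof. In short: correct in spirit, but the hard direction of the MV/LS bijection is precisely the part your sketch defers, and without it the argument only gives an upper bound on \(\dim V(\mu)_\nu\).
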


We refer to Section \ref{sect-LSMV}, and in particular \thref{character formula 2}, for details and the notation used.
We note that although we work with semisimple and simply connected groups in Section \ref{sect-LSMV}, the (reduced) Bruhat-Tits buildings and Weyl groups remain the same after passing to simply connected covers of derived subgroups. 
So, the theorem above also holds for arbitrary residually split groups.

\subsection{Applications to Hecke algebras}

Although further applications of \thref{thm.intro-Satake} are work in progress, we can already provide some first number-theoretic applications by decategorifying \thref{thm.intro-Satake}.
Let us recall the status of Satake isomorphisms for spherical Hecke algebras.

The original version was proven by Satake in \cite{Satake:Theory}, and Borel \cite{Borel:Automorphic} explained the relation with the Langlands dual group in case \(G\) is unramified.
While both references work with complex coefficients, they can also be used to construct Satake isomorphisms with \(\IZ[q^{\pm \frac{1}{2}}]\)-coefficients, where \(q\) is the cardinality of \(k\), cf.~\cite{Gross:Satake}.
For ramified \(G\), a Satake isomorphism was constructed by Haines--Rostami in \cite{HainesRostami:Satake}, again with complex coefficients.
On the other hand, there are the mod \(p\) Satake isomorphisms, constructed by Herzig \cite{Herzig:Satake} for unramified groups in mixed characteristic, and by Henniart--Vignéras \cite{HenniartVigneras:Satake} in general.
In fact, \cite{HenniartVigneras:Satake} even get isomorphisms with \(\IZ\)-coefficients, but the Langlands dual group (or any related object) does not appear there.
In contrast, \cite{Zhu:Integral} constructed integral Satake isomorphisms for unramified groups, relating spherical Hecke algebras to rings of functions on the Vinberg monoid of the Langlands dual group.

Our next goal is to generalize Zhu's integral Satake isomorphism to quasi-split reductive groups.
For such groups, this will then also recover \cite{HainesRostami:Satake} after a suitable change of coefficients.

\begin{thm}
	Let \(\Gg/\Oo\) be as in \thref{thm.intro-Satake}, and \(q\) the residue cardinality of \(F\).
	Let \(\Hh_\Gg\) be the Hecke algebra of \(\Gg\), consisting of certain \(\IZ\)-valued functions on \(G(F)\).
	Then, there is a canonical isomorphism
	\[\IZ[\restrvinberg^I]^{c_{\sigma}(\widehat{G}^I)} \cong \Hh_{\Gg}.\]
\end{thm}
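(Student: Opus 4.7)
The plan is to derive this isomorphism by decategorifying \thref{thm.intro-Satake}, namely by passing to Grothendieck rings of the two monoidal abelian hearts and then specializing the Tate twist to the residue cardinality $q$. Throughout, I would work with coefficient ring $\Lambda = \IZ[\frac{1}{p}]$, since $\Hh_\Gg$ naturally sits over this ring before specialization.

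First, I would identify the Grothendieck ring of the geometric side $K_0(\MATM_{L^+\Gg}(\Gr_\Gg, \Lambda), \conv)$. Using the filtrable decomposition of \thref{thm.intro-decomposition}, together with localization and parity-type vanishing for Artin--Tate motives along Schubert strata, this Grothendieck ring becomes free as a module over $K_0(\MATM(\Spec k, \Lambda))$ on the IC-motives $\IC_\mu$ indexed by $\mu \in X_*(T)_I^+$. The sheaf--function dictionary, applied through the $\ell$-adic realization for any $\ell \neq p$ (and independent of $\ell$ on Artin--Tate motives), sends the convolution of motives to convolution of $\Gg(\Oo)$-bi-invariant functions on $G(F)$. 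Specializing the Lefschetz motive $\IZ(1)[2]$ to the scalar $q$ passes from $\Lambda[q^{\pm 1}]$-coefficients to $\IZ$-coefficients, yielding $\Hh_\Gg$.

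Second, I would identify the representation-theoretic side. By construction, $\Rep_{\widehat{G}^I}\MATM(\Spec k, \Lambda)$ consists of comodules over the image, under the monoidal functor from graded $\Lambda[\Gamma]$-modules (with $\Gamma$ the unramified quotient of the local Galois group, generated by Frobenius $\sigma$), of the Hopf algebra of $\widehat{G}^I$, equipped with the $\IZ$-grading induced by $\rho_{\adj}$. This graded Hopf algebra is precisely $\IZ[\restrvinberg^I]$ once one accounts for the $\IG_m$-factor in the Vinberg monoid: restricting to $d_{\rho_{\adj}} = q$ matches the specialization of the Tate twist at $q$ on the motivic side. Passing to comodules corresponds to imposing $\widehat{G}^I$-equivariance, and the $\Gamma$-action descends on class functions to $\sigma$-conjugation invariance. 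Hence the Grothendieck ring of this category is identified with $\IZ[\restrvinberg^I]^{c_\sigma(\widehat{G}^I)}$.

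Combining these two identifications with the monoidal equivalence of \thref{thm.intro-Satake} yields the theorem. The main obstacle is the second step: for integral coefficients, $\widehat{G}^I$ need not be reductive, so one cannot directly invoke classical Tannakian arguments to match its representation ring with functions on the Vinberg monoid slice. Here one must use the results of \cite{ALRR:Fixed} to extract the correct Hopf algebra structure on $I$-fixed points, and carefully verify that the grading induced by Tate twists corresponds to the $\IG_m$-grading on $V_{\widehat{G},\rho_{\adj}}$ in such a way that specializing to $d_{\rho_{\adj}} = q$ on the representation side matches specializing the Lefschetz motive at $q$ on the geometric side.
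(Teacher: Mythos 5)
Your overall architecture — decategorify the Satake equivalence, identify Grothendieck rings, and use the sheaf–function dictionary via trace of Frobenius — is in the right spirit, but there are two concrete gaps that the paper's actual argument is specifically designed to close, and your proposal does not address them.

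First, and most importantly, you nowhere restrict to the \emph{anti-effective} subcategory $\MATM_{L^+\Gg}(\Gr_\Gg)^{\anti}$, and this is what makes the trace of Frobenius land in $\IZ$ rather than in $\IZ[q^{-1}]$. The trace of Frobenius on $\unit(n)$ is $q^{-n}$, so on a general mixed Artin-Tate motive the trace function is only $\IZ[q^{-1}]$-valued (rationally, $\IQ$-valued). Your step ``specializing the Lefschetz motive $\IZ(1)[2]$ to the scalar $q$ passes $\ldots$ to $\IZ$-coefficients, yielding $\Hh_\Gg$'' does not produce a map into $\Hh_\Gg$: $\Hh_\Gg$ is by definition $\IZ$-valued, and you need an a priori reason for the traces to be integers. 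The paper's \thref{trace of Satake} deduces exactly this from anti-effectivity (only nonnegative powers of $q$ occur). This also forces working with $\IQ$-coefficients rather than your $\Lambda = \IZ[\frac{1}{p}]$, since anti-effective motives are only defined rationally (\thref{def--anti-effective}) and one needs $K_0(\MTM(\Spec k,\IQ)) \cong \IZ[t,t^{-1}]$ to be usable. The anti-effectivity is not cosmetic: it is precisely the condition matching ``representations of the Vinberg monoid'' rather than of the full C-group, via \thref{Anti-effective equivalence}.

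Second, your identification of the representation side is incorrect as stated. You claim that ``the Grothendieck ring of this category is identified with $\IZ[\restrvinberg^I]^{c_\sigma(\widehat{G}^I)}$.'' This is not true; there is a trace map $K_0(\Rep_{\IQ}(\vinberg^I\rtimes\Gamma)) \to \IZ[\restrvinberg^I]^{c_\sigma(\widehat{G}^I)}$ (the paper's \thref{trace of representations}), but it is surjective, not an isomorphism, and the paper never asserts otherwise. Consequently ``combining these two identifications with the monoidal equivalence of \thref{thm.intro-Satake}'' does not directly yield the theorem. The paper instead assembles a commutative square: two surjective trace maps on the left, the twisted Chevalley restriction isomorphism \thref{Chevalley restriction} on the top, and two injections into $\IZ[X^*(\widehat{T})^{\Gal(\overline{F}/F)}]$ on the right (the classical Satake transform $\CT^{\cl}$ and the map \eqref{description invariants of vinberg}); the commutativity is the motivic Grothendieck--Lefschetz trace formula. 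The isomorphism is then deduced by a diagram chase, not by a direct identification of Grothendieck rings. Without the injectivity of $\CT^{\cl}$ (Iwasawa decomposition) and the Chevalley restriction isomorphism, one cannot match the kernels of the two trace maps, so your argument leaves the existence of the desired isomorphism unproven.
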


We refer to Section \ref{sect-Hecke} for the definitions of \(\Hh_\Gg\) and \(V_{\widehat{G}}\).
In \cite{Zhu:Integral}, two proofs are given for the above theorem in the unramified case: one using the classical Satake isomorphism, and one using the geometric Satake equivalence for unramified groups, by taking traces of Frobenii.
Our strategy roughly follows the latter, in the sense that it uses \thref{thm.intro-Satake}.
However, here the advantage of using motives over \(\ell\)-adic cohomology becomes clear.
Namely, in \cite[Proposition 18, Lemma 20]{Zhu:Integral}, certain subcategories on which the trace of Frobenius gives \(\IZ\)-valued functions are carefully singled out.
On the other hand, since we are using motives, it is clear that in our situation the trace of Frobenius gives (at least) \(\IQ\)-valued functions.
It is then easy to determine exactly when we get \(\IZ\)-valued functions.

For residually split groups, we can use \thref{thm.intro-Satake} to define generic spherical Hecke algebras, depending on some parameter \(\qq\).
Namely, there is a full subcategory of \(\MTM_{L^+\Gg}(\Gr_{\Gg},\IQ)\), closed under convolution, given by the anti-effective motives (\thref{def--anti-effective}), which is equivalent to representations (in \(\IQ\)-vector spaces) of \(\vinberg^I\).
We define \(\Hh_{\Gg}(\qq)\) as the Grothendieck ring of (the compact objects in) this full subcategory.
Specializing \(\qq\) to the residue cardinality of \(F\) then recovers the usual spherical Hecke algebra \(\Hh_\Gg\).
Note that this agrees with the definitions from \cite{PepinSchmidt:Generic,CassvdHScholbach:Geometric} for split groups, but it is in general subtle to define \(\Hh_{\Gg}(\qq)\).
We also show how this generic Hecke algebra fits into generic Satake and Bernstein isomorphisms (\thref{generic Bernstein}):

\begin{thm}
	There is a canonical isomorphism
	\[\Hh_{\Gg}(\qq) \cong R(\vinberg^I).\]
	Moreover, there is a morphism
	\[\Hh_{\Gg}(\qq) \to \Hh_{\Ii}(\qq),\]
	realizing the left hand side as the center of the right hand side.
\end{thm}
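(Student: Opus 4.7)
The plan is to decategorify \thref{thm.intro-Satake} restricted to the full anti-effective subcategory $\MTM_{L^+\Gg}^{\mathrm{anti}}(\Gr_\Gg,\IQ)\subseteq\MTM_{L^+\Gg}(\Gr_\Gg,\IQ)$. First I would check that this subcategory is stable under the convolution product, following the strategy of \cite{CassvdHScholbach:Geometric} for split groups: combining the Schubert stratification of $\Gr_\Gg$ with the dimension and coordinate estimates extracted from \thref{thm.intro-decomposition} shows that the convolution of anti-effective objects remains anti-effective. On the spectral side, the anti-effective Tate grading corresponds under \thref{thm.intro-Satake} to the natural $\IZ_{\geq 0}$-grading of $\vinberg$ given by the determinant character $d_{\rho_{\adj}}$, so one obtains a symmetric monoidal equivalence between $\MTM_{L^+\Gg}^{\mathrm{anti}}(\Gr_\Gg,\IQ)$ and the category of $\IQ$-representations of $\vinberg^I$. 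Taking the Grothendieck ring of compact objects on each side produces the canonical ring isomorphism $\Hh_\Gg(\qq)\cong R(\vinberg^I)$, with the formal parameter $\qq$ tracking the Tate twist, i.e.\ the $\vinberg^I$-grading.

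For the Bernstein map, I would construct a monoidal functor from $\MTM_{L^+\Gg}^{\mathrm{anti}}(\Gr_\Gg,\IQ)$ to a suitable anti-effective subcategory of $\MTM_{L^+\Ii}(\Fl_\Ii,\IQ)$, as the motivic analogue of Gaitsgory's central functor, built as nearby cycles along a Beilinson--Drinfeld degeneration from $\Gr_\Gg$ to the affine flag variety. Decategorifying then gives the desired algebra morphism $\Hh_\Gg(\qq)\to\Hh_\Ii(\qq)$. To see that the image lies in the center, I would establish a commutativity constraint: for every $A\in\MTM_{L^+\Gg}^{\mathrm{anti}}$ and every $B\in\MTM_{L^+\Ii}(\Fl_\Ii,\IQ)$, there is a canonical isomorphism $A\conv B\cong B\conv A$, coming from the monodromy-free nature of the nearby cycles together with a $\IG_m$-equivariant braiding on the two-point BD family. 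Passing to Grothendieck rings places the image inside the center of $\Hh_\Ii(\qq)$. A rank count via the Bernstein presentation of $\Hh_\Ii(\qq)$, combined with the identification of its center with the Weyl-invariants in the group algebra of the cocharacter lattice (which is visible from the Bruhat-ordered decomposition of \thref{thm.intro-decomposition}), then forces the image to exhaust the full center.

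The main obstacle is the centrality step. It requires a motivic Gaitsgory-type nearby-cycles functor that simultaneously preserves Artin--Tate motives, respects the anti-effective subcategory, and carries a coherent braiding landing in the center of the Iwahori-equivariant Hecke category. Each of these ingredients should be accessible from the machinery developed in Sections \ref{sect-MATM} and \ref{sect-LSMV}, but assembling them into a functorial commutativity constraint -- especially in mixed characteristic, where $\Gr_\Gg$ and the relevant BD deformations are only defined after perfection -- is the technically delicate step.
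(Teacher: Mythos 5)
The first isomorphism $\Hh_\Gg(\qq)\cong R(\vinberg^I)$ matches the paper's approach: it is obtained by applying $K_0$ to the equivalence from \thref{Anti-effective equivalence}, and you correctly identify that the parameter $\qq$ tracks the anti-effective Tate twist, i.e.\ the $d_{\rho_{\adj}}$-grading.

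For the Bernstein map and centrality, however, your route diverges substantially from the paper's, and the approach you propose is precisely the one the paper deliberately avoids. You suggest constructing a motivic Gaitsgory-type central functor via nearby cycles along a Beilinson--Drinfeld degeneration, equipped with a braiding whose decategorification forces centrality. As you yourself note, this demands motivic nearby cycles preserving Artin--Tate and anti-effective motives, together with a coherent commutativity constraint; in mixed characteristic these tools are not available (the paper's introductory remark about Bando's preprint notes that conservativity of motivic nearby cycles and a ULA theory for motives are still open). The paper sidesteps this entirely. Instead it defines the map $\Hh_\Gg(\qq)\to\Hh_\Ii(\qq)$ by purely algebraic means, via Vign\'eras's generic Bernstein elements $E(\nu)$: one first defines $\IZ[\qq][X_*(T)_I]\to\Hh_\Ii(\qq)[\qq^{-1}]$ sending $\nu\mapsto\qq^{-\langle 2\rho,\mu\rangle}E(\mu)E(\nu-\mu)$, and then checks using \thref{fiber functor of standard} that the composite from $\Hh_\Gg(\qq)$ stays integral in $\qq$. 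Centrality is then proved by specialization: for each prime power $q'$ the classical Bernstein isomorphism of Vign\'eras identifies $\Hh_\Gg(q')$ with the center of $\Hh_\Ii(q')$, and injectivity of $\Hh_\Ii(\qq)\to\prod_{q'}\Hh_\Ii(q')$ together with the cartesianness of the relevant diagram lifts this back to the generic algebras. The crucial observation enabling this (in \thref{raison d'etre of generic algebra}) is that, by Tits's classification of reductive groups over local fields, \emph{every} prime power $q'$ is realized as a residue cardinality for a group with the same Vinberg monoid $\vinberg^I$, so one genuinely has a family of classical isomorphisms to compare against.

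In short: the obstacle you flag at the end is real, and the paper does not attempt to overcome it. The lesson worth internalizing is that once one has a generic object $\Hh_\Gg(\qq)$ mapping surjectively and compatibly onto all specializations $\Hh_\Gg(q')$ for which the desired statement is a known theorem, one can often deduce the generic statement by an injectivity/specialization argument without rebuilding the geometric machinery generically. This is both more elementary and, in the motivic mixed-characteristic setting, currently the only available route. (The cited companion work \cite{CassvdHScholbach:Central} does construct a motivic central functor, but only for split groups in equal characteristic, so it cannot be invoked here.)
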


Here \(\Hh_{\Ii}(\qq)\) is the generic Iwahori-Hecke algebra, defined similarly as in \cite{Vigneras:ProI}.
For a more conceptual discussion about generic Hecke algebras at arbitrary parahoric level in the case of split groups in equal characteristic, we refer to \cite[§6]{CassvdHScholbach:Central}.
Finally, we mention that the generic Satake and Bernstein isomorphisms above open the door to extend the work of Pépin--Schmidt \cite{PepinSchmidt:Generic} to (residually split) ramified groups.

\subsection{Outline and strategy of proof}

Aside from many additional difficulties arising from the use of (integral) motives, our proof of \thref{thm.intro-Satake} follows the usual lines of geometric Satake.
Let us give a brief overview of the paper.

We start in Section \ref{sect-motives} by recalling the motivic theory we will use, and develop it further.
In particular, we define the notion of Artin-Tate motives we will need, explain how to equip it with a t-structure, and show t-exactness of certain realization functors.

Next, we recall the definitions and basic geometry of affine Grassmannians and affine flag varieties in Section \ref{Section--affine flag varieties}.
Most of the material is well known already, but we provide proofs when there is a lack of suitable references.

In Section \ref{sect-LSMV}, we generalize the methods of \cite{GaussentLittelmann:LS} to quasi-split groups over general nonarchimedean local fields. In particular, we prove Theorems \ref{thm.intro-decomposition} and \ref{thm.intro-character formula}.

With most of the necessary geometry of affine flag varieties available, we can now move towards the proof of \thref{thm.intro-Satake}.
We begin in Section \ref{sect-convolution} by considering the convolution, and show it preserves Artin-Tate motives and is t-exact.

Then we construct the constant term and fiber functors in Section \ref{sect-CT and F}. 
We also assert basic properties of these functors, such as preservation of Artin-Tate motives and t-exactness. 

In Section \ref{sect-tannakian}, we use a generalized Tannakian approach to show \(\MATM_{L^+\Gg}(\Gr_{\Gg})\) is equivalent to the category of comodules under some bialgebra in \(\MATM(\Spec k)\).

In order to show the bialgebra above is in fact a Hopf algebra, we would need a symmetry constraint for the convolution product.
In \cite{Zhu:Ramified}, this is obtained by considering a nearby cycles functor and the Satake equivalence for a certain unramified group.
Since we are primarily interested in local fields of mixed characteristic, the only nearby cycles available for our purposes is the one constructed by Anschütz--Gleason--Lourenço--Richarz \cite[§6]{AGLR:Local}, which makes it possible to relate our situation to the Satake equivalence from Fargues--Scholze \cite[§VI]{FarguesScholze:Geometrization}.
We thus describe this nearby cycles functor in the setting of \(\ell\)-adic étale cohomology in Section \ref{Sect--padic}, and we reduce motivic questions, such as commutativity of the bialgebra above, to this situation.
(During the refereeing process of this paper, a motivic version of the above Satake equivalence has appeared in \cite{Scholze:Geometrization}.)

We then conclude \thref{thm.intro-Satake} by relating the Hopf algebra arising from the Tannakian approach to the inertia-invariants of the Langlands dual group in Section \ref{sect-identification}.
Again, we make use of nearby cycles in case of \(\ell\)-adic étale cohomology, which helps us in determining the dual group integrally and motivically.

Finally, we provide number-theoretic applications in Section \ref{sect-Hecke}, by constructing integral and generic versions of the Satake and Bernstein isomorphisms for ramified groups.

\begin{rmk}
	After the first draft of this paper was finished, Bando uploaded a preprint \cite{Bando:Derived}, comparing categories of sheaves on mixed characteristic and equicharacteristic affine flag varieties.
	However, his methods are not immediately helpful for our purposes.
	For example, loc.~cit.~starts with a group over \(\Oo\pot{t}\), but, at least for wildly ramified groups, it is not clear how to extend the parahoric \(\Gg/\Oo\) to a suitable \(\Oo\pot{t}\)-group scheme.
	Namely, it can happen that Lourenço's construction \cite{Lourenco:Grassmanniennes} yields groups whose fiber over \(k\rpot{t}\) is not reductive.
	In fact, for wildly ramified odd unitary groups, it is not even known how to construct such lifts, but these are nevertheless still covered by our \thref{thm.intro-Satake}.
	Moreover, since we are working with motives, we would have to replace the notion of ULA sheaves (as in Hansen--Scholze \cite{HansenScholze:Relative}) by ULA motives (defined by Preis \cite{Preis:Motivic}).
	However, not all necessary properties of universal local acyclicity have been established in the motivic setting yet, as this would require conservativity results of motivic nearby cycles, which is still a conjecture.
\end{rmk}

\subsection{Notation}

Throughout this paper, we fix a prime \(p\), and we let \(k\) be either a finite field of characteristic \(p\), or an algebraic closure thereof.
Moreover, we fix a complete discretely valued field \(F\) with residue field \(k\), ring of integers \(\Oo\), and uniformizer \(\varpi\in \Oo\).
Let \(\breve{F}\) be the completion of the maximal unramified extension of \(F\), and \(\breve{\Oo}\) its ring of integers.
In particular, \(\breve{F}\) has residue field \(\overline{k}\).
We also denote by \(I\) the inertia group of \(F\), or equivalently, the absolute Galois group of \(\breve{F}\).

Since we will mostly work with perfect schemes and \(k\)-algebras, we denote by \(\Perf_k\) the category of all perfect \(k\)-schemes, and refer to \thref{defi-pfp} for the case where we want some finiteness conditions.
For such a perfect \(k\)-algebra \(R\), we have the ring of ramified Witt vectors \(W_\Oo(R):=W(R)\widehat{\otimes}_{W(k)} \Oo\), and its truncated version \(W_{\Oo,n}(R) := W_\Oo(R) \otimes_{\Oo} (\Oo/\varpi^{n+1})\) for \(n\geq 0\). 

In general, \(G\) will be a connected reductive \(F\)-group, in which we choose a maximal \(\breve{F}\)-split \(F\)-torus \(S\), with centralizer \(T\), which is a maximal torus.
If \(G\) is quasi-split, we let \(B=B^+\subseteq G\) be a rational Borel containing \(T\).
We denote the (perfect) pairing \(X^*(T)\times X_*(T)\to \IZ\) by \(\langle -,-\rangle\).
This is invariant for the inertia-action, and hence descends to a pairing \(X^*(T)\times X_*(T)_I\to \IZ\).
In Section \ref{sect-LSMV}, we will also need to use the similar pairing \(X^*(S)\times X_*(S)\to \IZ\), or more specifically, its rationalization.
These two pairings are not immediately compatible.
Although strictly speaking, it should always be clear which pairing is used, we will denote the latter one by \(\langle-,-\rangle_S\) for clarity.

\subsection{Acknowledgments}

First and foremost, I thank my PhD advisor Timo Richarz for all his help, advice, and patience, and for sending me a preliminary version of \cite{ALRR:Modular}.
Furthermore, it is a pleasure to thank Robert Cass, Rızacan Çiloğlu, Stéphane Gaussent, Tom Haines, João Lourenço, Cédric Pépin, Jakob Scholbach, Maarten Solleveld, Fabio Tanania and Xinwen Zhu for helpful discussions and email exchanges.
I also thank Alexis Bouthier, Timo Richarz, and the referee for very helpful comments on earlier versions of this paper.
Finally, I thank the Hausdorff center of Mathematics in Bonn for its hospitality during the trimester program \emph{The Arithmetic of the Langlands program}, where part of this work was carried out.
This work was supported by the European research council (ERC) under the European Union’s Horizon 2020 research
and innovation programme (grant agreement No 101002592), and by the Deutsche Forschungsgemeinschaft (DFG), via the TRR 326 \emph{Geometry and Arithmetic of Uniformized Structures} (project number 444845124), under Germany's Excellence Strategy – EXC-2047/1 – 390685813, and via the LOEWE professorship in Algebra via Timo Richarz.

\section{Motives}\label{sect-motives}

In this paper, we will use the theory of étale motives.
More specifically, we will consider the model defined via the h-topology, developed in \cite{CisinskiDeglise:Etale}.
We start by recalling and introducing the relevant notions.

\subsection{Motives on perfect schemes}

In \cite[§5.1]{CisinskiDeglise:Etale}, the authors construct a category of motives, by using the h-topology.
Although loc.~cit.~uses triangulated categories, it will be helpful for us to use \(\infty\)-categories instead, and we refer to \cite{Robalo:Theorie, Khan:Motivic, Preis:Motivic} for how to transfer the results of \cite{CisinskiDeglise:Etale} to the world of \(\infty\)-categories.
Thus, for a coefficient ring \(\Lambda\), we have a contravariant functor
\[\DM(-,\Lambda):=\DM_h(-,\Lambda)\colon (\Sch_k^{\ft})^{\op}\to \Pr_\Lambda^{\St}\colon \left(f\colon X\to Y\right)\mapsto \left(f^!\colon \DM(Y,\Lambda)\to \DM(X,\Lambda)\right)\]
from the category of finite type \(k\)-schemes to the \(\infty\)-category of stably presentably \(\Lambda\)-linear \(\infty\)-categories; where functors are automatically assumed to preserve colimits (we explain below why this is the case for \(f^!\) in our setting).

The motivic theory \(\DM\) satisfies the full six-functor formalism by \cite[Theorem 5.6.2]{CisinskiDeglise:Etale}.
In other words, for any morphism \(f\colon X\to Y\) of finite type \(k\)-schemes, we have functors \(f^!,f^*\colon \DM(Y,\Lambda)\to \DM(X,\Lambda)\), functors \(f_!,f_*\colon \DM(X,\Lambda)\to \DM(Y,\Lambda)\), and \(\DM(X,\Lambda)\) is closed symmetric monoidal. 
We will denote the monoidal unit by \(\unit\in \DM(X,\Lambda)\) (or by \(\unit_X\) to emphasize the scheme \(X\), or even \(\Lambda\) to emphasize the coefficients), and the internal Hom by \(\underline{\operatorname{Hom}}\) or \(\underline{\operatorname{Hom}}_X\).
These functors satisfy various adjunctions, compatibilities and other properties, such as base change, the projection formula, homotopy invariance, stability and localization; we refer to \cite[Theorem 2.4.50]{CisinskiDeglise:Triangulated} and \cite[Synopsis 2.1.1]{RicharzScholbach:Intersection} for a more detailed list. (We also note that by \cite[Proposition 2.1.14]{RicharzScholbach:Intersection}, we can drop the usual separatedness hypothesis for \(f^!\) and \(f_!\) to exist.)
Moreover, by our hypothesis on \(k\), the residue fields of any finite type \(k\)-scheme \(X\) have uniformly bounded étale cohomological dimension, so that \(\DM(X,\Lambda)\) is compactly generated by constructible objects by \cite[Theorem 5.2.4]{CisinskiDeglise:Etale}.
Hence, as \(f^*\) and \(f_!\) preserve compact objects for any map \(f\colon X\to Y\) of finite type \(k\)-schemes by \cite[Proposition 4.2.4 and Corollary 4.2.12]{CisinskiDeglise:Triangulated}, the four functors \(f^*,f_*,f^!,f_!\) preserve colimits.
Finally, there is the Tate twist \(\unit(1)\in \DM(X,\Lambda)\) as in \cite[§3.2.1]{CisinskiDeglise:Etale}, for which the functor
\[\DM(X,\Lambda)\to \DM(X,\Lambda)\colon M\mapsto M(1):= M\otimes \unit(1)\]
is an equivalence.
Hence, we can define \(\unit(m) := \unit(1)^{\otimes m}\) for any \(m\in \IZ\).

Since the mixed characteristic affine flag varieties we will define in Section \ref{Section--affine flag varieties} are only defined up to perfection, the functor \(\DM\) above is not yet suitable for our purposes; note that perfect schemes are rarely of finite type over \(k\).
Instead, following \cite{RicharzScholbach:Witt}, we will modify \(\DM\) to a functor out of the category of perfect schemes.

\begin{dfn}\thlabel{defi-pfp}
	Let \(f\colon X\to Y\) be a morphism of qcqs perfect \(k\)-schemes.
	We say \(f\) is \emph{perfectly of finite presentation}, or \emph{pfp}, if Zariski locally \(f\) looks like the perfection of a morphism of finite presentation.
	A qcqs \(k\)-perfect scheme \(X\) is said to be pfp if the structure morphism \(X\to \Spec k\) is pfp; note that any morphism between pfp schemes is itself pfp.
	We denote by \(\Sch_k^{\pfp}\) the full subcategory of \(k\)-schemes spanned by the qcqs pfp schemes.
\end{dfn}

Recall that for any \(k\)-scheme \(X\), we can functorially define its (limit) perfection \(X^{\perf}\) as in \cite[A.1.2]{Zhu:Affine}, by taking the limit along the Frobenius of \(X\).
Assume from now on that \(p\) is invertible in \(\Lambda\); by \cite[Corollary A.3.3]{CisinskiDeglise:Etale} this is not a restriction.
The following generalizes \cite[Theorem 2.10]{RicharzScholbach:Witt} to more general coefficients rings.

\begin{prop}
	The functor \(\DM(-,\Lambda)\colon (\Sch_k^{\ft})^{\op}\to \Pr_\Lambda^{\St}\) factors uniquely through the perfection functor.
	This yields a motivic theory
	\[\DM(-,\Lambda)\colon (\Sch_k^{\pfp})^{\op}\to \Pr_\Lambda^{\St},\]
	satisfying a six-functor formalism.
\end{prop}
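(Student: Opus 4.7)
The plan is to follow the strategy of \cite[Theorem 2.10]{RicharzScholbach:Witt}, which handles the case of \(\IQ\)-coefficients, since the essential ingredient is purely that étale motives are invariant under universal homeomorphisms whenever \(p\) is invertible in the coefficient ring. Thus the extension from \(\IQ\) to \(\IZ[\frac{1}{p}]\) (and consequently to \(\IF_\ell\) with \(\ell\neq p\)) should be formal.

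The first step is to establish the key technical input: for any universal homeomorphism \(f\colon Y\to X\) in \(\Sch_k^{\ft}\), the pullback \(f^*\colon \DM(X,\Lambda)\to \DM(Y,\Lambda)\) is an equivalence. This extracts from \cite{CisinskiDeglise:Etale} by combining topological invariance of the small étale site with \(\IA^1\)-invariance; the assumption that \(p\) is invertible in \(\Lambda\) is used precisely to guarantee topological invariance with \(\Lambda\)-coefficients. As a consequence, the absolute Frobenius \(\phi_X\colon X\to X\) becomes an autoequivalence on \(\DM(X,\Lambda)\) for every \(X\in \Sch_k^{\ft}\).

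Granting this, I would define \(\DM(X,\Lambda)\) for a qcqs pfp scheme \(X\) by choosing any finite type model \(X_0\) with \(X_0^{\perf}\cong X\) and setting \(\DM(X,\Lambda):=\DM(X_0,\Lambda)\). Independence of the chosen model is the core verification: two finite type models of the same perfection are connected by a zigzag of morphisms that become isomorphisms after composing with sufficiently large powers of Frobenius, and the previous step turns these into canonical equivalences between the corresponding motivic categories. For a pfp morphism \(f\colon X\to Y\) of pfp schemes, the definition of pfp guarantees that we can lift \(f\) to a morphism \(f_0\colon X_0\to Y_0\) between finite type models, and we then transport all six functors for \(f\) from those for \(f_0\), the result being independent of the lift by the same Frobenius-invariance argument.

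Finally, the full six-functor formalism on \(\Sch_k^{\pfp}\) transports from \(\Sch_k^{\ft}\), since base change, the projection formula, homotopy invariance, and localization can all be verified on finite type models. The main technical obstacle is handling the higher coherence inherent in \(\infty\)-categorical functoriality: one must check that the various choices of lifts assemble into a well-defined functor of \(\infty\)-categories. The cleanest path is to view \(\Sch_k^{\pfp}\) (up to an appropriate completion) as the localization of \(\Sch_k^{\ft}\) at the class of Frobenius morphisms, so that both the factorization and its uniqueness become a direct consequence of the universal property of \(\infty\)-categorical localization, once one knows that \(\DM(-,\Lambda)\) sends Frobenii to equivalences.
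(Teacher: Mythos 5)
Your final approach---realizing \(\Sch_k^{\pfp}\) as a localization of \(\Sch_k^{\ft}\) and observing that \(\DM(-,\Lambda)\) inverts the relevant morphisms, so the factorization and its uniqueness follow from the universal property---is precisely how the paper proceeds, citing \cite[Lemma 2.9]{RicharzScholbach:Witt} for the localization statement (at universal homeomorphisms rather than Frobenii, which is the more convenient class) and \cite[Corollary 2.1.5]{ElmantoKhan:Perfection} for the fact that these become equivalences. Your opening sketch of an explicit model-by-model construction with independence checks is exactly the higher-coherence trap that the localization framing is designed to sidestep, as you correctly recognize at the end.
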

\begin{proof}
	By \cite[Lemma 2.9]{RicharzScholbach:Witt}, the category \(\Sch_k^{\pfp}\) is equivalent to the localization of \(\Sch_k^{\ft}\) at the class of universal homeomorphisms.
	To get the desired factorization it hence suffices to show that universal homeomorphisms induce equivalences on \(\DM(-,\Lambda)\), which is shown in \cite[Corollary 2.1.5]{ElmantoKhan:Perfection}.
	The fact that \(\DM(-,\Lambda)\) on pfp schemes satisfies a six-functor formalism then follows from the similar fact on finite type schemes, and we refer to the proof of \cite[Theorem 2.10]{RicharzScholbach:Witt} for details.
\end{proof}

From now on, we will mostly consider the case \(\Lambda=\IZ[\frac{1}{p}]\), and write \(\DM(-)= \DM(-,\IZ[\frac{1}{p}])\).

\begin{rmk}\thlabel{Etale realization}
	Let \(\ell\neq p\) be a prime, and \(X\in \Sch_k^{\ft}\) a scheme.
	In \cite[§7.2]{CisinskiDeglise:Etale}, Cisinski--Déglise construct an étale (with \(\IZ_\ell\)-coefficients) realization functor out of \(\DM(X)\).
	A reformulation of this realization in terms of the pro-étale site from \cite{BhattScholze:Proetale} was given in \cite[§1.3]{Ruimy:IntegralI}, and this is the version that we will use.
	Indeed, topological invariance of the pro-étale site \cite[Lemma 5.4.2]{BhattScholze:Proetale} implies that the étale realization factors through a natural transformation \(\rho_{\ell}\colon \DM(-)\to D(-,\IZ_\ell)\) of functors out of \(\Sch_k^{\pfp}\), where \(D(X,\IZ_\ell)\) is the unbounded derived category of \(\IZ_\ell\)-sheaves on the pro-étale site of \(X\).
	This realization functor is compatible with the six functor formalism.
\end{rmk}

\subsection{Motives on perfect prestacks}

In order to define equivariant motives on affine flag varieties, which are only ind-schemes, we need to further extend \(\DM\), as in \cite[§2]{RicharzScholbach:Intersection}.

Let \(\PreStk_k:=\Fun\left((\Sch_k^{\aff})^{\op},\Ani\right)\) be the category of \(k\)-prestacks, i.e., presheaves on affine (but not necessarily of finite type) \(k\)-schemes with values in the \(\infty\)-category of anima.
This category contains the categories of schemes, ind-schemes, or algebraic stacks fully faithfully.
By using Kan extensions as in \cite[Definition 2.2.1]{RicharzScholbach:Intersection}, we can extend \(\DM\) to a functor
\[\DM\colon \PreStk_k^{\op}\to \Pr_{\IZ[\frac{1}{p}]}^{\St}\colon \left(f\colon X\to Y\right) \mapsto \left(f^!\colon \DM(Y)\to \DM(X)\right).\]
We note that in order to consider Kan extensions as in \cite{RicharzScholbach:Intersection}, we have to fix a regular cardinal \(\kappa\), and only consider affine schemes obtained as \(\kappa\)-filtered limits of affine schemes that are of finite type over \(\Spec k\).
However, the choice of \(\kappa\) has no influence on \(\DM\), as long as \(\kappa\) is large enough so that we can consider all affine schemes of interest.
Hence, we fix once and for all such a large enough \(\kappa\), and silently forget about it.

Although for these general prestacks, \(\DM\) does not satisfy the full six-functor formalism, and the functors \(f^*\), \(f_*\), and \(f_!\) do not exist in general, they do exist in certain situations, cf.~\cite[§2.3-2.4]{RicharzScholbach:Intersection}.
For example, if \(f\) is a morphism of ind-schemes, then \(f_!\) and \(f_*\) do exist, and \(f_!\) is left adjoint to \(f^!\). 
If \(f\) is also schematic, then \(f^*\) also exists and is left adjoint to \(f_*\).
Moreover, if \(f\) is a morphism of ind-Artin stacks that are ind-(locally of finite type) then \(f_!\) exists and is left adjoint to \(f^!\) \cite[Proposition 2.3.3]{RicharzScholbach:Intersection}.
Finally, assume \(G\) is a pro-smooth pro-algebraic group acting on ind-schemes \(X\) and \(Y\). 
Then for an equivariant map \(f\colon X\to Y\) giving rise to \(\overline{f}\colon G\backslash X\to G\backslash Y\), we can descend \(f_!\) and \(f_*\) (and possibly \(f^*\)) to functors \(\overline{f}_!\) and \(\overline{f}_*\) (and possibly \(\overline{f}^*\)), cf.~\cite[Lemma 2.2.9]{RicharzScholbach:Intersection}.

Now, as in the previous section, we will only want to define certain prestacks up to perfection.
This leads to the notion of perfect prestacks, as in \cite[§2.2]{RicharzScholbach:Witt}.

\begin{dfn}
	Let \(\AffSch_k^{\perf}\) denote the category of perfect affine \(k\)-schemes, not necessarily pfp.
	Then the category of \emph{perfect prestacks} is the functor category \(\PreStk_k^{\perf}:=\Fun\left( (\AffSch_k^{\perf})^{\op},\Ani\right)\).
\end{dfn}

There is a natural restriction functor \(\res\colon \PreStk_k \to \PreStk_k^{\perf}\), preserving all limits and colimits.
Moreover, by \cite[Lemma 2.2]{RicharzScholbach:Witt}, \(\res\) admits a fully faithful left adjoint \(\incl\colon \PreStk_k^{\perf} \to \PreStk_k\).
Note that the endofunctor \(\sigma\) on \(\AffSch_k\) given by Frobenius induces an endofunctor \(\sigma\) of \(\PreStk_k\), which is an equivalence when restricted to objects in the essential image of \(\incl\).

We can then define the \emph{colimit perfection} of a prestack \(X\) as \(X^{\perf}:= \incl \circ \res (X)\).
On the other hand, there is the \emph{limit perfection} \(\varprojlim_\sigma X := \varprojlim(\ldots \xrightarrow{\sigma} X \xrightarrow{\sigma} X \xrightarrow{\sigma} X)\).
These two notions of perfections do not agree in general, although they do agree for affine schemes, and hence Zariski locally for arbitrary schemes.
However, by \cite[Corollary 2.6]{RicharzScholbach:Witt}, there are canonical equivalences \(\DM(X)\cong \DM(\varprojlim_\sigma X) \cong \DM(X^{\perf})\) for any prestack \(X\).
In particular, as in the previous section, we can view \(\DM\) as a functor of perfect prestacks without ambiguity, compatibly with the extension of \(\DM\) to perfect schemes, and satisfying similar properties as \(\DM\) on all prestacks.

\subsection{Mixed Tate motives}

Later on, to construct a motivic Satake equivalence, we will need to consider ``perverse" motives, arising as the heart of a t-structure.
As the existence of the motivic t-structure is part of the standard conjectures on algebraic cycles, we need to restrict our categories of motives, following \cite{SoergelWendt:Perverse, RicharzScholbach:Intersection}.

Recall from \cite[Definition 3.2]{RicharzScholbach:Witt} that a \emph{perfect cell} is a \(k\)-scheme isomorphic to the perfection of \(\IA^n_k\times \IG_{m,k}^r\), and a pfp scheme is \emph{perfectly cellular} if it admits a smooth model and a stratification into perfect cells.
Moreover, a \emph{perfectly cellular stratified} or \emph{pcs} ind-scheme is a stratified ind-(pfp scheme) \(X=\bigsqcup_{w\in W} X_w\) for which each stratum \(X_w\) is a perfectly cellular \(k\)-scheme.

\begin{dfn}
	\begin{enumerate}
		\item For a perfectly cellular \(X\in \Sch_k^{\pfp}\), the category \(\DTM(X)\subseteq \DM(X)\) of \emph{Tate motives} on \(X\) is the full stable presentable subcategory generated under colimits and extensions by \(\unit(m)\), for \(m\in \IZ\).
		\item Let \(X^\dagger = X_{w\in W}\xrightarrow{\bigsqcup \iota_w} X\) be a pcs ind-scheme. 
		We say this stratification is \emph{Whitney--Tate} if for any \(v,w\), the motive \(\iota_v^!\iota_{w,!}\unit\in \DM(X_v)\) is Tate.
		As in \cite[3.1.11]{RicharzScholbach:Intersection}, this is equivalent to requiring each \(\iota_v^*\iota_{w,*}\unit\) to be Tate.
		\item For a Whitney--Tate pcs ind-scheme \(X\) as above, we define the category \(\DTM(X,X^\dagger)\subseteq \DM(X)\) of \emph{stratified Tate motives} as those motives \(M\in \DM(X)\) for which each \(\iota_w^!(M)\) is Tate.
		If \(M\) is bounded, i.e., it is supported on some subscheme of \(X\), then this is equivalent to \(\iota_w^*(M) \in \DTM(X_w)\) for all \(w\in W\) \cite[3.1.11]{RicharzScholbach:Intersection}.
	\end{enumerate}
\end{dfn}

If the stratification is clear from the context, we will also write \(\DTM(X):=\DTM(X,X^\dagger)\).

\begin{lem}\thlabel{BS-vanishing}
	Let \(X/k\) be a perfectly cellular scheme. Then we have \(\Hom_{\DM(X)}(\unit,\unit(n)[m]) = 0\) if either \(m<0\), or \(m=0\) and \(n\neq 0\).
\end{lem}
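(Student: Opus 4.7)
The plan is to reduce to the computation of motivic cohomology of the base $\Spec k$, where Beilinson--Soulé vanishing with $\IZ[\frac{1}{p}]$-coefficients will close the argument. Since $\DM$ is invariant under perfection, I first replace $X$ by its chosen smooth model, still denoted $X$; it comes with a stratification into cells $X_w\cong \IA_k^{n_w}\times \IG_{m,k}^{r_w}$. I will handle a single cell directly, and then propagate across the stratification by induction using localization and purity.

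For a single cell $Y=\IA_k^a\times \IG_{m,k}^r$ with structure map $p\colon Y\to\Spec k$, the adjunction $\unit_Y = p^*\unit_k$ together with invertibility of Tate twists gives $\Hom_{\DM(Y)}(\unit,\unit(n)[m])=\Hom_{\DM(\Spec k)}(\unit,(p_*\unit_Y)(n)[m])$. Homotopy invariance absorbs the $\IA^a$ factor, and iterating the standard decomposition $p_*\unit_{\IG_m}\cong \unit\oplus \unit(-1)[-1]$ for a single $\IG_m$ (deduced from the localization sequence for $\{0\}\hookrightarrow \IA^1$ and purity $i^!\unit\cong \unit(-1)[-2]$) yields $p_*\unit_Y\cong \bigoplus_{s=0}^{r}\unit(-s)[-s]^{\oplus\binom{r}{s}}$. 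Hence $\Hom_{\DM(Y)}(\unit,\unit(n)[m])$ decomposes as a finite direct sum of groups of the form $\Hom_{\DM(\Spec k)}(\unit,\unit(n-s)[m-s])$ with $s\geq 0$. Since $k$ is a finite field or its algebraic closure, Beilinson--Soulé vanishing (known for such fields via Quillen's $K$-theory computation and Bloch's higher Chow groups) gives $\Hom_{\DM(\Spec k)}(\unit,\unit(n')[m'])=0$ whenever $m'<0$, or $m'=0$ with $n'\neq 0$. In either case of the lemma, every summand satisfies one of these vanishing conditions, establishing the result for a single cell.

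To propagate from cells to general $X$, I induct on the number of strata. Picking a closed cell $\iota\colon X_0\hookrightarrow X$ of codimension $c$ with open complement $\jmath\colon U\hookrightarrow X$, the localization triangle $\jmath_!\unit_U\to \unit_X\to \iota_{*}\unit_{X_0}$, combined with the purity isomorphism $\iota^!\unit(n)\cong \unit(n-c)[-2c]$ (available since both $X$ and $X_0$ are smooth), yields a long exact sequence relating $\Hom_{\DM(X)}(\unit,\unit(n)[m])$ to $\Hom_{\DM(U)}(\unit,\unit(n)[m])$ and $\Hom_{\DM(X_0)}(\unit,\unit(n-c)[m-2c])$. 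In the target ranges of the lemma, the shifts on the $X_0$-terms satisfy $m-2c<0$, so those terms vanish by the single-cell case, while the $U$-terms vanish by the inductive hypothesis. The main potential obstacle is the Beilinson--Soulé vanishing on $\Spec k$, but this is fortunately established for the fields in question, so the argument goes through.
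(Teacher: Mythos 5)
Your proof follows the same global strategy as the paper: reduce to $\Spec k$ via the cell structure, then invoke the vanishing of motivic cohomology of a finite field or its algebraic closure. The explicit decomposition $p_*\unit_Y\cong\bigoplus_{s=0}^{r}\unit(-s)[-s]^{\oplus\binom{r}{s}}$ is a clean way to handle a single cell, and your inductive propagation via localization and absolute purity is essentially the same reduction the paper carries out by reference to Soergel--Wendt (and both are fine). A small gap: the inequality $m-2c<0$ fails when $c=0$ and $m=0$; you should either reduce to connected $X$ with at least two strata so that the minimal stratum has $c>0$, or note that for $c=0$ one lands in the already-covered case $m=0$, $n-c=n\neq 0$.

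The real place where your write-up glosses over a genuine issue is the appeal to Beilinson--Soul\'e. The inputs you cite --- Quillen's theorem that $K_*(\IF_q)$ is torsion, and Bloch's higher Chow groups --- give the \emph{rational} vanishing and the vanishing for \emph{Nisnevich} motivic cohomology. But $\DM$ here is the h/\'etale model with $\IZ[\frac{1}{p}]$-coefficients, and \'etale motivic cohomology does \emph{not} coincide with Nisnevich motivic cohomology integrally. One cannot simply quote those sources. The paper closes this gap by running the long exact sequence coming from $\unit\to\unit\otimes\IQ\to\unit\otimes\IQ/\IZ[\frac{1}{p}]$: with torsion coefficients $\IZ/a$ (for $a$ prime to $p$) the relevant groups are Galois cohomology $\mathrm H^m_{\et}(\Spec k,\mu_a^{\otimes n})$, which vanish for $m<0$, but emphatically \emph{not} in general for $m=0$, $n\neq 0$ (e.g.\ whenever $\gcd(q^n-1,a)>1$). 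The torsion vanishing in negative degrees forces $\Hom_{\DTM(\Spec k)}(\unit,\unit(n)[m])\to\Hom_{\DTM(\Spec k,\IQ)}(\IQ,\IQ(n)[m])$ to be an isomorphism for $m<0$ and injective for $m=0$, and only at that point does Quillen finish the job rationally. Without this rational-versus-torsion bootstrap, you cannot rule out that a nontrivial class in $\mathrm H^0_{\et}(\Spec k,\mu_a^{\otimes n})$ lifts to the integral \'etale group for some $n\neq 0$; so as written, the citation step is a real gap, not just an omitted reference.
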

\begin{proof}
	First, assume \(X=\Spec k\).
	Rationally, the desired vanishing holds by Quillen's computation that the K-theory of finite fields is torsion \cite{Quillen:Cohomology}, and the relation between algebraic K-theory with rational motivic cohomology (compare \cite[Remark 3.10]{SoergelWendt:Perverse}).
	For torsion coefficients, let \(a\geq 1\). 
	Then we have \(\Hom_{\DTM(\Spec k,\IZ/a)}(\IZ/a,\IZ/a(n)[m]) = \mathrm{H}^m_{\et}(\Spec k,\mu_a^{\otimes n}) = 0\) for \(m<0\).
	But this implies that \(\Hom_{\DTM(\Spec k)}(\unit,\unit(n)[m])\to \Hom_{\DTM(\Spec k,\IQ)}(\IQ,\IQ(n)[m])\) is an isomorphism for \(m<0\), and injective for \(m=0\). 
	Hence, the lemma follows from the rational case.
	
	The claim for general \(X\) then follows as in \cite[Proposition 3.9]{SoergelWendt:Perverse}, using that \(\DTM(X)\) is generated by \(\unit(n)[m]\) under colimits and extension.
\end{proof}

\begin{prop}\thlabel{defi-t-structure}
	\begin{enumerate}
		\item If \(X\) is a perfectly cellular \(k\)-scheme, the category \(\DTM(X)\) of (unstratified) Tate motives on \(X\) admits a t-structure for which \(\DTM^{\leq 0}(X)\) is generated under colimits and extensions by the shifted Tate twists \(\unit(n)[\dim X]\) for \(n\in \IZ\).
		This t-structure is non-degenerate, and its heart is compactly generated by \(\{\unit(n)[\dim X]\mid n\in \IZ\}\).
		\item Let \(X^\dagger=\coprod_{w\in W} X_w\xrightarrow{\iota} X\) be a Whitney--Tate pcs ind-scheme.
		Then we can define a non-degenerate t-structure on the category \(\DTM(X)=\DTM(X,X^\dagger)\) of stratified Tate motives by 
		\[\DTM^{\leq 0} := \{M\in \DTM(X)\mid \iota_w^*\in \DTM^{\leq 0}(X_w) \text{ for all } w\in W\}\]
		\[\DTM^{\geq 0} := \{M\in \DTM(X)\mid \iota_w^!\in \DTM^{\geq 0}(X_w) \text{ for all } w\in W\}.\]
	\end{enumerate}
	Both t-structures are compactly generated, and their connective and coconnective parts are closed under filtered colimits.
	We denote the heart of these t-structures by \(\MTM(X) = \DTM^{\heartsuit}(X)\), and call them the category of \emph{(stratified) mixed Tate motives}.
\end{prop}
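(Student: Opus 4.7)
The plan is to establish part (1) by applying the standard t-structure existence theorem for presentable stable \(\infty\)-categories to the set of generators \(S := \{\unit(n)[\dim X] \mid n \in \IZ\}\), and then to assemble the stratified version in part (2) via recollement along \(X^\dagger\). The Beilinson–Soulé type vanishing of \thref{BS-vanishing} is the essential input at both stages.

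For part (1), every Tate twist \(\unit(n)\) is compact and dualizable in \(\DM(X)\), so \(S\) is a small set of compact objects of \(\DTM(X)\). The standard machinery (Lurie, \emph{Higher Algebra} 1.4.4.11) then produces a t-structure on \(\DTM(X)\) whose connective part \(\DTM^{\leq 0}(X)\) is the smallest full subcategory containing \(S\) and closed under colimits and extensions. To verify \(S \subseteq \DTM^{\heartsuit}(X)\) the essential computation is that for \(s = \unit(a)[\dim X]\) and \(s' = \unit(b)[\dim X]\),
\[\Hom_{\DM(X)}(s', s[r]) \;=\; \Hom_{\DM(X)}(\unit,\unit(a-b)[r]),\]
which vanishes for all \(r < 0\) by \thref{BS-vanishing}. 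This places each \(s \in S\) in \(\DTM^{\geq 0}(X)\), hence in the heart. Compact generation of the heart by \(S\) is then a formal consequence, since the cohomological truncation of a compact generator of \(\DTM(X)\) already lying in the heart remains a compact generator of the heart. Non-degeneracy and closure of both halves under filtered colimits are standard properties of compactly generated t-structures on compactly generated stable \(\infty\)-categories, and the compact generators being assembled from shifts of \(S\) makes the cardinality arguments go through.

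For part (2), I would take \(\DTM^{\leq 0}(X, X^\dagger)\) and \(\DTM^{\geq 0}(X, X^\dagger)\) to be the categories defined by the displayed recollement formulas. The Whitney–Tate hypothesis is exactly what guarantees that \(\iota_w^\ast\) and \(\iota_w^!\) send stratified Tate motives to Tate motives on each stratum, so the two definitions are meaningful. For a finite stratification, that these formulas assemble into a t-structure is the standard Beilinson–Bernstein–Deligne gluing lemma, applied with the stratumwise t-structures of part (1); the general ind-case follows by writing \(X\) as a filtered colimit of bounded closed sub-ind-schemes and observing that both halves commute with such colimits. Non-degeneracy, compact generation and closure under filtered colimits all descend from the stratumwise statements, using the compactness of the shriek-extensions \(\iota_{w,!}\unit(n)[\dim X_w]\).

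The main obstacle I foresee is the heart identification in part (1): once \thref{BS-vanishing} supplies the orthogonality needed to place the generators in the heart and to deduce compact generation, the remaining pieces — Lurie's machinery producing the t-structure, and BBD gluing producing the stratified version — are formal. A secondary subtlety is ensuring that the recollement in part (2) interacts correctly with the filtered colimits describing \(X\) as an ind-scheme, but this reduces to the well-known compatibility of t-structures with colimits along closed immersions, together with the fact that the Whitney-Tate condition is preserved when passing to closed sub-ind-schemes.
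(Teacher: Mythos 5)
Your proposal follows essentially the same route as the paper's: Lurie's generation theorem for (1), \thref{BS-vanishing} to place the generators in the heart, and BBD-style recollement for (2). The one place you are slightly too casual is the claim that compact generation of the heart is "a formal consequence" of the generators lying in the heart — that containment alone does not instantly give generation of the heart (one must still rule out a nonzero heart object with no maps from any $\unit(n)[\dim X]$), and the paper delegates this step to a specific lemma (\cite[Lemma 2.13]{CassvdHScholbach:Geometric}); but this is a minor shortcut in exposition rather than a wrong turn.
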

\begin{proof}
	(1) The t-structure exists by \cite[Proposition 1.4.4.11]{Lurie:HigherAlgebra}.
	Since \(\DTM^{\leq 0}(X)\) is generated by the compact Tate twists, the t-structure is compactly generated, and \(\DTM^{\geq 0}(X)\) is closed under filtered colimits.
	By \thref{BS-vanishing}, the shifted Tate twists lie in the heart of the t-structure.
	The fact that these compactly generate the heart can be shown as in \cite[Lemma 2.13]{CassvdHScholbach:Geometric}.
	Since the Tate twists (and its shifts) moreover generate the whole category \(\DTM(X)\), this t-structure is non-degenerate.
	
	(2) The existence of the t-structure follows from (1) and recollement \cite[Theorem 1.4.10]{BBD:Faisceaux}, the necessary axioms for which can be verified as in \cite[Theorem 10.3]{SoergelWendt:Perverse}.
	Since pullback along a stratification is conservative, the other properties also follow from similar properties for (1).
\end{proof}

The following result gives us some control on the heart of the above t-structure, and follows immediately from \thref{BS-vanishing}.
We denote by \(\grZMod\) the category of \(\IZ\)-graded \(\IZ[\frac{1}{p}]\)-modules.

\begin{cor}\thlabel{graded to motives}
	There is a natural faithful symmetric monoidal functor \(\grZMod \to \MTM(\Spec k)\), where the grading corresponds to the Tate twist in \(\MTM(\Spec k)\).
	When restricted to ind-free modules, this functor is moreover fully faithful.
\end{cor}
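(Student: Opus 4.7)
The plan is to bootstrap from \thref{BS-vanishing} via the assignment $\mathbb{Z}[\tfrac{1}{p}](n)\mapsto\unit(n)$ on free graded modules, extend cocontinuously using that $\MTM(\Spec k)$ is a Grothendieck abelian category (by \thref{defi-t-structure}), and verify faithfulness and full faithfulness on ind-free modules separately.

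On the full subcategory of finitely generated free graded modules, \thref{BS-vanishing} gives $\Hom_{\MTM(\Spec k)}(\unit(n),\unit(m))=\delta_{n,m}\mathbb{Z}[\tfrac{1}{p}]$, matching the graded Hom. Hence the $\mathbb{Z}[\tfrac{1}{p}]$-linear extension of $\mathbb{Z}[\tfrac{1}{p}](n)\mapsto\unit(n)$ is fully faithful here, and symmetric monoidal via $\unit(n)\otimes\unit(m)\cong\unit(n+m)$. This extends uniquely to a $\mathbb{Z}[\tfrac{1}{p}]$-linear cocontinuous symmetric monoidal functor $F\colon\grZMod\to\MTM(\Spec k)$ given by $F(M)=\bigoplus_n M_n\otimes_{\mathbb{Z}[\tfrac{1}{p}]}\unit(n)$, where for a general $\mathbb{Z}[\tfrac{1}{p}]$-module $A$ one sets $A\otimes\unit:=\coker(P_1\otimes\unit\to P_0\otimes\unit)$ for any free presentation $P_1\to P_0\twoheadrightarrow A$.

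The functor $F$ admits a right adjoint $G$ with $G(X)_n=\Hom_{\MTM(\Spec k)}(\unit(n),X)$, so faithfulness of $F$ is equivalent to injectivity of the unit $\eta_M\colon M\to GF(M)$. This reduces to showing $A\to\Hom_{\MTM(\Spec k)}(\unit,A\otimes\unit)$ is injective for every $\mathbb{Z}[\tfrac{1}{p}]$-module $A$. Using exactness of $-\otimes\unit$, which follows from $\mathbb{Z}[\tfrac{1}{p}]$ being a PID together with the injectivity of $\unit\xrightarrow{n}\unit$ in $\MTM(\Spec k)$ for nonzero $n$ (verified e.g.~via the conservative $\ell$-adic realization of \thref{Etale realization}, where $\unit$ maps to the torsion-free sheaf $\mathbb{Z}_\ell$), a cyclic submodule argument reduces the claim to $A=\mathbb{Z}[\tfrac{1}{p}]$ (trivial) and $A=\mathbb{Z}[\tfrac{1}{p}]/n$ with $n$ coprime to $p$. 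The latter follows from the long exact sequence of $0\to\unit\xrightarrow{n}\unit\to\unit/n\to 0$, which exhibits $\mathbb{Z}[\tfrac{1}{p}]/n\hookrightarrow\Hom_{\MTM(\Spec k)}(\unit,\unit/n)$.

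Finally, for full faithfulness on ind-free modules, write $M=\varinjlim_i M_i$ and $N=\varinjlim_j N_j$ as filtered colimits of finitely generated free graded submodules. Compactness of each $\unit(n)$ in $\MTM(\Spec k)$ (\thref{defi-t-structure}) yields
$$\Hom_{\MTM(\Spec k)}(F(M),F(N))\cong\varprojlim_i\varinjlim_j\Hom_{\grZMod}(M_i,N_j)\cong\Hom_{\grZMod}(M,N),$$
using full faithfulness on finitely generated free modules. The main technical obstacle is the faithfulness step for torsion modules, where verifying injectivity of $A\to\Hom_{\MTM(\Spec k)}(\unit,A\otimes\unit)$ requires the exactness of $-\otimes\unit$ at the abelian level—controlled by \thref{BS-vanishing} and the flatness properties of the monoidal unit.
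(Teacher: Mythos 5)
Your proof is correct, and it fills in genuine content that the paper waves away with ``follows immediately from \thref{BS-vanishing}.'' That lemma gives the computation $\Hom_{\MTM(\Spec k)}(\unit(n),\unit(m))=\delta_{n,m}\IZ[\frac{1}{p}]$, which indeed makes full faithfulness on finitely generated free (and then, via compactness of the $\unit(n)$'s, ind-free) graded modules immediate; that is almost certainly what the author had in mind. But faithfulness on \emph{all} of $\grZMod$, including torsion modules, is not a formal consequence of that Hom computation: one needs, as you observe, that $-\otimes\unit$ is exact, i.e.\ that multiplication by each nonzero $n\in\IZ[\frac{1}{p}]$ is a monomorphism on $\unit$ in $\MTM(\Spec k)$, and then the unit of the $(F,G)$-adjunction is injective by your reduction to cyclic modules and the long exact sequence of $0\to\unit\xrightarrow{n}\unit\to\unit/n\to0$. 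This is a real step, and your argument for it is sound.

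Two minor points of polish. First, the realization argument you invoke to verify injectivity of $n\colon\unit\to\unit$ should cite \thref{texactness of realization} (so that $\pH^{-1}$ commutes with $\rho_\ell$) together with \thref{conservativity of etale realization} (so that the joint vanishing of $\rho_\ell(\pH^{-1}(\unit/n))$ forces $\pH^{-1}(\unit/n)=0$), rather than \thref{Etale realization}, which only constructs the realization. Second, when you assert that $F$ is symmetric monoidal, keep in mind that the monoidal structure on $\MTM(\Spec k)$ is $\pH^0(-\otimes-)$; for torsion graded modules $A,B$ one then needs the small verification that $\pH^0(F(A)\otimes F(B))\cong F(A\otimes_{\IZ[\frac1p]}B)$, which works out precisely because $F(A)\otimes^L F(B)$ lives in degrees $[-1,0]$ and the degree $0$ part matches the underived tensor; this follows from the same exactness statement you already established.
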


\begin{rmk}\thlabel{conservativity of etale realization}
	One of the reasons we use étale motives instead of Nisnevich motives (which we can also define for pfp schemes, as long as \(p\) is invertible in \(\Lambda\)), is that the realization functors from \thref{Etale realization} are jointly conservative when restricted to Artin-Tate motives.
	Indeed, let \(X\) be a Whitney--Tate pcs scheme. 
	By \cite[Proposition 5.4.12]{CisinskiDeglise:Etale}, it suffices to show the étale realizations \(\DTM(X,\IF_\ell) \to D_{\et}(X,\IF_\ell)\) and \(\DTM(X,\IQ) \to D_{\et}(X,\IQ_\ell)\) are conservative for \(\ell\neq p\).
	The former holds since torsion étale motives agree with étale cohomology \cite[Corollary 5.5.4]{CisinskiDeglise:Etale}, while the latter is shown in \cite[Proposition 3.5]{RicharzScholbach:Witt}.
	Along with the following proposition, this will allow us to deduce t-exactness results by using these realization functors.
\end{rmk}

\begin{prop}\thlabel{texactness of realization}
	For any prime \(\ell\neq p\) and perfectly cellular scheme \(X\), restricting the étale realization functor to Tate motives gives a t-exact functor \(\rho_{\ell}\colon \DTM(X)\to D(X,\IZ_\ell)\), for the perverse t-structure on the target.
\end{prop}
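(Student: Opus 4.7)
The strategy is to exploit the explicit generation of the motivic t-structure by shifted Tate twists, together with the observation that $\rho_\ell(\unit(n)[\dim X]) = \IZ_\ell(n)[\dim X]$ lies in the perverse heart $\Perv(X,\IZ_\ell)$. This uses that a perfectly cellular $X$ admits a smooth $k$-model of dimension $\dim X$, on which the shifted Tate twist of the constant sheaf is perverse.

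For right t-exactness, I would use that $\rho_\ell$ preserves colimits (by its construction, cf.~\thref{Etale realization}) and extensions (as a functor of stable $\infty$-categories). By \thref{defi-t-structure}(1), $\DTM^{\leq 0}(X)$ is generated under colimits and extensions by the $\unit(n)[\dim X]$, which map into ${}^{\mathrm{p}}\! D^{\leq 0}(X,\IZ_\ell)$; since the latter is itself closed under colimits and extensions, this yields $\rho_\ell(\DTM^{\leq 0}(X)) \subseteq {}^{\mathrm{p}}\! D^{\leq 0}(X,\IZ_\ell)$.

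For left t-exactness, I would first check that $\rho_\ell(\MTM(X)) \subseteq \Perv(X,\IZ_\ell)$. Since the heart $\MTM(X)$ is compactly generated by the $\unit(n)[\dim X]$, whose images lie in the perverse heart, and since $\rho_\ell$ preserves filtered colimits (which remain in the perverse heart, a Grothendieck abelian subcategory in the pro-étale framework), this follows. For $M \in \DTM^{\geq 0}(X)$, a Postnikov argument expresses $M$ as a limit of finite iterated extensions of shifts $\mathrm{H}^i(M)[-i] \in \MTM(X)[-i]$ for $i \geq 0$; the image of each such finite extension lies in ${}^{\mathrm{p}}\! D^{\geq 0}(X,\IZ_\ell)$, and left-completeness of the perverse t-structure, combined with compatibility of $\rho_\ell$ with the relevant truncations, yields $\rho_\ell(M) \in {}^{\mathrm{p}}\! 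D^{\geq 0}(X,\IZ_\ell)$.

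The main obstacle is the left t-exactness: carefully handling the compatibility of $\rho_\ell$ with the Postnikov limits, and ensuring the perverse t-structure on $D(X,\IZ_\ell)$ has the necessary abelian-categorical properties (Grothendieck abelian heart, left-completeness) for integral coefficients — for which the pro-étale formulation of \thref{Etale realization} is essential. An alternative route via Verdier duality, which swaps the two halves of each t-structure and should be compatible with $\rho_\ell$, would let one deduce left t-exactness from right t-exactness, but requires equally careful treatment of duality for integral motives on perfectly cellular schemes.
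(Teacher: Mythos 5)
Your right t-exactness argument is correct and matches the paper: $\DTM^{\leq 0}(X)$ is generated under colimits and extensions by the $\unit(n)[\dim X]$, whose images under $\rho_\ell$ lie in the perverse coconnective part, which is itself closed under colimits and extensions.

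The left t-exactness argument has a genuine gap, and it is precisely the step you flag as the ``main obstacle''. Your first step claims $\rho_\ell(\MTM(X)) \subseteq \Perv(X,\IZ_\ell)$ because $\MTM(X)$ is compactly generated by the $\unit(n)[\dim X]$. But compact generation of the heart means arbitrary objects are built from the generators by filtered colimits \emph{together with kernels and cokernels taken inside the abelian category} $\MTM(X)$. Cokernels in the heart are $\pH^0$ of cofibers; since $\rho_\ell$ is only known to commute with truncation once t-exactness is established, there is no reason a priori that $\rho_\ell(\coker f) = \coker(\rho_\ell f)$ for a morphism $f$ in $\MTM(X)$, even when both $\rho_\ell(A)$ and $\rho_\ell(B)$ are perverse. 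Concretely, $\coker(f) = \tau^{\geq 0}\cofib(f)$, and applying $\rho_\ell$ does not commute with $\tau^{\geq 0}$ without the very t-exactness you are trying to prove, so the argument is circular at its core. The Postnikov step has its own issue ($\rho_\ell$ preserves colimits but not cofiltered limits in general), but it never becomes the bottleneck because the heart step already fails. The Verdier-duality alternative you mention is plausible on compact objects (where $\verdier$ restricts to an anti-equivalence of $\DTM(X)^{\comp}$ exchanging shifted Tate twists) but does not directly extend to the whole presentable category, and you do not carry it out.

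The paper takes a quite different route for left t-exactness: it uses the arithmetic fracture triangle $M \otimes \IQ/\IZ[\tfrac{1}{p}][-1] \to M \to M \otimes \IQ$. The rational case is imported from \cite[Lemma 3.2.8]{RicharzScholbach:Intersection}, while the torsion case is essentially tautological because torsion étale motives \emph{are} (products of) torsion pro-étale sheaves by \cite[Corollary 1.2.3.4]{Ruimy:Integral}, so the realization is a projection which is t-exact by construction of the t-structure on $\DTM_{\tors}(X)$. That the arithmetic fracture square reduces an integral t-exactness question to rational and torsion cases, each of which is structurally easier, is the key idea missing from your proposal.
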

\begin{proof}
	Let us denote the connective and coconnective parts of the perverse t-structure on \(D(X,\IZ_\ell)\) by \(D^{\leq 0}(X,\IZ_\ell)\) and \(D^{\geq 0}(X,\IZ_\ell)\).
	We will deduce the proposition from an analogous result for rational and torsion coefficients respectively, similarly to \cite[Proposition 4.2.5]{Ruimy:IntegralI}.
	Since \(\DTM^{\leq 0}(X)\) is generated by \(\unit(m)[\dim(X)]\), which get mapped to \(\IZ_\ell(m)[\dim(X)]\in D^{\leq 0}(X,\IZ_\ell)\), it is clear that \(\rho_{\ell}\) is right t-exact.
	
	For the left t-exactness, let \(M\in \DTM^{\geq 0}(X)\), so that \(\operatorname{Hom}(\unit(m)[\dim(X)],M)=0\) for \(m\in \IZ\).
	Then \(M\otimes \IQ \in \DTM^{\geq 0}(X,\IQ)\) by \cite[Corollary 5.4.11]{CisinskiDeglise:Etale}.
	Moreover, \(\rho_{\ell}(M\otimes \IQ)\in D^{\geq 0}(X,\IQ_\ell)\) by \cite[Lemma 3.2.8]{RicharzScholbach:Intersection}, hence also \(\rho_{\ell}(M\otimes \IQ)\in D^{\geq 0}(X,\IZ_\ell)\).
	
	On the torsion side, we have an exact triangle \(\IZ[\frac{1}{p}] \to \IQ \to \IQ/\IZ[\frac{1}{p}],\) and hence an exact triangle
	\(M\otimes \IQ/\IZ[\frac{1}{p}][-1] \to M \to M\otimes \IQ\).
	Since \(\DTM^{\ge 0}(X)\) is closed under limits, it contains \(M\otimes \IQ/\IZ[\frac{1}{p}][-1]\).
	Consider the full subcategory of torsion étale motives \(\DM_{\tors}(X)\subseteq \DM(X)\), consisting of those \(N\) such that \(N\otimes \IQ = 0\).
	Similarly, we can define the derived category of torsion (pro-)étale sheaves \(D_{\tors}(X,\IZ_{\ell'})\) for \(\ell'\neq p\) prime, the product of which is equivalent to \(\DM_{\tors}(X)\) by \cite[Corollary 2.1.4]{Ruimy:IntegralI}.
	In particular, \(\DM_{\tors}(X)\) is equipped with a unique t-structure, compatible with the perverse t-structure on each \(D_{\tors}(X,\IZ_{\ell'})\).
	This clearly restricts to a t-structure on \(\DTM_{\tors}(X)\), which agrees with the restriction of the t-structure on \(\DTM(X)\) from \thref{defi-t-structure}.
	
	Hence, \(\rho_{\ell}(M\otimes \IQ/\IZ[\frac{1}{p}][-1]) \in D_{\et,\tors}^{\geq 0}(X,\IZ_{\ell}) \subseteq D_{\et}^{\geq 0}(X,\IZ_\ell)\).
	Since \(\rho_{\ell}(M)\) is an extension of \(\rho_{\ell}(M\otimes \IQ/\IZ[\frac{1}{p}][-1])\) and \(\rho_{\ell}(M\otimes \IQ)\), both of which lie in \(D_{\et}^{\geq 0}(X,\IZ_\ell)\), we conclude that \(\rho_{\ell}(M)\in D_{\et}^{\geq 0}(X,\IZ_\ell)\), so that \(\rho_{\ell}\) is t-exact.
\end{proof}

In order to construct a motivic Satake equivalence, we also need to define equivariant mixed Tate motives.

\begin{dfn}
	Let \(X\) be a Whitney--Tate pcs ind-scheme, equipped with an action of a group prestack \(G\).
	Then the category of \emph{\(G\)-equivariant stratified Tate motives on \(X\)} is the full subcategory \(\DTM_G(X)\subseteq \DM(G\backslash X)\) of objects whose !-pullback to \(\DM(X)\) lies in \(\DTM(X)\).
	In other words, it is the pullback \(\DTM_G(X) = \DTM(X) \times_{\DM(X)} \DM(G\backslash X)\).
\end{dfn}

Note that \(\DTM_G(X)\) really depends on the \(G\)-action on \(X\), and not just on the prestack quotient \(G\backslash X\).
Recall that if \(G\) acts on a stratified ind-scheme \(X^\dagger \to X\), we say the \(G\)-action is \emph{stratified} if it restricts to a \(G\)-action on \(X^\dagger\).
Moreover, as in \cite[Definition A.1]{RicharzScholbach:Intersection}, an action of a pro-algebraic group \(G=\varprojlim_i G_i\) on an ind-scheme \(X\) is said to be \emph{admissible} if there exists a \(G\)-stable presentation of \(X\), for which the G-action factors through a finite type quotient at each step.

\begin{prop}\thlabel{Existence MTM}
	Let \(G=\varprojlim_i G_i\) be the perfection of a pro-algebraic group scheme, and assume each \(G_i\) is perfectly cellular and each \(\ker(G\to G_i)\) is the perfection of a split pro-unipotent group scheme.
	Let \(X^\dagger \to X = \varinjlim_i X_i\) be a Whitney--Tate pcs ind-scheme, equipped with a stratified admissible \(G\)-action.
	Then, the pullbacks
	\[\DTM_G^{\leq 0}(X) = \DTM^{\leq 0}(X) \times_{\DM(X)} \DM(G\backslash X)\]
	\[\DTM_G^{\geq 0}(X) = \DTM^{\geq 0}(X) \times_{\DM(X)} \DM(G\backslash X)\]
	define a non-degenerate t-structure on \(\DTM_G(X)\).
	Its heart is denoted by \(\MTM_G(X)\), and called the category of \emph{\(G\)-equivariant stratified mixed Tate motives} on \(X\).
\end{prop}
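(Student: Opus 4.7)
The plan is to transfer the t-structure on $\DTM(X)$ from \thref{defi-t-structure} to the equivariant category $\DTM_G(X)$ via !-pullback along the quotient map $p\colon X \to G\backslash X$, exploiting that $p$ is sufficiently pro-smooth for $p^!$ to be t-exact up to a fixed Tate shift. This essentially mirrors the strategy of Richarz--Scholbach \cite{RicharzScholbach:Intersection} in the setting of perfect prestacks.

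First, reduce to a finite level. Choosing a $G$-stable presentation $X = \varinjlim_i X_i$, the equivariant category is assembled from the $\DM(G\backslash X_i)$ along !-pullbacks; since the !-extensions between strata are t-exact for the perverse normalization of the t-structure of \thref{defi-t-structure}, combined with the Whitney--Tate hypothesis, it suffices to construct compatible t-structures on each $\DTM_G(X_i)$. By admissibility, the action on $X_i$ factors through some finite-type quotient $G_j$, and since $\ker(G\to G_j)$ is the perfection of a split pro-unipotent group, iterated homotopy invariance along a filtration with $\IG_a$-quotients shows that pullback along $G_j\backslash X_i \to G\backslash X_i$ is an equivalence on (stratified) Tate motives. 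Hence it suffices to treat the action of a perfectly cellular smooth algebraic group $G_j$ on the Whitney--Tate pcs scheme $X_i$.

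In this finite-type setting, the quotient map $p\colon X_i \to G_j\backslash X_i$ is a smooth torsor, so $p^!$ is conservative on $\DTM_{G_j}(X_i) = \DTM(X_i) \times_{\DM(X_i)} \DM(G_j\backslash X_i)$ and t-exact up to a fixed Tate shift depending only on $\dim G_j$. The candidate truncation triangles for $M\in \DTM_{G_j}(X_i)$ are then produced by applying the natural truncations on $\DM(G_j\backslash X_i)$ and verifying that they preserve $\DTM_{G_j}(X_i)$. Non-degeneracy, compact generation, and closure under filtered colimits then follow from the corresponding statements in \thref{defi-t-structure} via conservativity of $p^!$, while the stratum-wise description of $\DTM_G^{\leq 0}$ and $\DTM_G^{\geq 0}$ follows by unwinding the perverse recollement in the pcs setup.

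The main obstacle is verifying that truncations of equivariant motives remain Tate after !-pullback. Concretely, one must check that the truncation functors on $\DM(G_j\backslash X_i)$ send the compact generators of $\DTM_{G_j}(X_i)$ --- which pull back under $p^!$ to sums of shifted Tate twists on $X_i$ --- back into $\DTM_{G_j}(X_i)$. This reduces to showing that $\DTM(X_i) \subseteq \DM(X_i)$ is preserved by the truncation functors of the t-structure of \thref{defi-t-structure}, which holds because the heart is compactly generated by shifted Tate twists that already lie in the heart of the t-structure on $\DM(X_i)$ once the perverse normalization is accounted for. Compatibility across different indices $i,j$ and the eventual passage back to the ind-scheme setting are then formal given the Whitney--Tate assumption.
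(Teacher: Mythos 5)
The overall route you take — reduce to a single scheme $X_i$ and a perfectly cellular quotient $G_j$ via homotopy invariance for the pro-unipotent kernel, then transfer the t-structure from $\DTM(X_i)$ along the forgetful functor — is indeed the strategy of \cite[Proposition 3.2.15]{RicharzScholbach:Intersection}, which the paper simply cites. But there is a genuine gap at the crucial step.

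You repeatedly invoke ``the natural truncations on $\DM(G_j\backslash X_i)$'' and ``the heart of the t-structure on $\DM(X_i)$.'' There is no such thing: the motivic t-structure on all of $\DM$ is conjectural, and avoiding it is precisely the reason for restricting to Tate motives in the first place (see the paper's remarks preceding \thref{defi-t-structure}). The t-structure of \thref{defi-t-structure} lives \emph{only} on the full subcategory $\DTM(X_i)$. So one cannot produce the candidate truncation triangle for $M\in\DTM_{G_j}(X_i)$ by truncating inside $\DM(G_j\backslash X_i)$ and then checking it stays Tate. Likewise, the assertion that $p^!$ is ``t-exact up to a fixed Tate shift'' already presupposes a t-structure on $\DTM_{G_j}(X_i)$, which is what one is trying to construct; what is actually available at that point is only the t-structure on the non-equivariant $\DTM(X_i)$.

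The step you need and do not supply is the following. Write $\DM(G_j\backslash X_i)$ as a limit over $\Delta$ of $\DM(G_j^{\times\bullet}\times X_i)$ via the \v{C}ech nerve of $p$, so that an object of $\DTM_{G_j}(X_i)$ is an object $M_0\in\DTM(X_i)$ together with compatible identifications $\alpha^*M_0\cong\pr^*M_0$ on $G_j\times X_i$ and higher coherences. To truncate, one truncates $M_0$ in $\DTM(X_i)$ and must then show that the descent isomorphisms restrict to the truncations; this is exactly where perfect cellularity of $G_j$ enters, since it guarantees that $\alpha^*$ and $\pr^*$ (and the higher face maps) are t-exact up to the shift by $\dim G_j$ on the Tate subcategories of the terms of the \v{C}ech nerve (e.g.\ via \cite[Proposition 3.2.22]{RicharzScholbach:Intersection}). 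Only after that does the limit $\DTM_{G_j}(X_i)=\lim_\Delta\DTM(G_j^{\times\bullet}\times X_i)$ inherit a t-structure. You invoke cellularity in the reduction to $G_j$ but then leave it unused at the point where it actually does the work; replacing the appeal to a nonexistent t-structure on $\DM$ by this \v{C}ech-nerve argument closes the gap.
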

\begin{proof}
	This can be proven as \cite[Proposition 3.2.15]{RicharzScholbach:Intersection}, cf.~also \cite[Theorem 4.6]{RicharzScholbach:Witt}.
\end{proof}

\subsection{Artin-Tate motives}\label{sect-MATM}

For split reductive groups, motivic Satake equivalences have already been constructed in \cite{RicharzScholbach:Satake,RicharzScholbach:Witt,CassvdHScholbach:Geometric}, which all use categories of mixed Tate motives.
However, as mentioned in \cite[Remark 5.10]{RicharzScholbach:Witt}, this will not suffice when working with more general reductive groups.
In that case, we need to consider \emph{Artin-Tate} motives, which are those motives that become Tate after a suitable finite étale base change.
For a field extension \(k'/k\) and a (perfect) prestack \(X\), we will write \(X_{k'}:=X\times_{\Spec k} \Spec k'\).

\begin{dfn}\thlabel{defi Artin-Tate}
	Fix some field extension \(k'/k\), which is either finite (automatically étale) or an algebraic closure.
	For a perfect scheme \(X\) such that \(X_{k'}\) is perfectly cellular, the category of Artin-Tate motives on \(X\) is the full subcategory \(\DATM(X)\subseteq \DM(X)\) of motives which become Tate after base change to \(X_{k'}\).
	
	Similarly, for an ind-scheme \(X\) and a Whitney--Tate stratification \(X_{k'}^\dagger = \coprod_w (X_{k'})_w\to X_{k'}\) into perfectly cellular schemes, we define \(\DATM(X, X_{k'}^\dagger)\subseteq \DM(X)\) as the full subcategory of objects whose pullback to \(X_{k'}\) lies in \(\DTM(X_{k'},X_{k'}^\dagger)\).
	This is called the category of stratified Artin-Tate motives.
	
	Finally, for an ind-scheme \(X\) as above equipped with an action of a group prestack \(G\), one defines the category of \(G\)-equivariant stratified Artin-Tate motives \(\DATM_{G}(X,X_{k'}^\dagger)\) as the full subcategory of \(\DM(G\backslash X)\) consisting of motives whose !-pullback to \(X\) is stratified Artin-Tate.
	In other words, it is the homotopy pullback
	\[\DATM_{G}(X,X_{k'}^\dagger):= \DATM(X,X_{k'}^\dagger)\times_{\DM(X)} \DM(G\backslash X).\]
\end{dfn}

If \(X_{k'}^\dagger = \coprod_w (X_{k'})_w\to X_{k'}\) is a Whitney--Tate stratification, we will also call it an \emph{Artin--Whitney--Tate} stratification of \(X\).
Moreover, if the field \(k'\) and the stratification \(X_{k'}^\dagger\to X_{k'}\) are clear, we will usually write \(\DATM(X)=\DATM(X,X_{k'}^\dagger)\), and similarly for equivariant Artin-Tate motives.

\begin{rmk}\thlabel{remarks about Artin-Tate}
	Let \(X\) and \(X_{k'}^\dagger\) be as above.
	\begin{enumerate}
		\item If \(k=k'\), Artin-Tateness agrees with Tateness.
		\item If \(k'/k\) is not finite, one has to be careful about applying the six functor formalism.
		For us, the only functors for non pfp morphisms we will need are the pullback functors along base changes of \(\Spec k'\to \Spec k\).
		Then the *- and !-pullback agree, and they exist by the Kan extension process.
		\item Since \(\DM(X)\) is compactly generated, the same holds for \(\DATM(X)\), as the full subcategory generated by certain compact objects.
		Let us denote the compact objects by \(\DATM(X)^{\comp}\).
		\item By continuity as in \cite[Theorem 6.3.9]{CisinskiDeglise:Etale}, any compact Artin-Tate motive becomes Tate after base change along a finite extension of \(k\).
		Consequently, \(\DATM(X)^{\comp}\) is equivalent to the category \(\DTM_{\Gal(k'/k)}(X_{k'})^{\comp}\) of compact Tate motives on \(X_{k'}\) equipped with a continuous (i.e., which factors through a finite quotient) \(\Gal(k'/k)\)-action.
		\item\label{etale descent} For not necessarily compact objects, we have an equivalence
		\[\DATM(X,X_{k'}^\dagger) \cong\varinjlim_{k''} \DATM(X,X_{k''}^\dagger)\cong \varinjlim_{k''} \DTM_{\Gal(k''/k)}(X_{k''},X_{k''}^\dagger),\]
		where \(k''\) ranges over all finite subextensions \(k\subseteq k''\subseteq k'\) over which \(X_{k'}^\dagger\) is defined, and the colimit is taken in \(\Pr_{\IZ[\frac{1}{p}]}^{\St}\).
		This category is again compactly generated, with compact objects the colimit (in \(\infty\)-categories) \(\varinjlim_{k''} \DATM(X,X_{k''}^\dagger)^{\comp}\).
		\item When working with Nisnevich motives, where pullback along finite étale covers is not necessarily conservative, one should \emph{define} Artin-Tate motives as Galois-equivariant Tate motives on a suitable base change.
	\end{enumerate}
\end{rmk}

It remains to show that for a Whitney--Tate stratified ind-scheme, the category of (equivariant) Artin-Tate motives also admits a t-structure.

\begin{prop}
	Fix some \(k'/k\) as above. 
	Let \(X\) be a Artin--Whitney--Tate pcs ind-scheme equipped with an action of some strictly pro-algebraic group \(G\) satisfying the same assumptions as in \thref{Existence MTM}.
	Then \(\DATM_{G}(X)\) admits a t-structure for which the forgetful functor \(\DATM_{G}(X)\to \DTM(X_{k'})\) is t-exact.
\end{prop}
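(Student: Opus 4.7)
The plan is to reduce the existence of the t-structure to the pure Tate case by finite étale descent, using the colimit presentation from the last item of \thref{remarks about Artin-Tate}, and then to assemble a t-structure on $\DATM_G(X)$ from the finite stages.

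First, for each finite subextension $k \subseteq k'' \subseteq k'$ over which the stratification $X_{k'}^\dagger$ is defined, the base change $X_{k''}$ remains a Whitney-Tate pcs ind-scheme, and the base-changed group $G_{k''}$ still satisfies the hypotheses of \thref{Existence MTM} (perfect cellularity and the pro-unipotent kernel condition are insensitive to finite étale base change). Hence $\DTM_{G_{k''}}(X_{k''}, X_{k''}^\dagger)$ carries a non-degenerate t-structure by \thref{Existence MTM}. By étale descent, $\DATM_G(X, X_{k''}^\dagger)$ identifies with the category of objects in $\DTM_{G_{k''}}(X_{k''}, X_{k''}^\dagger)$ equipped with a continuous $\Gamma$-equivariance, where $\Gamma := \Gal(k''/k)$. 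Since $\Gamma$ is finite and acts through t-exact autoequivalences (pullback along isomorphisms), this t-structure descends to $\DATM_G(X, X_{k''}^\dagger)$; equivalently, it is characterized stratum-wise by pulling back to $X_{k''}$ and applying the criterion of \thref{Existence MTM}.

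Next, for a tower $k \subseteq k'' \subseteq k'''$ of finite subextensions, the transition functor $\DATM_G(X, X_{k''}^\dagger) \to \DATM_G(X, X_{k'''}^\dagger)$ corresponds, after the above identification, to $*$-pullback along the finite étale cover $X_{k'''} \to X_{k''}$ followed by induction of the Galois equivariance. This is t-exact on each stratum, since étale pullback sends the generating shifted Tate twists to shifted Tate twists of the same cohomological degree. Applying the standard construction of a t-structure on a filtered colimit of presentable stable $\infty$-categories with colimit-preserving, t-exact transition functors to the decomposition
\[\DATM_G(X) \;\cong\; \varinjlim_{k''} \DATM_G(X, X_{k''}^\dagger)\]
from \thref{remarks about Artin-Tate} yields a non-degenerate t-structure on $\DATM_G(X)$, whose connective (resp.\ coconnective) part is the colimit of the connective (resp.\ coconnective) parts at the finite levels.

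T-exactness of the forgetful functor $\DATM_G(X) \to \DTM(X_{k'})$ is then immediate, since at each finite stage it is obtained by forgetting the $G_{k''}$- and $\Gamma$-equivariance and then pulling back along $X_{k'} \to X_{k''}$; both operations are t-exact by construction of the equivariant and Galois-equivariant t-structures, and the colimit of t-exact functors is t-exact. The main subtlety will be the careful verification that the Galois-equivariance descends the stratum-wise t-structure at each finite level, which amounts to checking that the Galois action is through t-exact autoequivalences; once this is in place, the passage to the colimit and the t-exactness of the forgetful functor are essentially formal.
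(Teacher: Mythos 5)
Your proposal is correct and takes essentially the same route as the paper: establish a t-structure at each finite Galois level via the existence result for equivariant mixed Tate motives, and then pass to the colimit using the decomposition from the last item of \thref{remarks about Artin-Tate}. The only real difference is one of packaging: the paper invokes \cite[Lemma 3.2.18]{RicharzScholbach:Intersection} once, applied to the action of the semidirect product \(G_{k'}\rtimes\Gamma\) on \(X_{k'}\), which bundles together the two steps you carry out separately (first descending a t-structure on \(\DTM_{G_{k''}}(X_{k''})\) via the Galois action acting by t-exact autoequivalences, then taking a filtered colimit with t-exact transition functors). Your identification of the key subtlety — that the Galois action must act through t-exact autoequivalences to make the descent go through — is exactly what the appeal to the semidirect-product version of the equivariant t-structure lemma is handling.
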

\begin{proof}
	This follows from \thref{Existence MTM} and \thref{remarks about Artin-Tate} \eqref{etale descent}, as well as \cite[Lemma 3.2.18]{RicharzScholbach:Intersection}, applied to the action of \(G_{k'}\rtimes \Gamma\) on \(X_{k'}\).
\end{proof}

\begin{rmk}
	If \(X\) is an Artin--Whitney--Tate pcs ind-scheme, the tensor product \(\otimes\) on \(\DM(X)\) preserves the subcategories \(\DATM^{\leq 0}(X) \subseteq \DATM(X) \subseteq \DM(X)\).
	However, it does not preserve \(\DATM^{\geq 0}(X)\) in general.
	Hence, to get a symmetric monoidal structure on \(\MATM(X)\), we need to truncate by using \(\pH^0(-\otimes -)\).
	We still denote the resulting functor by \(- \otimes - \colon \MATM(X) \times \MATM(X) \to \MATM(X)\).
\end{rmk}

\subsection{Anti-effective motives}

Let us conclude this section about motives by recalling \emph{anti-effective} motives.
These were used in \cite{CassvdHScholbach:Geometric} to relate the Vinberg monoid to the motivic Satake equivalence for split groups, and we will generalize this relation to the ramified case in \thref{Anti-effective equivalence}.
Roughly, these were the motives that were ``non-positively" Tate twisted, as opposed to the effective motives.
However, since we are using étale motives, for which the Tate twist can be trivialized with torsion coefficients (up to adding certain roots of unity), this definition does not make sense with integral or torsion coefficients.
Hence, we will only use rational coefficients when considering anti-effective motives.
(Although we had only constructed a t-structure for \(\IZ[\frac{1}{p}]\)-coefficients, the construction also works for \(\IQ\)-coefficients.)

\begin{dfn}\thlabel{def--anti-effective}
	For a perfectly cellular \(X\in \Sch_k^{\pfp}\), the category \(\DTM(X)^{\anti}\subseteq \DTM(X,\IQ)\) of anti-effective Tate motives is defined as be the full stable presentable category generated under colimits and extensions by \(\unit(m)=\IQ(m)\), for \(m\leq 0\).
	More generally, if \(k'/k\) is a finite extension or an algebraic closure such that \(X_{k'}\) is perfectly cellular, we define \(\DATM(X)^{\anti}\subseteq \DATM(X,\IQ)\) as those motives which become anti-effective Tate after base change to \(k'\).
	
	If \(X\) is an ind-scheme, a perfectly cellular stratification \(\iota \colon X_{k'}^\dagger = \coprod_w (X_{k'})_w\to X_{k'}\) is said to be \emph{anti-effective Artin--Whitney--Tate} if for any \(v,w\), we have \(\iota_v^*\iota_{w,*} \unit \in \DATM((X_{k'})_v)^{\anti}\).
	
	For a field extension \(k'/k\) as above, and an anti-effective Artin--Whitney--Tate pcs ind-scheme \(X\), we let \(\DATM(X)^{\anti}\subseteq \DATM(X,\IQ)\) and \(\MATM(X)^{\anti}\subseteq \MATM(X,\IQ)\) be the full subcategories consisting of those motives which *-pullback to anti-effective Tate motives on each stratum of \(X_{k'}\).
	
	Similarly, we can define \emph{equivariant} anti-effective (mixed) Artin-Tate motives.
\end{dfn}

\begin{nota}
	Since anti-effective motives are only defined for rational coefficients, we will write \(\DTM(X)^{\xanti}\) to denote either \(\DTM(X,\IZ[\frac{1}{p}])\) or \(\DTM(X)^{\anti} = \DTM(X,\IQ)^{\anti}\).
	We will use similar notation for equivariant, mixed, and/or Artin-Tate motives.
\end{nota}

Let us recall the following alternative characterization of anti-effective motives from \cite[Lemma 2.18]{CassvdHScholbach:Geometric}.

\begin{lem}\thlabel{characterization of anti-effective}
	Let \(X/k\) be a perfectly cellular scheme. Then we have
	\[\DTM(X)^{\anti} = \{\Ff\in \DTM(X,\IQ)\mid \Maps_{\DTM(X)}(\unit(n),\Ff) = 0 \text{ for } n\geq 1\},\]
	\[\MTM(X)^{\anti} = \{\Ff\in \MTM(X,\IQ)\mid \Hom_{\MTM(X)}(\unit(n)[\dim X]),\Ff) = 0 \text{ for } n\geq 1\},\]
	where we consider Tate motives for the trivial stratification of \(X\).
\end{lem}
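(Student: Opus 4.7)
The plan is to deduce both inclusions of the derived identity from \thref{BS-vanishing}, following the template of \cite[Lemma 2.18]{CassvdHScholbach:Geometric}; the statement in the heart follows by intersecting both sides with the heart of the t-structure from \thref{defi-t-structure}, so I focus on the derived version.

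For the inclusion $\subseteq$, the right-hand side is closed under extensions (via long exact sequences of mapping spectra) and under filtered colimits (since $\unit(n)$ is compact in $\DATM(X,\IQ)$). By the definition of $\DATM(X)^{\anti}$, after base change to $X_{k'}$ such an $\Ff$ is generated under colimits and extensions by the Tate twists $\unit(m)$ with $m\le 0$, and these generators are already defined over $k$ with trivial descent data, so it suffices to check the vanishing $\Maps_{\DATM(X)}(\unit(n),\unit(m))=0$ for $n\ge 1$ and $m\le 0$. But this spectrum equals $\Maps_{\DM(X)}(\unit,\unit(m-n))$ with $m-n\le -1$, which vanishes by \thref{BS-vanishing} together with Quillen's rational computation of the $K$-theory of $\IF_q$ (this upgrades the $\Hom$-vanishing in \thref{BS-vanishing} to vanishing of the entire mapping spectrum in strictly negative weight on a perfectly cellular scheme, since the relevant motivic cohomology propagates across perfect cells via homotopy invariance and the Künneth-type structure for $\IG_m$).

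For the inclusion $\supseteq$, I would exhibit a semi-orthogonal decomposition $\DATM(X,\IQ)=\langle \DATM(X)^{\anti},\DATM(X)^{\pos}\rangle$, where $\DATM(X)^{\pos}$ is the full subcategory generated under colimits and extensions by $\unit(n)$ for $n\ge 1$ (again taken after base change). The semi-orthogonality $\Maps_{\DATM(X)}(\unit(n),\unit(m))=0$ for $n\ge 1>0\ge m$ is the same \thref{BS-vanishing} computation, and joint generation is inherited from the corresponding decomposition of $\DTM(X_{k'},\IQ)$. The hypothesis $\Maps(\unit(n),\Ff)=0$ for all $n\ge 1$ then places $\Ff$ in the right orthogonal of $\DATM(X)^{\pos}$, which by the decomposition equals $\DATM(X)^{\anti}$. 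The main obstacle I foresee is descending this semi-orthogonal decomposition along $X_{k'}\to X$ and ensuring that the $\DATM$-level mapping-space condition faithfully detects the corresponding Tate condition on $X_{k'}$; this is handled by the $\Gal(k'/k)$-invariance of the generating Tate twists $\unit(n)$, whose descent data is trivial. The heart statement is then obtained by replaying the same argument inside $\MATM(X,\IQ)$, using the compact generators $\unit(n)[\dim X]$ of \thref{defi-t-structure} and replacing mapping spectra by $\Hom$-groups.
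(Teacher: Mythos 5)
There is a genuine gap in the reverse inclusion, and in fact the statement as written fails once $k'\neq k$, so the gap cannot be closed in that generality. Take $X=\Spec k$ with $k'/k$ a proper finite extension and $\Gamma=\Gal(k'/k)$, let $M\in\DATM(\Spec k,\IQ)$ be the Artin motive attached to a nontrivial irreducible $\IQ[\Gamma]$-representation $V$, and set $\Ff=M(1)$. Then $\Maps_{\DM(\Spec k)}(\unit(n),\Ff)\cong\Maps_{\DM(\Spec k)}(\unit(n-1),M)$ vanishes for every $n\ge 1$: for $n=1$ because $V^\Gamma=0$ and $\Gamma$ has no higher rational cohomology, and for $n\ge 2$ because $M(1-n)$ is a retract of $f_*\unit(1-n)^{\oplus\dim V}$ with $1-n<0$, where $f\colon\Spec k'\to\Spec k$, so the negative-weight vanishing over $k'$ applies. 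Yet $\Ff|_{k'}$ is a nonzero direct sum of copies of $\unit(1)$, hence is not anti-effective, so $\Ff\notin\DATM(X)^{\anti}$. The failure sits exactly at the step you flag: $\Gal(k'/k)$-invariance of $\unit(n)$ does not make $\Maps(\unit(n),-)=0$ over $X$ detect the anti-effectivity condition over $X_{k'}$. The positive part of $\DATM(X)$ is generated by the Artin-Tate objects $f_{k''*}\unit(n)$ for all finite subextensions $k''\subseteq k'$, not by $\unit(n)$ alone, and $\Maps(f_{k''*}\unit(n),\Ff)\cong\Maps_{\DM(X_{k''})}(\unit(n),\Ff|_{X_{k''}})$ carries strictly more information than $\Maps(\unit(n),\Ff)=0$. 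The same point makes your forward-inclusion reduction incorrect as stated — $\DATM(X)^{\anti}$ is generated over $X$ by the $f_{k''*}\unit(m)$ with $m\le 0$, not by $\unit(m)$, even though the base changes are — although that inclusion does hold, by checking the vanishing against the correct generators, which reduces by adjunction to the Tate computation over $X_{k''}$.

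When $k=k'$ your argument is essentially correct and matches the cited \cite[Lemma~2.18]{CassvdHScholbach:Geometric}: \thref{BS-vanishing} together with Quillen's computation (giving vanishing of all rational motivic cohomology of $\Spec k$ outside bidegree $(0,0)$, propagated to perfectly cellular $X$ by homotopy invariance, the $\IG_m$-Künneth formula, and localization) yields the semi-orthogonal decomposition of $\DTM(X,\IQ)$ into $\DTM(X)^{\anti}$ and the subcategory generated by $\{\unit(n)\}_{n\ge 1}$, and both inclusions follow; the heart statement then drops out by \thref{defi-t-structure}. That is also the only case the paper actually uses, since the invocations of this lemma occur after reducing to $G$ residually split and $k=k'$ (e.g.~in \thref{Schubert stratification is WT}). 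A correct Artin-Tate formulation should either impose the $\Maps$-vanishing after base change to $X_{k'}$, or test against the full family of generators $f_{k''*}\unit(n)$ rather than $\unit(n)$.
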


In particular, if \(X\) and \(k'\) are such that \(X_{k'}\) is perfectly cellular, the t-structure on \(\DATM(X,\IQ)\) restricts to a t-structure on \(\DATM(X)^{\anti}\), with heart \(\MATM(X)^{\anti}\).
The same applies for ind-schemes with an anti-effective Artin--Whitney--Tate perfectly cellular stratification

\section{Geometry of affine flag varieties}
\label{Section--affine flag varieties}

Recall that \(F\) was a complete discretely valued field with residue field \(k\).
In this section, we recall the basic definitions and geometry of partial affine flag varieties associated to parahoric integral models of reductive \(F\)-groups.
As our main interest is the case of a mixed characteristic field \(F\), we will only define these as ind-perfect schemes as in \cite{Zhu:Affine} (also for \(F\) of equal characteristic, for the sake of uniformity).

\subsection{Affine flag varieties}

Before we specialize to the case of reductive groups, let us consider a more general situation.
Let \(\Hh/\Oo\) be a smooth affine group scheme, with generic fiber \(H/F\).

\begin{dfn}
	\begin{enumerate}
		\item The \emph{loop group} of \(H\) is the functor
		\[LH\colon (\AffSch_k^{\perf})^{\op} \to \Grp\colon \Spec R\mapsto H(W_{\Oo}(R) \otimes_{\Oo} F).\]
		\item The \emph{positive loop group} of \(\Hh\) is the functor
		\[L^+\Hh\colon (\AffSch_k^{\perf})^{\op} \to \Grp\colon \Spec R\mapsto \Hh(W_{\Oo}(R)).\]
		\item The \emph{affine Grassmannian} \(\Gr_{\Hh}\) of \(\Hh\) is the étale sheafification of \(LH/L^+\Hh\).
	\end{enumerate}
\end{dfn}

By \cite{PappasRapoport:Twisted, Zhu:Affine, BhattScholze:Projectivity}, \(LH\) is representable by a perfect ind-scheme, \(L^+\Hh\) by a pro-(perfectly smooth) affine scheme (usually not perfectly of finite type), and \(\Gr_{\Hh}\) by an ind-(perfect quasi-projective) scheme.
We denote the structure map by \(\pi_{\Hh}\colon \Gr_{\Hh} \to \Spec k\).

\begin{nota}
	For any \(n\geq 0\), let \(L^n\Hh\colon (\AffSch_k^{\perf})^{\op}\to \Grp\colon \Spec R\mapsto \Hh(W_{\Oo,n}(R))\).
	This is a pfp perfectly smooth quotient of \(L^+\Hh\), and we have \(L^+\Hh \cong \varprojlim_n L^n\Hh\).
	In particular, if \(L^+\Hh\) acts on a pfp scheme \(X\), this actions factors through \(L^n\Hh\) for some \(n\geq 0\).
	Note that \(L^0\Hh\) is the special fiber of \(\Hh\).
	Finally, we let \(L^{>n}\Hh := \ker(L^+\Hh \to L^n\Hh)\).
\end{nota}
 
Now, let \(G/F\) be a reductive group, and \(\Gg/\Oo\) a parahoric model.
In this case we will write \(\Fl_{\Gg} := (LG/L^+\Gg)^{\et}\) and call it the \emph{(partial) affine flag variety}.
Instead, we will reserve the notation \(\Gr_{\Gg}\) for the case where \(\Gg\) is very special parahoric, and call it the \emph{(twisted) affine Grassmannian}.
Recall that \(\Gg\) is \emph{special} if it corresponds to a facet \(\ff\) in the Bruhat-Tits building \(\buil(G,F)\) of \(G\), which is contained in an apartment in which each wall is parallel to a wall containing \(\ff\).
Moreover, \(\Gg\) is called \emph{very special} if it remains special after any unramified base change.
On the other extreme, if \(\Gg\) is an Iwahori model, we call \(\Fl_{\Gg}\) the \emph{full affine flag variety}.
By \cite{PappasRapoport:Twisted, BhattScholze:Projectivity}, the affine flag variety of any parahoric model is ind-perfect projective.
The following result is well known, and we refer to \cite[Proposition 1.21]{Zhu:Affine} for a proof.

\begin{lem}\thlabel{connected components of affine flag varieties}
	There are natural isomorphisms 
	\[\pi_1(G)_{\Gal(\overline{F}/F)} \cong \pi_0(LG) \cong \pi_0(\Fl_{\Gg}).\]
\end{lem}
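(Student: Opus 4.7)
The plan is to establish the two isomorphisms in turn, in the spirit of Kottwitz.

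For the second isomorphism $\pi_0(LG)\cong\pi_0(\Fl_\Gg)$, I would observe that the projection $LG\to\Fl_\Gg$ is an $L^+\Gg$-torsor for the étale topology, by the very definition of the affine flag variety as the étale sheafification of $LG/L^+\Gg$. Hence it suffices to check that $L^+\Gg$ is geometrically connected. Writing $L^+\Gg\cong\varprojlim_n L^n\Gg$, the quotient $L^0\Gg$ is the special fiber of the parahoric $\Gg$, which is connected by the very definition of a parahoric model. The kernel of each $L^{n+1}\Gg\to L^n\Gg$ is (the perfection of) a vector group built from the Lie algebra of $\Gg$, and in particular geometrically connected. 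By induction each $L^n\Gg$ is connected, and the pro-limit inherits this property.

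For the first isomorphism $\pi_1(G)_{\Gal(\overline F/F)}\cong\pi_0(LG)$, the plan is to invoke the Kottwitz homomorphism. I would first work over $\breve F$, i.e.~after base change to $\overline k$, where the relevant Galois action reduces to the inertia $I$, and then descend the resulting isomorphism to $k$ (the residual Frobenius action accounting for the passage from $I$- to $\Gal(\overline F/F)$-coinvariants). For the geometric identification $\pi_0(LG_{\overline k})\cong\pi_1(G)_I$, I would reduce to two building blocks via the exact sequences $1\to G_{\der}\to G\to D\to 1$, with $D=G/G_{\der}$ a torus, and $1\to Z\to G_{\sico}\to G_{\der}\to 1$. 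For the torus $D$, one has the explicit identification $LD/L^+\Tt_D\cong X_*(D)_I$ via the Néron lft-model $\Tt_D$; this is essentially the content of \cite{PappasRapoport:Twisted}. For $G_{\sico}$, one shows $LG_{\sico}$ is geometrically connected, by combining the Iwasawa decomposition with the fact that $G_{\sico}(\breve F)$ is generated by the $\breve F$-points of unipotent radicals of opposite Borels, each of which maps into a pro-unipotent part of an Iwahori. A diagram chase against $\pi_1$ then assembles these two pieces.

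The main difficulty is the bookkeeping of Galois actions: one must check that the geometric identification is Frobenius-equivariant (so it descends from $\overline k$ to $k$) and that the resulting isomorphism of $\pi_0(LG)$ correctly absorbs the residual $\Gal(\overline k/k)$-coinvariants on $\pi_1(G)_I$ to recover $\pi_1(G)_{\Gal(\overline F/F)}$. These verifications are standard; the details are carried out in \cite[Proposition 1.21]{Zhu:Affine}.
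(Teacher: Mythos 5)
The paper gives no proof of its own for this lemma; it simply defers to \cite[Proposition 1.21]{Zhu:Affine}. Your sketch correctly reproduces the standard Kottwitz-homomorphism argument underlying that reference (connectedness of $L^+\Gg$ for the second isomorphism, and devissage through $G_{\sico}$ and the torus quotient, using the Iwasawa decomposition and Néron lft-models respectively, for the first), so it is consistent with the paper's citation.
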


Now, for any parahoric model \(\Gg'/\Oo\) of \(G\), the positive loop group \(L^+\Gg'\) acts on \(\Fl_\Gg\).
In order to describe the orbits for this action, let us recall some Bruhat-Tits theory, following \cite[§3]{AGLR:Local}.

Let \(A\subseteq G\) be a maximal \(F\)-split torus, and \(S\subseteq G\) a maximal \(\breve{F}\)-split torus containing \(A\), but still defined over \(F\).
Let \(T:=\Cent_G(S)\) be the centralizer of \(S\), which is a maximal torus of \(G\).
For any \(\lambda\colon \IG_{m,F}\to T\), we denote by \(\varpi^\lambda\in G(F)\) the image of \(\varpi\in \IG_m(F)\) under \(\IG_{m,F}\xrightarrow{\lambda} T \to G\).
We will also denote the image of \(\varpi^\lambda\) in \(\Fl_{\Gg}\) the same way.

Let \(\Ss\) and \(\Tt\) be the connected Néron \(\Oo\)-models of \(S\) and \(T\) respectively, which are automatically contained in \(\Gg\).
These are the unique parahoric models of \(S\) and \(T\).
Let \(\app(G,S,\breve{F})\) denote the apartment in the Bruhat-Tits building of \(G\) corresponding to \(S\).

\begin{dfn}{\cite[Definition 7]{HainesRapoport:Parahoric}}
	The \emph{Iwahori-Weyl group} of \(G\) (associated to \(S\)) is \[\tilde{W}:= \Norm_G(S)(\breve{F})/\Tt(\breve{\Oo}).\]
\end{dfn}

By \cite[Lemma 14]{HainesRapoport:Parahoric}, this group sits in a short exact sequence 
\[1\to W_\aff\to \tilde{W} \to \pi_1(G)_I\to 1,\]
where \(W_\aff\) is the affine Weyl group, and \(\pi_1(G)\) is Borovoi's fundamental group.
The choice of an alcove in \(\app(G,S,\breve{F})\) induces a splitting \(\tilde{W} = W_\aff \rtimes \pi_1(G)_I\).
In particular, as \(W_\aff\) is a Coxeter group, \(\tilde{W}\) inherits the structure of a quasi-Coxeter group and a length function \(l\), by setting the length of elements in \(\pi_1(G)_I\) to zero.

Let \(W_\Gg:= \left(\Norm_G(S)(\breve{F})\cap \Gg(\breve{\Oo})\right)/\Tt(\breve{\Oo})\subseteq \tilde{W}\); this group is always finite, and even trivial if \(\Gg\) is an Iwahori model.
If \(\Gg\) is the parahoric associated to a facet \(\ff\) in \(\app(G,S,\breve{F})\), we also write \(\Fl_\ff := \Fl_{\Gg}\) and \(W_\ff:=W_\Gg\).
Then, for a second parahoric model \(\Gg'\) of \(G\), \cite[Proposition 8]{HainesRapoport:Parahoric} gives a bijection
\[\Gg'(\breve{\Oo})\backslash G(\breve{F}) /\Gg(\breve{\Oo}) \cong W_{\Gg'}\backslash \tilde{W}/ W_{\Gg}.\]

In particular, over \(\overline{k}\), the \(L^+\Gg'\)-orbits on \(\Fl_\Gg\) are enumerated by \(W_{\Gg'}\backslash \tilde{W}/ W_{\Gg}\), and this already holds over a finite extension \(k'/k\) corresponding to a splitting of \(S\).
For \(w\in W_{\Gg'}\backslash \tilde{W}/ W_{\Gg}\), we denote the corresponding \(L^+\Gg'\)-orbit in \(\Fl_\Gg\) by \(\Fl_{\Gg,w}\); it is defined over the reflex field of \(w\), and called an (affine) \emph{Schubert cell}.
Its closure \(\Fl_{\Gg,\leq w}\) is called an (affine) \emph{Schubert variety}.
As the notation suggests, we have \(\Fl_{\Gg,\leq w} = \bigsqcup_{w'\leq w} \Fl_{\Gg,w'}\) (\cite[Proposition 2.8]{Richarz:Schubert}), so that the Schubert cells form a stratification of \(\Fl_{\Gg}\).
In order to emphasize the parahorics \(\Gg\) and \(\Gg'\), corresponding to facets \(\mathbf{f}\) and \(\mathbf{f}'\), we will sometimes also denote the Schubert cells by \(\Fl_w(\mathbf{f}',\mathbf{f})\) for \(w\in W_{\mathbf{f}'} \backslash \tilde{W}/ W_{\mathbf{f}}\), and their closures by \(\Fl_{\leq w}(\mathbf{f}',\mathbf{f})\).

\begin{rmk}
	At various points in this paper, it will be useful to assume that \(A=S\), i.e., that the maximal \(\breve{F}\)-split torus of \(G\) is already \(F\)-split.
	Following \cite[1.10.2]{Tits:Reductive}, we call such groups \emph{residually split}; recall that residually split groups are automatically quasi-split.
	Any reductive group over \(F\) is residually split after passing to a finite unramified extension of \(F\).
	On the level of affine flag varieties, this corresponds to base change along the corresponding extension of residue fields.
	Residually split groups have the advantage that all Schubert cells and Schubert varieties are defined over \(k\), and that their affine root systems are reduced.
	This makes the geometry of their affine flag varieties easier to study.
	
	For example, assume \(G\) is residually split, and \(\Gg\) is a very special parahoric. Choose a Borel \(T\subseteq B\subseteq G\), and consider the corresponding positive roots and dominant cocharacters.
	Then there is a natural bijection \(W_{\Gg} \backslash \tilde{W} / W_{\Gg} \cong X_*(T)_I^+\).
	Moreover, the dimension of \(\Gr_{\Gg,\mu}\) is given by \(\langle2\rho,\mu\rangle\), where \(2\rho\in X^*(T)\) is the sum of the absolute roots of \(G\); note that this is well-defined as the pairing \(\langle-,-\rangle\colon X^*(T)\times X_*(T)\) is \(I\)-invariant.
	
	As another example, if \(G\) is residually split, then the action of \(\Gal(\overline{k}/k)\) on \(\pi_1(G)_I\) is trivial, so that \thref{connected components of affine flag varieties} induces a bijection \(\pi_0(\Fl_{\Gg}) \cong \pi_1(G)_I\).
\end{rmk}

\begin{prop}\thlabel{Cellularity of Schubert cells}
	Assume \(G\) is residually split, and fix an Iwahori model \(\Ii\subseteq \Gg\) of \(G\).
	Consider the stratifications \(\Fl_{\Gg}^\dagger = \coprod_{w\in \tilde{W}/W_{\Gg}} \Fl_{\Gg,w} \to \Fl_{\Gg}\) and \(\Fl_{\Ii}^\dagger = \coprod_{w\in \tilde{W}} \Fl_{\Ii,w}\to \Fl_{\Ii}\) by \(L^+\Ii\)-orbits.
	\begin{enumerate}
		\item For any \(w\in \tilde{W}/W_{\Gg}\), the Schubert cell \(\Fl_{\Gg,w}\) is isomorphic to \(\IA^{l(w),\perf}\).
		\item The projection map \(\pi\colon \Fl_{\Ii}\to \Fl_{\Gg}\) is proper, and \(\pi_*\) preserves stratified Tate motives.
		\item The induced map \(\pi^\dagger\colon \Fl_{\Ii}^\dagger \to \Fl_{\Gg}^\dagger\) admits a section which is an open and closed immersion.
	\end{enumerate}
\end{prop}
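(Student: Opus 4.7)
The plan is to handle the three parts in order, using that \(G\) is residually split, so that all Schubert cells and Weyl group elements are already defined over \(k\) and the \(L^+\Ii\)-action is uniform.

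For \textbf{(1)}, I would first treat the Iwahori case: the standard Iwahori--Bruhat decomposition \(LG = \bigsqcup_{w\in \tilde{W}} L^+\Ii\cdot \varpi^w\cdot L^+\Ii\) yields \(\Fl_{\Ii,w}\cong \IA_k^{l(w),\perf}\) for every \(w\in \tilde{W}\); this is classical, see e.g.~\cite{PappasRapoport:Twisted, Zhu:Affine}. For a general parahoric \(\Gg\), every coset \(\bar{w}\in \tilde{W}/W_\Gg\) has a unique minimal length representative \(w_{\min}\) with \(l(w_{\min})=l(\bar{w})\). I would then argue that \(\pi\) restricts to an isomorphism \(\Fl_{\Ii,w_{\min}}\xrightarrow{\sim}\Fl_{\Gg,\bar{w}}\): both sides are perfections of affine spaces of the same dimension, surjectivity is clear, and injectivity on geometric points follows from minimality, since the \(L^+\Ii\)-stabilizer of \(\varpi^{w_{\min}}\cdot L^+\Gg\in \Fl_\Gg\) coincides with \(L^+\Ii\cap \varpi^{w_{\min}}L^+\Ii\varpi^{-w_{\min}}\), i.e., with the \(L^+\Ii\)-stabilizer of \(\varpi^{w_{\min}}\cdot L^+\Ii\in \Fl_\Ii\).

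For \textbf{(2)}, properness of \(\pi\) can be tested on each Schubert variety \(\Fl_{\Gg,\leq w}\): its preimage is a finite union of Iwahori Schubert varieties in \(\Fl_\Ii\), and both source and target are perfectly projective schemes, so \(\pi\) is proper. To see that \(\pi_*\) preserves stratified Tate motives, I would use the Bruhat--Tits dilation identity \(L^+\Ii = L^+\Gg\times_{\Gg_k}\Ii_k\), which forces \(L^{>0}\Gg = L^{>0}\Ii\); consequently \(\pi\) is a Zariski-locally trivial fibration whose typical fiber is the perfection of the partial flag variety \(\Gg_k/\Ii_k\) of the reductive quotient of \(\Gg_k\). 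Pulled back over a Schubert cell \(\Fl_{\Gg,\bar{w}}\cong \IA^{l(\bar{w}),\perf}\), one obtains a trivial bundle \(\IA^{l(\bar{w}),\perf}\times_k (\Gg_k/\Ii_k)^{\perf}\), which is perfectly cellular. Via proper base change, the \(*\)-pullback of \(\pi_*M\) along each stratum reduces to the pushforward of a stratified Tate motive along the projection from this trivial bundle, which stays Tate by a Künneth-type computation together with \thref{BS-vanishing}.

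For \textbf{(3)}, the section is defined stratum-by-stratum as the inverse of the isomorphism \(\Fl_{\Ii,w_{\min}(\bar{w})}\xrightarrow{\sim}\Fl_{\Gg,\bar{w}}\) from (1), viewed as a map into the stratum \(\Fl_{\Ii,w_{\min}(\bar{w})}\subseteq \Fl_\Ii^\dagger\). Its image is the disjoint subfamily of strata \(\coprod_{\bar{w}\in \tilde{W}/W_\Gg}\Fl_{\Ii,w_{\min}(\bar{w})}\) of the full stratification \(\Fl_\Ii^\dagger = \coprod_{w\in \tilde{W}}\Fl_{\Ii,w}\); since any union of components in a disjoint union of schemes is automatically clopen, the section is an open and closed immersion. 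The main obstacle I anticipate is cleanly executing the Künneth step in (2): one must argue that Zariski-local triviality of \(\pi\) (together with \thref{conservativity of etale realization} and proper base change in the motivic setting) suffices for the required tensor-product decomposition of \(\pi_*M\) on each Schubert cell, as the fiber is only union-of-cells stratified rather than a single perfect cell.
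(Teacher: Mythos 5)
Your approach diverges from the paper's in parts (1) and (2). For (1), the paper builds a Demazure resolution \(D(\dot w)\to\Fl_{\Gg,\leq w}\) from a reduced expression of the minimal representative \(\dot w\), shows its open cell is \(\IA^{l(w),\perf}\) by an iterated \(\IP^{1,\perf}\)-fibration argument, and notes that the resolution restricts to an isomorphism there. You instead compare \(L^+\Ii\)-stabilizers directly. That is a valid alternative, but as written your argument is circular: the phrase "both sides are perfections of affine spaces of the same dimension" presupposes that \(\Fl_{\Gg,\bar w}\) is such an affine space, which is exactly the content of (1). The honest version of your argument is: \(\Fl_{\Ii,w_{\min}}\to\Fl_{\Gg,\bar w}\) is an \(L^+\Ii\)-equivariant surjection of orbits whose point stabilizers coincide by minimality (the standard fact that a minimal-length double coset representative for \(\tilde W/W_\Gg\) satisfies \(L^+\Ii\,\dot w_{\min}\,L^+\Gg = L^+\Ii\,\dot w_{\min}\,L^+\Ii\cdot L^+\Gg\)); hence the map is an isomorphism of homogeneous spaces, and the source is already known to be a perfect cell.

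For (2), the obstacle you flag at the end is a genuine gap, not a technicality. The dilation description of \(\Ii\subset\Gg\) does identify the abstract fiber of \(\pi\) with a perfected finite flag variety, but it does not give Zariski-local triviality: \(\Fl_\Ii\to\Fl_\Gg\) is the bundle associated to the \(L^+\Gg\)-torsor \(LG\to\Fl_\Gg\), which is only étale-locally trivial by construction. Establishing Zariski-local triviality even just over the Schubert cells would require a separate argument in the spirit of \thref{triviality of torsor}, and even granting it, the Künneth step is delicate: a stratified Tate motive \(M\) on the trivialized bundle \(\IA^{l(\bar w),\perf}\times(\mathsf G/\mathsf P)^{\perf}\) need not decompose as an external product, so \(\pi_*M\) being Tate is not immediate from \cite[Proposition 2.1.20]{JinYang:Kunneth} and \thref{BS-vanishing} alone. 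The paper sidesteps all of this by working stratum-by-stratum on the \emph{source}: the Demazure presentation shows that the restriction of \(\pi\) to each Iwahori orbit \(\Fl_{\Ii,w}\cong\IA^{l(w),\perf}\) is a (trivial) relative affine space of dimension \(l(w)-l(\bar w)\) over the cell \(\Fl_{\Gg,\bar w}\). Then (2) follows from localization across the source stratification together with \(\IA^1\)-homotopy invariance, and no torsor-triviality question ever arises. Your treatment of (3) matches the paper's — the minimal-length stratum is the unique one of relative dimension zero, and a union of strata in a coproduct is automatically clopen.
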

\begin{proof}
	We adapt the classical proof of (1) to our setting.
	The minimal length representative of \(w\in \tilde{W}/W_{\Gg}\) in \(\tilde{W}\) admits a reduced decomposition \(\dot{w} = \tau s_1s_2\ldots s_{l(w)}\), where each \(s_i\) is a simple reflection, and \(\tau\) has length \(0\).
	To each \(s_i\) there is an associated parahoric \(\Ii\subset \Pp_i\) such that \(L^+\Pp_i/L^+\Ii\cong (\IP^1_k)^{\perf}\) (cf.~\cite[Proposition 8.7]{PappasRapoport:Twisted} in case \(F\) is of equal characteristic with algebraically closed residue field; this also holds in our setting by the assumption that \(G\) is residually split). 
	This yields a birational Demazure resolution \[D(\dot{w}) = \{\tau\}\times L^+\Pp_1 \overset{L^+\Ii}{\times} L^+\Pp_2 \overset{L^+\Ii}{\times}\ldots \overset{L^+\Ii}{\times} L^+\Pp_{l(w)}/L^+\Ii \to \Fl_{\Gg,\leq w},\]
	given by multiplication, where we have fixed a lift of \(\tau\) to \(LG(k) = G(F)\).
	By induction, the fibers of this resolution are iterated \(\IP^{1,\perf}_k\)-fibrations, and it restricts to an isomorphism
	\[\{\tau\} \times (L^+\Pp_1\setminus L^+\Ii) \overset{L^+\Ii}{\times} (L^+\Pp_2\setminus L^+\Ii) \overset{L^+\Ii}{\times}\ldots \overset{L^+\Ii}{\times} (L^+\Pp_{l(w)}\setminus L^+\Ii)/L^+\Ii \to \Fl_{\Gg,w}.\]
	This shows (1).
	
	Similarly, we can use Demazure resolutions to see that the restriction of \(\pi\) to each Iwahori-orbit is a relative affine space over an Iwahori orbit in \(\Fl_{\Gg}\).
	Thus, localization and homotopy invariance imply (2).
	Finally, as any \(w\in \tilde{W}/W_{\Gg}\) has the same length as its minimal representative in \(\tilde{W}\), there is a unique Iwahori-orbit in \(\Fl_{\Ii}\) which under \(\pi\) is a relative affine space over \(\Fl_{G,w}\) of dimension 0, i.e., an isomorphism.
	This gives the desired section for (3).
\end{proof}

In \cite[Theorem 5.1.1]{RicharzScholbach:Intersection}, it was shown that for a split reductive group, stratifications of affine flag varieties by Schubert cells are Whitney Tate.
We now upgrade this to the ramified setting.

\begin{thm}\thlabel{Schubert stratification is WT}
	Assume \(G\) is residually split, and consider two parahoric \(\Oo\)-models \(\Gg,\Gg'\) of \(G\).
	Then the stratification \(\iota\colon \Fl^\dagger = \coprod_{w\in W_{\Gg'}\backslash \tilde{W} / W_\Gg}\Fl_{\Gg,w}\to \Fl_\Gg\) by \(L^+\Gg'\)-orbits is anti-effective Whitney--Tate.
\end{thm}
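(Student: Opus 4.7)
The plan is to reduce to the case of two Iwahori models and then run the classical Demazure resolution argument, as in \cite[Theorem 5.1.1]{RicharzScholbach:Intersection}. The residually split assumption is exactly what makes this transfer from the split case possible: it ensures all Schubert cells are defined over $k$, the Iwahori-Weyl group acts in the Coxeter fashion, and the rank-one parahorics give $(\IP^1)^{\perf}$-fibrations over $k$, as exhibited in \thref{Cellularity of Schubert cells}.

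First I would choose Iwahori models $\Ii \subseteq \Gg$ and $\Ii' \subseteq \Gg'$. By \thref{Cellularity of Schubert cells}(2) and (3), the proper projection $\pi\colon \Fl_\Ii \to \Fl_\Gg$ preserves stratified Tate motives via $\pi_*$, and its restriction to the Iwahori stratification admits an open-and-closed section over the Schubert stratification of $\Fl_\Gg$; the analogous statements hold for $\Fl_{\Ii'} \to \Fl_{\Gg'}$. A short recollement argument using these maps reduces the Whitney-Tate condition for the $L^+\Gg'$-stratification of $\Fl_\Gg$ to the same condition for the $L^+\Ii'$-stratification of $\Fl_\Ii$; since the two Iwahoris can be aligned by translating within $\tilde W$ through connected components, it further suffices to treat the case $\Gg=\Gg'=\Ii$.

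For the single-Iwahori case, I would induct on $l(w)$. For $w \in \tilde W$ with reduced decomposition $\dot w = s_1 \cdots s_{l(w)}$, the Demazure resolution $D(\dot w) \to \Fl_{\Ii,\leq w}$ constructed in the proof of \thref{Cellularity of Schubert cells} is a tower of $(\IP^1)^{\perf}$-bundles, so proper base change and homotopy invariance force the pushforward of $\unit_{D(\dot w)}$ to be stratified Tate on $\Fl_{\Ii,\leq w}$. A birational stalk calculation on the top-dimensional stratum, combined with the inductive hypothesis on the boundary and recollement, then yields that $\iota_v^*\iota_{w,*}\unit$ is Tate for every $v \leq w$. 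Anti-effectiveness is automatic because the motive of $(\IP^1)^{\perf}$ is $\unit \oplus \unit(-1)[-2]$, so every Tate twist that arises is non-positive, and all operations used (proper pushforward along $D(\dot w)\to \Fl_{\Ii,\leq w}$, open and closed immersions, and recollement triangles) preserve the anti-effective subcategory.

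The main obstacle will be the stalk calculation in the Iwahori step, where one must peel off the contributions of lower-dimensional Schubert cells from the Demazure pushforward in order to isolate $\iota_v^*\iota_{w,*}\unit$. This is essentially \cite[Lemma 5.1.2]{RicharzScholbach:Intersection} in the split case; its argument relies only on the cellular structure of Schubert cells and on the tower structure of the Demazure resolution, both of which are furnished over $k$ by \thref{Cellularity of Schubert cells} under the residually split hypothesis, so the proof transports without essential change.
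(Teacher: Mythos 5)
Your proposal follows the same skeleton as the paper: reduce to the Iwahori case, induct on length via the $(\IP^1)^{\perf}$-fibration structure, and then climb back up to general parahorics. The Iwahori case as you describe it is correct, and the Whitney--Tate half of the reduction to Iwahoris also goes through along the lines you indicate. However, there is a genuine gap in the anti-effectivity part of the reduction, and a secondary imprecision in the Iwahori-alignment step.

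The gap: you assert that ``all operations used \ldots preserve the anti-effective subcategory,'' and conclude that anti-effectivity transfers automatically from the $L^+\Ii'$-stratification to the $L^+\Gg'$-stratification of $\Fl_\Gg$. This is not true as stated. The transfer of Whitney--Tateness from a finer stratification to a coarser one (the cited Proposition 3.1.23 of Richarz--Scholbach) hinges on the fact that the full family $\iota^*, \iota_*, \iota^!, \iota_!$ preserves Tate motives; but $\iota^!$ along a closed immersion of positive codimension introduces positive Tate twists and hence destroys anti-effectivity. So once you coarsen the stratification by changing $\Gg'$ from an Iwahori to an arbitrary parahoric, the naive recollement argument does not give anti-effectivity for free. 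The paper handles this with a separate argument: it invokes the characterization of anti-effectivity in terms of vanishing of $\Maps(\unit(n),-)$ for $n\ge 1$ (\thref{characterization of anti-effective}), and deduces the required vanishing for the $\Gg'$-stratification from the $\Ii'$-stratification via adjunction and localization. You need some such argument, or an explicit devissage that avoids $\iota^!$, to close this step. Note also that the paper splits the reduction asymmetrically: first making $\Gg$ arbitrary while $\Gg'$ stays Iwahori (easy, via the section from \thref{Cellularity of Schubert cells}(3)), then making $\Gg'$ arbitrary (the step that requires the extra work). Your proposal conflates the two directions; the asymmetry is real, because changing $\Gg$ changes the space while changing $\Gg'$ coarsens the stratification.

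A smaller issue: the claim that ``the two Iwahoris can be aligned by translating within $\tilde W$'' is vague and not needed. The paper's Step I argument works for two Iwahoris $\Gg, \Gg'$ without identifying them; the length induction and the $\IP^1$-fibration $\Fl_\Gg \to \Fl_{\Gg_s}$ are $LG$-equivariant, hence compatible with the $L^+\Gg'$-orbit stratification regardless of which Iwahori $\Gg'$ is. So you can simply drop that sentence and run the induction directly with two possibly different Iwahoris.
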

\begin{proof}
	The proof is similar to \cite[Theorem 5.1.1]{RicharzScholbach:Intersection}, cf.~also \cite[Proposition 3.7]{CassvdHScholbach:Geometric} for the anti-effective part. 
	We start by assuming that \(\Gg'\) and \(\Gg\) are Iwahori subgroups. 
	We will show \(\iota^*(\iota_w)_*\unit \in \DTM^{\xanti}(\Fl^\dagger)\) by induction on the length \(l(w)\), for \(w\in \tilde{W}\). 
	If \(l(w)=0\), then \(\iota_w\) is a closed immersion, so we are done.
	If \(l(w)>0\), we can find a simple reflection \(s\) such that \(w=vs\) for some \(v\in W_{\Gg'}\backslash \tilde{W} / W_\Gg\) with \(l(v)=l(w)-1\).
	Let \(\Gg_s\supset \Gg\) denote the parahoric corresponding to the simple reflection \(s\).
	The projection \(\pi\colon \Fl_\Gg\to \Fl_{\Gg_s}\) is proper and perfectly smooth, and is an étale-locally trivial fibration with general fiber \(\Gg_s/\Gg\cong (\IP^1)^\perf\) (\cite[Lemma 4.9]{HainesRicharz:TestParahoric} and \cite[Proposition 8.7]{PappasRapoport:Twisted}, the proofs also work in mixed characteristic). 
	The induced map \(\Fl_\Gg^\dagger\to \Fl_{\Gg_s}^\dagger\) on \(\Gg'\)-orbits is Tate and admits a section by \thref{Cellularity of Schubert cells}.
	As in \cite[Theorem 5.1.1]{RicharzScholbach:Intersection}, localization gives an exact triangle
	\[(\iota_v)_*\unit(-1)[-2] \to \pi^*\pi_! (\iota_v)_! \unit \to (\iota_w)_*\unit.\]
	Then the leftmost term is (anti-effective) stratified Tate by induction, while the middle term is (anti-effective) stratified Tate by \cite[Lemma 3.1.18]{RicharzScholbach:Intersection} and \cite[Lemma 3.6]{CassvdHScholbach:Geometric}.
	Hence \(\iota^*(\iota_w)_*\unit \in \DTM^{\xanti}(\Fl_\Gg^\dagger)\) as well.
	
	Next, we assume \(\Gg'\) is still an Iwahori, but \(\Gg\) is an arbitrary parahoric. 
	Let \(\Ii\) be an Iwahori such that the facet corresponding to \(\Gg\) is contained in the closure of the facet corresponding to \(\Ii\).
	Using the projection \(\Fl_\Ii\to \Fl_\Gg\) and \thref{Cellularity of Schubert cells}, we see that \(\Fl_\Gg\) is Whitney--Tate by \cite[Lemma 3.1.19]{RicharzScholbach:Intersection} and the Iwahori case.
	For the anti-effectivity, we can then use a section \(\Fl_{\Ii} \to \Fl_{\Gg}\), as well as \cite[Lemma 3.6]{CassvdHScholbach:Geometric} again.
	
	Finally, if both \(\Gg'\) and \(\Gg\) are arbitrary, we can use the previous case and \cite[Proposition 3.1.23]{RicharzScholbach:Intersection} to see the stratification is Whitney--Tate, so we are left to show it is anti-effective.
	For this, it suffices by \thref{characterization of anti-effective} to show that 
	\(\Maps_{\DTM(\Fl_{\Gg}^\dagger)}(\unit(n),\iota^*\iota_*\unit ) =0\) for \(n\geq 1\). 
	Let \(\iota'\) be the stratification of \(\Fl_{\Gg}\) by \(\Ii'\)-orbits, where \(\Ii'\subseteq \Gg'\) is an Iwahori.
	Then by the previous case and \thref{characterization of anti-effective} we have
	\(\Maps((\iota')_!(\iota')^*\unit(n),\iota^*\iota_*\unit) \cong \Maps((\iota')^*\unit(n),(\iota')^!\iota^*\iota_*\unit) =0\).
	We conclude by localization, and \thref{characterization of anti-effective} again.
\end{proof}

\begin{rmk}\thlabel{fix asymmetry}
	Recall that we have \(\DM(L^+\Gg'\backslash \Fl_{\Gg}) \cong \DM(L^+\Gg' \backslash LG / L^+\Gg)\) by descent.
	We can also consider the opposite flag variety \(\Fl_{\Gg'}^{\op} := (L^+\Gg'\backslash LG)^{\et}\), for which the obvious right \(L^+\Gg\)-action also determines a Whitney--Tate stratification.
	The previous proposition, along with \cite[Definition and Lemma 3.11]{RicharzScholbach:Intersection}, shows that \(\DATM_{L^+\Gg'}(\Fl_{\Gg}) \subseteq \DM(L^+\Gg' \backslash LG / L^+\Gg)\) consists of those motives whose pullback along \(k'/k\) lies in the category generated (under colimits, shifts, twists, and extensions) by the \(\iota_{w,*}\unit\) for \(w\in W_{\Gg'}\backslash \tilde{W} / W_{\Gg}\), and similarly for \(\DATM_{L^+\Gg}(\Fl_{\Gg'}^{\op})\).
	Hence, as in \cite[Theorem 5.3.4 (ii)]{RicharzScholbach:Intersection}, the two subcategories \(\DATM_{L^+\Gg'}(\Fl_{\Gg})\) and \(\DATM_{L^+\Gg}(\Fl_{\Gg'}^{\op})\) of \(\DM(L^+\Gg' \backslash LG / L^+\Gg)\) agree.
	
	In case \(\Gg=\Gg'\), since the stratifications by \(L^+\Gg\)-orbits of both \(\Fl_{\Gg}\) and \(\Fl_{\Gg}^{\op}\) are Whitney Tate, we can consider the categories of mixed Artin-Tate motives
	\[\MATM_{L^+\Gg}(\Fl_{\Gg})\subseteq \DM(L^+\Gg \backslash LG / L^+\Gg)\supseteq \MATM_{L^+\Gg}(\Fl_{\Gg}^{\op}).\]
	At least over \(k'\), the connective parts of both t-structures are generated under colimits by \(\iota_{w,!}\unit(n)[l(w)]\) for \(w\in W_{\Gg}\backslash \tilde{W} /W_{\Gg}\).
	Thus, they define the same t-structure, and the two categories of mixed Artin-Tate motives above agree.
\end{rmk}

\subsection{Semi-infinite orbits}\label{subsec--semi-infinite}

Next, we study the semi-infinite orbits in our setting. First introduced for split groups in equal characteristic by \cite[§3]{MirkovicVilonen:Geometric}, they are also well understood in the unramified case \cite{Zhu:Affine}. And while they were also expected to exist and behave well in the ramified case \cite[Remark 0.2 (2)]{Zhu:Ramified}, these only recently first appeared in \cite[§5.2]{AGLR:Local} (or implicitly in \cite[Proposition 4.7]{HainesRicharz:TestParahoric}).

Let \(\lambda\colon \IG_{m,\Oo}\to \Ss\) be a cocharacter defined over \(\Oo\).
Let \(M,P=P^+, P^-\subseteq G\) denote the fixed points, attractor, and repeller respectively for the resulting \(\IG_{m,F}\)-action on \(G\) given by conjugation.
It is well known that \(P^+\) and \(P^-\) are opposite parabolics with Levi \(M\), so that there is a semi-direct product decomposition \(P^\pm = U_{P^\pm} \rtimes M\), where \(U_{P^\pm}\subseteq P^\pm\) is the unipotent radical.
Since \(\lambda\) is defined over \(\Oo\), these groups extend to smooth affine \(\Oo\)-group schemes with connected fibers, also fitting into a semi-direct product \(\Pp^\pm = \Uu_{P^\pm} \rtimes \Mm\), and admitting morphisms \(\Mm \leftarrow \Pp^\pm \rightarrow \Gg\).
(In fact, they are the schematic closures of their generic fiber in \(\Gg\).)
Moreover, \(\Mm\) is a parahoric model of \(M\).
The fixed points, attractor, and repeller of the \(\IG_{m,k}^\perf\)-action on \(\Fl_\Gg\) are related to \(\Mm\) and \(\Pp^\pm\) as follows:

\begin{prop}\thlabel{flag varieties of fixed points and attractor}
	There exists a canonical commutative diagram 
	\[\begin{tikzcd}
		\Fl_{\Mm} \arrow[d, "\iota^0"] & \Fl_{\Pp^\pm} \arrow[l] \arrow[d, "\iota^\pm"] \arrow[r] & \Fl_\Gg \arrow[d, "\identity"]\\
		(\Fl_\Gg)^0 & (\Fl_\Gg)^\pm \arrow[l] \arrow[r] & \Fl_\Gg,
	\end{tikzcd}\]
	such that \(\iota^0\) and \(\iota^\pm\) are open and closed immersions, and surjective if \(\Gg\) is very special.
	Here, the lower horizontal maps are the natural maps out of the attractor and repeller of \(\Fl_\Gg\), and the upper horizontal maps are induced by the maps out of the attractor and repeller of \(\Gg\).
\end{prop}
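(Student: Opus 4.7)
The construction of the diagram proceeds by functoriality. The $\IG_{m,\Oo}$-action on $\Gg$ induced by conjugation through $\lambda$ has $\Mm$ as its fixed-point subscheme and $\Pp^{\pm}$ as its attractor and repeller; this is the integral refinement of the well-known statement on the generic fiber, and it follows from the uniqueness of parahoric models together with the fact that $\Pp^{\pm}$ are the schematic closures of their generic fibers in $\Gg$. Applying the Witt-vector loop functors $L$ and $L^+$, which preserve the $\IG_{m,k}^{\perf}$-equivariance, and then taking étale quotients, one obtains the upper row $\Fl_{\Mm} \leftarrow \Fl_{\Pp^{\pm}} \rightarrow \Fl_{\Gg}$ together with a $\IG_{m,k}^{\perf}$-action on $\Fl_{\Gg}$. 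By the universal properties of the fixed-point, attractor, and repeller functors, the map $\Fl_{\Mm} \to \Fl_{\Gg}$ factors uniquely through $(\Fl_{\Gg})^0$, and the maps $\Fl_{\Pp^{\pm}} \to \Fl_{\Gg}$ factor through $(\Fl_{\Gg})^{\pm}$; these factorizations provide $\iota^0$ and $\iota^{\pm}$ and ensure commutativity of the diagram.

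To prove that $\iota^0$ and $\iota^{\pm}$ are open and closed immersions, I would invoke the hyperbolic localization formalism for perfect ind-schemes, following Drinfeld--Gaitsgory and the subsequent refinements by Haines--Richarz and Anschütz--Gleason--Lourenço--Richarz. The strategy is to verify the statement stratum-by-stratum on the affine Schubert varieties $\Fl_{\Gg,\leq w}$, replacing them by the perfectly smooth Demazure resolutions $D(\dot{w})$ of \thref{Cellularity of Schubert cells}, which carry a compatible $\IG_{m,k}^{\perf}$-action. On $D(\dot{w})$ the classical Bialynicki--Birula decomposition applies, and by functoriality each fixed-point (resp.\ attractor, repeller) component is identified with the corresponding Demazure-type variety for $\Mm$ (resp.\ $\Pp^{\pm}$). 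Pushing down via the Demazure map and passing to the colimit over $w \in W_{\Gg'} \backslash \tilde{W} / W_{\Gg}$ shows that $\iota^0$ and $\iota^{\pm}$ are monomorphisms whose images are unions of connected components of $(\Fl_{\Gg})^0$ and $(\Fl_{\Gg})^{\pm}$, hence open and closed immersions.

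For the surjectivity when $\Gg$ is very special, the key input is the Iwasawa-type decomposition
\[ G(\breve{F}) = \bigsqcup_{\mu \in X_*(T)_I} U_{P^{\pm}}(\breve{F}) \cdot \varpi^{\mu} \cdot \Gg(\breve{\Oo}), \]
which holds precisely in the very special case. This implies on one hand that every geometric point of $(\Fl_{\Gg})^{\pm}$ lies in the image of $\Fl_{\Pp^{\pm}}$, and on the other hand that every geometric fixed point comes from $\Fl_{\Mm}$; combined with the open-and-closed immersion statement from the previous paragraph, this upgrades $\iota^0$ and $\iota^{\pm}$ to surjections. The main obstacle I anticipate is making the Bialynicki--Birula argument precise at the level of ind-schemes, since classical hyperbolic localization is formulated for finite-type schemes: one must check that the Demazure-local descriptions assemble coherently along Schubert closure inclusions, and that the attractor/repeller functors commute with the filtered colimits of closed immersions defining $\Fl_{\Gg}$. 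This compatibility is already implicit in the cited work of Haines--Richarz and AGLR, so the proof will reduce to invoking those results after verifying that the hypotheses on $\lambda$ and $\Gg$ are satisfied.
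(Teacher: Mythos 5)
The paper's own ``proof'' of this proposition is a single-sentence citation to \cite[Proposition 4.7]{HainesRicharz:TestParahoric} and \cite[Theorem 5.2]{AGLR:Local}. You attempt an independent proof, which is considerably more ambitious, so let me assess the argument on its own merits rather than as a match against the paper.

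Your first paragraph (the diagram construction and the identification of $\Mm$, $\Pp^\pm$ with the fixed points, attractor, and repeller of $\Gg$) is essentially sound: smoothness of the attractor/repeller of the smooth affine $\Oo$-group scheme $\Gg$ gives a flat closed subgroup scheme with generic fiber $P^\pm$, and over a DVR this forces it to equal the schematic closure $\Pp^\pm$. The universal-property step producing $\iota^0$ and $\iota^\pm$ is likewise fine, and the surjectivity argument via the Iwasawa decomposition over $\breve{F}$ in the very special case is the right mechanism.

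The genuine gap is in the second paragraph, the attempt to prove that $\iota^0$ and $\iota^\pm$ are open and closed immersions by passing to Demazure resolutions. Attractors and repellers do not descend along proper (even birational, even $\IG_m$-equivariant) morphisms: a proper $\IG_m$-equivariant $f\colon D(\dot w) \to \Fl_{\Gg,\leq w}$ induces a map $D(\dot w)^\pm \to (\Fl_{\Gg,\leq w})^\pm$, but this map is in general neither injective nor surjective, and its image is not obviously open or closed. So understanding the Bialynicki--Birula decomposition of $D(\dot w)$ does not directly control the attractor/repeller of $\Fl_{\Gg,\leq w}$. Moreover, the assertion that the fixed points (resp.\ attractor, repeller) of $D(\dot w)$ are ``Demazure-type varieties for $\Mm$ (resp.\ $\Pp^\pm$)'' is unsupported: the conjugation action by $\lambda$ on the iterated contracted product $L^+\Pp_1 \overset{L^+\Ii}{\times} \cdots$ does not decompose along the factors in any simple way, because the standard parahorics $\Pp_i$ are not themselves stabilized in a form that produces a product decomposition of the attractor. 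The cited references instead compute directly at the level of loop groups, establishing $(L^+\Gg)^\pm \cong L^+\Pp^\pm$ and $(LG)^\pm \cong LP^\pm$ from the affine-scheme case, and then descend to the \'etale quotient $\Fl_\Gg$; this circumvents the Demazure step entirely. Since your final sentence concedes that the argument will ``reduce to invoking those results,'' you have in effect recognized that the Demazure route does not close the gap, and what remains is precisely the citation the paper itself makes.
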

\begin{proof}
	This follows from \cite[Proposition 4.7]{HainesRicharz:TestParahoric}, cf.~also \cite[Theorem 5.2]{AGLR:Local} in mixed characteristic.
\end{proof}

\begin{nota}
	Assume moreover that \(\Gg\) is very special, so that \(\Fl_{\Pp^\pm}\to \Fl_\Gg\) is bijective on points (by the proposition and properness of \(\Fl_\Gg\)).
	Then we denote the connected components of \(\Fl_{\Pp^\pm}\) by \(\Ss_{P,\nu}^\pm\) for \(\nu\in \pi_0(\Fl_\Mm)\cong \pi_1(M)_{\Gal(\overline{F}/F)}\) (\cite[Corollary 1.12]{Richarz:Spaces} and \thref{connected components of affine flag varieties}).
\end{nota}

Now, assume \(\Gg\) is very special parahoric.
Then \(G\) is quasi-split by \cite[Lemma 6.1]{Zhu:Ramified}, and we fix a Borel \(T\subseteq B = B^+\subseteq G\), along with the corresponding choice of positive roots and dominant cocharacters.
We moreover assume \(\lambda\) is regular dominant, so that \(M=T\) is a minimal Levi of \(G\), and \(P^\pm=B^\pm\) are opposite Borels; we denote the unipotent radicals of the latter by \(U^\pm\), and their closures in \(\Gg\) by \(\Uu^{\pm}\).
Then the \(\Ss_{B,\nu}^\pm\) defined above are usually called the \emph{semi-infinite orbits}; we also denote them by \(\Ss_\nu^\pm\) for simplicity.
For the rest of this section, we assume that \(G\) is residually split.
Then the semi-infinite orbits are in bijection with \(\pi_0(\Fl_{\Tt}) \cong \tilde{W}/W_\Gg \cong X_*(T)_I\).
Moreover, we have \(\Ss^\pm_\nu = LU^\pm \cdot \nu\), making \(LU^\pm\to \Ss_{\nu}^\pm\) an \(L^+\Uu^\pm\)-torsor, compare \cite[(5.12)]{AGLR:Local}.

By \cite[Proposition 5.4]{AGLR:Local}, the semi-infinite orbits form a stratification of \(\Gr_\Gg\).
In particular, as any \(\Ss_{P,\nu}^\pm\) is a union of semi-infinite orbits (where \(P\supseteq B\) is the parabolic attached to a not necessarily regular cocharacter, but \(\Gg\) is still very special), we also have stratifications \(\Gr_\Gg = \bigcup_{\nu\in \pi_1(M)_I} \Ss_{P,\nu}^\pm\).
In order to show the convolution product is t-exact later on, we need to understand how semi-infinite orbits intersect Schubert cells, and the dimension of these intersections.

\begin{prop}\thlabel{Nonemptyness of intersections}
	Let \(\nu\in X_*(T)_I\) and \(\mu\in X_*(T)_I^+\). Then the following are equivalent.
	\begin{enumerate}
		\item \(\mathcal{S}^+_\nu\cap \Gr_{\Gg,\leq \mu}\neq \varnothing\),
		\item \(\mathcal{S}^+_\nu\cap \Gr_{\Gg, \mu}\neq \varnothing\), and
		\item The unique dominant representative \(\nu^+\) of \(W_{\Gg}\cdot \nu\) satisfies \(\nu^+\leq \mu\).
	\end{enumerate}
	Moreover, if these assumptions are satisfied, \(\mathcal{S}^+_\nu\cap \Gr_{\Gg,\leq \mu}\) is affine and equidimensional of dimension \(\langle \rho,\mu+\nu \rangle\).
\end{prop}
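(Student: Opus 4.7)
The implications $(3) \Rightarrow (1) \Leftarrow (2)$ are immediate: for the former, $\varpi^\nu \in \Ss_\nu^+ = LU^+ \cdot \varpi^\nu$ by definition, while the dominant representative $\nu^+$ of the $W_\Gg$-orbit of $\nu$ places $\varpi^\nu$ into the Schubert cell $\Gr_{\Gg,\nu^+}$, and the hypothesis $\nu^+ \leq \mu$ then gives $\varpi^\nu \in \Gr_{\Gg,\leq \mu}$.

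For $(1) \Rightarrow (3)$, I would exploit the $\IG_{m,k}^\perf$-action on $\Gr_\Gg$ induced by the fixed dominant regular cocharacter $\lambda$. The Schubert variety $\Gr_{\Gg,\leq \mu}$ is projective and $T$-stable, hence $\IG_m$-stable, so for any $x \in \Ss_\nu^+ \cap \Gr_{\Gg,\leq \mu}$ the attractor limit $x_0 := \lim_{t\to 0} \lambda(t) \cdot x$ exists inside $\Gr_{\Gg,\leq \mu}$. By \thref{flag varieties of fixed points and attractor}, the $T$-fixed locus of $\Gr_\Gg$ is $\Fl_\Tt = \bigsqcup_{\eta \in X_*(T)_I} \{\varpi^\eta\}$; since $\Ss_\nu^+$ is the attractor cell of $\varpi^\nu$, one must have $x_0 = \varpi^\nu$. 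Hence $\varpi^\nu \in \Gr_{\Gg,\leq \mu}$, and as $\varpi^\nu$ lies in the cell $\Gr_{\Gg,\nu^+}$, we conclude $\nu^+ \leq \mu$.

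The remaining implication $(1) \Rightarrow (2)$, along with the dimension and affineness claims, would follow from the filtrable decomposition of each open intersection $\Gr_{\Gg,\mu'} \cap \Ss_\nu^+$ provided by \thref{thm.intro-decomposition}: its pieces have the form $\IA_k^{n,\perf} \times \IG_{m,k}^{r,\perf}$, and the LS-gallery combinatorics of Section~\ref{sect-LSMV} (see \thref{thm.intro-character formula}) identify $n+r = \langle \rho, \mu' + \nu \rangle$. Combined with the Schubert stratification $\Gr_{\Gg,\leq \mu} = \bigsqcup_{\mu' \leq \mu} \Gr_{\Gg,\mu'}$, this yields a decomposition of $\Ss_\nu^+ \cap \Gr_{\Gg,\leq \mu}$ into cells of dimensions $\langle \rho, \mu' + \nu\rangle$, reaching the maximum $\langle \rho, \mu+\nu\rangle$ precisely on the open stratum $\mu' = \mu$. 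Consequently every top-dimensional component lies in $\Gr_{\Gg,\mu} \cap \Ss_\nu^+$, which is therefore nonempty — giving $(2)$ — and the whole intersection is equidimensional of the claimed dimension. Affineness is the standard fact that the attractor of an isolated $T$-fixed point inside a proper $\IG_m$-scheme is affine.

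The principal difficulty is the LS-gallery dimension count itself, which is postponed to Section~\ref{sect-LSMV}. A subtle point is that $\langle \rho, \mu + \nu\rangle$ is defined via the absolute (inertia-invariant) pairing and must descend correctly to $X_*(T)_I$; checking that the gallery combinatorics tracks this descent in the residually split ramified case is precisely what the generalisation of Gaussent–Littelmann in Section~\ref{sect-LSMV} arranges.
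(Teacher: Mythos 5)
The paper's own proof of this proposition is a one-line citation to \cite[Lemma 5.5]{AGLR:Local}, so your attempt to reconstruct the argument from scratch is a genuinely different route. Parts of it are sound: the implications $(2)\Rightarrow(1)$ and $(3)\Rightarrow(1)$ are indeed immediate, and your $(1)\Rightarrow(3)$ via the attractor limit $\lim_{t\to 0}\lambda(t)\cdot x = \varpi^\nu$ inside the projective $\IG_m$-stable $\Gr_{\Gg,\leq\mu}$ is a clean argument that agrees with the attractor/fixed-point identifications in \thref{flag varieties of fixed points and attractor}.

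The serious problem is the equidimensionality claim. You argue that because $\Ss_\nu^+\cap\Gr_{\Gg,\leq\mu}$ has a filtrable decomposition into cells whose maximal dimension is $\langle\rho,\mu+\nu\rangle$, achieved only on the open stratum, "the whole intersection is equidimensional." This does not follow. A filtrable decomposition into cells of various dimensions is compatible with irreducible components of strictly smaller dimension (an irreducible component may be the closure of a lower-dimensional cell that is not contained in the closure of any larger one — e.g.\ $\IP^2$ with an $\IA^1$ glued at a point has a cell decomposition with a $2$-cell, two $1$-cells and a $0$-cell but is not equidimensional). Proving that every irreducible component of the MV cycle has dimension exactly $\langle\rho,\mu+\nu\rangle$ is a genuine geometric input (this is what \cite[Lemma 5.5]{AGLR:Local}, following Mirkovi\'c--Vilonen, actually establishes), and your sketch supplies no argument for the lower bound on each component. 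Relatedly, the phrase "the LS-gallery combinatorics identify $n+r = \langle\rho,\mu'+\nu\rangle$" is not what \thref{description of Cdelta}--\thref{dimension estimate} give: the pieces $C_\delta$ have dimension $\dim\delta \le \langle\rho,\mu+e(\delta)\rangle$, with equality exactly for LS galleries, so the cells within a single stratum do \emph{not} all have the same dimension.

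There are also secondary issues worth flagging. First, the gallery decomposition of \thref{Cdelta gives semiinfinite orbits} and \thref{Intersections of Schubert cells and semi-infinite orbits} is stated for $\Ss_\nu^-$ and for $G$ semisimple simply connected; using it here requires the Borel-flip of \thref{intersections for positive orbits} and the reduction of \thref{reduce to simply connected}, neither of which you invoke. Second, even though I don't find a \emph{direct} circular citation in the chain leading to \thref{Intersections of Schubert cells and semi-infinite orbits}, the paper's Section~\ref{sect-LSMV} runs logically after \thref{Nonemptyness of intersections} (and cites it in \thref{intersection with Weyl group conjugate} and \thref{fiber functor of standard}), so reversing the dependency as you propose needs to be done with an explicit check that nothing upstream already assumed the proposition. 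Third, affineness is essentially correct but deserves a sentence: one needs an $\IG_m$-equivariant projective embedding of $\Gr_{\Gg,\leq\mu}$ to conclude that the attractor of the isolated fixed point is the complement of a hyperplane in a projective variety and hence affine; a bare appeal to "the standard fact" glosses over the perfectness and the embedding.
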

\begin{proof}
	This is \cite[Lemma 5.5]{AGLR:Local} (while it is only stated when \(F\) is of mixed characteristic, the proof also works in equal characteristic).
\end{proof}

\section{LS galleries and MV cycles for residually split reductive groups}\label{sect-LSMV}

Next, we need to understand how Schubert cells and semi-infinite orbits intersect.
Although \thref{Nonemptyness of intersections} already tells us which intersections are nonempty, we also need to understand the geometry of these intersections, generalizing \cite{GaussentLittelmann:LS, CassvdHScholbach:Geometric}, which handled the case of split groups in equal characteristic.
This will later be used to show, among other things, that the constant term functors preserve Artin-Tate motives, and that the Tannakian dual group is flat.

Since in this section we will study the geometry of semi-infinite orbits, we will assume throughout that \(G\) is residually split;  in particular \(G\) is quasi-split and its affine root system is reduced.
We denote its absolute Weyl group by \(W\), so that its relative Weyl group is given by the inertia-invariants \(W^I\).
As usual, \(\Gg\) denotes a very special parahoric \(\Oo\)-model of \(G\).
We will also assume \(G\) is semisimple and simply connected throughout most of this section.
We will explain in \thref{reduce to simply connected} why it suffices to handle this case.
Note that this assumption implies that \(X_*(T)_I\) is torsionfree.
Hence, we get inclusions \(X_*(S)\subseteq X_*(T)_I\subseteq X_*(S)\otimes_{\IZ} \IQ\), and we will use these inclusions to extend the pairing \(\langle -,-\rangle_S\colon X^*(S)\times X_*(T)_I\to \IQ\).
Many definitions in this section are adapted from \cite{GaussentLittelmann:LS}.

\begin{nota}
	We now introduce some notation, which will only be used in this section.
	Let \(\buil(G,F)\) be the Bruhat-Tits building of \(G\), and consider the appartment \(\app := \app(G,S,F)\). 
	The (relative) affine Weyl group \(W_\aff\) acts simply transitively on the set of alcoves of \(\app\).
	We fix a fundamental alcove \(\Delta_f\) in \(\app\) whose closure contains the facet \(\mathbf{f}_0\) corresponding to \(\Gg\), and let \(\Ii\subseteq \Gg\) be the corresponding Iwahori.
	We identify \(\app\) with \(X_*(S)\otimes_{\IZ} \IR\), by choosing \(\mathbf{f}_0\) as the origin.
	Recall that \(W_\aff\) is a Coxeter group generated by the simple reflections \(S_\aff\).
	Each such simple reflection \(s\) corresponds to a reflection hyperplane (or wall) \(\wall_s\), which contains a face of \(\Delta_f\).
	We call a proper subset of \(S_\aff\) a \emph{type}.
	Then the facets of \(\Delta_f\) correspond bijectively to the types, by sending a facet \(\Gamma\prec \Delta_f\) to the set \(S_{\aff}(\Gamma)\) of simple reflections whose corresponding reflection hyperplane above contains \(\Gamma\); for example \(S_\aff(\Delta_f) = \varnothing\).
	Conversely, for a type \(t\subset S_\aff\), we denote by \(F_t\) the unique face of \(\Delta_f\) of type \(t\).
	Since, under the \(G(F)\)-action on \(\buil(G,F)\), the \(G(F)\)-orbit of any facet contains a unique face of \(\Delta_f\), we can use this to associate a type to each facet \(\Gamma\) in \(\buil(G,F)\), still denoted by \(S_\aff(\Gamma)\).
	In particular, we can attach a type to each parahoric model \(\Pp\) of \(G\), and we will denote the facet of this type contained in \(\Delta_f\) by \(F_\Pp\).
\end{nota}

We also briefly recall the spherical buildings of reductive groups over \(k\).

\begin{nota}
	Consider the maximal reductive quotient \(\mathsf{G}\) of the special fiber of \(\Gg\), i.e., \(\mathsf{G}\) is the quotient of \(\Gg \times_{\Spec \Oo} \Spec k\) by its unipotent radical.
	By our assumption that \(G\) is residually split, \(\mathsf{G}\) is split, and a maximal torus is given by the maximal reductive quotient \(\mathsf{S}\) of the special fiber of \(\Ss\).
	The \emph{spherical building} \(\buil^s(\mathsf{G},k)\) of \(\mathsf{G}\) is the building whose facets correspond to the proper parabolic subgroups of \(\mathsf{G}\) defined over \(k\), with simplicial structure given by inclusion.
	The apartments are given by the sets of parabolics containing a fixed maximal \(k\)-torus; in particular, there is the apartment \(\app^s:=\app^s(\mathsf{G}, \mathsf{S}, k)\) corresponding to \(\mathsf{S}\).
\end{nota}

\begin{rmk}\thlabel{identification of apartments}
	Since \(\Gg\) is special, the affine Weyl group \(W_{\aff}\) is the semi-direct product of its translation subgroup with the finite Weyl group of \(\mathsf{G}\) \cite[Lemma 1.3.42]{KalethaPrasad:BruhatTits}.
	Hence, the apartments \(\app^s(\mathsf{G},\mathsf{S},k)\) and \(\app(G,S,F)\) can be identified as affine spaces, albeit with different simplicial structures.
\end{rmk}

In order to reserve the term \emph{alcoves} for maximal simplices in Bruhat-Tits buildings, we will call the maximal simplices of spherical buildings \emph{chambers}; in the situation above these correspond bijectively to the set of \(k\)-Borels of \(\mathsf{G}\).
We denote the chamber corresponding to \(\mathsf{B}\) (the image of the special fiber of \(\Ii\) in \(\mathsf{G}\)) by \(\CC_f\), the chamber corresponding to the opposite parabolic \(\mathsf{B}^-\supset \mathsf{S}\) by \(-\CC_f\), and the simplex corresponding to a parabolic \(\mathsf{P}\) by \(\FF_{\mathsf{P}}\).

\begin{dfn}\thlabel{defi-sector}
	Under the identification of \(\app=\app(G,S,F)\) with \(\app^s=\app^s(\mathsf{G}, \mathsf{S}, k)\) as affine spaces, the chambers of \(\app^s\) correspond to the connected components of \(\app\setminus \bigcup_{t\in S_\aff(\mathbf{f}_0)} \wall_t\).
	A \(W_\aff\)-translate of such a chamber in \(\app\) is called a \emph{sector}, and two sectors are called \emph{equivalent} if their intersection contains another sector.
	Note that any sector is equivalent to a unique chamber in \(\app^s\).
\end{dfn}

\subsection{Galleries in the Bruhat-Tits building}

One of the main goals of this section is to identify points in \(\Ss^{\pm}_{\nu}\cap \Gr_{\Gg,\mu}\) with certain minimal galleries in \(\buil(G,F)\), at least after passing to \(\breve{F}\), as in \cite{GaussentLittelmann:LS}.
Here, it will be crucial to consider not only galleries of alcoves, but also smaller facets.

\begin{dfn}
	A \emph{gallery} in \(\buil(G,F)\) is a sequence of facets
	\[\gamma = (\Gamma'_0 \prec \Gamma_0 \succ \Gamma'_1 \prec \ldots \succ \Gamma'_r \prec \Gamma_r\succ \Gamma'_{r+1})\]
	in \(\buil(G,F)\), such that
	\begin{enumerate}
		\item \(\Gamma'_0\) and \(\Gamma'_{r+1}\), called the \emph{source} and \emph{target} of \(\gamma\), are vertices,
		\item the \(\Gamma_j\)'s are facets of the same dimension, and
		\item for \(1\leq j\leq r\), \(\Gamma'_j\) is a codimension 1 face of \(\Gamma_{j-1}\) and \(\Gamma_j\).
	\end{enumerate}
By attaching to each facet its type, we get the associated \emph{gallery of types} of \(\gamma\).

If the large facets of \(\gamma\) are alcoves, we say \(\gamma\) is a \emph{gallery of alcoves}.
It is moreover a \emph{minimal gallery of alcoves}, if its length \(r\) is minimal among the lengths of all galleries of alcoves joining \(\Gamma_0\) and \(\Gamma_r\) (where we silently forget the source and target vertices).
\end{dfn}

For general galleries, the notion of minimality is more subtle.
Recall from \cite[Proposition 2.29]{Tits:Buildings} that for any alcove \(\Delta\) and facet \(\Gamma\) contained in an apartment \(\app'\) of \(\buil(G,F)\), there is a unique alcove \(\proj_\Gamma(\Delta)\) in \(\app'\) such that any face of the convex hull of \(\Gamma\) and \(\Delta\) that contains \(\Gamma\) is itself a face of \(\proj_\Gamma(\Delta)\).

\begin{dfn}
	Let \(\Gamma\) be a facet, and \(\Delta\) an alcove with a face \(\Gamma'\), all contained in an apartment \(\app'\) of \(\buil(G,F)\).
	Then \(\Delta\) is \emph{at maximal distance from \(\Gamma\) (among the alcoves containing \(\Gamma'\))} if the length of a minimal gallery of alcoves joining \(\Delta\) and \(\proj_\Gamma(\Delta)\) is the same as the number of walls of \(\app'\) separating \(\Gamma\) and \(\Gamma'\).
\end{dfn}

Here, separating is meant in the following sense:

\begin{dfn}
	Let \(\app'\subseteq \buil(G,F)\) be an apartment, \(\Omega\subseteq \app'\) any subset, and \(\Gamma\) a face of \(\app'\).
	\begin{enumerate}
		\item An affine reflection hyperplane \(\wall\subset \app'\) is said to \emph{separate} \(\Omega\) and \(\Gamma\) if \(\Omega\) is contained in a closed half-space defined by \(\wall\), and the closure of \(\Gamma\) is contained in the opposite open half-space.
		If \(\Omega\) and \(\Gamma\) are both facets, we denote the set of reflection hyperplanes in \(\app'\) separating \(\Omega\) and \(\Gamma\) by \(\Mm_{\app'}(\Omega,\Gamma)\), which is finite.
		\item We say a hyperplane \(\wall\) as above \emph{separates \(\Gamma\) from \(\CC_{-\infty}\)}, if \(\wall\) separates \(\Gamma\) from a sector equivalent to \(-\CC_f\), as in \thref{defi-sector}.
	\end{enumerate}
\end{dfn}

We can now define minimality for general galleries.

\begin{dfn}
	Consider a gallery \(\gamma = (\Gamma'_0 \prec \Gamma_0 \succ \Gamma'_1 \prec \ldots \succ \Gamma'_r \prec \Gamma_r\succ \Gamma'_{r+1})\) in \(\buil(G,F)\) and an alcove \(\Delta\succ \Gamma'_0\) at maximal distance of \(\Gamma'_{r+1}\).
	Then \(\gamma\) is \emph{minimal} if
	\begin{enumerate}
		\item there exists a minimal gallery
		\[\delta = (\Delta= \Delta_0^1, \ldots \Delta_0^{q_0}, \Delta_1^1, \ldots, \Delta_j^1,\ldots, \Delta_j^{q_j}, \ldots, \Delta_r^1, \ldots, \Delta_p^{q_r} = \proj_{\Gamma'_{r+1}}(\Delta)),\]
		of alcoves such that \(\Gamma_j\preceq \Delta_j^0\) and \(\Gamma'_j\prec \Delta_j^i\) for \(j=0,\ldots, r\) and \(i=0,\ldots, q_j\), and
		\item let \(\app'\) be an apartment containing \(\gamma\), which exists by (1). Then \[\Mm_{\app'}(\Gamma'_0,\Gamma'_{r+1}) = \bigsqcup_{j=0,\ldots, r} \left\{ \wall\subset \app'\mid \Gamma_j\subseteq \wall, \Gamma'_j\nsubseteq \wall\right\}.\]
	\end{enumerate}
\end{dfn}

One easily checks that the notion of minimality is independent of the choice of \(\Delta\) made above.

Let \(T_{\adj}\subseteq G_{\adj}\) be the adjoint torus of \(T\), and fix some \(\mu\in X_*(T_{\adj})_I\). 
We denote \[\wall_{\mu}:= \bigcap_{\alpha\in \Phi^{\nd}\colon \langle\alpha, \mu \rangle_S=0} \wall_{\alpha,0}\] where we let \(\wall_\mu= \app\) if \(\mu\) is regular.
We also let \(\mathbf{f}_\mu\) be the vertex in \(\app\) corresponding to \(\mu\).

\begin{dfn}
	A \emph{gallery joining 0 with \(\mu\)} is a gallery 
	\[\gamma = (\mathbf{f}_0 \prec \Gamma_0 \succ \Gamma'_1 \prec \ldots \succ \Gamma'_r \prec \Gamma_r\succ \mathbf{f}_\mu)\] 
	contained in \(\app\), such that \(\dim \Gamma_j = \dim \wall_\mu\) for any \(j\).
\end{dfn}

Let us now fix some \(\mu\in X_*(T_{\adj})_I^+\), together with a minimal gallery
\begin{equation}
	\gamma_\mu = (\mathbf{f}_0 \prec \Gamma_0 \succ \Gamma'_1 \prec \ldots \succ \Gamma'_r \prec \Gamma_r\succ \mathbf{f}_\mu)
\end{equation}
joining \(0\) with \(\mu\).
We denote its gallery of types by 
\begin{equation}\label{standard gallery of types}
	t_{\gamma_\mu} = (t'_0\supset t_0\subset t'_1\supset \ldots \subset t'_r\supset t_r\subset t_\mu).
\end{equation}

\begin{rmk}\thlabel{first facet is fixed}
	As in \cite[Lemma 4]{GaussentLittelmann:LS}, one can show that minimality of \(\gamma_\mu\) implies that it lies in \(\wall_{\mu}\).
	Since \(\Gamma_0\) also lies in \(\Delta_f\) and has the same dimension as \(\wall_\mu\), it is uniquely determined by \(\mu\).
	Namely, it is the unique parahoric \(\Ii\subseteq \Pp\subseteq \Gg\), whose corresponding parabolic \(\mathsf{P}\subseteq \mathsf{G}\) is generated by \(\mathsf{S}\) and the root groups for those roots \(\alpha\) for which \(\langle \alpha,\mu\rangle_{S} \geq 0\).
\end{rmk}

The following set of galleries will be particularly important for us.

\begin{dfn}\thlabel{defi combinatorial galleries}
	A gallery \(\gamma\) is called \emph{combinatorial of type} \(t_{\gamma_\mu}\) if it has source \(\mathbf{f}_0\), has \(t_{\gamma_\mu}\) as its gallery of types, and is contained in \(\app\).
	We denote the set of combinatorial galleries of type \(t_{\gamma_\mu}\) by \(\Gamma(\gamma_\mu)\), and by \(\Gamma(\gamma_\mu,\nu)\subseteq \Gamma(\gamma_\mu)\) those combinatorial galleries with target \(\ff_\nu\), for \(\nu\in X_*(T_{\adj})_I\).
\end{dfn}

We will denote the subgroups of \(W_\aff\) generated by \(t_j\) and \(t'_j\) by \(W_j\) and \(W'_j\) respectively.
Similarly, we will denote the standard parahorics (containing the Iwahori \(\Ii\)) corresponding to these types by \(\Pp_j\) and \(\Pp'_j\).
As the name suggests, there is a combinatorial way to describe \(\Gamma(\gamma_\mu)\), similar to the unramified case \cite[Proposition 2]{GaussentLittelmann:LS}.

\begin{prop}\thlabel{classification of combinatorial galleries}
	The map 
	\[W_{\mathbf{f}_0} \overset{W_0}{\times} W_1'\overset{W_1}{\times} \ldots \overset{W_{r-1}}{\times} W_r'/W_r\to \Gamma(\gamma_\mu)\]
	\[[\delta_0,\delta_1,\ldots, \delta_r]\mapsto \delta = (\mathbf{f}_0 \prec \Sigma_0 \succ \Sigma'_1 \prec \ldots \succ \Sigma'_r \prec \Sigma_r\succ \Sigma_{r+1})\]
	defined by \(\Sigma_j = \delta_0\delta_1\ldots \delta_j (F_{t_j})\) is a bijection.
\end{prop}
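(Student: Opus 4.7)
The plan follows the approach of \cite[Proposition 2]{GaussentLittelmann:LS} closely: I would exhibit the map as a bijection by constructing an explicit inverse through induction along the gallery, with the essential input being standard transitivity of Weyl-group stabilizers on sets of facets in the Coxeter complex of $(W_\aff, S_\aff)$.

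First I would check well-definedness on the twisted product. For a tuple $(\delta_0, \ldots, \delta_r)$ with $\delta_0 \in W_{\mathbf{f}_0}$ and $\delta_j \in W_j'$, the rule sets $\Sigma_j := \delta_0 \cdots \delta_j(F_{t_j})$ and forces $\Sigma_j' := \delta_0 \cdots \delta_{j-1}(F_{t_j'})$. The type inclusions $t_{j-1} \subset t_j' \supset t_j$ translate into $F_{t_j'} \prec F_{t_{j-1}}$ and $F_{t_j'} \prec F_{t_j}$ in $\overline{\Delta_f}$, so $\Sigma_j'$ is a genuine codimension-one face of both $\Sigma_{j-1}$ and $\Sigma_j$; since $\delta_j \in W_j' = \langle t_j'\rangle$ fixes $F_{t_j'}$, the expressions $\delta_0 \cdots \delta_{j-1}(F_{t_j'})$ and $\delta_0 \cdots \delta_j(F_{t_j'})$ agree, so the assembled gallery is consistent, with source $\mathbf{f}_0$ (fixed by $\delta_0 \in W_{\mathbf{f}_0}$). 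Invariance under the equivalence $(\ldots, \delta_j, \delta_{j+1}, \ldots) \sim (\ldots, \delta_j \eta, \eta^{-1} \delta_{j+1}, \ldots)$ for $\eta \in W_j$ is immediate, since $\eta$ fixes $F_{t_j}$ and every other facet is unchanged; the final quotient by $W_r$ absorbs the corresponding ambiguity on $\delta_r$.

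For bijectivity I would build the inverse by induction on $j$. Given $\Sigma_\bullet \in \Gamma(\gamma_\mu)$, the first facet $\Sigma_0$ has type $t_0$ and $\mathbf{f}_0$ as a face; the action of $W_{\mathbf{f}_0}$ on facets of type $t_0$ containing $\mathbf{f}_0$ is transitive with point stabilizer $W_0$, producing $\delta_0 \in W_{\mathbf{f}_0}$ unique modulo $W_0$ with $\Sigma_0 = \delta_0(F_{t_0})$. Assuming $\delta_0, \ldots, \delta_{j-1}$ are constructed so that $\Sigma_i = \delta_0 \cdots \delta_i(F_{t_i})$ for $i < j$, the face $\Sigma_j'$ of $\Sigma_{j-1}$ of type $t_j'$ is unique (since $F_{t_{j-1}}$ has a unique face of that type in $\overline{\Delta_f}$) and so equals $\delta_0 \cdots \delta_{j-1}(F_{t_j'})$; the facets of type $t_j$ of which $F_{t_j'}$ is a face form a single $W_j'$-orbit with point stabilizer $W_j$, so there exists $\delta_j \in W_j'$, unique modulo $W_j$, with $\Sigma_j = \delta_0 \cdots \delta_j(F_{t_j})$. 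At $j = r$ the residual $W_r$-ambiguity is exactly absorbed by the quotient $W_r'/W_r$, which gives both injectivity and surjectivity of the original map simultaneously.

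The only substantive input is the transitivity-with-stabilizer statement used at each inductive step. It rests on the standard description of the link of a face $F_t$ in the Coxeter complex of $(W_\aff, S_\aff)$ as the Coxeter complex of $(W_t, t)$, which forces the facets containing $F_t$ to be exhausted by $W_t$-translates of the faces of $\Delta_f$ that contain $F_t$. This combinatorics is insensitive to the passage from split to residually split groups, because our standing assumption that $G$ is residually split guarantees that the affine root system is reduced; the Coxeter-theoretic manipulations of \cite{GaussentLittelmann:LS} therefore transpose to the present setting without modification.
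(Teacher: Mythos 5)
Your proof is correct and takes essentially the same approach as the paper's; the paper's one-line proof ("simple transitivity of $W_{\aff}$ on alcoves plus the $W_j, W'_j$ being the stabilizers of the corresponding facets") is precisely the transitivity-with-stabilizer input you identify, and your inductive construction of the inverse fleshes out the standard argument in the expected way.
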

\begin{proof}
	This follows easily from \(W_{\aff}\) acting simply transitively on the alcoves of \(\app\), and the \(W_j\) and \(W'_j\) being the stabilizers of the corresponding facets.
\end{proof}

Using this proposition, we can and will denote combinatorial galleries as \(\delta = [\delta_0,\ldots,\delta_r]\), where each \(\delta_j\in W'_j\) is the minimal length representative of its class in \(W'_j/W_j\).
Clearly, we have \(\gamma_\mu\in \Gamma(\gamma_\mu)\), and \(\gamma_\mu = [1,\tau_1,\ldots,\tau_r]\), where each \(\tau_j\in W'_j\) is the minimal length representative of the longest class in \(W'_j/W_j\).
On the other hand, if \(\delta\) is such that \(\delta_j\neq \tau_j\) for some \(1\leq j \leq r\), we say \(\delta\) is \emph{folded} around \(\Sigma_j'\).
More precisely, consider the combinatorial galleries
\[\gamma^{j-1} = [\delta_0, \ldots, \delta_{j-1}, \tau_j, \tau_{j+1}, \ldots, \tau_r] = (\mathbf{f}_0 \prec\ldots \prec \Sigma_{j-1} \succ \Sigma_{j}' \prec \Omega_j \succ \Omega_{j+1}' \prec \Omega_{j+1} \succ \ldots \Omega_r \succ \Omega_{r+1})\]
and
\[\gamma^j = [\delta_0, \ldots, \delta_{j-1}, \delta_j, \tau_{j+1}, \ldots, \tau_r] = (\mathbf{f}_0 \prec\ldots \prec \Sigma_{j-1} \succ \Sigma_{j}' \prec \Sigma_j \succ \Sigma_{j+1}' \prec \Xi_{j+1} \succ \ldots \Xi_r \succ \Xi_{r+1}).\]
Then the following lemma says we can fold \(\gamma^{j-1}\) around \(\Sigma_j'\) to obtain \(\gamma^j\).

\begin{lem}\thlabel{affine roots for positive folds}
	There exist positive affine roots \(\psi_1, \ldots, \psi_a\) for which \(\wall_{\psi_i}\supset \Sigma_j'\), and such that 
	\[\Sigma_j = s_{\psi_a}\ldots s_{\psi_1}(\Omega_j).\]
	Then, for any \(j<l\leq r\), we also have
	\[\Xi_l = s_{\psi_a}\ldots s_{\psi_1}(\Omega_l).\]
\end{lem}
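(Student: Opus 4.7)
The approach is to reduce the lemma to a purely combinatorial statement inside the finite Coxeter group $W_j'$, and then conjugate back into the affine setting to get the desired reflections about walls through $\Sigma_j'$.

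First, I would set up notation for the common prefix. Let $v := \delta_0\delta_1\cdots \delta_{j-1} \in W_{\aff}$. Using \thref{classification of combinatorial galleries} and the fact that $W_j$ stabilizes $F_{t_j}$, a direct unfolding of the definitions gives $\Sigma_j' = vF_{t_j'}$, $\Sigma_j = v\delta_j F_{t_j}$, and for the two tails,
\[\Omega_l = v\,\tau_j\tau_{j+1}\cdots \tau_l\, F_{t_l}\qquad (l\ge j),\qquad \Xi_l = v\,\delta_j\tau_{j+1}\cdots \tau_l\, F_{t_l}\qquad (l>j).\]
Comparing these two formulas, all $l$ are simultaneously treated by a single element of $W_{\aff}$ acting on the left: the element $w := v\,\delta_j\tau_j^{-1}\,v^{-1}$ sends $\Omega_j$ to $\Sigma_j$ and, for $l>j$, sends $\Omega_l$ to $\Xi_l$, by a one-line computation. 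So the whole lemma reduces to writing $w$ as a product $s_{\psi_a}\cdots s_{\psi_1}$ with $\psi_i$ positive and $\wall_{\psi_i}\supset \Sigma_j'$.

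The second step is the purely group-theoretic factorization. Since both $\delta_j,\tau_j\in W_j'$ (they are, respectively, the minimal and maximal length representatives of the same $W_j$-coset inside the finite Coxeter group $W_j'$), the product $\delta_j\tau_j^{-1}$ lies in $W_j'$. Because $W_j'$ is generated by the simple reflections $\{s_{\alpha'}\mid \alpha'\in t_j'\}$, one may write
\[\delta_j\tau_j^{-1} \;=\; s_{\alpha_a'}\,s_{\alpha_{a-1}'}\cdots s_{\alpha_1'}\]
for some $\alpha_i'\in t_j'$ (the length $a$ and the specific expression are irrelevant for the statement). For each $i$, set $\psi_i$ to be whichever of $\pm v(\alpha_i')$ is a positive affine root; this is possible because $v\in W_{\aff}$ permutes the set of affine roots up to sign, and is harmless because $s_{\psi_i}=s_{-\psi_i}$. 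The standard conjugation identity $v s_{\alpha_i'} v^{-1} = s_{v(\alpha_i')} = s_{\psi_i}$ then gives $w = s_{\psi_a}\cdots s_{\psi_1}$.

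The third step is the wall-containment and the verification of both displayed formulas. Since $\alpha_i'\in t_j'$ means $\wall_{\alpha_i'}\supset F_{t_j'}$, applying $v$ yields $\wall_{\psi_i}\supset v(F_{t_j'}) = \Sigma_j'$, as required. The two facet identities then follow from the computation
\[s_{\psi_a}\cdots s_{\psi_1}(\Omega_l) \;=\; v\,\delta_j\tau_j^{-1}\,v^{-1}\cdot v\,\tau_j\tau_{j+1}\cdots \tau_l\,F_{t_l} \;=\; v\,\delta_j\tau_{j+1}\cdots \tau_l\,F_{t_l},\]
valid for every $l\ge j$, which specializes to $\Sigma_j$ for $l=j$ and to $\Xi_l$ for $l>j$.

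There is no serious obstacle. The only point requiring care is that ``positive'' in the statement is a convention on affine roots, not a constraint on the factorization: positivity is arranged by the free sign flip $\psi_i\rightsquigarrow -\psi_i$, which preserves the reflection $s_{\psi_i}$. Everything else is a transparent translation between the building and $W_{\aff}$, plus the general fact that a finite Coxeter group is generated by its simple reflections.
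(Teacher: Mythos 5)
Your reduction step — unwinding \thref{classification of combinatorial galleries} to show that the single element $w=v\,\delta_j\tau_j^{-1}\,v^{-1}$ (with $v=\delta_0\cdots\delta_{j-1}$) carries $\Omega_j\mapsto\Sigma_j$ and $\Omega_l\mapsto\Xi_l$ for all $l>j$ simultaneously — is exactly the observation behind the paper's two displayed identities, and is correct.

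The divergence is in the factorization of $w$. The paper, following \cite[Lemma~5]{GaussentLittelmann:LS}, fixes a \emph{reduced} word $s_{\zeta_1}\cdots s_{\zeta_a}$ in $W_j'$ and takes $\wall_{\psi_i}=v\,s_{\zeta_1}\cdots s_{\zeta_{i-1}}\wall_{\zeta_i}$, i.e.\ the conjugated inversion-set walls; these are precisely the hyperplanes through $\Sigma_j'$ separating $\Omega_j$ from $\Sigma_j$, an intrinsic set independent of the reduced expression. You instead take an arbitrary word $\delta_j\tau_j^{-1}=s_{\alpha_a'}\cdots s_{\alpha_1'}$ in the generators of $W_j'$ and conjugate letter by letter, so $\wall_{\psi_i}=v\,\wall_{\alpha_i'}$. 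Both are legitimate factorizations of $w$ into reflections fixing $\Sigma_j'$, so both prove the existence statement \emph{as written}. But the two lists of walls are genuinely different: already for $\delta_j\tau_j^{-1}=s_1s_2$, yours are $\{v\wall_{\alpha_1},v\wall_{\alpha_2}\}$ while the paper's are $\{v\wall_{\alpha_1},vs_1\wall_{\alpha_2}\}$; moreover your list can have repeated walls and depends on the chosen word, which you explicitly allow to be non-reduced.

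This matters one step later. The $\psi_i$ produced by this lemma are then fed into the definition of a \emph{positively folded} gallery, and the remark after that definition asserts well-definedness only across choices of \emph{reduced decomposition}. The half-space conditions imposed by your $\psi_i$ are a different (and a priori not well-defined) collection, so while your argument does prove the lemma's literal existence claim, you should construct the $\psi_i$ via a reduced word and its inversion set so that the subsequent definition refers to the intended, intrinsic set of folding walls.
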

\begin{proof}
	This is similar to \cite[Lemma 5]{GaussentLittelmann:LS}; let us sketch the proof.
	By \thref{classification of combinatorial galleries}, we know that
	\[\Sigma_j = \delta_0\ldots\delta_j (F_{t_j}) = (\delta_0\ldots \delta_{j-1}) \delta_j \tau_j^{-1} (\delta_0\ldots \delta_{j-1})^{-1}(\Omega_j).\]
	Since \(\Sigma_j'\) is a face of both \(\Sigma_j\) and \(\Omega_j\), it must be fixed by \(\delta_j\tau_j^{-1}\).
	Fix a reduced decomposition \(s_{\zeta_1}\ldots s_{\zeta_a}\) of the minimal representative of the class of \(\delta_j\tau_j^{-1}\) in \(W_j'/W_j\).
	All \(s_{\zeta_i}\) lie in the type \(S_\aff(F_{t_j})\).
	Then one checks that the (unique) positive affine roots corresponding to the hyperplane \(\delta_0\ldots \delta_{j-1} s_{\zeta_1}\ldots s_{\zeta_i} \wall_{s_{\zeta_i}}\), for varying \(i\), satisfy the desired condition for the first statement of the lemma.
	The second statement then follows from the observation that
	\[\Xi_l = \delta_0\ldots\delta_j \tau_{j+1} \ldots \tau_l (F_{t_l}) = (\delta_0\ldots \delta_{j-1}) \delta_j \tau_j^{-1} (\delta_0\ldots \delta_{j-1})^{-1}(\Omega_l).\]
\end{proof}

This allows us to define a special subset of combinatorial galleries.

\begin{dfn}
	Let \(\delta= [\delta_0,\ldots,\delta_r] = (\mathbf{f}_0 \prec \Sigma_0 \succ \Sigma'_1 \prec \ldots \succ \Sigma'_r \prec \Sigma_r\succ \Sigma_{r+1})\) be a combinatorial gallery of type \(\gamma_\mu\).
	Let \(j\geq 1\), and consider the associated positive affine roots \(\psi_1,\ldots,\psi_a\) obtained by the previous lemma.
	We say \(\delta\) is positively folded at \(\Sigma_j'\) if for all \(1\leq i\leq a\), the facet \(\Sigma_j\) is contained in the positive closed half-space determined by \(\wall_{\psi_i}\).
	We denote by \(\Gamma^+(\gamma_\mu)\subseteq \Gamma(\gamma_\mu)\) the subset of positively folded combinatorial galleries, i.e., those galleries which are positively folded everywhere.
	Moreover, for \(\nu\in X_*(T_{\adj})_I\), we write \(\Gamma^+(\gamma_\mu,\nu):=\Gamma^+(\gamma_\mu)\cap \Gamma(\gamma_\mu,\nu)\).
\end{dfn}

It is clear that this definition is independent of the chosen affine roots \(\psi_1,\ldots,\psi_a\), which in turn depend on a choice of reduced decomposition of a certain element in \(W_{\aff}\).

\begin{ex}
	If \(\delta_j = \tau_j\) for all \(j\geq 1\), i.e., when \(\delta\) does not have any folds, then the set of affine roots obtained from \thref{affine roots for positive folds} is empty.
	Hence the condition above is automatically satisfied, so that \(\delta\) is positively folded.
	In particular, all minimal combinatorial galleries are positively folded.
\end{ex}

Now, let us relate the above combinatorics to geometry.
The choice of \(\gamma_\mu\) induces a resolution of the corresponding Schubert variety.

\begin{dfn}\thlabel{defi BS resolution}
	The Bott-Samelson scheme \(\Sigma(\gamma_\mu)\) is the contracted product
	\[\Sigma(\gamma_\mu):= L^+\Pp_{\mathbf{f}_0} \overset{L^+\Pp_0}{\times} L^+\Pp'_1 \overset{L^+\Pp_1}{\times} \ldots \overset{L^+\Pp_{r-1}}{\times} L^+\Pp'_r/L^+\Pp_r.\] 
\end{dfn}

Clearly, \(\Sigma(\gamma_\mu)\) is representable by a smooth projective scheme, as an iterated Zariski-locally trivial fibration with partial flag varieties as fibers.
As we moreover have \(\Pp_r\subseteq \Pp_\mu\), multiplication induces a proper map
\[\pi_\mu\colon \Sigma(\gamma_\mu)\to \Fl_{\mathbf{f}_\mu}.\]
Thanks to the following proposition, we can also call \(\Sigma(\gamma_\mu)\) a Bott-Samelson resolution.

\begin{prop}
	The image of the multiplication map \(\pi\) above is \(\Fl_{\leq \mu}(\mathbf{f}_0,\mathbf{f}_\mu)\), and the induced map \(\pi\colon \Sigma(\gamma_\mu)\to \Fl_{\leq \mu}(\mathbf{f}_0,\mathbf{f}_\mu)\) is \(L^+\Gg\)-equivariant and birational.
\end{prop}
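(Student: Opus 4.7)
The proposition collects three assertions about $\pi_\mu$: $L^+\Gg$-equivariance, identification of the image with $\Fl_{\leq \mu}(\mathbf{f}_0, \mathbf{f}_\mu)$, and birationality. The plan is to treat them in order by adapting the classical Demazure resolution template, using only ingredients developed above. Equivariance is essentially built into the construction: since $\Gg$ corresponds to $\mathbf{f}_0$, we have $L^+\Gg = L^+\Pp_{\mathbf{f}_0}$, and this group acts on $\Sigma(\gamma_\mu)$ by left multiplication on the leftmost factor of the contracted product, intertwining with the natural left action on $\Fl_{\mathbf{f}_\mu}$ through $\pi_\mu$.

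For the image, I would first note that $\Sigma(\gamma_\mu)$ is proper, being an iterated Zariski-locally trivial fibration whose fibers $L^+\Pp'_j/L^+\Pp_j$ are perfections of the partial flag varieties of the reductive quotients of the special fibers. Hence $\pi_\mu$ is proper, its image is closed, and by equivariance is a union of affine Schubert varieties in $\Fl_{\mathbf{f}_\mu}$. To exhibit $\Fl_\mu$ in the image, I lift $\gamma_\mu = [1, \tau_1, \ldots, \tau_r]$ to a $k$-point of $\Sigma(\gamma_\mu)$ by choosing representatives $\dot{\tau}_j \in (N_G(S) \cap \Pp'_j)(\Oo)$, and observe that the image of this point lies in $\Fl_\mu$ precisely because $\gamma_\mu$ is a minimal gallery joining $\mathbf{f}_0$ and $\mathbf{f}_\mu$. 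For the reverse inclusion, a dimension count suffices: $\dim \Sigma(\gamma_\mu) = \sum_j l(\tau_j)$, and minimality of $\gamma_\mu$ forces this sum to equal $l(w_\mu) = \dim \Fl_{\leq \mu}$, where $w_\mu \in \tilde{W}/W_{\mathbf{f}_\mu}$ is the minimal length representative of $\mu$. Since $\Sigma(\gamma_\mu)$ is irreducible, its image is an irreducible closed subvariety of dimension at most $l(w_\mu)$ containing $\Fl_\mu$, so it must equal $\Fl_{\leq \mu}$.

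Birationality will then follow from the scheme-theoretic incarnation of \thref{classification of combinatorial galleries}: the Bruhat decomposition of each factor $L^+\Pp'_j$ by $L^+\Pp_j$-orbits yields a filtrable decomposition of $\Sigma(\gamma_\mu)$ whose strata are indexed by $\Gamma(\gamma_\mu)$, with the stratum attached to $[\delta_0, \ldots, \delta_r]$ being a perfect affine space of dimension $\sum_j l(\delta_j)$. The unique open dense stratum corresponds to $\gamma_\mu$ itself (all $\delta_j = \tau_j$), has dimension $l(w_\mu)$, and the restriction of $\pi_\mu$ to it is the natural parameterization of the Schubert cell $\Fl_\mu$, hence an isomorphism onto an open dense subset of $\Fl_{\leq \mu}$.

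The main technical obstacle, and the step requiring most care in the residually split ramified setting, is transferring the combinatorial parameterization of \thref{classification of combinatorial galleries} to a geometric one; this rests on the reductive quotients of the special fibers of the $\Pp'_j$ being split with reduced root systems, which holds by our residually split assumption on $G$, and ensures that the Bruhat cells of each $L^+\Pp'_j/L^+\Pp_j$ are in bijection with $W'_j/W_j$ with the expected dimension formula. Once this is in place, the three assertions drop out as above.
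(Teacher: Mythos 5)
The paper's proof of this proposition is a one-line reference to \cite[(3.1)]{Richarz:Schubert}, so your more detailed write-up is a genuinely different approach. The equivariance and properness arguments are fine, and lifting $\gamma_\mu$ to a $k$-point landing in $\Fl_\mu$ is correct. The gap is in the birationality/dimension argument.

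You identify the open dense stratum of the factor-wise Bruhat decomposition with $\gamma_\mu$, writing ``all $\delta_j = \tau_j$,'' and compute $\dim\Sigma(\gamma_\mu)=\sum_j l(\tau_j)$. But $\gamma_\mu = [1,\tau_1,\dots,\tau_r]$ has $\delta_0 = 1$, the \emph{smallest} coset in $W_{\ff_0}/W_0$, so $\gamma_\mu$ lies in the \emph{closed} Bruhat stratum of the first factor $L^+\Pp_{\ff_0}/L^+\Pp_0 = L^+\Gg/L^+\Pp_0$, not the open one. The open Bruhat cell corresponds to $[\tau'_0,\tau_1,\dots,\tau_r]$, where $\tau'_0$ is the longest element of $W_{\ff_0}/W_0$, which is a different combinatorial gallery (it does not even end at $\ff_\mu$). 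Correspondingly, $\dim \Sigma(\gamma_\mu) = l(\tau'_0) + \sum_{j\geq 1} l(\tau_j)$; the first summand is generally nonzero (e.g.\ for $\PGL_2$ with $\mu=1$ one has $\Pp_0=\Ii$, $L^+\Gg/L^+\Pp_0 \cong \IP^{1,\perf}$, and $\dim\Sigma(\gamma_\mu)=2$ whereas $\sum_{j\geq 1} l(\tau_j)=1$). This looks like a conflation of the Bruhat stratification with the attractor/retraction-fiber decomposition $\bigsqcup_\delta C_\delta$ of \thref{retraction fibers are filtrably decomposed}: in the latter, $C_{\gamma_\mu}$ really is the open dense cell and has dimension $\langle 2\rho,\mu\rangle$, even though $\delta_0=1$, because those cells are $\IG_m$-attractors, not Bruhat cells, and a gallery's dimension (as in \thref{examples of dimension of galleries}) is not $\sum_j l(\delta_j)$. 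The argument can likely be repaired — work with the genuine open Bruhat cell and check that $\tau'_0\tau_1\cdots\tau_r$ is the maximal-length representative of the double coset corresponding to $\mu$ — but as written the key step mislabels both the open stratum and its dimension, which is where the minimality of $\gamma_\mu$ would actually have to be used.
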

\begin{proof}
	The \(L^+\Gg\)-equivariance follows from the definition, while the fact that \(\pi\) is birational onto \(\Fl_{\leq \mu}(\mathbf{f}_0,\mathbf{f}_\mu)\) can be proven as in \cite[(3.1)]{Richarz:Schubert}
\end{proof}

We can describe the closed points of \(\Sigma(\gamma_\mu)\) as galleries.
Recall that for any parahoric \(\Pp\) of \(G\), the \(k\)-valued points of \(LG/L^+\Pp\) can be identified with the parahorics of the same type as \(\Pp\).

\begin{prop}
	The Bott-Samelson resolution \(\Sigma(\gamma_\mu)\) can be identified with the closed subscheme of
	\[LG/L^+\Pp_0 \times \left(\prod_{j=1,\ldots, r} LG/L^+\Pp'_j\times LG/L^+\Pp_j \right) \times LG/L^+\Pp_{\mu}\]
	corresponding to the sequences of parahorics (i.e., galleries)
	\[(\Pp_{\mathbf{f}_0} \supset \Qq_0 \subset \Qq'_1 \supset \Qq_1 \supset \ldots \subset \Qq'_r \supset \Qq_r \subset \Qq_\mu)\]
	of type \(t_{\gamma_\mu}\) starting at \(\mathbf{f}_0\).
\end{prop}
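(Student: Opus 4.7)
The plan is to construct a natural morphism $\Phi\colon \Sigma(\gamma_\mu) \to \mathscr{Y}$, where $\mathscr{Y}$ denotes the ambient product on the right, to verify that $\Phi$ factors through the closed subscheme $\mathscr{Z}\subseteq \mathscr{Y}$ of galleries of type $t_{\gamma_\mu}$ starting at $\mathbf{f}_0$, and to check that the resulting morphism $\Sigma(\gamma_\mu)\to \mathscr{Z}$ is an isomorphism by induction on $r$. Throughout, I use the identification of an $R$-point $gL^+\Pp$ of $LG/L^+\Pp$ with the parahoric $g\Pp g^{-1}$ over $W_\Oo(R)$; for standard parahorics $\Pp\subseteq \Pp'$ the natural projection $LG/L^+\Pp\to LG/L^+\Pp'$ is a Zariski-locally trivial $L^+\Pp'/L^+\Pp$-fibration (cf.\ \cite{PappasRapoport:Twisted, BhattScholze:Projectivity}) and sends a parahoric of type $t_\Pp$ to the unique over-parahoric of type $t_{\Pp'}$ containing it.

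First I would define $\Phi$ on $R$-points by sending
\[
[g_0,g_1,\ldots,g_r] \;\longmapsto\; \bigl(g_0L^+\Pp_0,\; g_0g_1L^+\Pp'_1,\; g_0g_1L^+\Pp_1,\; \ldots,\; g_0\cdots g_r L^+\Pp'_r,\; g_0\cdots g_r L^+\Pp_r,\; g_0\cdots g_r L^+\Pp_\mu\bigr).
\]
Well-definedness under the contracted-product relations is immediate: the substitution $(g_{j-1},g_j)\mapsto (g_{j-1}p,p^{-1}g_j)$ for $p\in L^+\Pp_{j-1}$ leaves every partial product $g_0\cdots g_l$ invariant for $l\ne j-1$, while for $l=j-1$ the partial product is right-multiplied by $p\in L^+\Pp_{j-1}$ and so represents the same coset in $LG/L^+\Pp_{j-1}$. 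The incidence conditions defining $\mathscr{Z}$, namely $\Qq_0\subseteq \Pp_{\mathbf{f}_0}$, $\Qq_j\subseteq \Qq'_j$, $\Qq_j\subseteq \Qq'_{j+1}$, and $\Qq_r\subseteq \Qq_\mu$, then follow directly from the parahoric inclusions $\Pp_0\subseteq \Pp_{\mathbf{f}_0}$, $\Pp_j\subseteq \Pp'_j$, $\Pp_j\subseteq \Pp'_{j+1}$, $\Pp_r\subseteq \Pp_\mu$, together with $g_0\in L^+\Pp_{\mathbf{f}_0}$.

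For the isomorphism $\Sigma(\gamma_\mu)\cong \mathscr{Z}$, I would induct on $r$. The base case $r=0$ reduces to identifying $L^+\Pp_{\mathbf{f}_0}/L^+\Pp_0$ with the variety of parahorics $\Qq_0$ of type $t_0$ contained in $\Pp_{\mathbf{f}_0}$, which is the standard identification recalled above. For the inductive step, forgetting the last triple $(\Qq'_r,\Qq_r,\Qq_\mu)$ produces a morphism $\mathscr{Z}\to \mathscr{Z}^{(r-1)}$ to the analogous subscheme for the truncated gallery type, isomorphic by induction to $\Sigma(\gamma_\mu^{(r-1)})$. The fiber over a truncated gallery ending in $\Qq_{r-1}$ consists of triples $(\Qq'_r,\Qq_r,\Qq_\mu)$ with the prescribed types and containments; since $\Qq'_r$ must be the image of $\Qq_{r-1}$ under $LG/L^+\Pp_{r-1}\to LG/L^+\Pp'_r$ and $\Qq_\mu$ the image of $\Qq_r$ under $LG/L^+\Pp_r\to LG/L^+\Pp_\mu$, the fiber reduces to the choice of $\Qq_r$ of type $t_r$ contained in $\Qq'_r$, which is $L^+\Pp'_r/L^+\Pp_r$. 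This matches the Bott-Samelson recursion $\Sigma(\gamma_\mu)\cong \Sigma(\gamma_\mu^{(r-1)})\times^{L^+\Pp_{r-1}}L^+\Pp'_r/L^+\Pp_r$, closing the induction.

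The main obstacle is to verify that the conditions ``$\Qq'_r$ equals the image of $\Qq_{r-1}$'' and ``$\Qq_\mu$ equals the image of $\Qq_r$'' (and their analogues at each intermediate step) cut out $\mathscr{Z}$ as a genuinely closed subscheme of $\mathscr{Y}$ rather than merely a set-theoretic locus. This reduces to the separatedness of the projections $LG/L^+\Pp\to LG/L^+\Pp'$ for standard pairs $\Pp\subseteq \Pp'$, which is inherited from ind-properness of the ambient affine flag varieties. Once this is established, the iterated fibration structure on $\mathscr{Z}$ matches that of $\Sigma(\gamma_\mu)$, and $\Phi$ restricts to isomorphisms on both the bases (by induction) and the fibers (by inspection), yielding the identification.
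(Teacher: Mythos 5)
Your proposal is correct and follows essentially the same approach as the paper. The paper's own proof is very short: it writes down the same morphism you construct (sending $[g_0,\ldots,g_r]$ to the gallery with $\Gamma_j = g_0\cdots g_j F_{t_j}$, noting that the small faces $\Gamma'_j$ are then uniquely determined), and then defers to \cite[Definition-Proposition 1]{GaussentLittelmann:LS} with the remark that the argument is unchanged from the split case. What you have done is spell out the contents of that citation: well-definedness under the contracted-product relations, factorization through the gallery subscheme via the normalizing property $g_j\Pp'_j g_j^{-1}=\Pp'_j$, and the identification by matching iterated-fibration structures on both sides, using that each facet has a unique face of any given larger type. Your final remark about closedness via separatedness (the gallery conditions being graph conditions for the projections $LG/L^+\Pp_j\to LG/L^+\Pp'_j$, closed because the ind-projective targets are separated) is a point the paper leaves implicit; it is worth noting that the containment conditions actually impose two graph conditions at each $\Qq'_j$ (both from $\Qq_{j-1}$ and from $\Qq_j$), though only one is needed to describe the fiber in your induction, the other becoming the constraint on where $\Qq_j$ may live.
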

\begin{proof}
	The proof is the same as in the split case, cf.~\cite[Definition-Proposition 1]{GaussentLittelmann:LS}.
	Specifically, the morphism
	\[\Sigma(\gamma_\mu)\to LG/L^+\Pp_0 \times \left(\prod_{j=1,\ldots, r} LG/L^+\Pp'_j\times LG/L^+\Pp_j \right) \times LG/L^+\Pp_{\mu}\]
	is given by sending \([g_0,\ldots, g_p]\) to the sequence of parahorics corresponding to the gallery
	\[(\mathbf{f}_0 \prec \Gamma_0 \succ \Gamma'_1 \prec \ldots \succ \Gamma'_r \prec \Gamma_r\succ \Gamma'_{r+1}),\]
	where \(\Gamma_j=g_0g_1\ldots g_j F_{t'_j}\). This already determines the small faces uniquely
\end{proof}

As in \cite[Remark 3.28]{CassvdHScholbach:Geometric}, \(\pi\) maps the minimal gallery \(\gamma_\mu\) to the point \(\varpi^{w_0(\mu)}\in \Fl_\mu(\mathbf{f}_0,\mathbf{f}_\mu)\), where \(w_0\) is the longest element in the relative Weyl group of \(G\).
Hence, we see that \(\pi\) restricts to a map \(L^+\Gg_{\mathbf{f}_0}\gamma_\mu\to \Fl_\mu(\mathbf{f}_0,\mathbf{f}_\mu)\).
Since all minimal galleries of type \(t_{\gamma_\mu}\) lie in the same \(L^+\Gg_{\mathbf{f}_0}\)-orbit, birationality of \(\pi\) gives the following generalization of \cite[Lemma 10]{GaussentLittelmann:LS}.

\begin{cor}\thlabel{Schubert cells are the minimal galleries}
	The Bott-Samelson resolution \(\pi\colon \Sigma(\gamma_\mu)\to \Fl_{\leq \mu}(\mathbf{f}_0,\mathbf{f}_\mu)\) induces a bijection between the minimal galleries in \(\Sigma(\gamma_\mu)\) (i.e., sequences of parahorics defined over \(\Oo\)), and \(\Fl_\mu(\mathbf{f}_0,\mathbf{f}_\mu)(k)\).
\end{cor}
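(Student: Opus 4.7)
My strategy is to combine the $L^+\Gg$-equivariance and birationality of $\pi$ with the observation that the set of minimal galleries in $\Sigma(\gamma_\mu)$ forms an open dense $L^+\Gg$-orbit, matching the unique open $L^+\Gg$-orbit $\Fl_\mu(\mathbf{f}_0,\mathbf{f}_\mu)\subseteq \Fl_{\leq \mu}(\mathbf{f}_0,\mathbf{f}_\mu)$.

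First I identify the minimal galleries with the $L^+\Gg$-orbit of $\gamma_\mu$. By \thref{classification of combinatorial galleries}, a combinatorial gallery $[\delta_0,\ldots,\delta_r]$ is unfolded (equivalently, minimal) exactly when each $\delta_j$ equals the longest representative $\tau_j\in W'_j/W_j$. Lifted to $\Sigma(\gamma_\mu)$, the $L^+\Gg$-orbit of $\gamma_\mu=[1,\tau_1,\ldots,\tau_r]$ corresponds to taking the big Bruhat cell in each step $L^+\Pp'_j/L^+\Pp_j$; it is therefore open and dense in $\Sigma(\gamma_\mu)$, and its $k$-points are exactly the set of minimal galleries.

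Surjectivity is then immediate from equivariance together with the identity $\pi(\gamma_\mu)=\varpi^{w_0(\mu)}$ recalled just before the corollary: one has $\pi(L^+\Gg\cdot\gamma_\mu) = L^+\Gg\cdot \varpi^{w_0(\mu)} = \Fl_\mu(\mathbf{f}_0,\mathbf{f}_\mu)$. For injectivity I would strengthen birationality to the statement that $\pi$ restricts to an isomorphism $\pi^{-1}(\Fl_\mu(\mathbf{f}_0,\mathbf{f}_\mu))\xrightarrow{\sim}\Fl_\mu(\mathbf{f}_0,\mathbf{f}_\mu)$. Birationality provides a dense open $U\subseteq \Fl_{\leq \mu}(\mathbf{f}_0,\mathbf{f}_\mu)$ over which $\pi$ is an isomorphism; irreducibility of $\Fl_{\leq \mu}(\mathbf{f}_0,\mathbf{f}_\mu)$ together with openness of $\Fl_\mu(\mathbf{f}_0,\mathbf{f}_\mu)$ forces $U\cap \Fl_\mu(\mathbf{f}_0,\mathbf{f}_\mu)\neq \varnothing$. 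By equivariance, $\pi$ is also an isomorphism over each translate $g\cdot U$, hence over the open $L^+\Gg$-stable saturation $\bigcup_{g\in L^+\Gg} g\cdot U$. Since this saturation meets the unique open $L^+\Gg$-orbit, it must contain all of $\Fl_\mu(\mathbf{f}_0,\mathbf{f}_\mu)$, giving the desired isomorphism upon restriction. Combined with the previous paragraph this yields the required bijection between minimal galleries and $\Fl_\mu(\mathbf{f}_0,\mathbf{f}_\mu)(k)$.

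The only mildly delicate step is the equivariant saturation argument, which rests on the fact that $\Fl_\mu(\mathbf{f}_0,\mathbf{f}_\mu)$ is the unique dense open $L^+\Gg$-orbit inside the irreducible Schubert variety $\Fl_{\leq \mu}(\mathbf{f}_0,\mathbf{f}_\mu)$; everything else is formal from birationality and the combinatorial description of minimal galleries.
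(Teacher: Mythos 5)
Your proof is correct and follows the same route as the paper: identify the minimal galleries as the $L^+\Gg$-orbit of $\gamma_\mu$, use $\pi(\gamma_\mu)=\varpi^{w_0(\mu)}$ and equivariance to hit $\Fl_\mu(\mathbf{f}_0,\mathbf{f}_\mu)$, and then invoke birationality to upgrade to a bijection. The one addition you make explicit — the equivariant saturation argument promoting the birational locus to an isomorphism over all of $\Fl_\mu(\mathbf{f}_0,\mathbf{f}_\mu)$ — is implicit in the paper's terse ``birationality of $\pi$ gives\ldots'' and is a worthwhile clarification. One small imprecision: a combinatorial gallery $[\delta_0,\ldots,\delta_r]$ is unfolded iff $\delta_j=\tau_j$ for $j\geq 1$ only; the leading entry $\delta_0\in W_{\mathbf{f}_0}/W_0$ is unconstrained (e.g.\ the unfolded gallery $\gamma_{w(\mu)}$ has $\delta_0\neq 1$ in general). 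This does not affect the argument, since the set of minimal galleries is still a single $L^+\Gg$-orbit, which is what you actually use.
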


\subsection{Root operators}

Next, we introduce root operators on the set of combinatorial galleries \(\Gamma(\gamma_\mu)\). 
Although this is not strictly necessary for \thref{Intersections of Schubert cells and semi-infinite orbits}, it will simplify a step in the proof of this theorem.
While root operators can be defined for arbitrary affine buildings as in \cite{Schwer:Roots}, we will emphasize the connection to the group \(G\) and the affine Grassmannian \(\Gr_{\Gg}\), in order to relate them to representation theory.
In particular, we believe the results below to be of independent interest.

Let \(\alpha\) be a simple nondivisible relative root of \(G\), and set \(u_{\alpha}\in \IQ\) to be as in \cite[Proposition 1.3.49 (3)]{KalethaPrasad:BruhatTits}, i.e., it represents the jumps between the affine roots with derivative \(\alpha\).
We will define three partially defined operators \(f_\alpha\), \(e_\alpha\), and \(\widetilde{e}_\alpha\) on \(\Gamma(\gamma_\mu)\). 
So let us fix some \(\gamma = (\mathbf{f}_0 = \Gamma_0' \prec \Gamma_0 \succ \Gamma'_1 \prec \ldots \succ \Gamma'_r \prec \Gamma_r\succ \Gamma'_{r+1} = \mathbf{f}_\nu)\in \Gamma(\gamma_\mu)\).
Let \(m\in u_{\alpha}\IZ\) be minimal such that \(\Gamma_l'\subseteq \wall_{\alpha,m} = \{x\in \app\mid \langle\alpha,x\rangle_S =m\}\) for some \(0\leq l\leq r+1\); in particular \(m\leq 0\).
We will consider the following (not mutually exclusive) cases:

\begin{enumerate}[(I)]
	\item If \(m<0\), let \(l\) be minimal such that \(\Gamma_l'\subseteq \wall_{\alpha,m}\), and let \(0\leq j\leq l\) be maximal such that \(\Gamma_j'\subseteq \wall_{\alpha,m+u_{\alpha}}\).
	\item If \(m< \langle \alpha,\nu \rangle_S\), let \(j\) be maximal such that \(\Gamma_j'\subseteq \wall_{\alpha,m}\), and let \(j\leq l\leq r+1\) be minimal such that \(\Gamma_l'\subseteq \wall_{\alpha,m+u_{\alpha}}\).
	\item If \(\gamma\) crosses \(\wall_{\alpha,m}\), let \(j\) be minimal such that \(\Gamma_j'\subseteq \wall_{\alpha,m}\) and such that \(\wall_{\alpha,m}\) separates \(\Gamma_{i}'\) from \(\CC_{-\infty}\) for \(i<j\). Moreover, let \(l>j\) be minimal such that \(\Gamma_l'\subseteq \wall_{\alpha,m}\).
\end{enumerate}

\begin{dfn}
	For a nondivisible simple relative root \(\alpha\) as above, we define three root operators as partially defined functions \(\Gamma(\gamma_\mu) \to \Gamma(\gamma_\mu)\).
	Fix some \(\gamma\in \Gamma(\gamma_\mu)\) as above.
	\begin{itemize}
		\item If \(\gamma\) satisfies (I), we define \(e_\alpha(\gamma)=(\mathbf{f}_0 \prec \Sigma_0 \succ \Sigma'_1 \prec \ldots \succ \Sigma'_r \prec \Sigma_r\succ \Sigma_{r+1})\), where
		\[\Sigma_i = \begin{cases}
			\Gamma_i & \text{for } i<j,\\
			s_{\alpha,m+u_{\alpha}}(\Gamma_i) & \text{for } j\leq i< l,\\
			t_{\alpha^\vee}(\Gamma_i) & \text{for } i\geq l.
		\end{cases}\]
		Here, \(s_{\alpha,m+u_{\alpha}}\) denotes the reflection across \(\wall_{\alpha,m+u_{\alpha}}\), while \(t_{\alpha^\vee}\) denotes the translation by \(\alpha^\vee\).
		\item If \(\gamma\) satisfies (II), we define \(f_\alpha(\gamma)=(\mathbf{f}_0 \prec \Sigma_0 \succ \Sigma'_1 \prec \ldots \succ \Sigma'_r \prec \Sigma_r\succ \Sigma_{r+1})\), where
		\[\Sigma_i = \begin{cases}
			\Gamma_i & \text{for } i<j,\\
			s_{\alpha,m}(\Gamma_i) & \text{for } j\leq i< l,\\
			t_{-\alpha^\vee}(\Gamma_i) & \text{for } i\geq l.
		\end{cases}\]
		\item If \(\gamma\) satisfies (III), we define \(\widetilde{e}_\alpha(\gamma)=(\mathbf{f}_0 \prec \Sigma_0 \succ \Sigma'_1 \prec \ldots \succ \Sigma'_r \prec \Sigma_r\succ \Sigma_{r+1})\), where
		\[\Sigma_i = \begin{cases}
			\Gamma_i & \text{for } i<j \text{ and } i\geq l,\\
			s_{\alpha,m}(\Gamma_i) & \text{for } j\leq i< l.
		\end{cases}\]
	\end{itemize}
\end{dfn}

One readily checks that \(e_\alpha\), \(f_\alpha\), and \(\widetilde{e}_\alpha\) take values in \(\Gamma(\gamma_\mu)\).
The following properties of the root operators are standard, compare with \cite[Lemma 6]{GaussentLittelmann:LS}, and follow immediately from the definitions.

\begin{lem}\thlabel{observations about root operators}
	Fix a nondivisible simple relative root \(\alpha\), as well as some \(\gamma\in \Gamma(\gamma_\mu,\nu)\).
	\begin{enumerate}
		\item If \(e_\alpha(\gamma)\) is defined, we have \(e_\alpha(\gamma)\in \Gamma(\gamma_\mu,\nu+\alpha^\vee)\). 
		\item If \(f_\alpha(\gamma)\) is defined, we have \(f_\alpha(\gamma)\in \Gamma(\gamma_\mu,\nu-\alpha^\vee)\).
		\item If \(e_\alpha(\gamma)\) is defined, then \(f_\alpha(e_\alpha(\gamma))\) is defined and equals \(\gamma\).
		\item If \(f_\alpha(\gamma)\) is defined, then \(e_\alpha(f_\alpha(\gamma))\) is defined and equals \(\gamma\).
		\item Let \(p\) and \(q\) be maximal such that \(f_\alpha^p(\gamma)\) and \(e_\alpha^q(\gamma)\) are defined. Then \(p-q = \frac{\langle \alpha,\nu\rangle_S}{u_{\alpha}}\).
	\end{enumerate}
\end{lem}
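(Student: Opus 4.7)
The plan is to verify each item by directly unwinding the definitions of the operators $e_\alpha$ and $f_\alpha$, following the strategy of \cite[Lemma 6]{GaussentLittelmann:LS} but with extra bookkeeping for the jump parameter $u_\alpha$ in the quasi-split setting.

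For (1) and (2), the substantive assertion is that $e_\alpha(\gamma)$ and $f_\alpha(\gamma)$ are again combinatorial galleries in $\Gamma(\gamma_\mu)$, with the claimed targets. Type-preservation is automatic, since each $\Sigma_i$ is obtained from $\Gamma_i$ by an element of $W_\aff$, so the task reduces to checking compatibility at the splice indices $j$ and $l$. At index $j$, the reflection $s_{\alpha, m+u_\alpha}$ (for $e_\alpha$) resp. $s_{\alpha, m}$ (for $f_\alpha$) fixes $\Gamma_j'$ by the very choice of $j$, so the reflected block matches the unchanged prefix across $\Sigma_j' = \Gamma_j'$. At index $l$, one invokes the Bruhat--Tits identity $t_{\alpha^\vee} = s_{\alpha, m} \cdot s_{\alpha, m+u_\alpha}$, which follows from the normalization of $u_\alpha$ and $\alpha^\vee$ in \cite[Proposition 1.3.49]{KalethaPrasad:BruhatTits}. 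Combined with $\Gamma_l' \subseteq \wall_{\alpha, m}$, this gives $t_{\alpha^\vee}(\Gamma_l') = s_{\alpha, m+u_\alpha}(\Gamma_l')$, and the analogous identity for $t_{-\alpha^\vee}$ takes care of $f_\alpha$. The new target vertex is then $t_{\pm \alpha^\vee}(\ff_\nu) = \ff_{\nu \pm \alpha^\vee}$.

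For (3) and (4), the strategy is to compute the data $(m', j', l')$ controlling $f_\alpha$ when applied to $e_\alpha(\gamma)$, and verify that it inverts the original operation. After applying $e_\alpha$, the minimum value of $\langle \alpha, \cdot \rangle_S$ along the gallery rises from $m$ to $m + u_\alpha$, first attained at the same index $j$ as before, and the minimal subsequent index whose small face lies in $\wall_{\alpha, m+2u_\alpha}$ coincides with the original $l$. Substituting back into the definition of $f_\alpha$ reproduces $f_\alpha(e_\alpha(\gamma)) = \gamma$; statement (4) is entirely symmetric.

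For (5), each application of $e_\alpha$ raises the minimum of $\langle \alpha, \cdot \rangle_S$ on the gallery by $u_\alpha$ and is defined exactly when this minimum is strictly negative, giving $q = -m/u_\alpha$ where $m$ is the minimum attained along $\gamma$. Dually, each application of $f_\alpha$ lowers the endpoint in the $\alpha$-direction by $u_\alpha$ and is defined precisely when $m < \langle \alpha, \nu \rangle_S$, giving $p = (\langle \alpha, \nu \rangle_S - m)/u_\alpha$. Subtracting yields $p - q = \langle \alpha, \nu \rangle_S / u_\alpha$. The main point of care throughout the proof is that the coroot $\alpha^\vee$ and the jump $u_\alpha$ must be consistently normalized so that the reflection-and-translation identities above hold; once this is pinned down, all assertions reduce to mechanical verifications that mirror the split case of \cite{GaussentLittelmann:LS}.
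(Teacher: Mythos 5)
Your proposal takes essentially the same approach as the paper, which gives no proof of its own here but simply cites \cite[Lemma 6]{GaussentLittelmann:LS} and states that the claims ``follow immediately from the definitions.'' The unwinding you supply — splice consistency at the indices $j$ and $l$ via the identity $t_{\pm\alpha^\vee} = s_{\alpha,m+u_\alpha}\cdot s_{\alpha,m}$ in the normalization of \cite[Proposition 1.3.49]{KalethaPrasad:BruhatTits} (you have the two factors reversed, but this only changes the sign of the translation), the inversion of the data $(m,j,l)\mapsto(m+u_\alpha,j,l)$ for (3)--(4), and the count in (5) — is precisely the verification the paper leaves implicit, and it is correct.
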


The result below gives a first indication of the relation between galleries and representation theory.
For a combinatorial gallery \(\delta\), let \(e(\delta)\in X_*(T_{\adj})_I\) correspond to the target of \(\gamma\), so that \(\delta\in \Gamma(\gamma_\mu,e(\delta))\).
We also set \(\Char \Gamma(\gamma_\mu) := \sum_{\gamma\in \Gamma(\gamma_\mu)} \exp(e(\gamma))\).
Finally, let \(\Gamma(\gamma_\mu,\dom)\) be the set of combinatorial galleries \(\delta\in \Gamma(\gamma_\mu)\) for which \(e_\alpha(\delta)\) is not defined for any \(\alpha\).

On the other hand, consider the inertia-invariants \(\widehat{G}^I\) of the Langlands dual group of \(G\); for this section, we consider \(\widehat{G}\) and \(\widehat{G}^I\) over \(\Spec \mathbb{C}\) (or any field of characteristic 0).
In that case, \(\widehat{G}^I\) is a (connected, by the assumption that \(G\) is simply connected) split reductive group with maximal torus \(\widehat{T}^I\) \cite[Theorem 1.1 (4)]{ALRR:Fixed}.
The Weyl group of \(\widehat{G}^I\) is canonically isomorphic to \(W^I\), the relative Weyl group of \(G\).
Then, for \(\mu\in X_*(T)_I^+ \cong X^*(\hat{T}^I)^+\), we let \(V(\mu)\) be the unique irreducible complex representation of \(\widehat{G}^I\) of highest weight \(\mu\).

\begin{cor}\thlabel{character formula 1}
	There is an equality \[\Char \Gamma(\gamma_\mu) = \sum_{\gamma\in \Gamma(\gamma_\mu,\dom)} \Char V(e(\gamma)).\]
\end{cor}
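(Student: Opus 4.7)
The plan is to adapt the crystal-theoretic argument of \cite[Corollary 1]{GaussentLittelmann:LS} to our residually split ramified setting. The main idea is to endow $\Gamma(\gamma_\mu)$, via the root operators introduced above, with a structure that decomposes it into combinatorial analogues of the irreducible $\widehat{G}^I$-representations indexed by dominant weights.

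First, I would define an equivalence relation $\sim$ on $\Gamma(\gamma_\mu)$ by declaring $\gamma \sim \gamma'$ whenever $\gamma'$ can be reached from $\gamma$ by finitely many applications of the root operators $e_\alpha$ and $f_\alpha$; parts (3) and (4) of \thref{observations about root operators} ensure this is indeed an equivalence relation. Since $\Gamma(\gamma_\mu)$ is finite by \thref{classification of combinatorial galleries}, iterated application of the $e_\alpha$'s to any $\gamma$ must terminate, producing a dominant element of $[\gamma]$. Decomposing $\Char \Gamma(\gamma_\mu)$ according to these equivalence classes reduces the claim to showing that, for every $\gamma_0 \in \Gamma(\gamma_\mu, \dom)$ with weight $\lambda := e(\gamma_0)$, the class $[\gamma_0]$ contains $\gamma_0$ as its \emph{unique} dominant element and satisfies
\[\sum_{\gamma \in [\gamma_0]} \exp(e(\gamma)) = \Char V(\lambda).\]

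The central step is to identify $[\gamma_0]$, equipped with the operators $e_\alpha, f_\alpha$, with the Littelmann--Kashiwara crystal $B(\lambda)$ of the irreducible $\widehat{G}^I$-representation $V(\lambda)$. The string-length relation in part (5) of \thref{observations about root operators} already shows that $\lambda$ is a highest weight and that the $\alpha$-strings in $[\gamma_0]$ have the required lengths. To conclude, one verifies the remaining axioms characterizing $B(\lambda)$ (for instance, Stembridge's axioms for normal crystals); in the split case this is precisely the content of \cite{GaussentLittelmann:LS}, ultimately resting on Littelmann's path model.

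The main obstacle is verifying these axioms in the relative setting. However, all combinatorial ingredients entering the definitions of galleries and root operators -- the apartment $\app$, the relative root system of $G$ with its simple nondivisible roots $\alpha$, the scaling constants $u_\alpha$, and the pairing $\langle -,- \rangle_S$ with $X_*(T_{\adj})_I$ -- depend only on the relative Dynkin diagram of $G$. By \cite[Theorem 1.1]{ALRR:Fixed}, the root datum of $\widehat{G}^I$ is dual to this combinatorial data. Hence, after passing (as permitted) to the simply connected cover of the derived subgroup, the setup matches that of a split reductive group with the same relative Dynkin diagram, and the arguments of \cite{GaussentLittelmann:LS} carry over without essential modification.
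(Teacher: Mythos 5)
Your proposal takes a genuinely different route from the paper. The paper proves the corollary directly via the Weyl character formula: it reduces the identity to showing that an alternating sum over pairs $(w,\delta)\in W^I\times\Gamma(\gamma_\mu)$ with $w(\rho)+e(\delta)$ dominant vanishes (excluding $(1,\gamma_0)$ with $\gamma_0$ dominant), and then exhibits an explicit sign-reversing involution on that set, built by looking at where the translated gallery $w(\rho)+\delta$ first meets a wall of the dominant chamber and applying a suitable power of $e_\alpha$ or $f_\alpha$. This uses only the properties of the root operators already recorded in the Lemma on root operators, and in particular never needs to know that the gallery set forms a crystal, let alone identify connected components with $B(\lambda)$.

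Your crystal-theoretic route is plausible in principle, but as written it has a substantive gap at the crucial step. Decomposing $\Char\Gamma(\gamma_\mu)$ along the equivalence relation generated by $e_\alpha,f_\alpha$ reduces the claim to two assertions: (i) each class contains a unique element on which no $e_\alpha$ is defined, and (ii) the generating function of each class is $\Char V(\lambda)$. Neither follows from parts (3)--(5) of the root-operator lemma; they are precisely the content of the statement that the gallery set is a \emph{normal} crystal, i.e.\ that each connected component is isomorphic to $B(\lambda)$. Verifying this (whether via Stembridge's local axioms, which moreover require care outside simply-laced types such as the $G_2$, $F_4$, $BC_n$ root systems that arise for $\widehat{G}^I$, or via a direct comparison with Littelmann's path model) is a theorem in its own right and is not addressed in your sketch beyond ``one verifies the remaining axioms.'' You also cite \cite[Corollary~1]{GaussentLittelmann:LS} as precedent for this crystal argument, but that corollary is in fact proved by the same sign-reversing involution the paper uses; the crystal structure on galleries in \cite{GaussentLittelmann:LS} is developed separately and is not what underlies their Corollary~1. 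So the route you describe would need the full normal-crystal theorem for galleries in the ramified residually split setting as an input, which is considerably heavier than what the corollary requires.
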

\begin{proof}
	The proof is analogous to \cite[Corollary 1]{GaussentLittelmann:LS}.
	By Weyl's character formula, we have to show
	\[\left(\sum_{w\in W^I} \sgn(w) \exp(w(\rho))\right)\cdot \Char(\Gamma(\gamma_\mu)) = \sum_{\gamma\in \Gamma(\gamma_\mu,\dom)} \left(\sum_{w\in W^I} \sgn(w) \exp\left( w(\rho+e(\gamma))\right) \right).\]
	As \(\Char \Gamma(\gamma_\mu)\) is stable under the \(W^I\)-action by \thref{observations about root operators}, both sides of the above equation are stable under \(W^I\), so it suffices to show the coefficients for the dominant weights agree.
	In other words, let \(\Omega:=\{(w,\delta)\in W^I\times \Gamma(\gamma_\mu)\mid w(\rho)+e(\delta)\in X_*(T)^+\}\), and let \(\Omega'\subseteq \Omega\) be the subset consisting of those \((w,\delta)\) such that either \(w\neq 1\) or \(\delta\notin \Gamma(\gamma_\mu,\dom)\).
	Then we are reduced to showing \(\sum_{(w,\delta)\in \Omega'} \exp(w(\rho) + e(\delta)) = 0\).
	For this, we will construct an involution \(\phi\colon \Omega'\to \Omega'\colon (w,\delta)\mapsto (w',\delta')\), such that \(\sgn(w)= -\sgn(w')\) and \(w(\rho) + e(\delta) = w'(\rho) + e(\delta')\).
	The existence of such an involution then finishes the proof.
	
	Fix some \((w,\delta)\in \Omega\), and let \(w(\rho) + \delta\) be the shifted gallery obtained by translating \(\delta\) facetwise by \(w(\rho)\).
	If moreover \((w,\delta)\in \Omega'\), then \(w(\rho)+\delta\) must meet a proper face \(\FF\) of the dominant Weyl chamber in \(\app\).
	For such a proper face \(\FF\), let \(\Omega'(\FF)\subseteq \Omega'\) be the subset of those \((w,\delta)\), with \(\delta=  (\mathbf{f}_0 \prec \Sigma_0 \succ \Sigma'_1 \prec \ldots \succ \Sigma'_r \prec \Sigma_r\succ \Sigma_{r+1})\), satisfying the following: there exists some \(j\) such that \(w(\rho) + \Gamma_j'\) is contained in the interior of \(\FF\), and \(w(\rho)+\Gamma_l'\) is contained in the interior of the dominant Weyl chamber for \(l>j\).
	
	Then \(\Omega' = \bigsqcup \Omega'(\FF)\), so it suffices to define \(\phi\colon \Omega'(\FF)\to \Omega'(\FF)\) with the required properties, for some fixed \(\FF\).
	Let \(\alpha\) be a nondivisible simple relative root of \(G\) orthogonal to \(\FF\).
	Then \(n:=\frac{\langle \alpha,w(\rho)\rangle_S}{u_{\alpha}} \neq 0\) for any \((w,\delta)\in \Omega'(\FF)\).
	If \(\langle \alpha,w(\rho)\rangle_S<0\), we set \(\phi(w,\delta):=(s_\alpha w,f_\alpha^{-n}(\delta))\), which is well-defined and lies in \(\Omega'(\FF)\).
	Similarly, if \(\langle \alpha,w(\rho)\rangle_S>0\), we set \(\phi(w,\delta):=(s_\alpha w,e_\alpha^{n}(\delta))\in \Omega'(\FF)\).
	\thref{observations about root operators} then implies that \(\phi\) is an involution and satisfies the desired properties.
\end{proof}

The corollary above relates the character of all combinatorial galleries of the same type as \(\gamma_\mu\) with the sum of the characters of certain representations of \(\widehat{G}^I\).
In order to single out the character of \(V(\mu)\), we will introduce the dimension of combinatorial galleries.

\begin{dfn}
	Let \(\gamma = (\mathbf{f}_0 = \Gamma_0' \prec \Gamma_0 \succ \Gamma'_1 \prec \ldots \succ \Gamma'_r \prec \Gamma_r\succ \Gamma'_{r+1})\in \Gamma^+(\gamma_\mu)\) be a positively folded combinatorial gallery.
	\begin{enumerate}
		\item A reflection hyperplane \(\wall\) is a \emph{load-bearing wall} for \(\gamma\) at \(\Gamma_j\) if we have \(\Gamma_j'\subseteq \wall\) but \(\Gamma_j\nsubseteq \wall\), and if \(\wall\) separates \(\Gamma_j\) from \(\CC_{-\infty}\).
		\item The \emph{dimension} \(\dim \gamma\) of \(\gamma\) is the number of pairs \((\wall,\Gamma_j)\) such that the reflection hyperplane \(\wall\) is a load-bearing wall for \(\gamma\) at \(\Gamma_j\).
	\end{enumerate}
\end{dfn}

\begin{rmk}
	In the definition above, \(\gamma\) is assumed to be positively folded.
	Hence, if \(\gamma\) is folded at \(\Gamma_j'\), then all folding hyperplanes containing \(\Gamma_j'\) are load-bearing by definition.
	Note also that a hyperplane \(\wall\) need not be load-bearing for all small facets of \(\gamma\) it contains.
\end{rmk}

\begin{ex}\thlabel{examples of dimension of galleries}
	For any positive nondivisible root \(\beta\), there are \(\frac{\langle \beta,\mu\rangle_S}{u_{\beta}}\) load-bearing walls for \(\gamma_\mu\) which are parallel to \(\wall_{\beta,0}\).
	Hence, we have
	\[\dim \gamma_\mu = \sum_\beta \frac{\langle \beta,\mu\rangle_S}{u_{\beta}} = \langle 2\rho,\mu\rangle.\]
	Here, we are implicitly using that our definition of \(\langle 2\rho,-\rangle\) agrees with the definition from e.g.~\cite[§1]{Richarz:Schubert}.
	For any \(w\in W^I\), we obtain a gallery \(\gamma_{w(\mu)}\) be applying \(w\) facetwise to \(\gamma_\mu\).
	Then \(\gamma_{w(\mu)}\) is minimal without foldings, as the same holds for \(\gamma_\mu\).
	Moreover, it is easy to see that \(\dim \gamma_{w(\mu)} = \langle \rho,\mu+w(\mu)\rangle\).
\end{ex}

\begin{rmk}\thlabel{dual galleries}
	Let \(\gamma = (\mathbf{f}_0 = \Gamma_0' \prec \Gamma_0 \succ \Gamma'_1 \prec \ldots \succ \Gamma'_r \prec \Gamma_r\succ \Gamma'_{r+1} = \mathbf{f}_\nu) \in \Gamma(\gamma_\mu)\) be a combinatorial gallery.
	Consider the gallery \(\gamma^*\), which is obtained by reversing the order of the facets in the shifted gallery \(\gamma-\nu\).
	In particular, \(\gamma^*\in \Gamma(w_0(\gamma_\mu^*))\) is a combinatorial gallery, of the same type as \(\gamma_\mu^*\) and \(w_0(\gamma_\mu^*)\).
	In fact, \(\gamma\mapsto \gamma^*\) defines a bijection \(\Gamma(\gamma_\mu)\cong \Gamma(w_0(\gamma_\mu^*))\), and this bijection identifies the positively folded galleries.
	Moreover, for any nondivisible simple relative root \(\alpha\) we have \(e_\alpha(\gamma) = (f_\alpha(\gamma^*))^*\) (respectively \(f_\alpha(\gamma) = (e_\alpha(\gamma^*))^*\)), as soon as either side of these equalities is defined.
\end{rmk}

Using this notion of \emph{dual} combinatorial galleries, we can determine the effect of root operators on the dimensions and folds of galleries, similar to \cite[Lemma 7]{GaussentLittelmann:LS}.

\begin{lem}\thlabel{effect of root operators}
	Let \(\gamma = (\mathbf{f}_0 = \Gamma_0' \prec \Gamma_0 \succ \Gamma'_1 \prec \ldots \succ \Gamma'_r \prec \Gamma_r\succ \Gamma'_{r+1}) \in \Gamma(\gamma_\mu)\) and \(\alpha\) a nondivisible simple relative root.
	\begin{enumerate}
		\item If \(e_\alpha(\gamma)\) (resp.~\(\widetilde{e}_\alpha(\gamma)\), resp.~\(f_\alpha(\gamma)\)) is defined, we have \(\dim e_\alpha(\gamma) = \dim \gamma +1\) (resp.~\(\dim \widetilde{e}_\alpha(\gamma) = \dim \gamma +1\), resp.~\(\dim f_\alpha(\gamma) = \dim \gamma -1\)).
		\item If \(\gamma\in \Gamma^+(\gamma_\mu)\) and \(\widetilde{e}_\alpha(\gamma)\) is defined, then also \(\widetilde{e}_\alpha(\gamma)\in \Gamma^+(\gamma_\mu)\).
		\item Assume \(\gamma\in \Gamma^+(\gamma_\mu)\) and that \(\widetilde{e}_\alpha(\gamma)\) is not defined. 
		Then, if \(e_\alpha(\gamma)\) or \(f_\alpha(\gamma)\) is defined, it is positively folded.
	\end{enumerate}
\end{lem}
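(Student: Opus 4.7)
The plan is to argue by a direct facet-by-facet comparison between $\gamma$ and the image gallery, following the template of Gaussent--Littelmann's Lemma~7, adapted for the scaling factor $u_\alpha$ appearing in our residually split setting. For part~(1), the key point is that each operator alters the gallery only on the segment $j\leq i<l$ (by a reflection $s_{\alpha,m'}$, with $m'=m+u_\alpha$ for $e_\alpha$ and $m'=m$ for $\widetilde{e}_\alpha$) and, for $e_\alpha$ and $f_\alpha$, on $i\geq l$ (by a translation by $\pm\alpha^\vee$).  Walls not parallel to $\wall_{\alpha,0}$ are permuted bijectively under both the reflection and the translation, and one checks that the separation condition with respect to $\CC_{-\infty}$ is preserved, so such walls contribute zero net change to the dimension.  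For walls parallel to $\wall_{\alpha,0}$, i.e., of the form $\wall_{\alpha,n}$ with $n\in u_\alpha\IZ$, a direct computation using the minimality/maximality of $j$ and $l$ in cases~(I) and~(II) shows that exactly one additional load-bearing wall appears across the facets in $[j,l-1]$ (namely $\wall_{\alpha,m+u_\alpha}$ for $e_\alpha$ or $\wall_{\alpha,m}$ for $\widetilde{e}_\alpha$), giving the $+1$ increase. The formula for $f_\alpha$ then follows from parts~(3)--(4) of \thref{observations about root operators}, which identify $f_\alpha$ as inverse to $e_\alpha$ on its domain.

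For part~(2), the salient observation is that $\widetilde{e}_\alpha$ acts by a single reflection in $\wall_{\alpha,m}$, and every small facet $\Gamma_i'$ for $j\leq i\leq l$ lies in $\wall_{\alpha,m}$ by construction.  For each folding position $\Sigma_i'$ of $\gamma$, the positive affine roots $\psi_1,\ldots,\psi_a$ produced by \thref{affine roots for positive folds} either fix $\wall_{\alpha,m}$ pointwise or are conjugated by $s_{\alpha,m}$ to other positive affine roots whose hyperplanes still contain the image of $\Sigma_i'$.  Since each $\Gamma_i'$ lies in $\wall_{\alpha,m}$, the half-space condition defining positivity of the fold is preserved by $s_{\alpha,m}$ (one distinguishes between $\wall_{\psi_k}=\wall_{\alpha,m}$, where the condition is trivially preserved, and $\wall_{\psi_k}$ transverse to $\wall_{\alpha,m}$, where $s_{\alpha,m}$ is a symmetry of the pair of half-spaces).

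For part~(3), when $\widetilde{e}_\alpha(\gamma)$ is undefined, case~(III) fails, so the gallery does not cross $\wall_{\alpha,m}$ in the sense of the definition; in cases~(I) or~(II) the modification on $[j,l-1]$ together with the translation beyond $l$ can be rewritten as a single affine transformation that sends positive affine roots with hyperplanes on the $\CC_{-\infty}$-side of $\wall_{\alpha,m}$ to positive affine roots again.  Combined with the positivity of the folds of $\gamma$, this yields positive foldings of $e_\alpha(\gamma)$ or $f_\alpha(\gamma)$.  The main obstacle is the careful bookkeeping in~(1): once the load-bearing walls are classified by their direction and one verifies that the reflection/translation steps affect parallel and non-parallel walls in the predicted way, parts~(2) and~(3) follow by comparatively short geometric checks.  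A useful simplification, parallel to \thref{dual galleries}, is to handle $f_\alpha$ by duality rather than directly, which reduces the amount of casework.
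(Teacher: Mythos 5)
Your approach is the same facet-by-facet comparison the paper uses, and parts (1) and the duality reduction for $f_\alpha$ are fine. But part (2) contains a factual error that breaks the argument: you claim ``every small facet $\Gamma_i'$ for $j\leq i\leq l$ lies in $\wall_{\alpha,m}$.'' This is false. In case (III), $j$ is the first index where the gallery touches $\wall_{\alpha,m}$ and $l>j$ is the \emph{next} one, so for $j<i<l$ the facet $\Gamma_i'$ lies strictly on the anti-dominant side of $\wall_{\alpha,m}$ and is contained in no $\wall_{\alpha,n}$ at all (since $m$ is minimal). Your subsequent claim that ``the half-space condition defining positivity is preserved by $s_{\alpha,m}$'' is then justified by an incorrect premise and does not go through as stated. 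The correct reasoning, which the paper leaves implicit and you hint at but don't isolate, is: since $\Gamma_i'$ avoids all $\wall_{\alpha,n}$ for $j<i<l$, none of the folding affine roots at $\Gamma_i'$ are parallel to $\alpha$; and because $\alpha$ is a \emph{simple} relative root, $s_{\alpha,m}$ preserves the positivity of every affine root not parallel to $\alpha$, so it carries positive folds at intermediate positions to positive folds. The only positions requiring individual inspection are $i=j$ and $i=l$, which part (1) already handles. A similar imprecision appears in part (3): the reflection on $[j,l)$ and the translation on $[l,\ldots]$ are \emph{different} affine transformations (their composite across the seam is what creates the new fold at $\Gamma_l'$), so they cannot be ``rewritten as a single affine transformation.'' The argument should instead examine the fold created at $\Gamma_l'$ directly: failure of (III) means $\wall_{\alpha,m}$ does not separate $\Gamma_{l-1}$ from $\Gamma_l$, and after $e_\alpha$ the corresponding pair becomes separated by $\wall_{\alpha,m+2u_\alpha}$, which (again using simplicity of $\alpha$) yields a positive fold.
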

\begin{proof}
	The proof is similar to \cite[Lemma 7]{GaussentLittelmann:LS}; we sketch it for convenience.
	\begin{enumerate}
		\item Consider some facets \(\Gamma_j'\prec \Gamma_j\) of \(\gamma\), and a reflection hyperplane \(\wall\supseteq \Gamma_j'\).
		Then translations preserve the relative position of \(\Gamma_j\) and \(\wall\) with respect to \(\CC_{-\infty}\).
		On the other hand, a reflection reverses the relative position if and only if \(\Gamma_j'\) is contained in a hyperplane parallel to the hyperplane fixed by the reflection.
		From there one deduces the lemma for \(e_{\alpha}\) and \(f_{\alpha}\).
		
		Now, let \(j,l\) be as in (III). Then \(\wall_{\alpha,m}\) is load-bearing at both \(\Gamma_j\) and \(\Gamma_l\) for \(\widetilde{e}_{\alpha}(\gamma)\), but only at \(\Gamma_l\) for \(\gamma\).
		Since \(\widetilde{e}_{\alpha}\) does not affect the relative position at the other facets of the galleries, we conclude that \(\dim \widetilde{e}_{\gamma} = \dim \gamma +1\).
		\item Again, we only have to check the facets for the indices \(j\) and \(l\). 
		But the proof of (1) already implies that all additional folds are positive.
		\item We consider \(e_{\alpha}\), the case of \(f_{\alpha}\) being dual.
		Let \(j,l\) be as in (I); as before, it suffices to consider the folds at \(\Gamma_j'\) and \(\Gamma_l'\).
		By definition, \(\Gamma_j'\) only obtains a positive fold by applying \(e_{\alpha}\), regardless of the assumption on \(\widetilde{e}_{\alpha}(\gamma)\).
		However, if the latter is not defined, this means that \(\wall_{\alpha,m}\) does not separate \(\Gamma_{l-1}\) and \(\Gamma_l\).
		Hence, applying \(e_{\alpha}\) to these two facets gives two facets which are separated by \(\wall_{\alpha,m+2u_{\alpha}}\).
		Since \(\alpha\) is simple, we conclude that all folds of \(e_{\alpha}(\gamma)\) are positive, as required.
	\end{enumerate}
\end{proof}

This allows us to give a bound on the dimension of positively folded combinatorial galleries.

\begin{prop}\thlabel{dimension estimate}
	For any \(\gamma\in \Gamma^+(\gamma_\mu)\), we have \(\dim \gamma \leq \langle \rho,\mu+e(\gamma)\rangle\).
\end{prop}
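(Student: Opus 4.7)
The proof proceeds by iteratively applying the root operators defined in the preceding subsection in order to raise the dimension of \(\gamma\) while tracking the movement of its endpoint, thereby reducing to the extremal case \(\gamma = \gamma_{w(\mu)}\) treated in \thref{examples of dimension of galleries}.

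\textbf{Iterative procedure.} Starting with \(\gamma \in \Gamma^+(\gamma_\mu)\), I repeatedly apply root operators according to the following rule: at each stage, if some \(\widetilde{e}_\alpha\) is defined on the current gallery for a nondivisible simple relative root \(\alpha\), apply it; otherwise, if some \(e_\alpha\) is defined, apply it instead. By \thref{effect of root operators}(2), \(\widetilde{e}_\alpha\) preserves positive foldedness; by \thref{effect of root operators}(3), when \(\widetilde{e}_\alpha\) is undefined, so does \(e_\alpha\). By \thref{effect of root operators}(1), each application increases the dimension by exactly one, while either leaving the endpoint unchanged (for \(\widetilde{e}_\alpha\)) or shifting it by \(\alpha^\vee\) (for \(e_\alpha\)). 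Since dimensions of elements of \(\Gamma^+(\gamma_\mu)\) are bounded above by \(\dim \Sigma(\gamma_\mu)\), the procedure terminates at some gallery \(\gamma_\infty \in \Gamma^+(\gamma_\mu)\) on which neither \(\widetilde{e}_\alpha\) nor \(e_\alpha\) is defined, for any nondivisible simple relative \(\alpha\).

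\textbf{Identification of the terminal gallery.} The key claim is that \(\gamma_\infty\) is unfolded, i.e.\ \(\gamma_\infty = \gamma_{w(\mu)}\) for the unique \(w \in W^I\) with \(w(\mu) = e(\gamma_\infty)\). Non-applicability of all \(e_\alpha\) forces the minimal level \(m \in u_\alpha \IZ\) with some \(\Gamma_l' \subseteq \wall_{\alpha, m}\) to satisfy \(m \geq 0\), so every small facet of \(\gamma_\infty\) lies in the closed dominant Weyl chamber. Non-applicability of \(\widetilde{e}_\alpha\) further excludes configurations where \(\gamma_\infty\) touches a wall \(\wall_{\alpha, 0}\), leaves it, and returns to it later. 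Combined with positive foldedness this rules out nontrivial folds: any such fold along a hyperplane containing the small facet \(\Sigma'_j\) would, by \thref{affine roots for positive folds}, either produce a wall of the required separating form or push the gallery out of the dominant chamber, contradicting one of the two non-applicability conditions. Hence \(\gamma_\infty\) is a minimal gallery starting at \(\mathbf{f}_0\) with gallery of types \(t_{\gamma_\mu}\) and target \(w(\mu)\), which by \thref{classification of combinatorial galleries} and \thref{first facet is fixed} coincides with \(\gamma_{w(\mu)}\).

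\textbf{Conclusion.} Write \(N_{\widetilde{e}}\) for the number of \(\widetilde{e}_\alpha\)-applications and \(\alpha_1,\ldots,\alpha_{N_e}\) for the (possibly repeated) simple relative roots used in the \(e_\alpha\)-applications. Then
\[
	\dim \gamma_\infty \;=\; \dim \gamma + N_{\widetilde{e}} + N_e, \qquad e(\gamma_\infty) \;=\; e(\gamma) + \sum_{i=1}^{N_e} \alpha_i^\vee.
\]
By \thref{examples of dimension of galleries} we have \(\dim \gamma_\infty = \langle \rho,\mu+e(\gamma_\infty)\rangle\). Using the standard normalization \(\langle \rho, \alpha_i^\vee\rangle = 1\) for simple relative coroots (which is built into the echelonnage of the Bruhat--Tits data of the residually split, semisimple simply connected group \(G\); for simple absolute roots this is immediate, and the general case reduces to this after summing over the corresponding inertia orbits with the chosen conventions), we compute
\[
	\dim \gamma \;=\; \langle \rho,\mu+e(\gamma_\infty)\rangle - N_{\widetilde{e}} - N_e \;=\; \langle \rho, \mu + e(\gamma)\rangle + N_e - N_{\widetilde{e}} - N_e \;\leq\; \langle \rho,\mu+e(\gamma)\rangle,
\]
as required.

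\textbf{Main obstacle.} The most delicate point is the identification of \(\gamma_\infty\) with some \(\gamma_{w(\mu)}\): termination of the procedure only guarantees that none of the operators \(e_\alpha\), \(\widetilde{e}_\alpha\) apply, not that the gallery is literally unfolded. Excluding nontrivial positive folds requires the case analysis above, which closely parallels the corresponding step in \cite{GaussentLittelmann:LS} but must be checked to carry over verbatim to the residually split (possibly non-split) setting. A secondary but nontrivial point is the normalization \(\langle \rho, \alpha^\vee\rangle = 1\) for simple relative coroots; this is standard once one recalls the definition of \(\alpha^\vee \in X_\ast(T)_I\) via the echelonnage, but deserves to be spelled out given the mixture of absolute and relative pairings used in this section.
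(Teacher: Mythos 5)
Your proof takes the same basic route as the paper — use root operators \(e_\alpha, \widetilde{e}_\alpha\) to trade dimension for endpoint displacement and reduce to an unfolded extremal gallery — but organizes it as an iteration to a terminal gallery rather than as an induction on the defect \(\langle \rho, \mu - e(\gamma)\rangle\). Your final bookkeeping (each \(\widetilde{e}\) loses one in the target estimate, each \(e\) is neutral) and the normalization \(\langle \rho, \alpha^\vee\rangle = 1\) match what the paper implicitly uses, and the iteration is well‑posed since each step increases dimension and dimensions are bounded.

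However, there is a real gap at the step you yourself flag as the main obstacle: the identification of the terminal gallery \(\gamma_\infty\). You argue that non‑applicability of all \(e_\alpha\) confines the small facets to the closed dominant chamber, and that non‑applicability of all \(\widetilde{e}_\alpha\) then excludes folds. Neither of these quite holds as stated. Having \(m=0\) for all \(\alpha\) only says that \(0\) is the minimal level among those \(u_\alpha\IZ\)-multiples for which some small facet is \emph{entirely contained} in a level set \(\wall_{\alpha,m}\); it does not immediately confine the whole gallery to the dominant chamber when the intermediate facets are not contained in any such level set. More importantly, a positively folded gallery can in principle have a fold inside the dominant chamber along some affine hyperplane not of the form \(\wall_{\alpha,0}\), and nothing in your "touches a wall, leaves it, returns" argument (which addresses only \(\wall_{\alpha,0}\)) visibly excludes this. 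The argument you gesture at via \thref{affine roots for positive folds} would need to be spelled out, and it is not clear that it can be made to work in this form.

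The paper's proof circumvents this entirely: it chooses the simple root \(\alpha\) depending on the gallery, namely with \(s_\alpha\gamma_0 < \gamma_0\) (possible as soon as \(\gamma_0 \neq \identity\), which holds whenever \(e(\gamma) \neq \mu\) by positive foldedness), and observes that for this \(\alpha\) at least one of conditions (I), (III) must hold, i.e.\ \(e_\alpha\) or \(\widetilde{e}_\alpha\) is defined. This is the crucial "progress" lemma missing from your proof: it guarantees that any gallery stuck for all operators already has \(\gamma_0 = \identity\), and a positively folded gallery with \(\gamma_0 = \identity\) must equal \(\gamma_\mu\). (Note also that, since none of the \(e_\alpha\) apply to \(\gamma_\infty\), \thref{observations about root operators}(5) forces \(e(\gamma_\infty)\) dominant, so your \(\gamma_{w(\mu)}\) is automatically \(\gamma_\mu\) — you could have used this to shorten the conclusion, but it does not repair the identification step.) Replacing your hand‑waved chamber/wall analysis by the paper's choice of \(\alpha\) would make your proof correct; as written, the step is not justified.
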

\begin{proof}
	We proceed by induction on \(\langle \rho,\mu-e(\gamma) \rangle\in \IN\).
	If this number is \(0\), we must have \(e(\gamma)=\mu\).
	But \(\gamma_\mu\) is the only element in \(\Gamma(\gamma_\mu)\) with target \(\mu\), for which the dimension formula holds by \thref{examples of dimension of galleries}.
	So we may assume \(e(\gamma)\neq \mu\), in which case \(\gamma = [\gamma_0,\ldots,\gamma_r]\) with \(\gamma_0\neq \identity\).
	Let \(\alpha\) be a nondivisible simple relative root such that \(s_\alpha \gamma_0< \gamma_0\).
	
	Assume first that \(\widetilde{e}_{\alpha}(\gamma)\) is not defined.
	Then \(\gamma\) does not satisfy condition (III) above, so that it must satisfy condition (I) as \(\gamma_0\neq \identity\).
	In other words, \(e_\alpha(\gamma)\) is defined, and by \thref{effect of root operators} it is positively folded.
	Then the dimension estimate follows by \(\dim e_\alpha(\gamma) = \dim \gamma +1\) and induction.
	
	On the other hand, if \(\widetilde{e}_\alpha(\gamma)\) is defined, we keep applying \(\widetilde{e}_{\alpha'}\) (possibly for different roots), until we get a gallery \(\gamma'\) (necessarily positively folded and with \(e(\gamma') = e(\gamma)\)) and a root \(\beta\) such that \(\widetilde{e}_\beta(\gamma')\) is not defined.
	Then the proposition follows from induction and \thref{effect of root operators}.
\end{proof}

\begin{dfn}
	A positively folded gallery \(\gamma\in \Gamma^+(\gamma_\mu)\) is called an \emph{LS gallery} (or Lakshimibai-Seshadri gallery, in analogy to the terminology from \cite{Littelmann:Paths}) if \(\dim \gamma = \langle \rho, \mu+e(\gamma) \rangle\).
	We denote the set of LS galleries of the same type as \(\gamma_\mu\) by \(\Gamma_{\LS}^+(\gamma_\mu)\).
\end{dfn}

Using these LS galleries, we can now determine the character of a single irreducible representation of \(\widehat{G}^I\), in terms of combinatorial galleries.

\begin{thm}\thlabel{character formula 2}
	\begin{enumerate}
		\item The set \(\Gamma_{\LS}^+(\gamma_\mu)\) is the subset of \(\Gamma(\gamma_\mu)\) generated by \(\gamma_\mu\) under the root operators \(f_\alpha\).
		\item A gallery \(\gamma\in \Gamma^+(\gamma_\mu)\) is an LS gallery if and only if its dual \(\gamma^*\) is an LS gallery.
		\item We have 
		\[\Char V(\mu) = \sum_{\gamma\in \Gamma_{\LS}^+(\gamma_\mu)} \exp(e(\gamma)).\]
	\end{enumerate}
\end{thm}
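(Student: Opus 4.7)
My plan follows Gaussent--Littelmann \cite[\S6--7]{GaussentLittelmann:LS}, adapted to the residually split quasi-split setting. The key point is that the weight lattice $X_*(T)_I$, the relative Weyl group $W^I$, and the pairing $\langle -,-\rangle_S$ with nondivisible relative roots replace their split-group analogues, while $\widehat{G}^I$ being split reductive with weight lattice $X_*(T)_I$ and Weyl group $W^I$ ensures the representation-theoretic input is unchanged.

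For (1), the forward inclusion is by induction on the number of applications of $f_\alpha$. The base case $\gamma_\mu \in \Gamma_{\LS}^+(\gamma_\mu)$ is clear from \thref{examples of dimension of galleries}: $\gamma_\mu$ has no folds (hence is positively folded) and $\dim\gamma_\mu = \langle 2\rho,\mu\rangle = \langle\rho,\mu+e(\gamma_\mu)\rangle$. For the inductive step, if $\gamma \in \Gamma_{\LS}^+(\gamma_\mu)$ with $f_\alpha(\gamma)$ defined, then \thref{effect of root operators}(1), combined with $e(f_\alpha(\gamma)) = e(\gamma) - \alpha^\vee$ from \thref{observations about root operators} and $\langle\rho,\alpha^\vee\rangle = 1$, gives the required dimension identity for $f_\alpha(\gamma)$. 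Positive foldedness follows from \thref{effect of root operators}(3): if $\widetilde{e}_\alpha(\gamma)$ is undefined the conclusion is direct, and otherwise one first replaces $\gamma$ by a maximal $\widetilde{e}_\alpha$-iterate, which remains LS by \thref{effect of root operators}(1)--(2) together with $\langle\rho,\alpha^\vee\rangle = 1$, reducing to the first case. The reverse inclusion proceeds by descending induction on $\langle\rho,\mu-e(\gamma)\rangle \in \IN$: the base case $e(\gamma) = \mu$ forces $\gamma = \gamma_\mu$ by a direct combinatorial analysis analogous to the split case, using \thref{dimension estimate} and \thref{classification of combinatorial galleries}; for $\gamma \neq \gamma_\mu$, one produces a simple nondivisible relative root $\alpha$ such that $e_\alpha(\gamma)$ is defined and again LS, following the strategy of \thref{dimension estimate}. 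Part (2) is then immediate from the duality \thref{dual galleries}: $\gamma \mapsto \gamma^*$ exchanges $e_\alpha$ with $f_\alpha$ and intertwines the LS condition.

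For (3), (1) together with \thref{observations about root operators} shows that $\chi(\mu) := \sum_{\gamma\in \Gamma_{\LS}^+(\gamma_\mu)}\exp(e(\gamma))$ is $W^I$-invariant; since $\mu$ is the unique maximal weight of $\chi(\mu)$ (attained only by $\gamma_\mu$), one may write $\chi(\mu) = \Char V(\mu) + \sum_{\nu < \mu} c_\nu \Char V(\nu)$ with $c_\nu \in \IN$. Applying \thref{character formula 1} to both sides and performing a downward induction on the dominance order of dominant weights then forces $c_\nu = 0$. The main obstacle I anticipate is the verification of positive foldedness of $f_\alpha(\gamma)$ in the forward direction of (1), which requires a careful case analysis of how the folding hyperplanes of $\gamma$ interact with $\wall_{\alpha,0}$ and its parallel translates; the remaining steps are direct translations from the split case.
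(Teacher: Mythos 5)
Your treatment of parts (1) and (2) is essentially the paper's own argument. The forward direction (that $f_\alpha$ preserves LS galleries) contains one misstep: you treat "$\widetilde{e}_\alpha(\gamma)$ defined" as a genuine case and claim that a maximal $\widetilde{e}_\alpha$-iterate of an LS gallery remains LS. That claim is false. By \thref{effect of root operators}(1), $\widetilde{e}_\alpha$ raises $\dim$ by $1$ while fixing the target $e(\gamma)$, so for an LS gallery $\gamma$ (with $\dim\gamma = \langle\rho,\mu+e(\gamma)\rangle$) the iterate would satisfy $\dim\widetilde{e}_\alpha(\gamma) = \langle\rho,\mu+e(\widetilde{e}_\alpha(\gamma))\rangle+1$; since it is positively folded by \thref{effect of root operators}(2), this contradicts the bound of \thref{dimension estimate}. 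In other words, $\widetilde{e}_\alpha(\gamma)$ is simply never defined when $\gamma$ is LS --- that is exactly what the paper observes ("not defined for dimension reasons"), and it means your "otherwise" branch is vacuous. Only your first branch is needed, and it is correct. Part (2) via duality is the same as the paper.

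For part (3) you take a genuinely different route, and here I see a gap. The paper replays the Weyl-character involution from the proof of \thref{character formula 1}, but restricted to the LS subset: by (1) the operators $e_\alpha, f_\alpha$ preserve LS-ness, so the sign-reversing involution $\phi$ on $\Omega'$ descends to LS galleries, and the uncancelled terms reduce to $(1,\gamma_\mu)$ because $\gamma_\mu$ is the only LS gallery in $\Gamma(\gamma_\mu,\dom)$. This yields $\Char V(\mu) = \sum_{\gamma\in\Gamma_{\LS}^+(\gamma_\mu)}\exp(e(\gamma))$ directly. Your proposal instead writes $\chi(\mu) = \Char V(\mu) + \sum_{\nu<\mu}c_\nu\Char V(\nu)$ and asserts that "applying \thref{character formula 1} to both sides and downward induction forces $c_\nu = 0$." Two steps here are not substantiated. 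First, your claim $c_\nu \in \IN$ does not follow from $W^I$-invariance and uniqueness of the maximal weight alone; a priori the $c_\nu$ lie in $\IZ$, and nonnegativity is essentially equivalent to what you are trying to prove (it would follow from a seminormal crystal structure, but establishing that amounts to the same work as the involution argument). Second, \thref{character formula 1} concerns $\Char\Gamma(\gamma_\mu)$, the character of \emph{all} combinatorial galleries, not of the LS subset, and you do not explain how it constrains the $c_\nu$; there is no evident comparison between the non-LS galleries on one side and the non-$\gamma_\mu$ dominant galleries on the other. The cleaner route, and the one the paper takes, is to run the \thref{character formula 1} cancellation argument a second time with the index set replaced by $\Gamma_{\LS}^+(\gamma_\mu)$.
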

\begin{proof}
	\begin{enumerate}
		\item If \(\gamma\in \Gamma_{\LS}^+(\gamma_\mu)\), the proof of \thref{dimension estimate} shows that we can obtain \(\gamma_\mu\) from \(\gamma\) by applying root operators of the form \(e_\alpha\).
		By \thref{observations about root operators}, this shows that repeatedly applying the root operators \(f_\alpha\) to \(\gamma_\mu\) generates at least \(\Gamma_{\LS}^+(\gamma_\mu)\).
		To ensure we only get the LS galleries, it suffices to show the root operators \(f_\alpha\) preserve LS galleries, so let again \(\gamma\in \Gamma_{\LS}^+(\gamma_\mu)\).
		Then \(\widetilde{e}_\alpha(\gamma)\) is not defined for dimension reasons, so \(f_\alpha(\gamma)\) is an LS gallery by \thref{effect of root operators} (1) and (3).
		\item follows from (1) and \thref{dual galleries}.
		\item We can repeat the proof of \thref{character formula 1} to the set of LS galleries, rather than all combinatorial galleries. 
		The statement then follows from the observation that \(\gamma_\mu\) is the only LS gallery in \(\Gamma(\gamma_\mu,\dom)\).
	\end{enumerate}
\end{proof}

\subsection{Retractions}

In order to make the semi-infinite orbits enter the picture, we will use the retraction at infinity.
We will also need to relate it to retractions in the spherical building attached to the maximal reductive quotient of the special fiber of \(\Gg\).
Let us briefly recall these notions, and refer to \cite{Brown:Buildings} for more details, in particular for the building at infinity.

\begin{ex}\thlabel{example spherical retraction}
	For a parahoric integral model \(\Pp\) of \(G\) contained in \(\Gg\), the image in \(\mathsf{G}\) of the special fiber of \(\Pp\) is a parabolic subgroup \(\mathsf{P}\).
	In particular, the Iwahori \(\Ii\) gives rise to a Borel \(\mathsf{B}\subseteq \mathsf{G}\).
	Recall that the chambers of \(\buil^s(\mathsf{G},k)\) are in bijection to \(\mathsf{G}/\mathsf{B}(k)\), and similarly for more general facets; we had denoted the simplex corresponding to a parabolic \(\mathsf{P}\) by \(\FF_{\mathsf{P}}\).
	The retraction \(r_{\CC_f, \app^s(\mathsf{G},\mathsf{S},k)}\colon \buil^s(\mathsf{G},k)\to \app^s(\mathsf{G}, \mathsf{S}, k)\) with center \(\CC_f\) can be described explicitly via the Bruhat decomposition, as in \cite[Example 1]{GaussentLittelmann:LS}.
	Namely, consider a proper parabolic \(\mathsf{P}'\subset \mathsf{G}\) of the same type as a standard parabolic \(\mathsf{P}\), along with its associated simplex \(\FF_{\mathsf{P}'}\) in \(\buil^s(\mathsf{G},k)\).
	By the Bruhat decomposition, we can find \(b\in \mathsf{B}(k)\) and a unique \(w\in W_{\mathsf{G}}/W_{\mathsf{P}}\) such that \(\mathsf{P}'= bw\mathsf{P}\) in \(\mathsf{G}/\mathsf{P}(k)\).
	Then \(\FF_{\mathsf{P}'}\) retracts to \(w\FF_{\mathsf{P}}\), i.e., 
	\[r_{\CC_f, \app^s}(\FF_{\mathsf{P}'}) = w\FF_{\mathsf{P}} = \FF_{w\mathsf{P}w^{-1}}.\] 
\end{ex}

\begin{dfn}
	The retraction at \(-\infty\) is the unique map \(r_{-\infty}\colon \buil(G,F)\to \app\) defined via \(r_{-\infty}(\Delta') = r_{\Delta,\app}(\Delta')\), where \(r_{\Delta,\app}\) is the usual retraction of \(\buil(G,F)\) onto \(\app\) with center \(\Delta\) \cite[Definition 1.5.25]{KalethaPrasad:BruhatTits}, and \(\Delta\) is any alcove contained in a common apartment with \(\Delta'\), as well as in a sector equivalent to \(-\CC_f\).
\end{dfn}

Since the fibers of a retraction \(r_{\Delta,\app}\colon \buil(G,F)\to \app\) are exactly the \(U_\Delta\)-orbits on \(\buil(G,F)\), the following proposition can be obtained similarly to \cite[Proposition 1]{GaussentLittelmann:LS}, cf.~also \cite[§2.3]{Schwer:Roots}.

\begin{prop}\thlabel{fibers of retraction}
	The fibers of \(r_{-\infty}\colon \buil(G,F)\to \app\) are the \(U^-(F)\)-orbits on \(\buil(G,F)\).
\end{prop}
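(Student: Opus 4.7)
The plan is to follow the strategy of \cite[Proposition 1]{GaussentLittelmann:LS}, which is based on two ingredients: (i) for a fixed alcove \(\Delta\), the fibers of \(r_{\Delta,\app}\) are orbits under a specific ``unipotent'' subgroup \(U_\Delta\subseteq G(F)\), and (ii) as \(\Delta\) moves to infinity in the direction of \(-\CC_f\), the subgroups \(U_\Delta\) filter \(U^-(F)\) from below.

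For (i), I would invoke the standard fact from Bruhat–Tits theory that \(r_{\Delta,\app}\colon \buil(G,F)\to \app\) is a \(U_\Delta\)-equivariant projection, where \(U_\Delta\) is the subgroup of \(G(F)\) generated by those affine root groups \(U_\psi(F)\) whose affine root \(\psi\) is nonnegative on \(\Delta\) (equivalently, \(U_\Delta\) is generated by the pointwise fixators of half-apartments whose boundary wall separates some point of \(\buil\) from \(\Delta\)). Since \(U_\Delta\) acts trivially on \(\app\) and transitively on each fiber of \(r_{\Delta,\app}\), the fibers of \(r_{\Delta,\app}\) coincide with the \(U_\Delta\)-orbits.

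For (ii), I would note that if \(\Delta\) lies in a sector equivalent to \(-\CC_f\), then every affine root \(\psi\) nonnegative on \(\Delta\) has negative vector part with respect to the choice of positive roots determined by \(\CC_f\); hence \(U_\Delta\subseteq U^-(F)\). Conversely, for any \(u\in U^-(F)\), a standard argument using the filtration of negative root groups \(U_{-\alpha}(F) = \bigcup_{n} U_{-\alpha,n}\) shows that one can find \(\Delta\) deep enough in a sector equivalent to \(-\CC_f\) with \(u\in U_\Delta\). Thus \(U^-(F) = \bigcup_\Delta U_\Delta\), where \(\Delta\) ranges over alcoves in sectors equivalent to \(-\CC_f\).

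Combining: if \(r_{-\infty}(x) = r_{-\infty}(y)\), choose (using the definition of \(r_{-\infty}\) and the fact that any finite configuration lies in a common apartment with a sufficiently negative alcove) a single \(\Delta\) as above which lies in a common apartment with both \(x\) and \(y\); then \(r_{\Delta,\app}(x)=r_{\Delta,\app}(y)\), so \(y\in U_\Delta\cdot x\subseteq U^-(F)\cdot x\). Conversely, for \(u\in U^-(F)\) and \(x\in \buil\), choose \(\Delta\) negative enough to contain both \(x\) and \(ux\) in a common apartment with \(\Delta\) and with \(u\in U_\Delta\); then \(r_{-\infty}(ux) = r_{\Delta,\app}(ux)= r_{\Delta,\app}(x) = r_{-\infty}(x)\).

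The main obstacle is the exhaustion \(U^-(F) = \bigcup_\Delta U_\Delta\) together with the ability to choose a single \(\Delta\) working simultaneously for any prescribed pair \(x,y\in \buil(G,F)\); this amounts to a careful application of the affine root group filtration in the residually split setting and the fact that any two points in \(\buil(G,F)\) lie in a common apartment, shifted ``far enough'' towards \(-\infty\) to absorb any given element of \(U^-(F)\).
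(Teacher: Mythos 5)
Your overall strategy — identifying the fibers of $r_{\Delta,\app}$ with $U_\Delta$-orbits and then letting $\Delta$ move to $-\infty$ — is exactly what the paper outsources to \cite[Proposition 1]{GaussentLittelmann:LS} and \cite[§2.3]{Schwer:Roots}, and your step (i) is fine. However, the inclusion $U_\Delta \subseteq U^-(F)$ asserted in step (ii) is false, as is the supporting claim that every affine root $\psi$ nonnegative on $\Delta$ has negative vector part. An alcove $\Delta$ is bounded, so no matter how far it sits inside an antidominant sector, there remain affine roots $(\alpha,n)$ with $\alpha$ a \emph{positive} root and $n$ large enough that $(\alpha,n)$ is nonnegative on all of $\Delta$; the corresponding root groups $U_{\alpha,n}$ are deep filtration steps of $U^+(F)$ and hence lie in $U_\Delta$ but not in $U^-(F)$. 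Concretely, for $G=\SL_2$ and $\Delta=[m,m+1]$ with $m\ll 0$, the group $U_\Delta$ contains $U_{\alpha,n}$ for every $n\geq -m$.

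What makes the argument work is the Iwahori factorization $U_\Delta = U_\Delta^-\cdot U_\Delta^+$ with $U_\Delta^\pm = U_\Delta\cap U^\pm(F)$: the subgroup $U_\Delta^+$ consists of elements deep in the filtration of $U^+(F)$, and once $\Delta$ is chosen far enough antidominant (depending on the fixed points $x,y$ under consideration) these elements pointwise fix the bounded region containing $x$, so that $U_\Delta\cdot x = U_\Delta^-\cdot U_\Delta^+\cdot x = U_\Delta^-\cdot x \subseteq U^-(F)\cdot x$. With this replacement both directions of your combination step go through; your converse direction, which only requires $U^-(F)=\bigcup_\Delta U_\Delta^-$ and $U_\Delta^-\subseteq U_\Delta$, was already correct as stated. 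So the conclusion is right, but $U_\Delta\subseteq U^-(F)$ must be replaced by this "$U_\Delta^+$ acts trivially on bounded sets" observation, which is precisely the content your sketch currently skips over.
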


We can also facetwise extend \(r_{-\infty}\) to galleries, which gives the following geometric picture.

\begin{prop}\thlabel{retraction fibers are filtrably decomposed}
	The retraction at infinity induces a map \(r_{\gamma_\mu}\colon \Sigma(\gamma_\mu)(\overline{k})\to \Gamma(\gamma_\mu)\).
	The fibers of this map correspond to locally closed subschemes \(C_\delta\subseteq \Sigma(\gamma_\mu)\), and they form a filtrable decomposition.
\end{prop}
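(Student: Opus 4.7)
The plan is to check that facetwise retraction lands in the set of combinatorial galleries of type $t_{\gamma_\mu}$, to identify each fiber $C_\delta$ with a product of opposite Bruhat cells in the factors of $\Sigma(\gamma_\mu)$, and to assemble a filtration by inducting on the gallery length $r$. For well-definedness, a $\overline{k}$-point of $\Sigma(\gamma_\mu)$ corresponds to a gallery of type $t_{\gamma_\mu}$ with source $\mathbf{f}_0$ in $\buil(G,\breve{F})$; since every retraction $r_{\Delta,\app}$ is a type-preserving simplicial map which is the identity on $\app$, and $\mathbf{f}_0 \in \app$, facetwise application of $r_{-\infty}$ yields a gallery of the same type in $\app$ with source $\mathbf{f}_0$. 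By \thref{classification of combinatorial galleries}, this gives an element of $\Gamma(\gamma_\mu)$, so the map $r_{\gamma_\mu}$ is well-defined.

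To describe $C_\delta$ for $\delta = [\delta_0, \ldots, \delta_r]$, I would represent points of $\Sigma(\gamma_\mu)$ as tuples $[g_0, g_1, \ldots, g_r]$ and translate the retraction condition into an orbit condition on each factor: by \thref{fibers of retraction}, on the $j$-th factor the class of $g_j$ must send the reference facet into the $U^-(\breve{F})$-orbit prescribed by $\delta_j$. After identifying $L^+\Pp'_j/L^+\Pp_j$ with the spherical partial flag variety of the maximal reductive quotient of $\Pp'_j$, these orbits descend to the Bruhat cells for the opposite Borel, indexed by minimal-length representatives of $W'_j/W_j$, exactly as in \thref{example spherical retraction}. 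Thus each $C_\delta$ is locally closed in $\Sigma(\gamma_\mu)$, and étale-locally a product of opposite Schubert cells in partial flag varieties.

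Filtrability then follows by induction on $r$. For $r=0$, the Bruhat decomposition of the partial flag variety $L^+\Pp_{\mathbf{f}_0}/L^+\Pp_0$ by opposite cells is a stratification, hence filtrable. For $r>0$, let $p\colon \Sigma(\gamma_\mu)\to \Sigma'(\gamma_\mu)$ be the projection dropping the last factor, which is a Zariski-locally trivial partial-flag-variety bundle. Combining a filtration of $\Sigma'(\gamma_\mu)$ provided by the inductive hypothesis with the pullback of the Bruhat filtration on each fiber of $p$ yields a filtration of $\Sigma(\gamma_\mu)$ whose successive complements are exactly the $C_\delta$, ordered lexicographically by the Bruhat orders on the quotients $W'_j/W_j$.

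The main obstacle is the matching in step two between the fibers of $r_{-\infty}$ on each factor of the Bott--Samelson scheme and the spherical Bruhat cells from \thref{example spherical retraction}. This requires relating the orbits of $LU^-$, which a priori live on the full affine flag variety, to orbits of the attractor of a suitable opposite parahoric on each $L^+\Pp'_j/L^+\Pp_j$; one must track carefully how the facetwise retraction behaves with respect to the contracted-product structure, in particular showing that the identification with the spherical flag variety is equivariant for the relevant opposite unipotent subgroups. Once this compatibility is in place, the remaining verifications are largely formal and parallel to the split case treated in \cite{GaussentLittelmann:LS}.
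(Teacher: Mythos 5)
Your approach to well-definedness matches the paper's. However, the rest of your argument takes a genuinely different route from the paper's, and it has a gap in the description of the fibers $C_\delta$ that would cause problems.

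The paper does not describe $C_\delta$ factor by factor at this stage. Instead, it picks an anti-dominant regular cocharacter $\lambda\colon \IG_{m,\Oo}\to \Ss$ and shows that the $C_\delta$ are exactly the Bia\l{}ynicki-Birula attractors for the induced $\IG_{m,k}$-action on $\Sigma(\gamma_\mu)$: the fibers of $r_{-\infty}$ are $\IG_m$-equivariant since $S$ normalizes $U^-$, the $\IG_m$-fixed points are precisely the combinatorial galleries, and anti-dominance forces $\lim_{t\to 0}\lambda(t)g=\delta$ for $g\in C_\delta$. Filtrability is then immediate from the general theorem that the BB decomposition of a projective scheme that embeds $\IG_m$-equivariantly in a projective space with linear action is filtrable (\cite[Theorem 3]{BB:Properties}). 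No explicit description of the $C_\delta$ is needed here.

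Your explicit description is where the gap lies. You assert that $C_\delta$ is \'etale-locally a product of the opposite Bruhat cells $\delta_j\IA^-(\delta_j)$ inside the factors $L^+\Pp'_j/L^+\Pp_j$. This is correct for the base factor $j=0$, but fails for $j>0$: the condition on $g_j$ obtained by unwinding the contracted-product structure is the orbit condition for a \emph{conjugate} of $U^-$ by $g_0\cdots g_{j-1}$, and after normalizing to the standard coordinate chart $\IA^+(\delta_j)\delta_j\IA^-(\delta_j)$, this conjugate can cut across the positive directions $R^+(\delta_j)$. The correct description, established later in \thref{description of Cdelta}, is
\[
C_\delta \cong \prod_{j}\Bigl(\prod_{\psi\in R^+(\delta_j)\cap J_{-\infty}(\delta)} L^+\Uu_{\psi}/L^+\Uu_{\psi+}\Bigr)\,\delta_j\,\Bigl(\prod_{\psi\in R^-(\delta_j)} L^+\Uu_{\psi}/L^+\Uu_{\psi+}\Bigr),
\]
which includes those $\IA^+(\delta_j)$-coordinates indexed by \emph{load-bearing} positive walls. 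A sanity check: $\dim C_\delta = \dim\delta$ is the total number of load-bearing walls (cf.\ \thref{description of Cdelta}), whereas a product of opposite cells would have dimension $\sum_j |R^-(\delta_j)|$, which is strictly smaller whenever $\delta$ has any positive load-bearing wall. For instance, for $\delta=\gamma_\mu$ one has $\dim C_{\gamma_\mu}=\langle 2\rho,\mu\rangle$, not zero. So the cells in your proposed factorwise decomposition are of the wrong dimension in general.

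Even setting that aside, the inductive-lexicographic filtration argument would still need a proof that the proposed order is compatible with closure relations in the twisted product; for a genuine Bruhat decomposition this follows from the Bruhat order, but for the modified cells appearing here it is not automatic. The BB argument in the paper sidesteps both issues at once, which is why the explicit cell description is deferred to \thref{description of Cdelta} and not needed to prove filtrability.
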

\begin{proof}
	As retractions are morphisms of simplicial complexes, the extension of \(r_{-\infty}\) to galleries preserves the types of galleries.
	Since \(r_{-\infty}\) moreover fixes \(\app\), it maps a gallery of type \(t_{\gamma_\mu}\) to a combinatorial gallery.
	This induces the desired map \(r_{\gamma_\mu}\colon \Sigma(\gamma_\mu)(\overline{k})\to \Gamma(\gamma_\mu)\).
	
	In order to describe the fibers of this map, choose some anti-dominant regular cocharacter \(\lambda\colon \IG_{m,\Oo}\to \Ss\), and consider the induced \(\IG_{m,k}\)-action on \(\Sigma(\gamma_\mu)\).
	We claim that the \(C_\delta\) arise from a Bialynicki-Birula decomposition as in \cite{BB:Theorems}.
	As the Bott-Samelson resolution can be \(\IG_m\)-equivariantly embedded in a projective space with linear \(\IG_m\)-action, it follows from \cite[Theorem 3]{BB:Properties} that this decomposition is filtrable.
	The claim can be shown as in \cite[Proposition 6]{GaussentLittelmann:LS}, we sketch the proof for convenience of the reader.
	
	By \cite[Corollary 1.16]{Richarz:Spaces}, we can assume \(k=\overline{k}\).
	Let \(\delta = (\mathbf{f}_0 \prec \Sigma_0 \succ \Sigma'_1 \prec \ldots \prec \Sigma_r\succ \Sigma'_{r+1})\in \Gamma(\gamma_\mu)\) be a combinatorial gallery, and let \(g=(\mathbf{f}_0 \prec \Gamma_0 \succ \Gamma'_1 \prec \ldots \prec \Gamma_r \succ \Gamma'_{r+1})\) be a gallery of type \(t_{\gamma_\mu}\) retracting to \(\delta\).
	By \thref{fibers of retraction}, there exist \(u_j\in U^-(\breve{F})\) such that \(\Gamma_j = u_j\cdot \Sigma_j\), which moreover satisfy \(u_0\cdot \mathbf{f}_0 = \mathbf{f}_0\) and \(u_{j-1}^{-1}u_j\cdot \Sigma'_j = \Sigma'_j\).
	Conversely, any sequence \((u_j)_{0\leq j\leq r}\) satisfying these conditions determines a gallery \((u_0\cdot \mathbf{f}_0 \prec u_0\cdot \Sigma_0 \succ u_1\cdot \Sigma'_1 \prec \ldots \prec \Sigma_r\succ \Sigma'_{r+1})\) retracting to \(\delta\).
	Since \(S\) normalizes \(U^-\) and for any \(t\in S(\breve{F})\), the element \((tu_{j-1}^{-1} t^{-1})(tu_jt{-^1})\) preserves \(\Sigma'_j\) exactly when \(t_{j-1}^{-1}u_j\) does, the fibres of \(r_{-\infty}\) are \(\IG_{m,k}\)-equivariant.
	In particular, the \(\IG_{m,k}\)-fixed points in \(\Sigma(\gamma_\mu)\) are exactly the combinatorial galleries \(\Gamma(\gamma_\mu)\).
	The assumption that \(\lambda\) is anti-dominant then implies that \(\lim_{t\to 0} \lambda(t) U^-(\breve{F}) \lambda(t)^{-1} = 1\), and hence also that \(\lim_{t\to 0} \lambda(s) g = \delta\), as desired.
\end{proof}

\begin{lem}\thlabel{only care about positively folded}
	If \(C_\delta\cap \Fl_\mu(\mathbf{f}_0,\mathbf{f}_\mu)\neq \varnothing\) for \(\delta\in \Gamma(\gamma_\mu)\), then \(\delta\in \Gamma^+(\gamma_\mu)\).
\end{lem}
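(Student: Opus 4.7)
The plan is to lift the given point of $C_\delta\cap \Fl_\mu(\mathbf{f}_0,\mathbf{f}_\mu)$ through the Bott-Samelson resolution to a minimal gallery, and then show that the retraction at $-\infty$ always carries a minimal gallery to a positively folded combinatorial one. By \thref{Schubert cells are the minimal galleries}, the preimage in $\Sigma(\gamma_\mu)$ of $\Fl_\mu(\mathbf{f}_0,\mathbf{f}_\mu)(\overline{k})$ consists exactly of the minimal galleries of type $t_{\gamma_\mu}$, so the assumption yields a minimal gallery $g=(\mathbf{f}_0\prec \Gamma_0\succ \Gamma_1'\prec \ldots\prec \Gamma_r\succ \mathbf{f}_\mu)$ with $r_{\gamma_\mu}(g)=\delta$. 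It then suffices, for each $j$ at which $\delta$ is folded, to verify positivity of the fold at $\Sigma_j'$.

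Using the parametrization of the fibers of $r_{\gamma_\mu}$ from the proof of \thref{retraction fibers are filtrably decomposed} (which rests on \thref{fibers of retraction}), I would write $\Gamma_j=u_j\cdot \Sigma_j$ with $u_j\in U^-(\breve{F})$ satisfying $u_0\cdot \mathbf{f}_0=\mathbf{f}_0$ and $u_{j-1}^{-1}u_j$ fixing $\Sigma_j'$. Fix an index $j$ and invoke \thref{affine roots for positive folds} to obtain positive affine roots $\psi_1,\ldots,\psi_a$ with $\wall_{\psi_i}\supseteq \Sigma_j'$ and $\Sigma_j=s_{\psi_a}\cdots s_{\psi_1}(\Omega_j)$, where $\Omega_j$ is the unfolded continuation. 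The desired positivity amounts to the assertion that $\Sigma_j$ lies in the closed positive half-space determined by each $\wall_{\psi_i}$.

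The key building-theoretic input would be the following structure result for $U^-(\breve{F})$: an element fixing $\Sigma_j'$ decomposes as a product of elements in the affine root subgroups $U_\psi$ for positive affine roots $\psi$ with $\wall_\psi\supseteq \Sigma_j'$, and a nontrivial element of $U_\psi$ moves an alcove adjacent to $\Sigma_j'$ only from the negative side of $\wall_\psi$ to the positive side, never the reverse. Applying this to $u_{j-1}^{-1}u_j$, together with the minimality of $g$ (which prevents any wall through $\Gamma_j'$ from being crossed elsewhere along $g$), forces $\Sigma_j$ to lie on the positive side of each $\wall_{\psi_i}$. The main obstacle is precisely this wall-by-wall verification, which is the residually split analogue of \cite[Lemma 12]{GaussentLittelmann:LS}. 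Since $G$ is residually split, its affine root system is reduced and the filtration of $U^-(\breve{F})$ by affine root subgroups behaves as in the split equicharacteristic situation, so the argument of loc.~cit.~adapts with only notational modifications.
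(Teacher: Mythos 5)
Your overall strategy agrees with the paper's: lift through the Bott--Samelson resolution to a minimal gallery via \thref{Schubert cells are the minimal galleries}, use the $U^-(\breve{F})$-parametrization of the retraction fibers, and argue that minimality forces the retracted gallery to be positively folded. The paper's proof is a one-line remark that the proof of \cite[Lemma 11]{GaussentLittelmann:LS} is purely building-combinatorial and so applies verbatim; note that it is Lemma~11, not Lemma~12 (the latter concerns minimal triple galleries and is used for \thref{description of minimal triple galleries}).

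The genuine gap is in the ``key building-theoretic input'' you isolate, which is incorrect as formulated. An element of $U^-(\breve{F})$ stabilizing $\Sigma_j'$ does \emph{not} decompose as a product over the affine root subgroups $U_\psi$ for \emph{positive} affine roots $\psi$ with $\wall_\psi\supseteq\Sigma_j'$. Already for $G=\SL_2$ and $\Sigma_j'=\mathbf{f}_0$ one has $U^-(\breve{F})\cap\Stab(\mathbf{f}_0)=U_{-\alpha+0}(\breve{\Oo})$, and $-\alpha+0$ is a \emph{negative} affine root; there is no positive affine root with negative vector part whose wall passes through $\mathbf{f}_0$, yet this stabilizer is nontrivial. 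You are conflating two distinct sets of affine roots: the $\psi_1,\dots,\psi_a$ produced by \thref{affine roots for positive folds} record the \emph{shape of the fold} of the combinatorial gallery $\delta$, and are not a decomposition of the group element $u_{j-1}^{-1}u_j$. The directional control you want really lives in the residue of $\Sigma_j'$, where the retraction at $-\infty$ becomes a spherical retraction of the reductive quotient's building centred at $-\CC_f$; positivity of the fold then follows from the Bruhat decomposition as in \thref{example spherical retraction} combined with minimality of $g$. Either carry out that local argument precisely, or follow the paper and simply observe that the proof of \cite[Lemma 11]{GaussentLittelmann:LS} is combinatorial and applies verbatim once residual splitness is used to ensure a reduced affine root system.
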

\begin{proof}
	The proof of \cite[Lemma 11]{GaussentLittelmann:LS} is purely based on building combinatorics, and hence applies verbatim to our setting.
\end{proof}

The fibers of the retraction map describe the intersection of Schubert cells and semi-infinite orbits as follows.

\begin{cor}\thlabel{Cdelta gives semiinfinite orbits}
	The retraction map \(r_{\gamma_\mu}\) restricts to a map \(r^+_{\gamma_\mu}\colon \Fl_\mu(\mathbf{f}_0,\mathbf{f}_\mu)\to \Gamma^+(\gamma_\mu)\).
	Moreover, for \(\nu\in X_*(T_{\adj})_I\) we have a filtrable decomposition
	\[\bigcup_{\delta\in \Gamma^+(\gamma_\mu,\nu)} (C_\delta\cap \Fl_\mu(\mathbf{f}_0,\mathbf{f}_\mu)) = \Ss_{w_0(\nu)}^-\cap \Fl_\mu(\mathbf{f}_0,\mathbf{f}_\mu).\]
\end{cor}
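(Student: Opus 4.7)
The first assertion is a straightforward assembly of earlier results. \thref{retraction fibers are filtrably decomposed} produces $r_{\gamma_\mu}\colon \Sigma(\gamma_\mu)(\overline{k})\to \Gamma(\gamma_\mu)$, while the $\overline{k}$-point bijection from \thref{Schubert cells are the minimal galleries}, combined with properness and birationality of $\pi_\mu$ plus Zariski's main theorem, shows that $\pi_\mu$ restricts to an isomorphism $\pi_\mu^{-1}(\Fl_\mu(\mathbf{f}_0,\mathbf{f}_\mu))\xrightarrow{\sim}\Fl_\mu(\mathbf{f}_0,\mathbf{f}_\mu)$ over the smooth open stratum. Transporting $r_{\gamma_\mu}$ through this isomorphism and invoking \thref{only care about positively folded} yields $r^+_{\gamma_\mu}$ with image in $\Gamma^+(\gamma_\mu)$, and under this isomorphism the sets $C_\delta\cap \Fl_\mu(\mathbf{f}_0,\mathbf{f}_\mu)$ appearing in the statement are identified with $C_\delta\cap \pi_\mu^{-1}(\Fl_\mu(\mathbf{f}_0,\mathbf{f}_\mu))$.

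For the equality with the semi-infinite orbit intersection, the plan is to read off the fibres of $r_{\gamma_\mu}$ facet-by-facet and translate along $\pi_\mu$. Fix $\delta=(\mathbf{f}_0\prec \Sigma_0\succ\ldots\succ \mathbf{f}_\nu)\in \Gamma^+(\gamma_\mu,\nu)$ and a gallery $g=(\mathbf{f}_0\prec \Gamma_0\succ\ldots\succ \Gamma'_{r+1})\in r_{\gamma_\mu}^{-1}(\delta)\cap \pi_\mu^{-1}(\Fl_\mu(\mathbf{f}_0,\mathbf{f}_\mu))$. By \thref{fibers of retraction} there exists $u\in U^-(\breve{F})$ with $\Gamma'_{r+1}=u\cdot \mathbf{f}_\nu$; using the equality $LU^-(\overline{k})=U^-(\breve{F})$ and the identification $\mathbf{f}_\nu\leftrightarrow \varpi^{w_0(\nu)}$ dictated by the convention $\pi_\mu(\gamma_\mu)=\varpi^{w_0(\mu)}$, this places $\pi_\mu(g)$ in $LU^-\cdot \varpi^{w_0(\nu)}=\Ss^-_{w_0(\nu)}$. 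Conversely, any $x=u\cdot \varpi^{w_0(\nu)}\in \Ss^-_{w_0(\nu)}\cap \Fl_\mu(\mathbf{f}_0,\mathbf{f}_\mu)$ lifts uniquely through the isomorphism above, and the same orbit description forces its retraction to have target $\mathbf{f}_\nu$; by \thref{only care about positively folded}, this retraction lies in $\Gamma^+(\gamma_\mu,\nu)$.

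The filtrable decomposition is then inherited from \thref{retraction fibers are filtrably decomposed}: intersecting the filtrating closed subschemes of $\Sigma(\gamma_\mu)$ with the open $\pi_\mu^{-1}(\Fl_\mu(\mathbf{f}_0,\mathbf{f}_\mu))$ preserves filtrability, and by the previous paragraph exactly the pieces indexed by $\Gamma^+(\gamma_\mu,\nu)$ are those whose $\pi_\mu$-image meets $\Ss^-_{w_0(\nu)}$.

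The main subtlety I expect is purely bookkeeping: carefully tracking the $w_0$-twist relating combinatorial targets $\mathbf{f}_\nu$ in the apartment to the points $\varpi^{w_0(\nu)}\in \Fl_{\mathbf{f}_\mu}$, and moving cleanly between $U^-(\breve{F})$-orbits in the Bruhat-Tits building and $LU^-$-orbits on the flag variety. None of this is conceptually deep given the propositions already in hand, but it must be set up unambiguously to land on the correct indexing.
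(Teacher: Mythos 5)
Your proposal is correct and follows essentially the same route as the paper's (much terser) proof: both rest on \thref{only care about positively folded} for the first claim, and on \thref{fibers of retraction} together with the fact that $\pi_\mu$ maps a combinatorial gallery $\delta$ to $\varpi^{w_0(e(\delta))}$ for the second, with the filtrable decomposition inherited from \thref{retraction fibers are filtrably decomposed}. The only cosmetic difference is that you unwind the $U^-(\breve{F})$-orbit description of the retraction fibers by hand, while the paper phrases the same point via $L^+\Gg$-equivariance (hence $\IG_m$-equivariance) of the Bott--Samelson resolution.
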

\begin{proof}
	The first statement follows immediately from \thref{only care about positively folded}, while the second statement follows from \thref{fibers of retraction}, and the fact that the Bott-Samelson resolution is \(L^+\Gg\)-equivariant and maps a combinatorial gallery \(\delta\in \Sigma(\gamma_\mu)\) to \(\varpi^{w_0(e(\delta))}\in \Fl_{\leq \mu}(\ff_0,\ff_\mu)\).
\end{proof}

Next, we describe an open affine covering of \(\Sigma(\gamma_\mu)\) as in \cite{GaussentLittelmann:LS}.
They will be built inductively out of the following base case.
\begin{lem}
	For standard parahorics \(\Pp'\supset \Pp\supseteq \Ii\), consider the set \(R\) of affine roots \(\psi\) for which \(\Uu_\psi\subset \Pp'\) and \(\Uu_\psi\nsubseteq \Pp\).
	Then there is an open immersion \(\prod_{\psi \in R} L^+\Uu_{\psi}/L^+\Uu_{\psi+} \to L^+\Pp'/L^+\Pp\), the source of which is isomorphic to (the perfection of) an affine space.
\end{lem}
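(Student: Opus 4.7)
The plan is to construct the map as an iterated multiplication of affine root subgroups and then verify it is an open immersion via a tangent space computation, following the standard big-cell description of partial flag varieties in Bruhat--Tits theory.

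First, I would fix a total ordering on $R$ compatible with a suitable height or $\IG_m$-action, so that iterated multiplication inside $L^+\Pp'$ defines a morphism
\[
m\colon \prod_{\psi \in R} L^+\Uu_\psi/L^+\Uu_{\psi+} \longrightarrow L^+\Pp'/L^+\Pp.
\]
That $m$ is well-defined, and in particular descends to the quotients by $L^+\Uu_{\psi+}$, follows from the Chevalley commutator formula for affine root subgroups: any commutator $[\Uu_\psi, \Uu_{\psi'}]$ lies in the product of $\Uu_{\psi''}$ for $\psi'' = a\psi + b\psi'$ with $a,b \geq 1$, and each such $\psi''$ is either itself in $R$ (where it may be re-absorbed in the ordering) or already sits inside $\Pp$ (where it disappears in the quotient). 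Residual splitness of $G$ ensures the affine root system is reduced, which avoids the subtleties with doubled roots.

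Next, I would show $m$ is étale at the identity via a Lie algebra calculation. The Lie algebras $\mathrm{Lie}(L^+\Pp)$ and $\mathrm{Lie}(L^+\Pp')$ admit decompositions along $L^+\Tt$ and the affine root subspaces, so that
\[
\mathrm{Lie}(L^+\Pp')/\mathrm{Lie}(L^+\Pp) \;\cong\; \bigoplus_{\psi \in R} \mathrm{Lie}(\Uu_\psi)/\mathrm{Lie}(\Uu_{\psi+}),
\]
and the differential of $m$ at the origin is visibly the identity on each summand. Since both source and target are perfectly smooth of the same finite dimension $|R|$, this makes $m$ étale at the identity. Injectivity on $\overline{k}$-points follows from the commutator formula together with the disjointness of distinct affine root subgroups modulo $L^+\Pp$, so $m$ is a radicial étale morphism onto its image, i.e., an open immersion.

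Finally, each factor $L^+\Uu_\psi/L^+\Uu_{\psi+}$ is identified with the perfection of $\IA^1_k$: since $G$ is residually split and the affine root system is reduced, $\Uu_\psi \cong \IG_{a,\Oo}$ as a smooth $\Oo$-scheme, and the successive quotient $L^+\Uu_\psi/L^+\Uu_{\psi+}$ is a one-dimensional additive $k$-group, perfected. Taking the product over $\psi \in R$ yields the perfection of affine space of dimension $|R|$. The main obstacle is simply the bookkeeping around the conventions for affine root groups $\Uu_\psi$, the filtration step $\Uu_{\psi+}$, and the appropriate form of the commutator relations in the ramified setting; once these are pinned down via \cite{KalethaPrasad:BruhatTits}, the argument proceeds exactly as in the split case.
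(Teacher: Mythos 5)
Your overall strategy — iterated multiplication of the affine root subgroups, then a local étale-ness argument — is more complicated than what the paper does, and as written it has a genuine gap at the crucial step. The paper's proof simply observes that $L^+\Pp'/L^+\Pp$ is the perfection of the partial flag variety $\mathsf{G}'/\mathsf{P}$ of the maximal reductive quotient $\mathsf{G}'$ of the special fiber of $\Pp'$, that the factors $L^+\Uu_\psi/L^+\Uu_{\psi+}$ descend to the root subgroups of $\mathsf{G}'$ indexed by $R$, and that the product map is then exactly the classical big-cell inclusion for a split finite-dimensional flag variety as in Jantzen, II.1.9. No Lie-algebra or étale-ness computation is needed; the result is outsourced to the classical theory.

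The gap in your argument is the Lie-algebra computation: you work throughout with perfect (perfected) schemes, for which $\Omega^1$ vanishes identically and there is no reasonable sheaf of Kähler differentials. ``The differential of $m$ at the origin is visibly the identity'' therefore does not make sense in the ambient perfect setting, and the implication ``both sides perfectly smooth of dimension $|R|$ plus differential an iso $\Rightarrow$ étale at the identity'' has no content. To salvage this you would have to descend the whole picture to an honest smooth model, which is exactly what the paper does by passing to $\mathsf{G}'/\mathsf{P}$ — at which point the étale argument is redundant because the open cell is already a standard fact there. Your concern about well-definedness of $m$ via the Chevalley commutator formula is also unnecessary: each $L^+\Uu_{\psi+}$ lies in the kernel of $L^+\Pp' \to \mathsf{G}'$, which is a normal subgroup of $L^+\Pp'$ contained in $L^+\Pp$, so the quotient map descends for free, with no commutator bookkeeping required. (Residual splitness and reducedness of the affine root system are used only, as you and the paper both note, to identify each factor $L^+\Uu_\psi/L^+\Uu_{\psi+}$ with $\IA^{1,\perf}_k$.)
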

We denote the image of this open immersion by \(\IA(\Pp'/\Pp)\subseteq L^+\Pp'/L^+\Pp\).
\begin{proof}
	Since \(\Uu_{\psi}\subseteq L^+\Pp'\) implies \(\Uu_{\psi+}\subseteq L^+\Ii\subseteq L^+\Pp\), we have natural maps \[L^+\Uu_{\psi}/L^+\Uu_{\psi+} \to L^+\Pp'/L^+\Pp.\]
	Note that since \(G\) is residually split, its affine root system is reduced, so that \(L^+\Uu_{\psi}/L^+\Uu_{\psi+} \cong \IA_k^{1,\perf}\).
	Now, consider the map \(f\colon \prod_{\psi\in R} L^+\Uu_{\psi}/L^+\Uu_{\psi+} \to L^+\Pp'/L^+\Pp\), induced by the multiplication with respect to a choice of ordering on \(R\).
	Then, under the identification of \(L^+\Pp'/L^+\Pp\) with (the perfection of) a partial flag variety for the maximal reductive quotient of the special fiber of \(\Pp'\), the map \(f\) corresponds exactly to the inclusion of an open cell as in \cite[II.1.9]{Jantzen:Representations}.
\end{proof}

Next, for \(w\in W_{\Pp'}/W_{\Pp}\), consider the finite subsets of affine roots 
\[R^+(w):=\{\psi > 0\mid \Uu_{w^{-1}(\psi)}\nsubseteq \Pp\}\] 
and 
\[R^-(w):=\{\psi<0\mid w(\psi)<0, \quad \Uu_\psi\subseteq \Pp', \quad \Uu_\psi \nsubseteq \Pp\},\] 
where we identify \(w\) with its minimal length representative in \(W_{\Pp'}\).
Then we define \(\IA^+(w):=\prod_{\psi\in R^+(w)} L^+\Uu_\psi/L^+\Uu_{\psi+}\) and \(\IA^-(w):=\prod_{\psi\in R^-(w)} L^+\Uu_\psi/L^+\Uu_{\psi+}\). 
As the notation suggests, these are (perfections of) affine spaces.
Moreover, we have an affine open neighbourhood \(w\IA(\Pp'/\Pp) = \IA^+(w) w \IA^-(w)\) of \(w\) in \(L^+\Pp'/L^+\Pp\).
The lemma above immediately gives the following corollary.

\begin{cor}
	For any \(\delta=[\delta_0,\ldots,\delta_r]\in \Gamma(\gamma_\mu)\), there is a natural open immersion 
	\[\IA^+(\delta_0) \delta_0 \IA^-(\delta_0) \times \ldots \times \IA^+(\delta_r) \delta_r \IA^-(\delta_r)\to \Sigma(\gamma_\mu).\]
	Denoting the corresponding open subscheme by \(\IA(\delta)\subseteq \Sigma(\gamma_\mu)\), we have \(\delta\in C_\delta \subseteq \IA(\delta)\) and \(\Sigma(\gamma_\mu) = \bigcup_{\delta\in \Gamma(\gamma_\mu)} \IA(\delta)\).
\end{cor}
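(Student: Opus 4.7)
The plan is to iterate the previous lemma along the tower of fibrations defining the Bott--Samelson resolution. Recall that \(\Sigma(\gamma_\mu)\) can be presented as
\[
\Sigma(\gamma_\mu) \;\cong\; \bigl(L^+\Pp_{\mathbf{f}_0} \times L^+\Pp'_1 \times \cdots \times L^+\Pp'_r\bigr)\big/\bigl(L^+\Pp_0 \times \cdots \times L^+\Pp_r\bigr),
\]
where the product group acts by the twisted rule \((p_0,\ldots,p_r)\cdot(g_0,\ldots,g_r) = (g_0 p_0^{-1},\, p_0 g_1 p_1^{-1},\ldots,\, p_{r-1} g_r p_r^{-1})\). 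Projecting to the successive quotients exhibits \(\Sigma(\gamma_\mu)\) as an iterated étale-locally trivial fibration with fibers \(L^+\Pp'_j/L^+\Pp_j\); hence the preimage of any product of opens in \(\prod_j L^+\Pp'_j/L^+\Pp_j\) is open in \(\Sigma(\gamma_\mu)\). The previous lemma furnishes the open cover \(L^+\Pp'_j/L^+\Pp_j = \bigcup_{w\in W'_j/W_j} w\,\IA(\Pp'_j/\Pp_j)\), and I would define \(\IA(\delta)\subseteq \Sigma(\gamma_\mu)\) as the preimage of \(\prod_j \delta_j\,\IA(\Pp'_j/\Pp_j)\), which is then manifestly open.

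Next, I would identify \(\IA(\delta)\) with the straight product \(\prod_j \IA^+(\delta_j)\,\delta_j\,\IA^-(\delta_j)\) using the factorization established before the corollary. Choose the evident set-theoretic section of \(L^+\Pp'_j \twoheadrightarrow L^+\Pp'_j/L^+\Pp_j\) over \(\delta_j\,\IA(\Pp'_j/\Pp_j)\) provided by this open-cell decomposition; substituting these sections into the contracted product gives a multiplication map from the straight product into \(\Sigma(\gamma_\mu)\). Well-definedness is automatic, and injectivity follows because two choices of lifts differ by elements of \(L^+\Pp_j\) that must cancel between consecutive factors. A dimension count (using \(\dim\IA^+(\delta_j)+\dim\IA^-(\delta_j) = \dim L^+\Pp'_j/L^+\Pp_j\) from the previous lemma) shows that source and target are smooth of the same dimension, forcing an isomorphism onto \(\IA(\delta)\).

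It remains to check the two assertions at the end. The inclusion \(\delta\in \IA(\delta)\) is tautological: \(\delta\) corresponds under \myref{classification of combinatorial galleries} to \((\delta_0,\ldots,\delta_r)\), and each \(\delta_j\) lies in \(\delta_j\,\IA(\Pp'_j/\Pp_j)\). For \(C_\delta\subseteq \IA(\delta)\), note that \(\IA(\delta)\) is stable under the \(\IG_m\)-action used in \myref{retraction fibers are filtrably decomposed}, since each \(\delta_j\,\IA(\Pp'_j/\Pp_j)\) is stable under the induced torus action on \(L^+\Pp'_j/L^+\Pp_j\); combined with the fact that \(\delta\) is the unique attractor fixed point in \(\IA(\delta)\), the Białynicki-Birula description of \(C_\delta\) places it inside \(\IA(\delta)\). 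Finally, \(\Sigma(\gamma_\mu) = \bigcup_\delta \IA(\delta)\) follows immediately from covering each factor and applying \myref{classification of combinatorial galleries} to parametrize \(\Gamma(\gamma_\mu)\) by the relevant product of coset spaces.

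The main obstacle I anticipate is the careful bookkeeping needed to verify that the straight-product multiplication map genuinely descends to an open immersion in the contracted product: one must track both that the opens \(\delta_j\,\IA(\Pp'_j/\Pp_j)\) cover enough of each \(L^+\Pp'_j/L^+\Pp_j\) to yield \(\IA(\delta)\) on the nose, and that the \(L^+\Pp_j\)-ambiguities in lifting the sections really do cancel across consecutive factors rather than producing nontrivial identifications. Once this is unwound, the remaining verifications are direct.
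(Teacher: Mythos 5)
Your approach is the right one: iterate the open-cell lemma along the tower of partial flag fibrations that defines the Bott--Samelson. The paper itself gives no explicit proof here (it says the corollary is immediate from the lemma), so you are filling in the intended details, and your elaboration is essentially correct. A few points of imprecision are worth flagging.

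First, the phrase ``preimage of any product of opens in \(\prod_j L^+\Pp'_j/L^+\Pp_j\)'' is not quite sensible as stated: there is no natural projection from the contracted product \(\Sigma(\gamma_\mu)\) to the direct product \(\prod_j L^+\Pp'_j/L^+\Pp_j\), since the twisting on each step moves the \(j\)-th coordinate by the left \(L^+\Pp_{j-1}\)-ambiguity. What you actually use (and correctly describe a moment later) is an inductive local trivialization: once one fixes a scheme-theoretic section of \(L^+\Pp'_j\to L^+\Pp'_j/L^+\Pp_j\) over the open cell \(\delta_j\IA(\Pp'_j/\Pp_j)\) (these exist because each factor \(L^+\Uu_\psi/L^+\Uu_{\psi+}\) lifts and the cell is an affine space), the preimage of \(\IA(\delta_0,\ldots,\delta_{j-1})\) in \(\Sigma^{(j)}\) is identified with \(\IA(\delta_0,\ldots,\delta_{j-1})\times L^+\Pp'_j/L^+\Pp_j\), and one restricts to \(\delta_j\IA(\Pp'_j/\Pp_j)\) in the second factor. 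This iteration already produces an open subscheme and an explicit isomorphism with the straight product; the subsequent dimension count is redundant and, on its own, would not upgrade an injective map to an open immersion. Second, ``set-theoretic section'' should be scheme-theoretic. Third, for \(C_\delta\subseteq\IA(\delta)\), the key input is that \(\IA(\delta)\) is \(\IG_m\)-stable and open, containing the fixed point \(\delta\); the uniqueness of the fixed point is not actually needed, since \(x\in C_\delta\) already gives \(\lambda(t)x\to\delta\), hence \(\lambda(t)x\in\IA(\delta)\) for small \(t\), and then \(x\in\IA(\delta)\) by stability. The \(\IG_m\)-stability itself requires a small computation (conjugation by \(\lambda(t)\) preserves each root group \(\Uu_\psi\) and moves the lift of \(\delta_j\) only by a torus element, which can be absorbed on the right modulo \(L^+\Pp_j\)); you assert it but it merits a sentence.
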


\begin{rmk}\thlabel{index sets agree}
	Recall that since \(G\) is residually split, its affine root system is reduced.
	Hence, for any \(w\in W_{\Pp'}/W_{\Pp}\), the affine roots in \(R^+(w)\sqcup (-R^-(w))\) are exactly those positive affine roots whose corresponding reflection hyperplane contain \(F_{\Pp'}\) but not \(wF_{\Pp}\).
\end{rmk}

Now, let us get back to understanding the open subsets \((C_\delta\cap \Fl_\mu(\mathbf{f}_0,\mathbf{f}_\mu)) \subseteq C_\delta\), for \(\delta\in \Gamma^+(\gamma_\mu)\).
We will start with the whole \(C_\delta\)'s, for which we need some more combinatorics.

\begin{nota}
	Let \(\delta=[\delta_0,\ldots, \delta_r] = (\mathbf{f}_0 \prec \Sigma_0 \succ \Sigma'_1 \prec \ldots \prec \Sigma_r\succ \Sigma'_{r+1}) \in \Gamma^+(\gamma_\mu)\) be a positively folded combinatorial gallery.
	By \thref{index sets agree} above, the set of reflection hyperplanes in \(\app\) containing \(\Sigma_j'\) but not \(\delta_j \Sigma_j\) is naturally in bijection with \(R(\delta_j):=R^+(\delta_j)\sqcup R^-(\delta_j)\); we fix this identification in what follows.
	Then, we denote by \(J_{-\infty}(\delta)\subseteq \bigsqcup_{j=0,\ldots, r} R(\delta_j)\) those hyperplanes that are load-bearing at \(\Sigma_j'\), and let \(J_{-\infty}^\pm(\delta):= J_{-\infty}(\delta) \cap (\bigsqcup_j R^\pm(\delta_j))\).
	Since \(\delta\) is positively folded, we have \(J_{-\infty}^-(\delta) = \bigsqcup_{j} R^-(\delta_j)\).
	Moreover, we the number of elements in \(J_{-\infty}\) is exactly \(\dim \delta\).
\end{nota}

\begin{prop}\thlabel{description of Cdelta}
	Let \(\delta = [\delta_0, \ldots, \delta_r]\in \Gamma^+(\gamma_\mu)\).
	Then the inclusion \(C_\delta \subseteq \IA(\delta)\) can be identified with
	\[ \prod_{j=0,\ldots, r} \left(\left(\prod_{\psi\in R^+(\delta_j)\cap J_{-\infty}(\delta)} L^+\Uu_{\psi}/L^+\Uu_{\psi+} \right) \delta_j \left( \prod_{\psi\in R^-(\delta_j)} L^+\Uu_{\psi}/L^+\Uu_{\psi+} \right) \right)\]
	\[\subseteq \prod_{j=0,\ldots, r} \left(\left(\prod_{\psi \in R^+(\delta_j)} L^+\Uu_{\psi}/L^+\Uu_{\psi+} \right) \delta_j \left( \prod_{\psi\in R^-(\delta_j)} L^+\Uu_{\psi}/L^+\Uu_{\psi+} \right) \right).\]
	Consequently, we have \(C_\delta\cong \IA_k^{\dim \delta, \perf}\).
\end{prop}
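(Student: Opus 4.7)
The plan is to write the retraction $r_{-\infty}$ explicitly in the coordinates of $\mathbb{A}(\delta)$, and then use the characterization of its fibers as $U^-(F)$-orbits (\thref{fibers of retraction}) to cut out $C_\delta$. I would first unpack the affine chart: a point of $\mathbb{A}(\delta)$ is represented uniquely by a tuple $(u_j^\pm)_{0\leq j\leq r}$ with $u_j^\pm \in \prod_{\psi \in R^\pm(\delta_j)} L^+\mathcal{U}_\psi/L^+\mathcal{U}_{\psi+}$, giving the class $[u_0^+\dot\delta_0 u_0^-, \ldots, u_r^+\dot\delta_r u_r^-]$ in the Bott--Samelson. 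The associated gallery has large facets $\Gamma_j = h_j F_{t_j}$ where $h_j := \prod_{i\leq j} u_i^+ \dot\delta_i u_i^-$, and small facets $\Gamma_j' = h_{j-1}F_{t_j'}$. I would then argue by induction on $j$: given that $r_{-\infty}(\Gamma_i) = \Sigma_i$ for all $i < j$, determine the additional constraints on $u_j^\pm$ forced by $r_{-\infty}(\Gamma_j) = \Sigma_j$, i.e.\ by $\Gamma_j \in U^-(F)\cdot \Sigma_j$.

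The negative factors $u_j^-$ will impose no constraint: for $\psi\in R^-(\delta_j)$ one has $\delta_j(\psi)<0$, so $\dot\delta_j u_j^-\dot\delta_j^{-1}$ is built out of affine root subgroups for negative affine roots and can be absorbed into a $U^-(F)$-modification without affecting the previously-fixed retractions. The positive factors require more care: writing $u_j^+ = \prod_{\psi\in R^+(\delta_j)} u_\psi$, each $u_\psi$ induces a crossing of the wall $\wall_\psi \supset \Sigma_j'$. The key combinatorial-geometric lemma, parallel to \cite[Proposition~8]{GaussentLittelmann:LS}, is that this crossing can be cancelled by an element of $U^-(F)$ precisely when $\wall_\psi$ is load-bearing at $\Sigma_j'$ for $\delta$, i.e.\ when $\psi \in J_{-\infty}(\delta)$. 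The geometric content is that the load-bearing condition means $\Sigma_j$ lies on the $\CC_{-\infty}$-side of $\wall_\psi$, so a suitable $U^-(F)$-translation can undo the effect of $u_\psi$; otherwise any nonzero $u_\psi$ pushes $\Gamma_j$ into a strictly different $U^-(F)$-orbit. Since $G$ is residually split the affine root system is reduced, and the argument of \cite{GaussentLittelmann:LS} transfers verbatim.

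Patching these step-by-step descriptions yields the asserted product formula for $C_\delta$. Each factor $L^+\mathcal{U}_\psi/L^+\mathcal{U}_{\psi+}$ is isomorphic to $\mathbb{A}_k^{1,\perf}$, so $C_\delta$ is a product of perfect affine lines, hence an affine space. For the dimension, positive foldedness of $\delta$ gives $J_{-\infty}^-(\delta) = \bigsqcup_j R^-(\delta_j)$, so the total number of factors is $|J_{-\infty}^+(\delta)| + |J_{-\infty}^-(\delta)| = |J_{-\infty}(\delta)| = \dim \delta$, as required. The main obstacle will be the load-bearing lemma in the middle paragraph: one must track carefully which conjugates of positive affine root subgroups by $\dot\delta_0\cdots\dot\delta_{j-1}$ lie in $U^-(F)\cdot \Stab(\Sigma_j')$, where the relative position of $\Sigma_j$ with respect to $\CC_{-\infty}$ enters in an essential way and where the inductive hypothesis on earlier retractions must be combined with the building combinatorics.
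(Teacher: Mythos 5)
Your overall strategy matches the paper's: identify $C_\delta\subseteq\IA(\delta)$ by tracking, factor by factor, when a gallery in the chart retracts under $r_{-\infty}$ to $\delta$, with a load-bearing criterion deciding which coordinates are free. You correctly identify that criterion as the crux, and the argument for the positive factors $u_j^+$ (a crossing of $\wall_\psi$ can be cancelled by $U^-(F)$ iff $\wall_\psi$ is load-bearing) is exactly what the paper proves, stepping through reduced decompositions of the $\delta_j$ into minimal galleries of alcoves.

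However, your treatment of the negative factors $u_j^-$ has a genuine gap. You argue that because $\delta_j(\psi)<0$ for $\psi\in R^-(\delta_j)$, the conjugate $\dot\delta_j u_j^-\dot\delta_j^{-1}$ lies in root groups for negative affine roots and can therefore be absorbed into $U^-(F)$. But a negative affine root need not have negative vectorial part (consider $\alpha+n$ with $\alpha>0$ and $n<0$); so these root groups need not sit inside $U^-(F)$, and the absorption fails in general. The correct argument --- and the one the paper gives --- is to apply the \emph{same} load-bearing criterion to $R^-(\delta_j)$ and then observe that those walls are \emph{automatically} load-bearing because $\delta$ is positively folded; this is precisely the content of $J_{-\infty}^-(\delta)=\bigsqcup_j R^-(\delta_j)$. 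In other words, positive foldedness is what makes the $b_\psi$ free, not just what makes your dimension count close. As written, your proof never uses the hypothesis $\delta\in\Gamma^+(\gamma_\mu)$ in the structural part of the argument, only in the last line, which should be a warning sign: the statement is false for non-positively-folded $\delta$ (by \thref{only care about positively folded}, such $C_\delta$ do not meet the Schubert cell and indeed have the wrong shape). You should also note that the base case $j=0$ needs its own argument (the paper reduces it to the Bruhat decomposition in the spherical building $\buil^s(\mathsf{G},k)$ via \thref{example spherical retraction}), which is how one sees $R^+(\delta_0)\cap J_{-\infty}(\delta)=\varnothing$.
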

\begin{proof}
	We may assume that \(F=\breve{F}\), i.e., that \(k=\overline{k}\), and proceed as in \cite[Lemma 13]{GaussentLittelmann:LS}.
	Let \(g=[g_0,\ldots, g_r]\) be a gallery in \(\buil(G,F)\) with \(g_j\in \IA^+(\delta_j) \delta_j \IA^-(\delta_j)\). 
	We need to determine the conditions under which \(g\) retracts to \(\delta\).
	We write \(\delta=[\delta_0,\ldots, \delta_r] = (\mathbf{f}_0 \prec \Sigma_0 \succ \Sigma'_1 \prec \ldots \prec \Sigma_r\succ \Sigma'_{r+1})\).
	
	We start with \(g_0\), i.e., we need to determine when \(r_{-\infty}(g_0\Gamma_{\Pp_0}) = \delta_0(\Gamma_{\Pp_0})\).
	Recall that \(\Pp_0\) was the standard parahoric corresponding to the type \(t_0\) from \eqref{standard gallery of types}, and \(\Gamma_{\Pp_0}\) is the corresponding facet.
	Consider the maximal reductive quotient \(\mathsf{G}\) of the special fiber of \(\Gg\), and the parabolic \(\mathsf{P}\subset \mathsf{G}\) corresponding to \(\Pp_0\subset \Gg\); in particular we have \(\mathsf{G}/\mathsf{P} \cong L^+\Gg/L^+\Pp_0\), and we can view \(\delta_0\) as an element of \(W_0/W_0'\cong W_{\mathsf{G}}/W_{\mathsf{P}}\).
	Let \(\mathsf{S}\subseteq \mathsf{G}\) be the maximal torus corresponding to \(S\subseteq G\), and \(\mathsf{P}^-\supseteq \mathsf{S}\) the parabolic opposite to \(\mathsf{P}\).
	In this case, we can describe the retraction at \(-\infty\) via \(r_{-\infty}(g_0\Gamma_{\Pp_0}) = r_{w_0\Delta_f, \Aa}(g_0\Gamma_{\Pp_0})\), where \(w_0\) is the longest element in the finite Weyl group \(W_{\mathsf{G}}\) of \(\mathsf{G}\).
	By \thref{identification of apartments} and the isomorphism \(L^+\Gg/L^+\Pp_0\cong \mathsf{G}/\mathsf{P}\), we are reduced to determining the spherical facets which retract to \(\delta_0(\FF_{\mathsf{P}})\) under \(r_{-\CC_f}=r_{w_0\CC_f, \app^s}\).
	But under the identification of the facets of the same type as \(\mathsf{P}^-\) in \(\buil^s(\mathsf{G},k)\) with \(\mathsf{G}/\mathsf{P}^-(k)\), \thref{example spherical retraction} tells us that \(r_{-\CC_f}^{-1}(\delta_0 \Gamma) = \mathsf(B)^- \delta_0 w_0\) in \(G/\mathsf{P}^-\).
	Note also that \[\mathsf{B}^-\delta_0 w_0 \cong \left(\prod_{\psi <0, (\delta_0 w_0)^{-1}(\psi)>0} L^+\Uu_{\psi}/L^+\Uu_{\psi+} \right)\delta_0 w_0 = \delta_0 \left(\prod_{\psi <0, \delta_0(\psi)>0} L^+\Uu_{\psi}/L^+\Uu_{\psi+}\right) w_0,\]
	and that right-multiplication by \(w_0\) gives an isomorphism \(\mathsf{G}/\mathsf{P} \cong \mathsf{G}/\mathsf{P}^-\).
	We can then conclude that \(g_0\Gamma_{\Pp_0}\) retracts onto \(\delta_0(\Gamma_{\Pp_0})\) exactly when
	\[g_0\in \delta_0 \prod_{\psi\in R^-(\delta_0)} L^+\Uu_{\psi}/L^+\Uu_{\psi+},\]
	again using \(\mathsf{G}/\mathsf{P} \cong L^+\Gg/L^+\Pp_0\).
	Since \(R^+(\delta_0)\cap J_{-\infty}(\delta) = \varnothing\), this gives the desired condition for \(g_0\).
	
	For \(j>0\), we can retract \(g\) step by step. 
	So let 
	\[g'=[\delta_0,\ldots, \delta_{j-1},g_j,\ldots, g_r] = (\mathbf{f}_0\prec \ldots \prec\Sigma_{j-1} \succ \Sigma_j' \prec \Xi_j \succ \Xi_{j+1}' \prec \ldots),\]
	where \(\Xi_j = \delta_0 \ldots \delta_{j-1} g_j F_{t_j}\) and \(g_j\in \IA^+(\delta_j)\delta_j \IA^-(\delta_j)\).
	Consider the simple affine roots \(\zeta_1,\ldots,\zeta_{l(\delta_j)}\) corresponding to the simple reflections in a reduced decomposition \(s_{\zeta_1}\ldots s_{\zeta_j}\) of \(\delta_j\).
	Then we have 
	\[R^+(\delta_j) = \{\zeta_1,s_{\zeta_1}(\zeta_2),\ldots, s_{\zeta_1}\ldots s_{\zeta_{l(\delta_j)-1}}(\zeta_{l(\delta_j)})\}.\]
	In particular, there exist elements \(a_1,\ldots,a_{l(\delta_j)}\in \breve{F}\) and \(b_\psi\in \breve{F}\) such that 
	\[g_j = p_{\zeta_1}(a_1) \cdot s_{\zeta_1} \cdot \ldots \cdot p_{\zeta_{l(\delta_j)}}(a_{l(\delta_j)}) \cdot s_{\zeta_{l(\delta_j)}} \cdot \prod_{\psi\in R^-(\delta_j)} p_{\psi}(b_\psi),\]
	where \(p_{\psi}\colon L^+\Uu_{\psi}/L^+\Uu_{\psi+}\to L^+\Pp_j'/ L^+\Pp_j\) denotes the locally closed immersion.
	
	To determine the necessary conditions on these \(a_i\) and \(b_\psi\) such that \(g'\) retracts to \(\delta\), let us first assume all the \(b_{\psi}\) are zero.
	Let \[\Gamma_i = \delta_0 \ldots \delta_{j-1} p_{\zeta_1}(a_1) s_{\zeta_1} \ldots p_{\zeta_{i}}(a_{i}) s_{\zeta_{i}} \Delta_f\]
	for any \(i\).
	Then by \cite[Proposition 2.1.9]{BruhatTits:Groupes1}, \((\Gamma_0,\ldots,\Gamma_{l(\delta_j)})\) is a minimal gallery of alcoves joining \(\Gamma_0\succ \Sigma_{j-1}\) and \(\Gamma_{l(\delta_j)}\succ \Xi_j\).
	By minimality, \(\Xi_j\) retracts to \(\Sigma_j\) exactly when each \(\Gamma_i\) retracts to \(\Theta_i:=\delta_0\ldots \delta_{j-1} s_{\zeta_1}\ldots s_{\zeta_i}\Delta_f\), for \(1\leq i\leq l(\delta_j)\).
	We can again apply this retraction step by step, so we need to determine when \(\Upsilon_i:=\delta_0\ldots \delta_{j-1} s_{\zeta_1}\ldots s_{\zeta_{i-1}} p_{\zeta_i}(a_i)s_{\zeta_i}\Delta_f\) retracts to \(\Theta_i\).
	
	We may assume \(\Upsilon_i\neq \Theta_i\), in which case \((\Theta_{i-1}\succ \Theta_i'\prec \Upsilon_i)\) and \((s_{\wall}\Theta_{i-1} \succ \Theta_i' \prec \Upsilon_i )\) are minimal galleries in any apartment containing them.
	Here, \(\Theta_i'\) is the obvious codimension 1 face of \(\Theta_i\), and \(s_{\wall}\) is the reflection corresponding to the unique hyperplane \(\wall\) containing \(\Theta_i'\).
	Let \(\app'\) be an apartment containing \(\Upsilon_i'\) and \(\wall\).
	Let us fix some alcove \(\Delta'\subset \app\cap \app'\) such that \(r_{-\infty}=r_{\Delta',\app}\), at least for all facets we will concern ourselves with.
	If \(\wall\) is not load-bearing at this place, then \(\Delta'\) and \(\Theta_i\) are not separated by \(\wall\).
	But \(\Upsilon_i\) and \(\Delta'\) are separated by \(\wall\), so that the properties of \(r_{-\infty}\) imply that \(\Upsilon_i\) can only retract to \(\Theta_i\) if they are already equal; this contradiction shows that \(a_i\) must be \(0\).
	On the other hand, if \(\wall\) is load-bearing at this place, a similar argument shows that \(a_i\in \breve{F}\) can be arbitrary, and we have determined the possible values for the \(a_i\) under the assumption that each \(b_\psi=0\).
	
	However, since the reflection hyperplanes corresponding to \(\psi\in R^-(\delta_j)\) are automatically load-bearing at \(\Sigma_j'\), we can show in the same way that \(b_\psi\) can be arbitrary, independently of the values of \(a_i\).
	This gives the desired description for the \(g_j\)'s, and hence concludes the proof.
\end{proof}

\begin{cor}\thlabel{intersection with Weyl group conjugate}
	For any \(w\in W_{\Gg}\), we have \(\IA^{\langle\rho,\mu-w(\mu)\rangle, \perf} \cong \Fl_\mu(\ff_0,\ff_\mu)\cap \Ss_{w(\mu)}^-\subseteq \Fl(\ff_0,\ff_\mu)\).
\end{cor}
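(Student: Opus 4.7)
My plan is to identify $\Gr_{\Gg,\mu}\cap \Ss^-_{w(\mu)}$ with a single Bialynicki--Birula cell of the Bott--Samelson resolution $\pi\colon \Sigma(\gamma_\mu)\to \Gr_{\Gg,\leq \mu}$, attached to a specific minimal combinatorial gallery. In the parametrization of \thref{classification of combinatorial galleries}, I would consider
\[\delta_w := [w_0 w,\tau_1,\ldots,\tau_r]\ \in\ \Gamma(\gamma_\mu),\]
which is the facetwise $(w_0 w)$-translate $\gamma_{w_0 w(\mu)}$ of the fundamental gallery in the sense of \thref{examples of dimension of galleries}. Since it has no folds it is trivially positively folded, so $\delta_w\in \Gamma^+(\gamma_\mu,w_0 w(\mu))$, and using $w_0\cdot\rho=-\rho$ together with \thref{examples of dimension of galleries} its dimension is $\langle \rho,\mu+w_0 w(\mu)\rangle = \langle \rho,\mu-w(\mu)\rangle$. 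Then \thref{description of Cdelta} gives $C_{\delta_w}\cong \IA^{\langle \rho,\mu-w(\mu)\rangle,\perf}_k$.

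Next, I would observe that $\pi(\delta_w)=\varpi^{w_0 e(\delta_w)}=\varpi^{w(\mu)}$ lies in $\Gr_{\Gg,\mu}$, using the formula for $\pi$ on combinatorial galleries recorded in the proof of \thref{Cdelta gives semiinfinite orbits}. Because $\pi^{-1}(\Gr_{\Gg,\mu})$ is $\IG_m$-stable and open and contains $\delta_w$, the attracting cell $C_{\delta_w}$ is forced to lie entirely inside it. Combined with the fact that $\pi$ restricts to an isomorphism over $\Gr_{\Gg,\mu}$ (by birationality and \thref{Schubert cells are the minimal galleries}), this yields an isomorphism $\pi|_{C_{\delta_w}}\colon C_{\delta_w}\xrightarrow{\sim} \pi(C_{\delta_w})\subseteq \Gr_{\Gg,\mu}$, and the retraction property confines the image to $\Ss^-_{w(\mu)}$, which by the discussion preceding \thref{only care about positively folded} is the attractor of $\varpi^{w(\mu)}$ under the chosen anti-dominant $\IG_m$-action.

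The main obstacle is to upgrade the inclusion $\pi(C_{\delta_w})\hookrightarrow \Gr_{\Gg,\mu}\cap \Ss^-_{w(\mu)}$ to an equality. For this I would invoke the Bialynicki--Birula/Drinfeld formalism for attractors of smooth fixed points on smooth $\IG_m$-schemes: since $\Gr_{\Gg,\mu}$ is smooth and $\varpi^{w(\mu)}$ is a smooth fixed point, its attractor $\Gr_{\Gg,\mu}\cap \Ss^-_{w(\mu)}$ is smooth and irreducible, of dimension equal to the $\IG_m$-positive part of $T_{\varpi^{w(\mu)}}\Gr_{\Gg,\mu}$; the $\Ss^-$-analogue of \thref{Nonemptyness of intersections} identifies this number with $\langle \rho,\mu-w(\mu)\rangle$. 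The inclusion $\pi(C_{\delta_w})\hookrightarrow \Gr_{\Gg,\mu}\cap \Ss^-_{w(\mu)}$ is thus an injection of smooth irreducible schemes of the same dimension, hence an open immersion. Its image is $\IG_m$-stable and contains the unique fixed point $\varpi^{w(\mu)}$, while every $\IG_m$-orbit in the attractor accumulates at $\varpi^{w(\mu)}$; therefore the $\IG_m$-stable complement of $\pi(C_{\delta_w})$ in the attractor must be empty, which concludes the proof.
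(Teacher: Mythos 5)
Your proposal uses the same combinatorial gallery that drives the paper's argument (your $\delta_w=\gamma_{w_0w(\mu)}$), the same dimension count via \thref{examples of dimension of galleries}, and the same identification $C_{\delta_w}\cong\IA^{\langle\rho,\mu-w(\mu)\rangle,\perf}$ from \thref{description of Cdelta}, so the ingredients overlap heavily. Where you diverge is in how the isomorphism $C_{\delta_w}\xrightarrow{\sim}\Gr_{\Gg,\mu}\cap\Ss^-_{w(\mu)}$ is established. The paper simply invokes \thref{Cdelta gives semiinfinite orbits}, which already writes $\Ss^-_{w(\mu)}\cap\Gr_{\Gg,\mu}$ as the union of the $C_\delta\cap\Gr_{\Gg,\mu}$ over $\delta$ with the right target, and then uses that this index set is a singleton (a standard extremal-weight uniqueness that the paper takes for granted). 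You instead avoid the singleton fact by two geometric observations: first, that the attracting cell of a $\IG_m$-fixed point is automatically contained in any $\IG_m$-stable open neighbourhood of that point -- this is a nice and correct slogan, and it cleanly shows $C_{\delta_w}\subseteq\pi^{-1}(\Gr_{\Gg,\mu})$, something the paper glosses over when it identifies $C_\delta\cap\Fl_\mu$ with $C_\delta$; and second, a dimension/irreducibility argument to upgrade the resulting locally closed embedding to an equality. The cost is that your second observation invokes Bialynicki--Birula/Drinfeld smoothness and irreducibility of the attractor for the \emph{perfect} scheme $\Gr_{\Gg,\mu}$, including a tangent-space description that is delicate for perfections and is not set up in the paper (which only invokes classical BB for the projective Bott--Samelson scheme, and otherwise handles attractors through \cite{Richarz:Spaces, HainesRicharz:TestParahoric}); this is the point that would need additional justification, and it is precisely what the paper's singleton-plus-\thref{Cdelta gives semiinfinite orbits} route sidesteps. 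Finally, note that both the paper's text and your write-up have to be consistent about whether the target is $w(\mu)$ or $w_0w(\mu)$ when matching against the $\Ss^-_{w_0(\nu)}$ convention in \thref{Cdelta gives semiinfinite orbits}; you have this right (target $w_0w(\mu)$, image at $\varpi^{w(\mu)}$), while the paper's wording appears to elide a composition with $w_0$, which is harmless since $w$ ranges over all of $W_0$.
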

It will follow from \thref{reduce to simply connected} that this a similar statement also holds without the assumption that \(G\) is semisimple or simply connected.
\begin{proof}
	By \thref{Nonemptyness of intersections}, we have \(\Fl_\mu(\ff_0,\ff_\mu)\cap \Ss_{w(\mu)}^- = \Fl_{\leq \mu}(\ff_0,\ff_\mu)\cap \Ss_{w(\mu)}^-\), so that this intersection moreover agrees with \(\bigsqcup_{\delta\in \Gamma^+(\gamma_\mu,w(\mu))} C_{\delta}\) by \thref{Cdelta gives semiinfinite orbits}.
	Now, \(\Gamma^+(\gamma_\mu,w(\mu))\) is a singleton, and consists of the gallery obtained by facetwise applying \(w\) to \(\gamma_\mu\).
	We conclude by \thref{description of Cdelta} and \thref{examples of dimension of galleries}.
\end{proof}

\subsection{Intersections of Schubert cells and semi-infinite orbits}

Finally, we will use the results above to deduce \thref{thm.intro-decomposition}, generalizing \thref{intersection with Weyl group conjugate} above.
By \thref{Cdelta gives semiinfinite orbits}, we need to understand the intersection of each \(C_\delta\) with \(\Fl_\mu(\mathbf{f}_0, \mathbf{f}_\mu)\subseteq \Sigma(\gamma_\mu)\), i.e., which galleries in \(C_\delta\) are minimal.
This will be done by cutting \(\gamma_\mu\) (and hence all galleries of the same type) into smaller \emph{triple galleries}.

\begin{dfn}\thlabel{defi-triple galleries}
	A \emph{triple gallery} is a sequence \((\Upsilon\succ \Xi' \prec \Xi)\) of facets of \(\buil(G,F)\), such that \(\Xi'\) is a codimension 1 face of both \(\Upsilon\) and \(\Xi\).
	It is called \emph{minimal} if \(\Mm_{\app'}(\Upsilon,\Xi)\) consists exactly of those walls containing \(\Xi'\) but not \(\Xi\), for any apartment \(\app'\) containing \(\Upsilon\) and \(\Xi\). 
	Such an apartment always exists, and minimality does not depend on the choice of such an apartment.
\end{dfn}

Clearly, if \(\gamma = (\Gamma'_0 \prec \Gamma_0 \succ \Gamma'_1 \prec \ldots \succ \Gamma'_r \prec \Gamma_r\succ \Gamma'_{r+1})\) is a minimal gallery in \(\buil(G,F)\), then each \((\Gamma_{j-1} \succ \Gamma'_j \prec \Gamma_j)\) is a minimal triple gallery of faces.
Moreover, as in \cite[Remark 7]{GaussentLittelmann:LS}, \((\Upsilon\succ \Xi' \prec \Xi)\) is minimal exactly when for any alcove \(\Delta\succ \Upsilon\) in \(\app'\) at maximal distance from \(\Xi\), the length of any minimal gallery of alcoves joining \(\Delta\) and \(\proj_F(\Delta)\) is \(|\Mm_{\app'}(\Xi',\Xi)|\).

Now, let \(\epsilon = (t_{j-1} \subset t_j'\supset t_j)\) be a triple gallery of types appearing in \(t_{\gamma_\mu}\), and consider the standard parahorics \(\Pp_j'\supset \Pp_j\) of types \(t_j'\supset t_j\) respectively.
Denote by \(\tau\in W_\aff\) the shortest representative of the longest class in \(W_j'/W_j\).
Then we can explicitly describe the minimal triple galleries of type \(\epsilon\), as in \cite[Lemma 12, Proposition 8]{GaussentLittelmann:LS}.

\begin{lem}\thlabel{description of minimal triple galleries}
	Let \(\rho = (\Upsilon\succ \Xi' \prec \Xi)\) be a triple gallery of faces in \(\buil(G,F)\) of type \(\epsilon\), with \(\Upsilon \preceq \Delta_f\).
	\begin{enumerate}
		\item If \(\Xi = x \Xi_{\Pp_j}\) for some \(x\in \IA^+(\tau)\tau\), then \(\rho\) is minimal.
		\item Conversely, if \(\rho\) is minimal, we can find \(x\in \IA^+(\tau)\tau\) and \(y\in \Stab_E(\Pp_j')\) such that \(\Xi = yx\Xi_{\Pp_j}\).
	\end{enumerate}
\end{lem}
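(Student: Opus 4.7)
The plan is to follow \cite[Lemma 12, Proposition 8]{GaussentLittelmann:LS} closely, adapted to the (residually split) ramified setting. Since the affine root system of \(G\) is reduced here, the Bruhat-style decomposition of the partial flag variety \(L^+\Pp_j'/L^+\Pp_j\) and the alcove combinatorics in \(\app\) behave just as in the split equal-characteristic case.

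First I would normalize the data. The hypothesis \(\Upsilon \preceq \Delta_f\) identifies \(\Upsilon = F_{\Pp_{j-1}}\), and \(\Xi' \prec \Upsilon\) of type \(t_j' \supseteq t_{j-1}\) is forced to be \(F_{\Pp_j'}\). Facets \(\Xi\) of type \(t_j\) with \(\Xi' \prec \Xi\) are then parametrized by \(L^+\Pp_j'/L^+\Pp_j\) via \(\Xi = z\Xi_{\Pp_j}\). By the Bruhat decomposition \(L^+\Pp_j'/L^+\Pp_j = \bigsqcup_{w \in W_j'/W_j} \IA^+(w)\,w\,L^+\Pp_j/L^+\Pp_j\), the class of \(\tau\) gives the unique open cell, of dimension \(l(\tau)\).

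For part (1), given \(\Xi = x\Xi_{\Pp_j}\) with \(x \in \IA^+(\tau)\tau\), I would choose an apartment \(\app'\) containing \(\Upsilon\) and \(\Xi\), and then build an alcove \(\Delta \succ \Upsilon\) at maximal distance from \(\Xi\) by combining a reduced expression for \(\tau\) with the product factorization \(\IA^+(\tau) = \prod_{\psi \in R^+(\tau)} L^+\Uu_\psi/L^+\Uu_{\psi+}\). This yields an explicit minimal alcove gallery from \(\Delta\) to \(\proj_\Xi(\Delta)\) of length \(l(\tau)\). For the wall count, any \(\wall \in \Mm_{\app'}(\Upsilon,\Xi)\) must contain \(\Xi'\) but not \(\Xi\), since \(\Xi'\) lies in both closures and cannot be strictly separated; conversely, by maximality of \(\tau\), every such wall does separate. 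Both counts give \(l(\tau)\), yielding minimality.

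For part (2), starting from a minimal \(\rho\), I would write \(\Xi = z\Xi_{\Pp_j}\) with \(z \in L^+\Pp_j'\) and use Bruhat to decompose \(z \equiv i\dot{w} \pmod{L^+\Pp_j}\) with \(i \in L^+\Ii\) and \(w \in W_j'/W_j\). The same wall count shows that the number of walls through \(\Xi'\) not containing \(\Xi\) equals \(l(w)\); minimality of \(\rho\), together with the upper bound \(l(w) \leq l(\tau)\), forces \(w = \tau\). Thus \(\Xi = i\tau\,\Xi_{\Pp_j}\), and the desired factorization \(\Xi = yx\Xi_{\Pp_j}\) with \(y \in \Stab_E(\Pp_j')\) and \(x \in \IA^+(\tau)\tau\) follows by decomposing \(i\tau\) inside \(L^+\Pp_j'\) as in \cite[Lemma 12]{GaussentLittelmann:LS}, using that \(\Stab_E(\Pp_j')\) contains the part of \(L^+\Ii\) fixing \(F_{\Pp_j'}\).

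The main technical obstacle will be the explicit matching in part (1) between a reduced expression of \(\tau\), the product factorization of \(\IA^+(\tau)\tau\) by affine root group quotients, and the walls of \(\app'\) separating \(\Upsilon\) from \(\Xi\). The combinatorics are essentially identical to the split case thanks to residual splitness, but some care is needed to track the stabilizer factor \(y\) in part (2) and to verify that it can genuinely be chosen in \(\Stab_E(\Pp_j')\) while \(x\) lies in the precisely defined subset \(\IA^+(\tau)\tau\).
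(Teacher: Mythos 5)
Your plan has the right shape -- normalize to the standard fundamental data, use the cell decomposition of \(L^+\Pp_j'/L^+\Pp_j\), and compare with the definition of minimality via walls -- and this is essentially the spirit of the paper's argument. But both parts have concrete issues.

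In part (1), the claim ``conversely, by maximality of \(\tau\), every such wall does separate'' is asserted but is exactly the nontrivial content of the statement. The paper bridges this by appealing to \cite[Proposition 2.1.9]{BruhatTits:Groupes1} (which identifies \(\proj_\Xi(\Delta_f)\) with \(x\Delta_f\) via a minimal gallery of length \(l(\tau)\)) and then to \cite[Lemma 3]{GaussentLittelmann:LS}, which characterizes which walls a minimal gallery crosses. You would need to either invoke something comparable or actually carry out the match between the factorization of \(\IA^+(\tau)\tau\) by affine root groups and the list of separating hyperplanes; as written this is a gap, not a proof.

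In part (2) there is a genuine error in the wall count. You claim that the number of walls through \(\Xi'\) not containing \(\Xi\) equals \(l(w)\), where \(w\) is the Bruhat class of \(\Xi\), and then argue that minimality forces \(w=\tau\). But by \thref{index sets agree} that set is in bijection with \(R^+(w)\sqcup(-R^-(w))\), whose cardinality is \(|R^+(w)|+|R^-(w)|\); since \(W_j'\) acts transitively on the chambers of the residue of \(\Xi'\) and permutes the walls through \(\Xi'\), this number is \emph{independent} of \(w\) and always equals \(l(\tau)\). So the cardinality alone cannot single out \(w=\tau\). What minimality actually says is that \(\Mm_{\app'}(\Upsilon,\Xi)\) equals this set of walls \emph{as sets}; for \(w<\tau\) some wall through \(\Xi'\) fails to strictly separate \(\Upsilon\) and \(\Xi\) because they lie on the same side. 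The paper avoids this pitfall by working with the maximal-distance alcove \(\Delta\succ\Upsilon\) in an arbitrary apartment \(\app'\), taking a minimal gallery of the right length \(r=|\Mm_{\app'}(\Xi',\Xi)|\), translating to \(\Delta_f\) by an element of \(\Stab_E(\Pp_j')\), and then invoking \cite[Proposition 2.1.9]{BruhatTits:Groupes1} and \cite[Remark 16 (2)]{GaussentLittelmann:LS} to produce \(x\in\IA^+(\tau)\tau\). Your Bruhat-decomposition route can presumably be made to work, but the step ``minimality forces \(w=\tau\)'' needs to compare the separating hyperplanes themselves, not just count them.
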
 
\begin{proof}
	(1) By \cite[Proposition 2.1.9]{BruhatTits:Groupes1}, there exists a minimal gallery of alcoves of length \(l(\tau)\) between \(\Delta_f\succeq \Upsilon\) and \(x\Delta_f = \proj_{\Xi}(\Delta_f)\), so that \(\Delta_f\) is at maximal distance from \(\Xi\).
	But then \(\rho\) can obtained from such a gallery above using the action of the stabilizer of \(\Upsilon \cup \Xi\), so that the lemma follows from \cite[Lemma 3]{GaussentLittelmann:LS}.
	
	(2) Let \(\app'\) be an apartment containing \(\rho\), and let \(r:=|\Mm_{\app'}(\Xi',\Xi)|\).
	Choose an alcove \(\Delta \succeq \Upsilon\) in \(\app'\) at maximal distance from \(\Xi\).
	Then minimality of \(\rho\) implies that any minimal gallery of alcoves joining \(\Delta\) and \(\proj_{\Xi}(\Delta)\) has length \(r\); let \(\xi = (\Delta, \Delta_1, \ldots, \Delta_r = \proj_{\Xi}(\Delta))\) be such a gallery of alcoves.
	Then there exists \(y\in \Stab_E(\Pp_j')\) such that the minimal gallery \(y\rho = (y\Delta = \Delta_f, y\Delta_1, \ldots, y\Delta_r)\) starts at \(\Delta_f\).
	By \cite[Proposition 2.1.9]{BruhatTits:Groupes1} and \cite[Remark 16 (2)]{GaussentLittelmann:LS}, we can find \(x\in \IA^+(\tau)\tau\) such that \(y\proj_{\Xi}(\Delta) = x\Delta_f\).
	This implies \(\Xi = y^{-1} x \Xi_{\Pp_j}\), as desired.
\end{proof}

\begin{rmk}
	As in \cite[Proposition 10]{GaussentLittelmann:LS}, we could already conclude that for regular \(\mu\in X_*(T_{\adj})_I^+\) and \(\delta\in\Gamma(\gamma_\mu)\), the intersection \(\Fl_\mu(\ff_0,\ff_\mu)\cap C_\delta\) is isomorphic to some \(\IA^r\times \IG_m^s\) (not just up to some filtrable decomposition).
	Indeed, using \thref{description of minimal triple galleries}, we can describe which galleries in \(C_\delta\) are minimal in terms of the identification from \thref{description of Cdelta}.
	However, for general \(\mu\), we need some extra arguments, as in \cite[Proposition 9]{GaussentLittelmann:LS}.
\end{rmk}

Consider \(\Pp'\supset \Pp\) standard parahorics and \(w,\tau\in W_{\Pp'}/W_{\Pp}\) as above, where as usual we identify an equivalence class in \(W_{\Pp'}/W_{\Pp}\) with its shortest representative in \(W_{\Pp}\).

\begin{lem}\thlabel{stratification for triples}
	The intersection
	\[\IA^+(w)w\IA^-(w)\cap \IA^+(\tau)\tau\]
	in \(\Pp'/\Pp\) admits a filtrable decomposition into perfect cells.
\end{lem}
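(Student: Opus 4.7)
The plan is to identify $L^+\Pp'/L^+\Pp$ with the perfection of a partial flag variety $\mathsf{P}'/\mathsf{P}$ attached to the maximal reductive quotient $\mathsf{G}$ of the special fiber of $\Pp'$, and then interpret both subsets as Schubert-cell-type subvarieties of $\mathsf{P}'/\mathsf{P}$. Concretely: since $\tau$ is the minimal representative of the longest class in $W_{\Pp'}/W_{\Pp}$, the subset $\IA^+(\tau)\tau$ should become the open Schubert cell $\mathsf{B}\tau\mathsf{P}/\mathsf{P}$, of full dimension $\dim \mathsf{P}'/\mathsf{P}$; and $\IA^+(w)w\IA^-(w)$ should become the $w$-translate of the opposite big cell, giving an affine open neighborhood of $w\mathsf{P}/\mathsf{P}$ of the same dimension. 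In particular both subsets are open in $\mathsf{P}'/\mathsf{P}$, so their intersection is already open, and the content of the lemma is that this open admits a nice cellular decomposition.

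My proof would then proceed in two steps. First, I would refine by the Bruhat decomposition $\mathsf{P}'/\mathsf{P} = \bigsqcup_v \mathsf{B}v\mathsf{P}/\mathsf{P}$, ranging over $v\leq w$ (the only ones meeting $\IA^+(w)w\IA^-(w)$). This gives a stratification into pieces $\IA^+(w)w\IA^-(w)\cap \mathsf{B}v\mathsf{P}/\mathsf{P}\cap \IA^+(\tau)\tau$, and filtrability follows from the standard fact that Schubert closures are unions of lower Schubert cells. Second, each such piece is a Richardson-type intersection of a Schubert cell with two ``opposite'' big cells, and I would invoke Deodhar's theorem \cite{Deodhar:Geometric}: any such intersection in a (partial) flag variety admits a filtrable decomposition into locally closed subvarieties isomorphic to $\IA_k^{a}\times \IG_{m,k}^{b}$. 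Composing the two decompositions and taking perfections then produces the claimed decomposition into perfect cells.

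The main obstacle I expect is matching the explicit root-combinatorial descriptions of $\IA^+(w)w\IA^-(w)$ and $\IA^+(\tau)\tau$ (given as products of root subgroups indexed by $R^\pm(w)$ and $R^+(\tau)$) with the group-theoretic Bruhat cells, in a way that lets one invoke Deodhar cleanly. A direct alternative would be to parametrize $\IA^+(w)w\IA^-(w)\cong \IA_k^{\dim \mathsf{P}'/\mathsf{P},\perf}$ by the coordinates $\{a_\psi\}_{\psi\in R^+(w)\sqcup R^-(w)}$, to write down the polynomial equations expressing membership in $\IA^+(\tau)\tau$, and to perform Deodhar's Gaussian-elimination algorithm in these coordinates to produce the stratification inductively. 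A secondary subtlety is that Deodhar's original formulation concerns full flag varieties, so some care is needed (either by pulling back along the smooth projection $\mathsf{G}/\mathsf{B}\to \mathsf{G}/\mathsf{P}$ and pushing forward, or by direct generalization) to transport his result to our partial-flag setting.
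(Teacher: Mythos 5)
Your overall instinct — reduce to Deodhar's decomposition of a Richardson-type intersection in a finite flag variety of $\mathsf{G}$ — is the same as the paper's, but the specific reduction you propose does not work as stated, and there are two further gaps.

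First, your Bruhat-stratification step in fact refines nothing. You correctly observe that $\IA^+(\tau)\tau$ is the open Schubert cell in $\mathsf{P}'/\mathsf{P}$; but then for any Bruhat cell $\mathsf{B}v\mathsf{P}/\mathsf{P}$ the intersection $\mathsf{B}v\mathsf{P}/\mathsf{P}\cap \IA^+(\tau)\tau$ is empty unless $v=\tau$, and is all of $\IA^+(\tau)\tau$ if $v=\tau$. So the ``pieces'' you introduce collapse to the single original intersection $\IA^+(w)w\IA^-(w)\cap \IA^+(\tau)\tau$; the step does not advance the argument, and the closure relation between Schubert cells is never used. Second, and more seriously, the resulting intersection is not literally in Deodhar's framework: $\IA^+(w)w\IA^-(w)$ is the $w$-translate of the \emph{opposite} big cell, i.e.\ the orbit of $w$ under $wU^-w^{-1}$, which is the unipotent radical of the Borel $w\mathsf{B}^-w^{-1}$, not of $\mathsf{B}^-$ itself. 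Since $\mathsf{B}$ and $w\mathsf{B}^-w^{-1}$ are not opposite (their intersection is strictly larger than the torus unless $w=e$), the intersection $\IA^+(w)w\IA^-(w)\cap \IA^+(\tau)\tau$ is not the intersection of a $\mathsf{B}$-orbit with a $\mathsf{B}^-$-orbit. You flag this as ``the main obstacle'' and suggest redoing Deodhar's Gaussian-elimination algorithm by hand, but that is exactly the content of the missing step, and it is not carried out. This is precisely what the reduction of \cite[Proposition~9]{GaussentLittelmann:LS} (which the paper follows) accomplishes: it transforms the intersection into one of the form $(\Ii v^{-1}\Pp/\Pp)\cap(\Ii^-\Pp/\Pp)$, so that after passing to $\mathsf{P}'/\mathsf{P}\cong L^+\Pp'/L^+\Pp$ one lands honestly in the situation of \cite[Corollary~1.2]{Deodhar:Geometric}. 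Third, even granting the reduction, Deodhar's cell decomposition is a decomposition, not a priori a stratification, so filtrability requires a separate argument; the paper cites \cite[Lemma~2.5]{Dudas:DeligneLusztigRestriction} for this, which your proposal leaves unaddressed (and which is not the same as filtrability of the Bruhat stratification). Your secondary worry about full versus partial flag varieties is real but minor compared to these; the crucial missing ingredient is the explicit reduction to an honest opposite-Borel intersection.
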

\begin{proof}
	As in \cite[Proposition 9]{GaussentLittelmann:LS}, we reduce to showing the claim for the intersection \(\Ii v^{-1}\cap \Ii^- \Pp'/\Pp\) in \(\Pp'/\Pp\).
	Using the isomorphism \(\Pp'/\Pp\cong \mathsf{P}'/\mathsf{P}\), it suffices to show the claim for the intersection of Schubert varieties in a finite flag varieties of \(\mathsf{G}\).
	But \(\mathsf{G}\) is split as \(G\) was assumed residually split, so that \cite[Corollary 1.2]{Deodhar:Geometric} gives us a decomposition into cells.
	This decomposition is moreover filtrable by \cite[Lemma 2.5]{Dudas:DeligneLusztigRestriction}.
\end{proof}

From now on, we remove the assumption that \(G\) is semisimple or simply connected.
The next lemma explains how we can still use the results of this section.

\begin{lem}\thlabel{reduce to simply connected}
	Let \(G_{\adj}\) be the adjoint quotient of \(G\), and \(G_{\sico}\) its simply connected cover.
	Let \(\Gg_{\adj}\) and \(\Gg_{\sico}\) be the respective parahoric models corresponding to \(\Gg\) under isomorphisms \(\buil(G_{\sico},F)\cong \buil(G,F) \cong \buil(G_{\adj},F)\).
	Fix some \(\mu\in X_*(T)_I^+\); by composing with \(T\to T_{\adj}\) we can view it as a cocharacter in \(X_*(T_{\adj})_I\).
	Then there are natural isomorphisms
	\[\Fl_{G,\leq \mu}(\ff_0,\ff_\mu) \cong \Fl_{G_{\adj},\leq \mu}(\ff_0,\ff_\mu) \cong \Fl_{G_{\sico},\leq \mu}(\ff_0,\ff_\mu),\]
	which are equivariant for the \(L^+\Gg\)- and \(L^+\Gg_{\sico}\)-actions respectively, and compatible for the intersections with the semi-infinite orbits.
\end{lem}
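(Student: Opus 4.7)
The plan is to construct the isomorphisms from the natural maps $G_{\sico}\to G\to G_{\adj}$, exploiting that a central isogeny and the quotient by a central torus do not affect the reduced Bruhat-Tits building, and that the relevant orbits on affine flag varieties are insensitive to the center. I would begin by noting that the induced maps on loop groups give morphisms $\Fl_{\Gg_{\sico}}\to \Fl_{\Gg}\to \Fl_{\Gg_{\adj}}$, and then prove the induced maps on Schubert varieties are isomorphisms one step at a time.

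For the first map, the kernel of $LG_{\sico}\to LG$ is a central group scheme, and central elements preserve every connected component of $\Fl_{\Gg_{\sico}}$ while acting trivially within each. Hence, by \thref{connected components of affine flag varieties}, $\Fl_{\Gg_{\sico}}\to \Fl_{\Gg}$ identifies the connected component containing $\varpi^\mu$ with the connected component of $\Fl_{\Gg}$ corresponding to the image of $\mu$ in $\pi_1(G)_I$. Since $L^+\Gg_{\sico}$ and $L^+\Gg$ both map to the same orbit of $\varpi^\mu$ inside this component (the center being in the stabilizer), the $L^+\Gg_{\sico}$-orbit through $\varpi^\mu$ maps isomorphically onto the $L^+\Gg$-orbit. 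Taking closures and invoking \thref{Cellularity of Schubert cells} (which identifies the closures with iterated Demazure resolutions depending only on the Iwahori-Weyl combinatorics, shared by the three groups after the identification of buildings) yields the first isomorphism. The argument for $\Fl_{\Gg}\to \Fl_{\Gg_{\adj}}$ is dual: the kernel is a torus that is contained in the positive loop group and acts trivially on the Schubert varieties inside a fixed connected component.

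For compatibility with semi-infinite orbits, the essential point is that the unipotent radicals $U^\pm\subseteq G$, $U_{\sico}^\pm\subseteq G_{\sico}$, and $U_{\adj}^\pm\subseteq G_{\adj}$ are canonically identified under the isogenies, because central isogenies and quotients by central tori restrict to isomorphisms on unipotent subgroups. Consequently $LU^\pm = LU_{\sico}^\pm = LU_{\adj}^\pm$ as subfunctors of the respective loop groups, and under the identification of Schubert varieties above, the $LU^\pm$-orbits match. Combined with \thref{flag varieties of fixed points and attractor}, this shows that the semi-infinite orbits $\Ss^\pm_\nu$ correspond under the isomorphisms for $\nu$ in the appropriate lattices.

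The main subtle point is index matching. A cocharacter $\mu\in X_*(T)_I^+$ descends to $X_*(T_{\adj})_I$ directly, but in general does not lift to $X_*(T_{\sico})_I$; however, $\mu$ is well-defined in $X_*(T_{\adj})_I$ modulo the image of the central cocharacters, which is exactly the ambiguity killed by the identification $\buil(G_{\sico},F)\cong \buil(G,F)$ of reduced buildings. Since Schubert varieties are defined via the facets $\ff_0,\ff_\mu$ in the reduced building rather than via the cocharacter lattice itself, and these facets transport faithfully across the three buildings, the Schubert variety $\Fl_{G_{\sico},\leq \mu}(\ff_0,\ff_\mu)$ is well-defined and matches $\Fl_{G,\leq\mu}(\ff_0,\ff_\mu)$. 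Likewise, for the semi-infinite piece, \thref{Nonemptyness of intersections} constrains the relevant $\nu$ to lie in the appropriate translate of the coroot lattice, for which the identifications on $X_*(T)_I$, $X_*(T_{\adj})_I$ are bijective.
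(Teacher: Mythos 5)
Your proof takes a genuinely different route than the paper, and while the intuition is sound, there are gaps that need addressing.

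The paper's argument is short because it rests on a key principle you never invoke: in the category of perfect schemes, a universal homeomorphism is an isomorphism (\cite[Lemma 3.8]{BhattScholze:Projectivity}). Since the Schubert varieties are perfections of proper schemes, any morphism between them is automatically universally closed, so one only needs to establish surjectivity and radiciality, which the paper delegates to \cite[Proposition 3.5]{HainesRicharz:Normality} (checked on the stratification by Iwahori orbits). Without this principle, your argument --- which tries to conclude an isomorphism of Schubert varieties from an isomorphism on the open cell plus a shared Demazure resolution --- does not close: a proper morphism that is birational and bijective on points still needs a reason to be an isomorphism of schemes, and in a non-perfect world it could fail to be one (think normalization maps). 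The perfection is precisely what rescues the implication ``bijective and proper $\Rightarrow$ isomorphism,'' and this is the technical heart of the paper's proof.

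A second issue is the phrase ``the kernel of $LG_{\sico}\to LG$ is a central group scheme, and central elements preserve every connected component of $\Fl_{\Gg_{\sico}}$ while acting trivially within each.'' This is not quite what drives the argument. The relevant point is not that the loop group of the kernel $Z$ acts trivially by left translation (which is not obviously true without knowing $LZ\subseteq L^+\Gg_{\sico}$, and is in any case beside the point); the crucial fact is rather that a central isogeny restricts to an isomorphism on unipotent root subgroups and therefore on the building blocks $L^+\Pp_i/L^+\Ii$ of the Demazure resolution, so that the induced map is surjective and radicial. Also, surjectivity of $\Fl_{\Gg_{\sico}}\to\Fl_{\Gg}$ onto the neutral component is not automatic from the central-kernel observation --- the map $LG_{\sico}\to LG$ is generally not surjective, and the surjection must come from the Demazure resolution (or equivalently \cite[Proposition 3.5]{HainesRicharz:Normality}), not from the stabilizer.

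Finally, for the $G_{\sico}$ side, you correctly identify the subtlety that $\mu$ need not lift to $X_*(T_{\sico})_I$, but your resolution --- ``facets transport faithfully across the three buildings'' --- glosses over the fact that for $G_{\sico}$ the parahoric at $\ff_\mu$ may not be conjugate to the one at $\ff_0$ (these can be vertices of different types, e.g.\ the two ends of $\tilde A_1$ for $\SL_2$). The paper handles this by conjugating the parahoric $\Gg_{\sico}$ by a suitable element of $LG_{\adj}$ to realize the relevant connected component of $\Fl_{\Gg_{\adj}}$ as a quotient of $LG_{\sico}$, which reduces to the neutral-component case already handled. Your discussion of semi-infinite orbits via identification of the unipotent radicals is fine, and is equivalent to the paper's one-line observation that the map is $LG$-equivariant.
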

Note that we have a canonical isomorphism \(\Fl_{G,\leq \mu}(\ff_0,\ff_\mu) \cong \Gr_{\Gg,\leq \mu}\), and similarly for \(G_{\adj}\).
For \(G_{\sico}\), this only holds if \(\mu\in X_*(T_{\adj})_I^+\) is induced by a cocharacter of \(G_{\sico}\).
\begin{proof}
	Consider the morphism \(\Fl_{\Gg}\to \Fl_{\Gg_{\adj}}\) induced by the quotient \(G\to G_{\adj}\). 
	It is clearly \(LG\)-equivariant, which will imply the combatiblity for the intersections with the semi-infinite orbits.
	It restricts to a morphism \(\Fl_{G,\leq \mu}(\ff_0,\ff_\mu) \to \Fl_{G_{\adj},\leq \mu}(\ff_0,\ff_\mu)\) as both schemes are defined as orbit closures.
	By \cite[Lemma 3.8]{BhattScholze:Projectivity}, it suffices to show this map is a universal homeomorphism, i.e., surjective, radicial, and universally closed.
	As Schubert varieties are perfections of proper schemes, any morphism between them is universally closed.
	For the other properties, it suffices to show the same properties on the stratifications by \(\Ii\)-orbits.
	This is shown in \cite[Proposition 3.5]{HainesRicharz:Normality} in equal characteristic, but the proof also works in mixed characteristic.
	(We note that the use of \cite[Proposition 3.1]{HainesRicharz:Normality} becomes superfluous, as perfected Schubert varieties are always normal by \cite[Proposition 3.7]{AGLR:Local} and \cite[Lemma 2.8]{CassXu:Geometrization}.)
	
	Next, the quotient \(LG_{\sico}\to LG_{\adj}\) realizes any connected component of \(\Fl_{\Gg_{\adj}}\) as the quotient of \(LG_{\sico}\) by a very special parahoric.
	Indeed, for the neutral component this follows as in the previous paragraph, and in general by conjugating \(\Gg\) by a suitable element in \(LG_{\adj}\).
	The same argument as in the previous paragraph then concludes the proof.
\end{proof}

\begin{rmk}\thlabel{remark pu3}
	Consider the group \(G=\PU_3\) corresponding to a ramified quadratic extension.
	Then although there are two very special standard parahorics, which are not conjugate to each other, we also have \(\pi_0(LG)=\pi_1(\PU_3)_I = 1\), so that we only need to consider the neutral connected component.
\end{rmk}

\begin{thm}\thlabel{Intersections of Schubert cells and semi-infinite orbits}
	For any \(\mu\in X_*(T)_I^+\) and \(\nu\in X_*(T)_I\), the intersection \(\Gr_{\Gg,\mu}\cap \Ss_\nu^-\) admits a filtrable decomposition by subschemes of the form \(\IA_k^{r,\perf}\times_k \IG_m^{s,\perf}\).
\end{thm}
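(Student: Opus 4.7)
The plan is to combine the filtrable $\IG_{m,k}^{\perf}$-decomposition of the Bott-Samelson resolution from \thref{retraction fibers are filtrably decomposed} with a triple-by-triple analysis of minimality, reducing the problem to a finite product of Deodhar-type intersections handled by \thref{stratification for triples}. First, I would invoke \thref{reduce to simply connected} to reduce to the case where $G$ is semisimple and simply connected, so that the combinatorics of galleries developed above is available verbatim and $\mu, \nu$ may be viewed in $X_*(T_{\adj})_I$. Fix a minimal gallery $\gamma_\mu$ joining $0$ to $\mu$ and set $\nu' := w_0^{-1}(\nu)$. Then \thref{Cdelta gives semiinfinite orbits} yields the finite decomposition
\[
\Gr_{\Gg,\mu}\cap \Ss_\nu^- = \bigsqcup_{\delta\in \Gamma^+(\gamma_\mu,\nu')} \bigl(C_\delta \cap \Fl_\mu(\mathbf{f}_0,\mathbf{f}_\mu)\bigr),
\]
and the ambient decomposition $\Sigma(\gamma_\mu) = \bigsqcup_{\delta} C_\delta$ is filtrable. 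Since filtrability is preserved by restriction to the locally closed subscheme $\Fl_\mu(\mathbf{f}_0,\mathbf{f}_\mu)$ and by refining each stratum by a filtrable decomposition, it is enough to produce a filtrable decomposition of each $C_\delta \cap \Fl_\mu(\mathbf{f}_0,\mathbf{f}_\mu)$ into perfect cells $\IA_k^{a,\perf}\times \IG_{m,k}^{b,\perf}$.

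Fix a positively folded $\delta = [\delta_0,\ldots,\delta_r]$. By \thref{description of Cdelta} and the definition of the Bott-Samelson resolution, $C_\delta$ sits inside $\IA(\delta) = \prod_j \IA^+(\delta_j)\delta_j\IA^-(\delta_j)$ in a product-preserving way. By \thref{Schubert cells are the minimal galleries}, a gallery $g = [g_0,\ldots,g_r]$ in $\Sigma(\gamma_\mu)$ lies in $\Fl_\mu(\mathbf{f}_0,\mathbf{f}_\mu)$ if and only if the associated gallery of faces is minimal, and because $\gamma_\mu$ is itself minimal, one argues by an inductive use of Tits' projection operator $\proj_{(-)}$ that this global minimality is equivalent to each triple $(\Gamma_{j-1}\succ \Gamma'_j\prec \Gamma_j)$ being a minimal triple gallery in the sense of \thref{defi-triple galleries}. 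Via \thref{description of minimal triple galleries}, the latter condition (with source pinned down by the previous coordinates) translates, at the $j$-th factor, to $g_j$ lying in $\IA^+(\tau_j)\tau_j$, up to the action of the stabilizer of $\Gamma'_j$, which does not affect the ambient cell $\IA^+(\delta_j)\delta_j\IA^-(\delta_j)$. Thus I would identify $C_\delta\cap \Fl_\mu(\mathbf{f}_0,\mathbf{f}_\mu)$ with the product over $j$ of intersections
\[
\IA^+(\delta_j)\delta_j\IA^-(\delta_j)\cap \IA^+(\tau_j)\tau_j
\]
inside $L^+\Pp'_j/L^+\Pp_j$.

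Applying \thref{stratification for triples} to each factor produces a filtrable decomposition by perfect cells; the product over $j$, taken in a lexicographic order compatible with the individual filtrations, remains filtrable and has strata given by products of perfect cells, which are themselves of the form $\IA_k^{a,\perf}\times \IG_{m,k}^{b,\perf}$. The hard part will be the combinatorial verification that global minimality of a gallery of type $t_{\gamma_\mu}$ really does factor triple-by-triple, together with the bookkeeping required to absorb the stabilizer from \thref{description of minimal triple galleries}(2) into the parametrisation of $C_\delta$ from \thref{description of Cdelta}; this is the technical core of the argument and is where the assumption that $\gamma_\mu$ is itself minimal becomes indispensable. Once these points are in place, preservation of filtrability under finite products and disjoint unions concludes the proof.
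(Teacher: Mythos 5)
Your proposal matches the paper's own proof almost step for step: the reduction to the simply connected case, the passage through \thref{Cdelta gives semiinfinite orbits} and \thref{Schubert cells are the minimal galleries}, the reduction of global minimality to minimality of each triple gallery, and the application of \thref{description of minimal triple galleries} and \thref{stratification for triples} are precisely the steps the paper takes. The only cosmetic difference is that the paper packages $C_\delta\cap\Gr_{\Gg,\mu}$ as an iterated fibration with Deodhar-type fibers rather than a literal product, but both descriptions (after translating the source of each triple gallery back to a standard facet, as you note) yield the same filtrable decomposition.
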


\begin{rmk}\thlabel{intersections for positive orbits}
	Since we have opted to follow the methods of \cite{GaussentLittelmann:LS}, the semi-infinite orbits \(\Ss_\nu^-\) for the negative Borel arose. 
	It follows however easily from the theorem that \(\Gr_{\Gg,\mu}\cap \Ss_\nu^+\) also admits a filtrable decomposition into products of \(\IA^{1,\perf}\)'s and \(\IG_m^{\perf}\)'s, by changing the choice of Borel.
\end{rmk}

\begin{proof}
	By \thref{Cdelta gives semiinfinite orbits}, it suffices to stratify the intersections \(C_\delta\cap \Gr_{\Gg,\mu}\), for \(\delta\in \Gamma^+(\gamma_\mu)\).
	By \thref{Schubert cells are the minimal galleries}, this amounts to determining which galleries in \(C_\delta\) are minimal.
	Breaking up a (not necessarily combinatorial) gallery \(\gamma \in C_\delta\) into triple galleries as in \thref{defi-triple galleries}, we see as in \cite[Remark 8]{GaussentLittelmann:LS} that \(\gamma\) is minimal if and only if each triple gallery appearing is minimal.
	
	Recall that \(\Sigma(\gamma_\mu)\) was defined as an iterated fibration with partial flag varieties as fibers.
	By \thref{description of minimal triple galleries}, the minimal galleries in \(C_\delta\subseteq \Sigma(\gamma_\mu)\) are those contained in the subscheme given by an iterated fibration with fibers \(\IA^+(\delta_j)\delta_j \IA^-(\delta_j)\cap \IA^+(\tau_j)\tau_j\), where \(\tau_j\) is as in \thref{description of minimal triple galleries}.
	But we know these fibers are stratified by perfect cells by \thref{stratification for triples}.
	Hence, \(C_\delta\cap \Gr_{\Gg,\mu}\) is an iterated fibration whose fibers admit stratifications by perfect cells, so that the same holds for \(C_\delta\cap \Gr_{\Gg,\mu}\) as well, concluding the proof.
\end{proof}

\begin{ex}
	Let us describe the intersections \(\Gr_{\Gg,\mu}\cap \Ss^\pm_\nu\) explicitly for the group \(\SU_3\) associated to a ramified quadratic extension \(\widetilde{F}/F\).
	This is similar to the case of the split group \(\PGL_2\), for which we refer to \cite[Example 3.38]{CassvdHScholbach:Geometric}.
	Although this \(\SU_3\) has two conjugacy classes of very special parahorics, the discussion below works uniformly for both cases.
	
	We identify the injections \(X_*(S)\subseteq X_*(T)_I\subseteq X_*(S)\otimes_{\IZ} \IR\) with \(\IZ\subseteq \frac{1}{2}\IZ \subseteq \IR\), where the dominant cocharacters (for the usual choice of Borel \(B\)) correspond to positive numbers.
	Let \(\mu\in X_*(T)_I^+\).
	Then there is a unique choice of \(\gamma_\mu\), namely the gallery of length \(4\mu\) which goes straight from \(0\) to \(\mu\).
	For each \(\nu\in X_*(T)_I\) satisfying \(|\nu|\leq |\mu|\) and \(\mu-\nu\in \IZ\), there may be multiple combinatorial galleries in \(\Gamma(\gamma_\mu,\nu)\), but there is a unique positively folded one.
	This is the gallery having at most one fold, in which case it changes from moving to \(\CC_{-\infty}\) to the opposite direction; we call this gallery \(\delta_\nu\).
	If \(\nu\) does not satisfy the conditions above, then \(\Gamma(\gamma_\mu,\nu)=\varnothing\).
	
	Then \thref{description of Cdelta} gives us \(C_{\delta_\nu}\cong \IA_k^{\dim \delta_\nu,\perf}\).
	To understand \(C_{\delta_\nu}\cap \Gr_{\Gg, \mu}\), we need to see which points correspond to minimal galleries.
	As in \cite[Proposition 10]{GaussentLittelmann:LS}, the only potential obstruction to minimality arises at the fold. 
	We deduce that \(C_{\delta_\nu}\cap \Gr_{\Gg, \mu}\cong \IA_k^{\dim \delta,\perf}\) if \(\nu=\pm \mu\), as \(\delta_\nu\) has no folds in this case, and \(C_{\delta_\nu}\cap \Gr_{\Gg, \mu}\cong \IG_{m,k}^{\perf} \times \IA_k^{\dim \delta-1,\perf}\) otherwise.
	It is also clear that \(\dim \delta_\nu\) is the number of steps where \(\delta_\nu\) moves away from \(\CC_{-\infty}\), so that \(\dim \delta_\nu = 2(\mu+\nu)\).
	This completely determines the intersections \(\Gr_{\Gg, \mu}\cap \Ss_{\nu}^{\pm}\), by \thref{Intersections of Schubert cells and semi-infinite orbits}.
\end{ex}

To end this section, we record some corollaries that will be used later on.

\begin{lem}\thlabel{triviality of torsor}
	Consider \(n\gge 0\) for which the action of \(L^+\Gg\) factors through \(L^n\Gg\), and let \(\Pp^n_{w_0(\mu)}\subseteq L^n\Gg\) be the stabilizer of \(\varpi^{w_0(\mu)}\).
	Then for any \(\delta \in \Gamma^+(\gamma_\mu)\), the \(\Pp^n_{w_0(\mu)}\)-torsor \(L^n\Gg\to \Gr_{\Gg,\mu}\) is trivial over the locally closed subscheme \(C_{\delta}\cap \Gr_{\Gg,\mu}\).
\end{lem}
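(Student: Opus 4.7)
The plan is to construct an explicit algebraic section of the torsor by exploiting the Bott--Samelson resolution and the coordinate description of $C_\delta$ from \myref{description of Cdelta}.

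First, by \myref{Schubert cells are the minimal galleries}, the Bott--Samelson map $\pi_\mu$ restricts to an $L^+\Gg$-equivariant isomorphism from the open subscheme $\Sigma^\circ\subseteq \Sigma(\gamma_\mu)$ of minimal galleries onto $\Gr_{\Gg,\mu}$. Under this identification, the torsor $L^n\Gg\to \Gr_{\Gg,\mu}$ corresponds to the orbit-map torsor $L^n\Gg\to \Sigma^\circ$, $g\mapsto g\cdot \gamma_\mu$, where $\gamma_\mu=[1,\tau_1,\ldots,\tau_r]$ is the base minimal gallery; under this, $C_\delta\cap \Gr_{\Gg,\mu}$ corresponds to $C_\delta\cap \Sigma^\circ$. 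To trivialize, it suffices to produce an algebraic map $h\colon C_\delta\cap \Sigma^\circ\to L^+\Gg$ with $h(x)\cdot \gamma_\mu=x$ and then compose with $L^+\Gg\twoheadrightarrow L^n\Gg$.

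By \myref{description of Cdelta}, each point $x\in C_\delta$ has canonical coordinates $[\tilde g_0,\tilde g_1,\ldots,\tilde g_r]$ in the contracted product defining $\Sigma(\gamma_\mu)$, where each $\tilde g_j$ is an explicit product of elements of root subgroups $L^+\Uu_\psi$ (lifted via the canonical section $L^+\Uu_\psi/L^+\Uu_{\psi+}\hookleftarrow L^+\Uu_\psi$) together with a lift of the Iwahori--Weyl element $\delta_j$ to $\Norm_G(S)(\breve{\Oo})$, which exists since $G$ is residually split. The desired $h$ is then constructed by using the contracted product relations $[ap,b,\ldots]=[a,pb,\ldots]$ for $p\in L^+\Pp_j$ to iteratively move each $\tilde g_j$ past $\tau_j$ and collect all factors into the first one, solving at each step for elements $p_j\in L^+\Pp_j$.

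The main technical point is that over $C_\delta\cap \Sigma^\circ$ this collection is algebraic in the coordinates: this is where the minimality of the galleries enters the argument, as it forces each intermediate element $\tilde g_{j+1}\tau_{j+1}^{-1}$ to admit an algebraic factorization of the required form, which can be established via an explicit (generalized) Bruhat-type decomposition of $L^+\Pp'_{j+1}/L^+\Pp_j$. Once $h$ is constructed, composition with $L^+\Gg\twoheadrightarrow L^n\Gg$ yields the required trivializing section.
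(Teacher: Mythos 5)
Your proposal takes a genuinely different route from the paper's. The paper does not try to write an explicit section in Bott--Samelson coordinates; instead it reduces the whole question to the special fiber via the cartesian square
\[
L^n\Gg/(L^{>0}\Gg\cap\Pp^n_{w_0(\mu)}) \longrightarrow \Gr_{\Gg,\mu}, \qquad
L^0\Gg \longrightarrow \mathsf{G}/\mathsf{P}_{w_0(\mu)},
\]
then uses three soft inputs: (i) $\mathsf{G}\to\mathsf{G}/\mathsf{P}_{w_0(\mu)}$ has sections over the $\IG_m$-attractors (Jantzen), (ii) $L^0\Gg\to\mathsf{G}$ is a vector bundle and vector bundles over affine spaces are trivial, so the section lifts, and (iii) $g\colon L^n\Gg\to L^n\Gg/(L^{>0}\Gg\cap\Pp^n_{w_0(\mu)})$ is a torsor under a perfect pro-unipotent group over an affine base, hence trivial. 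The role of the Bott--Samelson resolution there is only to show that each $C_\delta\cap\Gr_{\Gg,\mu}$ lands in the preimage of a single attractor; the trivialization itself is never written down.

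Your approach, attempting an explicit trivialization by the ``collection'' procedure, has a genuine gap at exactly the point you flag as ``the main technical point.'' The difficulty is twofold. First, the $p_j$'s are not uniquely determined (there is $L^+\Pp_r$-worth of slack at the start of the recursion), and each subsequent condition $\tilde g_j p_j\tau_j^{-1}\in L^+\Pp_{j-1}$ depends on the previously chosen $p_j$; so one must make a \emph{coherent algebraic family} of choices along the whole chain, which is not automatic from a Bruhat factorization at each step in isolation. Second, and more seriously, the locus $C_\delta\cap\Gr_{\Gg,\mu}$ is not a single Bruhat cell: by \myref{stratification for triples} the intersections $\IA^+(w)w\IA^-(w)\cap\IA^+(\tau)\tau$ admit only a \emph{filtrable} decomposition into perfect cells, coming from Deodhar's decomposition, and the Bruhat-type factorization of an element of $\Pp_{j-1}\tau_j\Pp_j$ has a different algebraic form on different Deodhar pieces. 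So the assertion that ``this collection is algebraic in the coordinates'' over all of $C_\delta\cap\Gr_{\Gg,\mu}$ — as opposed to on each stratum of the filtrable decomposition separately — is not established, and a section on each stratum does not glue for free since the strata are only locally closed. This is precisely the obstruction the paper's proof avoids by never unwinding the section explicitly: triviality over the preimage of an attractor is deduced abstractly, and the only thing one needs from the gallery picture is the containment of $C_\delta\cap\Gr_{\Gg,\mu}$ in such a preimage. If you want to pursue the explicit route, you would need to either prove that the Deodhar pieces of $C_\delta\cap\Gr_{\Gg,\mu}$ can be covered by a single algebraic factorization formula, or replace your construction by one that manifestly extends across strata.
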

\begin{proof}
Consider the following diagram with cartesian square:
\[\begin{tikzcd}
	L^n\Gg \arrow[r, "g"] & L^n\Gg/(L^{>0}\Gg\cap \Pp^n_{w_0(\mu)}) \arrow[rr, "f"] \arrow[d] && \Gr_{\Gg,\mu} \arrow[d]\\
	& L^0\Gg \arrow[r]&\mathsf{G} \arrow[r] & \mathsf{G}/\mathsf{P}_{w_0(\mu)}.
\end{tikzcd}\]
Here, \(\mathsf{G}\) is the reductive quotient of the special fiber \(L^0\Gg\) of \(\Gg\), and the parabolic subgroup \(\mathsf{P}_{w_0(\mu)}\subseteq \mathsf{G}\) is the image of \(\Pp^0_{w_0(\mu)}\).
By \cite[II.1.10 (5)]{Jantzen:Representations}, the projection \(\mathsf{G}\to \mathsf{G}/\mathsf{P}_{w_0(\mu)}\) has sections over the attractors for the \(\IG_{m,k}\)-action on \(\mathsf{G}/\mathsf{P}_{w_0(\mu)}\) induced by an anti-dominant regular cocharacter. 
Note that these attractors are affine spaces.
Since \(L^0\Gg\to \mathsf{G}\) is a vector bundle, and any vector bundle over an affine space is trivial, we see that \(L^0\Gg\to \mathsf{G}/\mathsf{P}_{(w_0(\mu))}\) has a section over each such attractor.
In particular, \(f\) has sections over the preimage in \(\Gr_{\Gg,\mu}\) of any such attractor.
On the other hand, since \(\Gr_{\Gg,\mu}\to \mathsf{G}/\mathsf{P}_{w_0(\mu)}\) is an affine morphism, \(L^n\Gg/(L^{>0}\Gg\cap \Pp^n_{w_0(\mu)})\) is also affine.
And since \(g\) is a torsor under a unipotent group scheme by \thref{cellularity of stabilizer} below, it is a trivial torsor by \cite[Proposition A.6]{RicharzScholbach:Intersection}.
So we are left to show that each \(C_{\delta}\cap \Gr_{\Gg,\mu}\) is contained in the preimage of some attractor in \(\mathsf{G}/\mathsf{P}_{w_0(\mu)}\).

Consider the Bott-Samelson scheme \(\Sigma(\gamma_\mu)\), along with its projection onto the first factor \(\Pp_{\ff_0}/\Pp_0\), which is clearly \(L^+\Gg\)-equivariant. 
Moreover, there is an equivariant identification \(\Pp_{\ff_0}/\Pp_0 \cong \mathsf{G}/\mathsf{P}_{w_0(\mu)}\), by \thref{first facet is fixed}, under which the restriction of \(\Sigma(\gamma_\mu) \to \Pp_{\ff_0}/\Pp_0\) agrees with \(\Gr_{\Gg,\mu} \to \mathsf{G}/\mathsf{P}_{w_0(\mu)}\).
We conclude by observing that \(\Sigma(\gamma_\mu) \to \Pp_{\ff_0}/\Pp_0\) preserves the attractors for the \(\IG_{m,k}\)-action induced by a regular anti-dominant cocharacter, and that the \(C_{\delta}\)'s are exactly these attractors.
\end{proof}

The following cellularity result was used in the proof above, and is a generalization of \cite[Remark 4.2.8]{RicharzScholbach:Intersection} to the residually split case.

\begin{lem}\thlabel{cellularity of stabilizer}
	For each \(n\geq m\geq 0\), the kernel \(\ker(\Pp_{w_0(\mu)}^n\to \Pp_{w_0(\mu)}^m)\) is a perfected vector group.
	In particular, \(\Pp_{w_0(\mu)}^n\) is perfectly cellular.
\end{lem}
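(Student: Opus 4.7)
The plan is to argue by induction on $n-m$, the case $n=m$ being trivial. For the inductive step, intersecting the short exact sequence
\[1 \to \ker(L^n\Gg \to L^{n-1}\Gg) \to L^n\Gg \to L^{n-1}\Gg \to 1\]
with $\Pp^n_{w_0(\mu)}$ yields
\[1 \to K \to \Pp^n_{w_0(\mu)} \to \Pp^{n-1}_{w_0(\mu)} \to 1,\]
with $K := \ker(L^n\Gg \to L^{n-1}\Gg) \cap \Pp^n_{w_0(\mu)}$. By the inductive hypothesis it suffices to show $K$ is a perfected vector group.

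Smoothness of $\Gg$ implies that $\ker(L^n\Gg \to L^{n-1}\Gg)$ is itself a perfected vector group, canonically identified via the exponential (or the tangent map when $n=1$) with the perfection of $\operatorname{Lie}(\Gg) \otimes_\Oo \varpi^n\Oo/\varpi^{n+1}\Oo$, with additive group structure. Such an element $X$ lies in $\Pp^n_{w_0(\mu)}$ precisely when $\varpi^{-w_0(\mu)}(1+X)\varpi^{w_0(\mu)} \in L^+\Gg$, i.e.\ $\Ad(\varpi^{-w_0(\mu)})(X) \in \operatorname{Lie}(\Gg) \otimes_\Oo \breve{\Oo}$. Decomposing $\operatorname{Lie}(\Gg) \otimes_\Oo \breve{F} = \bigoplus_\alpha \mathfrak{g}_\alpha$ under the adjoint $T$-action (over the absolute roots together with zero), the operator $\Ad(\varpi^{-w_0(\mu)})$ acts on $\mathfrak{g}_\alpha$ by the scalar $\varpi^{-\langle\alpha,w_0(\mu)\rangle}$. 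Hence the stability condition decouples into independent $\breve{\Oo}$-linear conditions on the components $X_\alpha$, cutting out a perfected vector subgroup of $\ker(L^n\Gg \to L^{n-1}\Gg)$.

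For perfect cellularity of $\Pp^n_{w_0(\mu)}$, the iterated extension just produced reduces us to showing that the special-fiber stabilizer $\Pp^0_{w_0(\mu)}$ is perfectly cellular. This group fits into an extension
\[1 \to U \to \Pp^0_{w_0(\mu)} \to \mathsf{P}_{w_0(\mu)} \to 1,\]
where $U$ is the intersection of $\Pp^0_{w_0(\mu)}$ with the unipotent radical of $\Gg_k$ (again a perfected vector group by the same root-space analysis), and $\mathsf{P}_{w_0(\mu)} \subseteq \mathsf{G}$ is a parabolic in the split reductive quotient, which is perfectly cellular by the Bruhat decomposition. Extensions by perfected vector groups preserve perfect cellularity (such torsors trivialize Zariski-locally over each perfect affine cell of the base), so $\Pp^n_{w_0(\mu)}$ is perfectly cellular.

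The main obstacle is verifying that the stabilizer condition on $X$ genuinely cuts out a linear $\breve{\Oo}$-submodule of $\operatorname{Lie}(\Gg)\otimes_\Oo \varpi^n\Oo/\varpi^{n+1}\Oo$. This requires the parahoric lattice $\operatorname{Lie}(\Gg)$ to be compatible with the $T$-weight decomposition—i.e.\ to admit a Moy--Prasad-style description as a direct sum of lattices in each root space—and one must carefully track the valuation jumps introduced by $\Ad(\varpi^{-w_0(\mu)})$ on each piece. Both ingredients are standard, but can be somewhat delicate in the wildly ramified setting where the integral structure of $\operatorname{Lie}(\Gg)$ is subtle.
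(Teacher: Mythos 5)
For the perfect cellularity assertion your argument is essentially the paper's: reduce, by trivializing the torsors under the unipotent kernels, to $\Pp^0_{w_0(\mu)}$ on the special fiber; view $\Pp^0_{w_0(\mu)}$ as a vector-group torsor over the parabolic $\mathsf{P}_{w_0(\mu)} \subseteq \mathsf{G}$; and use that a parabolic in the split group $\mathsf{G}$ is the semidirect product of a cellular Levi and a unipotent radical. The paper invokes \cite[Proposition A.6]{RicharzScholbach:Intersection} once over the affine base $\Pp^0_{w_0(\mu)}$ where you trivialize cell by cell, but these amount to the same thing.

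For the first assertion you give an explicit Lie-theoretic computation where the paper simply cites \cite[Proposition A.9]{RicharzScholbach:Intersection}; this is a genuinely different route. The difficulty you flag yourself --- compatibility of the parahoric lattice $\operatorname{Lie}(\Gg)$ with the $T$-weight decomposition, delicate for wild ramification --- is real, but there is also a structural gap in the induction that you do not address. The one-step kernel $K = \ker(\Pp^n_{w_0(\mu)} \to \Pp^{n-1}_{w_0(\mu)})$ lies inside a square-zero ideal, so your tangent-space identification applies there. For $n-m \geq 2$, however, the corresponding ideal is no longer square-zero, and your inductive step only exhibits $\ker(\Pp^n_{w_0(\mu)} \to \Pp^m_{w_0(\mu)})$ as an iterated extension of perfected vector groups. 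Such an extension is generally \emph{not} itself a perfected vector group: already $\ker(L^n\Gg\to L^m\Gg)(R)$ for $\Gg=\IG_{a,\Oo}$ and perfect $R$ is a truncated Witt group with $p$-power torsion of exponent $>p$ once $n-m\geq 2$ in mixed characteristic. So "by the inductive hypothesis it suffices to show $K$ is a perfected vector group" is not justified; one needs a direct argument valid for all $m$, which is presumably the content of the cited Proposition A.9. You also tacitly assume $\Pp^n_{w_0(\mu)} \to \Pp^{n-1}_{w_0(\mu)}$ is surjective in order to get a short exact sequence; intersecting the sequence for $L^n\Gg \to L^{n-1}\Gg$ with the stabilizer gives left-exactness but not surjectivity, so this point should be justified (for instance by defining the truncated stabilizers as images).
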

\begin{proof}
	The first assertion follows from \cite[Proposition A.9]{RicharzScholbach:Intersection}.
	For the second one, since \(\Pp_{w_0(\mu)}^0\) is affine, it suffices by \cite[Proposition A.6]{RicharzScholbach:Intersection} to show that \(\Pp_{w_0(\mu)}^0\) is cellular.
	But it is a torsor over \(\mathsf{P}_{w_0(\mu)}\) under the unipotent radical of \(L^0\Gg\), which is also a vector group, so we are left to show cellularity of \(\mathsf{P}_{w_0(\mu)}\).
	Since we assumed \(G\) residually split, this is a parabolic of the split reductive group \(\mathsf{G}\). 
	In particular, \(\mathsf{P}_{w_0(\mu)}\) is the semi-direct product of a (split) Levi subgroup, which is cellular by the Bruhat decomposition, and its unipotent radical, which is a vector group.
\end{proof}

Let \(a\colon L^n\Gg\times (\Ss_{\nu}^-\cap \Gr_{\Gg,\mu})\to \Gr_{\Gg,\mu}\) be the action map, for some \(\nu\in X_*(T)_I\).

\begin{cor}\thlabel{fibres of action map are cellular}
	For any \(\lambda\in X_*(T)_I\), the preimage \(a^{-1}(\Ss_{\lambda}^+\cap \Gr_{\Gg,\mu})\) admits a filtrable decomposition into perfect cells.
	Moreover, it has dimension \(\dim L^n\Gg + \langle \rho,\lambda-\nu\rangle\).
\end{cor}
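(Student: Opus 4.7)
The plan is to identify $a^{-1}(\Ss_\lambda^+\cap \Gr_{\Gg,\mu})$, via a rotation trick, with a trivialized stabilizer torsor over a product of Schubert--semi-infinite intersections, and then use the filtrable decompositions from \thref{Intersections of Schubert cells and semi-infinite orbits} to conclude. Set $Y := \Ss_\nu^-\cap \Gr_{\Gg,\mu}$ and $Z := \Ss_\lambda^+\cap \Gr_{\Gg,\mu}$, and consider the map
\[\theta\colon a^{-1}(Z)\to Y\times Z,\quad (g,y)\mapsto (y,g\cdot y).\]
Since $L^n\Gg$ acts transitively on $\Gr_{\Gg,\mu}$, the map $\theta$ is surjective, and its fiber over $(y,z)$ is the left coset $\{g\in L^n\Gg\colon g\cdot y = z\}$, which is a torsor under the stabilizer $\Pp_y^n$ of~$y$.

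By \thref{Intersections of Schubert cells and semi-infinite orbits} and \thref{intersections for positive orbits}, both $Y$ and $Z$ admit filtrable decompositions into perfect cells of the form $C_\delta\cap \Gr_{\Gg,\mu}$ for appropriate positively folded galleries $\delta$, and these assemble into a filtrable decomposition of $Y\times Z$. Furthermore, \thref{triviality of torsor} provides, for each such $\delta$, a section $s_\delta\colon C_\delta\cap \Gr_{\Gg,\mu}\to L^n\Gg$ satisfying $s_\delta(x)\cdot \varpi^{w_0(\mu)}=x$. For $(y,z)\in (C_{\delta_1}\cap \Gr_{\Gg,\mu})\times (C_{\delta_2}\cap \Gr_{\Gg,\mu})$, the fiber of $\theta$ rewrites as $s_{\delta_2}(z)\,\Pp_{w_0(\mu)}^n\,s_{\delta_1}(y)^{-1}$, yielding a trivialization
\[\theta^{-1}\bigl((C_{\delta_1}\cap \Gr_{\Gg,\mu})\times (C_{\delta_2}\cap \Gr_{\Gg,\mu})\bigr)\cong \Pp_{w_0(\mu)}^n\times (C_{\delta_1}\cap \Gr_{\Gg,\mu})\times (C_{\delta_2}\cap \Gr_{\Gg,\mu}).\]
Since $\Pp_{w_0(\mu)}^n$ is perfectly cellular by \thref{cellularity of stabilizer}, each such piece is a product of a perfectly cellular scheme with two perfect cells, and therefore itself admits a filtrable decomposition into perfect cells. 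Assembling these pieces along the filtration of $Y\times Z$ gives the desired filtrable decomposition of $a^{-1}(Z)$.

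The dimension count follows from the presentation $\Gr_{\Gg,\mu}=L^n\Gg/\Pp_{w_0(\mu)}^n$, which gives $\dim \Pp_{w_0(\mu)}^n = \dim L^n\Gg-\langle 2\rho,\mu\rangle$, together with \thref{Nonemptyness of intersections} (applied directly to $Z$, and via the substitution $\nu\mapsto -\nu$ relating $\Ss^-_\nu$ to $\Ss^+_{-\nu}$ for $Y$), which gives $\dim Z=\langle \rho,\mu+\lambda\rangle$ and $\dim Y=\langle \rho,\mu-\nu\rangle$. Summing these yields the maximal stratum dimension $\dim L^n\Gg+\langle \rho,\lambda-\nu\rangle$, as claimed. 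The main technical point is verifying that the local filtrations on the fibers of $\theta$ glue into a single global filtrable decomposition, but this reduces to the standard fact that pulling back a filtration along $\theta$ and refining on each stratum preserves filtrability.
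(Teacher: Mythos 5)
Your proof is essentially the paper's argument, just packaged slightly differently: where the paper first trivializes the torsor over a cell $X\subseteq Z=\Ss_\lambda^+\cap\Gr_{\Gg,\mu}$ to get $a^{-1}(X)\cong X\times a^{-1}(\varpi^\mu)$ and then, in a second step, fibers $a^{-1}(\varpi^{w_0(\mu)})$ over cells of $Y=\Ss_\nu^-\cap\Gr_{\Gg,\mu}$, you carry out both trivializations simultaneously via the map $\theta\colon a^{-1}(Z)\to Y\times Z$. The key inputs — \thref{Intersections of Schubert cells and semi-infinite orbits}, \thref{triviality of torsor}, \thref{cellularity of stabilizer}, and the dimension bookkeeping — are identical. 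One small inaccuracy in the dimension justification: $\Ss_\nu^-$ is not related to $\Ss_{-\nu}^+$ by the substitution $\nu\mapsto-\nu$; the correct identification is $w_0\cdot\Ss_\nu^-=\Ss_{w_0(\nu)}^+$ via a lift of the longest element $w_0$, and the formula $\dim Y=\langle\rho,\mu-\nu\rangle$ then follows from $\langle\rho,w_0(\nu)\rangle=-\langle\rho,\nu\rangle$. Also note that your sections $s_{\delta_1}$ and $s_{\delta_2}$ implicitly trivialize the torsor with respect to different basepoints ($\varpi^{w_0(\mu)}$ for the $\Ss^-$ cells, $\varpi^\mu$ for the $\Ss^+$ cells), so the fiber of $\theta$ is a bicoset rather than a one-sided coset; this is still scheme-theoretically isomorphic to $\Pp_{w_0(\mu)}^n$, so the conclusion stands.
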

\begin{proof}
	Filtering \(\Ss_\lambda^+\cap \Gr_{\Gg,\mu}\) by perfect cells as in \thref{intersections for positive orbits}, it suffices to consider the preimage under \(a\) of such a cell \(X\).
	By \thref{triviality of torsor}, the map \(L^n\Gg\to \Gr_{\Gg,\mu}\colon g\mapsto g\cdot \varpi^\mu\) has a section \(s\colon X\to L^n\Gg\) over \(X\subseteq \Gr_{\Gg,\mu}\).
	This gives an isomorphism \(X\times a^{-1}(\varpi^\mu) \cong a^{-1}(X)\colon (x,f)\mapsto s(x)\cdot f\), where we used the left action of \(L^n\Gg\) on \(L^n\Gg\times (\Ss_{\nu}^-\cap \Gr_{\Gg,\mu})\) via the first factor.
	Since \(X\) is a cell, it suffices to decompose \(a^{-1}(\varpi^{\mu})\), which is isomorphic to \(a^{-1}(\varpi^{w_0\mu})\) as \(L^n\Gg\) acts transitively on \(\Gr_{\Gg,\mu}\).
	
	It is also enough to decompose the preimages of any cell \(Y\) from \thref{Intersections of Schubert cells and semi-infinite orbits} under the projection \(a^{-1}(\varpi^{w_0(\mu)}) \subseteq L^n\Gg\times (\Ss_{\nu}^-\cap \Gr_{\Gg,\mu}) \to \Ss_{\nu}^-\cap \Gr_{\Gg,\mu}\).
	But this preimage is isomorphic to \(\Pp_{w_0(\mu)}^n\times Y\), by sending \((p,y)\in \Pp_{w_0(\mu)}^n\times Y\) to \((p\cdot s'(x^{-1}),x)\), where \(s'\colon Y\to L^n\Gg\) is a section of \(L^n\Gg\mapsto \Gr_{\Gg,\mu}\colon g\mapsto g\cdot \varpi^{w_0(\mu)}\) obtained from \thref{triviality of torsor}.
	Thus, we are left to decompose \(\Pp_{w_0(\mu)}\), which was covered in \thref{cellularity of stabilizer}.
	
	Finally, the dimension follows from \(\dim \Pp_{w_0(\mu)}^n = \dim L^n\Gg - \langle 2\rho,\mu\rangle\), as well as \(\dim \Ss_\lambda^+\cap \Gr_{\Gg,\mu} = \langle \rho,\mu+\lambda\rangle\) and \(\Ss_\nu^-\cap \Gr_{\Gg,\mu} = \langle \rho,\mu-\nu\rangle\).
\end{proof}

\section{The convolution product}\label{sect-convolution}

We now go back to an arbitrary connected reductive group \(G/F\).
The goal of this section is to construct a convolution product for equivariant motives on affine flag varieties, and show it preserves stratified Tate motives.
At very special level, we will moreover show it is t-exact, up to possible issues related to the failure of exactness of the tensor product.
Our approach is a generalization of \cite[§3]{RicharzScholbach:Satake} and \cite[§4.3]{RicharzScholbach:Witt} to the case of ramified groups.

From now on, we fix an extension \(k'/k\), either finite or an algebraic closure, corresponding to a complete unramified extension \(F'/F\) such that \(G\) becomes residually split over \(F'\).
We then use this fixed \(k'\) to define categories of Artin--Tate motives as in \thref{defi Artin-Tate}.

\subsection{Convolution affine flag varieties}

We start by recalling the twisted products of affine flag varieties (and semi-infinite orbits) and record their basic properties.
These are not very surprising, but the case of ramified groups has not yet appeared in the literature with enough details.

\begin{dfn}
	Let \(\Gg_0,\Gg_1,\ldots,\Gg_n/\Oo\) be (not necessarily very special) parahoric integral models of \(G\).
	The \emph{convolution affine flag variety} is the twisted product \(\Fl_{\Gg_1} \widetilde{\times} \Fl_{\Gg_2} \widetilde{\times} \ldots \widetilde{\times}\Fl_{\Gg_n}\), which is by definition the contracted product \(LG \overset{L^+\Gg_1}{\times} LG \overset{L^+\Gg_2}{\times} \ldots \overset{L^+\Gg_{n-1}}{\times} \Fl_{\Gg_n}\).
\end{dfn}

For \(i=1,\ldots,n\), one can multiply the first \(i\) factors of this contracted product to get a map \(m_i\colon \Fl_{\Gg_1} \widetilde{\times} \Fl_{\Gg_2} \widetilde{\times} \ldots \widetilde{\times}\Fl_{\Gg_i} \to \Fl_{\Gg_i}\); in particular \(m_1\) is the projection onto the first factor.
This induces an isomorphism
\begin{equation}\label{iso of convolution Gr}
	(m_1,\ldots,m_n)\colon \Fl_{\Gg_1} \widetilde{\times} \Fl_{\Gg_2} \widetilde{\times} \ldots \widetilde{\times}\Fl_{\Gg_n} \cong \Fl_{\Gg_1} \times \Fl_{\Gg_2} \times \ldots \times\Fl_{\Gg_n},
\end{equation}
so that convolution affine flag varieties are representable by ind-(perfect projective) ind-schemes.

By replacing \(\Fl_{\Gg_i}\) in the contracted product above by an \(L^+\Gg_{i-1}\)-orbit \(\Fl_{\Gg_i,w_i}\), we get a locally closed subvariety \(\Fl_{\Gg_1, w_1} \widetilde{\times} \Fl_{\Gg_2, w_2} \widetilde{\times} \ldots \widetilde{\times}\Fl_{\Gg_n, w_n}\).
This defines a stratification of the convolution flag variety, indexed by \(\prod_{i=1}^n (W_{\Gg_{i-1}}\backslash \tilde{W}/ W_{\Gg_i})\).
Similarly, we can define twisted products of the closures of such orbits, which are perfect projective schemes.

Next, assume that \(\Gg_0=\Gg_1=\ldots=\Gg_n=\Gg\) is a very special parahoric integral model, with associated twisted affine Grassmannian \(\Gr_{\Gg}\). 
As in §\ref{subsec--semi-infinite}, choose a cocharacter \(\lambda\colon \IG_{m,\Oo}\to \Ss\).
Let \(M\subseteq P^\pm\subseteq G\) be the fixed points and attractor/repeller of the associated \(\IG_{m,F}\)-action on \(G\), and consider the associated stratification \(\Gr_{\Gg} = \bigsqcup_{\nu\in \pi_1(M)_{\Gal(\overline{F}/F)}} \Ss_{P,\nu}^\pm\).

If \(\lambda\) is regular dominant, then \(P=B\) is a Borel with Levi factor \(M=T\), and the \(\Ss_{P,\nu}^\pm= \Ss_\nu^\pm\) are the semi-infinite orbits.
If furthermore \(k=k'\), then for each \(\nu\) we have an \(L^+\Uu^\pm\)-torsor \(LU^\pm \to \Ss_{\nu}^\pm\).
We can use these to form the twisted product \(\Ss_{\nu_1}^\pm \widetilde{\times} \Ss_{\nu_2}^\pm \widetilde{\times} \ldots \widetilde{\times} \Ss_{\nu_n}^\pm\), for any tuple \((\nu_1,\ldots,\nu_n)\) of elements in \(\pi_1(T)_{\Gal(\overline{F}/F)}\).
This time, the partial multiplication maps induce an isomorphism
\[\Ss_{\nu_1}^\pm \widetilde{\times} \Ss_{\nu_2}^\pm \widetilde{\times} \ldots \widetilde{\times} \Ss_{\nu_n}^\pm \cong \Ss_{\nu_1}^\pm \times \Ss_{\nu_1+\nu_2}^\pm \times \ldots \times \Ss_{\nu_1+\ldots +\nu_n}^\pm.\]
Under the isomorphism \((\Gr_{\Gg})^{\widetilde{\times}n} \cong (\Gr_{\Gg})^{\times n}\), the diagonal \(\IG_{m,k}\)-action on the product induced by \(\lambda\) induces a \(\IG_{m,k}\)-action on the convolution affine Grassmannian.
Then it is clear that the attractors and repellers of this \(\IG_{m,k}\)-action are exactly the twisted products \(\Ss_{\nu_1}^\pm \widetilde{\times} \Ss_{\nu_2}^\pm \widetilde{\times} \ldots \widetilde{\times} \Ss_{\nu_n}^\pm\) above, compare \cite[§6]{Yu:Integral}.
In particular, we can consider the twisted products of \(\Gr_{\Bb^\pm}\) with itself along the \(L^+\Bb^\pm\)-torsor \(LB^\pm\to \Gr_{\Bb^\pm}\). This yields a coproduct
\[\Gr_{\Bb^\pm} \widetilde{\times} \Gr_{\Bb^\pm} \widetilde{\times} \ldots \widetilde{\times} \Gr_{\Bb^\pm} = \coprod_{\nu_i\in \pi_1(T)_{\Gal(\overline{F}/F)}} \Ss_{\nu_1}^\pm \widetilde{\times} \Ss_{\nu_2}^\pm \widetilde{\times} \ldots \widetilde{\times} \Ss_{\nu_n}^\pm,\]
which determines a stratification of \((\Gr_{\Gg})^{\widetilde{\times} n}\).

If \(\lambda\) is not regular dominant, then we can no longer view the \(\Ss_{P,\nu}^\pm\) as the quotient of a loop group by a positive loop group, so we cannot define their twisted products in a naive way.
However, we can define them in analogy to the \(P=B\) case, by considering the isomorphism
\[\Gr_{\Pp^\pm} \widetilde{\times} \Gr_{\Pp^\pm} \widetilde{\times} \ldots \widetilde{\times} \Gr_{\Pp^\pm} \cong \Gr_{\Pp^\pm} \times \Gr_{\Pp^\pm} \times \ldots \times \Gr_{\Pp^\pm}.\]
Then for any tuple \((\nu_1,\ldots,\nu_n)\in (\pi_1(M)_{\Gal(\overline{F}/F)})^n\) we \emph{define} the twisted product 
\[\Ss_{P,\nu_1}^\pm \widetilde{\times} \Ss_{P,\nu_2}^\pm \widetilde{\times} \ldots \widetilde{\times} \Ss_{P,\nu_n}^\pm\] 
as the preimage of \(\Ss_{P,\nu_1}^\pm \times \Ss_{P,\nu_1+\nu_2}^\pm \times \ldots \times \Ss_{P,\nu_1+\ldots +\nu_n}^\pm\) under the above isomorphism.
These are the connected components of the iterated twisted product \((\Gr_{\Pp^\pm})^{\widetilde{\times} n}\), and define a stratification of \((\Gr_{\Gg})^{\widetilde{\times}n}\).
They are also the attractor/repeller of the \(\IG_{m,k}\)-action on \((\Gr_{\Gg})^{\widetilde{\times} n}\) induced by \(\lambda\) via the diagonal action on \((\Gr_{\Gg})^{\times n}\).

\subsection{Preservation of Artin-Tate motives}

Let \(\Gg,\Gg',\Gg''/\Oo\) be three parahoric models of \(G\), which we assume to contain a common Iwahori \(\Ii\) for simplicity. 
Consider the maps
\begin{equation}\label{convolution diagram}
	L^+\Gg' \backslash LG / L^+\Gg \times L^+\Gg \backslash LG / L^+\Gg'' \xleftarrow{\overline{q}} L^+\Gg' \backslash LG \overset{L^+\Gg}{\times}LG/ L^+\Gg'' \xrightarrow{\overline{m}} L^+\Gg' \backslash LG / L^+\Gg''
\end{equation}
induced by the projection and multiplication on \(LG\).

\begin{dfn}
	The \emph{(derived) convolution product} is the functor
	 \[\star:=\overline{m}_! \overline{q}^! (-\boxtimes-)\colon \DM(L^+\Gg'\backslash LG/L^+\Gg) \times \DM(L^+\Gg\backslash LG/L^+\Gg'') \to \DM(L^+\Gg'\backslash LG/L^+\Gg'').\]
\end{dfn}

\begin{rmk}\thlabel{remarks about derived convolution}
	\begin{enumerate}
		\item By descent, we have natural equivalences \[\DM(L^+\Gg' \backslash LG / L^+\Gg) \cong \DM(L^+\Gg'\backslash \Fl_{\Gg}),\] and similarly for the other parahorics.
		Hence, we will also consider \(\star\) as a functor \[\DM(L^+\Gg'\backslash \Fl_{\Gg}) \times \DM(L^+\Gg\backslash \Fl_{\Gg''}) \to \DM(L^+\Gg'\backslash \Fl_{\Gg''}).\]
		\item By base change, the derived convolution product admits natural associativity isomorphisms, cf.~\cite[Lemma 3.7]{RicharzScholbach:Satake} for details.
		\item Any map of prestacks admits an upper-! functor by construction, while \(\overline{m}_!\) exists by descending \(m_!\), which exists for the morphism \(m\colon \Fl_{\Gg} \widetilde{\times} \Fl_{\Gg''} \to \Fl_{\Gg''}\) of ind-(pfp schemes).
		On the other hand, \(\boxtimes\) can be constructed as in \cite[Proposition 2.4.4]{RicharzScholbach:Intersection}.
	\end{enumerate}
\end{rmk}

The main purpose of this subsection is to show the derived convolution product preserves Artin-Tate motives.
And while we are mostly interested in the case where \(\Gg=\Gg'=\Gg''\) is very special, we will reduce to the case where \(\Gg=\Gg'=\Gg'=\Ii\) are Iwahori models, in order to use the full affine flag variety rather than the affine Grassmannian.

\begin{prop}\thlabel{Conv-Tate}
	The convolution product \(\star\colon \DM(\Gg'\backslash \Fl_{\Gg}) \times \DM(\Gg\backslash \Fl_{\Gg''}) \to \DM(\Gg'\backslash \Fl_{\Gg''})\) preserves anti-effective stratified Artin-Tate motives.
\end{prop}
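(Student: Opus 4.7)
The plan is to follow the strategy of Richarz--Scholbach \cite[§3.3]{RicharzScholbach:Satake}, adapted to the ramified and integral-coefficient setting.

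First, I reduce to the Iwahori case $\Gg=\Gg'=\Gg''=\Ii$. Let $\Ii$ be an Iwahori contained in all three parahorics. By \thref{Cellularity of Schubert cells}, the projection $\pi\colon \Fl_\Ii \to \Fl_\Gg$ is a proper iterated $\IP^{1,\perf}$-fibration over each Schubert cell, and both $\pi_*$ and $\pi^*$ preserve (anti-effective) stratified Artin-Tate motives. Applying $\pi$ to one parahoric at a time in the convolution diagram \eqref{convolution diagram} and using descent, the parahoric convolution is computed from the Iwahori convolution via these pushforward/pullback operations, so preservation at parahoric level follows from preservation at Iwahori level.

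Second, I reduce to generators. By \thref{Schubert stratification is WT} and \thref{fix asymmetry}, the category $\DATM(L^+\Ii \backslash \Fl_\Ii)$ is generated under colimits, shifts, twists, and extensions by the anti-effective objects $\iota_{w,!}\unit$ for $w \in \tilde{W}$. Since $\star = \overline{m}_!\overline{q}^!(-\boxtimes-)$ commutes with all these operations in each variable, it suffices to check that $\iota_{w,!}\unit \star \iota_{w',!}\unit$ is anti-effective stratified Artin-Tate for every $w, w' \in \tilde{W}$.

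Third, I exploit the Bott-Samelson resolution. By the partial multiplication isomorphism \eqref{iso of convolution Gr}, the twisted product $\Fl_{\Ii,\leq w} \widetilde{\times} \Fl_{\Ii,\leq w'}$ is isomorphic to $\Fl_{\Ii,\leq w} \times \Fl_{\Ii,\leq w'}$, hence perfectly cellular. A base change computation using this isomorphism identifies $\overline{q}^!(\iota_{w,!}\unit \boxtimes \iota_{w',!}\unit)$ (up to shift) with the $!$-pushforward of $\unit$ from the open twisted cell $\Fl_{\Ii,w}\widetilde{\times}\Fl_{\Ii,w'}$. For reduced decompositions $w = s_1\cdots s_r$ and $w' = s'_1\cdots s'_{r'}$, the concatenated Bott-Samelson scheme $D(\dot{w}\dot{w}') = L^+\Pp_{s_1}\overset{L^+\Ii}{\times}\cdots\overset{L^+\Ii}{\times} L^+\Pp_{s'_{r'}}/L^+\Ii$ is a smooth projective iterated $\IP^{1,\perf}$-bundle, admitting a proper birational Demazure-type resolution onto $\Fl_{\Ii,\leq w}\widetilde{\times}\Fl_{\Ii,\leq w'}$. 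Composing with $\overline{m}$ yields a proper map $\mu\colon D(\dot{w}\dot{w}') \to \Fl_\Ii$ with image in a finite union of Schubert varieties. Since $\mu_*\unit$ is anti-effective stratified Artin-Tate by the iterated $\IP^{1,\perf}$-bundle argument used in the proof of \thref{Schubert stratification is WT}, and $\iota_{w,!}\unit \star \iota_{w',!}\unit$ differs from a shifted copy of $\mu_*\unit$ by contributions supported on proper sub-Schubert varieties (handled inductively on the length of the support), the desired conclusion follows.

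The main obstacle will be precisely matching shifts and twists in the third step and handling the lower-order boundary contributions when $l(ww') < l(w)+l(w')$. Since every operation in the Bott-Samelson induction introduces at worst a non-positive Tate twist (arising from $\IP^{1,\perf}$-bundle cohomology), anti-effectivity is preserved throughout, exactly as in the proof of \thref{Schubert stratification is WT}.
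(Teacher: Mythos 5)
Your overall architecture (reduce to Iwahori level, reduce to generators, then compute convolutions of standard objects using the $\IP^{1,\perf}$-bundle structure of Demazure/Bott--Samelson schemes) is in the spirit of the paper's proof, but there is a genuine gap in your first reduction, and your third step omits the one concrete geometric computation that actually drives the argument.

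\textbf{The reduction to Iwahori level is not correct as stated.} You claim that "applying $\pi$ to one parahoric at a time in the convolution diagram and using descent, the parahoric convolution is computed from the Iwahori convolution via these pushforward/pullback operations." But the convolution functor at parahoric level is defined via the twisted product $LG\overset{L^+\Gg}{\times}LG$, which is a \emph{quotient} of $LG\overset{L^+\Ii}{\times}LG$, not a sub- or super-object accessible by $\pi_*$ or $\pi^*$ along $\Fl_\Ii\to\Fl_\Gg$. The convolution functors for different middle parahorics are not intertwined by these pushforwards, and your sentence does not constitute an argument. What the paper actually shows is that, for $w\in\tilde{W}/W_\Gg$ and $w'\in W_\Gg\backslash\tilde{W}$, the natural map $(L^+\Ii w L^+\Gg)\overset{L^+\Ii}{\times}(L^+\Gg w'L^+\Ii)\to(L^+\Ii w L^+\Gg)\overset{L^+\Gg}{\times}(L^+\Gg w'L^+\Ii)$ admits a section, because $\Fl_{\Gg,w}$ is an affine space and the $L^+\Ii$-torsor over it is trivial --- this is the geometric input that lets one compare the two twisted products. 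Moreover, when the \emph{outer} parahorics $\Gg'$ and $\Gg''$ differ from $\Ii$, one still needs an argument: the paper invokes \cite[Proposition 3.1.23]{RicharzScholbach:Intersection} to check stratified Tateness étale-locally, and crucially the asymmetry observation of \thref{fix asymmetry} to switch the role of left and right and thereby reduce $\Gg''$ to $\Ii$ as well. Your sketch addresses neither point.

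\textbf{The Iwahori case via a concatenated Bott--Samelson resolution is plausible but your account lacks the key computation.} The paper does not merely observe that the Demazure resolution is an iterated $\IP^{1,\perf}$-bundle; it first computes $\iota_{s,*}\unit\star\iota_{s,*}\unit$ explicitly by a localization triangle on $\IP^{1,\perf}$ (identifying it with $f_*\unit$ for $f\colon(\IA^1\times\IP^1)\setminus\Delta(\IA^1)\to\IP^1$), and then bootstraps to general $w,w'$ by a length induction that exploits associativity: $\iota_{w,*}\unit\cong\iota_{ws,*}\unit\star\iota_{s,*}\unit$ when $l(ws)=l(w)-1$. Your "all at once" version via the concatenated Bott--Samelson scheme $D(\dot w\dot w')$ is morally the same idea, but your claim that the difference between $\mu_*\unit$ and $\iota_{w,!}\unit\star\iota_{w',!}\unit$ is "handled inductively on the length of the support" is not an argument: you do not specify the inductive quantity or explain why the boundary contributions are controlled by previously-treated cases. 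If you want to pursue this route, you should isolate and prove the rank-one computation first, as the paper does; otherwise the induction has no base.

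A minor point: the paper works with $\iota_{w,*}\unit$, not $\iota_{w,!}\unit$. On affine-space cells these differ only by shift and twist, so this is immaterial, but it is worth being consistent, especially since anti-effectivity is sensitive to the sign of the twist.
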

Here, stratified Artin-Tate motives in \(\DM(\Gg'\backslash \Fl_{\Gg})\) are defined with respect to the stratification of \(\Fl_{\Gg}\) by \(L^+\Gg'\)-orbits, and similar for the other parahorics.
\begin{proof}
	We may assume \(G\) is residually split and set \(k=k'\), in which case we need to show \(\star\) preserves stratified Tate motives.
	The proof is then similar to \cite[Theorem 3.17]{RicharzScholbach:Satake}, cf.~also \cite[Theorem 4.8]{RicharzScholbach:Witt}, and \cite[Definition and Lemma 4.11]{CassvdHScholbach:Geometric} for anti-effectivity; let us recall it.
	It suffices to show that for any \(w\in W_{\Gg'}\backslash \tilde{W} /W_\Gg\) and \(w'\in W_\Gg\backslash \tilde{W}/ W_{\Gg''}\), the non-equivariant motive underlying \(\iota_{w,*}\unit \star \iota_{w',*}\unit\in \DTM_{L^+\Gg'}(\Fl_{\Gg''})\) is (anti-effective) stratified Tate; note that both \(\iota_{w,*}\unit\) and \(\iota_{w',*}\unit\) are naturally equivariant.
	
	\textbf{(I) First, assume \(\Gg=\Gg'=\Gg''=\Ii\) are Iwahori models.}
	This case contains the key geometric computations, and we analyse different cases further.
	
	\emph{(I.a) \(w=w'=s\) is a simple reflection.} 
	In this case \(\Fl_{\Ii,s}\subset \Fl_{\Ii,\leq s}\) can be identified with the inclusion \(\IA_k^{1,\perf}\subseteq \IP_k^{1,\perf}\).
	Moreover, \eqref{iso of convolution Gr} restricts to an isomorphism \(\tau\colon \Fl_{\Ii,\leq s}\ \widetilde{\times} \Fl_{\Ii,\leq s} \cong \Fl_{\Ii,\leq s} \times \Fl_{\Ii,\leq s}\).
	More specifically, we have \(\tau(\Fl_{\Ii,e}\widetilde{\times} \Fl_{\Ii,e}) = \Fl_{\Ii,e}\times \Fl_{\Ii,e}\), \(\tau(\Fl_{\Ii,e} \widetilde{\times} \Fl_{\Ii,s}) = \Fl_{\Ii,e} \times \Fl_{\Ii,s}\), and \(\tau(\Fl_{\Ii,s} \widetilde{\times} \Fl_{\Ii,e}) = \Delta(\Fl_{\Ii,s})\), where \(\Delta\) is the diagonal of \(\Fl_{\Ii,\leq s}\).
	Thus, the diagram \cite[p.~1619]{RicharzScholbach:Satake} works in our setting as well, and we are reduced to showing that \(f_*(\unit) = \iota_{s,*}\unit \star \iota_{s,*}\unit \in \DM(\IP_k^{1,\perf})\) lies in \(\DTM(\IP_k^{1,\perf},\IA_k^{1,\perf}\coprod \{\infty\})^{(\anti)}\), where \(f\colon (\IA_k^{1,\perf}\times \IP_k^{1,\perf})\setminus \Delta(\IA_k^{1,\perf}) \to \IP_k^{1,\perf}\) is the projection onto the second factor.
	(We use \(f_*\), instead of \(f_!\) as in loc.~cit., in order to handle anti-effective motives as well.
	To show *-pushforwards commute with exterior, and hence twisted products, we can use \cite[Proposition 2.1.20]{JinYang:Kunneth}.)
	This follows from the localization sequence
	\begin{equation}\label{localization for convolution}
		\iota_{\infty,*}\iota_{\infty}^!f_*(\unit) \cong \iota_{\infty,*}(\unit(-1)[-2]) \to f_*(\unit) \to \iota_{\IA^1_k,*}\iota_{\IA^1_k}^!f_*(\unit)\cong \iota_{\IA^1_k,*} (\unit(-1)[-1] \oplus \unit),
	\end{equation}
	since \(q^{-1}(\{\infty\})\cong \IA^1_k\) and \(q^{-1}(\IA^1_k)\cong \IA^1\times \IG_{m,k}\).
	
	\emph{(I.b) \(w,w'\in \tilde{W}\) satisfy \(l(ww') = l(w)+l(w')\).} Then the multiplication map restricts to an isomorphism \(\Fl_{\Ii,w}\widetilde{\times} \Fl_{\Ii,w'} \cong \Fl_{\Ii,ww'}\), so that \(\iota_{w,*}\unit \star \iota_{w',*}\unit \cong \iota_{ww',*}\unit\) is (anti-effective) stratified Tate by \thref{Schubert stratification is WT}.
	
	\emph{(I.c) \(w'=s\) is a simple reflection.} 
	The case \(l(ws) = l(w)+1\) is already handled, so we can assume \(l(ws) = l(w)-1\).
	Then the previous case shows \(\iota_{w,*}\unit \cong \iota_{ws,*}\unit \star \iota_{s,*}\unit\).
	Convolving \eqref{localization for convolution} on the left with \(\iota_{ws,*}\unit\) gives an exact triangle
	\[\iota_{ws,*}\unit (-1)[-2] \to \iota_{ws,*}\unit \star (\iota_{s,*}\unit \star \iota_{s,*}\unit) \to (\iota_{w,*}\unit (-1)[-2] \oplus \iota_{w,*}\unit[-1]).\]
	We conclude this case by associativity of \(\star\), \thref{remarks about derived convolution}.
	
	\emph{(I.d) \(w,w'\in \tilde{W}\) are arbitrary.} 
	Fixing a reduced expression for \(w'\), this follows by using the second and third cases.
	
	\textbf{(II) Next, assume \(\Gg'=\Gg''=\Ii\) are Iwahori's, and \(\Gg\supseteq \Ii\).}
	Let \(w\in \tilde{W}/W_{\Gg}\) and \(w'\in W_{\Gg}\backslash \tilde{W}\).
	Note that the action map \(L^+\Ii\to \Fl_{\Gg,w}\) is already surjective when restricted to the pro-unipotent radical of \(L^+\Ii\).
	Since \(\Fl_{\Gg,w}\) is itself an affine space, this action map admits a section, and hence \((L^+\Ii w L^+\Gg) \overset{L^+\Ii}{\times} (L^+\Gg w'L^+\Ii)\to (L^+\Ii w L^+\Gg) \overset{L^+\Gg}{\times} (L^+\Gg w'L^+\Ii)\) admits a section as well.
	As in \cite[Proposition 3.26]{RicharzScholbach:Satake}, this allows us to reduce to the case \(\Gg=\Ii\), which was already covered.
	
	\textbf{(III) Finally, let \(\Gg,\Gg',\Gg''\) be arbitrary} (but still containing a common Iwahori \(\Ii\).)
	Since the stratifications involved are given by orbits, we can use \cite[Proposition 3.1.23]{RicharzScholbach:Intersection} to reduce to the case \(\Gg'=\Ii\).
	Using \thref{fix asymmetry}, we can similarly assume \(\Gg''=\Ii\).
	Since we already handled this case, we can conclude the proof.
\end{proof}

\subsection{t-exactness of the convolution product}

Now, let us specialize to the situation where \(\Gg=\Gg'=\Gg''\) is a very special parahoric, with associated (twisted) affine Grassmannian \(\Gr_\Gg=\Fl_\Gg\).
Using \thref{Conv-Tate}, we view the convolution as a functor \(\star\colon \DATM_{L^+\Gg}(\Gr_{\Gg}) \times \DATM_{L^+\Gg}(\Gr_{\Gg}) \to \DATM_{L^+\Gg}(\Gr_{\Gg})\).
As usual, this has no hope to be t-exact (for \(\IZ[\frac{1}{p}]\)-coefficients): already if \(\Gg\) is the trivial group, then convolution agrees with the tensor product, which is not t-exact.
However, this is the only obstruction.

\begin{prop}\thlabel{convolution t-exact}
	The functor \[\overline{m}_! \overline{q}^! \pH^0 (-\boxtimes -)\colon \DATM_{L^+\Gg}(\Gr_{\Gg})\times \DATM_{L^+\Gg}(\Gr_{\Gg}) \to \DATM_{L^+\Gg}(\Gr_{\Gg})\] is t-exact, and hence preserves mixed Artin-Tate motives.
\end{prop}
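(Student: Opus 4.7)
The plan is to deduce t-exactness from a Mirkovic--Vilonen-style semismallness statement for the multiplication map, after making standard reductions. First, I would reduce to the residually split non-equivariant setting: the t-structures on both sides, as well as all functors involved ($\boxtimes$, $\overline{q}^!$, $\overline{m}_!$, $\pH^0$), are compatible with the base change $\Spec \bar k \to \Spec k$ and with the forgetful functor $\DATM_{L^+\Gg}(\Gr_\Gg) \to \DATM(\Gr_\Gg)$, both of which are t-exact and conservative. Thus we may assume $G$ is residually split and work non-equivariantly with stratified Tate motives. In this setting $\overline{q}$ arises from a perfectly smooth projection (an $L^+\Gg$-torsor), so $\overline{q}^!$ agrees with $\overline{q}^*$ up to a cohomological shift and Tate twist, and is therefore t-exact. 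Passing through the descent identification $\overline{q}^!\,\pH^0(\Ff \boxtimes \Gg) \cong \Ff \twext \Gg$, the problem reduces to showing that $\overline{m}_!\colon \MTM_{L^+\Gg}(\Gr_\Gg \widetilde{\times} \Gr_\Gg) \to \DTM(\Gr_\Gg)$ is t-exact.

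Next, I would exploit supports: any $\Ff \twext \Gg$ with $\Ff$ supported on $\overline{\Gr_{\Gg,\leq \mu_1}}$ and $\Gg$ on $\overline{\Gr_{\Gg,\leq \mu_2}}$ lies on the proper subscheme $\overline{\Gr_{\Gg,\leq \mu_1}} \widetilde{\times} \overline{\Gr_{\Gg,\leq \mu_2}}$, on which $m$ is proper, so $\overline{m}_! = \overline{m}_*$. Writing an arbitrary perverse motive as a colimit of such objects, it suffices to verify t-exactness for these proper pushforwards. To check that $\overline{m}_*$ sends perverse motives to perverse motives, I would compute $*$- and $!$-stalks at points $x \in \Gr_{\Gg,\lambda}(\bar k)$ via proper base change, reducing to the dimension estimate
\[\dim \overline{m}^{-1}(x) \cap \bigl(\Gr_{\Gg,\mu_1} \widetilde{\times} \Gr_{\Gg,\mu_2}\bigr) \;\leq\; \tfrac{1}{2}\langle 2\rho, \mu_1+\mu_2-\lambda\rangle,\]
together with the existence of a filtrable decomposition of this fiber into perfect cells, so that its compactly supported motivic cohomology is concentrated in the correct perverse degree and carries only Tate twists.

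The main obstacle, and the only nontrivial step, is this semismallness estimate. To establish it I would take $x = \varpi^\lambda$ using $L^+\Gg$-equivariance, stratify the first factor of $\overline{m}^{-1}(\varpi^\lambda)$ by the semi-infinite orbits $\Ss_\nu^+ \cap \Gr_{\Gg,\mu_1}$, and for each $\nu$ analyze the corresponding slice via the transitive $L^+\Uu^+$-action. The filtrable decompositions of $\Ss_\nu^\pm \cap \Gr_{\Gg,\mu_i}$ into perfect cells (\thref{Intersections of Schubert cells and semi-infinite orbits}, \thref{intersections for positive orbits}), the dimension formulas $\dim(\Ss_\nu^\pm\cap \Gr_{\Gg,\leq\mu}) = \langle\rho,\mu\pm \nu\rangle$ from \thref{Nonemptyness of intersections}, and the triviality of the relevant $L^+\Gg$-torsors over cells from \thref{triviality of torsor}--\thref{fibres of action map are cellular} assemble into the required bound. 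This simultaneously yields both left and right t-exactness, and since the heart is exactly $\MATM_{L^+\Gg}(\Gr_\Gg)$, the functor preserves mixed Artin-Tate motives.
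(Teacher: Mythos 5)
Your proposal and the paper both rest on the same geometric input — the semi-smallness of the convolution morphism, which follows from the dimension formula $\dim(\Ss_\nu^+\cap \Gr_{\Gg,\mu}) = \langle\rho,\mu+\nu\rangle$ — but you reach for it along a substantially more laborious route. The paper's actual proof is three lines: reduce to bounded objects, observe that the $\IZ_\ell$-étale realizations $\rho_\ell$ are t-exact (\thref{texactness of realization}) and jointly conservative on Artin--Tate motives (\thref{conservativity of etale realization}), and therefore t-exactness of the truncated convolution can be checked after realization, where it is the classical Mirkovi\'c--Vilonen argument \cite[Proposition 4.2]{MirkovicVilonen:Geometric} applied to the semi-small map $m$ (\thref{semismallness of convolution}). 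That is, the machinery of Section~\ref{sect-motives} was set up precisely so that delicate perversity statements can be exported to the étale world, and this proposition is one of the main beneficiaries.

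Your attempt to redo the MV argument intrinsically in $\DTM$ has a concrete gap in the left t-exactness direction. You say you would compute both $*$- and $!$-stalks via proper base change and then invoke ``its compactly supported motivic cohomology is concentrated in the correct perverse degree.'' This is inaccurate on two counts. First, for a perfectly cellular scheme of dimension $d$, $f_!\unit$ is \emph{not} concentrated in a single degree: it spreads over degrees $0,\dots,2d$ (with Tate twists), and what the semi-smallness inequality buys you is an upper bound $\leq -\dim\Gr_{\Gg,\lambda}$ on the perverse degree of $\iota_\lambda^* m_*\Ff$. That handles right t-exactness only. Second, the $!$-stalk side requires the lower bound $\iota_\lambda^! m_*\Ff \in \DTM^{\geq -\dim\Gr_{\Gg,\lambda}}$, which does not follow from a compactly supported cohomology estimate on the fiber; one would need either a dual dimension estimate for $f_*\unit$ on the fibers of $m$ over each stratum, or a Verdier duality argument for the perverse t-structure on stratified Tate motives. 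Neither is addressed in your sketch, and neither is entirely automatic in the motivic setting (this is exactly why the paper routes through the étale realization, where the BBD machinery for semi-small maps is available off the shelf). Your reductions to the residually split case and to the twisted exterior product formulation are fine, and your semi-smallness estimate is the correct one; but as written the proposal establishes at most right t-exactness, and the claim that it ``simultaneously yields both left and right t-exactness'' is unjustified.
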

\begin{proof}
	It suffices to check this for bounded objects.
	As the \(\IZ_\ell\)-étale realizations are t-exact by \thref{texactness of realization}, and jointly conservative by \thref{conservativity of etale realization}, it suffices to show the statement for étale \(\IZ_\ell\)-cohomology, with \(\ell\neq p\).
	In this situation t-exactness follows from semi-smallness of the convolution morphism, \thref{semismallness of convolution}, as in \cite[Proposition 4.2]{MirkovicVilonen:Geometric}.
\end{proof}

\begin{lem}\thlabel{semismallness of convolution}
	Equip \(\Gr_{\Gg}\) with the stratification by \(L^+\Gg\)-orbits, and \(\Gr_{\Gg} \widetilde{\times} \Gr_{\Gg}\) with the stratification given by twisted products of Schubert cells.
	Then the convolution morphism \(m\colon \Gr_{\Gg} \widetilde{\times} \Gr_{\Gg}\) is stratified semi-small.
\end{lem}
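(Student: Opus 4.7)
The plan is to reduce to the residually split case and then analyse fibres of $m$ via the semi-infinite orbit stratification of the twisted product. First I base change along $\Oo \to \breve{\Oo}$: this does not affect fibre dimensions of $m$, the parahoric $\Gg$ remains very special, and $G$ becomes residually split, so we may assume $G$ is residually split. After fixing Schubert strata $\Gr_{\Gg,\mu_1}, \Gr_{\Gg,\mu_2}$ on the source and $\Gr_{\Gg,\mu}$ on the target, the task is to show that for any $x \in \Gr_{\Gg,\mu}$,
\[\dim\bigl(m^{-1}(x) \cap (\Gr_{\Gg,\mu_1} \widetilde{\times} \Gr_{\Gg,\mu_2})\bigr) \leq \langle \rho, \mu_1+\mu_2-\mu\rangle.\]
By $L^+\Gg$-equivariance of $m$ it suffices to treat $x = \varpi^\mu$, which lies in $\Ss_\mu^-$.

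The key input is the product description $\Gr_\Gg \widetilde{\times} \Gr_\Gg \cong \Gr_\Gg \times \Gr_\Gg$ from Section~\ref{Section--affine flag varieties}, under which the attractor stratum $\Ss_{\nu_1}^- \widetilde{\times} \Ss_{\nu_2}^-$ corresponds to $\Ss_{\nu_1}^- \times \Ss_{\nu_1+\nu_2}^-$ and $m$ becomes the second projection. Decomposing $m^{-1}(\varpi^\mu) \cap (\Gr_{\Gg,\mu_1} \widetilde{\times} \Gr_{\Gg,\mu_2})$ along this stratification, a piece indexed by $(\nu_1,\nu_2)$ is non-empty only when $\nu_1 + \nu_2 = \mu$, and then via the first projection it embeds into $\Ss_{\nu_1}^- \cap \Gr_{\Gg,\mu_1}$. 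Hence its dimension is at most $\langle \rho, \mu_1 - \nu_1\rangle$ by the $\Ss^-$-analogue of \thref{Nonemptyness of intersections}, obtained by applying that statement to the opposite Borel.

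To finish, I observe that the piece can contribute only when $\Ss_{\nu_2}^- \cap \Gr_{\Gg,\mu_2}$ is itself non-empty, which by the same proposition forces $\langle \rho, \mu_2 - \nu_2\rangle \geq 0$. Combined with $\nu_1 + \nu_2 = \mu$, this yields
\[\langle \rho, \mu_1 - \nu_1\rangle = \langle \rho, \mu_1 + \mu_2 - \mu\rangle - \langle \rho, \mu_2 - \nu_2\rangle \leq \langle \rho, \mu_1 + \mu_2 - \mu\rangle,\]
as required. The main technical point is to check that the semi-infinite orbit stratification of the twisted product matches $\Ss_{\nu_1}^- \times \Ss_{\nu_1+\nu_2}^-$ under the product description in the ramified setting, but this compatibility is already built into the framework of Section~\ref{Section--affine flag varieties}; beyond the dimension formula of \thref{Nonemptyness of intersections}, no further geometric input enters the argument.
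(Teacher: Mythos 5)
Your argument is correct and follows essentially the same route as the paper, which simply cites [MV, Corollary 3.4, Lemma 4.4] and notes that their proofs carry over once one has the dimension formula of \thref{Nonemptyness of intersections}. You have spelled out that MV argument in detail (using $\Ss^-$ in place of $\Ss^+$, which is only a convention change), and the two nontrivial points you flag — the compatibility of $\Ss_{\nu_1}^- \widetilde{\times} \Ss_{\nu_2}^-$ with $\Ss_{\nu_1}^- \times \Ss_{\nu_1+\nu_2}^-$, and the fact that non-emptiness of $\Ss^-_{\nu_2}\cap\Gr_{\Gg,\mu_2}$ gives $\langle\rho,\mu_2-\nu_2\rangle\ge 0$ — both hold and are used correctly.
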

\begin{proof}
	Using that the intersections \(\Ss_\nu^+\cap \Gr_{\Gg,\mu}\) are equidimensional of dimension \(\langle \rho,\mu+\nu \rangle\), \thref{Nonemptyness of intersections}, the proofs of \cite[Corollary 3.4, Lemma 4.4]{MirkovicVilonen:Geometric} carry over verbatim.
\end{proof}

This allows us to turn \(\MATM_{L^+\Gg}(\Gr_{\Gg})\) into a (not yet symmetric) monoidal abelian category.

\begin{dfn}
	The (truncated) convolution product on \(\MATM_{L^+\Gg}(\Gr_{\Gg})\) is given by 
	\[\conv:=\overline{m}_! \overline{q}^! \pH^0 (-\boxtimes -)\colon \MATM_{L^+\Gg}(\Gr_{\Gg})\times \MATM_{L^+\Gg}(\Gr_{\Gg}) \to \MATM_{L^+\Gg}(\Gr_{\Gg}).\]
\end{dfn}

\begin{prop}
	The convolution product \(\conv\) induces a monoidal structure on \(\MATM_{L^+\Gg}(\Gr_{\Gg})\).
\end{prop}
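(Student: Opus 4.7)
The plan is to show that the derived monoidal structure on $\star$ from \thref{remarks about derived convolution} descends to $\conv$ on the heart via perverse truncation. The unit is $\unit_e := (i_e)_*\unit$, where $i_e\colon \Spec k \hookrightarrow \Gr_{\Gg}$ is the base point; the unit constraints hold already at the derived level, since the multiplication map identifies $\Gr_{\Gg}\widetilde{\times}\{e\}$ and $\{e\}\widetilde{\times}\Gr_{\Gg}$ with $\Gr_{\Gg}$, giving $\unit_e \star M \cong M \cong M \star \unit_e$ with all three terms already in the heart.

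For associativity, the key technical input is that the iterated convolution morphism $m^{(n)}\colon \Gr_{\Gg}^{\widetilde{\times}n}\to \Gr_{\Gg}$ is stratified semi-small for every $n\geq 2$. Indeed, \thref{semismallness of convolution} handles $n=2$, and the general case follows by induction using the factorization $m^{(n)} = m\circ (m^{(n-1)}\widetilde{\times} \identity)$. Combined with \thref{texactness of realization} and \thref{conservativity of etale realization}, this generalizes \thref{convolution t-exact}: for $M_1,\ldots,M_n\in \MATM_{L^+\Gg}(\Gr_{\Gg})$ the iterated derived convolution lies in $\DATM^{\leq 0}_{L^+\Gg}(\Gr_{\Gg})$, and its $\pH^0$ is t-exact in each variable. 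Consequently, for $M_1,M_2,M_3\in \MATM_{L^+\Gg}(\Gr_{\Gg})$, convolving the exact triangle $\tau_{<0}(M_1\star M_2)\to M_1\star M_2\to \pH^0(M_1\star M_2)$ on the right with $M_3$ produces a triangle whose first term lies in $\DATM^{<0}_{L^+\Gg}(\Gr_{\Gg})$, so applying $\pH^0$ yields a natural isomorphism
\[(M_1\conv M_2)\conv M_3 = \pH^0(\pH^0(M_1\star M_2)\star M_3) \xrightarrow{\cong} \pH^0(M_1\star M_2\star M_3),\]
and symmetrically $M_1\conv(M_2\conv M_3) \cong \pH^0(M_1\star M_2\star M_3)$. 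Composing with the derived associator from \thref{remarks about derived convolution} yields the associator for $\conv$; the pentagon descends from its derived counterpart by the analogous identification of both sides with $\pH^0$ of the quadruple convolution $M_1\star M_2\star M_3\star M_4$, and the triangle axiom is analogous.

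The main obstacle I anticipate is producing the coherence data at the $\infty$-categorical level rather than verifying it diagram by diagram. The cleanest route is to observe that by the above t-exactness estimates $(\DATM^{\leq 0}_{L^+\Gg}(\Gr_{\Gg}),\star)$ is a monoidal full subcategory of $(\DATM_{L^+\Gg}(\Gr_{\Gg}),\star)$, so the truncation $\pH^0$ defines a monoidal functor to the heart by the general principle that truncating a t-structure-compatible monoidal structure on a stable $\infty$-category yields a monoidal structure on the heart. This is parallel to the split equicharacteristic strategy and carries over verbatim to our setting.
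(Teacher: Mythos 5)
Your proof is correct and takes essentially the same approach as the paper: both descend the derived associator of $\star$ to $\conv$ via iterated twisted products and base change, using t-exactness arguments to collapse the nested perverse truncations. You spell out what the paper leaves implicit, and your closing observation about $(\DATM^{\leq 0}_{L^+\Gg}(\Gr_{\Gg}),\star)$ being a monoidal subcategory whose truncation inherits coherences is exactly the content of the paper's remark that ``the required coherences then also follow from the similar conditions for $\otimes$.''

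Two small points worth tightening. First, the identity $M_1 \conv M_2 = \pH^0(M_1 \star M_2)$ used silently in your display (i.e., that the inner truncation $\pH^0(-\boxtimes-)$ in the definition of $\conv$ can be dropped under an outer $\pH^0$) deserves a sentence: it follows because $\overline{m}_!\overline{q}^!$ is t-exact on motives constructible along the twisted-product stratification, which is what \thref{convolution t-exact} and \thref{semismallness of convolution} actually show. Second, your inductive semi-smallness of $m^{(n)}$ via $m^{(n)}=m\circ(m^{(n-1)}\widetilde{\times}\identity)$ is true but stated too quickly, since semi-small proper maps are not closed under composition in general; here it works because the fiber dimension over $\Gr_{\Gg,\nu}$ is bounded by $\max_{\lambda}\bigl(\langle\rho,\mu_1+\cdots+\mu_{n-1}-\lambda\rangle+\langle\rho,\lambda+\mu_n-\nu\rangle\bigr)=\langle\rho,\sum\mu_i-\nu\rangle$, i.e., by the same MV-cycle estimate (\thref{Nonemptyness of intersections}) as in the $n=2$ case. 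In fact you can bypass the iterated semi-smallness entirely: right t-exactness of the binary $\star$ already gives $\tau_{<0}(M_1\star M_2)\star M_3\in\DATM^{<0}$ and hence the collapse of $\pH^0\circ\pH^0$ to a single $\pH^0$, so only \thref{semismallness of convolution} for $n=2$ is really needed.
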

\begin{proof}
	We need to construct an associativity constraint.
	This can be obtained by base change, using iterated twisted products of \(\Gr_{\Gg}\), along with the associativity constraint of the (truncated) tensor product \(\otimes\) on \(\MATM_{L^+\Gg\times L^+\Gg \times L^+\Gg}(\Gr_{\Gg} \times \Gr_{\Gg} \times \Gr_{\Gg})\).
	The required coherences then also follow from the similar conditions for \(\otimes\).
\end{proof}

\section{Constant terms and the fiber functor}\label{sect-CT and F}

Our next goal is to introduce and study the constant term functors in the situation at hand. 
These functors are well known in the geometric Langlands program, and will help us reduce certain questions to the case of tori (or smaller Levi's), which are easier to handle.
Throughout this section, \(\Gg/\Oo\) denotes a very special parahoric model of a connected reductive group \(G/F\).

\subsection{Constant term functors}

Let \(\lambda\colon \IG_{m,\Oo}\to \Ss\) be a dominant cocharacter defined over \(\Oo\), and consider the induced \(\IG_{m,F}\)-action on \(G\). 
As before, the attractor, repeller and fixed points of this action are given by \(P^+\), \(P^-\) and \(M = P^+\cap P^-\) respectively, where \(P^\pm\) are opposite parabolics with Levi \(M\).
Since \(\lambda\) is dominant, we have \(B^\pm \subseteq P^{\pm}\), with equality exactly when \(\lambda\) is regular dominant.
By \thref{flag varieties of fixed points and attractor}, the (twisted) affine Grassmannians of their smooth \(\Oo\)-models \(\Pp^\pm\) and \(\Mm\) can be identified with the attractor, repeller and fixed points respectively of the \(\IG_{m,k}\)-action on \(\Gr_\Gg\).
These affine Grassmannians moreover all admit a natural \(L^+\Mm\)-action.
We denote the corresponding hyperbolic localization diagram as follows, including the induced maps on prestacks obtained by quotienting out this \(L^+\Mm\)-action
\begin{equation}\label{hyperbolic localization diagram}
	\begin{tikzcd}[row sep=small]
	\Gr_{\Mm} \arrow[d] &\Gr_{\Pp^\pm} \arrow[l, "q_P^\pm"'] \arrow[r, "p_P^\pm"] \arrow[d]&\Gr_\Gg \arrow[d]\\
	L^+\Mm\backslash \Gr_{\Mm} &L^+\Mm\backslash\Gr_{\Pp^\pm} \arrow[l, "\overline{q}_P^\pm"'] \arrow[r, "\overline{p}_P^\pm"]&L^+\Mm\backslash\Gr_\Gg
	\end{tikzcd}
\end{equation}

As in \cite[Construction 2.2]{Richarz:Spaces}, we obtain a natural transformation of functors \((q_P^-)_*(p_P^-)^! \to (q_P^+)_! (p_P^+)^*\), which is an equivalence when restricted to \(\IG_{m,k}\)-monodromic objects (i.e., objects generated under colimits by the image of \(\DM_{\IG_{m,k}}(\Gr_\Gg)\to \DM(\Gr_\Gg)\), cf.~\cite[Proposition 2.5]{CassvdHScholbach:Geometric} for more details about the motivic situation).
Since \(L^+\Mm\) is pro-(perfectly smooth), this descends to a natural transformation 
\[(\overline{q}_P^-)_*(\overline{p}_P^-)^! \to (\overline{q}_P^+)_! (\overline{p}_P^+)^*.\]
Moreover, since forgetting the equivariance is conservative and the \(\IG_{m,k}\)-action factors through \(L^+\Mm\), the above natural transformation is already an equivalence.
In \cite{FarguesScholze:Geometrization, AGLR:Local}, this is defined to be the constant term functor.
However, we prefer to include a shift into the definition of the constant term functor in order to make it t-exact, as in \cite{CassvdHScholbach:Geometric}.
This yields the following definitions, and we refer to \thref{rmk abuse notation} for the slight abuse of notation.

\begin{dfn}
	Let \(\rho_{M}\) denote the half-sum of the positive roots of \(M\). Then the \emph{degree} associated to \(P\) is the locally constant function 
	\[\deg_P:\Gr_{\Mm}\to \Gr_{\Mm/\Mm_{\der}} \cong X_*(M/M_{\der})_{\Gal(\overline{F}/F)} \xrightarrow{\langle2\rho_G-2\rho_{M},- \rangle} \IZ.\]
\end{dfn}

\begin{dfn}\thlabel{dfn--CT}
	The \emph{constant term functor} associated to \(P\) is the functor
	\begin{equation}\label{equation CT}
		\CT_P:=(\overline{q}_P^-)_*(\overline{p}_P^-)^![\deg_P] \cong (\overline{q}_P^+)_! (\overline{p}_P^+)^*[\deg_P] \colon \DM_{L^+\Gg}(\Gr_\Gg) \to \DM_{L^+\Mm}(\Gr_{\Mm}),
	\end{equation}
	where we implicitly restrict part of the equivariance by applying the forgetful functor \(\DM_{L^+\Gg}(\Gr_\Gg)\to \DM_{L^+\Mm}(\Gr_\Gg)\).
	
	For \(\nu\in \pi_0(\Gr_{\Mm})\), we denote the functor obtained by restricting \(\CT_P\) to this connected component by \(\CT_{P,\nu}\).
\end{dfn}

These constant term functors are very well-behaved, and most of their properties are already well known in different settings.
So we recall certain of these properties for convenience of the reader, but we will be brief and give references for their proofs.
Recall that we assumed \(\Gg\) is very special, so that \(G\) is automatically quasi-split by \cite[Lemma 6.1]{Zhu:Ramified}.
In particular, if \(\lambda\) is regular, \(P^\pm=B^\pm\) are opposite Borels and \(M=T\) is a maximal torus.

\begin{rmk}\thlabel{rmk abuse notation}
	The whole story above really depends only on the parabolics \(P^\pm\), rather than on the cocharacter \(\lambda\), which is why we have used a subscript \(P\) in \(\deg_P\) and \(\CT_P\).
	This is also consistent with \cite[Proposition VI.7.13]{FarguesScholze:Geometrization} and \cite[Definition 5.3]{CassvdHScholbach:Geometric}.
\end{rmk}

\begin{lem}\thlabel{composition of CTs}
	Let \(P'\subseteq P\subseteq G\) be parabolics with Levi quotients \(M'\subseteq M\), and denote \(Q:=\im(P'\to M)\). Then there is a natural equivalence of functors \(\CT_{P'}\cong \CT_Q\circ \CT_P\).
\end{lem}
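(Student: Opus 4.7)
The plan is to use iterated hyperbolic localization combined with base change in the six-functor formalism. The key geometric input is that $P'^-$ fits into a short exact sequence
\[
1 \to U_{P^-} \to P'^- \to Q^- \to 1,
\]
expressing $P'^-$ as the preimage of $Q^-$ under the Levi projection $P^- \twoheadrightarrow M$. Consequently $\Pp'^- \cong \Pp^- \times_\Mm \Qq^-$ as smooth affine $\Oo$-group schemes. Since $\Gr_{\Hh}$ is the moduli space of $\Hh$-torsors with generic trivialization, this fiber product description translates to an isomorphism of prestacks $\Gr_{\Pp'^-} \cong \Gr_{\Pp^-} \times_{\Gr_\Mm} \Gr_{\Qq^-}$ (at worst up to universal homeomorphism, which is harmless for motives by \cite{ElmantoKhan:Perfection}). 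This gives a Cartesian square
\[
\begin{tikzcd}
\Gr_{\Pp'^-} \arrow[r, "\widetilde{p}"] \arrow[d, "\widetilde{q}"'] & \Gr_{\Pp^-} \arrow[d, "q_P^-"] \\
\Gr_{\Qq^-} \arrow[r, "p_Q^-"'] & \Gr_\Mm
\end{tikzcd}
\]
of $L^+\Mm'$-equivariant ind-schemes, where $\widetilde{p}$ and $\widetilde{q}$ are induced by $\Pp'^- \hookrightarrow \Pp^-$ and $\Pp'^- \twoheadrightarrow \Qq^-$ respectively. Direct verification on the group level gives $q_Q^- \circ \widetilde{q} = q_{P'}^-$ and $p_P^- \circ \widetilde{p} = p_{P'}^-$.

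Next, I would apply !-base change to this Cartesian square, which is valid since all four maps are morphisms of ind-schemes. This yields $(p_Q^-)^! (q_P^-)_* \cong \widetilde{q}_* \widetilde{p}^!$, and descending to $L^+\Mm'$-equivariant motives (along the forgetful functor $\DM_{L^+\Mm}\to \DM_{L^+\Mm'}$) gives the analogous identity with bars. Composing with the remaining functors and using functoriality of $(-)_*$ and $(-)^!$ then produces
\[
(\overline{q}_Q^-)_* (\overline{p}_Q^-)^! \circ (\overline{q}_P^-)_* (\overline{p}_P^-)^! \;\cong\; (\overline{q}_{P'}^-)_* (\overline{p}_{P'}^-)^!.
\]
Finally, the decomposition $2\rho_G - 2\rho_{M'} = (2\rho_G - 2\rho_M) + (2\rho_M - 2\rho_{M'})$ together with the fact that $\deg_P$ is a locally constant function factoring through $X_*(M/M_\der)_I$ shows that the pullback of $\deg_P$ along the natural map $\pi_0(\Gr_{\Mm'}) \to \pi_0(\Gr_\Mm)$ added to $\deg_Q$ equals $\deg_{P'}$, so the shifts in \eqref{equation CT} accumulate correctly. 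Combining these steps yields $\CT_Q \circ \CT_P \cong \CT_{P'}$.

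The main obstacle is carefully establishing the Cartesian square at the level of affine Grassmannians with sufficient precision to apply !-base change in the ind-scheme six-functor formalism of \cite{RicharzScholbach:Intersection}. In the mixed characteristic setting some care is needed because affine Grassmannians are defined only up to perfection; however, since universal homeomorphisms induce equivalences on motivic categories, this does not cause essential difficulty. Everything else is formal once the square is in place.
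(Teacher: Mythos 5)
Your proof is correct and follows essentially the same route as the paper: the paper's one-line proof cites ``base change and the compatibility of taking affine Grassmannians with fiber products along quotient maps,'' and your argument is precisely a fleshed-out version of that, including the correct identification $\Pp'^- \cong \Pp^- \times_\Mm \Qq^-$, the resulting Cartesian square, the right-adjoint form of base change $(p_Q^-)^!(q_P^-)_* \cong \widetilde{q}_*\widetilde{p}^!$, and the additivity of the degree shifts. The only cosmetic difference is that you spell out details (equivariance bookkeeping, the perfection caveat) that the paper delegates to \cite[Lemma 5.5]{CassvdHScholbach:Geometric}.
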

\begin{proof}
	As in \cite[Lemma 5.6]{CassvdHScholbach:Geometric}, this follows from base change and the compatibility of taking affine Grassmannians with fiber products along quotient maps.
\end{proof}

Recall that an object in \(\DM_{L^+\Gg}(\Gr_\Gg)\) is bounded if, after forgetting the equivariance, it is supported on some scheme.

\begin{lem}\thlabel{CT conservative}
	The constant term functor \(\CT_P\) is conservative when restricted to bounded objects.
\end{lem}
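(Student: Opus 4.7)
The plan is to reduce to the case where $P = B$ is a Borel, and then to argue by contradiction using the most extreme semi-infinite orbit meeting the top Schubert cell in the support of $\Ff$. By \thref{composition of CTs} applied to a Borel $B \subseteq P$ (so that $B = (B\cap M) \cdot U_P$ and $Q = B\cap M$ is a Borel of $M$), one has $\CT_B \cong \CT_{B\cap M} \circ \CT_P$; hence if $\CT_P(\Ff) = 0$ then $\CT_B(\Ff) = 0$, and it suffices to prove conservativity for $\CT_B$.

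Suppose $\Ff \in \DM_{L^+\Gg}(\Gr_\Gg)$ is bounded and nonzero with $\CT_B(\Ff) = 0$. Being bounded and $L^+\Gg$-equivariant, $\Ff$ is constructible along the Schubert stratification, so there exists $\mu \in X_*(T)_I^+$ maximal in the Bruhat order such that $i_\mu^*\Ff \neq 0$, where $i_\mu\colon \Gr_{\Gg,\mu} \hookrightarrow \Gr_\Gg$ is the locally closed immersion; by maximality, the support of $\Ff$ is contained in $\Gr_{\Gg, \leq \mu}$.

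I then compute $\CT_{B,w_0(\mu)}(\Ff)$ via the formula $(\overline{q}_B^+)_! (\overline{p}_B^+)^*[\deg_B]$. By \thref{Nonemptyness of intersections}, one has $\dim(\Ss_{w_0(\mu)}^+ \cap \Gr_{\Gg, \leq \mu}) = \langle \rho, \mu + w_0(\mu) \rangle = 0$; being $T$-stable and $0$-dimensional, this intersection reduces to the single $T$-fixed point $\{\varpi^{w_0(\mu)}\}$. Moreover, by maximality of $\mu$ and the nonemptiness criterion of the same proposition, $\Ss_{w_0(\mu)}^+ \cap \Gr_{\Gg, \leq \mu'} = \varnothing$ for any other dominant $\mu'$ appearing in the support of $\Ff$. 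Hence $(p_B^+)^*\Ff$ restricted to $\Ss_{w_0(\mu)}^+$ is supported on the single point $\{\varpi^{w_0(\mu)}\}$, so its compactly supported pushforward to the base collapses to the stalk, yielding $\CT_{B,w_0(\mu)}(\Ff) \cong i_{w_0(\mu)}^*\Ff \,[\deg_B(w_0(\mu))]$.

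Since $\CT_B(\Ff) = 0$, this forces $i_{w_0(\mu)}^*\Ff = 0$. But $\varpi^{w_0(\mu)}$ lies in $\Gr_{\Gg,\mu}$, and $L^+\Gg$-equivariance of $\Ff$ gives an isomorphism $a^*i_\mu^*\Ff \cong \pr^*i_{w_0(\mu)}^*\Ff$, where $a\colon L^+\Gg \to \Gr_{\Gg,\mu}$ is the orbit map and $\pr$ the projection to a point. Since $a$ is surjective, $a^*$ is conservative, so $i_\mu^*\Ff = 0$, contradicting our choice of $\mu$. The main delicate point will be making the stalk identification precise in the motivic, ind-prestack setting; once the support is concentrated at a single point this should follow from base-change in the six-functor formalism and the compatibility of hyperbolic localization with proper base change recorded in \eqref{hyperbolic localization diagram}.
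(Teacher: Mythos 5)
Your proof is correct and follows essentially the same route as the paper's (which cites \cite[Lemma 5.7]{CassvdHScholbach:Geometric}): reduce to the Borel case via \thref{composition of CTs}, then use the fact from \thref{Nonemptyness of intersections} that the semi-infinite orbit $\Ss_{w_0(\mu)}^+$ meets $\Gr_{\Gg,\leq\mu}$ in the single point $\varpi^{w_0(\mu)}$ to detect the top stratum. One small inaccuracy: since the Bruhat order is only partial, a maximal $\mu$ in the support of $\Ff$ need not dominate the entire support, so your claim that $\operatorname{supp}\Ff \subseteq \Gr_{\Gg,\leq\mu}$ is not quite right in general; this is harmless, however, because the subsequent step correctly applies the nonemptiness criterion of \thref{Nonemptyness of intersections} stratum by stratum, which gives exactly what is needed, namely that $\Ss_{w_0(\mu)}^+$ meets the support of $\Ff$ only at $\varpi^{w_0(\mu)}$.
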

\begin{proof}
	As in \cite[Lemma 5.8]{CassvdHScholbach:Geometric}, we use \thref{composition of CTs} to reduce to the case where \(B\) is a Borel.
	Then we proceed by induction on the strata, using \thref{Nonemptyness of intersections} to see that for any Schubert variety, there is a semi-infinite orbit intersecting it in a single point.
\end{proof}

\begin{prop}\thlabel{CT preserves Artin-Tate}
	The constant term functor \(\CT_P\) preserves Artin-Tate motives, i.e., restricts to a functor
	\[\CT_P\colon \DATM_{L^+\Gg}(\Gr_\Gg)\to \DATM_{L^+\Mm}(\Gr_\Mm).\]
\end{prop}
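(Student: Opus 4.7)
Since Artin-Tateness is tested after base change along $\Spec k' \to \Spec k$, and this base change commutes with the functors $(\overline{q}_P^\pm)_!$, $(\overline{q}_P^\pm)_*$, $(\overline{p}_P^\pm)^*$, $(\overline{p}_P^\pm)^!$ of \eqref{hyperbolic localization diagram}, I may assume that $G$ is residually split, so that the claim becomes preservation of stratified Tate motives. By \thref{composition of CTs} and induction on the semi-simple rank of $M$, this reduces further to the Borel case $P = B$, $M = T$: for a Borel $B$ of $G$ contained in $P$ with image $B_M$ a Borel of $M$, the identity $\CT_{B_M} \circ \CT_P = \CT_B$, combined with the inductive hypothesis applied to the Levi $M$ (itself residually split with very special parahoric $\Mm$), furnishes a stratum-wise reflection statement on $\Gr_\Mm$ that passes Tateness from $\CT_B(F)$ to $\CT_P(F)$.

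In the Borel case, $\Gr_\Tt$ is a discrete disjoint union of copies of $\Spec k$ indexed by $\nu \in X_*(T)_I$, so the goal reduces to showing that each $\CT_{B, \nu}(F) \in \DM(\Spec k)$ is Tate. After arranging $F$ to be bounded and supported on some $\Gr_{\Gg, \le \mu}$, the object $\CT_{B, \nu}(F)$ identifies, up to the shift $[\deg_B]$, with the $!$-pushforward to $\Spec k$ of the restriction of $(p_B^+)^* F$ to the intersection $\Ss_\nu^+ \cap \Gr_{\Gg, \le \mu}$.

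The key geometric step is then to stratify $\Ss_\nu^+ \cap \Gr_{\Gg, \le \mu}$ by its intersections with Schubert cells $\Gr_{\Gg, \mu'}$ for $\mu' \le \mu$: by \thref{Intersections of Schubert cells and semi-infinite orbits} combined with \thref{intersections for positive orbits}, each such piece admits a filtrable decomposition into perfect cells of the form $\IA_k^{n, \perf} \times \IG_{m, k}^{r, \perf}$. Since $F$ is stratified Tate, its restriction to every such perfect cell is Tate, and the $!$-pushforward to $\Spec k$ of a Tate motive on such a cell is again Tate, by Künneth combined with homotopy invariance for $\IA^n$ and the standard computation for $\IG_m$. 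Iterating localization across the pieces of the filtrable decomposition then yields that $\CT_{B, \nu}(F) \in \DTM(\Spec k)$, as required.

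The essential geometric input — the filtrable decomposition of $\Ss_\nu^+ \cap \Gr_{\Gg, \mu}$ into perfect cells — is precisely \thref{Intersections of Schubert cells and semi-infinite orbits}, whose proof constitutes the main technical work of Section \ref{sect-LSMV}. Once this is available, the remaining obstacle is the dévissage through $\CT_{B_M}$ in the reduction from general $P$ to the Borel case, which amounts to applying the Borel case recursively to the pair $(M, \Mm)$ and checking Tateness on each Schubert cell of $\Gr_\Mm$; this is technical but formal, given the geometric input.
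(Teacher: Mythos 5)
Your proposal has the right overall structure and correctly identifies the key geometric input for the Borel case (\thref{Intersections of Schubert cells and semi-infinite orbits} together with \thref{intersections for positive orbits}): the reduction to residually split via base change, the reduction to the Borel case via \thref{composition of CTs}, and the stratification of $\Ss_\nu^+ \cap \Gr_{\Gg,\leq\mu}$ followed by localization and cell computations in the Borel case all match the paper.

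However, there is a genuine gap in the dévissage from general $P$ to the Borel case. You write that the identity $\CT_{B_M}\circ\CT_P = \CT_B$, combined with ``the inductive hypothesis applied to the Levi $M$'' (i.e.\ induction on semisimple rank), ``furnishes a stratum-wise reflection statement''. This is not correct as stated: the inductive hypothesis would give you that $\CT_{B_M}$ \emph{preserves} Tateness, but what you actually need is that $\CT_{B_M}$ \emph{reflects} Tateness on bounded objects. Knowing that $\CT_{B_M}(\CT_P F) = \CT_B(F)$ is Tate, and knowing that $\CT_{B_M}$ sends Tate to Tate, does not let you conclude that $\CT_P(F)$ is Tate --- preservation is simply the wrong direction. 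The paper proves the reflection statement by a separate argument that does not involve rank induction at all: one inducts on the stratification of $\Gr_\Mm$ by Schubert cells, using that for each Schubert cell $\Gr_{\Mm,\mu'}$ there exists a semi-infinite orbit for $M$ meeting it in a single point (a consequence of \thref{Nonemptyness of intersections}), so that $\CT_{B_M}(G)$ being Tate forces the stalk of $G$ at that point to be Tate; then one upgrades from a point to the entire orbit using that $G$ is $L^+\Mm$-equivariant and $L^+\Mm$ acts transitively on each Schubert cell, via \cite[Proposition~3.1.23]{RicharzScholbach:Intersection}. Your closing remark that the dévissage is ``technical but formal, given the geometric input'' undersells it: the reflection requires the new geometric inputs just described (single-point intersections plus transitivity/equivariance), which are not consequences of the cellularity of the intersections used in the preservation direction.
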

\begin{proof}
	By definition, an object in \(\DM_{L^+\Gg}(\Gr_{\Gg})\) is stratified Artin--Tate exactly when it becomes stratified Tate after pullback along \(\Spec k'\to \Spec k\), where \(k'/k\) is the field extension fixed in the beginning of §\ref{sect-convolution}.
	Moreover, the constant term functor \(\CT_P\) commutes with pullback along \(\Spec k'\to \Spec k\).
	Thus, we may assume \(k'=k\), so that \(G\) is residually split, in which case we need to show that \(\CT_P\) preserves stratified Tate motives. 
	
	We now proceed as in \cite[Proposition 5.7]{CassvdHScholbach:Geometric}. 
	If \(P\) is a Borel, this follows from \thref{Intersections of Schubert cells and semi-infinite orbits}. 
	For the general case, it suffices by \thref{composition of CTs} to show \(\CT_B\) reflects Tateness for bounded objects.
	This can be proven by an induction on the strata.
	Indeed, as before we see that for every Schubert variety there is a semi-infinite intersecting it in a single point.
	Then we use that for equivariant motives on a scheme which is acted on transitively, Tateness can be checked after pullback to a point by \cite[Proposition 3.1.23]{RicharzScholbach:Intersection}.
\end{proof}

In order to consider properties related to t-structures, we will sometimes consider the constant terms as a functor \(\CT_P\colon \DATM_{L^+\Gg}(\Gr_\Gg)\to \DATM_{L^+\Mm}(\Gr_\Mm)\).

\begin{prop}\thlabel{CT texact}
	The constant term functor \(\CT_P\) is t-exact.
	In particular, if \(\Ff\in \DATM_{L^+\Gg}(\Gr_\Gg)\) is bounded, then \(\Ff\in \MATM_{L^+\Gg}(\Gr_\Gg)\) if and only if \(\CT_P(\Ff)\in \MATM_{L^+\Mm}(\Gr_\Mm)\).
\end{prop}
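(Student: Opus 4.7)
My plan is to first reduce to the Borel case, then establish t-exactness in the Borel case via étale realization, and finally deduce the ``in particular'' clause using conservativity.

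For the reduction, let \(P'\subseteq P\) be a Borel \(B\) of \(G\); then \(Q:=\im(B\to M)\) is a Borel of \(M\), and \thref{composition of CTs} yields \(\CT_B\cong \CT_Q\circ \CT_P\). Granting t-exactness in the Borel case for all quasi-split groups (in particular for \(M\) with Borel \(Q\)) together with conservativity of \(\CT_Q\) on bounded objects (\thref{CT conservative}), one deduces t-exactness of \(\CT_P\) as follows: for bounded \(\Ff\in\MATM_{L^+\Gg}(\Gr_\Gg)\), each \(\pH^i\CT_P(\Ff)\) is bounded; applying the t-exact \(\CT_Q\) gives \(\pH^i \CT_B(\Ff)=\CT_Q(\pH^i\CT_P(\Ff))\), which vanishes for \(i\ne 0\) by the Borel case, and conservativity of \(\CT_Q\) forces \(\pH^i\CT_P(\Ff)=0\).

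For the Borel case \(P=B\), the constant term functor is built from \(\overline{q}_\pm\) and \(\overline{p}_\pm\) together with a shift by the integer-valued function \(\deg_B\); all three operations commute with the étale realization \(\rho_\ell\) of \thref{Etale realization}. By \thref{texactness of realization} and \thref{conservativity of etale realization}, the family \(\{\rho_\ell\}_{\ell\ne p}\) is t-exact and jointly conservative on \(\DATM\), so an object of \(\DATM_{L^+\Mm}(\Gr_\Mm)\) is perverse iff each of its \(\ell\)-adic realizations is perverse. It therefore suffices to prove that the \(\ell\)-adic constant term functor is t-exact. This is the classical Mirković–Vilonen-type computation, carried out in the present setting in \cite[\S5.2]{AGLR:Local} (and essentially in \cite{Zhu:Ramified} for equal characteristic): the dimension formula \(\dim(\Ss_\nu^+\cap \Gr_{\Gg,\mu})=\langle\rho,\mu+\nu\rangle\) from \thref{Nonemptyness of intersections} combined with equidimensionality makes the degree shift \(\langle 2\rho_G-2\rho_M,-\rangle\) in \thref{dfn--CT} match the cohomological amplitudes on both sides.

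The ``in particular'' statement is now a formal consequence. If \(\Ff\) is bounded and \(\CT_P(\Ff)\in \MATM_{L^+\Mm}(\Gr_\Mm)\), each \(\pH^i(\Ff)\) is bounded and t-exactness yields \(\CT_P(\pH^i(\Ff))=\pH^i\CT_P(\Ff)=0\) for \(i\ne 0\); conservativity of \(\CT_P\) on bounded objects (\thref{CT conservative}) gives \(\pH^i(\Ff)=0\), hence \(\Ff\in\MATM_{L^+\Gg}(\Gr_\Gg)\). The main potential obstacle is verifying the compatibility of the degree shift and the hyperbolic localization map with realization — but since \(\deg_P\) is a locally constant integer-valued function and \(\rho_\ell\) is a functor of symmetric monoidal stable \(\infty\)-categories commuting with the relevant six-functor operations, this compatibility is automatic.
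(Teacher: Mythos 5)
Your approach is correct and genuinely different from the paper's. The paper proves t-exactness in the Borel case by a direct motivic argument: right t-exactness via the explicit cell decompositions of \(\Gr_{\Gg,\mu}\cap\Ss_\nu^\pm\) (Theorem \ref{Intersections of Schubert cells and semi-infinite orbits}) together with the dimension estimate for galleries (Proposition \ref{dimension estimate}), and left t-exactness via an adjunction trick (recognizing \((q_B^-)_*(q_B^-)^!\pi_{\Tt}^![\deg_B]\) as right adjoint to a right t-exact functor) combined with the generators \(\iota_{\mu,*}\DTM^{\geq 0}(\Gr_{\Gg,\mu})\). You instead reduce to étale \(\IZ_\ell\)-cohomology via the realization functors \(\rho_\ell\), using their t-exactness (Proposition \ref{texactness of realization}) and joint conservativity (Remark \ref{conservativity of etale realization}). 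Both are valid; indeed the paper itself uses precisely your realization strategy for the convolution t-exactness in Proposition \ref{convolution t-exact}, so your route is consistent with the paper's toolkit. What the paper's direct argument buys is self-containment (no appeal to the \(\ell\)-adic result) and, in the process, makes explicit the role of the gallery-theoretic dimension bounds; your argument is shorter and more conceptual but outsources the hard dimension counting to the étale setting. Your reduction from the Borel case to general \(P\) via \(\CT_B\cong\CT_Q\circ\CT_P\), conservativity of \(\CT_Q\), and the observation that heart-to-heart implies t-exactness on bounded objects (then all objects by filtered colimits) is also what the paper implicitly does when invoking Lemma \ref{composition of CTs}.

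Two small points to tighten: your citation of \cite[\S5.2]{AGLR:Local} for the t-exactness of the \(\ell\)-adic constant term functor is imprecise; that section gives the semi-infinite orbits and the dimension formula \(\dim(\Ss_\nu^+\cap\Gr_{\Gg,\leq\mu})=\langle\rho,\mu+\nu\rangle\) (which the present paper records as Proposition \ref{Nonemptyness of intersections}), while the t-exact constant term functor for \(\ell\)-adic sheaves appears in \cite[\S6.1]{AGLR:Local}; alternatively cite \cite{Zhu:Ramified,ALRR:Modular} for the analogous equal-characteristic argument. Second, Proposition \ref{texactness of realization} and Remark \ref{conservativity of etale realization} are formulated for perfectly cellular schemes; for your argument one needs to note that these extend stratum-by-stratum to the equivariant categories over ind-schemes, since the t-structures there are defined by pullback to strata and the realization commutes with these pullbacks. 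This is standard, but worth a sentence.
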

\begin{proof}
Pullback along \(\Spec k'\to \Spec k\) is conservative by descent, and t-exact by definition (when restricted to stratified Artin--Tate motives).
Thus, as in the proof of \thref{CT preserves Artin-Tate} above, we may assume \(k'=k\), so that \(G\) is residually split.
Then the proposition can be proven similar to \cite[Proposition 5.10]{CassvdHScholbach:Geometric}, using \thref{Intersections of Schubert cells and semi-infinite orbits} and \thref{dimension estimate}.
Alternatively, we can use the conservativity and t-exactness of the étale realization functors, \thref{conservativity of etale realization} and \thref{texactness of realization}, to deduce the proposition from \cite[Proposition 4.6]{ALRR:Modular} (whose proof also works in mixed characteristic).
\end{proof}

In order to relate the constant term functors to the convolution product, we introduce a twisted version of the constant term functor, in the sense that it is a functor between categories of motives on twisted products of affine Grassmannians.
Consider the \(\IG_{m,k}\)-action, induced by \(\lambda\), on \(\Gr_\Gg\widetilde{\times} \Gr_\Gg\) corresponding to the diagonal action under the isomorphism \eqref{iso of convolution Gr}.
As in \cite[§6]{Yu:Integral}, the fixed points under this action can be identified with \(\Gr_\Mm\widetilde{\times} \Gr_\Mm\), the attractor with \(\Gr_{\Pp}\widetilde{\times} \Gr_\Pp\) (which is isomorphic to \(\coprod_{\nu_1,\nu_2\in \pi_0(\Gr_{\Mm})} \Ss_{P,\nu_1}^+ \widetilde{\times} \Ss_{P,\nu_2}^+\)), and similarly for the repeller.
Moreover, the natural maps make the following diagram commute, where the vertical maps are the restrictions of the multiplication map:
\begin{equation}\label{eq--twisted vs multiplication}
\begin{tikzcd}
	\Gr_\Mm \widetilde{\times} \Gr_\Mm \arrow[d] & \Gr_{\Pp^\pm} \widetilde{\times} \Gr_{\Pp^\pm} \arrow[l, "\widetilde{q}_P^\pm"'] \arrow[r, "\widetilde{p}_P^\pm"] \arrow[d] & \Gr_\Gg \widetilde{\times} \Gr_\Gg \arrow[d] \\
	\Gr_\Mm & \Gr_{\Pp^\pm} \arrow[l, "q_P^\pm"] \arrow[r, "p_P^\pm"'] & \Gr_\Gg.
\end{tikzcd}
\end{equation}

\begin{dfn}
	The \emph{twisted constant term functor} \[\widetilde{\CT}_P\colon \DM_{L^+\Gg}(\Gr_\Gg \widetilde{\times} \Gr_\Gg) \to \DM_{L^+\Mm}(\Gr_\Mm \widetilde{\times} \Gr_\Mm)\] is defined similarly to \thref{dfn--CT}, i.e., by taking the quotient of \((\widetilde{q}_P^-)_*(\widetilde{p}_P^-)^![\widetilde{\deg}_P]\) by \(L^+\Mm\). 
	Equivalently, it can also be described by the quotient of \((\widetilde{q}_P^+)_! (\widetilde{p}_P^+)^*[\widetilde{\deg}_P]\) by \(L^+\Mm\).
	Here, \(\widetilde{\deg}_P\) is defined as \[\widetilde{\deg}_P\colon \Gr_{\Mm} \widetilde{\times} \Gr_{\Mm} \cong \Gr_{\Mm} \times \Gr_{\Mm}\xrightarrow{\deg_P \times \deg_P} \IZ\times \IZ \xrightarrow{+} \IZ.\]
\end{dfn}

\begin{rmk}\thlabel{twisted constant term and product}
	In particular, \(\widetilde{\CT}_P\) can be identified with the constant term functor \(\CT_{P\times P}\) for the group \(G\times G\) and \(\Oo\)-model \(\Gg\times \Gg\).
\end{rmk}

The lemma below will later be used to show constant terms are monoidal.
In order to state it, we recall the twisted exterior product.

\begin{nota}\thlabel{notation twisted exterior product}
	For \(\Ff_1,\Ff_2\in \MATM_{L^+\Gg}(\Gr_{\Gg})\), we denote
	\[\Ff_1\twext \Ff_2 := \overline{q}^! \pH^0(\Ff_1\boxtimes \Ff_2)\in \DM(L^+\Gg \backslash LG \overset{L^+\Gg}{\times}LG/ L^+\Gg).\]
\end{nota}

Here, \(\overline{q}\) is defined as in \eqref{convolution diagram}.
Using the same notation, we then have \(\Ff_1\conv \Ff_2 = \overline{m}_!(\Ff_1\twext \Ff_2)\), and \(\twext\) agrees with the classical twisted exterior product when taking étale realizations, up to truncating and forgetting the equivariance.
Note that \(\Gr_{\Gg}\times \Gr_{\Gg}\cong \Gr_{\Gg\times \Gg}\) is Artin--Whitney--Tate stratified by its Schubert cells by \thref{Schubert stratification is WT}, so that the truncation indeed makes sense.

\begin{lem}\thlabel{CT and twisted boxtimes}
	There is a natural equivalence of functors \(\CT_P(-) \twext \CT_P(-) \cong \widetilde{\CT}_P(-\twext-)\).
\end{lem}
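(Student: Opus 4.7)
My strategy is to identify $\widetilde{\CT}_P$ with the constant term $\CT_{P \times P}$ for $G \times G$ via \thref{twisted constant term and product}, and then deduce the claim from a Künneth identity combined with base change along the natural maps comparing twisted and ordinary products. All of $\overline{q}$, $\widetilde{q}_P^+$, $\widetilde{p}_P^+$ below refer to the morphisms in \eqref{convolution diagram} and \eqref{eq--twisted vs multiplication}, and I write $\overline{q}_\Mm$, $\overline{q}_\Gg$ for the instances of $\overline{q}$ associated to $\Mm$ and $\Gg$.

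First, a formal Künneth identity
\[
\CT_{P \times P}(\Ff_1 \boxtimes \Ff_2) \cong \CT_P(\Ff_1) \boxtimes \CT_P(\Ff_2)
\]
holds at the level of equivariant motives on $\Gr_\Mm \times \Gr_\Mm$: the hyperbolic localization diagram for $G \times G$ relative to $P \times P$ is the product of those for $G$ relative to $P$, the exterior product is compatible with $f_!$ and $g^*$ (as in \cite[Proposition 2.4.4]{RicharzScholbach:Intersection}), and the degree functions add: $\deg_{P \times P} = \deg_P \boxplus \deg_P = \widetilde{\deg}_P$.

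Second, consider the commutative diagram of prestacks
\[
\begin{tikzcd}[column sep=small]
\Gr_\Mm \widetilde{\times} \Gr_\Mm \arrow[d] & \Gr_{\Pp^+} \widetilde{\times} \Gr_{\Pp^+} \arrow[l, "\widetilde{q}_P^+"'] \arrow[r, "\widetilde{p}_P^+"] \arrow[d] & \Gr_\Gg \widetilde{\times} \Gr_\Gg \arrow[d] \\
\Gr_\Mm \times \Gr_\Mm & \Gr_{\Pp^+} \times \Gr_{\Pp^+} \arrow[l, "q_P^+ \times q_P^+"'] \arrow[r, "p_P^+ \times p_P^+"] & \Gr_\Gg \times \Gr_\Gg,
\end{tikzcd}
\]
in which the vertical maps are the analogues of $\overline{q}$ for $\Mm$, $\Pp^+$, and $\Gg$, respectively. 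Both squares should be cartesian at the prestack level (including all equivariance data), because the twisted product $LH \overset{L^+\Hh}{\times} Y$ is preserved under the compatible inclusions $LM \hookrightarrow LP^+ \hookrightarrow LG$ and $L^+\Mm \hookrightarrow L^+\Pp^+ \hookrightarrow L^+\Gg$.

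Cartesianness of both squares, together with proper/smooth base change for the lower-shriek and upper-star functors appearing in the definition of $\widetilde{\CT}_P$, yields a natural equivalence
\[
\widetilde{\CT}_P \circ \overline{q}_\Gg^! \cong \overline{q}_\Mm^! \circ \CT_{P \times P}.
\]
Combined with the Künneth identity from the first step, this gives the desired statement up to perverse truncation; the compatibility with the $\pH^0$ built into the definition of $\twext$ then follows from t-exactness of $\CT_P$ and of $\widetilde{\CT}_P \cong \CT_{P \times P}$ (\thref{CT texact}). The main subtlety I anticipate is the careful verification that the squares above are cartesian after the equivariant descent, so that the base-change arguments may legitimately be invoked through the stacky quotient morphisms.
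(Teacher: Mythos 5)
Your overall scheme—reduce $\widetilde{\CT}_P$ to $\CT_{P\times P}$ via \thref{twisted constant term and product}, invoke Künneth to split $\CT_{P\times P}$ on exterior products, then trade the various $\overline{q}$-maps through base change, finally appeal to t-exactness (\thref{CT texact}) to control the $\pH^0$—is the same strategy the paper follows. But the crucial geometric input is asserted too optimistically, and this is precisely where the paper works harder.

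The gap is in the claim that ``both squares should be cartesian at the prestack level (including all equivariance data).'' Neither the existence of your middle vertical map nor the cartesianness of the right square is automatic, and the paper's proof signals clearly that the naive version fails. Write $f$ for your putative map $L^+\Mm\backslash (\Gr_{\Pp^+}\widetilde{\times}\Gr_{\Pp^+})\to L^+\Mm\backslash\Gr_{\Pp^+}\times L^+\Mm\backslash\Gr_{\Pp^+}$. The twisted product $\Gr_{\Pp^+}\widetilde{\times}\Gr_{\Pp^+}$ contracts over $L^+\Pp^+$, so the second projection from the twisted product lands naturally in $L^+\Pp^+\backslash\Gr_{\Pp^+}$ (the left-hand stabilizer of the second factor is $L^+\Pp^+$, not $L^+\Mm$); there is no canonical lift to $L^+\Mm\backslash\Gr_{\Pp^+}$. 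This is why the paper's diagram interposes a fourth column $L^+\Mm\backslash\Gr_{\Pp^-}\times L^+\Pp^-\backslash\Gr_{\Pp^-}$, with the mixed quotient exactly matching what the twisted product actually projects to, and why the paper is careful to assert only that the \emph{left} square—the one between the $\Mm$- and $\Pp^-$-columns—is cartesian. The square you want, between the $\Pp^+$- and $\Gg$-columns, is not: comparing the fiber products over $(L^+\Gg\backslash\Gr_\Gg)^2$ against the twisted product of $\Gr_{\Pp^+}$'s amounts to comparing a gluing along $BL^+\Gg$ with a gluing along $BL^+\Pp^+$, and these differ. Base change is therefore available only on the genuinely cartesian left square; the right-hand side of the diagram is handled by plain commutativity (via the diagonal $q_\Pp$ arrow) together with transitivity of $!$-pullback, not by a second base-change step. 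You should replace the claim that the right square is cartesian by this commutativity argument, and you need to either construct the map $f$ via the universal property of the left cartesian square or, as the paper does, route everything through the mixed-quotient column so that only the naturally existing projections appear.

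Two smaller remarks: you use $\Pp^+$ where the paper uses $\Pp^-$—this is immaterial by duality of the two descriptions of (twisted) constant terms. And the Künneth step is fine and agrees with the paper's ``commutation of $\CT_P$ with exterior products,'' with $\widetilde{\deg}_P = \deg_P\boxplus\deg_P$ as you observe.
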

\begin{proof}
	Consider the commutative diagram
	\[\begin{tikzcd}[column sep=tiny, font=\small]
		L^+\Mm \backslash \Gr_\Mm \times L^+\Mm \backslash \Gr_\Mm & L^+\Mm \backslash \Gr_{\Pp^-} \times L^+\Mm \backslash \Gr_{\Pp^-} \arrow[l] \arrow[r] &  L^+\Mm \backslash \Gr_{\Pp^-} \times L^+{\Pp^-}\backslash \Gr_{\Pp^-} \arrow[r] & L^+\Gg \backslash \Gr_\Gg \times L^+\Gg \backslash \Gr_\Gg \\
		L^+\Mm \backslash \Gr_\Mm \widetilde{\times} \Gr_\Mm \arrow[u, "q_\Mm"] & L^+\Mm \backslash \Gr_{\Pp^-} \widetilde{\times} \Gr_{\Pp^-} \arrow[rr] \arrow[l] \arrow[ur, "q_\Pp"] \arrow[u] && L^+\Gg \backslash \Gr_\Gg \widetilde{\times} \Gr_\Gg \arrow[u, "q_\Gg"],
	\end{tikzcd}\]
	where the left square is cartesian. 
	Then the proposition follows from the commutation of \(\CT_P\) with exterior products, base change, and t-exactness of \(\CT_P\times \CT_P\) (\thref{CT texact}).
\end{proof}

\subsection{The fiber functor}
Next, we construct a fiber functor, and relate it to the constant term functors from the previous subsection.
In the next subsection, we will show that all these functors are monoidal.
This will allow us to use a Tannakian approach to relate \(\MATM_{L^+\Gg}(\Gr_\Gg)\) to the category of comodules under some bialgebra in \(\MATM(\Spec k)\).

By \thref{Cellularity of Schubert cells}, pushforward \(\pi_{\Gg,!}\) along the structure map preserves Artin-Tate motives.
Let \(u\colon \Gr_\Gg\to L^+\Gg\backslash \Gr_\Gg\) be the quotient map, so that \(u^!\) is given by forgetting the equivariance.
\begin{dfn}
	The \emph{fiber functor} is \(\FibFunctor:=\bigoplus_{n\in \IZ} \pH^n \pi_{\Gg,!}u^!\colon \MATM_{L^+\Gg}(\Gr_\Gg)\to \MATM(\Spec k)\).
\end{dfn}

To justify the name of this functor, we want to show it is exact, conservative, and monoidal.
This will be done by relating it to the constant term functor \(\CT_B\).
Recall that the maximal torus \(T\subseteq G\) has a unique parahoric integral model \(\Tt\subseteq \Gg\).

\begin{prop}\thlabel{Fiber functor is constant terms}
	There is a natural isomorphism
	\[\FibFunctor\cong \pi_{\Tt,!} u^! \CT_B\]
	of functors \(\MATM_{L^+\Gg}(\Gr_\Gg)\to \MATM(\Spec k)\).
\end{prop}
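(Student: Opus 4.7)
The plan is to apply base change to unwind $\pi_{\Tt,!} u^! \CT_B$, then invoke a motivic version of the Mirkovi{\'c}--Vilonen weight functor decomposition, made available by the geometric input of Section~\ref{sect-LSMV}.

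First, I would rewrite the right-hand side. Using the hyperbolic localization diagram \eqref{hyperbolic localization diagram}, base change along the vertical quotient maps (which are pullbacks along a pro-(perfectly smooth) map, so $u^!$ commutes with the decorated push/pull), and $\pi_\Tt \circ q_B^+ = \pi_{\Bb^+}$, I obtain
\[
\pi_{\Tt,!}\, u^!\, \CT_B(\Ff) \;\cong\; \pi_{\Bb^+,!}\, (p_B^+)^*\, u^!\Ff\, [\deg_B]
\;\cong\; \bigoplus_{\nu\in X_*(T)_I} \pi_{\Ss_\nu^+,!}\, s_\nu^*\, u^!\Ff\, [\langle 2\rho,\nu\rangle],
\]
where $s_\nu\colon \Ss_\nu^+\to \Gr_\Gg$ is the inclusion of the semi-infinite orbit and we use that on the $\nu$-component $\deg_B = \langle 2\rho,\nu\rangle$ (since $\rho_T=0$).

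Second, since both functors commute with filtered colimits, I reduce to $\Ff$ bounded, and by repeatedly dévissaging via the localization triangles attached to the Schubert stratification it is enough to check the isomorphism for $\Ff = \iota_{\mu,!} M$ with $M \in \MATM_{L^+\Gg}(\Gr_{\Gg,\mu})$ and $\mu \in X_*(T)_I^+$. Via the t-exact equivalence $\iota_\mu^![\langle 2\rho,\mu\rangle]\colon \DTM_{L^+\Gg}(\Gr_{\Gg,\mu}) \simeq \DTM(\Spec k)$ from \cite[Proposition 3.2.22]{RicharzScholbach:Intersection}, I may further reduce to $M = \unit[\langle 2\rho,\mu\rangle]$. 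By \thref{Intersections of Schubert cells and semi-infinite orbits} and \thref{intersections for positive orbits}, $\Ss_\nu^+ \cap \Gr_{\Gg,\mu}$ admits a filtrable decomposition into perfect cells all of pure dimension $\langle \rho, \mu+\nu\rangle$ (\thref{Nonemptyness of intersections}). Localization and homotopy invariance then show that $\pi_{\Ss_\nu^+,!}\, s_\nu^*\, u^! \iota_{\mu,!}\unit[\langle 2\rho,\mu\rangle]$ is a Tate motive concentrated in a single cohomological degree, equal to $\langle 2\rho,\mu\rangle - 2\langle \rho, \mu+\nu\rangle = -\langle 2\rho,\nu\rangle$; shifting by $[\langle 2\rho,\nu\rangle]$ puts it in the heart $\MATM(\Spec k)$, consistent with the t-exactness of $\CT_B$ (\thref{CT texact}).

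Third, the very same filtrable decomposition of $\Gr_{\Gg,\mu}$ into $\Ss_\nu^+ \cap \Gr_{\Gg,\mu}$ yields an iterated localization sequence computing $\pi_{\Gg,!}\, u^!\, \iota_{\mu,!}\unit[\langle 2\rho,\mu\rangle]$. The successive cones are the individual pieces $\pi_{\Ss_\nu^+,!}\, s_\nu^*\, u^! \iota_{\mu,!} \unit[\langle 2\rho,\mu\rangle]$, which by the previous step are concentrated in pairwise distinct cohomological degrees $-\langle 2\rho,\nu\rangle$. Hence all extensions split, giving
\[
\pi_{\Gg,!}\, u^!\, \iota_{\mu,!}\unit[\langle 2\rho,\mu\rangle] \;\cong\; \bigoplus_{\nu} \pi_{\Ss_\nu^+,!}\, s_\nu^*\, u^!\, \iota_{\mu,!}\unit[\langle 2\rho,\mu\rangle].
\]
Taking $\bigoplus_n \pH^n$ and comparing with the formula from Step~1 identifies $\FibFunctor(\Ff)$ with $\pi_{\Tt,!}\, u^!\, \CT_B(\Ff)$. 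Naturality in $\Ff$ is automatic from the construction.

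The main obstacle is the splitting claim in Step~3: the localization triangles only give an \emph{a priori} filtration, not a decomposition. What makes it split here is precisely the dimension formula of \thref{Nonemptyness of intersections}, which forces each cone into a single cohomological degree depending only on $\nu$, and these degrees are distinct across $\nu$. This is the Artin--Tate shadow of Braden's hyperbolic localization for the $\IG_m$-action attached to a regular dominant $\lambda$, but avoids invoking it directly.
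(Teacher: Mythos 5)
Your overall architecture (unwind $\pi_{\Tt,!}u^!\CT_B$ by base change into the semi-infinite pieces, then show $\FibFunctor$ respects the semi-infinite filtration because the graded pieces land in distinct perverse degrees of constant parity) is the right one, and matches the argument the paper delegates to \cite[Proposition 5.11]{CassvdHScholbach:Geometric}. But there is a real gap in Step~2, and it sits exactly at the crux of the proof.

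The claim that $\pi_{\Ss_\nu^+,!}\,s_\nu^*\,u^!\,\iota_{\mu,!}\unit[\langle 2\rho,\mu\rangle]$ is concentrated in the single degree $-\langle 2\rho,\nu\rangle$ is not justified by the argument you give. First, ``perfect cells all of pure dimension $\langle\rho,\mu+\nu\rangle$'' misreads \thref{Nonemptyness of intersections}: equidimensionality of $\Ss_\nu^+\cap\Gr_{\Gg,\leq\mu}$ is a statement about irreducible components, not about the cells in the filtrable decomposition from \thref{Intersections of Schubert cells and semi-infinite orbits}, which in general have varying dimension. Second, even if every cell had the same dimension, cells of the form $\IA^{r,\perf}\times\IG_m^{s,\perf}$ with $s>0$ do not have compactly supported cohomology concentrated in one degree, so ``localization and homotopy invariance'' alone give only an \emph{upper} bound on the cohomological amplitude (the dimension estimate, i.e.\ right t-exactness of the unshifted $(q_B^+)_!(p_B^+)^*$). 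The paper's own $\SU_3$ example produces intersections $C_{\delta_\nu}\cap\Gr_{\Gg,\mu}\cong\IG_m^\perf\times\IA^{d-1,\perf}$, which are not cohomologically pure. The lower bound is a separate input: in the proof of \thref{CT texact} it is obtained by expressing the constant term via $(\overline q_B^-)_*(\overline p_B^-)^!$ and an adjointness argument against the opposite Borel. So the t-exactness of $\CT_B$ is not a ``consistency check'' for your computation --- it \emph{is} the computation, and without invoking it your Step~2 has no way to produce the lower bound. Once you simply \emph{cite} the t-exactness of $\CT_B$ to get concentration of each graded piece, your Step~1 and Step~3 go through cleanly, and the dévissage to $\iota_{\mu,!}\unit$ becomes unnecessary. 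A minor further remark on Step~3: you do not need the derived object $\pi_{\Gg,!}u^!\Ff$ to actually \emph{split} as a direct sum (that would require vanishing of certain positive-degree motivic $\mathrm{Ext}$'s which need not hold); you only need the identification of $\bigoplus_n\pH^n$, which already follows from the graded pieces sitting in pairwise distinct degrees of constant parity.
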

\begin{proof}
	This can be proven verbatim as \cite[Proposition 5.12]{CassvdHScholbach:Geometric}.
\end{proof}

\begin{cor}\thlabel{F is fiber functor}
	The fiber functor \(\FibFunctor\) is exact, conservative, and faithful.
\end{cor}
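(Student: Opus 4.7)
The strategy is to exploit the factorization $\FibFunctor \cong \pi_{\Tt,!} u^! \CT_B$ established in Proposition \ref{Fiber functor is constant terms}, reducing each of the three desired properties to the corresponding property of $\CT_B$ together with the (much simpler) functor $\pi_{\Tt,!} u^!$ on the torus affine Grassmannian. All three properties are about functors between abelian categories, so it will be convenient to work on hearts throughout.

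For exactness, the constant term functor $\CT_B$ is t-exact by Proposition \ref{CT texact}, so it suffices to show that $\pi_{\Tt,!} u^! \colon \MATM_{L^+\Tt}(\Gr_{\Tt}) \to \MATM(\Spec k)$ is exact. Since $T$ is a torus and $L^+\Tt$ acts on $\Gr_{\Tt} = LT/L^+\Tt$ by left multiplication, this action is trivial, so that $L^+\Tt \backslash \Gr_{\Tt} \cong \Gr_{\Tt} \times BL^+\Tt$ as prestacks. Moreover $\Gr_{\Tt}$ is a discrete ind-(perfect scheme) whose connected components, over $\overline{k}$, are indexed by $\pi_1(T)_I = X_*(T)_I$ and are each a single perfect point; over $k$ they assemble into finite Galois orbits corresponding to spectra of finite separable extensions of $k$. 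Hence $\pi_{\Tt,!} u^!$ decomposes as a direct sum of pushforwards along finite étale maps, each of which is manifestly t-exact on Artin–Tate motives.

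For conservativity, suppose $\FibFunctor(\Ff) = 0$ for some $\Ff \in \MATM_{L^+\Gg}(\Gr_{\Gg})$. Lemma \ref{CT conservative} asserts that $\CT_B$ is conservative on bounded objects, which applies to the compact generators of $\MATM_{L^+\Gg}(\Gr_{\Gg})$ (those supported on finitely many Schubert cells). Since $\pi_{\Tt,!} u^!$ is also conservative on $\MATM_{L^+\Tt}(\Gr_{\Tt})$ by the decomposition above, and since $\FibFunctor$ commutes with filtered colimits (as $\pi_{\Gg,!}$ is a left adjoint and $\pH^n$ preserves filtered colimits for the relevant t-structures), conservativity on compact objects extends to all of $\MATM_{L^+\Gg}(\Gr_{\Gg})$.

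Faithfulness is then a formal consequence: for any morphism $\phi$ in $\MATM_{L^+\Gg}(\Gr_{\Gg})$ with $\FibFunctor(\phi) = 0$, exactness gives $\FibFunctor(\im \phi) = \im \FibFunctor(\phi) = 0$, whence $\im \phi = 0$ by conservativity, so $\phi = 0$. The only genuine subtlety throughout is verifying the discreteness of $\Gr_{\Tt}$ in the perfect setting and controlling the pro-smooth $L^+\Tt$-equivariance; both follow from the description of loop groups of tori recalled in Section \ref{Section--affine flag varieties}, together with the general formalism of equivariant motives from Section \ref{sect-motives}.
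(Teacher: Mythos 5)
Your proof is correct and takes essentially the same route as the paper: factor $\FibFunctor \cong \pi_{\Tt,!}u^!\CT_B$, use t-exactness of $\CT_B$ and the fact that $\Gr_{\Tt}$ is a coproduct of points to get exactness, use \thref{CT conservative} for conservativity on bounded objects, derive faithfulness from conservativity and exactness in abelian categories, and extend to unbounded objects via colimits. The one place where your wording is slightly loose is the extension of conservativity from bounded to arbitrary objects: it is not merely that $\FibFunctor$ commutes with filtered colimits, but that exactness lets you see any bounded subobject $\Ff'\subseteq\Ff$ injects into $\FibFunctor(\Ff)=0$, forcing $\Ff'=0$ and hence $\Ff=0$; since you have already established exactness, this is only a minor gap in explanation rather than in substance.
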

\begin{proof}
	Exactness follows from \thref{Fiber functor is constant terms}, \thref{CT texact}, and the fact that \(\Gr_{\Tt}\) is a coproduct of points.
	Conservativity for bounded objects was covered in \thref{CT conservative}, and it implies faithfulness for bounded objects as we are working with abelian categories.
	Taking the colimit, this implies conservativity and faithfulness for all of \(\MATM_{L^+\Gg}(\Gr_{\Gg})\).
\end{proof}

The following generalization of \thref{Fiber functor is constant terms} is a weaker version of \cite[Corollary 4.10]{ALRR:Modular}.

\begin{cor}\thlabel{generalization of fiber functor vs CT}
	Assume \(k=k'\), and let \(M\in \DTM(\Spec k)\) be such that each \(\pH^n(M)\) is a direct sum of Tate twists, and moreover trivial for odd \(n\).
	Then for each \(\Ff\in \MTM_{L^+\Gg}(\Gr_{\Gg})\), there is a canonical isomorphism
	\[\bigoplus_{n\in \IZ} \pH^n \pi_{\Gg,!} u^!(\Ff\otimes M) \cong \bigoplus_{n\in \IZ} \pH^n \pi_{\Tt,!} u^! \CT_B(\Ff\otimes M)\]
\end{cor}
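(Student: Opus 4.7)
The plan is to reduce to \thref{Fiber functor is constant terms} applied to $\Ff$ alone, by factoring the tensor product with $M$ out of both sides via the projection formula. More precisely, I will show that both the LHS and the RHS are canonically isomorphic to $\bigoplus_{n} \pH^n(\FibFunctor(\Ff) \otimes M)$, where $M$ is pulled back to $\Gr_{\Gg}$ (respectively $\Gr_{\Mm}$, $\Spec k$) without notational distinction.

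For the LHS, the projection formula for $\pi_{\Gg,!}$, combined with the compatibility of $u^!$ with tensoring by the pullback of an external object, gives $\pi_{\Gg,!}u^!(\Ff \otimes M) \cong \pi_{\Gg,!}u^!(\Ff) \otimes M$ in $\DM(\Spec k)$. For the RHS, I would first check that $\CT_B$ commutes with tensoring by an external motive on $\Spec k$: using the presentation $\CT_B \cong (\overline{q}_B^+)_!(\overline{p}_B^+)^*[\deg_B]$ from \thref{dfn--CT}, this follows from compatibility of $(\overline{p}_B^+)^*$ with the tensor product and the projection formula for $(\overline{q}_B^+)_!$ applied to the hyperbolic localization diagram \eqref{hyperbolic localization diagram}. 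Applying $\pi_{\Tt,!}u^!$ and invoking the projection formula once more yields $\pi_{\Tt,!}u^! \CT_B(\Ff \otimes M) \cong \pi_{\Tt,!}u^! \CT_B(\Ff) \otimes M$.

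Now, \thref{Fiber functor is constant terms} applied to $\Ff \in \MTM_{L^+\Gg}(\Gr_\Gg)$ identifies $\pi_{\Gg,!}u^!(\Ff)$ with $\pi_{\Tt,!}u^!\CT_B(\Ff) = \FibFunctor(\Ff)$ in $\MATM(\Spec k)$; t-exactness of $\CT_B$ (\thref{CT texact}) together with the fact that $\Gr_{\Tt}$ is zero-dimensional ensures that this object is already concentrated in perverse degree $0$, so no derived issues arise at this point. Combining the three isomorphisms above then produces the stated canonical identification of both sides with $\bigoplus_n \pH^n(\FibFunctor(\Ff) \otimes M)$. The hypotheses on $M$—each $\pH^n(M)$ being a direct sum of Tate twists and vanishing for odd $n$—serve to guarantee that this direct sum indeed lies in $\MATM(\Spec k)$, so that the displayed isomorphism makes sense in the desired category. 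The main technical issue I anticipate is the careful bookkeeping of shifts and Tate twists in the second step, in particular relating $(\overline{p}_B^+)^*\pi_{\Gg}^*M$ to $\pi_{\Pp^+}^*M$ and then pushing down to $\Gr_{\Mm}$; this is routine but requires care to make all the compatibilities genuinely canonical.
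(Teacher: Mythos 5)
Your projection-formula reductions on both sides are correct and are in fact implicitly present in the paper's proof (both arguments arrive at $\pi_{\Gg,!}u^!(\Ff\otimes M)\cong\pi_{\Gg,!}u^!(\Ff)\otimes M$ and its analogue for $\CT_B$). The gap is in the step where you claim that \thref{Fiber functor is constant terms} "identifies $\pi_{\Gg,!}u^!(\Ff)$ with $\pi_{\Tt,!}u^!\CT_B(\Ff)$ in $\MATM(\Spec k)$". That proposition identifies $\FibFunctor(\Ff)=\bigoplus_n\pH^n\pi_{\Gg,!}u^!(\Ff)$ with $\pi_{\Tt,!}u^!\CT_B(\Ff)$; it does \emph{not} say that $\pi_{\Gg,!}u^!(\Ff)$ and $\pi_{\Tt,!}u^!\CT_B(\Ff)$ agree as objects of $\DM(\Spec k)$. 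The latter is concentrated in perverse degree $0$, but the former is spread over many degrees with a nontrivial Postnikov filtration — motivic cohomology of $\Spec k$ contains odd classes, so this filtration need not split. Consequently, after your projection-formula step you are left comparing $\bigoplus_n\pH^n(\pi_{\Gg,!}u^!\Ff\otimes M)$ with $\bigoplus_n\pH^n(\pi_{\Tt,!}u^!\CT_B\Ff\otimes M)$, and these are \emph{not} yet matched by \thref{Fiber functor is constant terms} alone.

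The missing ingredient — and the actual content of this corollary — is the degeneration of the filtration spectral sequence for $\pi_{\Gg,!}u^!\Ff\otimes M$, which gives
\[
\pH^n\bigl(\pi_{\Gg,!}u^!\Ff\otimes M\bigr)\cong\bigoplus_{i+j=n}\pH^i(\pi_{\Gg,!}u^!\Ff)\otimes\pH^j(M).
\]
This degeneration uses both hypotheses on $M$ in an essential way: each $\pH^j(M)$ is a direct sum of Tate twists, so it is flat and the graded pieces $\pH^i(\pi_{\Gg,!}u^!\Ff)\otimes\pH^j(M)$ stay in the heart; and after reducing to $\Ff$ supported on a single connected component, both $i$ and $j$ range over one parity, so all nonzero graded pieces sit in degrees of a fixed parity, forcing the spectral-sequence differentials (which change degree by $1$) to vanish. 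Your remark that the hypotheses on $M$ merely "guarantee the direct sum lies in $\MATM(\Spec k)$" underestimates their role: without the parity and flatness, the displayed Künneth decomposition fails, and the two sides of the corollary cannot be matched term by term via \thref{Fiber functor is constant terms}.
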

\begin{proof}
	We may assume \(\Ff\) is supported on a single connected component of \(\Gr_{\Gg}\), so that \(\pH^n \pi_{\Gg,!}u^!\Ff\) can only be nontrivial for \(n\) of a single parity, say of even parity.
	Then, by our assumption on \(M\), we have
	\[\pH^n \pi_{\Gg,!} u^!(\Ff\otimes M) \cong \bigoplus_{i+j=n} \pH^i (\pi_{\Gg,!}u^!\Ff) \otimes \pH^j(M),\]
	which vanishes if either \(i\) or \(j\) is odd.
	Similarly, we get
	\[\pH^n \pi_{\Tt,!} u^! \CT_B(\Ff\otimes M) \cong \bigoplus_{i+j=n} \pH^i (\pi_{\Tt,!} u^! \CT_B\Ff) \otimes \pH^j(M),\]
	so we conclude by \thref{Fiber functor is constant terms}.
\end{proof}

The following result will be used when constructing integral Satake isomorphisms.
Recall that when working with anti-effective motives, we only consider motives with rational coefficients.

\begin{prop}\thlabel{anti-effective motives and the fiber functor}
	The fiber functor \(\FibFunctor\colon \MATM_{L^+\Gg}(\Gr_{\Gg}) \to \MATM(\Spec k)\) preserves and reflects anti-effective motives.
\end{prop}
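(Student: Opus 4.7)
The plan is to reduce, via Proposition \ref{Fiber functor is constant terms}, to showing that $\CT_B\colon \MATM_{L^+\Gg}(\Gr_\Gg) \to \MATM_{L^+\Tt}(\Gr_\Tt)$ preserves and reflects anti-effective motives. Indeed, $\Gr_\Tt$ is a disjoint union of perfect points indexed by $X_*(T)_I$, so anti-effectivity of a motive on $\Gr_\Tt$ is checked componentwise; taking the direct sum over components (which is essentially what $\pi_{\Tt,!} u^!$ does, up to Galois twists) thus trivially both preserves and reflects anti-effectivity. The claim for $\FibFunctor$ follows from the claim for $\CT_B$.

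For preservation, I would use the description $\CT_{B,\nu} \cong (\overline{q}_B^+)_!(\overline{p}_B^+)^*[\langle 2\rho,\nu\rangle]$ combined with the filtrable decomposition of $\Ss_\nu^+\cap \Gr_{\Gg,\leq\mu}$ into perfect cells of dimension at most $\langle\rho,\mu+\nu\rangle$ (\thref{intersections for positive orbits}). The pullback $(\overline{p}_B^+)^*$ preserves $\unit(m)$ and hence anti-effectivity tautologically. On each cell $\IA^{n,\perf}_k \times \IG_{m,k}^{r,\perf}$, the compactly supported pushforward of $\unit(m)[n+r]$ produces $\unit(m')$'s with $m'\leq m\leq 0$, and the various shifts cancel out thanks to the overall degree shift $[\langle 2\rho,\nu\rangle]$ and t-exactness of $\CT_B$ (\thref{CT texact}); hence the output lands in $\MATM_{L^+\Tt}(\Gr_\Tt)^{\anti}$.

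For reflection, argue inductively on the support. Suppose $\CT_B(\Ff)$ is anti-effective and let $\mu\in X_*(T)_I^+$ be maximal with $\iota_\mu^*\Ff\neq 0$. By \thref{intersection with Weyl group conjugate} applied with $w=1$, together with the vanishing of $\Ss_\mu^-\cap \Gr_{\Gg,\mu'}$ for $\mu'<\mu$ (which holds because a dominant $\mu$ cannot lie in the Weyl orbit of a strictly smaller dominant $\mu'$), we obtain $\Ss_\mu^-\cap \Gr_{\Gg,\leq\mu}=\{\varpi^\mu\}$. Using the $\ast$-form $\CT_{B,\mu} \cong (\overline{q}_B^-)_*(\overline{p}_B^-)^![\langle 2\rho,\mu\rangle]$, the component $\CT_{B,\mu}(\Ff)$ is therefore equivalent to $\iota_{\varpi^\mu}^!\Ff[\langle 2\rho,\mu\rangle]$. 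Since $\Gr_{\Gg,\mu}\cong \IA^{\langle 2\rho,\mu\rangle,\perf}_k$ is perfectly smooth (\thref{Cellularity of Schubert cells}), the stalk $\iota_{\varpi^\mu}^!\Ff$ agrees with $\iota_\mu^*\Ff$ up to the Tate twist $(-\langle 2\rho,\mu\rangle)$ and the shift $-2\langle 2\rho,\mu\rangle$, so anti-effectivity of $\CT_{B,\mu}(\Ff)$ forces anti-effectivity of $\iota_\mu^*\Ff$. Truncating $\Ff$ to remove the top stratum and iterating completes the induction.

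The main obstacle is keeping track of the interaction between $\iota_\mu^!$ and $\iota_\mu^*$ in the motivic setting where the general relationship for locally closed immersions is not immediate; this is handled by reducing to the perfect cellularity from \thref{Cellularity of Schubert cells} so the twist/shift formulas on affine spaces apply. A secondary point is verifying the single-point intersection $\Ss_\mu^-\cap \Gr_{\Gg,\leq\mu}=\{\varpi^\mu\}$, which requires either invoking the $B^-$-analogue of \thref{Nonemptyness of intersections} or combining \thref{intersection with Weyl group conjugate} with a dominance argument as sketched above.
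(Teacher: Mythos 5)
Your overall strategy matches the paper's (which simply cites the split-group case in \cite[Proposition~6.31]{CassvdHScholbach:Geometric} and summarises the mechanism: the cellular decomposition of intersections for preservation, a single-point intersection for detection), and your preservation argument is correct. However, the reflection step contains a genuine Tate-twist error that breaks the conclusion as written.

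You compute that if $\mu$ is maximal in the support and $\iota_\mu^*\Ff = M[\langle 2\rho,\mu\rangle]$, then $\CT_{B,\mu}(\Ff)\cong M(-\langle 2\rho,\mu\rangle)$ (your purity/codimension computation for $\iota_{\varpi^\mu}^!$ is correct: the shift cancels, but the twist $(-\langle 2\rho,\mu\rangle)$ survives). You then conclude that anti-effectivity of $\CT_{B,\mu}(\Ff)$ forces anti-effectivity of $\iota_\mu^*\Ff$, i.e.\ of $M$. This implication is false: $M(-d)$ anti-effective (with $d=\langle 2\rho,\mu\rangle\ge 0$) only bounds the twists of $M$ by $d$, not by $0$. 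Compare with \thref{fiber functor of standard}: $\CT_{B,\mu}(\Jj_!(\mu,\unit)) = \unit(-\langle 2\rho,\mu\rangle)$, which is \emph{more} anti-effective than the input; this component cannot detect failure of anti-effectivity.

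The fix is to use the opposite extreme component $\nu = w_0(\mu)$ rather than $\nu=\mu$. For the $\ast$-form of the constant term, $\Ss_{w_0(\mu)}^+\cap \Gr_{\Gg,\le\mu}$ is the single point $\varpi^{w_0(\mu)}$ (indeed, by \thref{Nonemptyness of intersections} the intersection is concentrated in the open stratum $\Gr_{\Gg,\mu}$ and has dimension $\langle\rho,\mu+w_0(\mu)\rangle = 0$ since $w_0(\rho)=-\rho$), and $\Gamma_c$ over a point is a $\ast$-stalk with no purity twist. Combined with the degree shift $[\langle 2\rho, w_0(\mu)\rangle] = [-\langle 2\rho,\mu\rangle]$ one finds $\CT_{B,w_0(\mu)}(\Ff)\cong M$ on the nose — no extra Tate twist — so anti-effectivity of $\CT_{B,w_0(\mu)}(\Ff)$ genuinely forces anti-effectivity of $\iota_\mu^*\Ff$, and the rest of your induction goes through. (This also matches \thref{fiber functor of standard}, where the twist at $\nu=w_0(\mu)$ is $-\langle\rho,\mu+w_0(\mu)\rangle = 0$.)
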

\begin{proof}
	Since pullback along \(\Spec k'\to \Spec k\) preserves and reflects anti-effectivity, we may assume \(k=k'\) and consider Tate motives only.
	Then, the proposition can be proven as in \cite[Proposition 6.31]{CassvdHScholbach:Geometric}: using \thref{Intersections of Schubert cells and semi-infinite orbits} one sees that constant terms, and hence the fiber functor, can only lower the Tate twists, and hence preserve anti-effectivity.
	But since there is always a semi-infinite orbit intersecting a given Schubert variety in a point, these functors also detect anti-effective motives.
\end{proof}

\subsection{Intersection and (co)standard motives}

In order to get some control on \(\MATM_{L^+\Gg}(\Gr_{\Gg})\), we introduce specific examples of objects in this category.
These are the motivic analogues of the objects defined in \cite[End of §2]{MirkovicVilonen:Geometric}.
These motives will moreover be used in the proof of \thref{thm.intro-Satake}.
For the rest of this section, we assume \(G\) is residually split.

\begin{dfn}
	For \(\mu\in X_*(T)_I^+\), let \(\iota_\mu\colon L^+\Gg\backslash\Gr_{\Gg,\mu}\to L^+\Gg\backslash\Gr_{\Gg}\) be the inclusion, and consider the structure map \(p_\mu\colon L^+\Gg\backslash \Gr_{\Gg,\mu}\to \Spec k\). 
	Moreover, we denote by \(p_\mu^*:=p_\mu^!(-\langle2\rho,\mu\rangle)[-\langle 4\rho,\mu\rangle]\) the functor which agrees with *-pullback after forgetting the equivariance.
	\begin{enumerate}
		\item The \emph{standard functor} associated to \(\mu\) is
		\[\Jj_!(\mu,-) := \pH^0\iota_{\mu,!} p_\mu^*(-)[\langle 2\rho,\mu\rangle] \colon \MTM(\Spec k)\to \MTM_{L^+\Gg}(\Gr_{\Gg}).\]
		\item The \emph{costandard functor} associted to \(\mu\) is 
		\[\Jj_*(\mu,-) := \pH^0\iota_{\mu,*} p_\mu^*(-)[\langle 2\rho,\mu\rangle] \colon \MTM(\Spec k)\to \MTM_{L^+\Gg}(\Gr_{\Gg}).\]
		\item The \emph{IC-functor} associated to \(\mu\) is
		\[\IC_\mu(-) := \im(\Jj_!(\mu,-)\to \Jj_*(\mu,-)) \colon \MTM(\Spec k)\to \MTM_{L^+\Gg}(\Gr_{\Gg}).\]
	\end{enumerate}
\end{dfn}

\begin{lem}\thlabel{fiber functor of standard}
	For \(\mu\in X_*(T)_I^+\) and \(\nu\in X_*(T)_I\), there is an equivalence
	\[\CT_{B,\nu}(\Jj_!(\mu,\unit)) \cong \bigoplus_{\Irr(\Gr_{\Gg,\leq \mu}\cap \Ss_\nu^+)} \unit(-\langle \rho,\mu+\nu\rangle).\]
\end{lem}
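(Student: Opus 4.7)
The plan is to reduce to a compactly supported cohomology computation on $\Gr_{\Gg,\mu}\cap \Ss_\nu^+$ and then invoke the geometry from Section \ref{sect-LSMV}. First, since $\CT_{B,\nu}$ is t-exact by \thref{CT texact}, it commutes with the truncation $\pH^0$ defining $\Jj_!(\mu,\unit)$, so it suffices to compute $\pH^0\bigl(\CT_{B,\nu}(\iota_{\mu,!}\unit[\langle 2\rho,\mu\rangle])\bigr)$. Using the presentation $\CT_{B,\nu}=(q_\nu^+)_!(p_\nu^+)^{\ast}[\langle 2\rho,\nu\rangle]$ and proper base change along the cartesian square determined by $\iota_\mu\colon \Gr_{\Gg,\mu}\hookrightarrow \Gr_\Gg$ and $p_\nu^+\colon \Ss_\nu^+\hookrightarrow \Gr_\Gg$, this reduces to $c_{\mu,\nu,!}\unit[\langle 2\rho,\mu+\nu\rangle]$, where $c_{\mu,\nu}\colon \Gr_{\Gg,\mu}\cap \Ss_\nu^+\to \Spec k$ is the structure map; the $L^+\Tt$-equivariance on the component $\{\nu\}\subseteq \Gr_\Tt$ is automatic since $L^+\Tt$ acts trivially there.

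Next, by \thref{Intersections of Schubert cells and semi-infinite orbits} together with \thref{intersections for positive orbits}, the intersection $\Gr_{\Gg,\mu}\cap \Ss_\nu^+$ admits a filtrable decomposition into pieces $X_i\cong \IA_k^{n_i,\perf}\times \IG_{m,k}^{r_i,\perf}$, and by \thref{Nonemptyness of intersections} each satisfies $n_i+r_i\leq \langle\rho,\mu+\nu\rangle$. Iterated localization presents $c_{\mu,\nu,!}\unit$ as an iterated extension of the motives $c_{X_i,!}\unit\in\DTM(\Spec k)$, each of which, by a Künneth computation, is
\[c_{X_i,!}\unit\cong \unit(-n_i)[-2n_i]\otimes\bigl(\unit[-1]\oplus\unit(-1)[-2]\bigr)^{\otimes r_i},\]
whose unique top-degree summand is $\unit(-(n_i+r_i))[-2(n_i+r_i)]$.

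After shifting by $[\langle 2\rho,\mu+\nu\rangle]$, every cohomology sheaf of each $c_{X_i,!}\unit[\langle 2\rho,\mu+\nu\rangle]$ lies in nonpositive cohomological degree, and the degree-zero part is nonzero precisely when $n_i+r_i=\langle\rho,\mu+\nu\rangle$, in which case it equals $\unit(-\langle\rho,\mu+\nu\rangle)$. Hence $\pH^0$ picks out exactly one copy of $\unit(-\langle\rho,\mu+\nu\rangle)$ for each top-dimensional cell of the decomposition. To finish, I identify these top-dimensional cells with $\Irr(\Gr_{\Gg,\leq\mu}\cap \Ss_\nu^+)$: the top-dimensional cells are disjoint open subschemes of $\Gr_{\Gg,\mu}\cap \Ss_\nu^+$, and since any two nonempty opens of an irreducible scheme meet, each irreducible component of $\Gr_{\Gg,\mu}\cap \Ss_\nu^+$ contains exactly one such cell; by equidimensionality of the closure (\thref{Nonemptyness of intersections}), combined with the fact that each lower stratum $\Gr_{\Gg,\mu'}\cap \Ss_\nu^+$ has strictly smaller dimension, every irreducible component of $\Gr_{\Gg,\leq\mu}\cap \Ss_\nu^+$ is the closure of a unique irreducible component of $\Gr_{\Gg,\mu}\cap \Ss_\nu^+$.

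The substantive input here is \thref{Intersections of Schubert cells and semi-infinite orbits}, whose proof occupies the whole of Section \ref{sect-LSMV}; once that decomposition is in hand, the rest of the argument is a routine unwinding of definitions and I do not anticipate a real obstacle in this lemma itself.
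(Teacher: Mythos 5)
Your proof follows essentially the same route as the paper: reduce via t-exactness of $\CT_{B,\nu}$ and base change to computing $\pH^{\langle 2\rho,\mu+\nu\rangle}f_!\unit$ for the structure map of $\Gr_{\Gg,\mu}\cap\Ss_\nu^+$, then invoke \thref{Nonemptyness of intersections} (dimension and equidimensionality) and \thref{Intersections of Schubert cells and semi-infinite orbits} (filtrable cellular decomposition). The only difference is that the paper delegates the final cohomological bookkeeping to \cite[Lemma 2.20]{CassvdHScholbach:Geometric}, whereas you carry out the Künneth computation and the matching of top-dimensional cells with irreducible components by hand; that unwinding is correct and is precisely what the cited lemma packages.
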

\begin{proof}
	Since \(\CT_{B,\nu}\) is t-exact, we have
	\[\CT_{B,\nu}(\Jj_!(\mu,\unit)) \cong \pH^{\langle 2\rho,\nu\rangle} (q_\nu^+)_! (p_\nu^+)^* \iota_{\mu,!} p_\mu^*(\unit)[\langle 2\rho,\mu\rangle] \cong \pH^{\langle 2\rho,\mu+\nu\rangle} f_!f^*(\unit),\]
	where \(f\colon \Gr_{\Gg,\mu}\cap \Ss_\nu^+\to \Spec k\) is the structure map.
	Since \(\Gr_{\Gg\mu}\cap \Ss_\nu^+\) has dimension \(\langle \rho,\mu+\nu\) by \thref{Nonemptyness of intersections} and it admits a filtrable decomposition by perfect cells by \thref{Intersections of Schubert cells and semi-infinite orbits}, we conclude by \cite[Lemma 2.20]{CassvdHScholbach:Geometric}.
\end{proof}

By general properties of t-structures, along with \cite[Proposition 3.2.22]{RicharzScholbach:Intersection}, the essential images of the IC-functors generate \(\MTM(\Gr_{\Gg})\) under colimits and extensions.
Moreover, for rational coefficients we can say something even stronger.
Note that \(\MTM_{L^+\Gg}(\Spec k,\IQ) \cong \MTM(\Spec k,\IQ) \cong \grQVect\) is itself semisimple.

\begin{prop}\thlabel{semisimplicity}
	The abelian category \(\MTM_{L^+\Gg}(\Gr_{\Gg},\IQ)\) is semisimple, with simple generators given by \(\IC_\mu(\Ff)\) for \(\mu\in X_*(T)_I^+\) and simple \(\Ff\in \MTM(\Spec k,\IQ)\).
\end{prop}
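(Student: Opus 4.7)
The plan is to reduce the statement to parity/purity computations on the strata, generalizing the argument from \cite{CassvdHScholbach:Geometric} for split groups to our ramified setting; the geometric input from \thref{Intersections of Schubert cells and semi-infinite orbits} and the weight information from \thref{fiber functor of standard} are exactly what is needed.

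First I would show that each $\IC_\mu(\Ff)$ with $\Ff\in\MTM(\Spec k,\IQ)$ simple is a simple object of $\MTM_{L^+\Gg}(\Gr_{\Gg},\IQ)$. By \cite[Proposition 3.2.22]{RicharzScholbach:Intersection}, applied to the transitive $L^+\Gg$-action on the perfectly cellular Schubert cell $\Gr_{\Gg,\mu}$ (whose structure is provided by \thref{Cellularity of Schubert cells}), the functor $p_\mu^*[\langle 2\rho,\mu\rangle]$ is an equivalence $\MTM(\Spec k,\IQ)\xrightarrow{\sim}\MTM_{L^+\Gg}(\Gr_{\Gg,\mu},\IQ)$. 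Consequently the restriction of $\Jj_!(\mu,\Ff)$ to the open stratum is simple, and the usual recollement formalism \cite[Theorem 1.4.10]{BBD:Faisceaux} makes $\IC_\mu(\Ff) = \mathrm{im}(\Jj_!(\mu,\Ff)\to\Jj_*(\mu,\Ff))$ simple. Conversely, any simple object $X$ is supported on some $\Gr_{\Gg,\leq\mu}$ with $\mu$ minimal; its restriction to $\Gr_{\Gg,\mu}$ is simple in the heart and corresponds to some simple $\Ff$, forcing $X\cong\IC_\mu(\Ff)$ by the intermediate extension property.

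For semisimplicity it remains to show $\Ext^1$-vanishing between two such simples $\IC_\mu(\Ff)$ and $\IC_{\mu'}(\Ff')$. The case $\mu=\mu'$ reduces, via the open-stratum equivalence above and the exact triangle computing $\Ext^1$, to $\Ext^1_{\grQVect}(\Ff,\Ff')=0$. For $\mu\neq \mu'$, the key point is a purity statement: by \thref{fiber functor of standard}, $\CT_{B,\nu}\Jj_!(\mu,\IQ)$ is a sum of copies of $\unit(-\langle\rho,\mu+\nu\rangle)$, and dually (using Verdier duality together with \thref{Intersections of Schubert cells and semi-infinite orbits} to control the dimensions of the strata intersections) $\CT_{B,\nu}\Jj_*(\mu,\IQ)$ is concentrated in the same single Tate twist. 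Since $\IC_\mu(\IQ)$ is the image of the natural map between these, and $\CT_{B,\nu}$ is t-exact (\thref{CT texact}), $\CT_{B,\nu}\IC_\mu(\Ff)$ is also pure of weight $-\langle\rho,\mu+\nu\rangle$ tensored with $\Ff$.

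Finally, I would use that $\CT_B=\bigoplus_\nu \CT_{B,\nu}$ is faithful and conservative on bounded objects (\thref{CT conservative}, \thref{F is fiber functor}) and lands in the semisimple category $\MTM_{L^+\Tt}(\Gr_\Tt,\IQ)\cong \bigoplus_\nu \grQVect$. Given a short exact sequence $0\to \IC_{\mu'}(\Ff')\to E\to \IC_\mu(\Ff)\to 0$, the purity computation just made shows that $\CT_{B,\nu}E$ is a direct sum of Tate twists concentrated in specific weights determined by $\mu,\mu',\nu$; these weights cannot match up in a way compatible with a non-split extension class. Spelled out: the extension class lives in $\Ext^1$ between pure Tate motives of distinct weights in $\grQVect^{X_*(T)_I}$, which vanishes, so $\CT_B E$ splits. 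One then lifts an idempotent splitting through $\CT_B$ using its faithfulness on the abelian category and the fact that $\Hom$-groups between IC motives are themselves controlled by the open-stratum equivalence, giving the desired splitting upstairs. The main obstacle is this last lifting step; the cleanest route is to observe that by standard recollement arguments $\Ext^1(\IC_\mu(\Ff),\IC_{\mu'}(\Ff'))$ embeds into an $\Ext$-group in the heart on the open stratum of $\Gr_{\Gg,\mu}\cup \Gr_{\Gg,\mu'}$, which vanishes by the purity/parity just established combined with \thref{BS-vanishing}.
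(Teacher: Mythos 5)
Your proposal heads in the right general direction (purity/parity should indeed imply semisimplicity) but has a genuine gap, and the paper takes a different and shorter route.  The paper first invokes \thref{forgetting equivariance is ff} to reduce to semisimplicity of the non-equivariant category \(\MTM(\Gr_{\Gg},\IQ)\), and then cites \cite[Proposition~5.3]{RicharzScholbach:Witt}, where semisimplicity is deduced from the parity vanishing condition for \(\ell\)-adic intersection complexes on \(\Gr_{\Gg}\); that parity vanishing in turn is imported from \cite[Lemma~1.1]{Zhu:Ramified} or \cite[Lemma~2.1]{Zhu:Affine}.

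The gap in your argument is in the \(\Ext^1\)-vanishing for \(\mu\neq\mu'\) (and is also lurking in the \(\mu=\mu'\) case).  You assert that \(\Ext^1(\IC_\mu(\Ff),\IC_{\mu'}(\Ff'))\) \emph{embeds} into an \(\Ext^1\)-group over an open stratum.  But by the recollement exact sequences (say for \(\mu'<\mu\), taking \(j\colon\Gr_{\Gg,\mu}\hookrightarrow\Gr_{\Gg,\leq\mu}\) and \(i\) the complementary closed immersion), \(\Hom(j_!j^*\IC_\mu,\IC_{\mu'}[1])\) vanishes because \(j^!\IC_{\mu'}=0\), and one finds instead that \(\Ext^1(\IC_\mu,\IC_{\mu'})\) is a \emph{quotient} of \(\Hom_{D}(i^*\IC_\mu,\, i^!\IC_{\mu'}[1])\) on the boundary, not a subgroup of anything supported on the open stratum.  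It is precisely the vanishing of this boundary group — controlled by parity vanishing of the stalks \(i^*\IC_\mu\) on each boundary stratum together with weight/Tate-twist estimates — that is needed.  The purity of \(\CT_{B,\nu}\IC_\mu(\Ff)\) constrains the \emph{total} hyperbolic cohomology but does not by itself control the stalk and costalk on each individual boundary cell, so it cannot substitute for the parity input.  Likewise, as you note yourself, conservativity and faithfulness of \(\CT_B\) do not imply that it reflects split extensions; that step cannot be carried out as stated.  Replacing these steps with the transferred parity vanishing (as the paper does) closes the gap, and once you know semisimplicity, your identification of the simple objects via \cite[Proposition~3.2.22]{RicharzScholbach:Intersection} and the intermediate-extension formalism is fine and coincides with the paper's.
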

\begin{proof}
	By \thref{forgetting equivariance is ff} below, it suffices to show \(\MTM(\Gr_{\Gg},\IQ)\) is semisimple; the enumeration of the simple generators then follow from generalities about t-structures and \cite[Proposition 3.2.22]{RicharzScholbach:Intersection}.
	As in \cite[Proposition 5.3]{RicharzScholbach:Witt} this semisimplicity follows from the parity vanishing condition for \(\ell\)-adic sheaves on \(\Gr_{\Gg}\), which in turn can be deduced exactly as in \cite[Lemma 1.1]{Zhu:Ramified} or \cite[Lemma 2.1]{Zhu:Affine}.
\end{proof}

\begin{lem}\thlabel{forgetting equivariance is ff}
	The functor \(u^! \colon \MTM(\Gr_{\Gg}) \to \MTM_{L^+\Gg}(\Gr_{\Gg})\), given by forgetting the equivariance, is fully faithful, and its image is closed under subquotients.
\end{lem}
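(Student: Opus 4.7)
The plan is to reduce the statement to a single Schubert cell via the Artin--Whitney--Tate recollement, and then to a classifying-space computation. Since the t-structure on $\DATM_{L^+\Gg}(\Gr_{\Gg})$ is defined so that the forgetful functor is t-exact and conservative, full faithfulness on hearts amounts to checking that the natural map on Hom-groups between bounded objects is a bijection; by compact generation we may restrict to objects supported on finitely many Schubert strata. Using the localization triangles coming from the stratification of $\Gr_{\Gg}$ by $L^+\Gg$-orbits (\thref{Schubert stratification is WT}), we proceed by induction on the strata, so it suffices to prove the analogous statements for the forgetful functor $\DTM_{L^+\Gg}(\Gr_{\Gg,w}) \to \DTM(\Gr_{\Gg,w})$ on a single Schubert cell.

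On such a cell, the action of $L^+\Gg$ is transitive and factors through the perfectly cellular finite-type quotient $L^n\Gg$ (for $n$ large enough), with connected perfectly cellular stabilizer $P_w$ by \thref{cellularity of stabilizer}. Applying \cite[Proposition 3.2.22]{RicharzScholbach:Intersection}, we identify $\DTM_{L^+\Gg}(\Gr_{\Gg,w})$ with $\DTM(BP_w)$, $\DTM(\Gr_{\Gg,w})$ with $\DTM(\Spec k)$, and the forgetful functor with pullback along the basepoint $\Spec k\to BP_w$. The key computation is that the motivic cohomology $H^\ast(BP_w)$ is Tate and concentrated in non-negative even degrees: this follows by d\'evissage along the pro-unipotent radical of $P_w$ (homotopy-invariant on motives) and the classifying space of its reductive quotient, which is a parabolic in the split reductive group $\mathsf{G}$ and therefore has motivic cohomology of this form by the Bruhat decomposition. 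This yields a convergent spectral sequence expressing $\Ext^\ast_{\DTM(BP_w)}(F,G)$ in terms of $\Ext^\ast_{\DTM(\Spec k)}(u^!F,u^!G)$ tensored with factors of $H^\ast(BP_w)$, whose edge map is an isomorphism in total degree zero; this gives the desired full faithfulness on the heart.

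For closedness under subquotients, given a subobject $H\hookrightarrow u^!F$ in $\MTM(\Gr_{\Gg})$, after restricting to each Schubert cell and passing through the above equivalence $H$ corresponds to a Tate motive on $\Spec k$, which carries a canonical lift to $\MTM(BP_w)$ via the same pullback identification. Since the lift is unique (by the full faithfulness just established), these lifts glue compatibly across the recollement to produce an $L^+\Gg$-equivariant subobject of $F$ in $\MTM_{L^+\Gg}(\Gr_{\Gg})$ whose image under $u^!$ is $H$. The main obstacle is the parity-vanishing of the motivic cohomology of $BP_w$ in the bidegrees relevant to the spectral sequence: this requires the d\'evissage above to interact cleanly with the Tate-twist structure, and uses crucially that $G$ may be assumed residually split (so that the reductive quotient of $P_w$ is split) and that $P_w^n$ sits as an extension whose filtration quotients are either pro-unipotent or motivically cellular in the sense of \thref{cellularity of stabilizer}.
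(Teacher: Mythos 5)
Your proposal takes a genuinely different route from the paper, which at this point simply invokes~\cite[Proposition~4.30]{CassvdHScholbach:Geometric}. That reference proves the statement without ever touching the Schubert stratification: it adapts the classical argument (cf.~\cite[Appendix~A]{MirkovicVilonen:Geometric}) that uses the action and projection maps \(a,p\colon L^n\Gg\times \Gr_\Gg\to \Gr_\Gg\) for \(n\) large. Since \(L^n\Gg\) is connected and perfectly cellular, and since \(F,G\) lie in the heart so that \(\Ext^{<0}\) vanishes, the functor given by shifted pullback along \(p\) (equivalently along \(a\)) is fully faithful on the hearts of the mixed Tate t-structures. Hence any morphism \(\phi\colon u^!F\to u^!G\) in \(\MTM(\Gr_\Gg)\) automatically satisfies \(a^*\phi=p^*\phi\) under the equivariance isomorphisms, and therefore descends to an equivariant morphism. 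The same mechanism gives closure under subquotients: a subobject \(H\hookrightarrow u^!F\) pulls back to a subobject of \(p^*u^!F\cong a^*u^!F\), and by the same faithfulness \(p^*H\) and \(a^*H\) agree as subobjects, which is exactly the equivariance condition for \(H\).

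The gap in your argument is the step ``proceed by induction on the strata, so it suffices to prove the analogous statements on a single Schubert cell.'' This does not follow from the localization triangles. For bounded objects supported on several strata, \(\Hom\) in the heart sits in a long exact sequence in which \(\Ext^1\)-terms across strata appear; knowing full faithfulness of \(u^!\) on each individual Schubert cell does not control these cross-terms, and does not let you conclude full faithfulness of \(u^!\) on the whole affine Grassmannian. (The statement is not a ``local'' property of the t-structure that glues along the recollement in any obvious way.) Similarly, in the subquotient part, the claim that the stratum-wise lifts of \(H\) ``glue compatibly across the recollement'' is precisely what needs to be proved: the recollement supplies extension data between strata, and nothing in your argument shows that an arbitrary perverse subobject of \(u^!F\) respects the \emph{equivariant} extension data. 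Your spectral-sequence computation on a single \(BP_w\) is fine in spirit — indeed only \(H^0(BP_w)=\Lambda\) (connectedness) and vanishing of negative \(\Ext\)'s between perverse objects are needed, not the full parity vanishing you invoke — but it does not repair the missing reduction. The direct action--projection argument is both shorter and avoids the issue entirely.
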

\begin{proof}
	The proof of \cite[Proposition 4.30]{CassvdHScholbach:Geometric} carries over verbatim to the current situation.
\end{proof}

In fact, the functor \(u^!\) above is even an equivalence. Although one can likely give a direct motivic proof of this fact by slightly modifying \cite[Appendix A]{ALRR:Modular}, we give a shorter proof, by reducing to the result of loc.~cit.

\begin{prop}\thlabel{forgetting equivariance is equivalence}
	The functor \(u^! \colon \MTM(\Gr_{\Gg}) \to \MTM_{L^+\Gg}(\Gr_{\Gg})\) is an equivalence.
\end{prop}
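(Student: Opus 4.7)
By the previous lemma, $u^!$ is fully faithful with essential image closed under subquotients, so only essential surjectivity remains. Since both categories are compactly generated and $u^!$ preserves colimits, I reduce to bounded objects and proceed by induction on the Schubert stratification.

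The crucial geometric observation is that equivariant and non-equivariant Tate motives agree on each Schubert cell: by \cite[Proposition 3.2.22]{RicharzScholbach:Intersection}, $\DTM_{L^+\Gg}(\Gr_{\Gg,\mu}) \cong \DTM(\Spec k)$, while perfect cellularity of $\Gr_{\Gg,\mu}$ (\thref{Cellularity of Schubert cells}) combined with homotopy invariance gives $\DTM(\Gr_{\Gg,\mu}) \cong \DTM(\Spec k)$, and the identifications intertwine $u^!$ with the identity. Consequently, given a bounded object supported on $\Gr_{\Gg,\leq\mu}$, its restriction to the top stratum $\Gr_{\Gg,\mu}$ can be identified via $u^!$ with a non-equivariant Tate motive, and by induction the same holds on the closed complement $\Gr_{\Gg,<\mu}$.

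To assemble these strata lifts into a global object via the recollement for $\Gr_{\Gg,\leq\mu} = \Gr_{\Gg,<\mu} \sqcup \Gr_{\Gg,\mu}$, the only nontrivial datum is an extension class that a priori lives only in the equivariant $\mathrm{Ext}^1$. To transport this to the non-equivariant setting, the plan is to apply the étale realization $\rho_\ell$ for some $\ell \neq p$ (\thref{Etale realization}). By \cite[Appendix A]{ALRR:Modular}, the analogous forgetful functor between categories of perverse $\IZ_\ell$-sheaves on $\Gr_{\Gg}$ is an equivalence, so the $\ell$-adic realization of the extension class is identified with a non-equivariant one. Joint conservativity of $\ell$-adic realizations on compact Artin-Tate motives (\thref{conservativity of etale realization}) together with t-exactness (\thref{texactness of realization}) then guarantees that the candidate motivic object constructed from this non-equivariant extension datum is the desired lift.

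The main obstacle will be the recollement step itself: tracking that the extension class transported via $\ell$-adic realization genuinely comes from a motivic Ext class in the non-equivariant category, consistently across the induction on strata. The reduction to \cite[Appendix A]{ALRR:Modular} avoids any direct motivic computation of the relevant Ext groups, and is precisely what yields a proof shorter than adapting the methods of loc.~cit.\ directly to motives.
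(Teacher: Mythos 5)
Your reduction to bounded objects and the idea to induct on the Schubert stratification is sensible, and the observation that on a single stratum $\DTM_{L^+\Gg}(\Gr_{\Gg,\mu})\cong\DTM(\Spec k)\cong\DTM(\Gr_{\Gg,\mu})$ compatibly with $u^!$ is correct (this is \cite[Proposition 3.2.22]{RicharzScholbach:Intersection}). But the glueing step has a genuine gap that you correctly identify as "the main obstacle" and then do not resolve. The recollement datum glueing the open and closed strata is a morphism in the \emph{derived} category $\DTM$, and the forgetful functor $u^!$ is \emph{not} an equivalence (nor even fully faithful) at the derived level — equivariant cohomology of a point is nontrivial, so $\DTM_{L^+\Gg}$ strictly contains $\DTM$. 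Your appeal to conservativity of the $\ell$-adic realizations cannot close this gap: conservativity detects when an \emph{existing} motivic morphism is an isomorphism, but it does not let you lift an $\ell$-adic extension class or glueing datum to a motivic one. Knowing the $\ell$-adic realization of the equivariant glueing morphism factors through the non-equivariant $\ell$-adic category gives no candidate motivic object at all, so there is nothing for conservativity to test.

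The paper's proof sidesteps this by reformulating the problem as a statement about morphisms rather than objects. It constructs a left adjoint $\pH^0\circ\coav$ to the forgetful functor, built purely from the six-functor formalism and hence compatible with the corresponding left adjoint on the $\ell$-adic side. Then showing $u^!$ is an equivalence is equivalent to showing the unit and counit of this adjunction are isomorphisms. These \emph{are} specific motivic morphisms, already in hand, and conservativity plus t-exactness of the $\ell$-adic realizations (Remarks \thref{conservativity of etale realization} and \thref{texactness of realization}) do apply to them; the isomorphism after realization is exactly \cite[Proposition A.3]{ALRR:Modular}. You should replace your stratum-by-stratum glueing by this adjunction argument: construct $\coav$ (it preserves Tate motives by \cite[Lemma 2.22]{CassvdHScholbach:Geometric}), take $\pH^0\coav$ as the left adjoint on hearts, and check the unit/counit are equivalences after $\ell$-adic realization.
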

\begin{proof}
	We may replace \(\Gr_{\Gg}\) by an \(L^+\Gg\)-stable closed subscheme \(X\), and \(L^+\Gg\) by a perfectly smooth quotient \(L^n\Gg\) through which the action on \(X\) factors.
	The forgetful functor \(u^!\colon \DM(L^n\Gg\backslash X) \to \DM(X)\) admits a left adjoint \(\coav\) by \cite[Lemma 2.24]{CassvdHScholbach:Geometric}.
	The construction of loc.~cit.~is purely in terms of the six functor formalism, so that \(\coav\) is compatible with the left adjoint of the forgetful functor on étale sheaves, under the étale realization functor.
	By \cite[Lemma 2.24 (2)]{CassvdHScholbach:Geometric} and since positive loop groups are pcs, motivic coaveraging maps \(\DTM(X)\) to \(\DTM_{L^n\Gg}(X)\).
	Hence, the forgetful functor \(\MTM_{L^n\Gg}(X) \to \MTM(X)\) admits a left adjoint \(\pH^0\circ \coav\), which is again compatible with the similar functor on étale perverse sheaves.
	It thus suffices to show that the unit and counit of this adjunction are equivalences.
	Since the étale realization functors are t-exact and jointly conservative when restricted to \(\DTM\) (\thref{texactness of realization} and \thref{conservativity of etale realization}), this can be checked on étale sheaves, where it follows from \cite[Proposition A.3]{ALRR:Modular}.
\end{proof}

\section{Tannakian reconstruction}\label{sect-tannakian}

As usual, we let \(\Gg/\Oo\) be a very special parahoric model of the reductive group \(G/F\). 
Moreover, \(S\subseteq G\) is a maximal \(\breve{F}\)-split torus, which splits over an unramified extension \(F'/F\) with residue field \(k'/k\).
In the following section, we apply a generalized Tannakian formalism to show \(\MATM_{L^+\Gg}(\Gr_{\Gg})\) is equivalent to the category of comodules under some bialgebra in \(\MATM(\Spec k)\). 
Since this Tannakian approach is well known, we will give references when proofs already exist in the literature.

\subsection{An adjoint to the fiber functor}

Fix some finite subset \(W\subseteq X_*(T)_I^+\) closed under the Bruhat order, and let \(\Gr_{\Gg,W}:= \bigcup_{\mu\in W} \Gr_{\Gg,\mu}\).
We denote by \(i_W\colon \Gr_{\Gg,W}\into \Gr_\Gg\) the inclusion, and by \(\FibFunctor_W:=\FibFunctor\circ (i_W)_*\colon \MATM_{L^+\Gg}(\Gr_{\Gg,W})\to \MATM(\Spec k)\) the restriction of the fiber functor.
Similarly, we denote by \(p_W^\pm, q_W^\pm\) the restrictions of the maps from \eqref{hyperbolic localization diagram} in case \(P=B\) is the Borel, and by \(\pi_{\Gg,W}\) the restriction of \(\pi_\Gg\).

\begin{prop}\thlabel{adjoints of fiber functor}
	The restricted fiber functor \(\FibFunctor_W\colon \MATM_{L^+\Gg}(\Gr_{\Gg,W})\to \MATM(\Spec k)\) admits a left adjoint \(L_W\). 
\end{prop}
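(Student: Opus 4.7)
The plan is to apply the adjoint functor theorem for presentable categories. Both \(\MATM_{L^+\Gg}(\Gr_{\Gg,W})\) and \(\MATM(\Spec k)\) are presentable abelian categories, being hearts of compactly generated t-structures on presentable stable \(\infty\)-categories by \thref{Existence MTM} (together with the Artin-Tate variant in \thref{remarks about Artin-Tate}). Since \(\FibFunctor_W\) is exact by \thref{F is fiber functor}, it suffices to show that it is accessible and preserves filtered colimits; the adjoint functor theorem then produces the required left adjoint \(L_W\).

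To verify continuity, unpack \(\FibFunctor_W \cong \bigoplus_{n \in \IZ} \pH^n\bigl(\pi_{\Gg,W,!}\, u^! (i_W)_*(-)\bigr)\), where \(\pi_{\Gg,W}\) is the structure map of \(\Gr_{\Gg,W}\). Since \(W\) is finite, the closed ind-subscheme \(\Gr_{\Gg,W}\) lies in a single Schubert variety and is therefore perfectly projective, so \(\pi_{\Gg,W}\) is proper and \(\pi_{\Gg,W,!} = \pi_{\Gg,W,*}\). The functor \(\pi_{\Gg,W,!}\) preserves colimits as a left adjoint, \(u^!\) is (up to shift and twist) the smooth pullback \(u^*\) and hence also a left adjoint, and \((i_W)_*\) on the hearts is the fully faithful inclusion of motives supported on \(\Gr_{\Gg,W}\). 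Moreover, the perverse cohomology \(\pH^n \pi_{\Gg,W,!} u^! (i_W)_* \Ff\) is concentrated in a bounded range of degrees \(|n| \leq \max_{\mu \in W} \langle 2\rho,\mu\rangle\) depending only on \(W\), so the outer sum is effectively finite and commutes with filtered colimits.

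Accessibility follows from compact generation on both sides: the source is generated by the objects \(\IC_\mu(\unit(i))\) for \(\mu \in W\) and \(i \in \IZ\), while the target is generated by the \(\unit(i)\). That \(\FibFunctor_W\) preserves compactness on these generators follows from \thref{fiber functor of standard}, which describes the stalks of \(\Jj_!(\mu,\unit)\) via the filtrable decomposition of \thref{Intersections of Schubert cells and semi-infinite orbits}, together with the standard localization triangles comparing \(\IC_\mu(\unit)\) to \(\Jj_!(\mu,\unit)\). The main obstacle I anticipate is bookkeeping the equivariance through these manipulations, since \(L^+\Gg\) is only pro-(perfectly smooth); this is resolved by noting that the \(L^+\Gg\)-action on the bounded piece \(\Gr_{\Gg,W}\) factors through a pfp quotient \(L^n\Gg\), reducing everything to a finite-dimensional equivariant setting where the six-functor formalism of \thref{Existence MTM} applies directly.
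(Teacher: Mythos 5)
Your proposal takes a genuinely different route from the paper's proof, but it contains a gap that is not easily repaired. The paper does not invoke the adjoint functor theorem at all: it writes down an explicit candidate for \(L_W\), namely \(\pH^0\) of \(\coav\,(p_W^-)_!\,(q_W^-)^*\,\pi_{\Tt,W}^*[-\deg]\), a composition of functors that are each the left member of an adjunction at the level of the ambient \(\DM\)-categories. The entire content of the paper's proof is then showing that this candidate lands in \(\DATM_{L^+\Gg}(\Gr_{\Gg,W})\) rather than merely in \(\DM_{L^+\Gg}(\Gr_{\Gg,W})\); this is where the geometry from Section 4 (filtrable decompositions of \(a^{-1}(\Ss_\lambda^+\cap \Gr_{\Gg,\mu})\) via \thref{fibres of action map are cellular}) enters. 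Your approach sidesteps all of this, which is exactly where the gap lies.

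The gap is in your very first step. The adjoint functor theorem for presentable categories says that a functor between presentable categories admits a \emph{left} adjoint if and only if it preserves all small limits and is accessible. Exactness gives you only finite limits; to apply the theorem you would also need \(\FibFunctor_W\) to preserve arbitrary products. This is not a pedantic point: an exact, filtered-colimit-preserving functor between Grothendieck abelian categories need not admit a left adjoint (e.g. \(M \mapsto M\otimes_\IZ \IQ\) on \(\operatorname{Ab}\) is exact, preserves all colimits, but fails to preserve infinite products and has no left adjoint). Moreover, verifying product-preservation for \(\FibFunctor_W\) is delicate: the subcategories \(\MATM \subseteq \DM\) are closed under colimits but not under limits, so products in \(\MATM(\Spec k)\) are not simply products in \(\DM(\Spec k)\) (the inclusion \(\DATM \hookrightarrow \DM\) has a right adjoint through which products must be pushed). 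The fact that each constituent of \(\FibFunctor_W\) preserves limits in \(\DM\) does not automatically transfer to \(\MATM\). When one tries to chase this, one is led right back to the question the paper actually answers: does the derived-level left adjoint \(\coav\,(p_W^-)_!\,(q_W^-)^*\,\pi_{\Tt,W}^*\) preserve Artin–Tate motives? There is no abstract shortcut around the geometry. A further drawback of the abstract route, even if it were completed, is that \thref{tensoring with adjoint} and \thref{algebras are dualizable} require the explicit form of \(L_W\) (they manipulate \(\coav\,(p_W^-)_!\,(q_W^-)^*\,\pi_{\Tt,W}^*\) directly), so an existence-only proof would leave those later arguments without their main input.

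A secondary issue: your paragraph on "accessibility follows from compact generation" is a red herring. Accessibility of \(\FibFunctor_W\) already follows from the filtered-colimit preservation established in your first paragraph, and the assertion that \(\FibFunctor_W\) sends compact objects to compact objects is relevant to compactness of the \emph{right} adjoint of a right adjoint, not to the existence of a \emph{left} adjoint.
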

\begin{proof}
	We proceed as in \cite[§6.1.1]{CassvdHScholbach:Geometric}.
	Recall the motivic coaveraging functor from \cite[Lemma 2.24]{CassvdHScholbach:Geometric}, and
	consider the functor 
	\[\coav (p_W^-)_! (q_W^-)^* \pi_{\Tt,W}^* [-\deg_B] \colon \MATM(\Spec k)\to \DM_{L^+\Gg}(\Gr_{\Gg,W})\]
	It suffices to show this preserves Artin-Tate motives, as perverse truncating will then give the desired left adjoint.
	As usual, we can assume \(G\) is residually split, and consider Tate motives only.
	
	As \((p_W^-)_! (q_W^-)^* \pi_{\Tt,W}^*\) maps \(\MTM(\Spec k)\) to motives which are stratified Tate for the stratification given by \(\Ss^-_\nu\cap \Gr_{\Gg,\mu}\) for \(\mu\in W\) and \(\nu\in X_*(T)_I\), we have to show coaveraging takes values in Tate motives for the stratification by Schubert cells.
	In other words, if \(i_\nu\colon \Ss^-_\nu\cap \Gr_{\Gg,\mu}\to \Gr_{\Gg,\mu}\) is the inclusion, we must show \(u^!\coav i_{\nu,!}(\unit)\) is Tate.
	As coaveraging gives equivariant motives, it suffices by \cite[Proposition 3.1.23]{RicharzScholbach:Intersection} to show \((\iota_{w_0(\mu)})^!u^!\coav i_{\nu,!}(\unit)\in \DTM(\Spec k)\), where \(\iota_{w_0(\mu)}\colon \varpi^{w_0(\mu)}\into \Gr_{\Gg,\mu}\) is the inclusion of the basepoint.
	By \cite[Lemma 2.24 (2)]{CassvdHScholbach:Geometric}, this motive is isomorphic to \((\iota_{w_0(\mu)})^! a_! p^! i_{\nu,!}(\unit)\), where \(a,p\colon L^n\Gg\times \Gr_{\Gg,\mu}\to \Gr_{\Gg,\mu}\) are the action and projection maps respectively, for \(n\gge 0\).
	It thus suffices to show \(f_*(\unit)\) is Tate, where \(f\) is the structure map of \((a')^{-1}(\varpi^{w_0(\mu)})\), and \(a'\colon L^n\Gg\times (\Ss^-_\nu\cap \Gr_{\Gg,\mu})\to \Gr_{\Gg,\mu}\) is the restriction of the action map.
	This follows from \thref{fibres of action map are cellular} applied to \(\lambda = w_0(\mu)\).
\end{proof}

\begin{cor}
	For \(W'\subseteq W\) and the corresponding inclusion \(i_{W'}^W\colon \Gr_{\Gg,W'}\into \Gr_{\Gg,W}\), the adjoints constructed above are related by \((i_{W'}^W)^*L_W\cong L_{W'}\).
\end{cor}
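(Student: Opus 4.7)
The plan is to use the uniqueness of left adjoints. Since $i_{W'} = i_W \circ i_{W'}^W$ and the pushforward along a closed immersion of ind-schemes is t-exact and compatible with composition, we obtain a canonical identification
\[ \FibFunctor_{W'} \;\cong\; \FibFunctor \circ (i_{W'})_* \;\cong\; \FibFunctor \circ (i_W)_* \circ (i_{W'}^W)_* \;\cong\; \FibFunctor_W \circ (i_{W'}^W)_*. \]
Combining this with the adjunction $(i_{W'}^W)^* \dashv (i_{W'}^W)_*$, which descends to the perverse hearts because $(i_{W'}^W)_*$ is t-exact while $(i_{W'}^W)^*$ is right t-exact (so that $\pH^0(i_{W'}^W)^*$ is the heart-level left adjoint to $(i_{W'}^W)_*$), one obtains for $F \in \MATM(\Spec k)$ and $G \in \MATM_{L^+\Gg}(\Gr_{\Gg,W'})$ the chain
\[ \Hom(\pH^0(i_{W'}^W)^* L_W F, G) \cong \Hom(L_W F, (i_{W'}^W)_* G) \cong \Hom(F, \FibFunctor_W (i_{W'}^W)_* G) \cong \Hom(F, \FibFunctor_{W'} G). \]
By Yoneda and the defining property of $L_{W'}$, this yields a canonical isomorphism $\pH^0(i_{W'}^W)^* L_W \cong L_{W'}$.

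To upgrade this to the unshifted statement $(i_{W'}^W)^* L_W \cong L_{W'}$, I would verify that $(i_{W'}^W)^* L_W F$ already lies in the perverse heart. This is checked via the explicit construction recalled in the proof of \thref{adjoints of fiber functor}, where $L_W$ is realized as the $\pH^0$ of $\coav \circ (p_W^-)_! \circ (q_W^-)^* \circ \pi_{\Tt,W}^*[-\deg]$. Restricting the hyperbolic localization diagram along $i_{W'}^W$ produces a cartesian square on the attractor side, so base change identifies $(i_{W'}^W)^* (p_W^-)_!$ with $(p_{W'}^-)_!$ applied to the restriction of the input to $\Gr_{\Bb^-,W'}$. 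Since $q_W^-$ and $\pi_{\Tt,W}^*$ depend only on the component structure of $\Gr_{\Tt}$, which restricts compatibly, and since motivic coaveraging is a formal construction in the six-functor formalism (compatible with *-pullback along stratum-preserving closed immersions), the full composite commutes with $(i_{W'}^W)^*$ up to the relevant $\pH^0$.

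The main obstacle is the compatibility of the coaveraging functor with $(i_{W'}^W)^*$, since $\coav$ is not itself given by one of the six functors but is constructed by an iterated procedure involving both the action and the projection maps for $L^+\Gg$. This compatibility follows from base change applied stratum-by-stratum, using that the $L^+\Gg$-action preserves each Schubert stratum (so in particular preserves $\Gr_{\Gg,W'} \subseteq \Gr_{\Gg,W}$). Once this is established, the expression defining $(i_{W'}^W)^* L_W F$ coincides with the one defining $L_{W'}F$, which lies in the heart by \thref{adjoints of fiber functor}, completing the proof.
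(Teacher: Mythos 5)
Your approach matches the paper's exactly: the paper's one-line proof is the identity $\FibFunctor_{W'} = \FibFunctor_W \circ (i_{W'}^W)_*$ (the paper has a harmless typo in the order of composition) together with uniqueness of left adjoints, and your second paragraph spells this out correctly, yielding $\pH^0(i_{W'}^W)^* L_W \cong L_{W'}$.

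The extra work in your last two paragraphs attempts to upgrade $\pH^0(i_{W'}^W)^*$ to $(i_{W'}^W)^*$, addressing a subtlety the paper silently skips, but the argument as written does not close the gap. Your base-change analysis shows that $(i_{W'}^W)^*$ commutes with the functor $\coav\,(p_W^-)_!(q_W^-)^*\pi_{\Tt,W}^*[-\deg]$ \emph{before} perverse truncation; taking $\pH^0$ on both sides of that compatibility merely re-derives $\pH^0(i_{W'}^W)^*L_W \cong L_{W'}$, which you already had. The step where you conclude "the expression defining $(i_{W'}^W)^* L_W F$ coincides with the one defining $L_{W'}F$" conflates $(i_{W'}^W)^*\pH^0(X_W)$ with $\pH^0((i_{W'}^W)^*X_W)$: since $(i_{W'}^W)^*$ along a closed union of strata is only right t-exact, it need not commute with $\pH^0$, and the pullback of a perverse object may acquire negative perverse cohomology. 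What is actually needed is that the right t-exact functor $\coav\,(p_W^-)_!(q_W^-)^*\pi_{\Tt,W}^*[-\deg]$ is already t-exact (so that $\pH^0$ is a no-op and $L_W$ equals the pre-truncation); this can be checked by applying the t-exact, jointly conservative $\ell$-adic realizations of \thref{texactness of realization} and \thref{conservativity of etale realization} and invoking the corresponding perversity statement for the analogous functor on $\ell$-adic perverse sheaves. With that extra observation your restriction argument does finish the proof in the form the corollary is stated.
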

\begin{proof}
	This follows from the identity \(\FibFunctor_{W'}= \FibFunctor_W \circ (i_{W'}^W)_*\).
\end{proof}

Unwinding the definition of \(L_W\), we see that \(L_W(\unit)\) corresponds to \(A_Z(\unit)\) in \cite[§11]{MirkovicVilonen:Geometric}.
The advantage of our formulation is that we can make use of monadicity theorems.

\begin{prop}\thlabel{bounded equivalences}
	The adjunction \((L_W,\FibFunctor_W)\) is monadic, so that there is an equivalence
	\[\MATM_{L^+\Gg}(\Gr_{\Gg,W}) \cong \Mod_{T_W}(\MATM(\Spec k)),\]
	where the right hand side denotes the category of modules under the monad \(T_W:=\FibFunctor_W\circ L_W\) of the adjunction.
\end{prop}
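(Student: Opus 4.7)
The strategy is a direct application of Beck's monadicity theorem to the adjunction $(L_W, \FibFunctor_W)$ established in \thref{adjoints of fiber functor}. The real content has already been built up; this proposition is the place where that content gets repackaged categorically.

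First, I would verify that $\FibFunctor_W$ is exact, conservative, and faithful. All three properties are inherited from the unrestricted fiber functor $\FibFunctor$, which was shown to have these properties in \thref{F is fiber functor}. The point is that $(i_W)_*\colon \MATM_{L^+\Gg}(\Gr_{\Gg,W})\to \MATM_{L^+\Gg}(\Gr_{\Gg})$ is t-exact and fully faithful, because $i_W$ is a closed immersion; so composition with $\FibFunctor$ preserves all three properties. In particular, $\FibFunctor_W$ preserves all finite (co)limits.

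Next, I would invoke Beck's monadicity theorem in its abelian form: for an adjunction $L\dashv U$ between Grothendieck abelian categories with $U$ exact and faithful, the Eilenberg--Moore comparison functor induces an equivalence $\mathcal{A}\cong \Mod_{UL}(\mathcal{B})$. In our case, both categories $\MATM_{L^+\Gg}(\Gr_{\Gg,W})$ and $\MATM(\Spec k)$ are Grothendieck abelian, as they arise as hearts of compactly generated t-structures on presentable stable $\infty$-categories (\thref{defi-t-structure} and \thref{Existence MTM}, together with their Artin--Tate variants). The faithfulness/exactness of $\FibFunctor_W$ ensures that $\FibFunctor_W$ preserves and reflects coequalizers, while the existence of $L_W$ is precisely \thref{adjoints of fiber functor}. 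Applying Beck's theorem then produces the desired equivalence with $\Mod_{T_W}(\MATM(\Spec k))$, where $T_W = \FibFunctor_W \circ L_W$.

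The main obstacle is not in this proposition itself but in the inputs it consumes: constructing the left adjoint $L_W$ relied on the filtrable decomposition of the fibres of the action map (\thref{fibres of action map are cellular}), which in turn rested on the detailed gallery-theoretic analysis of Section \ref{sect-LSMV}; and the exactness and conservativity of $\FibFunctor$ relied on the identification with constant terms (\thref{Fiber functor is constant terms}) together with the t-exactness of $\CT_B$ (\thref{CT texact}). Once these are in place, the monadicity step is formal.
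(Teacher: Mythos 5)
Your proposal is correct and takes the same approach as the paper, which simply observes that $\FibFunctor_W$ is exact (hence preserves finite colimits) and conservative, then invokes Barr--Beck. The extra detail you supply about Grothendieck abelian categories and inheritance of properties from $\FibFunctor$ via $(i_W)_*$ is accurate but not spelled out in the paper's terser proof.
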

\begin{proof}
	Since \(\FibFunctor_W\) is exact, it preserves finite colimits. 
	As it is also conservative, the equivalence follows from the Barr-Beck monadicity theorem.
\end{proof}

Here are some more properties of the left adjoints \(L_W\).

\begin{lem}\thlabel{tensoring with adjoint}
	For any \(\Ff\in \MATM(\Spec k)\), there is a canonical isomorphism
	\[L_W(\Ff) \cong L_W(\unit) \otimes \Ff.\]
\end{lem}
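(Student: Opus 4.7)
My plan is to leverage the projection formula, which endows $\FibFunctor_W$ with an $\MATM(\Spec k)$-linear structure and, via the explicit construction of $L_W$, allows a direct derived-level computation. Recall from the proof of \thref{adjoints of fiber functor} that $L_W = \pH^0 \circ L_W^{\mathrm{der}}$, where $L_W^{\mathrm{der}} = \coav\,(p_W^-)_!(q_W^-)^*\pi_{\Tt,W}^*[-\deg]$.

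First I would verify, at the derived level, the canonical isomorphism
\[L_W^{\mathrm{der}}(\Ff) \cong L_W^{\mathrm{der}}(\unit) \otimes \pi_{\Gg,W}^*\Ff.\]
This is a formal manipulation: $\pi_{\Tt,W}^*\Ff = \pi_{\Tt,W}^*\unit \otimes \pi_{\Tt,W}^*\Ff$, the pullback $(q_W^-)^*$ commutes with $\otimes$, the equality $\pi_{\Tt,W}\circ q_W^- = \pi_{\Gg,W}\circ p_W^-$ rewrites $(q_W^-)^*\pi_{\Tt,W}^*\Ff = (p_W^-)^*\pi_{\Gg,W}^*\Ff$, the derived projection formula for $(p_W^-)_!$ pulls $\pi_{\Gg,W}^*\Ff$ outside, and finally $\coav$ commutes with tensoring by equivariant pullbacks from the base (a version of projection formula for coaveraging, as in \cite[Lemma 2.22]{CassvdHScholbach:Geometric}).

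Next I would take perverse $\pH^0$ on both sides. The left-hand side is $L_W(\Ff)$ by definition; the right-hand side is $\pH^0(L_W^{\mathrm{der}}(\unit) \otimes \pi_{\Gg,W}^*\Ff)$, and I must identify this with $L_W(\unit) \otimes \Ff := \pH^0(\pH^0 L_W^{\mathrm{der}}(\unit) \otimes \pi_{\Gg,W}^*\Ff)$.

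The main obstacle is precisely this last commutation of $\pH^0$ with the tensor product, since the tensor on $\DATM$ is not t-exact in general. I would handle it by first treating the case $\Ff = \unit(n)[m]$: shifted Tate twists are $\otimes$-invertible, so tensoring with $\pi_{\Gg,W}^*\unit(n)[m] = \unit(n)[m]$ merely shifts and twists the t-structure, and $\pH^0(A\otimes\unit(n)[m]) = \pH^{-m}(A)(n)$ for any $A$, making the commutation trivial. For general $\Ff \in \MATM(\Spec k)$, the result follows by extending to filtered colimits and extensions: both $L_W(-)$ (as a left adjoint) and $L_W(\unit) \otimes -$ preserve colimits in $\Ff$, and $\MATM(\Spec k)$ is generated under colimits and extensions by Galois-equivariant shifted Tate twists via \thref{graded to motives} together with its Artin-Tate refinement (\thref{remarks about Artin-Tate}\eqref{etale descent}).
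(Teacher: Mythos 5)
The derived-level step of your argument — rewriting $L_W^{\mathrm{der}}(\Ff) \cong L_W^{\mathrm{der}}(\unit) \otimes \pi_{\Gg,W}^*\Ff$ via the projection formula and the compatibility of $\coav$ with tensoring — matches what the paper does (the paper invokes compatibility of $\coav$ with exterior products from \cite[Lemma 2.22 (6)]{CassvdHScholbach:Geometric}, which is the same ingredient phrased slightly differently).

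Where you diverge is the $\pH^0$-reconciliation step, and here the paper has a shorter mechanism that you may be missing. The paper observes that $L_W^{\mathrm{der}} = \coav\,(p_W^-)_!(q_W^-)^*\pi_{\Tt,W}^*[-\deg]$ is \emph{right t-exact}, being (essentially) a left adjoint to the t-exact functor $\CT_B$. Hence $L_W^{\mathrm{der}}(\unit)$ already lies in $\DATM^{\leq 0}$, so the truncation map $L_W^{\mathrm{der}}(\unit)\to L_W(\unit) = \pH^0 L_W^{\mathrm{der}}(\unit)$ exists with fiber in $\DATM^{\leq -1}$; tensoring with $\Ff \in \MATM \subseteq \DATM^{\leq 0}$ keeps the fiber in $\DATM^{\leq -1}$ (because $\otimes$ preserves $\DATM^{\leq 0}$), and the long exact sequence on $\pH^*$ immediately gives $\pH^0(L_W^{\mathrm{der}}(\unit)\otimes\Ff)\cong \pH^0(L_W(\unit)\otimes\Ff)$. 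This is one sentence and requires no reduction to generators.

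Your alternative route — verify the Tate twist case $\Ff=\unit(n)$ directly (there $-\otimes\unit(n)$ is a t-exact equivalence, so everything commutes trivially) and then extend by colimits — is in principle viable, but as sketched it leaves a real gap: you need a \emph{natural} transformation between the two functors of $\Ff$ before you can run a colimit-extension argument, and the natural comparison map $\pH^0(L_W^{\mathrm{der}}(\unit)\otimes\Ff)\to \pH^0(L_W(\unit)\otimes\Ff)$ is only canonically available once you know $L_W^{\mathrm{der}}(\unit)\in\DATM^{\leq 0}$ — which is exactly the right t-exactness input you sidestepped. One can in principle assemble a natural transformation from the isomorphisms on the generators using that the mapping spaces between distinct Tate twists vanish (\thref{BS-vanishing}), and that $L_W(\unit)\otimes-$ is right exact and filtered-colimit-preserving, but this is several moving parts where the paper needs one observation. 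Also, your phrase "extending to filtered colimits and extensions" is slightly off: right-exact colimit-preserving functors do not automatically respect extensions via a five-lemma argument (you would need left exactness as well); what saves you is that the heart is generated under arbitrary colimits (cokernels and filtered colimits) by the generators, so extensions never need to be invoked separately. You would do well to notice the right t-exactness of $L_W^{\mathrm{der}}$ — it both closes the gap and shortens the proof.
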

\begin{proof}
	Consider the functor \(\coav (p_W^-)_! (q_W^-)^* \pi_{\Tt,W}^* [-\deg_B]\), which gives the adjoint \(L_W\) after truncating.
	By \cite[Lemma 2.24 (6)]{CassvdHScholbach:Geometric}, it commutes with exterior products.
	Hence, the desired isomorphism will follow by applying \(\pH^0\) to 
	\[\coav (p_W^-)_! (q_W^-)^* \pi_{\Tt,W}^*(\unit)[-\deg_B] \otimes \Ff \cong \coav (p_W^-)_! (q_W^-)^* \pi_{\Tt,W}^*(\Ff)[\deg_B],\] 
	as long as we can show this lies in \(\DATM_{L^+\Gg}^{\leq 0}(\Gr_{\Gg})\).
	But this holds since \(\coav (p_W^-)_! (q_W^-)^* \pi_{\Tt,W}^* [-\deg_B]\) is right t-exact, as the left adjoint of the t-exact constant term functor \(\CT_B\).
\end{proof}

\begin{lem}\thlabel{algebras are dualizable}
	Assume \(k=k'\).
	Then \(T_W(\unit)\) is the image of a free module under the faithful functor \(\grZMod\to \MTM(\Spec k)\).
	In particular, \(T_W(\unit)\) is dualizing, and tensoring with \(T_W(\unit)\) is t-exact.
\end{lem}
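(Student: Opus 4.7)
The plan is to compute $T_W(\unit) = \FibFunctor_W L_W(\unit)$ geometrically via hyperbolic localization, and to identify the resulting motive with a finite direct sum of Tate twists of $\unit$, which is precisely the image of a free finitely generated object of $\grZMod$ under the faithful functor of \thref{graded to motives}.

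Since we assume $k=k'$, we have $\MATM(\Spec k)=\MTM(\Spec k)$, and $G$ is residually split, so all results of Section \ref{sect-LSMV} apply. By the construction in the proof of \thref{adjoints of fiber functor}, $L_W(\unit) = \pH^0 \coav (p_W^-)_!(q_W^-)^*\pi_{\Tt,W}^*\unit[-\deg_B]$, while by \thref{Fiber functor is constant terms}, $\FibFunctor \cong \bigoplus_n \pH^n\pi_{\Tt,!}u^!\CT_B$. Using the attractor description $\CT_B \cong (\overline{q}_B^+)_!(\overline{p}_B^+)^*[\deg_B]$, the t-exactness of $\CT_B$ from \thref{CT texact}, and base change applied to the hyperbolic localization diagrams for the opposite Borels $B^+$ and $B^-$ (on the product $\IG_{m,k}$-action), this reduces to
\[
T_W(\unit) \;\cong\; \bigoplus_{\mu \in W,\; \nu \in X_*(T)_I}\; \bigoplus_n \pH^n \bigl(\pi_{\mu,\nu,!}\,\unit\bigr)\bigl(-\langle\rho,\mu+\nu\rangle\bigr),
\]
where $\pi_{\mu,\nu}\colon \Gr_{\Gg,\leq\mu}\cap \Ss_\nu^+ \to \Spec k$ denotes the structure map, and the twist arises from combining $[-\deg_B]$, $[\deg_B]$ and $\langle 2\rho,\mu+\nu\rangle$.

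By \thref{Intersections of Schubert cells and semi-infinite orbits} and \thref{intersections for positive orbits}, each such nonempty intersection admits a filtrable decomposition into perfect cells $\IA_k^{r,\perf}\times \IG_{m,k}^{s,\perf}$. The compactly supported motivic cohomology of each such cell is a finite direct sum of Tate twists concentrated in a single cohomological degree (of the correct parity, matching $\langle 2\rho,\mu+\nu\rangle$), by direct computation using \thref{BS-vanishing} and the localization triangles from the filtration. Consequently each $\pH^n\pi_{\mu,\nu,!}\unit$ lies in the image of $\grZMod$, and by \thref{Nonemptyness of intersections} only finitely many $\nu$ with $\nu^+\leq \mu$ for some $\mu\in W$ contribute. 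Since $W$ is finite, this exhibits $T_W(\unit)$ as the image of a free graded $\IZ[\tfrac{1}{p}]$-module of finite rank.

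The two consequences are then formal. A free graded module of finite rank is dualizable in $\grZMod$, and the functor of \thref{graded to motives} is symmetric monoidal, so $T_W(\unit)$ is dualizable. For t-exactness, tensoring with $\bigoplus_i \unit(n_i)$ is a finite direct sum of the endofunctors $(-)\otimes\unit(n_i)$, each of which is an auto-equivalence of $\MTM(\Spec k)$ and hence t-exact. The main obstacle is the bookkeeping in the second paragraph: correctly matching the shifts from the definitions of $L_W$ and $\CT_B$ against the dimension $\langle\rho,\mu+\nu\rangle$ of the intersection, and confirming the requisite parity vanishing so that the entire contribution lands in the heart $\MTM(\Spec k)$ rather than in strictly higher (or lower) perverse degrees.
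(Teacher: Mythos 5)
Your reduction "this reduces to $T_W(\unit) \cong \bigoplus_{\mu,\nu}\bigoplus_n\pH^n(\pi_{\mu,\nu,!}\unit)(-\langle\rho,\mu+\nu\rangle)$" is not justified, and the main obstruction is the coaveraging operation $\coav$ sitting inside $L_W$. None of the ingredients you cite (attractor description of $\CT_B$, its t-exactness, base change for the $\IG_m$-action) touch $\coav$, which is averaging over the large group $L^n\Gg$, not over $\IG_m$. By \cite[Lemma 2.22\,(2)]{CassvdHScholbach:Geometric}, $u^!\coav(\mathcal{F}) = a_!\,p^!\mathcal{F}$ for the action and projection maps $a,p\colon L^n\Gg\times\Gr_{\Gg,W}\to\Gr_{\Gg,W}$, so after applying $\CT_{B,\lambda}$ one is led by base change not to the intersection $\Gr_{\Gg,\mu}\cap\Ss_\nu^+$, but to the preimage $\widetilde{a}^{-1}(\Ss_\lambda^+\cap\Gr_{\Gg,W})$ inside $L^n\Gg\times(\Ss_\nu^-\cap\Gr_{\Gg,W})$. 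This is precisely why the paper establishes \thref{fibres of action map are cellular} (cellularity of these action-map preimages) rather than relying only on \thref{Intersections of Schubert cells and semi-infinite orbits}. Your formula also has the wrong indexing: the paper's computation carries two semi-infinite indices $\nu$ and $\lambda$, one coming from $L_W$ and one from $\CT_B$, whereas yours has only one.

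A second, independent problem is the appearance of $\bigoplus_n\pH^n(\pi_{\mu,\nu,!}\unit)$ together with the claim that "each $\pH^n\pi_{\mu,\nu,!}\unit$ lies in the image of $\grZMod$". Over a finite field the extension groups $\operatorname{Ext}^1_{\MTM(\Spec k)}(\unit(m),\unit(m+1))\cong H^{1,1}_{\mathrm{mot}}(\mathbb{F}_q,\IZ[\tfrac1p])$ are nonzero, so the localization triangles coming from a filtrable cell decomposition need not split, and the compactly-supported motivic cohomology of a filtrably cellular scheme in degrees below the top is not in general free. Only the \emph{top} cohomology is free, by \cite[Lemma 2.20]{CassvdHScholbach:Geometric}. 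The paper's shift bookkeeping (via $[-\deg]$, $[\deg_B]$, the twist by $(\dim L^n\Gg)$, and the dimension formula from \thref{fibres of action map are cellular}) is precisely calibrated so that $\pH^0$ after the t-exact $\CT_{B,\lambda}$ picks out exactly the top cohomology and nothing else; this is the crux of the argument, and your proposal (which sums over all $n$) loses it.
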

\begin{proof}
	Consider the functor \(\coav (p_W^-)_! (q_W^-)^* \pi_{\Tt,W}^* [-\deg_B]\), as in the proof of \thref{tensoring with adjoint}.
	Restricting to the connected components of \(\Gr_{\Tt}\), we can decompose it as \(\bigoplus_{\nu\in X_*(T)_I} L_\nu\), where only finitely many \(L_\nu\colon \MTM(\Spec k)\to \DTM_{L^+\Gg}(\Gr_{\Gg})\) are nontrivial.
	Let \(\widetilde{a}\colon L^n\Gg\times (\Ss_\nu^-\cap \Gr_{\Gg,W})\to \Gr_{\Gg,W}\) be the action map, for \(n\gge 0\). 
	For \(\lambda\in X_*(T)_I\), let \(f\colon \widetilde{a}^{-1}(\Ss_\lambda^+\cap \Gr_{\Gg,W})\to \Spec k\) be the structure map.
	Then by \cite[Lemma 2.24 (2)]{CassvdHScholbach:Geometric}, we have
	\[u^!\CT_{B,\lambda}(L_{W,\nu}(\unit)) = f_!\unit(\dim L^n\Gg)[2\dim L^n\Gg + \langle 2\rho,\lambda-\nu\rangle].\]
	By \thref{fibres of action map are cellular}, \(\widetilde{a}^{-1}(\Ss_\lambda^+\cap \Gr_{\Gg,W})\) admits a filtrable decomposition by cells, and has dimension \(\dim L^n\Gg + \langle\rho,\lambda-\nu\rangle\).
	As \(\CT_{B,\lambda}\) is t-exact, \(T_W(\unit)\) computes the top cohomology of a cellular scheme, which lies in the image of \(\grZMod\to \MTM(\Spec k)\) by \cite[Lemma 2.20]{CassvdHScholbach:Geometric}.
\end{proof}

\subsection{Monoidality of the fiber functor}

We can now finally show the constant term functor \(\CT_B\) and the fiber functor \(\FibFunctor\) are monoidal, following \cite[Remark 4.19]{ALRR:Modular}.

\begin{lem}\thlabel{Surjection from projectives}
	Assume \(k=k'\). Every bounded object \(\Ff\in \MTM_{L^+\Gg}(\Gr_{\Gg})\) admits a surjection from some \(\Ff' \in \MTM_{L^+\Gg}(\Gr_{\Gg})\), such that \(\FibFunctor(\Ff')\) is a direct sum of Tate twists. 
\end{lem}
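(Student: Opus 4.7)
The plan is as follows. Since $\Ff$ is bounded, choose a finite subset $W\subseteq X_*(T)_I^+$, closed under the Bruhat order, such that $\Ff$ is supported on $\Gr_{\Gg,W}$; view $\Ff\in \MTM_{L^+\Gg}(\Gr_{\Gg,W})$ and consider the restricted fiber functor $\FibFunctor_W$ together with its left adjoint $L_W$ from \thref{adjoints of fiber functor}. I will set $\Ff' := L_W(\Lambda)$, where $\Lambda\in \MTM(\Spec k)$ is a direct sum of Tate twists equipped with a surjection $\phi\colon \Lambda\onto \FibFunctor_W(\Ff)$. Such a $\Lambda$ exists because, by \thref{defi-t-structure}, the twists $\{\unit(n)\mid n\in \IZ\}$ compactly generate $\MTM(\Spec k)$; explicitly one can take
\[
\Lambda := \bigoplus_{n\in \IZ}\;\bigoplus_{\alpha\in \Hom(\unit(n),\FibFunctor_W(\Ff))} \unit(n)
\]
with the tautological evaluation map. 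Let $\psi\colon \Ff'\to \Ff$ denote the adjoint of $\phi$.

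First I would verify that $\FibFunctor(\Ff')$ is a direct sum of Tate twists. By \thref{tensoring with adjoint}, $\Ff'\cong L_W(\unit)\otimes \Lambda$ in $\MTM_{L^+\Gg}(\Gr_{\Gg,W})$. Because each Tate twist $\unit(n)$ is $\otimes$-invertible, tensoring by it is an equivalence and in particular t-exact; together with the projection formula for $\pi_{\Gg,W,!}u^!$ and distributivity over direct sums, this yields
\[
\FibFunctor(\Ff')\;\cong\;\FibFunctor(L_W(\unit))\otimes \Lambda\;=\;T_W(\unit)\otimes \Lambda.
\]
By \thref{algebras are dualizable}, $T_W(\unit)$ lies in the image of a free graded $\IZ[\tfrac{1}{p}]$-module under $\grZMod\to \MTM(\Spec k)$, and is therefore itself a direct sum of Tate twists; tensoring with $\Lambda$ preserves this property.

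Next I would show that $\psi$ is an epimorphism. By \thref{F is fiber functor}, $\FibFunctor_W$ is exact and conservative, and hence reflects epimorphisms in the abelian category $\MTM_{L^+\Gg}(\Gr_{\Gg,W})$. Unwinding the adjunction, $\FibFunctor_W(\psi)$ factors as
\[
T_W(\Lambda) \xrightarrow{T_W(\phi)} T_W(\FibFunctor_W(\Ff)) \xrightarrow{\epsilon_{\Ff}} \FibFunctor_W(\Ff),
\]
where $\epsilon_{\Ff}$ is the $T_W$-algebra structure map; it admits $\eta_{\FibFunctor_W(\Ff)}$ as a section by the triangle identity, and is therefore split surjective. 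The functor $T_W=\FibFunctor_W\circ L_W$ is right exact, as the composition of a left adjoint between abelian categories with an exact functor, so $T_W(\phi)$ is surjective. Hence $\FibFunctor_W(\psi)$ is surjective, and so is $\psi$.

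The main obstacle is the clean identification $\FibFunctor(L_W(\unit)\otimes \Lambda)\cong T_W(\unit)\otimes \Lambda$ used in step two: although this is morally just the projection formula, the tensor product in $\MATM_{L^+\Gg}(\Gr_{\Gg,W})$ is defined by perverse truncation, and one must ensure that the $\pH^0$ truncations do not disturb the computation. Here the $\otimes$-invertibility (and hence exactness) of tensoring with each Tate twist, combined with \thref{algebras are dualizable}, is precisely what makes the identification hold in the abelian heart.
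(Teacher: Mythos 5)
Your proof is correct and follows essentially the same route as the paper's: choose $W$ containing the support of $\Ff$, build a surjection onto $\FibFunctor_W(\Ff)$ from a direct sum of Tate twists using generation of $\MTM(\Spec k)$, pass to the adjoint map via $L_W$, and invoke \thref{algebras are dualizable} and \thref{tensoring with adjoint} to identify its image under the fiber functor. The paper is terser—it justifies the initial epimorphism via \thref{BS-vanishing} rather than invoking compact generation directly, and it dispatches the surjectivity of the adjoint map with a one-line appeal to conservativity of $\FibFunctor_W$—but your unwinding of the counit-and-triangle-identity argument and your care about the perverse truncation in $\otimes$ are both accurate expansions of the same steps.
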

\begin{proof}
	First, note that for any \(M\in \MTM(\Spec k)\), the natural map
	\[\bigoplus_{n\in \IZ} \unit(n)^{\Hom_{\DM(\Spec k)}(\unit(n),M)}\to M\]
	is an epimorphism.
	Indeed, it suffices to see the fiber of this map (in \(\DTM(\Spec k)\)) is concentrated in degree 0, which follows from \thref{BS-vanishing} and the fact that the Tate twists generate \(\MTM(\Spec k)\) (\thref{defi-t-structure}).
	
	Now, let \(W\subseteq X_*(T)_I^+\) be a finite subset such that \(\Gr_{\Gg,W}\) is a closed subscheme containing the support of \(\Ff\), and let \(M\to \FibFunctor_W(\Ff)\) be an epimorphism, where \(M\) is a direct sum of Tate twists.
	Then the adjoint map \(L_W(M) \to \Ff\) is an epimorphism (as this can be checked after applying \(\FibFunctor_W\)), and \(L_W(M)\) satisfies the desired condition by \thref{algebras are dualizable}.
\end{proof}

\begin{prop}\thlabel{reduction to ALRR}
	Let \(\Ff_1,\Ff_2\in \MATM_{L^+\Gg}(\Gr_{\Gg})\).
	Then there exists a canonical equivalence
	\[\CT_B m_{\Gg,!}(\Ff_1\twext\Ff_2) \cong m_{\Tt,!}\widetilde{\CT}_B(\Ff_1 \twext \Ff_2).\]
\end{prop}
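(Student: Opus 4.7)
The plan is to combine \thref{CT and twisted boxtimes} with a base change argument along the ind-proper multiplication map \(m_\Gg\). By \thref{CT and twisted boxtimes}, the right-hand side can be rewritten as \(m_{\Tt,!}(\CT_B(\Ff_1) \twext \CT_B(\Ff_2)) = \CT_B(\Ff_1) \conv \CT_B(\Ff_2)\), whereas the left-hand side equals \(\CT_B(\Ff_1 \conv \Ff_2)\) by definition of the convolution product. Thus the proposition reduces to the monoidality of \(\CT_B\), namely the existence of a canonical equivalence \(\CT_B(\Ff_1 \conv \Ff_2) \cong \CT_B(\Ff_1) \conv \CT_B(\Ff_2)\).

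To establish this monoidality, I would exploit the identification \((m_1, m_\Gg)\colon \Gr_\Gg\widetilde{\times}\Gr_\Gg \cong \Gr_\Gg\times \Gr_\Gg\) from \eqref{iso of convolution Gr}, under which \(m_\Gg\) corresponds to the second projection and is therefore ind-proper (as \(\Gr_\Gg\) is itself ind-proper). Under this identification, the diagonal \(\IG_m\)-action on the product corresponds to the action on the convolution Grassmannian whose attractor, repeller, and fixed points realize the twisted products \(\Gr_{\Pp^\pm}\widetilde{\times}\Gr_{\Pp^\pm}\) and \(\Gr_\Tt\widetilde{\times}\Gr_\Tt\). The key geometric input is then the decomposition of the preimage \(m_\Gg^{-1}(\Ss_\nu^+)\) of a semi-infinite orbit as the disjoint union \(\coprod_{\nu_1+\nu_2=\nu}\Ss_{\nu_1}^+\widetilde{\times}\Ss_{\nu_2}^+\), which follows from \(m_\Tt\) being the group operation on the discrete abelian group \(\Gr_\Tt \cong \pi_1(T)_I\).

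Combining proper base change for the ind-proper \(m_\Gg\) with this decomposition and the Künneth formula for the twisted exterior product (analogous to the diagram used in the proof of \thref{CT and twisted boxtimes}) yields, component-by-component, the equivalence
\[\CT_{B,\nu}(\Ff_1\conv\Ff_2) \cong \bigoplus_{\nu_1+\nu_2=\nu}\CT_{B,\nu_1}(\Ff_1)\otimes \CT_{B,\nu_2}(\Ff_2) = (\CT_B(\Ff_1)\conv \CT_B(\Ff_2))_\nu.\]
The degree shifts \([\deg_P]\) and \([\widetilde{\deg}_P]\) match up here because \(\widetilde{\deg}_P = \deg_P \circ m_\Tt\), which is a direct consequence of \(\deg_P\) being a group homomorphism on the discrete group \(\Gr_\Tt\); hence the shift commutes through \(m_{\Tt,!}\).

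The main obstacle is making the base change and Künneth step rigorous in the motivic setting, since the squares relating \(\Gr_\Gg\) to \(\Gr_{\Pp^\pm}\) through the attractor/repeller inclusions are not cartesian in a naive sense (one has \((\Gr_\Gg\times\Gr_\Gg)\times_{\Gr_\Gg}\Gr_{\Pp^\pm} \cong \Gr_\Gg\times \Gr_{\Pp^\pm}\) rather than \(\Gr_{\Pp^\pm}\times \Gr_{\Pp^\pm}\)). This is circumvented by working with the twisted exterior product directly — where \(\twext\) already encodes the correct restriction to the attractor on both factors via \thref{CT and twisted boxtimes} — rather than trying to base change an arbitrary sheaf on \(\Gr_\Gg\times \Gr_\Gg\), together with careful use of the ind-properness of \(m_\Gg\) throughout.
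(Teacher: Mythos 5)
Your first step — rephrasing the proposition, via \thref{CT and twisted boxtimes}, as the statement that \(\CT_B\) commutes with convolution — is a valid reformulation, and your observation that \(\widetilde{\deg}_B = \deg_B\circ m_\Tt\) by linearity of \(\langle 2\rho,-\rangle\) is also correct. The gap is in the central step: you invoke ``proper base change for the ind-proper \(m_\Gg\),'' but the square
\[\begin{tikzcd}
\Gr_{\Pp^+}\widetilde{\times}\Gr_{\Pp^+} \arrow[r] \arrow[d] & \Gr_\Gg\widetilde{\times}\Gr_\Gg \arrow[d, "m_\Gg"] \\
\Gr_{\Pp^+} \arrow[r] & \Gr_\Gg
\end{tikzcd}\]
is \emph{not} cartesian: under the identification \eqref{iso of convolution Gr}, the fiber product is \(\Gr_\Gg\times\Gr_{\Pp^+}\), while the attractor is \(\Gr_{\Pp^+}\times\Gr_{\Pp^+}\). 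You acknowledge this non-cartesianness but then assert it is ``circumvented by working with the twisted exterior product directly,'' which does not actually address the issue. What is really needed is the commutation of hyperbolic localization with proper pushforward for \(\IG_m\)-equivariant proper maps (Braden, Drinfeld--Gaitsgory, Richarz), which is a genuine theorem and not an instance of proper base change; you neither invoke it nor verify its availability for perfect ind-schemes in the motivic setting. Relatedly, the asserted decomposition \(m_\Gg^{-1}(\Ss_\nu^+) = \coprod_{\nu_1+\nu_2=\nu}\Ss_{\nu_1}^+\widetilde{\times}\Ss_{\nu_2}^+\) is wrong as stated: the left side is \(\Gr_\Gg\times\Ss_\nu^+\) under \eqref{iso of convolution Gr}, whereas the right side is the smaller locally closed piece \(\Gr_{\Pp^+}\times\Ss_\nu^+\); they only agree as sets, which is again precisely the non-cartesianness you need to get around.

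The paper takes a completely different route that avoids having to establish this commutation directly. After reducing, via \thref{Surjection from projectives} and right-exactness, to the case where \(G\) is residually split and \(\FibFunctor_W(\Ff_2)\) is a direct sum of Tate twists, it follows the strategy of \cite{ALRR:Modular}: one first establishes the analogous commutation for the \emph{projection} \(\pr_{1,\Gg}\colon \Gr_\Gg\widetilde{\times}\Gr_\Gg\to\Gr_\Gg\) (the analogue of \cite[Corollary 4.15]{ALRR:Modular}), at the level of total perverse cohomology of \(\pi_{\Tt,!}u^!\), and then transfers this to the multiplication map \(m_\Gg\) by the arguments of \cite[Corollary 4.16 and Proposition 4.17]{ALRR:Modular}, using the fiber-functor description from \thref{generalization of fiber functor vs CT}. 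This sidesteps the need for Braden-type commutation and instead works at the abelian level where the reduction to projective generators makes the identification tractable.
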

\begin{proof}
	We start by some reduction steps.
	Since the desired equivalence will be canonical, we can assume by descent that \(G\) is residually split and \(k=k'\).
	Since every object in \(\MTM_{L^+\Gg}(\Gr_{\Gg})\) is a colimit of bounded objects, we may moreover assume that \(\Ff_1,\Ff_2\) are bounded.
	Finally, by choosing a presentation \(M'\to M\to \Ff_2\to 0\) with \(M,M'\) as in \thref{Surjection from projectives}, and since the functors \(\CT_B m_{\Gg,!}(\Ff_1\twext-)\) and \(m_{\Tt,!}\widetilde{\CT}_B(\Ff_1 \twext -)\) are right exact, we may assume \(\FibFunctor_W(\Ff_2)\) is a direct sum of Tate twists.
	
	In this situation, we can follow the strategy from \cite{ALRR:Modular}.
	Namely, denote by \(\pr_{1,\Gg}\colon \Gr_{\Gg}\widetilde{\times} \Gr_{\Gg}\to \Gr_{\Gg}\) the projection onto the first factor, and by \(\overline{\pr}_{1,\Gg}\colon L^+\Gg\backslash \Gr_{\Gg}\widetilde{\times} \Gr_{\Gg}\to L^+\Gg \backslash \Gr_{\Gg}\) the quotient map.
	Then we can use the same proof as \cite[Corollary 4.15]{ALRR:Modular} to get a canonical equivalence
	\[\bigoplus_{n\in \IZ} \pH^n \pi_{\Tt,!} u^! \CT_B \overline{\pr}_{1,\Gg,!}(\Ff_1\twext \Ff_2) \cong \bigoplus_{n\in \IZ} \pH^n \pi_{\Tt,!} u^! \overline{\pr}_{1,\Tt,!} \widetilde{\CT}_B(\Ff_1\twext \Ff_2).\]
	Combining this with \thref{generalization of fiber functor vs CT} (instead of \cite[Corollary 4.10]{ALRR:Modular}), the proofs of \cite[Corollary 4.16~and~Proposition 4.17]{ALRR:Modular} carry over to give the desired equivalence.
\end{proof}

\begin{cor}\thlabel{CT monoidal}
	The constant term functor \(\CT_B\colon \MATM_{L^+\Gg}(\Gr_{\Gg})\to \MATM_{L^+\Tt}(\Gr_{\Tt})\) admits a natural monoidal structure.
\end{cor}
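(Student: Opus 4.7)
The plan is to build the monoidal structure directly from the two results we just established. Given $\Ff_1, \Ff_2 \in \MATM_{L^+\Gg}(\Gr_{\Gg})$, I would define the tensor-compatibility isomorphism by the chain
\[
\CT_B(\Ff_1 \conv \Ff_2) \;=\; \CT_B \, \overline{m}_{\Gg,!}(\Ff_1 \twext \Ff_2) \;\cong\; \overline{m}_{\Tt,!}\,\widetilde{\CT}_B(\Ff_1 \twext \Ff_2) \;\cong\; \overline{m}_{\Tt,!}\bigl(\CT_B(\Ff_1) \twext \CT_B(\Ff_2)\bigr) \;=\; \CT_B(\Ff_1) \conv \CT_B(\Ff_2),
\]
where the first equivalence uses \thref{reduction to ALRR} and the second uses \thref{CT and twisted boxtimes}. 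Both inputs were constructed naturally in $\Ff_1$ and $\Ff_2$, so the composite is natural as well. T-exactness of $\CT_B$ (\thref{CT texact}) and of $\conv$ (\thref{convolution t-exact}) ensures this identifies the truncated convolutions on both sides without further corrections.

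Next, I would verify preservation of the monoidal unit. The unit of $(\MATM_{L^+\Gg}(\Gr_{\Gg}), \conv)$ is the skyscraper $(\iota_e)_*\unit$ at the basepoint $e \in \Gr_{\Tt}(k) \subseteq \Gr_{\Pp^\pm}(k) \subseteq \Gr_{\Gg}(k)$, and similarly for $\Gr_{\Tt}$. Applying proper base change to the hyperbolic localization diagram \eqref{hyperbolic localization diagram} restricted to the basepoint gives a canonical isomorphism $\CT_B((\iota_e)_*\unit) \cong (\iota_e)_*\unit$, and the degree shift is zero on this connected component.

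For the coherence conditions, I would proceed by an iterated version of the argument above. Namely, the pentagon axiom reduces to the three-fold compatibility
\[
\CT_B \,\overline{m}_{\Gg,!}^{(3)}(\Ff_1 \twext \Ff_2 \twext \Ff_3) \;\cong\; \overline{m}_{\Tt,!}^{(3)}\bigl(\CT_B\Ff_1 \twext \CT_B\Ff_2 \twext \CT_B\Ff_3\bigr),
\]
which can be established by the same method (iterated twisted product diagrams, \thref{twisted constant term and product}, and iterated base change), and the two factorizations through binary convolutions agree canonically because all the constituent equivalences come from base change along commutative diagrams of pfp ind-schemes. The unit triangle axiom follows analogously from the case of one factor being a skyscraper.

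The principal obstacle is the bookkeeping of coherence: since $\conv$ is the truncated rather than the derived convolution, each pentagon/triangle must be verified after applying $\pH^0$ to diagrams in $\DATM_{L^+\Gg}(\Gr_{\Gg})$. However, because every isomorphism in sight is constructed from the six-functor formalism on commuting diagrams of ind-schemes, and the relevant truncations are controlled by t-exactness of $\CT_B$, $\widetilde{\CT}_B$, and $\overline{m}_!$, the coherences are essentially formal once the binary and ternary cases are in hand.
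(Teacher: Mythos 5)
Your proposal matches the paper's proof essentially step for step: both construct the monoidality constraint by combining \thref{reduction to ALRR} with \thref{CT and twisted boxtimes}, and both appeal to triple twisted products for the coherence verification. Your explicit treatment of the unit object and the observation that t-exactness controls the truncations are welcome elaborations, but the underlying argument is the same as the one given in the paper.
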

\begin{proof}
	The monoidality isomorphisms are constructed by combining \thref{CT and twisted boxtimes} and \thref{reduction to ALRR}.
	By considering triple twisted products, it is then a standard check that this really defines a monoidal structure.
\end{proof}

\begin{cor}\thlabel{fiber functor monoidal}
	The fiber functor \(\FibFunctor\) admits a natural monoidal structure.
\end{cor}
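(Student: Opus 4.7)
The strategy is to bootstrap monoidality from the already-established \thref{CT monoidal} via the factorization $\FibFunctor \cong \pi_{\Tt,!} u^! \CT_B$ of \thref{Fiber functor is constant terms}. Since compositions of monoidal functors are monoidal, it suffices to equip the functor $\pi_{\Tt,!} u^! \colon \MATM_{L^+\Tt}(\Gr_{\Tt}) \to \MATM(\Spec k)$ with a natural monoidal structure, where the source carries the convolution product inherited from \thref{CT monoidal}. The point is that for a torus, everything becomes essentially combinatorial.

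More precisely, I would first observe that $\Gr_{\Tt}$ is $0$-dimensional, with the reduced structure of a disjoint union of copies of $\Spec k$ indexed (up to continuous $\Gal(k'/k)$-action) by $X_*(T)_I$. Hence the twisted product satisfies $\Gr_{\Tt}\widetilde{\times}\Gr_{\Tt} = \Gr_{\Tt}\times\Gr_{\Tt}$ (the $L^+\Tt$-action preserves connected components), and the multiplication map $m_{\Tt}$ corresponds to the addition on $X_*(T)_I$. Because the base is $0$-dimensional, exterior products in $\MATM_{L^+\Tt}(\Gr_{\Tt})$ are already perverse, so the truncation $\pH^0$ in the definition of $\conv$ is superfluous, and for $\Ff_1,\Ff_2 \in \MATM_{L^+\Tt}(\Gr_{\Tt})$ one obtains
\[\Ff_1 \conv \Ff_2 \cong \bigoplus_{\nu\in X_*(T)_I}\bigoplus_{\nu_1+\nu_2=\nu} (\Ff_1)_{\nu_1}\otimes (\Ff_2)_{\nu_2},\]
where $(\Ff_i)_{\nu_i}$ denotes the stalk (forgetting equivariance) at the $\nu_i$-th component.

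Applying $\pi_{\Tt,!}u^!$ amounts to summing these stalks over all components, and regrouping the resulting double sum yields a natural isomorphism
\[\pi_{\Tt,!} u^!(\Ff_1 \conv \Ff_2) \;\cong\; \pi_{\Tt,!} u^!(\Ff_1) \otimes \pi_{\Tt,!} u^!(\Ff_2).\]
Associativity and unit coherence reduce to the associativity and unit of the group law on $X_*(T)_I$, together with the monoidal coherences of $\otimes$ on $\MATM(\Spec k)$, so these are essentially automatic. Composing with the monoidal structure on $\CT_B$ and transporting along the isomorphism of \thref{Fiber functor is constant terms} produces the desired monoidal structure on $\FibFunctor$. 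I do not anticipate a serious obstacle here; the only mildly delicate point is checking that this composite monoidal structure is compatible with the identification $\FibFunctor \cong \pi_{\Tt,!} u^! \CT_B$, which follows formally from the naturality of the constructions in \thref{Fiber functor is constant terms} and \thref{CT monoidal}.
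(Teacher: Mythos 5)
Your proof takes essentially the same route as the paper: factor $\FibFunctor \cong \pi_{\Tt,!} u^! \CT_B$, invoke the monoidality of $\CT_B$ from \thref{CT monoidal}, and reduce to constructing a monoidal structure on $\pi_{\Tt,!}$ in the torus case by identifying the convolution map on $\Gr_\Tt$ with addition on $X_*(T)_I$. The extra details you supply (vanishing of the truncation, regrouping the double sum) are the standard unwinding that the paper leaves implicit.
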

\begin{proof}
	Since \(\FibFunctor \cong \pi_{\Tt,!} u^! \CT_B\) and we know \(\CT_B\) is monoidal, it suffices to construct a monoidal structure on \(\pi_{\Tt,!}\).
	But this can easily be done by identifying the convolution map \(\Gr_\Tt \widetilde{\times} \Gr_\Tt\to \Gr_\Tt\) with the addition map \(X_*(T)_I \times X_*(T)_I \to X_*(T)_I\).
\end{proof}

\begin{cor}\thlabel{cor bounded equivalences}
	The monad \(T_W\) from \thref{bounded equivalences} is given by tensoring with \(T_W(\unit)\).
\end{cor}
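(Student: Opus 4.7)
The plan is to combine \thref{tensoring with adjoint} with the $\MATM(\Spec k)$-linearity of the fiber functor. For $\Ff \in \MATM(\Spec k)$, I first rewrite
\[T_W(\Ff) = \FibFunctor_W(L_W(\Ff)) \cong \FibFunctor_W(L_W(\unit) \otimes \Ff),\]
using \thref{tensoring with adjoint}, where $\otimes$ denotes the (truncated) tensor product in $\MATM_{L^+\Gg}(\Gr_{\Gg,W})$ with $\Ff$ silently pulled back along the structure map $\pi \colon \Gr_{\Gg,W} \to \Spec k$.

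Next, I would establish an $\MATM(\Spec k)$-linearity property of $\FibFunctor_W$: for any $M \in \MATM_{L^+\Gg}(\Gr_{\Gg,W})$, there is a natural isomorphism $\FibFunctor_W(M \otimes \pi^* \Ff) \cong \FibFunctor_W(M) \otimes \Ff$. At the derived level this is the standard projection formula for $\pi_{\Gg,W,!}$, which combined with the fact that $\FibFunctor_W$ commutes with Tate twists handles the case where $\Ff$ is a Tate twist.

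The step I expect to be the main obstacle is ensuring that this projection formula descends past the perverse truncations $\bigoplus_n \pH^n$ appearing in the definition of $\FibFunctor_W$ for arbitrary $\Ff$. Here I would invoke \thref{algebras are dualizable}, which shows that $T_W(\unit)$ lies in the image of $\grZMod \to \MTM(\Spec k)$ as a direct sum of Tate twists; in particular tensoring with $T_W(\unit)$ is t-exact. Combined with the fact that both $T_W$ and $-\otimes T_W(\unit)$ are right exact (the former as the composition of a left adjoint with an exact functor) and that they agree on Tate twists by the previous paragraph, this yields agreement on all of $\MATM(\Spec k)$ by resolving a general $\Ff$ by (sums of) Tate twists, producing the required
\[T_W(\Ff) \cong T_W(\unit) \otimes \Ff. \qedhere\]
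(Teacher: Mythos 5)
Your argument is correct but takes a genuinely different route from the paper's. The paper, having already established that $\FibFunctor$ is monoidal (\thref{fiber functor monoidal}), simply combines this with \thref{tensoring with adjoint} and \thref{algebras are dualizable}: the $\MATM(\Spec k)$-linearity of $\FibFunctor_W$ is then automatic, because tensoring $L_W(\unit)$ with the pullback $\pi^*\Ff$ is the same as convolving with the object $\Ff$ supported at the base point, and $\FibFunctor$ applied to that skyscraper returns $\Ff$. You instead reprove the linearity from scratch, via the projection formula on Tate twists and a resolution argument for general $\Ff$; this is more self-contained, since it avoids leaning on the machinery behind \thref{CT monoidal} and \thref{fiber functor monoidal}, at the cost of a little extra bookkeeping. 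Two small points worth flagging: for $k'\neq k$ you should resolve by Artin--Tate generators rather than bare Tate twists, as these are the compact generators of $\MATM(\Spec k)$; and it is worth being explicit that the derived projection formula controls $\pi_!$ of the derived tensor $u^!L_W(\unit)\otimes\pi^*\Ff$, whereas $T_W(\Ff)=\FibFunctor_W(L_W(\Ff))$ is computed from the truncation $\pH^0$ of that derived tensor --- these differ by a piece in negative perverse degrees, which vanishes exactly when $\Ff$ is a sum of twists (so there is nothing to check there), and it is precisely your resolution step, together with right exactness of $T_W$ and $-\otimes T_W(\unit)$, that transports the agreement to general $\Ff$; so that step is doing genuine work, not merely tidying up.
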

\begin{proof}
	Since the fiber functor \(\FibFunctor\) (which restricts to \(\FibFunctor_W\)) is monoidal by \thref{fiber functor monoidal}, the corollary follows from Lemmas \ref{tensoring with adjoint} and \ref{algebras are dualizable}.
\end{proof}

\subsection{The Hopf algebra}

We can now prove the main theorem of this section.

\begin{thm}\thlabel{Hopf algebra thm}
	The fiber functor \(\FibFunctor\colon \MATM_{L^+\Gg}(\Gr_{\Gg})\to \MATM(\Spec k)\) is comonadic, and the comonad is given by tensoring with the coalgebra
	\[H_{\Gg}:= \varinjlim_W T_W(\unit)^\vee.\]
	Moreover, \(H_{\Gg}\) is a commutative Hopf algebra, and we obtain a monoidal equivalence
	\[(\MATM_{L^+\Gg}(\Gr_{\Gg}),\conv) \cong (\coMod_{H_\Gg}(\MATM(\Spec k)),\otimes).\]
\end{thm}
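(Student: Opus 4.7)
The plan is to obtain the equivalence by dualizing the monadic equivalences of \thref{bounded equivalences} at each bounded level, then transport the bialgebra structure from monoidality of $\FibFunctor$, and finally obtain commutativity and the antipode either by invoking symmetry of $\conv$ from later sections or by a rigidity argument on the motives $\IC_\mu(\unit)$.

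\textbf{Step 1 (Comonadic equivalence).} For each finite Bruhat-closed subset $W \subseteq X_*(T)_I^+$, combining \thref{bounded equivalences} with \thref{cor bounded equivalences} gives an equivalence $\MATM_{L^+\Gg}(\Gr_{\Gg,W}) \simeq \Mod_{T_W(\unit)}(\MATM(\Spec k))$, where the monad acts by tensoring with the object $T_W(\unit)$. By \thref{algebras are dualizable} this object is dualizable in $\MATM(\Spec k)$, so modules over it identify with comodules over $H_{\Gg,W} := T_W(\unit)^\vee$ in a canonical way. Passing to the filtered colimit over $W$ (compatible with the transition maps given by $(i_{W'}^W)^* L_W \cong L_{W'}$ after dualization) yields $\MATM_{L^+\Gg}(\Gr_\Gg) \simeq \coMod_{H_\Gg}(\MATM(\Spec k))$ with $H_\Gg = \varinjlim_W H_{\Gg,W}$. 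Comonadicity of $\FibFunctor$ is just the dual of the Barr--Beck monadicity used at each bounded level.

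\textbf{Step 2 (Bialgebra structure).} To promote $H_\Gg$ from a coalgebra to a bialgebra, I use the monoidal structure on $\FibFunctor$ from \thref{fiber functor monoidal}, together with \thref{tensoring with adjoint}. The monoidality isomorphisms $\FibFunctor(\Ff_1) \otimes \FibFunctor(\Ff_2) \xrightarrow{\sim} \FibFunctor(\Ff_1 \conv \Ff_2)$, transported through the adjunction $L_W \dashv \FibFunctor_W$ and then dualized, produce a multiplication $H_\Gg \otimes H_\Gg \to H_\Gg$ compatible with the comultiplication. Coherence (associativity, unit, the bialgebra compatibility axiom) follows from the monoidal coherences of $\FibFunctor$. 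Under the equivalence, the convolution $\conv$ on $\MATM_{L^+\Gg}(\Gr_\Gg)$ corresponds to the standard tensor product of $H_\Gg$-comodules induced from $\otimes$ in $\MATM(\Spec k)$, and $\FibFunctor$ corresponds to the forgetful functor.

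\textbf{Step 3 (Commutativity and antipode).} The remaining claims, commutativity of $H_\Gg$ as an algebra and the existence of an antipode, do not follow from the current formal setup alone: they require a commutativity constraint on $\conv$, which we do not yet possess motivically (as noted in the introduction, we lack the fusion product). The plan is to import this constraint from the $\ell$-adic side: in Section \ref{Sect--padic} one constructs a nearby cycles functor à la \cite{AGLR:Local} relating the ramified situation to an unramified one where the Fargues--Scholze Satake equivalence supplies a commutativity constraint, and this constraint is then transported back to motives via t-exactness and joint conservativity of the étale realizations (\thref{texactness of realization}, \thref{conservativity of etale realization}). Once the commutativity constraint is available, commutativity of $H_\Gg$ is automatic, and the antipode is produced by the standard Tannakian rigidity argument using that each $\IC_\mu(\unit)$ admits $\IC_{-w_0(\mu)}(\unit)$ as a dual under $\conv$. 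I expect this symmetry-transfer step to be the main obstacle, since it is the only part that requires genuine new geometric input beyond the comonadic formalism; everything else is bookkeeping around Barr--Beck, duality, and the monoidality of $\FibFunctor$ already established.
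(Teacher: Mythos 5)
Your Steps 1 and 2 match the paper's argument almost exactly: break the Grassmannian into bounded pieces $\Gr_{\Gg,W}$, use \thref{bounded equivalences} and \thref{cor bounded equivalences} to get monadic equivalences, dualize via \thref{algebras are dualizable} to pass to comodules, and take the colimit. Promoting $H_\Gg$ to a bialgebra via monoidality of $\FibFunctor$ (the paper phrases this as ``the right adjoint of a strong monoidal functor is lax monoidal,'' you phrase it through the left adjoint $L_W$ and then dualize, but these are the same mechanism) is also correct.

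Step 3, however, has a genuine logical reversal. You propose to ``transport back'' a commutativity constraint from the $\ell$-adic side to $\MATM_{L^+\Gg}(\Gr_\Gg)$ using t-exactness and joint conservativity of the realizations, and then deduce commutativity of $H_\Gg$. This doesn't work: a commutativity constraint is a family of natural \emph{isomorphisms}, and conservativity/faithfulness of realization only allows you to verify equalities between \emph{existing} morphisms, not to produce new natural transformations in the source category. Indeed the paper is explicit (both in the introduction and in the remark after the theorem statement) that the convolution on $\MATM_{L^+\Gg}(\Gr_\Gg)$ is \emph{not} equipped with a commutativity constraint independently of the theorem; that symmetry is obtained only \emph{afterwards} by transport of structure through the equivalence with $\coMod_{H_\Gg}$. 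The paper's argument runs in the opposite direction from yours: one already has two morphisms $m, m\circ\sigma\colon H_\Gg\otimes H_\Gg\to H_\Gg$ in $\MATM(\Spec k)$, and showing $m=m\circ\sigma$ is an \emph{equation of morphisms} which, by joint faithfulness of $\rho_\ell$ on mixed Artin-Tate motives (and flatness of $H_\Gg$ to reduce to $\IQ_\ell$-coefficients), can be verified $\ell$-adically, where it follows from \thref{bialgebra is commutative} via the nearby cycles functor and the Fargues--Scholze equivalence. So the nearby-cycles input you identify is correct and necessary, but it is used to check an equation on the Hopf algebra directly, not to construct a symmetry on the category. Your antipode sketch via $\IC_{-w_0(\mu)}$ would also require knowing the monoidal category is rigid with a compatible symmetry, which again is not available prior to the equivalence; the paper instead cites the Fargues--Scholze construction of the antipode, which does not presuppose a commutativity constraint on the source category.
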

Note that this equivalence equips \(\MATM_{L^+\Gg}(\Gr_{\Gg})\) with a symmetric monoidal structure for the convolution.
But since we did not directly construct a commutativity constraint for the convolution, it does not make sense to ask for the above equivalence to be symmetric monoidal.
\begin{proof}
	By \thref{bounded equivalences} and \thref{cor bounded equivalences}, we have equivalences \[\MATM_{L^+\Gg}(\Gr_{\Gg,W}) \cong \Mod_{T_W(\unit)}(\MATM(\Spec k))\] for any finite \(W\).
	Since each \(T_W(\unit)\) is dualizable by \thref{algebras are dualizable}, its dual \(H_{\Gg,W}:= \underline{\operatorname{Hom}}(T_W(\unit),\unit)\) is a coalgebra, and we have
	\[\MATM_{L^+\Gg}(\Gr_{\Gg,W}) \cong \Mod_{T_W(\unit)}(\MATM(\Spec k)) \cong \coMod_{H_{\Gg,W}} (\MATM(\Spec k)).\]
	Thus, \(\FibFunctor_W\) has a right adjoint, given by tensoring with the dualizable \(H_{\Gg,W}\).
	Passing to colimits then gives an equivalence
	\begin{equation}\label{full equivalence}
		\MATM_{L^+\Gg}(\Gr_{\Gg}) \cong \coMod_{H_\Gg}(\MATM(\Spec k))
	\end{equation}
	where \(H_{\Gg} := \varinjlim_W H_{\Gg,W}\).
	We are left to show \(H_{\Gg}\) is a commutative Hopf algebra, and that this equivalence is monoidal.
	
	Since \(\FibFunctor\) is monoidal, its right adjoint is lax monoidal.
	This gives a morphism \(H_{\Gg}\otimes H_{\Gg} \to H_{\Gg}\), and by a classical argument this makes \(H_{\Gg}\) into a bialgebra object.
	
	The commutativity of \(H_{\Gg}\) is more subtle, since we do not yet know \(\MATM_{L^+G}(\Gr_{\Gg})\) is symmetric monoidal.
	Instead, we can check this after applying the étale realizations \(\rho_{\ell}\) to \(\IZ_\ell\)-étale sheaves, which are jointly faithful on mixed Artin-Tate motives.
	Since \(H_{\Gg}\) is dualizable, and hence flat, we can even check this after rationalizing to \(\IQ_\ell\)-étale sheaves, and we postpone this to \thref{bialgebra is commutative}.
	
	To give \(H_{\Gg}\) the structure of a Hopf algebra, we can construct an antipode as in \cite[Proposition VI.10.2]{FarguesScholze:Geometrization}.
	Finally, since \(\FibFunctor\) is monoidal, the equivalence \eqref{full equivalence} is monoidal as well.
\end{proof}

In order to relate \(H_\Gg\) to the inertia-invariants of the Langlands dual group of \(G\), we will make use of the following two properties of \(H_\Gg\).
First, let \(\Lambda\) be either the field \(\IQ\), or the finite field \(\IF_\ell\), for a prime \(\ell\neq p\).

\begin{prop}\thlabel{change of scalars}
	There is an equivalence of symmetric monoidal categories
	\[(\MATM_{L^+\Gg}(\Gr_{\Gg};\Lambda), \star) \cong (\coMod_{H_\Gg\otimes \Lambda}(\MATM(\Spec k; \Lambda)), \otimes).\]
	Moreover, the functor \(\pH^0(-\otimes \Lambda)\colon \MATM_{L^+\Gg}(\Gr_{\Gg})\to \MATM_{L^+\Gg}(\Gr_\Gg;\Lambda)\) is symmetric monoidal.
\end{prop}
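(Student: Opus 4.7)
The plan is to rerun the Tannakian reconstruction of \thref{Hopf algebra thm} with \(\Lambda\)-coefficients, and then to identify the resulting Hopf algebra with \(H_\Gg \otimes \Lambda\). All ingredients entering the proof of \thref{Hopf algebra thm} --- the stratified compactly generated t-structure of \thref{defi-t-structure}, the left adjoint \(L_W\) of the restricted fiber functor from \thref{adjoints of fiber functor}, the freeness/flatness in \thref{algebras are dualizable}, the monoidality of the constant term functor in \thref{CT monoidal}, and the monoidality of \(\FibFunctor\) in \thref{fiber functor monoidal} --- depend only on the six-functor formalism and the geometric input from \thref{Intersections of Schubert cells and semi-infinite orbits}, both of which are available for \(\Lambda\)-coefficients. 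This produces a (commutative) bialgebra \(H_\Gg^\Lambda\) in \(\MATM(\Spec k;\Lambda)\) and a monoidal equivalence \(\MATM_{L^+\Gg}(\Gr_\Gg;\Lambda)\cong \coMod_{H_\Gg^\Lambda}(\MATM(\Spec k;\Lambda))\).

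To identify \(H_\Gg^\Lambda\) with \(H_\Gg\otimes \Lambda\), I would observe that the derived coefficient-change functor \(-\otimes^L \Lambda\) commutes with each of the operations used to construct \(T_W(\unit) = \FibFunctor \circ L_W(\unit)\): lower-shrieks, upper-stars, exterior products, and motivic coaveraging. By \thref{algebras are dualizable}, \(T_W(\unit)\) is the image of a free graded \(\IZ[\frac{1}{p}]\)-module under \(\grZMod \to \MTM(\Spec k)\); in particular it is flat and concentrated in perverse degree zero, so the perverse truncations appearing in the definition of \(L_W\) commute with \(-\otimes \Lambda\) without Tor obstructions. Hence \(T_W^\Lambda(\unit) \cong T_W(\unit)\otimes \Lambda\), and dualizing followed by a colimit over \(W\) gives \(H_\Gg^\Lambda \cong H_\Gg \otimes \Lambda\) as bialgebras.

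For the symmetric monoidal upgrade: once \(H_\Gg\) is known to be commutative (postponed to \thref{bialgebra is commutative}, proved via the étale realization where fusion provides the commutativity constraint), so is \(H_\Gg \otimes \Lambda\), so its category of comodules carries a canonical symmetric monoidal structure from \((\MATM(\Spec k;\Lambda),\otimes)\); transporting this through the equivalence yields the desired symmetric monoidal structure on \((\MATM_{L^+\Gg}(\Gr_\Gg;\Lambda),\conv)\). Finally, \(\pH^0(-\otimes \Lambda)\) being symmetric monoidal reduces, via the two equivalences, to \(-\otimes \Lambda\) being symmetric monoidal on \(\MATM(\Spec k)\) together with the intertwining \(H_\Gg^\Lambda \cong H_\Gg \otimes \Lambda\); on the derived level \(-\otimes^L \Lambda\) is symmetric monoidal by construction, and the flatness noted above ensures that perverse truncation respects this structure. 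The main obstacle is the second step: verifying that perverse truncation commutes with coefficient change in the constructions of \(L_W\) and \(T_W(\unit)\); this is where the explicit Tate-freeness of \(T_W(\unit)\) is essential, and it is precisely the reason the argument works uniformly for \(\Lambda = \IQ\) and \(\Lambda = \IF_\ell\).
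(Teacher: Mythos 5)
Your approach is correct but takes a genuinely different route from the paper. You propose to rerun the full Tannakian reconstruction of \thref{Hopf algebra thm} with $\Lambda$-coefficients, producing a bialgebra $H_\Gg^\Lambda$, and then to identify $H_\Gg^\Lambda \cong H_\Gg \otimes \Lambda$ by verifying that derived coefficient change $-\otimes^L\Lambda$ commutes with every operation (six-functor operations, coaveraging, and the perverse truncations entering $L_W$), with flatness of $T_W(\unit)$ from \thref{algebras are dualizable} suppressing the Tor obstructions at the truncation step. The paper instead avoids rebuilding the machinery altogether: for $\Lambda = \IQ$ it observes that rationalization $\Ff\otimes\IQ = \varinjlim(\ldots \to \Ff \xrightarrow{n} \Ff \to \ldots)$ is a filtered colimit and all relevant functors commute with filtered colimits; for $\Lambda = \IF_\ell$ it identifies $\MATM_{L^+\Gg}(\Gr_\Gg, \IF_\ell)$ with the full subcategory of $\MATM_{L^+\Gg}(\Gr_\Gg)$ consisting of objects $M$ with $M = M/\ell$, uses right t-exactness of $\conv$ to get $(M_1 \conv M_2)/\ell \cong (M_1/\ell)\conv(M_2/\ell)$ directly, and then concludes $H_\Gg/\ell = H_\Gg\otimes\Lambda$ from \thref{algebras are dualizable}. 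Both routes hinge on the same ingredient (dualizability/flatness of $T_W(\unit)$), but the paper's comparison is internal to the abelian categories and bypasses the bookkeeping of re-establishing \thref{adjoints of fiber functor}, \thref{CT monoidal}, \thref{fiber functor monoidal}, etc.\ with $\Lambda$-coefficients. One point you gloss over that the paper's route handles implicitly: for $\Lambda = \IF_\ell$, the t-structure on $\DATM(-,\IF_\ell)$ is defined via the equivalence with \'etale cohomology rather than by the recipe of \thref{defi-t-structure}, so "rerunning the Tannakian argument with $\Lambda$-coefficients" requires checking that this \'etale t-structure matches what you get by truncating inside the integral category; the paper's identification of $\MATM_{L^+\Gg}(\Gr_\Gg,\IF_\ell)$ with the $\ell$-torsion objects is exactly the bridge you need here, and it is worth stating explicitly in your argument rather than absorbing it into "the t-structure behaves compatibly with coefficient change."
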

For rational coefficients, the t-structure on \(\DATM\) can be defined similarly as for integral coefficients.
On the other hand, for torsion coefficients, we use the t-structure induced by the equivalence between étale motives and étale cohomology \cite[Corollary 5.5.4]{CisinskiDeglise:Etale}.
In both cases, we have \(\MATM(-,\Lambda)\subseteq \MATM(-)\).
Note that since we are working with field coefficients, there is no need to truncate the convolution or tensor product.
\begin{proof}
	For \(\Lambda= \IQ\), this follows easily from the fact that \(\Ff\otimes \IQ = \varinjlim_{n>0} (\ldots \to \Ff \xrightarrow{n} \Ff \to \ldots)\) and the fact that all our functors are additive and commute with filtered colimits.
	
	On the other hand, if \(\Lambda = \IF_\ell\), we denote \(M/\ell:=\coker(M\xrightarrow{\ell} M)\in \MATM_{L^+\Gg}(\Gr_{\Gg},\Lambda)\) for any \(M\in \MATM_{L^+\Gg}(\Gr_{\Gg})\).
	Then we have
	\[\MATM_{L^+\Gg}(\Gr_{\Gg},\Lambda) = \{M\in \MATM_{L^+\Gg}(\Gr_{\Gg})\mid M=M/\ell\}.\]
	Since the convolution product \(\conv\) is right t-exact, we get \((M_1{\conv} M_2)/\ell \cong (M_1/\ell){\conv} (M_2/\ell)\).
	This shows that \(\MATM_{L^+\Gg}(\Gr_{\Gg},\Lambda)\) is controlled by the Hopf algebra \(H_{\Gg}/\ell = \pH^0(H_{\Gg}\otimes \Lambda) = H_{\Gg}\otimes \Lambda\), where the second isomorphism follows from \thref{algebras are dualizable}.
\end{proof}

\begin{rmk}\thlabel{Hopf algebra is reduced}
	Recall that by étale descent we have \(\MATM(\Spec k) \cong \MTM_{\Gamma}(\Spec k')\), where \(\Gamma = \Gal(k'/k)\).
	By \thref{graded to motives}, we have a faithful functor \(\grZMod\to \MTM(\Spec k')\), which is fully faithful when restricted to ind-free modules.
	Hence, by \thref{algebras are dualizable}, we can consider \(H_{\Gg}\) as a graded Hopf algebra over \(\IZ[\frac{1}{p}]\) with a \(\Gamma\)-action, or equivalently, a Hopf algebra with \(\IG_m\times \Gamma\)-action.
	Consequently, we can also view \(H_\Gg\otimes \Lambda\) as a Hopf algebra over \(\Lambda\) with \(\IG_m\times \Gamma\)-action, even for torsion coefficients.
\end{rmk}

\section{Some \(p\)-adic geometry and nearby cycles}\label{Sect--padic}

Let us digress from the world of motives and (perfect) schemes, and move on to the realm of \(p\)-adic geometry and diamonds.
This will allow us to consider a nearby cycles functor, constructed in \cite[§6]{AGLR:Local}.
Although it is likely possible to construct a ramified Satake equivalence in mixed characteristic without using diamonds (at least with rational coefficients, cf.~also \cite[Remark 2.37]{Zhu:Affine}), the approach using nearby cycles simplifies certain aspects and is more conceptual.
At the time of writing this article, a motivic version of the Satake equivalence from \cite[Chapter VI]{FarguesScholze:Geometrization} was not yet available. 
For this reason, we only use \(\ell\)-adic étale sheaves in this section, and reduce  everything to this case in later sections.
However, we note that during the refereeing process, such a motivic Satake equivalence has appeared in \cite[§5]{Scholze:Geometrization}.

Since we only use the nearby cycles functor from \cite{AGLR:Local} and the Satake equivalence from \cite{FarguesScholze:Geometrization} as an input, we will not need to go deep into the theory of perfectoid spaces and diamonds.
As such, we will not recall them, and instead refer to \cite{Scholze:Perfectoid, ScholzeWeinstein:Berkeley, Scholze:Etale, FarguesScholze:Geometrization, AGLR:Local} for background and details. 
We note that while \cite{ScholzeWeinstein:Berkeley,AGLR:Local} work in mixed characteristic, everything we will need also works in equal characteristic.
Alternatively, one could also use more classical nearby cycles functors in equal characteristic, as in \cite{Zhu:Ramified,Richarz:Affine,ALRR:Modular}.

As usual, we let \(\Gg/\Oo\) be a parahoric model of \(G\); we will specialize to the case where \(\Gg\) is very special later in this section.
For this section only, we assume \(k=\overline{k}\) is already algebraically closed, so that \(F=\breve{F}\).
Moreover, we let \(C/F\) be a completed algebraic closure, with ring of integers \(\Oo_C\) and residue field \(\overline{k}\).
Throughout this section, we denote by \(\Perfd\) the category of perfectoid spaces in characteristic \(p\).

\subsection{The \(\BdR^+\)-affine Grassmannian}

Rather than power series or Witt vectors, the affine Grassmannian from \cite{ScholzeWeinstein:Berkeley} is defined via the \emph{de Rham period rings}.

\begin{dfn}
	Let \((R,R^+)\) be a perfectoid Tate--Huber pair, and consider the canonical surjection \(\theta\colon W_{\Oo}(R^{\flat+}) \to R^+\), whose kernel is generated by a non-zero-divisor \(\xi\).
	Consider a pseudouniformizer \(\varpi^\flat\in R^\flat\) such that \(\varpi^p\mid p\) for \(\varpi = (\varpi^\flat)^\sharp\).
	Then the ring \(\BdR^+(R)\) of \emph{de Rham periods} is defined as the \(\xi\)-adic completion of \(W_{\Oo}(R^{\flat+})[[\varpi^\flat]^{-1}]\), and we set \(\BdR(R):=\BdR^+(R)[\xi^{-1}]\).
\end{dfn}

\begin{dfn}(\cite[§20.2]{ScholzeWeinstein:Berkeley})
	The \emph{\(\BdR^+\)-loop group} is the group functor over \(\Perfd_{/\Spd F}\), which on affinoid perfectoids is given by
	\[L_{\dR}G\colon \Spa(R,R^+)\mapsto G(\BdR(R^\sharp)),\]
	where \(R^\sharp\) is the untilt of \(R\) corresponding to the structure morphism \(\Spa(R,R^+)\to \Spd F\).
	Similarly, the \emph{positive \(\BdR^+\)-loop group} is given by
	\[L_{\dR}^+G\colon \Spa(R,R^+)\mapsto G(\BdR^+(R^\sharp)).\]
	Both these functors are v-sheaves, and hence extend to functors on all perfectoid spaces.
	
	Next, the \emph{\(\BdR^+\)-affine Grassmannian} \(\Gr^{\dR}_G\) is the étale sheafification of \(L_{\dR}G/L_{\dR}^+G\).
	By \cite[Proposition 20.2.3]{ScholzeWeinstein:Berkeley}, the map \(\Gr_G^{\dR}\to \Spd F\) is ind-proper and ind-representable in spatial diamonds.
	
	Finally, we define the Hecke stack \(\Hck_G^{\dR}\) as the v-stack quotient \(L_{\dR}^+G\backslash \Gr_G^{\dR}\).
\end{dfn}

We will denote the base change of \(\Gr_G^{\dR}\) to \(\Spd C\) by \(\Gr_{G,C}^{\dR}\), and similarly for \(\Hck_{G,C}^{\dR}\).
Since \(F=\breve{F}\) by assumption, this yields an action of \(I\) on both these base changes.

Fix a prime \(\ell\neq p\). As in \cite[§6.1]{AGLR:Local}, based on \cite{Scholze:Etale}, we can consider the stable \(\infty\)-category \(D_{\et}(X,\IQ_\ell)\) of \(\ell\)-adic sheaves on any small v-stack \(X\).
It comes equipped with a six-functor formalism.

Next, we recall the geometric Satake equivalence for the \(\BdR^+\)-affine Grassmannian from \cite[§VI]{FarguesScholze:Geometrization}.
Consider the t-structure on \(D_{\et}(\Hck_{G,C}^{\dR},\IQ_\ell)\) given by \cite[Definition/Proposition VI.7.1]{FarguesScholze:Geometrization} and \cite[(6.6)]{AGLR:Local}.
Recall also the notion of ULA (= universal local acyclicity) from \cite[§4.2]{FarguesScholze:Geometrization} and \cite[§6.1]{AGLR:Local}.
Let \(\Sat_{G,C}^{\dR}\subseteq D_{\et}(\Hck_{G,C}^{\dR},\IQ_\ell)\) be the full subcategory consisting of bounded ULA perverse sheaves (since we are working with \(\IQ_\ell\)-coefficients, flat perversity is automatic).

The convolution product from \cite[§VI.8]{FarguesScholze:Geometrization} makes \(\Sat_{G,C}^{\dR}\) a monoidal category, and the fusion product from \cite[§VI.9]{FarguesScholze:Geometrization} gives it a symmetric monoidal structure.
A variation of \cite[Theorem VI.0.2]{FarguesScholze:Geometrization} then gives the following theorem, cf.~also \cite[Theorem 6.3]{AGLR:Local}.
In this section, we consider the Langlands dual group \(\widehat{G}\) defined over \(\IQ_\ell\).

\begin{thm}\thlabel{Fargues-Scholze Satake}
	Up to identifying the root groups of \(\widehat{G}\) with their Tate twists, there is a canonical symmetric monoidal equivalence
	\[\Sat_{G,C}^{\dR} \cong \Rep(\widehat{G})^{\fd}.\]
\end{thm}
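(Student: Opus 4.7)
The plan is to essentially import the proof of \cite[Theorem VI.0.2]{FarguesScholze:Geometrization}, with only minor adjustments needed for the quasi-split (but possibly ramified) setting over a local field. The statement for split groups over $\mathrm{Spd}\,F$ is already covered in loc.~cit., and the passage to ramified groups runs in close parallel to how \cite{Zhu:Ramified,Richarz:Affine} adapt the Mirkovi\'c--Vilonen proof to ramified groups in equal characteristic.

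First I would verify that $\Sat_{G,C}^{\dR}$ is closed under the convolution product from \cite[\S VI.8]{FarguesScholze:Geometrization}. The key geometric input is semi-smallness of the convolution morphism, which in the $\BdR^+$-Grassmannian setting follows from the dimension formula for intersections of Schubert cells with semi-infinite orbits, exactly analogous to \thref{Nonemptyness of intersections} (cf.~\cite[Lemma 5.5]{AGLR:Local}). Combined with the preservation of ULA sheaves under proper pushforward, this shows that $\star$ restricts to a monoidal structure on $\Sat_{G,C}^{\dR}$ and that the convolution is t-exact for the perverse t-structure. Next, the fusion product, obtained via the Be\u{\i}linson--Drinfeld degeneration on $(\mathrm{Div}^1_F)^n$ together with the compatibility of nearby cycles with ULA sheaves from \cite[Proposition VI.6.4]{FarguesScholze:Geometrization}, canonically upgrades $\star$ to a symmetric monoidal structure on $\Sat_{G,C}^{\dR}$ and shows that it agrees with convolution after forgetting the commutativity constraint.

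Second, I would construct a fiber functor $\FibFunctor := \bigoplus_n \mathrm{H}^n(\Gr_{G,C}^{\dR},-) \colon \Sat_{G,C}^{\dR} \to \Rep(\IQ_\ell)^{\fd}$ and show it is exact, faithful, and symmetric monoidal. Exactness and faithfulness reduce (via the constant term functor attached to a Borel) to the torus case, mirroring the strategy of \thref{Fiber functor is constant terms} and \thref{F is fiber functor}; here one uses the hyperbolic localization results of \cite[Proposition VI.3.1]{FarguesScholze:Geometrization} for the $\BdR^+$-Grassmannian. Monoidality of the fiber functor, and more generally of the constant term functors, is again obtained by a version of \thref{reduction to ALRR} (or by Braden's theorem applied fiberwise). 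Applying the standard Tannakian reconstruction theorem then yields an affine group scheme $H$ over $\IQ_\ell$ together with an equivalence $\Sat_{G,C}^{\dR} \cong \Rep(H)^{\fd}$ identifying the fiber functor with the forgetful functor.

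Third, I would identify $H$ with $\widehat{G}$. The main obstacle, and the bulk of the work in \cite[\S VI.10-VI.11]{FarguesScholze:Geometrization}, is this identification. One proceeds by: (i) showing $H$ is reductive using that $\Sat_{G,C}^{\dR}$ is semisimple, which follows from the parity vanishing $\mathrm{H}^*(\Gr_{G,C}^{\dR},\IC_\mu) = 0$ in odd degrees, established exactly as in \thref{semisimplicity}; (ii) computing the maximal torus of $H$ from the constant term along $B$, which identifies it with $\widehat{T}$ (or more precisely its fixed version accounting for the $I$-action in the ramified case); (iii) parametrizing the weights via $X_*(T)_I$ using the geometry of semi-infinite orbits; and (iv) constructing the coroots of $H$ via the sub-Satake categories attached to rank-one subgroups, which via parabolic induction identifies them with the root groups of $\widehat{G}$. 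The Tate twist normalization enters in step (iv): with the unnormalized fiber functor, the generators of the root spaces naturally land in Tate-twisted copies of $\IQ_\ell$, which is why the identification is only canonical after fixing a square root of the cyclotomic character (or equivalently, a trivialization of Tate twists). The residually-split nature of the setting does not obstruct this identification: the relative Weyl group $W^I$ coincides with the Weyl group of $\widehat{G}^I = \widehat{G}$ over the algebraically closed base, so in fact the statement here only involves the absolute dual group $\widehat{G}$, and the inertia action will play a role only in the next section.
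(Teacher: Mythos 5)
The paper does not prove this statement; it is cited as a variant of \cite[Theorem VI.0.2]{FarguesScholze:Geometrization}, with \cite[Theorem 6.3]{AGLR:Local} given as the reference for the specific form used here. Your proposal is essentially a faithful reconstruction of the Fargues--Scholze proof, and it takes the correct approach: closure of the ULA perverse subcategory under convolution via semi-smallness, the fusion product for symmetry, a cohomological fiber functor made monoidal via constant terms and hyperbolic localization, Tannakian reconstruction, and then an explicit identification of the resulting group scheme with $\widehat{G}$ by reducing to tori, parity vanishing for semisimplicity, and rank-one subgroups for the root groups.

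One remark on the final sentence: the equality $\widehat{G}^I = \widehat{G}$ is not what is happening. Over $\Spd C$ there is no inertia action on the dual group being imposed at all -- the theorem concerns the full $\widehat{G}$ with its absolute Weyl group $W$, and for ramified $G$ one has $\widehat{G}^I \subsetneq \widehat{G}$ strictly. Your parenthetical conclusion that "the statement here only involves the absolute dual group $\widehat{G}$" is correct; the phrasing via $\widehat{G}^I = \widehat{G}$ is not. Relatedly, the appeal to \cite[Lemma 5.5]{AGLR:Local} for semi-smallness needs some care, since that lemma concerns the Witt-vector affine Grassmannian over the residue field, whereas what is needed here is the analogue on the $\BdR^+$-Grassmannian (proved in \cite[§VI.7]{FarguesScholze:Geometrization}); the dimension formulas are indeed parallel, but the diamond-theoretic statement is not a corollary of the special-fiber one.
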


In particular, choosing a trivialization of the Tate twist \(\IQ_\ell(1)\cong\IQ_\ell\) induces a canonical equivalence \(\Sat_{G,C}^{\dR}\cong \Rep(\widehat{G})^{\fd}\).

\subsection{Nearby cycles}

In order to define a nearby cycles functor, we need a family of affine Grassmannians over \(\Spd \Oo_C\).

\begin{dfn}(\cite[§20.3]{ScholzeWeinstein:Berkeley})
	The \emph{Beilinson-Drinfeld Grassmannian} is the étale sheafification \(\Gr_{\Gg}^{\BD}\) of the functor
	\[\Perfd_{/\Spd \Oo_C}\to \Set\colon \Spa(R,R^+)\mapsto \Gg(\BdR(R^\sharp))/\Gg(\BdR^+(R^\sharp)).\]
	By \cite[Theorem 21.2.1]{ScholzeWeinstein:Berkeley}, the morphism \(\Gr_{\Gg}^{\BD}\to \Spd \Oo_C\) is ind-proper and ind-representable in spatial diamonds.
	By \cite[Lemma 4.10]{AGLR:Local}, its generic fiber is \(\Gr_{G,C}^{\dR}\), while its special fiber is identified with \(\Fl_{\Gg}^\diamondsuit\), where \((-)^\diamondsuit\) is the diamond functor as defined as in \cite[§27]{Scholze:Etale}.
	
	Similarly, we define the Beilinson-Drinfeld Hecke stack \(\Hck_{\Gg}^{\BD}\) as the étale sheafification of
	\[\Spa(R,R^+)\mapsto \Gg(\BdR^+(R^\sharp))\backslash \Gg(\BdR(R^\sharp))/\Gg(\BdR^+(R^\sharp)),\]
	which is a small v-stack.
\end{dfn}

This gives us a \emph{nearby cycles} functor
\[\Psi_\Gg\colon D_{\et}(\Hck_{G,C}^{\dR}, \IQ_\ell)^{\bd} \to D_{\et}(L^+\Gg\backslash LG /L^+\Gg,\IQ_\ell)^{\bd}\]
as in \cite[(6.27) and Proposition A.5]{AGLR:Local}, where we restrict to sheaves with bounded support.
By \cite[Theorem 1.8]{AGLR:Local}, \(\Psi_{\Gg}\) preserves constructibility and universal local acyclicity.

Let us now assume \(\Gg\) is very special.
Then \(\Psi_\Gg\) is t-exact by \cite[Corollary 6.14]{AGLR:Local}.
It is also compatible with the monoidal structures and fiber functors.
In order to show this, we consider the constant term functors
\[\CT_P^{\et}\colon \Perv_{\et}(L^+\Gg\backslash LG/L^+\Gg,\IQ_\ell)^{\bd} \to \Perv_{\et}(L^+\Mm \backslash LM /L^+\Mm,\IQ_\ell)^{\bd}\]
for \(\ell\)-adic sheaves on the special fiber as in \cite[(6.10)]{AGLR:Local}, and
\[\CT_P^{\dR}\colon \Sat_{G,C}^{\dR} \to \Sat_{M,C}^{\dR}\]
for \(\ell\)-adic sheaves on the generic fiber as in \cite[§VI.9]{FarguesScholze:Geometrization}.
Here, \(P\subseteq G\) is any standard parabolic with Levi factor \(M\), and \(\Mm\) is the parahoric model of \(M\) corresponding to \(\Gg\).
We also note that we have already included the degree shift in the definition of the constant term functors, and that \(\CT_P^{\et}\) is the \(\ell\)-adic realization of the motivic \(\CT_P\).
Composing with the obvious fiber functors in case \(G=T\) is a torus, we obtain fiber functors, which we denote \(\FibFunctor_{\et}\) and \(\FibFunctor_{\dR}\) respectively.

\begin{prop}\thlabel{monoidality of nearby cycles}
	The nearby cycles functor \(\Psi_{\Gg}\) admits a monoidal structure, for which there exists a monoidal equivalence
	\[\FibFunctor_{\et} \circ \Psi_{\Gg} \cong \FibFunctor_{\dR}\colon \Sat_{G,C}^{\dR} \to \QlVect.\]
\end{prop}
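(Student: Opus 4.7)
The plan is to deduce both the monoidal structure and the fiber functor compatibility from the geometry of the Beilinson–Drinfeld Grassmannian, together with the compatibility of nearby cycles with convolution and with hyperbolic localization.

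For the monoidal structure, I would introduce a two-point variant \(\Gr_\Gg^{\BD,(2)}\) of the Beilinson–Drinfeld Grassmannian over \(\Spd \Oo_C \times_{\Spd \overline{k}} \Spd \Oo_C\), parametrizing pairs of modifications of \(\Gg\)-torsors at two (possibly coinciding) points. The convolution multiplication map \(m\colon \Gr_\Gg^{\BD,(2)} \to \Gr_\Gg^{\BD}\) is ind-proper, and on the generic fiber it recovers the fusion/convolution product \(\Gr_{G,C}^{\dR} \widetilde{\times} \Gr_{G,C}^{\dR} \to \Gr_{G,C}^{\dR}\), while on the special fiber it recovers the ordinary convolution map \(\Fl_\Gg \widetilde{\times} \Fl_\Gg \to \Fl_\Gg\). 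Combining proper base change for diamonds with a Künneth-type formula for nearby cycles then equips \(\Psi_\Gg\) with the desired monoidal structure in a canonical way; associativity is obtained by passing to the analogous three-point family.

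For the fiber functor compatibility, I would first prove the stronger statement that \(\Psi_\Mm \circ \CT_P^{\dR} \cong \CT_P^{\et} \circ \Psi_\Gg\) for every standard parabolic \(P \subseteq G\) with Levi \(M\), where \(\CT_P^{\et}\) is defined via hyperbolic localization as in \cite[(6.10)]{AGLR:Local}. The key point is that the \(\IG_{m,\Oo}\)-action induced by a cocharacter \(\lambda\colon \IG_{m,\Oo}\to \Ss\) extends to \(\Gr_\Gg^{\BD}\), and its attractor, repeller, and fixed point loci interpolate between the analogous loci on the generic and special fibers (which were identified in Section \ref{Section--affine flag varieties}). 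The compatibility then follows from commutation of nearby cycles with smooth pullback, proper pushforward, and the relevant shifts, together with a Braden-style identification on each fiber. Specializing to \(P = B\), so that \(M = T\), the affine Grassmannian \(\Gr_\Tt\) is discrete, so both \(\FibFunctor_\et\) and \(\FibFunctor_\dR\) reduce to summing costalks at the individual \(\overline{k}\)-points, and the equivalence \(\FibFunctor_\et \circ \Psi_\Gg \cong \FibFunctor_\dR\) drops out. Its monoidality is inherited from the monoidality of the constant term functors on both sides (which, on the de Rham side, is part of \thref{Fargues-Scholze Satake}) and the evident monoidality of the pushforward on \(\Gr_\Tt\).

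The main obstacle is rigorously establishing the commutation of nearby cycles with the hyperbolic localization diagrams in the setting of small v-stacks. Schematically, this is due to Richarz, but in the diamond framework one must verify that the attractor and fixed point v-substacks of \(\Gr_\Gg^{\BD}\) deform well over \(\Spd \Oo_C\) and that the comparison map between \(!\)- and \(*\)-pushforwards along the attractor leg becomes an isomorphism on \(\IG_m\)-monodromic sheaves. Both inputs should be accessible using the six-functor formalism from \cite{Scholze:Etale, FarguesScholze:Geometrization}, the analysis of semi-infinite orbits in \cite[§5]{AGLR:Local}, and the boundedness/ULA properties of \(\Psi_\Gg\) already established in \cite[§6]{AGLR:Local}.
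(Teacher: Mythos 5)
Your overall architecture—get a monoidal structure on \(\Psi_\Gg\) from the Beilinson–Drinfeld family, then compare fiber functors by relating nearby cycles to constant terms via hyperbolic localization—is the right one, and matches the paper in spirit. But two points need correction.

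For the monoidal structure, your two-point family over \(\Spd\Oo_C \times_{\Spd\overline{k}}\Spd\Oo_C\) is heavier machinery than needed, and its use is a bit off target: a two-point family is the right input for \emph{fusion}, but the special fiber only carries the ordinary \emph{convolution} on \(\Fl_\Gg\). The cleaner route (which the paper takes) is to stay with the one-point family \(\Hck_\Gg^{\BD}\to\Spd\Oo_C\): writing \(\Psi_\Gg = i^*Rj_*\) for the inclusions \(i\), \(j\) of the special and generic fibers, both \(i^*\) and \(j^*\) are monoidal for convolution by proper base change, and on ULA objects \(Rj_*\) is an inverse to \(j^*\) by \cite[Proposition 6.12]{AGLR:Local}, so \(Rj_*\) (hence \(\Psi_\Gg\)) inherits a monoidal structure immediately. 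No Künneth for nearby cycles or three-point families are needed.

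For the fiber functor compatibility, you identify the wrong obstacle. The compatibility \(\CT_B^\et\circ\Psi_\Gg \cong \Psi_\Tt\circ\CT_B^{\dR}\) that you propose to establish—including the interpolation of \(\IG_m\)-attractor/repeller/fixed-point loci over \(\Spd\Oo_C\) and the commutation of nearby cycles with hyperbolic localization—is already \cite[Proposition 6.13]{AGLR:Local}, so it is not something you should re-derive, and it is not the bottleneck. The genuine subtlety, which your proposal glosses over in the sentence ``its monoidality is inherited from the monoidality of the constant term functors on both sides,'' is that even when all the functors involved are monoidal, the \emph{comparison isomorphism} \(\CT_B^\et\circ\Psi_\Gg \cong \Psi_\Tt\circ\CT_B^{\dR}\) need not automatically be a monoidal natural transformation: one must check that it intertwines the monoidality data, which is a nontrivial coherence verification. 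The paper reduces the claim to exactly this statement and then checks it by extending the equal-characteristic argument of \cite[\S7.2, \S8.6]{ALRR:Modular} to the mixed-characteristic setting. Without an argument at this spot, your proof has a gap. Also note that the paper obtains the non-monoidal equivalence \(\FibFunctor_\et\circ\Psi_\Gg\cong\FibFunctor_\dR\) more directly than you do, by interpreting fiber functors as total cohomology and using that nearby cycles commutes with proper pushforward, bypassing the constant term functors entirely for that part.
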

\begin{proof}
	First, we show \(\Psi_{\Gg}\) admits a monoidal structure.
	In the notation of \cite{AGLR:Local}, we have \(\Psi_{\Gg} = i^*Rj_*\), where \(i\) and \(j\) are the inclusions of the special and generic fibers of \(\Hck_{\Gg}^{\BD}\).
	By base change, both \(i^*\) and \(j^*\) are monoidal with respect to the convolution product.
	Since on ULA objects, \(j^*\) is an equivalence with inverse \(Rj_*\) by \cite[Proposition 6.12]{AGLR:Local}, this yields a natural monoidal structure on \(\Psi_{\Gg}\).
	(Compare also with \cite[Proposition 4.7]{ALWY:Gaitsgory}.)
	
	The equivalence \(\FibFunctor_{\et} \circ \Psi_{\Gg} \cong \FibFunctor_{\dR}\) follows from commutation of nearby cycles and proper pushforward, using the interpretation of fiber functors as total cohomology.
	It remains to show this equivalence is monoidal.
	In case \(G=T\) is a torus, this can be checked by hand, so it suffices to show that the equivalence
	\[\CT_B^{\et} \circ \Psi_{\Gg} \cong \Psi_{\Tt} \circ \CT_B^{\dR}\]
	from \cite[Proposition 6.13]{AGLR:Local} is monoidal.
	The equal characteristic analogue has been checked in \cite[§7.2,~§8.6]{ALRR:Modular}, and the proof directly extends to our situation.
\end{proof}

As in \thref{Hopf algebra thm}, we have the Tannakian bialgebra \(H_{\Gg,\et}\) corresponding to the abelian category \(\Perv_{\et}(L^+\Gg\backslash LG /L^+\Gg,\IQ_\ell)^{\bd}\) of constructible perverse \(L^+\Gg\)-equivariant étale sheaves on \(\Gr_{\Gg}\) with bounded support. 
This is the \(\ell\)-adic realization of the bialgebra \(H_{\Gg}\) from \thref{Hopf algebra thm}.
By \thref{monoidality of nearby cycles}, we get a morphism
\(\IQ_\ell[\widehat{G}] \to H_{\Gg,\et}\) of bialgebras.
This allows us to deduce that \(H_{\Gg,\et}\) and \(H_{\Gg}\) are commutative Hopf algebras, similarly to \cite[Proposition 9.1]{ALRR:Modular}.

\begin{lem}\thlabel{bialgebra is commutative}
	The bialgebra \(H_{\Gg,\et}\) is commutative.
	Furthermore, if the map \(X_*(T)^+ \to X_*(T)_I^+\) is surjective, then so is the morphism \(\IQ_\ell[\widehat{G}] \to H_{\Gg,\et}\) constructed above.
\end{lem}
\begin{proof}
	Assume first that the map \(X_*(T)^+\to X_*(T)_I^+\) is surjective, which is the case for adjoint groups by \cite[Lemma 2.6 (2)]{ALRR:Modular}.
	In that case we will show that the morphism \(\IQ_\ell[\widehat{G}] \to H_{\Gg,\et}\) is surjective, which will in particular imply that \(H_{\Gg,\et}\) is commutative.
	As in \thref{semisimplicity}, we can show that \(\Perv_{\et}(L^+\Gg\backslash LG /L^+\Gg,\IQ_\ell)^{\bd}\) is semisimple with simple objects the intersection complexes \(\IC_{\mu}^{\et}\), which are the \(\ell\)-adic realizations of \(\IC_{\mu}(\unit)\) for \(\mu\in X_*(T)_I^+\).
	By \cite[Proposition 2.21]{DeligneMilne:Tannakian}, it suffices to show each \(\IC_{\mu}^{\et}\) is a subquotient of an object in the essential image of \(\Psi_\Gg\).
	Let \(V\in \Sat_{G,C}^{\dR}\) be a sheaf which, under the Satake equivalence, corresponds to an irreducible representation with highest weight a lift of \(\mu\) to \(X_*(T)^+\).
	Since \(\Gr_{\Gg,\leq \mu}\cap \Ss_{\mu}^- \cong \Spec k\), it suffices to show that the restriction of \(\CT_B^{\et}(\Psi_\Gg(V))\) to \(\Gr_{\Tt,\mu}\) is nontrivial.
	Indeed, by \cite[Theorem 6.16]{AGLR:Local} the special fiber of the closure of the Schubert diamond corresponding to \(V\) is \(\Gr_{\Gg,\leq \mu}\), so that \(\mu\) is the only element that can contribute to the restriction of \(\CT_B^{\et}(\Psi_\Gg(V))\) to \(\Gr_{\Tt,\mu}\).
	Hence, we are done by \cite[(6.32)]{AGLR:Local}.
	Note that the proof of \thref{Hopf algebra thm} shows that \(H_{\Gg,\et}\) is even a commutative Hopf algebra.
	
	For general \(G\), consider the quotient map \(G\to G_{\adj}\), as well as the induced map \(\Gg\to \Gg_{\adj}\), where \(\Gg_{\adj}\) is the very special parahoric corresponding to the same facet as \(\Gg\) under the isomorphism \(\buil(G,F)\xrightarrow{\cong} \buil(G_{\adj},F)\) of reduced buildings.
	Recall that \(\Gr_{\Gg} \to \Gr_{\Gg_{\adj}}\) restricts to an isomorphism on each connected component of \(\Gr_{\Gg}\).
	Hence, by \thref{connected components of affine flag varieties} and \thref{forgetting equivariance is equivalence}, \(\Perv_{\et}(L^+\Gg\backslash LG / L^+\Gg,\IQ_\ell)^{\bd}\) is equivalent to the category of \(A\in\Perv_{\et}(L^+\Gg_{\adj}\backslash LG_{\adj} / L^+\Gg_{\adj},\IQ_\ell)^{\bd}\) equipped with a refinement of its \(\pi_1(G_{\adj})_I\)-grading to a \(\pi_1(G)_I\)-grading (with the obvious notion of morphisms).
	Consequently, \(H_{\Gg,\et}\) agrees with the global sections of \[D(\pi_1(G)_I) \overset{D(\pi_1(G_{\adj})_I)}{\times} \Spec H_{\Gg_{\adj},\et},\] where \(D(-)\) denotes the diagonalizable \(\IQ_\ell\)-group scheme with given character group, which is in particular commutative.
\end{proof}

This concludes the proof of \thref{Hopf algebra thm}, and we see that \(H_{\Gg,\et}\) and \(H_{\Gg}\) are commutative Hopf algebras in general.
The final goal of this section is to describe it explicitly, as this will help us to identify \(H_{\Gg}\) integrally and motivically in the next section.
As in \cite{FarguesScholze:Geometrization, CassvdHScholbach:Geometric}, we want to use constant term functors for general parabolics.
However, we have not yet shown these are monoidal.

\begin{lem}\thlabel{cheat for monoidality of CT}
	Let \(P\subseteq G\) be any standard parabolic.
	Then the constant term functor \(\CT_P^{\et}\) induces a morphism \(H_{\Gg,\et}\to H_{\Mm,\et}\) of Hopf algebras, which is surjective if \(G\) is adjoint.
\end{lem}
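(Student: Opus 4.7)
The strategy is to bootstrap from three places: the monoidality of $\CT_B^{\et}$ (which is the $\ell$-adic realization of the motivic $\CT_B$ treated in Corollary~\ref{CT monoidal}), the monoidality of $\CT_P^{\dR}$ on the generic fibre (Fargues--Scholze, see \cite[\S VI.9]{FarguesScholze:Geometrization}), and the monoidal compatibility $\CT_P^{\et} \circ \Psi_{\Gg} \cong \Psi_{\Mm} \circ \CT_P^{\dR}$ extending \thref{monoidality of nearby cycles} from the Borel to an arbitrary standard parabolic $P$ (as in \cite[Proposition 9.2]{ALRR:Modular}). Using the factorization $\CT_B = \CT_Q^M \circ \CT_P$ of Lemma~\ref{composition of CTs} with $Q = B \cap M$, combined with the $\ell$-adic version of \thref{CT and twisted boxtimes} and \thref{reduction to ALRR} (whose proofs carry over verbatim to general $P$ since the key input, \cite[Corollary 4.15]{ALRR:Modular}, is stated for all standard parabolics), one obtains a monoidality isomorphism $\CT_P^{\et}(\Ff_1 \star \Ff_2) \cong \CT_P^{\et}(\Ff_1) \star \CT_P^{\et}(\Ff_2)$. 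Since $\CT_P^{\et}$ also commutes with the fiber functors by the analogue of Proposition~\ref{Fiber functor is constant terms} and Lemma~\ref{composition of CTs} (using a Borel of $M$), Tannakian reconstruction yields a well-defined Hopf algebra map $H_{\Gg,\et} \to H_{\Mm,\et}$.

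For the surjectivity when $G$ is adjoint, I will use the commutative diagram of Hopf algebras
\[
\begin{tikzcd}
\IQ_\ell[\widehat{G}] \arrow[r] \arrow[d, two heads] & \IQ_\ell[\widehat{M}] \arrow[d] \\
H_{\Gg,\et} \arrow[r] & H_{\Mm,\et}
\end{tikzcd}
\]
whose commutativity is the Tannakian shadow of the monoidal natural equivalence $\CT_P^{\et} \circ \Psi_{\Gg} \cong \Psi_{\Mm} \circ \CT_P^{\dR}$. The top horizontal arrow is the classical restriction map, which is surjective because $\widehat{M} \hookrightarrow \widehat{G}$ is a Levi subgroup; the left vertical arrow is surjective by the proof of Lemma~\ref{bialgebra is commutative}. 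To deduce surjectivity of the bottom arrow it therefore suffices to show that the right vertical arrow $\IQ_\ell[\widehat{M}] \to H_{\Mm,\et}$ is surjective.

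This last surjectivity is the key remaining point. By semisimplicity (the analogue of \thref{semisimplicity} for $\ell$-adic perverse sheaves) and Deligne--Milne's criterion \cite[Proposition 2.21]{DeligneMilne:Tannakian}, it is enough to show that every simple $\IC_{\nu}^{\et, M}$, for $\nu \in X_\ast(T)_I^+$ dominant relative to $M$, appears as a subquotient of some $\Psi_{\Mm}(V)$ with $V \in \Sat_{M,C}^{\dR}$. Choose a lift $\mu \in X_\ast(T)^+$ (dominant for $G$) of some element in the $W^M$-orbit of $\nu$, let $V_\mu$ be the corresponding irreducible representation of $\widehat G$, and examine $\CT_P^{\et}(\Psi_{\Gg}(V_\mu)) \cong \Psi_{\Mm}(\CT_P^{\dR}(V_\mu))$; the restriction of $\CT_P^{\dR}(V_\mu)$ to the connected component of $\Gr_{\Mm}$ labelled by $\nu$ is nontrivial (the intersection $\Gr_{\Gg, \leq \mu} \cap \Ss_{\nu}^-$ is nonempty of the expected dimension by \thref{Nonemptyness of intersections}, so by \cite[Theorem~6.16]{AGLR:Local} and \cite[(6.32)]{AGLR:Local} the sheaf $\IC_{\nu}^{\et,M}$ contributes). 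Consequently $\IC_{\nu}^{\et, M}$ is a direct summand of $\Psi_{\Mm}(\CT_P^{\dR}(V_\mu))$, so it lies in the essential image of $\Psi_{\Mm}$.

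The main obstacle is the monoidality of $\CT_P^{\et}$ for non-Borel $P$, since the paper explicitly avoids proving this directly (hence the name ``cheat''). My plan is to verify this by generalizing \thref{reduction to ALRR} to arbitrary standard parabolics; the argument there depends only on the monoidal properties of the twisted constant term $\widetilde{\CT}_P$ already established for general $P$ in \thref{twisted constant term and product} and on \thref{generalization of fiber functor vs CT}, both of which extend without difficulty. Once monoidality is in hand, the construction of the Hopf algebra map is automatic, and the surjectivity in the adjoint case reduces to the combinatorial/dimension-count verification sketched above.
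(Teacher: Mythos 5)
Your approach is genuinely different from the paper's, and the difference matters. The paper's entire point — and the reason the lemma is internally named a ``cheat'' — is to \emph{avoid} proving that $\CT_P^{\et}$ is monoidal for a non-Borel parabolic. Its argument is as follows: the compatibility $\CT_P^{\et}\circ\Psi_\Gg\cong\Psi_\Mm\circ\CT_P^{\dR}$ from \cite[Proposition 6.13]{AGLR:Local} gives a commutative square of $\IQ_\ell$-vector space maps between $\IQ_\ell[\widehat M]$, $\IQ_\ell[\widehat G]$, $H_{\Mm,\et}$, $H_{\Gg,\et}$; three of the four arrows (both verticals via \thref{monoidality of nearby cycles}, and the top via \cite[Prop.~IV.9.6]{FarguesScholze:Geometrization}) are already known to be Hopf algebra maps; and since the left vertical $\IQ_\ell[\widehat G]\to H_{\Gg,\et}$ is \emph{surjective} (\thref{bialgebra is commutative}, this is where adjointness enters), the bottom arrow is forced to be a Hopf algebra map too — a purely diagrammatic argument that requires no monoidality of $\CT_P^{\et}$ as a functor. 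You recognize the cheat and then go ahead and prove monoidality of $\CT_P^{\et}$ anyway by extending \thref{reduction to ALRR} and \thref{CT and twisted boxtimes} to general $P$. If carried out, this would give a strictly stronger conclusion, but it is unnecessary for the lemma as stated and takes on substantial risk.

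The risk is not imaginary. Your claim that the proof of \thref{reduction to ALRR} ``carries over verbatim to general $P$'' underestimates the dependence of that proof on Borel-specific ingredients. In particular, it goes through \thref{generalization of fiber functor vs CT}, whose hypotheses (each $\pH^n(M)$ trivial for odd $n$, parity matching across components) are arranged precisely by the Borel dimension count $\dim(\Ss_\nu^\pm\cap\Gr_{\Gg,\mu})=\langle\rho,\mu\pm\nu\rangle$. For a general $P$ you would need the analogous parity statement for the orbits $\Ss_{P,\nu}^\pm$ and, more seriously, a version of \thref{generalization of fiber functor vs CT} for $\CT_P$ rather than $\CT_B$. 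This is not automatic from composition ($\CT_B=\CT_Q^M\circ\CT_P$ does not obviously pass through the perverse-truncated tensor product), and the paper makes no claim to have established it — indeed this is exactly the work the cheat is designed to sidestep.

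Two smaller points. First, your lift in the surjectivity argument is slightly misaimed: what you want is that $X_*(T)_M^+\to X_*(T)_{I,M}^+$ is surjective (lifting to an \emph{$M$-dominant} cocharacter), so that the adjoint-group argument of \thref{bialgebra is commutative} applies verbatim to $\Mm$; choosing a lift in the $W^M$-orbit that is $G$-dominant and then running $\CT_P^{\dR}$ works but is more roundabout than necessary, and you should check that the $G$-dominant lift of the $G$-dominant representative of $\nu$ can be brought into the $M$-dominant chamber by an element of $W^I$ lifting the relevant relative Weyl group element. Second, you do not address the non-adjoint case at all — the paper reduces it to the adjoint case by the argument of \cite[Proposition 9.2]{ALRR:Modular}, and some such step is needed for the first sentence of the lemma, which has no adjointness hypothesis.
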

\begin{proof}
First assume \(G\) is adjoint. 
The commutative diagram
\[\begin{tikzcd}
	\Sat_{M,C}^{\dR} \arrow[d, "\Psi_\Mm"'] & \Sat_{G,C}^{\dR} \arrow[d, "\Psi_\Gg"] \arrow[l, "\CT_P^{\dR}"']\\
	\Perv_{\et}(L^+\Mm\backslash LM /L^+\Mm)^{\bd} & \Perv_{\et}(L^+\Gg\backslash LG/L^+\Gg)^{\bd} \arrow[l, "\CT_P^{\et}"]
\end{tikzcd}\]
from \cite[Proposition 6.13]{AGLR:Local} induces a commutative diagram
\[\begin{tikzcd}
	\IQ_\ell[\widehat{M}] \arrow[d] & \IQ_\ell[\widehat{G}] \arrow[d] \arrow[l]\\
	H_{\Mm,\et} & H_{\Gg,\et} \arrow[l],
\end{tikzcd}\]
where the arrows are a priori just morphisms of \(\IQ_\ell\)-vector spaces.
We have already seen in \thref{monoidality of nearby cycles} that \(\Psi_{\Gg}\) and \(\Psi_\Mm\) are monoidal, and \(\CT_P^{\dR}\) is monoidal by \cite[Proposition IV.9.6]{FarguesScholze:Geometrization}, so that all but the lower arrow are morphisms of Hopf algebras.
(The compatibility of the resulting Hopf algebra structure similarly follows from \thref{monoidality of nearby cycles}.)
The fact that \(H_{\Gg,\et}\to H_{\Mm,\et}\) is also a morphism of Hopf algebras then follows from surjectivity of \(\IQ_\ell[\widehat{G}] \to H_{\Gg,\et}\), \thref{bialgebra is commutative}.
Now, since \(G\) is adjoint, the center of any Levi subgroup \(M\subseteq G\) is a torus; this follows from \cite[Proposition 21.8]{Milne:Algebraic}, as the simple roots form a basis of the character lattice for adjoint groups.
Thus, \(\IQ_\ell[\widehat{M}] \to H_{\Mm,\et}\) is surjective by \cite[Lemma 2.6 (2)]{ALRR:Modular} and \thref{bialgebra is commutative}.
Since \(\IQ_\ell[\widehat{G}]\to \IQ_\ell[\widehat{M}]\) is also surjective, we conclude that \(H_{\Gg,\et}\to H_{\Mm,\et}\) is surjective as well.

The fact that \(H_{\Gg,\et}\to H_{\Mm,\et}\) is a morphism of Hopf algebras in general follows from the adjoint case, as in \cite[Proposition 9.2]{ALRR:Modular}.
\end{proof}

Now we can determine \(H_{\Gg,\et}\).
Since \(\Perv_{\et}(L^+\Gg\backslash LG /L^+\Gg,\IQ_\ell)^{\bd}\) is semisimple and has a finite number of generators as a monoidal category (since \(X_*(T)_I^+\) is finitely generated), we already know that \(\Spec H_{\Gg,\et}\) has reductive neutral component \cite[Proposition 2.20, Remark 2.28]{DeligneMilne:Tannakian}.

\begin{prop}\thlabel{Hopf algebra for ladic sheaves}
	The morphism \(\IQ_\ell[\widehat{G}]\to H_{\Gg,\et}\) from the previous lemma factors through an isomorphism 
	\[\IQ_\ell[\widehat{G}^I] \cong H_{\Gg,\et}.\]
\end{prop}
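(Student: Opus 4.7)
The plan is to prove the factorization first, then surjectivity onto $H_{\Gg,\et}$, and finally that the resulting morphism is an isomorphism. The key geometric input is that the special fiber $\Fl_{\Gg}^\diamondsuit$ of $\Gr_{\Gg}^{\BD}$ is defined over $\Spec\overline{k}$ and hence carries no residual inertia action, whereas the generic fiber $\Gr_{G,C}^{\dR}$ does.

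First I would establish the factorization. The inertia group $I = \Gal(C/F)$ acts on $\Spd C$, inducing actions on $\Gr_{G,C}^{\dR}$ and on $\Sat_{G,C}^{\dR}$; under the Fargues–Scholze equivalence (\thref{Fargues-Scholze Satake}) this corresponds, after fixing a pinning, to the standard action of $I$ on $\widehat{G}$. Because the special fiber carries no $I$-action, one obtains for each $\sigma \in I$ a canonical equivalence $\Psi_{\Gg} \circ \sigma^* \cong \Psi_{\Gg}$ by applying base change along the $\sigma$-twisted map $\Spd\Oo_C \to \Spd\Oo_C$ (which is the identity on the special fiber). Dually, the morphism $\IQ_\ell[\widehat{G}] \to H_{\Gg,\et}$ kills every element of the form $f - \sigma^* f$. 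Since $\widehat{G}^I$ is the scheme-theoretic fixed point subscheme of the $I$-action on $\widehat{G}$ and its coordinate ring is precisely the quotient of $\IQ_\ell[\widehat{G}]$ by the ideal generated by these expressions (cf.~\cite{ALRR:Fixed}), the map factors through $\IQ_\ell[\widehat{G}^I] \to H_{\Gg,\et}$.

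Next I would address surjectivity, by reducing to the case where $G$ is adjoint exactly as in the proof of \thref{bialgebra is commutative}; the general statement then follows from the description of $H_{\Gg,\et}$ in terms of $H_{\Gg_{\adj},\et}$ together with the $\pi_1(G)_I$-grading. In the adjoint case, surjectivity of $\IQ_\ell[\widehat{G}] \to H_{\Gg,\et}$ was established in \thref{bialgebra is commutative}, and therefore the factorization through $\IQ_\ell[\widehat{G}^I]$ is also surjective. Dually, this produces a closed immersion of group schemes $\Spec H_{\Gg,\et} \hookrightarrow \widehat{G}^I$.

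The main obstacle, and the most delicate step, is to show this closed immersion is an isomorphism. I would compare both sides through the maximal torus $\widehat{T}^I \subseteq \widehat{G}^I$, which is indeed maximal by \cite[Theorem 1.1]{ALRR:Fixed}. Composing with the constant term functor $\CT_B^{\et}\colon H_{\Gg,\et}\to H_{\Tt,\et}$ (which is a Hopf-algebra morphism by \thref{cheat for monoidality of CT}) and computing $H_{\Tt,\et}$ directly — the torus case is trivial because $\Gr_{\Tt}$ is discrete, yielding $H_{\Tt,\et}\cong \IQ_\ell[\pi_0(\Gr_{\Tt})]\cong \IQ_\ell[X_*(T)_I]\cong \IQ_\ell[\widehat{T}^I]$ — one sees that the composition $\IQ_\ell[\widehat{G}^I]\to H_{\Gg,\et}\to H_{\Tt,\et}$ is the standard restriction $\IQ_\ell[\widehat{G}^I]\to \IQ_\ell[\widehat{T}^I]$, so $\widehat{T}^I \subseteq \Spec H_{\Gg,\et}$. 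To upgrade this to equality, I would appeal to the semisimple highest-weight theory: both $\Rep(\widehat{G}^I)^{\fd}$ and $\Perv_{\et}(L^+\Gg\backslash LG/L^+\Gg,\IQ_\ell)^{\bd}\cong \Rep^{\fd}(\Spec H_{\Gg,\et})$ have simple objects indexed by $X_*(T)_I^+ = X^*(\widehat{T}^I)^+$ (via \cite{ALRR:Fixed} on the dual side and Schubert cells on the geometric side), and under the Tannakian inclusion the simple $\IC_\mu^{\et}$ is sent to the $\widehat{G}^I$-representation whose $\widehat{T}^I$-character contains $\mu$ as highest weight (computable via $\CT_B^{\et}$ and \thref{Intersections of Schubert cells and semi-infinite orbits}, and matching the character of $V(\mu)$ from \thref{character formula 2}); hence it must coincide with the simple $\widehat{G}^I$-representation $V(\mu)$. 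This forces the restriction $\Rep(\widehat{G}^I)^{\fd}\to \Rep^{\fd}(\Spec H_{\Gg,\et})$ to be essentially surjective on simples, and therefore the closed immersion is an isomorphism, concluding the proof.
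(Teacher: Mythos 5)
Your first two steps — the factorization through $\IQ_\ell[\widehat{G}^I]$ via $I$-equivariance of nearby cycles, and the reduction to the adjoint case for surjectivity — match the paper's argument. The divergence is in the final step, and that is where your proposal has a genuine gap.

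Your plan is to establish that the simple objects $\IC_\mu^{\et}$ of $\Rep^{\fd}(\Spec H_{\Gg,\et})$ have $\widehat{T}^I$-character equal to $\Char V(\mu)$, and to deduce from this that the closed immersion $\Spec H_{\Gg,\et}\hookrightarrow \widehat{G}^I$ is surjective. Two things go wrong here. First, the inference ``the restriction functor is essentially surjective on simples, therefore the closed immersion is an isomorphism'' is not valid as stated: for a proper closed subgroup $H\subsetneq G$, the restriction $\Rep(G)\to \Rep(H)$ can certainly hit all simple $H$-modules as constituents. What you actually need is the full character identity $\Char \IC_\mu^{\et} = \Char V(\mu)$ for \emph{every} $\mu$, which (via comparing the invariant rings $\IZ[X^*(\widehat{T}^I)]^{W_{H_{\Gg,\et}}}$ and $\IZ[X^*(\widehat{T}^I)]^{W^I}$) would force the Weyl groups, hence the root data, hence the groups to agree. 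Second, and more seriously, that character identity is not a consequence of the results you cite. \thref{Intersections of Schubert cells and semi-infinite orbits} gives a filtrable cell decomposition of $\Gr_{\Gg,\mu}\cap\Ss_\nu^-$, and \thref{fiber functor of standard} computes $\CT_{B,\nu}$ of the \emph{standard} object $\Jj_!(\mu,\unit)$ in terms of irreducible components, but neither gives $\CT_B^{\et}(\IC_\mu^{\et})$ directly. \thref{character formula 2} is proved combinatorially and expresses $\Char V(\mu)$ through LS galleries; connecting this to the IC stalk amounts to the ramified analogue of the Gaussent--Littelmann theorem that MV cycles are enumerated by LS galleries, which the paper does not prove and does not need. (One would have to show the top-dimensional pieces of the decomposition of $\Gr_{\Gg,\leq\mu}\cap\Ss_\nu^-$ are exactly indexed by $\Gamma_{\LS}^+(\gamma_\mu,\nu)$, a nontrivial refinement of \thref{Intersections of Schubert cells and semi-infinite orbits}.)

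The paper avoids all of this by a structural argument instead of a character computation. After reducing to the adjoint case, it first treats groups of relative semisimple rank one: there $\Spec H_{\Gg,\et}$ contains $\widehat{T}^I$ (by constant-term compatibility) and must contain a root group (because the fiber functor produces objects of dimension greater than one), so by reductivity and the bijectivity of $\pi_0(\widehat{T}^I)\to\pi_0(\widehat{G}^I)$ the inclusion is an equality. The general case then follows from \cite[Corollary 6.6]{ALRR:Fixed}, a criterion for closed immersions of fixed-point group schemes to be isomorphisms once one controls the rank-one Levi subgroups, combined with the compatibility of nearby cycles with constant terms. This sidesteps any need for a fine character computation. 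If you want to pursue your route you would first need to prove the ramified MV-cycle/LS-gallery correspondence; the paper's reduction to rank one is both shorter and uses only what is already available.
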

\begin{proof}
	First, we show that \(\IQ_\ell[\widehat{G}]\to H_{\Gg,\et}\) factors through \(\IQ_\ell[\widehat{G}^I]\), where the \(I\)-action on \(\widehat{G}\) is induced by the \(I\)-action on \(\Sat_{G,C}^{\dR}\).
	By \cite[Lemma 4.5]{Zhu:Ramified}, it suffices to construct equivalences \(\Psi_{\Gg}(\gamma \cdot -) \cong \Psi_{\Gg}(-)\) for any \(\gamma\in I\).
	Since the \(I\)-action on \(C\) preserves \(\Oo_C\) and induces the trivial action on \(\overline{k}\), the \(I\)-action on \(\Gr_{G,C}^{\dR}\) extends to an \(I\)-action on \(\Gr_{\Gg}^{\BD}\) which acts trivially on the special fiber.
	This gives the desired equivalences, and hence a surjection \(\IQ_\ell[\widehat{G}^I] \to H_{\Gg,\et}\).
	Passing to group schemes, we get a closed immersion \(\phi\colon \Spec  H_{\Gg,\et} \to \widehat{G}^I\) of (possibly disconnected) reductive groups, as both representation categories are semisimple.
	
	We show \(\phi\) is an isomorphism in several steps.
	By \cite[Lemma 9.4]{ALRR:Modular} (which also works in mixed characteristic), we may assume \(G\) is adjoint.
	In particular, the proposition holds for tori.
	
	Assume first that \(G\) has a single nondivisible relative root, i.e., is of \(F\)-rank 1.
	Then \(\Spec H_{\Gg,\et}\) cannot be a torus, since \(\Perv_{\et}(L^+\Gg\backslash LG /L^+\Gg,\IQ_\ell)^{\bd}\) contains simple objects which have dimension \(>1\) when applying the fiber functor.
	Hence \(\Spec H_{\Gg,\et}\subseteq \widehat{G}^I\) must contain a root group.
	By compatibility of the constant terms with nearby cycles from \cite[Proposition 6.13]{AGLR:Local}, \(\Spec H_{\Gg,\et}\) must also contain the maximal torus \(\widehat{T}^I\subseteq \widehat{G}^I\).
	Hence \(\Spec H_{\Gg,\et} \subseteq \widehat{G}^I\) is an equality, by reductivity and bijectivity of \(\pi_0(\widehat{T}^I)\to \pi_0(\widehat{G}^I)\) \cite[Lemma 4.6]{Zhu:Ramified}.
	
	Finally, by \thref{cheat for monoidality of CT} and our assumption that \(G\) is adjoint, we have surjective morphisms \(H_{\Gg,\et}\to H_{\Mm,\et}\to H_{\Tt,\et}\) for any standard Levi \(M\subseteq G\).
	Hence, we can use \cite[Corollary 6.6]{ALRR:Fixed} to deduce the general case, by the compatibility of nearby cycles and constant term functors from \cite[Proposition 6.13]{AGLR:Local}, and the case where \(G\) has semisimple \(F\)-rank 1.
\end{proof}

This gives us a mixed characteristic ramified Satake equivalence for étale \(\IQ_\ell\)-sheaves:

\begin{cor}
	After fixing a compatible system of \(\ell^n\)-roots of unity in \(k\), there is a canonical monoidal equivalence
	\[\Perv_{\et}(L^+\Gg\backslash LG /L^+\Gg,\IQ_\ell)^{\bd} \cong \Rep(\widehat{G}^I)^{\fd}.\]
\end{cor}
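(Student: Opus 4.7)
The plan is to deduce this from the $\ell$-adic analogue of \thref{Hopf algebra thm} together with \thref{Hopf algebra for ladic sheaves}. First I would note that the Tannakian reconstruction of Section~\ref{sect-tannakian} applies verbatim in the $\ell$-adic setting: the fiber functor $\FibFunctor_{\et} \colon \Perv_{\et}(L^+\Gg\backslash LG / L^+\Gg, \IQ_\ell)^{\bd} \to \QlVect$ is exact, faithful, and monoidal (this was already used in \thref{monoidality of nearby cycles} and \thref{bialgebra is commutative}), so the classical Tannakian formalism (e.g.~\cite{DeligneMilne:Tannakian}) produces a canonical monoidal equivalence
\[\Perv_{\et}(L^+\Gg\backslash LG / L^+\Gg,\IQ_\ell)^{\bd} \cong \coMod_{H_{\Gg,\et}}(\QlVect)^{\fd} \cong \Rep(\Spec H_{\Gg,\et})^{\fd},\]
where the finite-dimensionality of comodules comes from the boundedness assumption.

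Next, I would invoke \thref{Hopf algebra for ladic sheaves}, which provides a canonical isomorphism $\IQ_\ell[\widehat{G}^I] \cong H_{\Gg,\et}$ of Hopf algebras, and hence an isomorphism of group schemes $\Spec H_{\Gg,\et} \cong \widehat{G}^I$. Composing the two equivalences yields the desired monoidal equivalence
\[\Perv_{\et}(L^+\Gg\backslash LG /L^+\Gg,\IQ_\ell)^{\bd} \cong \Rep(\widehat{G}^I)^{\fd}.\]

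The role of the compatible system of $\ell^n$-roots of unity in $k$ is to make the Satake equivalence \thref{Fargues-Scholze Satake} canonical rather than only canonical up to identifying $\IQ_\ell$-Tate twists with $\IQ_\ell$. Concretely, such a choice trivializes $\IQ_\ell(1)$, hence pins down the identification of root groups of $\widehat{G}$ appearing in \thref{Fargues-Scholze Satake}. This choice then propagates through the nearby cycles functor $\Psi_\Gg$ used in the proof of \thref{Hopf algebra for ladic sheaves}, rigidifying the surjection $\IQ_\ell[\widehat{G}] \to H_{\Gg,\et}$ and therefore the induced isomorphism $\IQ_\ell[\widehat{G}^I] \cong H_{\Gg,\et}$. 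Since both steps above are entirely formal once \thref{Hopf algebra thm} and \thref{Hopf algebra for ladic sheaves} are granted, there is no genuine obstacle here; the only delicate point is tracking the canonicity of the Tate twist identifications through the chain of equivalences, but this is handled uniformly by the choice of roots of unity.
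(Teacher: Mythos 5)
Your proof is correct and supplies exactly the reasoning the paper leaves implicit: the corollary follows by combining the Tannakian/Barr--Beck reconstruction of $\Perv_{\et}(L^+\Gg\backslash LG/L^+\Gg,\IQ_\ell)^{\bd}$ as $H_{\Gg,\et}$-comodules (the $\ell$-adic shadow of \thref{Hopf algebra thm}) with the identification $\IQ_\ell[\widehat{G}^I]\cong H_{\Gg,\et}$ from \thref{Hopf algebra for ladic sheaves}, the choice of $\ell^n$-roots of unity serving precisely to rigidify the Tate twist and hence the pinning in \thref{Fargues-Scholze Satake}. One small imprecision: invoking \cite{DeligneMilne:Tannakian} as ``classical Tannakian formalism'' glosses over the fact that at this stage the convolution on $\Perv_{\et}$ has not yet been given a commutativity constraint, so what is actually available is the bialgebra reconstruction via monadicity of $\FibFunctor_{\et}$ (as in the proof of \thref{Hopf algebra thm}), with commutativity of $H_{\Gg,\et}$ established only afterwards in \thref{bialgebra is commutative}; this does not affect the conclusion since $\widehat{G}^I$ is already a group scheme by the time the isomorphism of Hopf algebras is applied.
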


\section{Identification of the dual group}\label{sect-identification}

By \thref{Hopf algebra thm}, we have an equivalence \(\MATM_{L^+\Gg}(\Gr_\Gg)\cong \coMod_{H_\Gg}(\MATM(\Spec k))\) of monoidal categories, where \(\Gg/\Oo\) is very special parahoric.
In order to finish the Satake equivalence, it hence remains to relate the Hopf algebra \(H_\Gg\) to the Langlands dual group of \(G\); this will be the goal of this section.
By \thref{Hopf algebra is reduced}, we can view \(H_\Gg\) as a \(\IZ\)-graded Hopf algebra over \(\IZ[\frac{1}{p}]\) with a \(\Gal(k'/k)=\Gamma\)-action, or in other words, since the \(\Gamma\)-action preserves the grading, as a module with a \(\IG_m\times \Gamma\)-action.

Let us start with some notation.
Recall that \(I \subseteq \Gal(\overline{F}/F)\) was the inertia subgroup, and that \(k'/k\) was the residue field of some unramified extension \(F'/F\) splitting the maximal \(\breve{F}\)-split torus \(S\subseteq G\).
Let \((\widehat{G}, \widehat{B}, \widehat{T}, \widehat{e})\) be the pinned dual group of \(G\) over \(\IZ[\frac{1}{p}]\).
The Galois group \(\Gal(\overline{F}/F)\) acts on \(\widehat{G}\) by pinning-preserving automorphisms, and we can consider the group \(\widehat{G}^I\) of inertia-invariants.
In particular, since \(I\) acts trivially on \(\widehat{G}^I\), the \(\Gal(\overline{F}/F)\)-action on \(\widehat{G}^I\) factors through a \(\Gal(\overline{k}/k)\)-action, and even through a \(\Gal(k'/k)=\Gamma\)-action.

On the other hand, consider \(2\rho\in X_*(\widehat{T})=X^*(T)\) as the sum of the positive coroots of \(\widehat{G}\).
This is clearly \(I\)-invariant, and at least after passing to the adjoint quotient \(\widehat{G}\to \widehat{G}_{\adj}\), it admits a unique square root \(\rho_{\adj}\colon \IG_m\to \widehat{T}_{\adj}\).
This induces a \(\IG_m\)-action on \(\widehat{G}^I\) via \(\IG_m\xrightarrow{\rho_{\adj}} \widehat{T}_{\adj}^I \xrightarrow{\operatorname{Ad}} \Aut(\widehat{G}^I)\).
Then the above \(\Gamma\)- and \(\IG_m\)-actions on \(\widehat{G}^I\) commute, so they induce an \(\IG_m\times \Gamma\)-action on \(\widehat{G}^I\).
The main theorem of this section is the following.

\begin{thm}\thlabel{identification hopf algebra}
	The Hopf algebra \(H_\Gg\in \MATM(\Spec k)\), viewed as an \(\IZ[\frac{1}{p}]\)-module with \(\IG_m\times \Gamma\)-action, is isomorphic to the Hopf algebra corresponding to \(\widehat{G}^I\) equipped with the above \(\IG_m\times \Gamma\)-action.
	More precisely, such an isomorphism is canonical, up to the choice of a basis element of the Tate twist \(\IZ[\frac{1}{p}](1)\).
\end{thm}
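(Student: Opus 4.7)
The plan is to bootstrap from the rational $\ell$-adic identification in \thref{Hopf algebra for ladic sheaves} to an integral motivic one, using flatness of $H_\Gg$ together with \thref{change of scalars}. Since each truncation of $H_\Gg$ corresponds to a dualizable object in $\MATM(\Spec k)$ by \thref{algebras are dualizable}, $H_\Gg$ is flat over $\IZ[\frac{1}{p}]$, so a candidate morphism $\IZ[\frac{1}{p}][\widehat{G}^I] \to H_\Gg$ of $\IG_m \times \Gamma$-equivariant Hopf algebras is an isomorphism if and only if it is so after tensoring with $\IQ$ and with each $\IF_\ell$, $\ell \neq p$.

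To construct the candidate morphism, I would first treat the torus case: for $\Gg = \Tt$, $\Gr_\Tt$ is a discrete disjoint union of copies of $\Spec k'$ indexed by $X_*(T)_I$, and a direct computation yields $H_\Tt \cong \IZ[\frac{1}{p}][X_*(T)_I]$ as a $\IG_m \times \Gamma$-equivariant Hopf algebra, which matches the coordinate ring of $\widehat{T}^I$. Then \thref{CT monoidal} produces a Hopf algebra map $H_\Gg \to H_\Tt$, dualizing to a closed immersion $\widehat{T}^I \hookrightarrow \Spec H_\Gg$, which I would extend to root subgroups via the $F$-rank-one calculation as in \cite[§9]{ALRR:Modular}, the key geometric inputs being \thref{Intersections of Schubert cells and semi-infinite orbits} and \thref{fiber functor of standard}. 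Surjectivity of $H_\Gg \to H_\Mm$ for standard Levi subgroups $M$ (the motivic analog of \thref{cheat for monoidality of CT}) together with \cite[Corollary 6.6]{ALRR:Fixed} then assembles the torus and root subgroup data into the global morphism $\IZ[\frac{1}{p}][\widehat{G}^I] \to H_\Gg$; the choice of trivialization of $\IZ[\frac{1}{p}](1)$ enters at this stage to rigidify the identifications of root subgroups.

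Checking this map is an isomorphism after rationalization reduces, via the $\ell$-adic realization of \thref{Etale realization} and joint conservativity (\thref{conservativity of etale realization}), to \thref{Hopf algebra for ladic sheaves}. The modular reduction is analogous but is where I expect the main obstacle, since over $\IF_\ell$ the group $\widehat{G}^I$ need not be reductive; here one must adapt the rank-one arguments of \cite[§9]{ALRR:Modular} to control the fibers of $\Spec H_\Gg$ carefully, again leaning on \cite{ALRR:Fixed} for the structure of $\widehat{G}^I$. Finally, the $\Gamma$-equivariance is automatic since every construction in sight is Galois-equivariant, while the $\IG_m$-action (tracking the Tate twist grading) matches $\Ad \circ \rho_{\adj}$ on $\widehat{G}^I$: the dimension formula $\dim \Gr_{\Gg,\mu} = \langle 2\rho, \mu\rangle$ combined with the shifts built into the normalizations of $\CT_B$ and $\Jj_!(\mu,-)$ forces the weight shifts to agree on the torus and on each root subgroup, which suffices.
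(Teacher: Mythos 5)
Your overall strategy---torus case first, $\CT_B$ produces a closed immersion $\widehat{T}^I \hookrightarrow \widetilde{\Gg} := \Spec H_\Gg$, handle $\breve{F}$-rank one by explicit computation, bootstrap from the rational $\ell$-adic result using flatness---matches the paper closely in its ingredients. However, there is a genuine gap in the step where you claim the torus and root-subgroup data ``assembles'' into the global morphism $\IZ[\frac{1}{p}][\widehat{G}^I] \to H_\Gg$. Surjectivity of $H_\Gg \twoheadrightarrow H_{\Mm_a}$ for rank-one Levis, dually, gives closed immersions $\widetilde{\Mm}_a \hookrightarrow \widetilde{\Gg}$; knowing $\widetilde{\Mm}_a \cong \widehat{M}_a^I$ then exhibits rank-one Levi subgroups inside $\widetilde{\Gg}$, but this does not by itself produce any map $\widetilde{\Gg} \to \widehat{G}^I$, and \cite[Corollary 6.6]{ALRR:Fixed} is a tool for proving a given closed subgroup of $\widehat{G}^I$ is all of $\widehat{G}^I$, not for building a homomorphism out of $\widetilde{\Gg}$ whose target is not yet known. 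The paper fills this gap with a $\GL_N$ embedding trick: choose a representation $\phi\colon \widetilde{\Gg} \to \GL_N$ coming from an object of $\MATM_{L^+\Gg}(\Gr_\Gg)$ and a faithful representation $\psi\colon \widehat{G} \to \GL_N$ such that $\phi$ factors through $\psi$ after base change to $\IQ_\ell$ (which follows from \thref{Hopf algebra for ladic sheaves}), then promotes the factorization to $\IQ$ and then integrally by flatness; the Levi constant-term compatibility is then used, again $\ell$-adically first, to show the resulting map $\widetilde{\Gg} \to \widehat{G}$ factors through $\widehat{G}^I$. Without some such device to write down the map, your step (4) is not an argument.

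A second, smaller difference: you propose to verify isomorphy by checking separately after $\otimes\,\IQ$ and $\otimes\,\IF_\ell$, which forces you to confront the non-reductivity and non-reducedness of $\widehat{G}^I$ over $\IF_\ell$ directly. The paper instead establishes the generic isomorphism via $\ell$-adic realization and then deduces the integral statement from fiberwise surjectivity (using \cite[Lemma 6.20]{CassvdHScholbach:Geometric} and \cite[Tag 056A]{stacks-project}), which is in turn reduced to the rank-one case and \cite[Corollary 6.6]{ALRR:Fixed}. The surjectivity route avoids some of the delicate analysis of modular fibers you anticipate as the ``main obstacle.'' You should also be aware that the rank-one classification for quasi-split groups includes Weil restrictions of scalars along totally ramified extensions; the paper treats this as a separate case, and your outline does not mention it.
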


For étale \(\IQ_\ell\)-sheaves, this was already shown in \thref{Hopf algebra for ladic sheaves}.

\subsection{Proof of \thref{identification hopf algebra}}

Let \(\widetilde{\Gg} = \Spec H_\Gg\) denote the Tannakian dual group; it is flat over \(\IZ[\frac{1}{p}]\) by \thref{algebras are dualizable}.
Our goal will be to construct a canonical isomorphism \(\widehat{G}^I \cong \widetilde{\Gg}\), compatible with the \(\IG_m\times \Gamma\)-action.
We also fix a basis element of the Tate twist \(\IZ[\frac{1}{p}](1)\).

\begin{lem}\thlabel{identification - torus}
	\thref{identification hopf algebra} holds if \(G=T\) is a torus.
\end{lem}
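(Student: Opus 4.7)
The plan is to reduce both sides to explicit group algebras. First, I would observe that $\Gr_{\Tt}$ is an étale $k$-scheme: concretely,
\[
\Gr_{\Tt} \cong \coprod_{[\nu] \in X_*(T)_I/\Gamma} \Spec k_{[\nu]},
\]
where $k_{[\nu]}$ is the finite extension of $k$ cut out by the stabilizer of $\nu$. Since $T$ (and hence $LT$) is commutative, $L^+\Tt$ acts trivially on $\Gr_{\Tt}$, so the forgetful functor $\MATM_{L^+\Tt}(\Gr_{\Tt}) \to \MATM(\Gr_{\Tt})$ is an equivalence. Using \thref{remarks about Artin-Tate}\eqref{etale descent} together with \thref{graded to motives}, I would then identify $\MATM_{L^+\Tt}(\Gr_{\Tt})$ with the abelian category of $X_*(T)_I$-graded objects in $\MATM(\Spec k)$ compatibly equipped with a $\Gamma$-action, where $\Gamma$ acts on the grading lattice $X_*(T)_I$ through its natural action.

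Next I would describe the fiber functor and its right adjoint in these terms. For a torus one has $2\rho = 0$, so the shift in \thref{dfn--CT} is trivial and $\CT_B = \mathrm{id}$; hence $\FibFunctor = \pi_{\Tt,!} u^!$ simply sums the graded pieces. Its right adjoint sends $M \in \MATM(\Spec k)$ to the graded object having $M$ in every degree, i.e., to $M \otimes \IZ[\frac{1}{p}][X_*(T)_I]$ with diagonal $\Gamma$-action. Thus the comonad of the adjunction is tensoring with
\[
H_{\Tt} \cong \IZ[\tfrac{1}{p}][X_*(T)_I],
\]
viewed as a $\Gamma$-equivariant object of $\MATM(\Spec k)$ concentrated in Tate-weight zero, hence with trivial $\IG_m$-action. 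The Hopf algebra structure is the standard one on the group algebra: the comultiplication is dual to the convolution $\conv$, which on $\Gr_{\Tt}$ is induced by addition on $X_*(T)_I$.

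On the dual side, $\widehat{T} = \Spec \IZ[\frac{1}{p}][X_*(T)]$ with $I$ acting through $X_*(T)$, so that
\[
\widehat{T}^I = \Spec \IZ[\tfrac{1}{p}][X_*(T)_I]
\]
is a well-defined diagonalizable $\IZ[\frac{1}{p}]$-group scheme (any $p$-torsion in $X_*(T)_I$ is inverted). The residual $\Gamma$-action on the coordinate ring is exactly the one on $X_*(T)_I$, matching that of $H_{\Tt}$. Since $\rho_{\adj} = 0$ for a torus, the $\IG_m$-action on $\widehat{T}^I$ is trivial, matching the trivial grading on $H_{\Tt}$. The resulting identification is canonical, with no role played by the choice of basis of $\IZ[\frac{1}{p}](1)$ since no nontrivial Tate twists arise for tori.

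The only step requiring a bit of care is the étale descent producing the $\Gamma$-equivariant graded picture and the compatibility of convolution with addition on $X_*(T)_I$ in this descended form; but this is essentially packaged into the setup of Sections \ref{sect-motives} and \ref{Section--affine flag varieties}, and no genuinely new geometric input is needed.
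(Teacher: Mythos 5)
Your argument is correct and follows essentially the same route as the paper: reduce via the trivial $L^+\Tt$-action to (Artin-)Tate motives on the étale scheme $\Gr_{\Tt}$, identify these with $X_*(T)_I$-graded objects with $\Gamma$-action by Galois descent, and read off $H_{\Tt}\cong\IZ[\frac{1}{p}][X_*(T)_I]$ with trivial $\IG_m$-action and the natural $\Gamma$-action. You spell out a bit more of the coalgebra structure than the paper does, but there is no genuine difference in approach.
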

\begin{proof}
	Assume first that \(G=T\) is residually split.
	In this case, \(T\) has a unique parahoric \(\Oo\)-model \(\Tt\), for which \(\Gr_\Tt \cong \coprod_{\nu\in X_*(T)_I} \Spec k\).
	Since there is a canonical isomorphism \(X_*(T)_I\cong X^*(\widehat{T}^I)\), the desired isomorphism \(\widetilde{\Tt}\cong \widehat{T}^I\) follows from \(\MATM_{L^+\Tt}(\Spec k) \cong \MATM(\Spec k)\), which in turns follows from \cite[Proposition 3.2.20]{RicharzScholbach:Intersection} and the fact that \(L^+\Tt\) acts trivially on \(\Gr_\Tt\).
	Moreover, \(\IG_m\) acts trivially on both \(\widetilde{\Tt}\) and \(\widehat{T}^I\).
	
	The case of general \(T\) follows by descent, observing that \(\Gamma\) acts on both \(\widetilde{\Tt}\) and \(\widehat{T}^I\) via the natural \(\Gamma\)-action on \(X_*(T)_I\cong X^*(\widehat{T}^I)\).
	(Note that the isomorphism is independent of a trivialization of the Tate twist in this case).
\end{proof}

As the constant term functor \(\CT_B\colon \MATM_{L^+\Gg}(\Gr_\Gg)\to \MATM_{L^+\Tt}(\Gr_\Tt)\) is monoidal and commutes with the fiber functors, it induces a morphism \(H_\Gg\to H_\Tt\) of Hopf algebras, or equivalently a group morphism \(\widetilde{\Tt}\to \widetilde{\Gg}\). 
Let \(\nu\in X_*(T)_I\), and \(\nu^+ \in X_*(T)_I^+\) be its dominant representative. 
Then the motive in \(\MATM_{L^+\Tt}(\Gr_{\Tt})\) supported at \(\nu\) with value \(\Ff\in \MATM(\Spec k)\) is a quotient of \(\CT_B(\IC_{\nu^+}(\Ff))\) by \thref{intersection with Weyl group conjugate}.
Hence, \(\widetilde{\Tt}\to \widetilde{\Gg}\) is a closed immersion.

Let us look at the generic fiber.
Since the base change of \(H_\Gg\) to \(\IQ_\ell\) agrees, after forgetting the \(\IG_m\times \Gamma\)-action, with \(H_{\Gg,\et}\) from Section \ref{Sect--padic}, we see that \(H_{\Gg,\IQ}\) is (possibly disconnected) reductive.
Moreover, denoting the neutral components by \((-)^{\circ}\), the inclusion \(\widetilde{\Tt}_{\IQ}^{\circ} \to \widetilde{\Gg}_{\IQ}^{\circ}\) is a split maximal torus, so that \(\widetilde{\Gg}_{\IQ}^{\circ}\) is split reductive.

Now, consider the quotient \(H_\Gg\to K\) that stabilizes the filtration \(\bigoplus_{i\geq n} \pH^i \pi_{\Gg,!}u^!\) of the fiber functor \(F\).
This corresponds to a subgroup \(\widetilde{\Bb}\subseteq \widetilde{\Gg}\), such that \(\widetilde{\Tt}\subseteq \widetilde{\Bb}\).
It will follow from the proof of \thref{identification hopf algebra} that \(\widetilde{\Bb}_{\IQ}^{\circ}\subseteq \widetilde{\Gg}_{\IQ}^{\circ}\) is a Borel; we omit details as we will not need this explicitly.

Next, we consider two explicit examples: the split group \(\PGL_2\), and a ramified \(\PU_3\).
In fact, we even allow groups \(G\) which become one of these two groups after an unramified base change.
The case of \(\PGL_2\) can be handled as in \cite[Proposition 6.22]{CassvdHScholbach:Geometric}, cf.~also \cite[§VI.11]{FarguesScholze:Geometrization}, and we refer to loc.~cit.~for details; note that \(I\) acts trivially on \(\widehat{\PGL_2} = \SL_2\) in that case.

\begin{lem}
	\thref{identification hopf algebra} holds if \(G_{F'}\cong \PU_3\) corresponding to a ramified quadratic extension.
\end{lem}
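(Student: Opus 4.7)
The plan is to mimic the proof for split $\PGL_2$ given in \cite[Proposition 6.22]{CassvdHScholbach:Geometric}, using the fact that the geometry of $\Gr_\Gg$ for ramified $\PU_3$ is structurally very similar to that of $\Gr_{\PGL_2}$. First I would reduce to $G = \PU_3$: since $G_{F'} \cong \PU_3$ and $G$ is quasi-split (being so over $F'$), $G$ must equal the quasi-split form $\PU_3$ itself. The essential geometric input is the explicit description at the end of Section \ref{sect-LSMV}: if $\mu \in X_*(T)_I^+$ denotes the smallest non-trivial dominant cocharacter (corresponding to $1/2$ in that example's normalization), then $\Gr_{\Gg, \mu} \cong \IA_k^{2, \perf}$, the semi-infinite intersections $\Gr_{\Gg, \mu} \cap \Ss_\nu^+$ are non-empty exactly for $\nu = \pm \mu$ where they are perfect cells of the expected top dimension, and $\pi_0(\Gr_\Gg) = \pi_1(\PU_3)_I = 0$ by \thref{remark pu3}.

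The main computation is to establish the convolution decomposition $\IC_\mu(\unit) \conv \IC_\mu(\unit) \cong \IC_0(\unit) \oplus \IC_{2\mu}(\unit)$ in $\MATM_{L^+\Gg}(\Gr_\Gg, \IQ)$. By semi-smallness of the convolution morphism (\thref{semismallness of convolution}) and semisimplicity of the rational category (\thref{semisimplicity}), the left-hand side is a direct sum of IC motives $\IC_\nu(\unit(i))$; computing its constant term via \thref{CT and twisted boxtimes} using the explicit semi-infinite decomposition above pins down the only possible decomposition. Alternatively, one may apply the nearby cycles of Section \ref{Sect--padic} together with \thref{monoidality of nearby cycles} and \thref{Fargues-Scholze Satake} to reduce the statement to the well-known identity $V_{\mathrm{std}} \otimes V_{\mathrm{std}} = V(0) \oplus V(2\mu)$ for the standard representation of $\SL_2$.

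Combined with the rank 1 of $\widetilde{\Tt}_{\IQ} \cong \widehat{T}_{\IQ}^I$ from \thref{identification - torus} and the connectedness of $\widetilde{\Gg}$ (since $\Gr_\Gg$ is connected), the Tannakian formalism then identifies $\widetilde{\Gg}_{\IQ}$ as a connected rank 1 split reductive $\IQ$-group admitting a 2-dimensional faithful representation with tensor square equal to trivial $\oplus$ adjoint, which rigidly forces $\widetilde{\Gg}_{\IQ} \cong \SL_2 \cong \widehat{G}^I_{\IQ}$ compatibly with the maximal tori, the $\IG_m$-action via $\rho_{\adj}$ (tracked by the cohomological grading of the fiber functor), and the trivial $\Gamma$-action (since $G$ is residually split).

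The remaining step is to lift this to an isomorphism of flat affine group schemes over $\IZ[\tfrac{1}{p}]$. Using flatness of $H_\Gg$ (\thref{algebras are dualizable}) and the fact that $\widetilde{\Tt} \cong \widehat{T}^I$ integrally (\thref{identification - torus}), the problem reduces to matching the integral lattice structures on the coordinate rings. I expect the hard part to be this final integral identification, analogous to the most delicate step in the $\PGL_2$ case: one must verify that the cellular decomposition of the Schubert varieties (\thref{Intersections of Schubert cells and semi-infinite orbits}) together with the grading coming from the Tate twists produces exactly the integral lattice of $\IZ[\tfrac{1}{p}][\widehat{G}^I]$, which can be checked by reducing to explicit direct summands of $H_\Gg$ indexed by the dominant cocharacters $n\mu$.
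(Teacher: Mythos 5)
Your proposal contains a genuine error that derails the identification: you have conflated the $\SU_3$ and $\PU_3$ cases. The lemma is about $G_{F'}\cong \PU_3$, whose Langlands dual is $\widehat{G} = \SL_3$ (simply connected, since $\PU_3$ is adjoint). Over $\IQ$, the inertia-invariants $\widehat{G}^I = \SL_3^{\IZ/2\IZ}$ are isomorphic to $\SO_3 \cong \PGL_2$, \emph{not} $\SL_2$. The explicit example at the end of Section \ref{sect-LSMV} that you cite (with $X_*(T)_I \cong \tfrac{1}{2}\IZ$ and a minuscule $\mu = 1/2$ giving a two-dimensional picture with orbits only for $\nu = \pm\mu$) is written for $\SU_3$, whose cocharacter coinvariants are the half-integers; for $\PU_3$ the lattice $X_*(T)_I$ is different, the smallest nontrivial dominant element is \emph{quasi-minuscule}, and the corresponding $\Gr_{\Gg,\mu}$ meets \emph{three} semi-infinite orbits (with weights $-\mu, 0, \mu$), giving $\FibFunctor(\Jj_!(\mu,\unit)) \cong \unit \oplus \unit(-1) \oplus \unit(-2)$, a three-dimensional representation. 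Consequently your proposed tensor square decomposition $\IC_\mu \conv \IC_\mu \cong \IC_0 \oplus \IC_{2\mu}$ is false — for $\PGL_2$ the three-dimensional representation satisfies $V_3 \otimes V_3 \cong V_1 \oplus V_3 \oplus V_5$, with three summands. More seriously, $\PGL_2$ admits no faithful two-dimensional representation, so the rigidity argument you propose ("connected rank-one reductive group admitting a faithful $2$-dimensional representation with prescribed tensor square") cannot identify $\widetilde{\Gg}_{\IQ}$ with $\widehat{G}^I_{\IQ}$ for this group.

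The paper instead embeds $\widetilde{\Gg}$ into $\GL_3$ via the three-dimensional $\FibFunctor(\Jj_!(\mu,\unit))$, shows this factors through $\SL_3$ (first over $\IQ$ using weight considerations, then integrally by flatness), and then uses the $\ell$-adic nearby cycles comparison of Section \ref{Sect--padic} together with \thref{Hopf algebra for ladic sheaves} to deduce that the map factors through $\SL_3^I$ — first over $\IQ_\ell$, then over $\IQ$, then integrally by flatness, invoking flatness of $\SL_3^I$ from \cite[Theorem 1.1 (1)]{ALRR:Fixed}. The final isomorphism is established by checking schematic dominance fiberwise, which requires knowing the reduced fibers of $\SL_3^I$ (these degenerate to $\SL_2$ at $\ell = 2$, so the integral story is genuinely subtle). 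Your sketch does correctly point to nearby cycles as the mechanism for controlling the inertia action, but the concrete geometry and representation theory it rests on is for the wrong group. If you repair the input — quasi-minuscule $\mu$, three weights, and $\widehat{G}^I \cong \PGL_2$ (generically) — your Tannakian strategy would need to pin down a group with a faithful three-dimensional orthogonal representation, at which point the argument essentially becomes the paper's $\SL_3 \supset \SL_3^I$ argument, and the integral subtleties at $\ell = 2$ (nonreducedness of $\SL_3^I$) would have to be handled exactly as the paper does.
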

There are two conjugacy classes of very special parahorics \(\Gg\), and the geometry of the corresponding twisted affine Grassmannians is different \cite[p.411]{Zhu:Ramified}.
However, it turns out that \(\widetilde{\Gg}\) is independent of the choice of very special parahoric.
\begin{proof}
	Note that in this situation, we have \(I\cong \IZ/2\IZ\) and \(\widehat{G} \cong \SL_3\).
	We first assume that \(G\) is residually split, i.e., that \(G\cong \PU_3\).
	We will remove this assumption at the end of the proof.
	Let \(0\neq \mu\in X_*(T)_I^+\) be the image of the unique quasi-minuscule dominant cocharacter, and consider the corresponding Schubert variety \(\Gr_{\Gg,\leq \mu}\).
	Then \(\widetilde{\Gg}\) acts on \(\FibFunctor(\Jj_!(\mu,\unit)) \cong \unit \oplus \unit(-1) \oplus \unit(-2)\), cf.~\thref{fiber functor of standard}, which gives a homomorphism \(\widetilde{\Gg}\to \GL(\unit \oplus \unit(-1) \oplus \unit(-2))\).
	Using the fixed trivialization of the Tate twist, we will identify \(\GL(\FibFunctor(\Jj_!(\mu,\unit))) \cong \GL_3\), and similarly for the special linear subgroup.
	Note that \(\widetilde{\Tt}\) acts on \(\unit\) by weight \(-2\), on \(\unit(-1)\) by weight \(0\), and on \(\unit(-2)\) by weight \(2\).
	Hence, the composition \(\widetilde{\Tt}\to \widetilde{\Gg} \to \GL_3\) factors through \(\SL_3\) rationally, and then integrally by flatness.
	The (a priori possibly disconnected) reductive group \(\widetilde{\Gg}_{\IQ}\) is connected and of rank 1, as this holds after base change to \(\IQ_\ell\) by \thref{Hopf algebra for ladic sheaves}.
	Hence, the map \(\widetilde{\Gg}\to \GL_3\) factors through \(\SL_3\) rationally, and even integrally by flatness.
	
	This special linear group comes equipped with a natural maximal torus and Borel, which contain the images of \(\widetilde{\Tt}\subseteq \widetilde{\Gg}\) and \(\widetilde{\Bb}\subseteq \widetilde{\Gg}\) respectively.
	For this Borel pair, the two simple root groups can be identified with \(\IHom(\unit,\unit(-1))\cong \unit(-1)\) and \(\IHom(\unit(-1),\unit(-2))\cong \unit(-1)\) respectively.
	Viewing everything as graded modules, we can forget the grading, in which case the choice of a trivialization of the Tate twist induces a pinning of \(\SL_3\).
	Consider the unique non-trivial action of \(\IZ/2\IZ\) on \(\SL_3\) preserving this pinning, cf.~\cite[§2.3.2]{ALRR:Fixed}. 
	We note that while the pinning depends on the choice of trivialization, the \(\IZ/2\IZ\)-action does not.
	However, such a trivialization does identify the base change of \(\SL_3\) to \(\IQ_\ell\) with the dual group of \(\PGL_3\) arising from \thref{Fargues-Scholze Satake}.
	Hence, \thref{Hopf algebra for ladic sheaves} tells us that \(\widetilde{\Gg}\to \SL(\FibFunctor(\IC_\mu(\unit)))\) factors through the \(\IZ/2\IZ\)-invariants after base change to \(\IQ_\ell\).
	Generically, we hence also get a factorization \(\widetilde{\Gg}\to \SL_{3,\IQ}^I\), which is an isomorphism by \thref{Hopf algebra for ladic sheaves}.
	Since all schemes involved are flat over \(\IZ[\frac{1}{p}]\), where we use \cite[Theorem 1.1 (1)]{ALRR:Fixed} for the invariants, we get a map \(\widetilde{\Gg}\to \widehat{G}^I\) integrally as well.
 	
	To show this map is an isomorphism, it suffices by \cite[Lemma 6.20]{CassvdHScholbach:Geometric} and \cite[Tag 056A]{stacks-project} to show that this map is schematically dominant, fiberwise over \(\Spec \IZ[\frac{1}{p}]\).
	By \cite[Example 5.9 (1) and Proposition 6.9]{ALRR:Fixed}, it is enough to show that this map is surjective.
	Since by loc.~cit., the reduced fibers of \(\SL_3^I\) are isomorphic to \(\SL_2\) if \(\ell=2\), and to \(\PGL_2\) otherwise, surjectivity follows from \cite[Lemma VI.11.2]{FarguesScholze:Geometrization} (while this lemma is only stated for \(\SL_2\), the proof also works for \(\PGL_2\)).
	
	We are left to identify the \(\IG_m\times \Gamma\)-action, for which we remove the assumption that \(G\) was residually split.
	The \(\IG_m\)-actions can be computed explicitly, or alternatively it suffices to check the compatibility after base change to \(\IQ_\ell\).
	Then the Tate twists were explicitly identified in \cite[§VI.11]{FarguesScholze:Geometrization} and \cite[Theorem 6.18]{CassvdHScholbach:Geometric}, and the two resulting \(\IG_m\)-actions agree (but note \cite[Remark 6.23]{CassvdHScholbach:Geometric}).
	Hence, \(\widetilde{\Gg} \to \SL_3\) is \(\IG_m\)-equivariant, where \(\SL_3 = \widehat{\PGL_3}\) is equipped with the \(\IG_m\)-action from \cite[(6.11)]{CassvdHScholbach:Geometric}.
	On the other hand, since the isomorphism for residually split groups is canonical, compare \cite{FarguesScholze:Geometrization, CassvdHScholbach:Geometric}, it automatically descends to a similar isomorphism for general quasi-split groups, compatibly with the \(\Gamma\)-actions.
\end{proof}

Next, we look at Weil restriction of scalars.

\begin{lem}
	Let \(F/K\) be a finite separable totally ramified extension.
	If \thref{identification hopf algebra} holds for \(G\), then it also holds for the Weil restriction \(\Res_{F/K} G\).
\end{lem}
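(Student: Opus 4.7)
The plan is to reduce the statement for \(G' := \Res_{F/K} G\) with the Weil-restricted parahoric \(\Gg' := \Res_{\Oo_F/\Oo_K} \Gg\) to the assumed statement for \(G\), by showing that both the Tannakian Hopf algebra \(H\) and the inertia-invariants of the dual group are unchanged under Weil restriction along a totally ramified extension.

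On the geometric side, since \(F/K\) is totally ramified the residue fields of \(\Oo_K\) and \(\Oo_F\) both equal \(k\), so for any perfect \(k\)-algebra \(R\) the identity \(W_{\Oo_K}(R) \otimes_{\Oo_K} \Oo_F \cong W_{\Oo_F}(R)\) yields natural isomorphisms \(L^+\Gg' \cong L^+\Gg\) and \(LG' \cong LG\), and hence \(\Gr_{\Gg'} \cong \Gr_\Gg\) equivariantly and with matching Schubert stratifications. I would deduce a monoidal equivalence \(\MATM_{L^+\Gg'}(\Gr_{\Gg'}) \cong \MATM_{L^+\Gg}(\Gr_\Gg)\) compatible with the fiber functor, and therefore an isomorphism \(H_{\Gg'} \cong H_\Gg\) of Hopf algebras in \(\MATM(\Spec k)\) together with their \(\IG_m \times \Gamma\)-actions.

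On the dual side, using the standard description \(\widehat{G'} = \mathrm{Ind}_{\Gal(\overline{K}/F)}^{\Gal(\overline{K}/K)} \widehat{G} \cong \widehat{G}^n\) with \(n = [F:K]\), together with the fact that \(I_K\) acts transitively on \(\Hom_K(F,\overline{K})\) with stabilizer \(I_F\) (since total ramification gives \(F \cdot K^{\mathrm{unr}} = F^{\mathrm{unr}}\), whence \(I_K \cap \Gal(\overline{K}/F) = I_F\)), the Mackey formula yields a canonical isomorphism \(\widehat{G'}^{I_K} \cong \widehat{G}^{I_F}\) of \(\IZ[\tfrac{1}{p}]\)-group schemes. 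The natural identifications \(\Gal(\overline{K}/K)/I_K \cong \Gal(\overline{k}/k) \cong \Gal(\overline{K}/F)/I_F\) make the residual \(\Gamma\)-actions correspond, and the equality \(2\rho_{\widehat{G'}} = (2\rho_{\widehat{G}}, \ldots, 2\rho_{\widehat{G}})\) under \(\widehat{G'} = \widehat{G}^n\) restricts to \(2\rho_{\widehat{G}}\) on the diagonal subgroup \(\widehat{G}^{I_F}\), so the \(\IG_m\)-actions via \(\rho_{\adj}\) agree.

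Combining these two identifications with the hypothesis for \(G\) immediately yields the desired identification for \(G'\). The main bookkeeping step is verifying the compatibility of the \(\IG_m \times \Gamma\)-structures through the whole chain; concretely, one checks that the perverse shifts on Schubert cells of \(\Gr_\Gg = \Gr_{\Gg'}\), given by pairings against \(2\rho_G\) and \(2\rho_{G'}\) respectively, agree under the canonical isomorphism \(X_*(T')_{I_K} \cong X_*(T)_{I_F}\) for \(T' = \Res_{F/K} T\), which also implements the compatibility of the \(\Gamma\)-actions on cocharacter lattices.
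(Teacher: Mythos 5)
Your proposal is correct and follows essentially the same approach as the paper: identify the (positive) loop groups and hence the affine flag varieties via the equality of residue fields (the paper invokes \cite[Lemma 3.2]{HainesRicharz:Smoothness}, you give the underlying Witt-vector isomorphism), deduce \(\widetilde{\Gg'} \cong \widetilde{\Gg}\); then use that \(\widehat{G'}\) is a product of \([F:K]\) copies of \(\widehat{G}\) with \(I_K\) acting transitively on the factors with stabilizer \(I_F\), so that taking invariants yields the diagonal copy \(\widehat{G}^{I_F}\); and finally trace through the compatibility of the \(\IG_m\times\Gamma\)-actions. Your write-up supplies slightly more detail at each of these steps (the explicit Mackey/induced-module phrasing, the identity \(F\cdot K^{\mathrm{unr}} = F^{\mathrm{unr}}\) justifying the stabilizer computation), but the structure and substance of the argument are the same.
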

\begin{proof}
	Using that the Bruhat-Tits buildings of \(G\) and \(G':=\Res_{F/K} G\) are canonically identified, let us fix corresponding very special parahorics \(\Gg\) and \(\Gg'\).
	Since \(F/K\) is totally ramified, they have the same residue field, and the corresponding (positive) loop groups, and hence also affine flag varieties, are isomorphic, cf.~\cite[Lemma 3.2]{HainesRicharz:Smoothness}.
	In particular, there is a canonical equivariant isomorphism \(\widetilde{\Gg} \cong \widetilde{\Gg'}\).
	
	Now, let \(I'\) be the inertia group of \(K\), which contains \(I\) as a subgroup; more precisely we have \(\Gal(\overline{F}/F)\cap I' = I\) inside \(\Gal(\overline{K}/K)\).
	Then by \cite[§5]{Borel:Automorphic}, there is a Galois-equivariant isomorphism
	\[\widehat{G'} \cong \operatorname{Ind}_{\Gal(\overline{F}/F)}^{\Gal(\overline{K}/K)}\widehat{G}.\]
	It follows that there is a natural \(\IG_m\times \Gamma\)-equivariant isomorphism \(\widehat{G'}^{I'}\cong \widehat{G}^I\), which gives the required equivariant isomorphisms
	\[\widetilde{\Gg'} \cong \widetilde{\Gg} \cong \widehat{G}^I \cong \widehat{G'}^{I'}.\]
\end{proof}

This finishes all the cases where \(G\) is adjoint and has at most one nondivisible relative root. 
We can also remove the assumption that \(G\) is adjoint.

\begin{lem}\thlabel{identification - single root}
	\thref{identification hopf algebra} holds if \(G\) has semisimple \(\breve{F}\)-rank at most 1.
\end{lem}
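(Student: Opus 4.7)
The semisimple $\breve{F}$-rank zero case is \thref{identification - torus}, so I assume $G$ has semisimple $\breve{F}$-rank equal to $1$. The plan is to reduce to the adjoint case, where the preceding lemmas apply, and then recover the full statement via central isogenies. Since $G \to G_{\adj}$ has kernel a central torus, $G_{\adj}$ is absolutely simple adjoint of $F$-rank $1$, and being residually quasi-split, it must be either a form of $\PGL_2$ or a ramified quasi-split $\PU_3$. (Any nontrivial totally ramified Weil restriction would raise the $\breve{F}$-rank above $1$, so no further restriction of scalars is needed.) The preceding lemmas thus furnish a canonical $\IG_m\times \Gamma$-equivariant isomorphism $\widetilde{\Gg_{\adj}} \cong \widehat{G_{\adj}}^I = \widehat{G}_{\sico}^I$.

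To pass from $G_{\adj}$ to $G$, I would repeat the argument in the last paragraph of the proof of \thref{bialgebra is commutative}, now motivically rather than for $\ell$-adic sheaves. Since $\Gr_\Gg \to \Gr_{\Gg_{\adj}}$ restricts to an isomorphism on each connected component, \thref{connected components of affine flag varieties} together with \thref{forgetting equivariance is equivalence} identify $\MATM_{L^+\Gg}(\Gr_\Gg)$ with $\MATM_{L^+\Gg_{\adj}}(\Gr_{\Gg_{\adj}})$ equipped with a refinement of the $\pi_1(G_{\adj})_I$-grading to a $\pi_1(G)_I$-grading. Tannakian duality then yields
\[
\widetilde{\Gg} \cong \widetilde{\Gg_{\adj}} \overset{D(\pi_1(G_{\adj})_I)}{\times} D(\pi_1(G)_I),
\]
where $D(-)$ denotes the diagonalizable $\IZ[\frac{1}{p}]$-group with given character lattice, and the right-hand side is the amalgamated product (dually, the pushout of Hopf algebras).

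The identification with $\widehat{G}^I$ then follows, via the canonical isomorphisms $D(\pi_1(H)_I) \cong Z(\widehat{H})^I$ coming from $X^*(Z(\widehat{H})) \cong \pi_1(H)$, from the structural statement
\[
\widehat{G}^I \cong \widehat{G}_{\sico}^I \overset{Z(\widehat{G}_{\sico})^I}{\times} Z(\widehat{G})^I,
\]
obtained by taking $I$-invariants of the short exact sequence $1 \to Z(\widehat{G}_{\sico}) \to \widehat{G}_{\sico} \times Z(\widehat{G}) \to \widehat{G} \to 1$ that presents $\widehat{G}$ via its simply-connected cover. The $\IG_m\times \Gamma$-equivariance transfers automatically through the amalgamated product from the adjoint and torus cases. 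The principal technical obstacle lies in verifying this last structural identification integrally over $\IZ[\frac{1}{p}]$ (as opposed to only over $\IQ_\ell$, which was handled in \thref{Hopf algebra for ladic sheaves}), for which the flatness and fixed-point results of \cite{ALRR:Fixed} are required, especially in view of the potentially disconnected character of $\widehat{G}^I$ with integral coefficients.
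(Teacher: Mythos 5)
Your reduction from general $G$ to the adjoint quotient $G_{\adj}$---using that $\Gr_\Gg \to \Gr_{\Gg_{\adj}}$ is an isomorphism on connected components and that $\MTM_{L^+\Gg}(\Gr_\Gg)$ is $\MTM_{L^+\Gg_{\adj}}(\Gr_{\Gg_{\adj}})$ together with a refinement of the $\pi_1(G_{\adj})_I$-grading, yielding $\widetilde{\Gg}\cong \widetilde{\Gg_{\adj}}\overset{D(\pi_1(G_{\adj})_I)}{\times} D(\pi_1(G)_I)$---is precisely the paper's argument.

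However, your classification of the adjoint rank-1 case is wrong. The kernel of $G\to G_{\adj}$ is the center of $G$, which is of multiplicative type but need not be a torus (e.g.\ $\mu_n$ for $\SL_n$), so ``central torus'' is already imprecise. More seriously, the parenthetical ``any nontrivial totally ramified Weil restriction would raise the $\breve{F}$-rank above $1$'' is false: Weil restriction along a totally ramified extension $F'/F$ preserves the relative rank over the maximal unramified closure, so $\Res_{F'/F}\PGL_2$ and $\Res_{F'/F}\PU_3$ are adjoint, $F$-simple, of semisimple $\breve{F}$-rank $1$, and yet not forms of $\PGL_2$ or $\PU_3$ over $F$ (nor absolutely simple, contrary to your claim). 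This is exactly why the paper proves a separate Weil-restriction lemma immediately before the statement in question. Your conclusion that ``the preceding lemmas furnish a canonical isomorphism $\widetilde{\Gg_{\adj}}\cong \widehat{G_{\adj}}^I$'' happens to be salvageable, since the Weil-restriction lemma is among the preceding lemmas; but the justification you give for it (that restriction of scalars cannot occur) is wrong and would leave a real gap if those lemmas had not already been established.

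Finally, the structural identification $\widehat{G}^I\cong \widehat{G_{\adj}}^I\overset{Z(\widehat{G_{\adj}})^I}{\times} Z(\widehat{G})^I$ does not follow merely from ``taking $I$-invariants of the short exact sequence'': invariants are only left exact, so the surjectivity of $\widehat{G_{\adj}}^I\times Z(\widehat{G})^I\to\widehat{G}^I$ (as a map of flat $\IZ[\frac{1}{p}]$-group schemes) requires a separate argument, drawing on the fixed-point results of \cite{ALRR:Fixed}. You flag this as the ``principal technical obstacle'' rather than resolving it; the paper is admittedly just as terse here, but one should be aware this step is genuinely nontrivial.
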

\begin{proof}
	First, assume \(G\) is residually split.
	Consider the adjoint quotient \(G_{\adj}\) of \(G\), and the (automatically very special) parahoric \(\Gg_{\adj}\) corresponding to \(\Gg\) under the isomorphism \(\buil(G,F)\to \buil(G_{\adj},F)\) of reduced Bruhat-Tits buildings.
	Then, as we are working with perfect schemes, the natural map \(\Gr_{\Gg}\to \Gr_{\Gg_{\adj}}\) restricts to an isomorphism on each connected component, so that \(\Gr_{\Gg} \cong \Gr_{\Gg_{\adj}} \times_{\pi_1(G_{\adj})_I} \pi_1(G)_I\).
	So by \thref{forgetting equivariance is equivalence}, an object in \(\MTM_{L^+\Gg}(\Gr_{\Gg})\) is equivalent to an object in \(\MTM_{L^+\Gg_{\adj}}(\Gr_{\Gg_{\adj}})\) equipped with a refinement of the \(\pi_1(G_{\adj})_I\)-grading to a \(\pi_1(G)_I\)-grading.
	This gives a canonical equivariant isomorphism \[\widetilde{\Gg} \cong \widetilde{\Gg_{\adj}} \overset{D(\pi_1(G_{\adj})_I)}{\times} D(\pi_1(G)_I),\] where \(D(-)\) denotes the diagonalizable \(\IZ[\frac{1}{p}]\)-group scheme with given character group.
	We deduce an isomorphism \(\widetilde{\Gg}\cong \widehat{G}^I\), satisfying the required properties.
	
	Since this isomorphism is canonical, it moreover descends to the desired equivariant isomorphism for general quasi-split groups.
\end{proof}

Finally, we can finish the proof of the main theorem.

\begin{proof}[Proof of \thref{identification hopf algebra}]
Now we consider the case where \(G\) is a general group.
For \(N\gge 0\), choose a representation \(\phi\colon \widetilde{\Gg}\to \GL_N\) induced by some element of \(\MATM_{L^+\Gg}(\Gr_{\Gg})\), and a faithful representation \(\psi\colon \widehat{G}\to \GL_N\), such that, after base change to \(\IQ_\ell\), \(\phi\) factors through \(\psi\); this is possible by \thref{Hopf algebra for ladic sheaves}.
Then \(\phi\) also factors through \(\psi\) after base change to \(\IQ\), and hence integrally by flatness, giving a map \(\widetilde{\Gg} \to \widehat{G}\).

Now, let \(a\) be a nondivisible simple relative coroot, and consider the absolute coroots of \(G\) which restrict to relative coroots given by scalar multiples of \(a\).
The corresponding roots in \(\widehat{G}\) then induce a Levi \(\widehat{M}_a\subseteq \widehat{G}\).
This Levi can moreover be identified with the Langlands dual group of a Levi subgroup \(M_a\subseteq G\) of semisimple \(\breve{F}\)-rank 1.
The parahoric model \(\Gg\) of \(G\) also gives rise to a parahoric model \(\Mm_a\) of \(M_a\).
Then by \thref{identification - single root}, we obtain a canonical \(\IG_m\)-equivariant isomorphism \(\widetilde{\Mm}_a\cong \widehat{M}_a^I\).

Next, let \(P_a\supseteq B\) be a standard parabolic with Levi quotient \(M_a\).
Then the constant term functor \(\CT_{P_a}\) induces a map \(H_{\Gg} \to H_{\Mm_a}\), and we claim this is a morphism of Hopf algebras.
As \(H_{\Gg}\) and \(H_{\Mm_a}\) are dualizable, this can be checked after rationalizing.
Since the rational \(\ell\)-adic realization functor is faithful when restricted to mixed Artin-Tate motives by \cite[Lemma 3.2.8]{RicharzScholbach:Intersection}, the claim follows from \thref{cheat for monoidality of CT}.
By compatibility of étale constant terms and nearby cycles, the resulting composition \(\widetilde{\Mm}_a\to \widetilde{\Gg} \to \widehat{G}\) factors through \(\widehat{M}_a^I\subseteq \widehat{M}_a\subseteq \widehat{G}\): first \(\ell\)-adically, then rationally, and hence integrally by flatness.
In particular, \(\widetilde{\Gg}\to \widehat{G}\) factors through \(\widehat{G}^I\).
Moreover, \(\widetilde{\Gg}\to \widehat{G}^I\) is an isomorphism after base change to \(\IQ_\ell\) by \thref{Hopf algebra for ladic sheaves}, and hence after base change to \(\IQ\) by faithfully flat descent.
To show it is an isomorphism integrally, it suffices to show it is surjective by \cite[Lemma 6.20]{CassvdHScholbach:Geometric} and \cite[Tag 056A]{stacks-project}.
We then conclude by the case of semisimple \(\breve{F}\)-rank 1 groups, as well as \cite[Corollary 6.6]{ALRR:Fixed}.

The fact that the \(\IG_m\times \Gamma\)-action on \(H_{\Gg}\) (viewed as a \(\IZ[\frac{1}{p}]\)-module) is the correct one then also follows from \thref{identification - single root}.
\end{proof}

\subsection{Variants}

By \thref{change of scalars}, \thref{identification hopf algebra} also holds for coefficient rings such as \(\IQ\) and \(\IF_\ell\).
In fact, for rational coefficients, we can rephrase the theorem as follows.

\begin{cor}\thlabel{Cgroup equiv}
	For rational coefficients, there is a canonical symmetric monoidal equivalence
	\[(\MATM_{L^+\Gg}(\Gr_\Gg; \IQ), \star) \cong (\Rep_{\widehat{G}^I\rtimes (\IG_m \times \Gamma)}(\QVect), \otimes).\]
\end{cor}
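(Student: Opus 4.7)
The plan is to assemble the three main structural results already in hand. First, applying \thref{change of scalars} gives a symmetric monoidal equivalence
\[(\MATM_{L^+\Gg}(\Gr_\Gg; \IQ), \star) \cong (\coMod_{H_\Gg\otimes\IQ}(\MATM(\Spec k; \IQ)), \otimes).\]
So everything reduces to identifying the coefficient category $\MATM(\Spec k;\IQ)$ and the Hopf algebra $H_\Gg\otimes\IQ$ inside it.

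Next I would make explicit that $\MATM(\Spec k;\IQ)\cong \Rep_{\IG_m\times\Gamma}(\QVect)$. Over $\IQ$, Quillen's computation of the $K$-theory of finite fields, already exploited in \thref{BS-vanishing}, gives $\Hom_{\DM(\Spec k')}(\IQ,\IQ(n)[m])=0$ unless $(m,n)=(0,0)$, so the Tate twists form a family of simple generators with no nontrivial extensions, yielding $\MTM(\Spec k';\IQ)\cong\grQVect\cong\Rep_{\IG_m}(\QVect)$. Étale descent along $\Spec k'\to\Spec k$ (\thref{remarks about Artin-Tate}\eqref{etale descent}) then produces the $\Gamma$-factor, giving the stated equivalence, with $\IG_m$ recording the weight grading.

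Now I would invoke \thref{identification hopf algebra}, whose statement (after base change to $\IQ$) gives a canonical isomorphism between $H_\Gg\otimes\IQ$, regarded as a Hopf algebra in $\Rep_{\IG_m\times\Gamma}(\QVect)$, and the coordinate ring $\IQ[\widehat{G}^I]$ equipped with the $\IG_m\times\Gamma$-action described just before that theorem. Under the previous step, comodules under $H_\Gg\otimes\IQ$ in $\MATM(\Spec k;\IQ)$ become comodules under $\IQ[\widehat{G}^I]$ inside $\Rep_{\IG_m\times\Gamma}(\QVect)$. A comodule under a commutative Hopf algebra $\IQ[H]$ inside representations of a group $K$ acting on $H$ by Hopf automorphisms is the same datum as a representation of the semidirect product $H\rtimes K$; applying this with $H=\widehat{G}^I$ and $K=\IG_m\times\Gamma$ yields the desired equivalence.

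There is no genuine obstacle left: all the hard analytical and geometric work has been carried out in the preceding sections. The only point to double check is canonicity: \thref{identification hopf algebra} is only canonical after a choice of basis of $\IZ[\tfrac{1}{p}](1)$, but since the $\IG_m$-factor controlling this choice appears in both sides of the final equivalence, any two such choices differ by an inner automorphism of $\widehat{G}^I\rtimes(\IG_m\times\Gamma)$ and the resulting equivalence is canonical.
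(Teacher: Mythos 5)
Your proof is correct and follows essentially the same route as the paper, which simply observes that $\MATM(\Spec k;\IQ)$ identifies with $\IZ$-graded $\IQ$-vector spaces carrying a $\Gamma$-action and leaves the rest implicit. You usefully spell out the intermediate steps: passing to $\IQ$-coefficients via \thref{change of scalars}, identifying the coefficient category via \thref{BS-vanishing} and étale descent, invoking \thref{identification hopf algebra}, and then using the standard translation between equivariant comodules and representations of a semidirect product. Your final remark on canonicity — that changing the trivialization of $\IQ(1)$ by $u\in\IQ^\times$ alters the isomorphism of \thref{identification hopf algebra} by $\Ad(\rho_{\adj}(u))$, an inner automorphism of $\widehat{G}^I\rtimes(\IG_m\times\Gamma)$, so the induced equivalence of representation categories is independent of the choice — is a point the paper leaves unstated, and it is the right justification for the word "canonical" in the corollary.
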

\begin{proof}
	This follows from the fact that \(\MATM(\Spec k; \IQ)\) is equivalent to \(\IZ\)-graded vector spaces with a \(\Gamma\)-action (cf.~\thref{graded to motives}).
\end{proof}

Let us recall the C-group, as it will also appear in the next section.
First introduced in \cite{BuzzardGee:Conjectural}, it will be more useful to consider a variant as in \cite{Zhu:Integral}, where not necessarily the full Galois group appears.

\begin{dfn}\thlabel{Cgroup}
	Let \(\Gal(\overline{F}/F)\onto \Gamma'\) be a (not necessarily finite) quotient through which the \(\Gal(\overline{F}/F)\)-action on \(\widehat{G}\) factors.
	Then the \emph{C-group} is the semi-direct product \(\CG:= \widehat{G} \rtimes (\IG_m\times \Gamma')\), where \(\IG_m\) acts via \(\Ad \rho_{\adj}\colon \IG_m \xrightarrow{\rho_{\adj}} \widehat{T}_{\adj} \xrightarrow{\Ad} \Aut(\widehat{G})\).
\end{dfn}

We can then make the following observations.

\begin{rmk}
	\begin{enumerate}
		\item In the unramified case, i.e., when \(\Gg\) is hyperspecial, the inertia \(I\) acts trivially on \(G\).
		In particular, in that case the group \(\widehat{G}^I\rtimes (\IG_m \times \Gamma)\) is the C-group defined above.
		In the split case, we can moreover choose \(F'=F\), so that \(k'=k\).
		Then the Galois group vanishes, so that the C-group becomes Deligne's modification of the Langlands dual group as in \cite{Deligne:Letter2007, FrenkelGross:Irregular}, and we recover the main results of \cite{RicharzScholbach:Satake,RicharzScholbach:Witt}.
		\item To construct an equivalence as in \thref{Cgroup equiv} for general coefficients, one would need to use the reduced motives from \cite{EberhardtScholbach:Integral}, as in \cite{CassvdHScholbach:Geometric}. 
		However, unlike Nisnevich motives, one can only reduce étale motives with rational coefficients, which is why we only state the corollary in this case.
		(Note also that over \(\Spec k\), rational regular motives are already reduced, so there is no need to consider reduced motives explicitly.)
		\item In the appendix of \cite{Zhu:Ramified}, two different \(\Gamma\)-actions on \(\widehat{G}\) are considered:~an algebraic action, arising from the definition of the Langlands dual group, and a geometric action, arising from the \(\Gamma\)-action on the affine Grassmannian.
		By \cite[Proposition A.6]{Zhu:Ramified}, they differ exactly by a cyclotomic twist, i.e., by a Tate twist.
		Thus, since the \(\widehat{G}\) and \(\widetilde{\Gg}\) already incorporate the information coming from the Tate twist, they implicitly relate the algebraic \(\Gamma\)-action on \(\widehat{G}\) to the geometric \(\Gamma\)-action on \(\widetilde{\Gg}\).
	\end{enumerate}
\end{rmk}

Although we only stated and proved \thref{identification hopf algebra} in the motivic setting, the proof also works (and can be simplified) for étale sheaves.
Let us record this explicitly.

\begin{thm}\thlabel{Satake for etale cohomology}
	Up to trivializing the Tate twist, there is a canonical isomorphism
	\[\Perv_{\et}(L^+\Gg\backslash LG /L^+\Gg,\IZ_\ell)^{\bd} \cong \Rep(\widehat{G}^I)^{\fg},\]
	where the Langlands dual group \(\widehat{G}\) is defined over \(\IZ_\ell\), and we only consider representations on finitely generated modules.
\end{thm}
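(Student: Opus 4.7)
The plan is to mirror the motivic proof of Theorems \ref{Hopf algebra thm} and \ref{identification hopf algebra}, in the simpler setting of étale $\IZ_\ell$-sheaves. Many of the delicate complications that arose motivically (restriction to Artin-Tate motives, existence of a t-structure, $\ell$-adic realization arguments for semisimplicity or commutativity) become trivial or can be bypassed, since in the étale setting we have the natural perverse t-structure on the whole derived category and all six functors are available without restriction. The key inputs we retain from the main text are Proposition \ref{Hopf algebra for ladic sheaves}, giving the rational version of the identification, and the geometric work of Sections \ref{sect-LSMV} and \ref{sect-convolution}, which we apply verbatim since the cellularity and filtrable decomposition results hold directly at the level of perfect schemes.

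First, define the fiber functor $\FibFunctor_{\et}$ as total cohomology (in analogy with Corollary \ref{F is fiber functor}) and, for any parabolic $P \subseteq G$, the constant term functor $\CT_P^{\et}$ on $\Perv_{\et}(L^+\Gg\backslash LG/L^+\Gg, \IZ_\ell)^{\bd}$, as already recalled in Section \ref{Sect--padic}. All the structural properties carry over: $\CT_P^{\et}$ is t-exact (via the dimension estimates of Sections \ref{sect-LSMV}-\ref{sect-CT and F}), conservative on bounded objects, and $\FibFunctor_{\et} \cong \pi_{\Tt,!}u^! \CT_B^{\et}$. Monoidality of $\CT_B^{\et}$ and of $\FibFunctor_{\et}$ is obtained as in Corollaries \ref{CT monoidal} and \ref{fiber functor monoidal} via the two-step analysis of $\CT_B \circ m_!$ versus $m_! \circ \widetilde{\CT}_B$, using the étale analogue of Proposition \ref{reduction to ALRR} which is already established in \cite{ALRR:Modular}. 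Convolution is t-exact by stratified semi-smallness (Lemma \ref{semismallness of convolution}).

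Next, apply Tannakian reconstruction: for each finite downward-closed $W \subseteq X_*(T)_I^+$, construct a left adjoint $L_W^{\et}$ to the restricted $\FibFunctor_{\et}$ using the coaveraging construction of Proposition \ref{adjoints of fiber functor}. Since $\widetilde{a}^{-1}(\Ss_\lambda^+\cap \Gr_{\Gg,W})$ admits a filtrable decomposition by perfect cells (Corollary \ref{fibres of action map are cellular}), the étale realization shows $T_W^{\et}(\unit)$ is a finite free $\IZ_\ell$-module, hence dualizable. Barr-Beck monadicity and passage to the colimit then yield a commutative Hopf algebra $H_{\Gg,\et,\IZ_\ell}$ flat over $\IZ_\ell$, together with an equivalence
\[\Perv_{\et}(L^+\Gg\backslash LG/L^+\Gg, \IZ_\ell)^{\bd} \cong \coMod_{H_{\Gg,\et,\IZ_\ell}}^{\fg}(\IZ_\ell\text{-Mod}).\]
Commutativity follows from Proposition \ref{bialgebra is commutative} after inverting $\ell$, using flatness. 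Set $\widetilde{\Gg}_{\et,\IZ_\ell} := \Spec H_{\Gg,\et,\IZ_\ell}$.

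Finally, identify $\widetilde{\Gg}_{\et,\IZ_\ell}$ with $\widehat{G}^I$ over $\IZ_\ell$. Nearby cycles $\Psi_\Gg$ (Section \ref{Sect--padic}), now considered with $\IZ_\ell$-coefficients, produce a homomorphism $\IZ_\ell[\widehat{G}] \to H_{\Gg,\et,\IZ_\ell}$; the extension of the $I$-action to the Beilinson-Drinfeld Grassmannian (which is trivial on the special fiber) ensures this factors through $\IZ_\ell[\widehat{G}^I]$, as in the proof of Proposition \ref{Hopf algebra for ladic sheaves}. To show the resulting map $\widetilde{\Gg}_{\et,\IZ_\ell} \to \widehat{G}^I$ is an isomorphism, proceed exactly as in the motivic proof of Theorem \ref{identification hopf algebra}: reduce to adjoint groups by \cite[Lemma 9.4]{ALRR:Modular}, then to semisimple rank $\leq 1$ via compatibility of nearby cycles with constant terms and \cite[Corollary 6.6]{ALRR:Fixed}, and finally to the two explicit cases of $\PGL_2$ and ramified $\PU_3$. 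By \cite[Lemma 6.20]{CassvdHScholbach:Geometric} and flatness over $\IZ_\ell$, it suffices to check surjectivity, which is settled in both cases by the rank-1 arguments of \cite[Proposition VI.11.2]{FarguesScholze:Geometrization} and the analysis in the motivic proof.

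The main obstacle is controlling the special fiber of $\widetilde{\Gg}_{\et,\IZ_\ell}$ at $\IF_\ell$-coefficients in the rank-$1$ cases: the category $\Perv_{\et}(L^+\Gg\backslash LG/L^+\Gg, \IF_\ell)^{\bd}$ need not be semisimple, so the clean dimension-counting approach available rationally must be replaced by the explicit geometric analysis of the quasi-minuscule Schubert variety. However, this hard input is exactly what the modular work of \cite{ALRR:Modular} and \cite[§VI.11]{FarguesScholze:Geometrization} supplies, and it can be imported essentially verbatim since it depends only on the geometry of $\Gr_{\Gg, \leq \mu}$ and its intersections with semi-infinite orbits, which we have controlled in Theorem \ref{Intersections of Schubert cells and semi-infinite orbits}.
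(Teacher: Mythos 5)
Your proposal is correct and follows exactly the route the paper intends: the paper's own ``proof'' of this theorem consists of a single remark that the motivic argument for Theorems \ref{Hopf algebra thm} and \ref{identification hopf algebra} works verbatim (and simplifies) for étale $\IZ_\ell$-sheaves, and you have spelled out precisely that simplification---using the natural perverse t-structure in place of Artin-Tate motives, dualizability of the monad via cellularity, commutativity after inverting $\ell$ by flatness, and the same nearby-cycles-plus-rank-$1$ devissage to $\PGL_2$ and ramified $\PU_3$. The only nitpick is that the cited result from Fargues--Scholze is Lemma~VI.11.2 rather than a Proposition.
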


In particular, if \(F\) is of equal characteristic, we recover the main theorem of \cite{ALRR:Modular}.
We note that while \(\widehat{G}^I\) is generically reductive (possible disconnected), this is not true integrally.
Already in the example of \(G=\SU_3\) corresponding to a ramified quadratic extension, the dual group \(\SL_3^{\IZ/2\IZ}\) is isomorphic to \(\PGL_2\) over \(\IZ[\frac{1}{2p}]\), but is nonreduced over \(\IZ/2\IZ\).
Moreover, the reduction of the fiber modulo 2 is isomorphic to \(\SL_2\).
We refer to \cite{ALRR:Fixed,ALRR:Modular} for more details.

\section{Hecke algebras and the Vinberg monoid}\label{sect-Hecke}

In this final section, we will decategorify our Satake equivalence, using Grothendieck's sheaf-function dictionary.
This will also highlight certain advantages of motives over e.g.~\(\ell\)-adic cohomology.
Since we will take Grothendieck rings and consider anti-effective motives, we will work with \(\IQ\)-coefficients.
This has the advantage that \(\MTM(\Spec k,\IQ) \cong \grQVect\), for which the Grothendieck ring \(K_0(\grQVect^{\fd})\cong \IZ[t,t^{-1}]\) is known.
Hence, throughout this section we will always use motives with \(\IQ\)-coefficients, and write \(\DM(-)=\DM(-,\IQ)\).
As usual, \(\Gg/\Oo\) denotes a very special parahoric integral model with reductive generic fiber \(G\), but we now assume that \(k=\IF_q\) is a finite field.
For simplicity, we will also take \(k'/k\) to be a finite extension.
Then \(\Gamma=\Gal(k'/k)\) is a finite cyclic group, generated by the geometric \(q\)-Frobenius \(\sigma\).

\subsection{The Vinberg monoid}

The object representing the Langlands dual side in \cite{Zhu:Integral} is the Vinberg monoid.
We will recall its definition and basic properties, and explain how it behaves under taking invariants.
We will follow the approach of loc.~cit., and refer to \cite{Vinberg:Reductive, XiaoZhu:Vector} for more details.

Let \((\widehat{G},\widehat{B},\widehat{T},\widehat{e})\) be the pinned dual group of \(G\) as before, but which we now consider as a group over \(\Spec \IZ\).
As in the previous section, the group \(\widehat{G}^I\) of inertia-invariants admits an action of \(\IG_m\times \Gamma\), preserving \(\widehat{B}\) and \(\widehat{T}\).
Although most results below will hold for more general pinning-preserving actions by finite groups, we restrict ourselves to the situation at hand for simplicity.

\begin{nota}\thlabel{dual Weyl groups}
	We denote by \(W=N_{\widehat{G}}(\widehat{T})/\widehat{T}\) the Weyl group of \(\widehat{G}\), and write \(W_0 := W^{\Gal(\overline{F}/F)} = (W^I)^\Gamma\).
	Recall from \cite[Lemma 6.3]{ALRR:Fixed} that \(N_{\widehat{G}^I}(\widehat{T}^I) = N_{\widehat{G}}(\widehat{T})^I\), and that \(W^I\) can be seen as the Weyl group of \(\widehat{G}^I\).
	Using this, we let \(\widehat{N}_0\) be the preimage of \(W_0\) in \(N_{\widehat{G}^I}(\widehat{T}^I)\).
\end{nota}

Let \(\widehat{G}_{\adj}\) be the adjoint quotient of \(\widehat{G}\), and \(\widehat{T}_{\adj}\) the adjoint torus, i.e., the image of \(\widehat{T}\) in \(\widehat{G}_{\adj}\).
Then we have inclusions \(X^*(\widehat{T}_{\adj})\subseteq X^*(\widehat{T})\), which can be identified with the root lattice inside the character lattice of \(\widehat{T}\).
Let \(X^*(\widehat{T}_{\adj})_{\pos} \subseteq X^*(\widehat{T})_{\pos}^+ \subseteq X^*(\widehat{T})\) be the submonoid generated by the simple roots of \(\widehat{G}\), respectively the simple roots and the dominant characters, corresponding to the Borel \(\widehat{B}\).

Now, \(\IZ[\widehat{G}]\) admits a \(\widehat{G} \times \widehat{G}\)-action induced by the left and right multiplication of \(\widehat{G}\) on itself.
Moreover, \(\IZ[\widehat{G}]\) admits a multifiltration (in the sense of \cite[§2]{XiaoZhu:Vector}) by \(\widehat{G}\times \widehat{G}\)-submodules
\begin{equation}\label{filtration dual group}
	\IZ[\widehat{G}] = \bigcup_{\mu\in X^*(\widehat{T})_{\pos}^+} \fil_{\mu}\IZ[\widehat{G}],
\end{equation}
where \(\fil_{\mu}\IZ[\widehat{G}]\) is the largest submodule for which any weight \((\lambda,\lambda')\in X^*(\widehat{T}) \times X^*(\widehat{T})\) satisfies \(\lambda\leq -w_0(\mu)\) and \(\lambda'\leq \mu\), where the partial order is defined by \(\lambda_1\leq \lambda_2\) if \(\lambda_1-\lambda_2\in X^*(\widehat{T}_{\adj})_{\pos}\) and \(w_0\in W\) is the longest element.
The associated graded of this filtration is given by 
\begin{equation}\label{associated graded}
	\gr \IZ[\widehat{G}] = \bigoplus_{\mu\in X^*(\widehat{T})} S_{-w_0(\mu)} \otimes S_\mu,
\end{equation}

where \(S_\mu\) denotes the Schur module of highest weight \(\mu\).

\begin{dfn}
	The \emph{universal Vinberg monoid} \(V_{\widehat{G}}\) of \(\widehat{G}\) is the spectrum of the Rees algebra \(R_{X^*(\widehat{T})_{\pos}^+} \IZ[\widehat{G}] = \bigoplus_{\mu \in X^*(\widehat{T})_{\pos}^+} \fil_{\mu} \IZ[\widehat{G}]\).
	It is a finite type affine monoid scheme, equipped with a faithfully flat monoid morphism
	\[d\colon V_{\widehat{G}} \to \widehat{T}_{\adj}^+ := \Spec \IZ[ X^*(\widehat{T}_{\adj})_{\pos}].\]
	The \(\Gal(\overline{F}/F)\)-action on \(\widehat{G}\) extends to actions on both \(V_{\widehat{G}}\) and \(\widehat{T}_{\adj}^+\), for which \(d\) is equivariant.
\end{dfn}

For example, if \(\widehat{G} = \widehat{T}\) is a torus, then \(V_{\widehat{T}} = \widehat{T}\) and \(\widehat{T}_{\adj}^+ = \Spec \IZ\).
Note that any dominant cocharacter \(\widehat{\lambda}\colon \IG_{m,\IZ}\to \widehat{T}_{\adj}\) extends to a monoid morphism \(\widehat{\lambda}^+\colon \IA^1_\IZ\to \widehat{T}_{\adj}^+\).
Hence, given any such dominant cocharacter, we can specialize the universal monoid by defining
\[d_{\widehat{\lambda}}\colon V_{\widehat{G},\widehat{\lambda}} := \IA^1_{\IZ} \times_{\widehat{\lambda},\widehat{T}_{\adj}^+} V_{\widehat{G}} \to \IA^1_{\IZ}.\]
We will mostly be interested in the cocharacter \(\widehat{\lambda} = \rho_{\adj}\), which is the unique square root of \(\IG_{m,\IZ}\xrightarrow{2\rho} \widehat{T} \to \widehat{T}_{\adj}\), where \(2\rho\) is the sum of the positive coroots of \(\widehat{G}\).
In that case, we call \(\vinberg\) simply the \emph{Vinberg monoid}.
Since \(\rho_{\adj}\) is \(\Gal(\overline{F}/F)\)-invariant, the \(\Gal(\overline{F}/F)\)-action on \(V_{\widehat{G}}\) restricts to \(\vinberg\), and \(d_{\rho_{\adj}}\) extends to a morphism
\[\widetilde{d}_{\rho_{\adj}}\colon \vinberg \rtimes \Gal(\overline{F}/F) \to \IA^1_{\IZ} \times \Gal(\overline{F}/F).\]
By \cite[(1.7)]{Zhu:Integral} there is an isomorphism \(\widetilde{d}_{\rho_{\adj}}^{-1}(\IG_{m,\IZ}\times \Gal(\overline{F}/F)) \cong \CG\), which coincides with the group of units of \(\vinberg \rtimes \Gal(\overline{F}/F)\).

\begin{ex}
	In case \(G=\PGL_2\), we have \(\CG=\widehat{G}\rtimes \IG_m\cong \GL_2\).
	Then \(\vinberg\) is isomorphic to the monoid of \(2\times 2\)-matrices, and \(d_{\rho_{\adj}}\) is identified with the determinant map.
\end{ex}

Now, restricting the \(\Gal(\overline{F}/F)\)-action on \(V_{\widehat{G}}\) and \(\vinberg\), we can consider the inertia-invariants \(V_{\widehat{G}}^I\) and \(\vinberg^I\).
These invariants admit an action of \(\Gal(\overline{k}/k)\), which clearly factors through \(\Gamma\).

\begin{rmk}\thlabel{rmk-description of invariants}
	Let us describe \(V_{\widehat{G}}^I\) explicitly.
	As \(I\) preserves \(\widehat{T}\subseteq \widehat{B}\subseteq \widehat{G}\), it induces an action on \(X^*(\widehat{T})_{\pos}^+\) and \(X^*(\widehat{T}_{\adj})_{\pos}\).
	This in turn defines a filtration 
	\begin{equation}\label{filtration invariants}
		\IZ[\widehat{G}^I] = \bigcup_{\mu\in (X^*(\widehat{T})_{\pos}^+)_I} \fil_{\mu}\IZ[\widehat{G}^I],
	\end{equation}
	induced by \eqref{filtration dual group}.
	Moreover, the global sections of invariants are given by coinvariants, so that
	\[\IZ[V_{\widehat{G}}^I] = R_{(X^*(\widehat{T})_{\pos}^+)_I} \IZ[\widehat{G}^I] = \bigoplus_{\mu \in (X^*(\widehat{T})_{\pos}^+)_I} \fil_{\mu} \IZ[\widehat{G}^I].\]
\end{rmk}

\begin{rmk}\thlabel{units of invariants}
	In fact, we claim that there is a natural bijection \((X^*(\widehat{T}_{\adj})_{\pos})_I \cong X^*(\widehat{T}_{\adj}^I)_{\pos}\), where the latter denotes the submonoid of \(X^*(\widehat{T}^I)\) generated by the positive roots of \(\widehat{G}^I\) with respect to \(\widehat{T}^I\). 
	Note that invariants of adjoint reductive groups are connected and adjoint by \cite[Example 5.3 and Lemma 6.7]{ALRR:Fixed}.
	Then the claim follows from \cite[Proposition 6.1]{ALRR:Fixed}.
	Similarly, we can define \(X^*(\widehat{T}^I)_{\pos}^+\), and there is natural bijection \((X^*(\widehat{T})_{\pos}^+)_I \cong X^*(\widehat{T}^I)_{\pos}^+\).
	
	Recall from \cite[Proposition 3.2.2]{XiaoZhu:Vector} that the group of units of \(V_{\widehat{G}}\) is given by the quotient \(\widehat{G}\overset{Z_{\widehat{G}}}{\times}\widehat{T}\) of the inclusion \(Z_{\widehat{G}}\subseteq \widehat{G}\times \widehat{T}\colon z\mapsto (z,z)\).
	Using the description of \(V_{\widehat{G}}^I\) above, we can similarly see that the group of units of \(V_{\widehat{G}}^I\), which is given by \((\widehat{G}\overset{Z_{\widehat{G}}}{\times} \widehat{T})^I\), agrees with \(\widehat{G}^I\overset{Z_{\widehat{G}^I}}{\times} \widehat{T}^I\).
	Here we are again using the fact that \(Z_{\widehat{G}^I} \cong Z_{\widehat{G}}^I\) \cite[Lemma 6.7]{ALRR:Fixed}.
\end{rmk}

At least for the generic fiber, this allows us to give a Tannakian interpretation of \(\vinberg^I\)

\begin{thm}\thlabel{Anti-effective equivalence}
	The equivalence from \thref{Cgroup equiv} restricts to a monoidal equivalence
	\[(\MATM_{L^+\Gg}(\Gr_{\Gg})^{\anti},\star)\cong (\Rep_{\vinberg^I\rtimes \Gamma}(\IQ\text{-}\operatorname{Vect}),\otimes).\]
\end{thm}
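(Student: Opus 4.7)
The plan is to apply the equivalence from \thref{Cgroup equiv} and cut it down to match the anti-effective subcategory on the left with $\Rep_{\vinberg^I \rtimes \Gamma}(\QVect)$ on the right. The identification rests on the fact that a representation of the monoid $\vinberg^I$ is precisely a representation of its unit group $\widehat{G}^I \rtimes \IG_m$ whose $\IG_m$-action extends over $\IA^1$, and that this extension condition corresponds on the motivic side to the anti-effectivity condition.

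First I would translate anti-effectivity into a representation-theoretic condition. Via the faithful monoidal functor $\grQVect \to \MATM(\Spec k; \IQ)$ of \thref{graded to motives}, the Tate twists match gradings, and through the equivalence of \thref{Cgroup equiv} this grading corresponds to the action of the central $\IG_m \subseteq \widehat{G}^I \rtimes (\IG_m \times \Gamma)$. By \thref{characterization of anti-effective} combined with \thref{anti-effective motives and the fiber functor}, a motive $\Ff \in \MATM_{L^+\Gg}(\Gr_\Gg; \IQ)$ is anti-effective iff its underlying graded vector space has weights in a single half-line; with the convention that $\unit(-n)$ corresponds to $\IG_m$-weight $n$, this means the $\IG_m$-weights of the associated representation are non-negative.

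Next I would identify this half-graded subcategory with $\Rep_{\vinberg^I \rtimes \Gamma}(\QVect)$. Since $\rho_{\adj}$ is $I$-invariant, the fiber product defining $\vinberg$ commutes with taking $I$-invariants, so $\vinberg^I = \IA^1 \times_{(\widehat{T}_{\adj}^+)^I} V_{\widehat{G}}^I$, whose coordinate ring is $\bigoplus_{n \geq 0} \fil_{n\rho_{\adj}} \IQ[\widehat{G}^I]$ by \thref{rmk-description of invariants}. By \thref{units of invariants} the group of units of $\vinberg^I$ is $\widehat{G}^I \rtimes \IG_m$ (where $\IG_m$ acts on $\widehat{G}^I$ via $\Ad \rho_{\adj}$), and a rational representation of the monoid $\vinberg^I$ is exactly a rational representation of this unit group whose $\IG_m$-component extends over $\IA^1$ as a $k[t]$-comodule, which happens precisely when the $\IG_m$-weights are non-negative. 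The $\Gamma$-action, which commutes with both $\widehat{G}^I$ and $\IG_m$ and preserves the filtration \eqref{filtration invariants}, is simply adjoined on both sides. Monoidality of the resulting equivalence follows from that of \thref{Cgroup equiv}, since the Hopf coproduct on $\IQ[\widehat{G}^I \rtimes (\IG_m \times \Gamma)]$ restricts compatibly to the bialgebra structure on $\IQ[\vinberg^I \rtimes \Gamma]$.

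The main obstacle will be pinning down the sign convention and verifying that anti-effective motives end up on the correct side of the grading. I would verify this explicitly on the simple objects $\IC_\mu(\unit)$ for $\mu \in X_*(T)_I^+$, using semisimplicity \thref{semisimplicity} to reduce to this case. By \thref{fiber functor of standard}, $\FibFunctor(\Jj_!(\mu,\unit))$ decomposes into Tate twists of the form $\unit(-\langle \rho, \mu+\nu\rangle)$ as $\nu$ ranges over $X_*(T)_I$ with $\Gr_{\Gg,\leq \mu} \cap \Ss_\nu^+ \neq \varnothing$. Combining \thref{Nonemptyness of intersections} with dominance of $\mu$ shows $\langle \rho, \mu + \nu\rangle \geq 0$ throughout this range, so all Tate twists are non-positive, matching the non-negative $\IG_m$-weights that appear in the Schur module $S_\mu$ of $\widehat{G}^I$ (sitting inside $\fil_\mu \IQ[\widehat{G}^I]$ and twisted by $\rho_{\adj}$). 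This pins down the convention and matches the simple objects on both sides, completing the identification on all of $\MATM_{L^+\Gg}(\Gr_\Gg; \IQ)^{\anti}$.
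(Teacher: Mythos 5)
Your overall strategy matches the paper's: translate anti-effectivity into non-negativity of $\IG_m$-weights via the fiber functor, and then identify the latter with $\Rep_{\vinberg^I\rtimes\Gamma}$ inside $\Rep_{\widehat{G}^I\rtimes(\IG_m\times\Gamma)}$ through the unit group embedding. The translation of anti-effectivity into the weight condition (via \thref{characterization of anti-effective} and \thref{anti-effective motives and the fiber functor}) is correct and is also what the paper does.

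However, there is a genuine gap in the middle step. You assert that ``a rational representation of the monoid $\vinberg^I$ is exactly a rational representation of its unit group $\widehat{G}^I\rtimes\IG_m$ whose $\IG_m$-component extends over $\IA^1$, which happens precisely when the $\IG_m$-weights are non-negative.'' This conflates two conditions that are not a priori equivalent. Extending the coaction over $\IQ[\IA^1]$ is merely the condition of landing in $\IQ[\widehat{G}^I][t]\subseteq \IQ[\widehat{G}^I][t,t^{-1}]$, whereas extending to $\vinberg^I$ requires landing in the strictly smaller filtered subalgebra $\bigoplus_{n\geq 0}\fil_{n\rho_{\adj}}\IQ[\widehat{G}^I]\cdot t^n$ (cf.\ \thref{rmk-description of invariants}). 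That these two conditions agree is a non-obvious fact about the Vinberg monoid, and it is precisely what the paper establishes by the explicit weight computation (following Zhu's Lemma~21): one first checks extendability of irreducibles of $\widehat{G}^I\overset{Z_{\widehat{G}^I}}{\times}\widehat{T}^I$ to $V_{\widehat{G}}^I$ via the filtration, then pulls back along $\rho_{\adj}$ to get a condition in terms of $\widehat{G}^I\times\IG_m$-weights ($\langle 2\rho,\lambda\rangle\geq -n$ with a parity constraint), and only after rewriting in $\widehat{G}^I\rtimes\IG_m$-coordinates via the map $(g,t)\mapsto(g\,2\rho(t)^{-1},t^2)$ does the condition collapse to $n\geq 0$. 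Your final paragraph, verifying $\FibFunctor(\Jj_!(\mu,\unit))$ has non-positive Tate twists, is a useful sanity check for sign conventions but does not supply the missing equivalence; note also that \thref{fiber functor of standard} computes the fiber of the standard object $\Jj_!(\mu,\unit)$ rather than of the simple object $\IC_\mu(\unit)$, so even as a consistency check it would need the observation that $\IC_\mu(\unit)$ is a summand of $\Jj_!(\mu,\unit)$ in the semisimple category.
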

\begin{proof}
	We saw in \thref{units of invariants} that \(\widehat{G}^I\overset{Z_{\widehat{G}^I}}{\times} \widehat{T}^I\) identifies with the group of units of \(V_{\widehat{G}}^I\). 
	Since we are working in characteristic 0, this group is (possibly disconnected) reductive \cite[Theorem 1.1]{ALRR:Fixed}; in particular its representation category is semisimple.
	Moreover, this inclusion is open dense, so that the restriction of representations is a fully faithful functor.
	It thus suffices to identify the representations corresponding to anti-effective motives.
	The theorem will then follow since anti-effective motives are preserved under convolution by \thref{Conv-Tate}.
	Since the property of being anti-effective can be checked after base change along \(\Spec k'\to \Spec k\), we may assume \(k=k'\) and consider Tate motives only; this amounts to forgetting the \(\Gamma\)-action.
	We then proceed as in \cite[Lemma 21]{Zhu:Integral}.
	In the rest of the proof, all our schemes will live over \(\Spec \IQ\).
	
	Let \(M\) be an irreducible \(\widehat{G}^I\overset{Z_{\widehat{G}^I}}{\times} \widehat{T}^I\)-representation.
	Then \(M_{\mid 1\times \widehat{T}^I}\) has a single weight \(\mu\in X^*(\widehat{T}^I)\).
	By \thref{rmk-description of invariants}, \(M\) extends to \(V_{\widehat{G}}^I\) if and only if we have \(\mu\in X^*(\widehat{T}^I)_{\pos}^+\subseteq X^*(\widehat{T}^I)\) and the coaction map sends \(M\) to \(\fil_\mu\IQ[\widehat{G}^I]\otimes_{\IQ} M\).
	This second condition can be rephrased as \(\lambda \leq -w_0(\mu)\) for any weight \(\lambda\) of \(M_{\mid \widehat{G}^I\times 1}\).
	Or equivalently, \(\mu+\lambda_-\in X^*(\widehat{T}_{\adj}^I)_{\pos} \subseteq X^*(\widehat{T}^I)\) for any weight \(\lambda\) as above, where \(\lambda_-\) is the unique anti-dominant element in \(W^I\lambda\).
	
	In order to pass to \(\vinberg^I\), consider the composition \(\widehat{G}^I \times \IG_m \to (\widehat{G}^I\times \IG_m)/(2\rho\times \identity)(\mu_2) \into \vinberg^I\), where the middle term is isomorphic to \(\widehat{G}^I\rtimes \IG_m\), and identifies with the group of units of \(\vinberg^I\).
	By the previous paragraph, an irreducible representation \(M\) of \(\widehat{G}^I\times \IG_m\) extends to \(\vinberg^I\) if and only if for any weight \((\lambda,n)\) of \(M\), there exists \(\mu\in X^*(\widehat{T}^I)_{\pos^+}\) with \(\langle 2\rho,\mu\rangle = n\) and \(\mu+\lambda_-\in X^*(\widehat{T}_{\adj}^I)_{\pos}\).
	This is in turn equivalent to having \(\langle 2\rho,\lambda_-\rangle \geq -n\) and \(\langle 2\rho,\lambda\rangle \equiv n \mod 2\).
	Since the isomorphism \((\widehat{G}^I\times \IG_m)/(2\rho\times \identity)(\mu_2) \cong \widehat{G}^I\rtimes \IG_m\) is given by \((g,t)\mapsto (g2\rho(t)^{-1},t^2)\), the weight \((\lambda,n)\) of \(\widehat{G}^I\rtimes \IG_m\) induces the weight \((\lambda, 2n-\langle 2\rho,\lambda,\rangle)\) of \(\widehat{G}\times \IG_m\).
	Thus, the equivalence from \thref{Cgroup equiv} identifies the anti-effective Tate motives with those representations whose \(\IG_m\)-weights are \(\geq 0\).
	Since anti-effectivity can be checked after applying the fiber functor by \thref{anti-effective motives and the fiber functor}, which corresponds to pullback along \(1\times \IG_m\into \widehat{G}^I\rtimes \IG_m\), this concludes the proof.
\end{proof}

Now, since \(\Gamma = \langle \sigma \rangle\) acts on \(V_{\widehat{G}}^I\) and \eqref{filtration invariants} is a filtration by \(\widehat{G}^I\times \widehat{G}^I\)-modules, we can consider the twisted conjugation action
\[c_{\sigma}\colon \widehat{G}^I\times V_{\widehat{G}}^I \to V_{\widehat{G}}^I\colon (g,x)\mapsto g x \sigma(g)^{-1}.\]
Moreover, the regular conjugation action on \(V_{\widehat{G}}^I\rtimes \langle \sigma \rangle\) preserves the subscheme \(V_{\widehat{G}\mid d=\rho_{\adj}(q)}^I\sigma\), and we have
\[\IZ[\restrvinberg^I\sigma]^{\widehat{G}^I} = \IZ[\restrvinberg^I]^{c_\sigma(\widehat{G}^I)}.\]

We denote by \(V_{\widehat{T}}\) the closure of \(\widehat{T}\overset{Z_{\widehat{G}}}{\times} \widehat{T} \subseteq \widehat{G}\overset{Z_{\widehat{G}}}{\times} \widehat{T}\) in \(V_{\widehat{G}}\).
This is stable under the inertia action, and the commutative diagram
\[\begin{tikzcd}
	\widehat{T} \overset{Z_{\widehat{G}}}{\times} \widehat{T} \arrow[r] & V_{\widehat{T}} \\
	(\widehat{T} \overset{Z_{\widehat{G}}}{\times} \widehat{T})^I \arrow[u] \arrow[r] & V_{\widehat{T}}^I \arrow[u]
\end{tikzcd}\]
corresponds to the following diagram by taking global sections:
\[\begin{tikzcd}
	\bigoplus_{\lambda,\nu\in X^*(\widehat{T})\colon \lambda+\nu \in X^*(\widehat{T}_{\adj})} \IZ(e_1^\lambda \otimes e_2^\nu) \arrow[d] & \bigoplus_{\lambda,\nu\in X^*(\widehat{T})\colon \nu+\lambda_-\in X^*(\widehat{T}_{\adj})_{\pos}} \IZ(e_1^\lambda \otimes e_2^\nu) \arrow[d] \arrow[l]\\
	\bigoplus_{\lambda,\nu\in X^*(\widehat{T}^I)\colon \lambda +\nu \in X^*(\widehat{T}_{\adj}^I)} \IZ(e_1^\lambda\otimes e_2^\nu) & \bigoplus_{\lambda,\nu\in X^*(\widehat{T}^I)\colon \nu+\lambda_-\in (X^*(\widehat{T}_{\adj})_{\pos})_I} \IZ(e_1^\lambda + e_2^\nu), \arrow[l]
\end{tikzcd}\]
where for \(\lambda\in X^*(\widehat{T})\) (resp.~\(\lambda\in X^*(\widehat{T}^I)\)) we denote by \(\lambda_-\in X^*(\widehat{T}^-)\) (resp.~\(\lambda_-\in X^*(\widehat{T}^I)^-\)) the unique antidominant representative of \(W\cdot \lambda\) (resp.~\(W^I\cdot \lambda\)).
Indeed, this follows from \cite[§3.2]{XiaoZhu:Vector} (although we use the same sign convention as in \cite[§1.3]{Zhu:Integral}), the observation that \(X^*(\widehat{T})_I \cong X^*(\widehat{T}^I)\), and similarly for \(\widehat{T}_{\adj}\).

The morphism \(V_{\widehat{T}}^I\to (\widehat{T}_{\adj}^+)^I\) can be explicitly described as \[\IZ[(X^*(\widehat{T}_{\adj})_{\pos})_I]\to \IZ[V_{\widehat{T}}^I]\colon e^{\lambda}\mapsto e_1^0\otimes e_2^\lambda,\]
so that it admits a section \(s\colon (\widehat{T}_{\adj}^+)^I \to V_{\widehat{T}}^I\) given by
\[\IZ[V_{\widehat{T}}^I] \to \IZ[(X^*(\widehat{T}_{\adj})_{\pos})_I] \colon e_1^\lambda \otimes e_2^\nu \mapsto e^{\lambda+\nu}.\]
Now, consider the closed immersion \(i\colon \widehat{T}^I\to (\widehat{T}\overset{Z_{\widehat{G}}}{\times} \widehat{T})^I \to V_{\widehat{T}}^I\) given by the inclusion into the first factor.
Then the product \((i,s)\colon \widehat{T}^I \times (\widehat{T}_{\adj}^+)^I\to V_{\widehat{T}}^I\) is given by
\[\IZ[V_{\widehat{T}}^I] \to \IZ[X^*(\widehat{T}^I)]\otimes_{\IZ} \IZ[(X^*(\widehat{T}_{\adj})_{\pos})_I] \colon e_1^\lambda \otimes e_2^\nu \mapsto e^\lambda \otimes e^{\lambda+\nu},\]
and hence induces an injection
\begin{equation}
	\IZ[V_{\widehat{T}\mid d=\rho_{\adj}(q)}^I] \into \IZ[X^*(\widehat{T}^I)]\colon e_1^\lambda \otimes e_2^\nu\mapsto q^{\langle \rho_{\adj},\lambda+\nu\rangle} e^\lambda.
\end{equation}

Recall the group \(\widehat{N}_0\) from \thref{dual Weyl groups}.
The twisted conjugation action \(c_\sigma\colon \widehat{G}^I\times V_{\widehat{G}}^I\to V_{\widehat{G}}^I\) restricts to an action \(c_\sigma\colon \widehat{N}_0\times V_{\widehat{T}}^I\to V_{\widehat{T}}^I\).
Taking invariants under this action, we obtain the following:
\begin{lem}\thlabel{lemma invariants of vinberg}
	The composition
	\begin{equation}\label{description invariants of vinberg}
		\IZ[V_{\widehat{T}\mid d=\rho_{\adj}(q)}^I]^{c_\sigma(\widehat{N}_0)} \into	\IZ[V_{\widehat{T}\mid d=\rho_{\adj}(q)}^I] \into \IZ[X^*(\widehat{T}^I)]
	\end{equation}
	factors through \(\IZ[X^*(\widehat{T}^I)^{\sigma}]\subseteq \IZ[X^*(\widehat{T}^I)]\).
	Moreover, the elements \(\sum_{\lambda'\in W_0\lambda} q^{\langle \rho_{\adj},\lambda'-\lambda \rangle} e^{\lambda'}\) for \(\lambda\in X^*(\widehat{T}^I)^{\sigma}\cap X^*(\widehat{T}^I)^-\) form a \(\IZ\)-basis for its image.
\end{lem}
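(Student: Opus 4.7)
The plan is to compute $\IZ[V_{\widehat{T}}^I]^{c_\sigma(\widehat{N}_0)}$ in two stages -- first under $\widehat{T}^I\subseteq \widehat{N}_0$ and then under the quotient $W_0 = \widehat{N}_0/\widehat{T}^I$ -- before transferring to $\IZ[X^*(\widehat{T}^I)]$ via the explicit injection introduced just before the lemma.

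For the first stage, $V_{\widehat{T}}^I$ sits in the commutative monoid $V_{\widehat{T}}$, so for $t\in \widehat{T}^I$ the twisted conjugation $c_\sigma(t)(x) = tx\sigma(t)^{-1}$ coincides with left multiplication by $t\sigma(t)^{-1}\in (1-\sigma)(\widehat{T}^I)$. Since left multiplication by $s\in \widehat{T}^I$ scales $e_1^\lambda\otimes e_2^\nu$ by $\lambda(s)$, we get
\[c_\sigma(t)^*(e_1^\lambda\otimes e_2^\nu) = \lambda(t\sigma(t)^{-1})\cdot e_1^\lambda\otimes e_2^\nu,\]
and invariance under all $t$ is equivalent to $\lambda$ annihilating $(1-\sigma)(\widehat{T}^I)$, i.e., $\lambda\in X^*(\widehat{T}^I)^\Gamma$, which under the natural map corresponds to the lattice $X^*(\widehat{T})^{\Gal(\overline{F}/F)}$.

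For the second stage, fix a lift $n_w\in \widehat{N}_0$ of each $w\in W_0$ and set $t_w := \sigma(n_w)n_w^{-1}\in \widehat{T}^I$ (well-defined in $\widehat{T}^I$ since $\sigma(w)=w$). A direct computation on units gives $c_\sigma(n_w)[t_1,t_2] = [w(t_1)\,t_w^{-1}, t_2]$, hence
\[c_\sigma(n_w)^*(e_1^\lambda\otimes e_2^\nu) = \lambda(t_w^{-1})\cdot e_1^{w^{-1}\lambda}\otimes e_2^\nu.\]
On the $\widehat{T}^I$-invariants from the first stage, the scalar $\lambda(t_w^{-1})$ depends only on $w$ (changing the lift $n_w$ by $s\in\widehat{T}^I$ multiplies $t_w$ by $(1-\sigma)(s)$, which $\Gamma$-invariant $\lambda$ kills), and a Hilbert~90 / Lang-type argument for the cyclic $\Gamma$ acting on the torus $\widehat{T}^I$ shows the class of $t_w$ in the coinvariant torus $(\widehat{T}^I)_\Gamma$ vanishes, so $\lambda(t_w)=1$. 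Thus the residual $W_0$-action is the standard Weyl action on the first factor. Since $\Gamma$ preserves the $W^I$-anti-dominant cone (the induced pinning of $\widehat{G}^I$ being $\Gamma$-stable) and the constraint $\nu+\lambda_-\in (X^*(\widehat{T}_{\adj})_{\pos})_I$ depends only on the $W^I$-orbit, the $\widehat{N}_0$-invariants have $\IZ$-basis the orbit sums $\sum_{\mu\in W_0\lambda}e_1^\mu\otimes e_2^\nu$ indexed by $W_0$-orbits of Galois-invariant characters together with allowed $\nu$.

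Finally, for $\lambda\in X^*(\widehat{T})^{\Gal(\overline{F}/F)}\cap X^*(\widehat{T})^-$ one has $\lambda_-=\lambda$, so writing $\nu = -\lambda + \mu_0$ with $\mu_0\in (X^*(\widehat{T}_{\adj})_{\pos})_I$ and applying the injection after $d = \rho_{\adj}(q)$, the image of the orbit sum becomes
\[q^{\langle\rho_{\adj},\mu_0\rangle}\cdot \sum_{\mu\in W_0\lambda}q^{\langle\rho_{\adj},\mu-\lambda\rangle}e^\mu.\]
Since $\mu_0=0$ is allowed, the $\IZ$-span of these images for a fixed $W_0$-orbit is precisely the line generated by the claimed basis element; the monomials $e^\mu$ lie in $X^*(\widehat{T})^{\Gal(\overline{F}/F)}$ because $W_0$ preserves the Galois-invariants, yielding the factorisation through $\IZ[X^*(\widehat{T})^{\Gal(\overline{F}/F)}]$, and distinct anti-dominant $\lambda$ yield disjoint $W_0$-orbits so linear independence is immediate. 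The main obstacle is Step~2: correctly handling the cocycle class of $t_w$ in $(\widehat{T}^I)_\Gamma$ and verifying that it is absorbed by the $\widehat{T}^I$-invariance, which reduces to the cohomological fact that Galois-invariant characters trivialise the image of $(1-\sigma)$.
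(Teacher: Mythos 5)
Your two-stage strategy---first compute the $\widehat T^I$-invariants, then handle the residual $W_0$-invariance, and finally transfer through the explicit injection into $\IZ[X^*(\widehat T^I)]$---is correct, and both of your action formulas $c_\sigma(t)^*(e_1^\lambda\otimes e_2^\nu)=\lambda(t\sigma(t)^{-1})\,e_1^\lambda\otimes e_2^\nu$ and $c_\sigma(n_w)^*(e_1^\lambda\otimes e_2^\nu)=\lambda(t_w^{-1})\,e_1^{w^{-1}\lambda}\otimes e_2^\nu$ check out. The final paragraph, writing $\nu=-\lambda+\mu_0$ with $\mu_0\in (X^*(\widehat T_{\adj})_{\pos})_I$ and identifying the image of each orbit sum under the specialized injection with $q^{\langle\rho_{\adj},\mu_0\rangle}$ times the claimed basis element, correctly produces the $\IZ$-basis.

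The one genuine gap is the step you flag yourself. You cannot kill the scalar $\lambda(t_w^{-1})$ by a ``Hilbert~90 / Lang-type argument'': the base is $\IZ[\frac{1}{p}]$, not a field, and $\sigma$ is a finite-order pinning-preserving automorphism, not a Frobenius, so neither theorem applies in the form you invoke. More to the point, $t_w=\sigma(n_w)n_w^{-1}$ is the coboundary of a lift $n_w\notin\widehat T^I$, so there is no a priori reason it should lie in $(1-\sigma)\widehat T^I$; your closing sentence that the vanishing ``reduces to the cohomological fact that Galois-invariant characters trivialise the image of $(1-\sigma)$'' presupposes exactly what is in question. The fix is much more elementary: choose the lifts to be $\sigma$-fixed from the start, by taking $n_w=\dot w$, the Tits representative of $w$ with respect to the pinning $(\widehat B,\widehat T,\widehat e)$. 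Since $\Gal(\overline F/F)$ acts by pinning-preserving automorphisms and $w\in W_0=(W^I)^\Gamma$ is Galois-invariant, $\dot w$ is $I$-fixed (hence lies in $\widehat G^I$), normalises $\widehat T^I$, lifts $w$, and satisfies $\sigma(\dot w)=\dot w$; thus $\dot w\in\widehat N_0$ and $t_w=1$ on the nose. Your independence-of-lift observation then disposes of every other choice of lift. One secondary point worth recording: in Step~1 the constraint you actually derive is $\lambda\in(X^*(\widehat T^I))^\Gamma$, and the passage to the image of $X^*(\widehat T)^{\Gal(\overline F/F)}$ under the injection $X^*(\widehat T)^{\Gal}\hookrightarrow X^*(\widehat T^I)$ needs justification, since this injection can fail to be onto the $\Gamma$-invariants by a finite group coming from the $I$-cohomology of $X^*(\widehat T)$; as written, your proof establishes the factorisation through $(X^*(\widehat T^I))^\Gamma$ rather than through $X^*(\widehat T)^{\Gal(\overline F/F)}$, and the equality of the two lattices is a nontrivial claim you should address.
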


The following generalization of \cite[Proposition 4.2.3]{XiaoZhu:Vector} is the twisted Chevalley restriction isomorphism in the current setting.

\begin{prop}\thlabel{Chevalley restriction}
	The inclusion \(V_{\widehat{T}}^I\subseteq V_{\widehat{G}}^I\) induces an isomorphism
	\[\IZ[V_{\widehat{G}}^I]^{c_\sigma(\widehat{G}^I)} \xrightarrow{\cong} \IZ[V_{\widehat{T}}^I]^{c_\sigma(\widehat{N}_0)},\]
	which restricts to an isomorphism
	\[\Res\colon \IZ[\restrvinberg^I]^{c_\sigma(\widehat{G}^I)} \xrightarrow{\cong} \IZ[V_{\widehat{T}\mid d=\rho_{\adj}(q)}^I]^{c_\sigma(\widehat{N}_0)}\]
\end{prop}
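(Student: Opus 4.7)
The plan is to reduce the twisted Chevalley restriction to the untwisted one via the standard trick of replacing $\sigma$-twisted conjugation by ordinary conjugation on the coset $V_{\widehat{G}}^I\cdot \sigma \subseteq V_{\widehat{G}}^I \rtimes \langle \sigma \rangle$, then apply the Vinberg monoid Chevalley restriction of \cite[Proposition 4.2.3]{XiaoZhu:Vector} to $\widehat{G}^I$ (which is connected reductive over $\IQ$ by \cite[Theorem 1.1]{ALRR:Fixed}), and finally compare $\sigma$-fixed points on both sides.

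First I would note that \thref{rmk-description of invariants} and \thref{units of invariants} identify $V_{\widehat{G}}^I$ with a Vinberg-type monoid attached to $\widehat{G}^I$ (its group of units is $\widehat{G}^I\overset{Z_{\widehat{G}^I}}{\times}\widehat{T}^I$, which is open dense), and that this monoid carries a residual $\Gamma$-action preserving the filtration \eqref{filtration invariants}. Hence by the density argument that functions on $V_{\widehat{G}}^I$ are determined by their restriction to the unit group, the twisted conjugation action $c_\sigma$ of $\widehat{G}^I$ on $V_{\widehat{G}}^I$ agrees, under the identification $V_{\widehat{G}}^I\xrightarrow{\cong}V_{\widehat{G}}^I\cdot\sigma$, $x\mapsto x\sigma$, with the ordinary conjugation action of $\widehat{G}^I$ on the connected component $V_{\widehat{G}}^I\cdot \sigma$ of the monoid $V_{\widehat{G}}^I \rtimes \langle \sigma\rangle$; similarly, $c_\sigma(\widehat{N}_0)$ on $V_{\widehat{T}}^I$ becomes the restriction of the conjugation action on $V_{\widehat{T}}^I\cdot\sigma$.

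Next I would establish the untwisted Chevalley restriction for $V_{\widehat{G}}^I$. Rationally, since $\widehat{G}^I$ is connected reductive with maximal torus $\widehat{T}^I$ and Weyl group $W^I$, the Vinberg monoid machinery from \cite[§3-4]{XiaoZhu:Vector} applies verbatim to $V_{\widehat{G}}^I$ and yields an isomorphism
\[\IQ[V_{\widehat{G}}^I]^{\widehat{G}^I}\xrightarrow{\cong}\IQ[V_{\widehat{T}}^I]^{N_{\widehat{G}^I}(\widehat{T}^I)}.\]
Now take $\sigma$-fixed points on both sides: on the left this imposes invariance under the full semi-direct product, i.e.\ yields $\IQ[V_{\widehat{G}}^I\cdot\sigma]^{\widehat{G}^I}$; on the right, using $N_{\widehat{G}^I}(\widehat{T}^I)=N_{\widehat{G}}(\widehat{T})^I$ (from \thref{dual Weyl groups}), we obtain $\IQ[V_{\widehat{T}}^I\cdot\sigma]^{\widehat{N}_0}$, since $\widehat{N}_0$ is by definition the preimage of $W_0=(W^I)^\Gamma$. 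Translating back via $x\mapsto x\sigma$ gives the desired rational isomorphism. The integrality can then be upgraded by filtering both sides by the filtration induced from \eqref{filtration invariants}: the map is filtration-preserving, and the associated graded pieces on either side are naturally identified (using \eqref{associated graded} together with the semisimplicity of $\widehat{G}^I$-representations in char $0$ and the weight analysis of \thref{units of invariants}), so that both sides are free $\IZ$-modules of matching rank, with the map an isomorphism on graded pieces; hence it is an isomorphism.

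Finally, for the restricted statement I would use that $d\colon V_{\widehat{G}}\to \widehat{T}_{\adj}^+$ is invariant under $c_\sigma(\widehat{G}^I)$ (conjugation does not change the determinant, and $\sigma$ preserves $\rho_{\adj}$), and therefore induces a $\IZ[(X^*(\widehat{T}_{\adj})_{\pos})_I]$-algebra structure on both sides of the first isomorphism. Base-changing along the closed immersion $\Spec\IZ\to\widehat{T}_{\adj}^+$ given by $\rho_{\adj}(q)$ then yields the second isomorphism; its target matches the explicit description in \thref{lemma invariants of vinberg}. The main obstacle is the integral step in the central paragraph: the rational Chevalley restriction is classical, but pushing through $\IZ$-coefficients for a possibly disconnected reductive monoid like $V_{\widehat{G}}^I\rtimes\langle\sigma\rangle$ requires the careful filtration-and-graded-pieces comparison above, where one must check that the $W_0$-orbit sums appearing in \thref{lemma invariants of vinberg} genuinely exhaust the image of the restriction map at each filtered level.
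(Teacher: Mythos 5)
Your coset trick and your final filtered/graded comparison are both aligned with the paper's strategy, but the middle paragraph contains a step that does not work: taking $\sigma$-fixed points on $\IQ[V_{\widehat{G}}^I]^{\widehat{G}^I}$ gives $\IQ[V_{\widehat{G}}^I]^{\widehat{G}^I\rtimes\langle\sigma\rangle}$, which is \emph{not} the same as $\IQ[V_{\widehat{G}}^I\cdot\sigma]^{\widehat{G}^I}\cong\IQ[V_{\widehat{G}}^I]^{c_\sigma(\widehat{G}^I)}$. These two spaces of functions are genuinely different. For instance, take $G=\IG_m\times\IG_m$ with $\sigma$ swapping the factors: ordinary conjugation is trivial so $\IQ[G]^G=\IQ[t_1^{\pm1},t_2^{\pm1}]$ and its $\sigma$-fixed subring is all symmetric Laurent polynomials (containing $t_1+t_2$), whereas $c_\sigma(g)(t_1,t_2)=(at_1b^{-1},bt_2a^{-1})$ forces $c_\sigma$-invariant functions to depend only on the product $t_1t_2$, so $\IQ[G]^{c_\sigma(G)}=\IQ[(t_1t_2)^{\pm1}]$. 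So ``compare $\sigma$-fixed points'' is not a valid bridge from the untwisted Chevalley restriction to the twisted one, and your rational isomorphism is not established.

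This matters because, as you frame it, the filtered/graded comparison in your last paragraph is only an ``upgrade to $\IZ$'' of a rational isomorphism you haven't actually proven — so you would still need to show that the restriction map is an isomorphism on graded pieces, not merely that ranks agree. The paper's own proof skips the coset reformulation entirely and does exactly this: it puts the induced $(X^*(\widehat{T})^+_{\pos})_I$-filtration on $\IZ[\widehat{T}^I]$, computes $(\fil_\mu\IZ[\widehat{T}^I])^{c_\sigma(\widehat{N}_0)}$ directly (each graded piece is free of rank $\leq 1$, spanned by an explicit $W_0$-orbit sum), and then uses the associated graded description $\gr\IZ[\widehat{G}^I]=\bigoplus_\mu S_{-w_0\mu}\otimes S_\mu$ to see that $(\fil_\mu\IZ[\widehat{G}^I])^{c_\sigma(\widehat{G}^I)}$ has the identical graded pieces, giving the isomorphism of Rees algebras. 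Your final paragraph is pointing in that direction; if you drop the ``$\sigma$-fixed points'' detour and carry out the graded comparison directly for the $c_\sigma$-invariants on both sides (using \eqref{associated graded} and the weight analysis of \thref{units of invariants} as you indicate), you recover the paper's argument. The derivation of the restricted version from the universal one by base change along $\rho_{\adj}(q)\colon\Spec\IZ\to\widehat{T}_{\adj}^+$ in your last paragraph is fine.
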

\begin{proof}
	Consider the \((X^*(\widehat{T})^+_{\pos})_I\)-filtration on \(\IZ[\widehat{G}^I]\) from \eqref{filtration invariants}.
	Via the embedding \(\widehat{T}^I\subseteq \widehat{G}^I\), it induces a \((X^*(\widehat{T})^+_{\pos})_I\)-filtration on \(\IZ[\widehat{T}^I]\), given by
	\[\fil_\mu \IZ[\widehat{T}^I] = \bigoplus_{\lambda \in X^*(\widehat{T}^I) \colon\lambda_{\dom}\leq \nu} \IZ\cdot e^\lambda,\]
	where \(\lambda_{\dom}\in X^*(\widehat{T}^I)^+\) is the unique dominant representative in \(W^I\cdot \lambda\).
	
	Then the twisted conjugation actions, by \(\widehat{G}^I\) and \(\widehat{N}_0\) respectively, preserve these filtrations, and we have
	\[(\fil_\mu \IZ[\widehat{T}^I])^{c_\sigma(\widehat{N}_0)} = \bigoplus_{\lambda\in X^*(\widehat{T}^I)^{+,\sigma}\colon \mu-\lambda \in X^*(\widehat{T}_{\adj}^I)_{\pos}} \IZ \cdot (\sum_{\nu\in W_0\lambda} e^\nu).\]
	In particular, each graded piece of this multi-filtration is either trivial or free of rank 1.
	Using \eqref{associated graded}, we see that the same holds for the graded pieces of \((\fil_\mu \IZ[\widehat{G}^I])^{c_\sigma(\widehat{G}^I)}\), and that we get an isomorphism \(\gr \IZ[\widehat{G}^I]^{c_\sigma(\widehat{G}^I)} \cong \IZ[\widehat{T}^I]^{c_\sigma(\widehat{N}_0)}\).
	Since both \(V_{\widehat{G}}^I\) and \(V_{\widehat{T}}^I\) can be described as the Rees algebra associated to the filtrations above, this implies that \(\IZ[V_{\widehat{G}}^I]^{c_\sigma(\widehat{G}^I)} \to \IZ[V_{\widehat{T}}^I]^{c_\sigma(\widehat{N}_0)}\) is an isomorphism.
	
	This isomorphism is moreover clearly compatible with the \(\IZ[X^*(\widehat{T}_{\adj}^I)_{\pos}]\)-structures on both sides.
	Hence, we get the desired isomorphism \(\Res\) by tensoring along \(\IZ[X^*(\widehat{T}_{\adj}^I)_{\pos}]\to \IZ\), via the map defined by \(q^{\langle\rho_{\adj},-\rangle}\).
\end{proof}

In order to construct an integral Satake isomorphism in the next subsection, we will take traces of representations.
This is similar to \cite[Lemma 22]{Zhu:Integral}, but simpler since we know our traces take values in \(\IQ\) rather than \(\IQ_\ell\).
For an abelian category \(\Cc\), we will denote by \(K_0(\Cc)\) the Grothendieck group of the category of compact objects of \(\Cc\), and by \([-]\) the class of an object in \(\Cc\).
Since the notation \(V_{\widehat{T}}\) was already used above, we will use \(\widehat{T}\times \IA^1\) to denote the analogue of \(\vinberg\) for \(\widehat{T}\).
There is a Galois-equivariant embedding \(\widehat{T}\times \IA^1 \subseteq \vinberg\), and we denote by \(\Res\) the functor given by restriction of representations.

\begin{lem}\thlabel{trace of representations}
	There exists a (necessarily unique) surjective morphism \(\tr\), such that the diagram below commutes:
	\[\begin{tikzcd}[column sep=huge]
		K_0(\Rep_{\IQ}(\vinberg^I\rtimes \Gamma)) \arrow[rrr, "\tr"] \arrow[d, "K_0(\Res)"'] &&& \IZ[\restrvinberg^I]^{c_{\sigma}(\widehat{G}^I)} \arrow[d, "\eqref{description invariants of vinberg} \circ \Res"] \\
		K_0(\Rep_{\IQ}((\widehat{T}^I\rtimes \Gamma)\times \IA^1)) \arrow[rrr, "{[}V{]}\mapsto \sum_{\lambda\in X^*(\widehat{T}^I)^{\sigma}} \tr((q{,}\sigma)\mid V(\lambda)) e^\lambda"'] &&& \IZ[X^*(\widehat{T}^I)^{\sigma}].
	\end{tikzcd}\]
\end{lem}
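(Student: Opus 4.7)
The plan is to define $\tr$ via characters at the element $\sigma \in \Gamma$, and to verify its properties via the twisted Chevalley restriction \thref{Chevalley restriction}.

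For a finite-dimensional representation $V$ of $\vinberg^I \rtimes \Gamma$ over $\IQ$, define $\chi_V(x) := \tr((x,\sigma)\mid V)$ for $x \in \vinberg^I$. This is a regular function, and cyclicity of the trace gives $\chi_V(g x \sigma(g)^{-1}) = \chi_V(x)$, so $\chi_V \in \IQ[\vinberg^I]^{c_\sigma(\widehat{G}^I)}$; let $\tr([V])$ be its restriction to $\restrvinberg^I$. To verify the diagram commutes with $\IQ$-coefficients on the right, decompose $V = \bigoplus_{\lambda \in X^*(\widehat{T}^I)} V(\lambda)$ as a $\widehat{T}^I$-module: the operator $(x,\sigma)$ carries $V(\lambda)$ into $V(\sigma\lambda)$, so only $\Gamma$-invariant weights $\lambda \in X^*(\widehat{T}^I)^\Gamma \cong X^*(\widehat{T})^{\Gal(\overline{F}/F)}$ contribute to $\chi_V(x)$ for $x \in \widehat{T}^I$, and the factor $q^{\langle\rho_{\adj},\lambda+\nu\rangle}$ from \eqref{description invariants of vinberg} (arising from the specialization $d = \rho_{\adj}(q)$) matches the evaluation of the $\IA^1$-coordinate at $q$ in the bottom row.

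Uniqueness and integrality of $\tr([V])$ now follow simultaneously. The composition $\eqref{description invariants of vinberg}\circ \Res$ is injective by \thref{Chevalley restriction} and \thref{lemma invariants of vinberg}, and identifies $\IZ[\restrvinberg^I]^{c_\sigma(\widehat{G}^I)}$ with the $\IZ$-span, inside $\IZ[X^*(\widehat{T})^{\Gal(\overline{F}/F)}]$, of the basis elements $b_\mu := \sum_{\lambda \in W_0 \mu} q^{\langle\rho_{\adj},\lambda-\mu\rangle}e^\lambda$. By the calculation above, the image of $\chi_V|_{\restrvinberg^I}$ is automatically $c_\sigma(\widehat{N}_0)$-invariant, hence an integral combination of the $b_\mu$ provided each coefficient $\tr(\sigma \mid V(\lambda))$ is an integer. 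This holds because $\sigma$ acts with finite order on the finite-dimensional $\IQ$-vector space $V(\lambda)$, so its trace is a sum of roots of unity in $\IQ$, hence an integer.

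For surjectivity, by \thref{Chevalley restriction} it is enough to realize each basis element $b_\mu$ in the image of $\tr$, for $\mu \in X^*(\widehat{T})^{\Gal(\overline{F}/F)} \cap X^*(\widehat{T})^-$. Setting $\nu := -\mu \in X^*(\widehat{T}^I)^+$, which is $\Gamma$-invariant, the corresponding irreducible $\widehat{G}^I$-representation $V(\nu)$ has a $\Gamma$-equivariant structure (unique up to a character of $\Gamma$). By the anti-effectivity criterion from the proof of \thref{Anti-effective equivalence}, $V(\nu)$ extends, with appropriate $\IG_m$-weight, to a representation $\widetilde{V}$ of $\vinberg^I \rtimes \Gamma$. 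Computing the character of $\widetilde{V}$ in the basis $\{b_{\mu'}\}$, the leading term is $b_\mu$, with corrections coming from weights of $V(\nu)$ strictly below $\nu$ in the dominance order; induction on this order then produces every $b_\mu$. The main obstacle is this final step: the careful construction of the $\vinberg^I \rtimes \Gamma$-extension of $V(\nu)$ and the identification of the leading coefficient of its character in the basis $\{b_{\mu'}\}$, requiring bookkeeping of $\IG_m$- and $\Gamma$-twists via \thref{Anti-effective equivalence}.
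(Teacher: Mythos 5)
Your proposal follows the paper's approach step for step: define $\tr$ by taking traces (as in Zhu's Lemma 22), argue integrality from finiteness of $\sigma$ together with the non-negativity of the $\IA^1$-weights, and prove surjectivity via the anti-effective equivalence and the $b_\mu$-basis from \thref{lemma invariants of vinberg}. Two remarks on places where your write-up is looser than it needs to be.

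In the integrality step you reduce to showing each coefficient is an integer, but then only argue that $\tr(\sigma\mid V(\lambda))$ is an integer. The actual coefficient is $\tr((q,\sigma)\mid V(\lambda))$, and you also need the $\IA^1$-weights on $V(\lambda)$ to be non-negative (this is the monoid condition on $\vinberg^I$), so that decomposing by $\IA^1$-weight gives $\sum_{n\geq 0} q^n\,\tr(\sigma\mid V(\lambda,n))$ with each summand an integer. The paper states this explicitly (``the $\IG_m$-representations \ldots\ have positive weights, and the fact that $\sigma$ has finite order''); your first paragraph does invoke the $\IA^1$-coordinate, but the integrality paragraph drops it and so has a small gap.

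In the surjectivity step, with $\mu$ anti-dominant and $\nu := -\mu$ dominant, the lowest weight of $V(\nu)$ is $w_0\nu = -w_0\mu$, not $\mu$; so the leading basis element in the expansion is $b_{-w_0\mu}$ rather than $b_\mu$. Setting $\nu := w_0\mu$ gives $b_\mu$ directly, or one can keep $\nu = -\mu$ and observe that $-w_0$ permutes the anti-dominant $\Gal(\overline{F}/F)$-invariant characters, so the induction still reaches every $b_\mu$. You also need that the leading coefficient $\tr((q,\sigma)\mid V(w_0\nu))$ is a unit in $\IZ$; this is $\pm 1$ precisely because the lowest weight line of the representation attached to $\IC_\mu(\unit)$ has $\IG_m$-weight $0$ and $\sigma$ acts there by a sign. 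You flag these as the ``main obstacle'' at the end, and that is a fair self-assessment — the paper is equally terse about this last step, delegating it to the cited results.
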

\begin{proof}
	The map \(\tr\) is defined by taking the trace of representations, as in \cite[Lemma 22]{Zhu:Integral}.
	To see that it is well-defined, it suffices to show these traces take integral values.
	First, note that these traces are always rational numbers, since we are using rational representations.
	The integrality then follows since the \(\IG_m\)-representations obtained via restriction along \(\IG_m\into \vinberg^I\rtimes \Gamma\) have positive weights, and the fact that \(\sigma\) has finite order.
	Finally, surjectivity follows by considering the objects corresponding to \(\IC_\mu(\unit)\) under \thref{Anti-effective equivalence}, as well as the description of \(\IZ[\restrvinberg^I]^{c_{\sigma}(\widehat{G}^I)} \cong  \IZ[V_{\widehat{T}\mid d=\rho_{\adj}(q)}^I]^{c_\sigma(\widehat{N}_0)}\) given in \thref{lemma invariants of vinberg}.
\end{proof}

\subsection{The integral Satake isomorphism}

We can now use the Vinberg monoid to construct integral Satake isomorphisms, involving Hecke algebras at very special level.
This generalizes \cite{Zhu:Integral} to the case of ramified groups, and \cite{HainesRostami:Satake} to integral coefficients.

Recall that the (very special) Hecke algebra of \(\Gg\) is defined as the ring \[\Hh_{\Gg}:=C_c(\Gg(\Oo)\backslash G(F)/\Gg(\Oo),\IZ)\] of compactly supported locally constant bi-\(\Gg(\Oo)\)-invariant \(\IZ\)-valued functions on \(G(F)\).
It is equipped with the convolution product, for which we fix a Haar measure on \(G(F)\) such that \(\Gg(\Oo)\) has measure 1.
Recall also that we have bijections of sets \(\Gg(\Oo)\backslash G(F) /\Gg(\Oo) \cong (X^*(\widehat{T}^I)^+)^{\sigma} \cong (X_*(T)_I^+)^\sigma\).
In particular, if \(G=T\) is a torus, the Hecke algebra \(\Hh_{\Tt}\) is isomorphic to the group algebra \(\IZ[X^*(\widehat{T}^I)^{\sigma}]\).
In order to construct an injection \(\Hh_\Gg\to \Hh_{\Tt}\), we define the \emph{Satake transform}
\[\CT^{\cl}\colon \Hh_{\Gg} \to \IZ[X^*(\widehat{T}^I)^{\sigma}] \colon f\mapsto \sum_{\lambda \in X^*(\widehat{T}^I)^{\sigma}} \left( \sum_{u\in U(F)/\Uu(\Oo)} f(\lambda(\varpi)u) e^\lambda\right).\]
Here, \(U\) denotes the unipotent radical of the Borel \(B\) of \(G\) and \(\Uu\) its natural \(\Oo\)-model, although \(\CT^{\cl}\) is independent of the choice of Borel and uniformizer.
This is clearly a well-defined ring homomorphism, and it is injective by the Iwasawa decomposition.

On the other hand, the Hecke algebra arises geometrically via the sheaf-function dictionary as follows.

\begin{lem}\thlabel{trace of Satake}
	The trace of geometric Frobenius induces a surjective ring morphism
	\[\tr\colon K_0(\MATM_{L^+\Gg}(\Gr_{\Gg})^{\anti}) \to \Hh_\Gg.\]
\end{lem}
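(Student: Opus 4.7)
The plan is to construct $\tr$ via the sheaf-function correspondence and verify its properties step by step. For a compact object $\Ff \in \MATM_{L^+\Gg}(\Gr_{\Gg})^{\anti}$ and a point $x \in \Gr_{\Gg}(k)$, I set $f_\Ff(x) := \sum_{n \in \IZ} (-1)^n \tr(\Frob_q \mid \pH^n x^* \Ff) \in \IQ$. Via the identification $\Gr_{\Gg}(k) \cong G(F)/\Gg(\Oo)$ from Lang's theorem applied to the pro-connected $L^+\Gg$, the equivariance of $\Ff$ ensures that $f_\Ff$ descends to a function on $\Gg(\Oo)\backslash G(F)/\Gg(\Oo)$, while the boundedness of compact objects makes it compactly supported. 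Additivity along exact triangles then lifts this assignment to a well-defined map on $K_0$, at least before checking integrality.

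The first nontrivial check is that $f_\Ff$ is $\IZ$-valued, which is precisely where anti-effectivity enters. Each stalk $x^*\Ff$ lies in $\MATM(\Spec k)^{\anti}$, hence is a successive extension of Tate twists $\unit(m)$ with $m \leq 0$, on which $\Frob_q$ acts by $q^{-m} \in \IZ_{\geq 1}$. Therefore each $\tr(\Frob_q \mid \pH^n x^*\Ff)$ is a non-negative integer combination of non-negative integer powers of $q$, so $f_\Ff(x) \in \IZ$. Compatibility with convolution is the Grothendieck--Lefschetz sheaf-function dictionary applied to the convolution diagram \eqref{convolution diagram}; to avoid reproving the trace formula in our motivic setting, I pass to the jointly conservative family of $\ell$-adic realizations from \thref{Etale realization} (where the dictionary is standard) and transport the identity $f_{\Ff_1 \conv \Ff_2} = f_{\Ff_1} \ast f_{\Ff_2}$ back via the faithfulness of these realizations on mixed Artin-Tate motives.

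For surjectivity, I will exhibit explicit preimages of a $\IZ$-basis of $\Hh_\Gg$. By the Cartan decomposition combined with $\Gamma$-invariance, the characteristic functions $\mathbbm{1}_{\Gg(\Oo)\varpi^\mu \Gg(\Oo)}$ for $\mu \in (X_*(T)_I^+)^{\Gamma}$ form such a basis. For each such $\mu$, the standard motive $\Jj_!(\mu,\unit)$ is anti-effective---its $\ast$-restriction to $\Gr_{\Gg,\mu}$ is $\unit[\langle 2\rho, \mu\rangle]$ and vanishes on every other stratum, and perverse truncation preserves anti-effectivity by the discussion following \thref{characterization of anti-effective}---and a direct stalk computation yields $f_{\Jj_!(\mu,\unit)} = (-1)^{\langle 2\rho, \mu\rangle}\mathbbm{1}_{\Gg(\Oo)\varpi^\mu \Gg(\Oo)}$, so each basis element lies in the image up to a sign. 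The main obstacle in this plan is the ring-morphism property: while plausible from general principles, one must carefully ensure that proper base change and the Lefschetz trace formula can be invoked in the motivic anti-effective setting, and the cleanest route is the reduction to $\ell$-adic sheaves sketched above.
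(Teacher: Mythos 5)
Your construction of the trace function, the integrality check via anti-effectivity, and the reduction to $\ell$-adic realizations for the ring-morphism property all align with the paper's approach (one small quibble: the coefficients in your ``non-negative integer combination'' can actually be negative, since Artin twists contribute integer but not necessarily non-negative character values to the Frobenius trace---this does not affect the integrality conclusion).

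The surjectivity argument, however, contains a genuine error. You claim that $\Jj_!(\mu,\unit)$ has vanishing $*$-stalks on all strata other than $\Gr_{\Gg,\mu}$, and hence that $f_{\Jj_!(\mu,\unit)} = (-1)^{\langle 2\rho,\mu\rangle}\mathbbm{1}_{\Gg(\Oo)\varpi^\mu\Gg(\Oo)}$. This is false in general. While $\iota_{\mu,!}p_\mu^*\unit[\langle 2\rho,\mu\rangle]$ does have vanishing stalks away from $\Gr_{\Gg,\mu}$ (it is a genuine $!$-extension), the perverse truncation $\pH^0$ in the definition of $\Jj_!$ can and typically does introduce stalks on the boundary: from the triangle $\tau^{\le -1}\iota_{\mu,!}(\dots)\to\iota_{\mu,!}(\dots)\to\Jj_!(\mu,\unit)$ one only gets that the stalks of $\Jj_!$ on the boundary agree with the shift of those of $\tau^{\le -1}\iota_{\mu,!}(\dots)$, which need not vanish. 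Exact vanishing would require $\iota_{\mu,!}p_\mu^*\unit[\langle 2\rho,\mu\rangle]$ to already be perverse, i.e. the open immersion $\Gr_{\Gg,\mu}\hookrightarrow\Gr_{\Gg,\leq\mu}$ to be an affine morphism. But in the affine Grassmannian the boundary of a Schubert variety often has codimension $\ge 2$: already for $G = \PGL_2$ and $\mu$ quasi-minuscule, $\Gr_{\Gg,\leq\mu}$ is a surface whose boundary is the single point $\Gr_{\Gg,0}$, so the open immersion cannot be affine. Concretely, with rational coefficients \thref{semisimplicity} gives $\Jj_!(\mu,\unit) \cong \IC_\mu(\unit)\oplus K$ with $K$ supported on the boundary, and since $\IC_\mu(\unit)$ has a nonzero stalk at $\Gr_{\Gg,0}$ (reflecting the nontrivial weight-zero multiplicity of the adjoint representation), so does $\Jj_!(\mu,\unit)$.

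The paper sidesteps this entirely: it works with $\IC_\mu(\unit)$ and uses only that its trace function has leading term $\pm\mathbbm{1}_{\Gg(\Oo)\varpi^\mu\Gg(\Oo)}$ together with lower-order terms indexed by $\mu' < \mu$, so that the change-of-basis matrix to the characteristic functions is upper-triangular with $\pm 1$ on the diagonal, hence invertible over $\IZ$. Your argument would also go through if you replaced exact vanishing by this triangularity statement for $\Jj_!$ (whose leading term on $\Gr_{\Gg,\mu}$ is indeed $(-1)^{\langle 2\rho,\mu\rangle}\mathbbm{1}$), but as written the claim of exact vanishing on the boundary is incorrect.
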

\begin{proof}
	Recall that the trace of Frobenius on the Tate twist \(\unit(n)\in \DM(\Spec \IF_q,\IZ[\frac{1}{p}])\) is given by \(q^{-n}\) \cite[(1.2.5) (iv)]{Deligne:Weil2}.
	Hence, as in the proof of \thref{trace of representations}, the trace of Frobenius function of any object in \(\MATM_{L^+\Gg}(\Gr_\Gg)^{\anti}\) takes values in \(\IZ\).
	This gives a map \(\tr\colon K_0(\MATM_{L^+\Gg}(\Gr_{\Gg})^{\anti}) \to \Hh_\Gg\), which is easily seen to be a ring morphism, e.g.~as in \cite[Lemma 5.6.1]{Zhu:Introduction}.
	To show surjectivity, note that for dominant \(\mu\in X^*(\widehat{T}^I)^{\sigma}\) we have
	\[\tr(\IC_\mu(\unit)) = \pm 1_{\Gg(\Oo)\mu(\varpi)\Gg(\Oo)} + \sum_{\mu'<\mu} c_{\mu',\mu} 1_{\Gg(\Oo)\mu'(\varpi)\Gg(\Oo)},\]
	where \(1_{-}\) denotes the characteristic function.
	Indeed, this follows from the fact that \(\IC_{\mu}(\unit)\) restricts to a shifted constant sheaf on \(\Gr_{\Gg,\mu}\), and is supported on \(\Gr_{\Gg,\leq \mu}\).
	Since these characteristic functions form a basis of \(H_{\Gg}\), we are finished.
\end{proof}

\begin{thm}\thlabel{Integral Satake isomorphism}
	There is a canonical isomorphism
	\[\IZ[\restrvinberg^I]^{c_{\sigma}(\widehat{G}^I)} \cong \Hh_{\Gg}.\]
	In particular, \(\Hh_{\Gg}\) is commutative.
\end{thm}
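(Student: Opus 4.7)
The plan is to combine the anti-effective Satake equivalence (\thref{Anti-effective equivalence}) with the trace surjections from \thref{trace of representations} and \thref{trace of Satake}. The Satake equivalence identifies $K_0(\MATM_{L^+\Gg}(\Gr_\Gg)^{\anti})$ with $K_0(\Rep_\IQ(\vinberg^I \rtimes \Gamma))$, giving two surjections out of this common Grothendieck ring: the trace-of-Frobenius map $\tr_{\mathrm{Frob}}\colon K_0 \twoheadrightarrow \Hh_\Gg$ and the trace-of-representations map $\tr_{\mathrm{rep}}\colon K_0 \twoheadrightarrow \IZ[\restrvinberg^I]^{c_\sigma(\widehat{G}^I)}$. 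The goal is to show that these two surjections have the same kernel, which then produces the asserted canonical isomorphism.

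To compare the kernels, I would reduce to the torus via the constant term $\CT_B$. On the classical side, the Satake transform $\CT^{\cl}$ injects $\Hh_\Gg \hookrightarrow \Hh_\Tt \cong \IZ[X^*(\widehat{T})^{\Gal(\overline{F}/F)}]$ by the Iwasawa decomposition, and on the Vinberg side the twisted Chevalley restriction \thref{Chevalley restriction} combined with the inclusion \eqref{description invariants of vinberg} of \thref{lemma invariants of vinberg} injects $\IZ[\restrvinberg^I]^{c_\sigma(\widehat{G}^I)}$ into the same target $\IZ[X^*(\widehat{T})^{\Gal(\overline{F}/F)}]$. It therefore suffices to check that the square
\[
\begin{tikzcd}
K_0(\MATM_{L^+\Gg}(\Gr_\Gg)^{\anti}) \arrow[r,"\tr"] \arrow[d,"K_0(\CT_B)"'] & \Hh_\Gg \arrow[d,"\CT^{\cl}"] \\
K_0(\MATM_{L^+\Tt}(\Gr_\Tt)^{\anti}) \arrow[r,"\tr"] & \Hh_\Tt
\end{tikzcd}
\]
commutes, and that after identifying $\MATM_{L^+\Tt}(\Gr_\Tt)^{\anti}$ with anti-effective representations of $(\widehat{T}\rtimes\Gamma)\times \IA^1$, the bottom trace matches the map $[V]\mapsto \sum_\lambda \tr((q,\sigma)\mid V(\lambda))e^\lambda$ of \thref{trace of representations}.

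The main obstacle is the commutativity of the above square, which is essentially a Grothendieck trace formula computation. For $\Ff\in \MATM_{L^+\Gg}(\Gr_\Gg)^{\anti}$ supported on $\Gr_{\Gg,\leq\mu}$, the Frobenius trace of $\CT_{B,\nu}(\Ff)$ at $\varpi^\nu$ is, by proper base change and the Grothendieck–Lefschetz formula, computed by the Frobenius trace on the compactly supported cohomology of $\Gr_{\Gg,\leq\mu}\cap \Ss_\nu^+$ with coefficients in $\Ff$, shifted by $\deg_B$; matching this with the sum of $\tr_{\mathrm{Frob}}(\Ff)$ over $U(F)/\Uu(\Oo)$-cosets defining $\CT^{\cl}$ is a standard sheaf-function dictionary calculation, using the description of semi-infinite orbits in terms of $LU$-orbits and the equidimensionality of \thref{Nonemptyness of intersections}. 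The torus-level compatibility is then transparent: $\Gr_\Tt$ is a coproduct of points indexed by $X_*(T)_I$ that Frobenius permutes via $\sigma \in \Gamma$, and it acts on $\unit(-n)\in \MATM(\Spec k)$ as multiplication by $q^n$, exactly matching the $(q,\sigma)$-weight pairing of \thref{trace of representations}.

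Granted these two commutativities, both kernels reduce to the same subset inside $K_0(\MATM_{L^+\Tt}(\Gr_\Tt)^{\anti})$ and hence coincide, producing the canonical isomorphism. Commutativity of $\Hh_\Gg$ then follows formally, since it is realized as the ring of invariants $c_\sigma(\widehat{G}^I)$ inside the commutative ring $\IZ[\restrvinberg^I]$, and finite generation follows from the explicit $\IZ$-basis of $\IZ[V_{\widehat{T}\mid d=\rho_{\adj}(q)}^I]^{c_\sigma(\widehat{N}_0)}$ described in \thref{lemma invariants of vinberg}.
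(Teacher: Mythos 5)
The proposal is correct and takes essentially the same approach as the paper's proof: the paper packages the argument into one commuting rectangle (left vertical the anti-effective Satake isomorphism, horizontals the two trace maps followed by Chevalley restriction and the classical Satake transform, right vertical the torus-level identification \eqref{description invariants of vinberg}), with commutativity cited to the motivic Grothendieck--Lefschetz trace formula, while you split that rectangle into a $\CT_B$-square and a torus-level compatibility check. The mathematical inputs—\thref{Anti-effective equivalence}, \thref{trace of representations}, \thref{trace of Satake}, \thref{Chevalley restriction}, \thref{lemma invariants of vinberg}, the injectivity of $\CT^{\cl}$, and the trace formula—are identical, so your argument is the paper's argument.
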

\begin{proof}
	Consider the diagram
	\[\begin{tikzcd}
		K_0(\Rep_{\vinberg^I\rtimes \Gamma}(\IQ\text{-}\operatorname{Vect})) \arrow[d, "\cong"'] \arrow[r, "\tr"] & \IZ[\restrvinberg^I]^{c_{\sigma}(\widehat{G}^I)} \arrow[r, "\Res"] & \IZ[V_{\widehat{T}\mid d=\rho_{\adj}(q)}^I]^{c_{\sigma}(\widehat{N}_0)} \arrow[d, "\eqref{description invariants of vinberg}"]\\
		K_0(\MATM_{L^+\Gg}(\Gr_{\Gg})^{\anti}) \arrow[r, "\tr"] & \Hh_\Gg \arrow[r, "\CT^{\cl}"] & \IZ[X^*(\widehat{T}^I)^{\sigma}],
	\end{tikzcd}\]
	which commutes by the motivic Grothendieck-Lefschetz trace formula from \cite[Theorem 3.4.2.25]{Cisinski:Cohomological}.
	Since both maps \(\tr\) are surjective by Lemmas \ref{trace of representations} and \ref{trace of Satake}, \(\Res\) is the isomorphism from \thref{Chevalley restriction}, and \(\CT^{\cl}\) and \eqref{description invariants of vinberg} are injective, there is a unique isomorphism \(\IZ[\restrvinberg^I]^{c_{\sigma}(\widehat{G}^I)} \cong \Hh_{\Gg}\) making the diagram commute.
\end{proof}

We note that Satake isomorphisms with integral coefficients have also appeared in \cite{HenniartVigneras:Satake}.
However, contrary to loc.~cit., the Langlands dual side becomes apparent in the above theorem. 

After inverting and adding square roots of \(q\), we can get more familiar forms of the Satake isomorphism, generalizing \cite{HainesRostami:Satake} from complex to \(\IZ[q^{\pm \frac{1}{2}}]\)-coefficients.

\begin{cor}
	Fix a square root \(q^{\frac{1}{2}}\) of \(q\).
	The isomorphism from \thref{Integral Satake isomorphism} induces isomorphisms
	\[\IZ[q^{\pm \frac{1}{2}}] [X^*(\widehat{T})_I^{\sigma}]^{W_0} \cong \IZ[q^{\pm \frac{1}{2}}][\widehat{G}^I\sigma]^{\widehat{G}^I} \cong \Hh_{\Gg} \otimes_{\IZ} \IZ[q^{\pm \frac{1}{2}}].\]
\end{cor}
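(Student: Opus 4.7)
The plan is to deduce both isomorphisms by base-changing \thref{Integral Satake isomorphism} along \(\IZ \to \IZ[q^{\pm \frac{1}{2}}]\), and then providing two explicit trivializations: one identifying \(\restrvinberg^I\) with the C-group coset \(\widehat{G}^I\sigma\) via the element \(2\rho(q^{\frac{1}{2}})\), and one rescaling the description of \thref{lemma invariants of vinberg} so that the twisted orbit sums become ordinary \(W_0\)-orbit sums.

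For the rightmost isomorphism \(\IZ[q^{\pm\frac{1}{2}}][\widehat{G}^I\sigma]^{\widehat{G}^I} \cong \Hh_\Gg \otimes_{\IZ} \IZ[q^{\pm\frac{1}{2}}]\), I first note that under the bijection \(\widehat{G}^I \to \widehat{G}^I\sigma \colon g \mapsto g\sigma\), the conjugation action of \(\widehat{G}^I\) inside \(\widehat{G}^I\rtimes\Gamma\) corresponds to the twisted conjugation \(c_\sigma\) on \(\widehat{G}^I\), because \(h(g\sigma)h^{-1} = (hg\sigma(h)^{-1})\sigma\). Hence it suffices to construct a \(\widehat{G}^I\)-biequivariant isomorphism \(\widehat{G}^I \xrightarrow{\cong} \restrvinberg^I\) over \(\IZ[q^{\pm\frac{1}{2}}]\). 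By \thref{units of invariants}, the units of \(V_{\widehat{G}}^I\) are \(\widehat{G}^I \overset{Z_{\widehat{G}^I}}{\times} \widehat{T}^I\), and \(\restrvinberg^I\) is contained therein as the fiber over \(\rho_{\adj}(q) \in \widehat{T}_{\adj}^I\). Setting \(t_0 := 2\rho(q^{\frac{1}{2}}) \in \widehat{T}^I(\IZ[q^{\pm\frac{1}{2}}])\), where \(2\rho \in X_*(\widehat{T})\) is the \(\Gal\)-invariant sum of positive coroots, one checks that \(t_0\) projects to \(\rho_{\adj}(q)\) via the relation \((2\rho)_{\adj} = 2\rho_{\adj}\). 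Then \(g \mapsto [g,t_0]\) gives the desired isomorphism, and intertwines \(c_\sigma\) with conjugation in \(\widehat{G}^I\rtimes\Gamma\) since \(\sigma\) fixes \(t_0\).

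For the left isomorphism \(\IZ[q^{\pm\frac{1}{2}}][X^*(\widehat{T})^{\Gal(\overline{F}/F)}]^{W_0} \cong \IZ[q^{\pm\frac{1}{2}}][\widehat{G}^I\sigma]^{\widehat{G}^I}\), I chain the twisted Chevalley restriction \thref{Chevalley restriction} with the explicit embedding of \thref{lemma invariants of vinberg}, which together realize \(\IZ[\restrvinberg^I]^{c_\sigma(\widehat{G}^I)}\) as the \(\IZ\)-span inside \(\IZ[X^*(\widehat{T})^{\Gal(\overline{F}/F)}]\) of the twisted orbit sums \(\sum_{\lambda'\in W_0\lambda} q^{\langle\rho_{\adj},\lambda'-\lambda\rangle}e^{\lambda'}\), where \(\lambda\) ranges over antidominant \(\Gal(\overline{F}/F)\)-invariant characters. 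After tensoring with \(\IZ[q^{\pm\frac{1}{2}}]\), the substitution \(e^\lambda \mapsto q^{-\langle\rho_{\adj},\lambda\rangle}e^\lambda\) is well-defined (because \(\langle\rho_{\adj},X^*(\widehat{T})\rangle \subseteq \frac{1}{2}\IZ\)) and turns each generator into the unit \(q^{-\langle\rho_{\adj},\lambda\rangle}\) times the standard orbit sum \(\sum_{\lambda'\in W_0\lambda}e^{\lambda'}\). Since these orbit sums form a \(\IZ[q^{\pm\frac{1}{2}}]\)-basis of the \(W_0\)-invariants and the prefactors are units, the image is exactly \(\IZ[q^{\pm\frac{1}{2}}][X^*(\widehat{T})^{\Gal(\overline{F}/F)}]^{W_0}\) as a subalgebra.

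The only real work will be bookkeeping: tracking the \(c_\sigma\)- and inertia-equivariance through both trivializations, and verifying that, once assembled, the composite factors through the classical Satake transform \(\CT^{\cl}\) appearing in the proof of \thref{Integral Satake isomorphism}, so that the corollary recovers the Satake isomorphism of Haines--Rostami after further scalar extension. No new geometric input beyond results already established in the paper should be needed.
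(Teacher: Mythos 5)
Your argument is correct and follows the same route as the paper: both isomorphisms are obtained by trivializing the Vinberg monoid fibre over \(\rho_{\adj}(q)\) via the section \(2\rho(q^{\frac{1}{2}})\), and using that conjugation on the coset \(\widehat{G}^I\sigma\) transports to \(c_\sigma\) on \(\widehat{G}^I\). The one place where you add genuinely useful detail is the left isomorphism: the paper simply invokes taking fibres of \thref{Chevalley restriction}, whereas you make explicit the rescaling \(e^\lambda \mapsto q^{-\langle \rho_{\adj},\lambda\rangle} e^\lambda\) that converts the \(q\)-twisted orbit sums of \thref{lemma invariants of vinberg} into ordinary \(W_0\)-orbit sums, which is exactly where the square root of \(q\) is needed (since \(\langle \rho_{\adj}, X^*(\widehat{T})\rangle \subseteq \tfrac{1}{2}\IZ\)) and is left implicit in the paper's proof.
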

\begin{proof}
	The first isomorphism is obtained by taking fibers of the twisted Chevalley restriction isomorphism as in \thref{Chevalley restriction}.
	For the second isomorphism, recall that the preimage \(\widetilde{d}_{\rho_{\adj}}^{-1}(\IG_m\times \Gamma)\) under \(\widetilde{d}_{\rho_{\adj}} \colon \vinberg^I \rtimes \Gamma\to \IA^1\times \Gamma\) is exactly \(\widehat{G}^I\rtimes (\IG_m\times \Gamma)\).
	Hence \thref{Integral Satake isomorphism} induces an isomorphism
	\[\IZ[q^{-1}][(\widehat{G}^I\rtimes (\IG_m\times \Gamma))_{\mid \widetilde{d}_{\rho_{\adj}}=(q,\sigma)}]^{\widehat{G}^I} \cong \Hh_{\Gg} \otimes_{\IZ} \IZ[q^{-1}].\]
	On the other hand, after fixing \(q^{\frac{1}{2}}\), we get \[(\widehat{G}^I\rtimes (\IG_m\times \Gamma))_{\widetilde{d}_{\rho_{\adj}}=(q,\sigma)} \cong \widehat{G}^I\sigma\colon (g,(q,\sigma))\mapsto g 2\rho(q^{-\frac{1}{2}})\sigma,\]
	giving the required isomorphism.
\end{proof}

On the other hand, the fiber of \(d\colon V_{\widehat{G}}\to \widehat{T}_{\adj}^+\) over the origin is the \emph{asymptotic cone} \(\As_{\widehat{G}} := \Spec \gr \IZ[\widehat{G}]\) of \(\widehat{G}\) \cite[§3.2]{XiaoZhu:Vector}.
Hence we get \(\IF_p[\restrvinberg] = \IF_p[\As_{\widehat{G}}]\), and taking invariants also \(\IF_p[\restrvinberg^I] = \IF_p[\As_{\widehat{G}}^I]\).
As in \cite[Corollary 7]{Zhu:Integral}, this gives a mod \(p\) Satake isomorphism involving the Langlands dual side.

\begin{cor}
	The isomorphism from \thref{Integral Satake isomorphism} induces an isomorphism
	\[\IF_p[\As_{\widehat{G}}^I]^{c_\sigma(\widehat{G}^I)} \cong \Hh_{\Gg}\otimes_{\IZ} \IF_p.\]
\end{cor}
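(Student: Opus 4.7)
The plan is to obtain this mod $p$ Satake isomorphism by tensoring the Integral Satake isomorphism of \thref{Integral Satake isomorphism} with $\IF_p$ over $\IZ$. The right-hand side becomes $\Hh_{\Gg}\otimes_{\IZ}\IF_p$ by construction, so it suffices to identify
\[\IZ[\restrvinberg^I]^{c_\sigma(\widehat{G}^I)}\otimes_{\IZ}\IF_p \cong \IF_p[\As_{\widehat{G}}^I]^{c_\sigma(\widehat{G}^I)}.\]

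The first step will be to verify the scheme-theoretic identification $\IF_p[\restrvinberg]=\IF_p[\As_{\widehat{G}}]$ stated in the preamble to the corollary. Since $q$ is a power of $p$, the specialization $\widetilde{d}_{\rho_{\adj}}=q$ coincides after base change to $\IF_p$ with the specialization $\widetilde{d}_{\rho_{\adj}}=0$. By construction $\restrvinberg$ is the fiber of $d_{\rho_{\adj}}\colon V_{\widehat{G},\rho_{\adj}}\to\IA^1_\IZ$ over $q$, so modulo $p$ it becomes the fiber over $0$, which is by definition $\As_{\widehat{G}}$.

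Next I would pass to inertia invariants via the explicit Rees-algebra description of \thref{rmk-description of invariants}, namely $\IZ[V_{\widehat{G}}^I]=\bigoplus_{\mu\in (X^*(\widehat{T})_{\pos}^+)_I}\fil_\mu\IZ[\widehat{G}^I]$. This presentation is manifestly compatible with both the specialization at $\widetilde{d}_{\rho_{\adj}}=0$ (which replaces each filtered piece by its graded quotient, yielding $\gr\IZ[\widehat{G}^I]=\IZ[\As_{\widehat{G}}^I]$) and with the base change to $\IF_p$, so that $\IF_p[\restrvinberg^I]=\IF_p[\As_{\widehat{G}}^I]$.

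The main step, and the one requiring the most care, will be commuting the $c_\sigma(\widehat{G}^I)$-invariants with the base change $-\otimes_\IZ \IF_p$: since $\widehat{G}^I$ is not reductive in small characteristic (already $\SL_3^{\IZ/2\IZ}$ is nonreduced modulo $2$), flatness is not automatic. To circumvent this I would invoke the twisted Chevalley restriction isomorphism \thref{Chevalley restriction}, whose proof exhibits the multi-graded pieces of the $c_\sigma(\widehat{G}^I)$-invariants, after identification with $\IZ[V_{\widehat{T}}^I]^{c_\sigma(\widehat{N}_0)}$, as either zero or free of rank one over $\IZ$. Consequently $\IZ[\restrvinberg^I]^{c_\sigma(\widehat{G}^I)}$ is a free $\IZ$-module, so that the natural map
\[\IZ[\restrvinberg^I]^{c_\sigma(\widehat{G}^I)}\otimes_{\IZ}\IF_p \longrightarrow \IF_p[\restrvinberg^I]^{c_\sigma(\widehat{G}^I)}\]
is an isomorphism. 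Combining the three steps with \thref{Integral Satake isomorphism} then yields the corollary.
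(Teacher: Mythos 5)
Your overall strategy — reduce the integral Satake isomorphism mod $p$, identify $\IF_p[\restrvinberg] = \IF_p[\As_{\widehat{G}}]$ via $q\equiv 0$, pass to inertia-invariants, and then handle the commutation of $c_\sigma(\widehat{G}^I)$-invariants with the base change $\IZ\to\IF_p$ — is the right plan, and the first two steps are carried out correctly. The paper itself gives no further detail, pointing only to the analogue in Zhu's note (Corollary 7 there), so the real content is exactly the step you flag as requiring care.

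However, that step has a genuine gap: freeness of $\IZ[\restrvinberg^I]^{c_\sigma(\widehat{G}^I)}$ as a $\IZ$-module does \emph{not} imply that the natural comparison map
\[
\IZ[\restrvinberg^I]^{c_\sigma(\widehat{G}^I)}\otimes_{\IZ}\IF_p \longrightarrow \bigl(\IZ[\restrvinberg^I]\otimes_{\IZ}\IF_p\bigr)^{c_\sigma(\widehat{G}^I)}
\]
is an isomorphism. For a cheap counterexample to the general principle you are invoking, take the constant group $\IZ/2$ acting on $M=\IZ$ by $-1$: then $M^G=0$ is certainly free, but $(M\otimes_\IZ\IF_2)^G=\IF_2$, so the comparison map fails to be surjective. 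Freeness of $M^G$ controls only the injectivity direction (torsion-freeness of $M/M^G$); surjectivity is governed by higher cohomology and is exactly where the non-reductivity of $\widehat{G}^I$ in small characteristic can cause trouble. To close the gap you need to prove the mod $p$ statement directly, for instance by re-running the proof of the twisted Chevalley restriction over $\IF_p$ and matching graded pieces (verifying that the $\IF_p$-dimension of each graded piece of $\IF_p[\As_{\widehat{G}}^I]^{c_\sigma(\widehat{G}^I)}$ is at most one, not merely that the $\IZ$-rank is), or equivalently by reducing the whole commutative diagram from the proof of the integral Satake isomorphism mod $p$ and re-establishing surjectivity of the two trace maps and injectivity of the restriction to $\IF_p[X^*(\widehat{T})^{\Gal(\overline{F}/F)}]$ in that setting.
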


Similarly to \cite[§1.4]{Zhu:Integral}, this recovers the mod \(p\) Satake isomorphism from \cite{HenniartVigneras:Satake}.
Namely, there are isomorphisms
\[\IF_p[\As_{\widehat{G}}^I]^{c_{\sigma}(\widehat{G}^I)} \cong \bigoplus_{\mu\in X^*(\widehat{T}^I)^+} (S_{-w_0(\mu)} \otimes S_{\mu})^{c_{\sigma}(\widehat{G}^I)} \cong \IF_p[X_*(T)_I^{\sigma,-}],\]
where this time \(S_{\mu}\) denotes an \(\IF_p\)-linear Schur module for \(\widehat{G}^I\), and \(w_0\) is the longest element in \(W^I\).
Moreover, at least mod \(p\), our Satake transform coincides with the one from \cite[Proposition 2.7]{HenniartVigneras:Satake}.

\subsection{Generic Hecke algebras}

In this final section, we explain how to define generic Hecke algebras at very special level, and how they fit into generic Satake and Bernstein isomorphisms.
Such a generic Hecke algebra should be a \(\IZ[\qq]\)-algebra, which recovers usual Hecke algebras by specializing the variable \(\qq\) to some prime power \(q\).
Moreover, setting \(p=\qq=0\) for some prime \(p\) should recover mod \(p\) Hecke algebras.

\begin{rmk}\thlabel{Rmk residually split}
	At Iwahori-level, generic Hecke algebras usually depend on multiple parameters, indexed by the relative Iwahori-Weyl group.
	As in \cite[§2]{Vigneras:ProI}, the values of these parameters are supposed to represent the number of rational points of Iwahori-orbits in the full affine flag variety.
	This is moreover multiplicative for reduced decompositions.
	For residually split groups and simple reflections, these Iwahori-orbits always have \(q\) rational points, so we do not lose too much by using only a single parameter, meant to represent all simple reflections at the same time.
\end{rmk}

For this reason, we restrict ourselves to the case where \(G\) is residually split, and let \(k=k'\).
Note that for split groups, generic spherical Hecke algebras with a single parameter have already appeared in \cite{PepinSchmidt:Generic, CassvdHScholbach:Geometric}.
Such groups are already defined over \(\Spec \IZ\), and hence over any finite field.
The generic spherical Hecke algebras then interpolates the different Hecke algebras arising by varying the finite field.

Our goal here is to construct a suitable candidate for very special generic Hecke algebras.
This should open a path to study mod \(p\) Hecke algebras at very special level, by studying regular Hecke algebras and varying the parameter.
Since \(G\) is residually split, we can use the representation ring of the Vinberg monoid to describe the very special Hecke algebra.
For a group or monoid \(M\),  we will denote by \(R(M)=K_0(\Rep M)\) its representation ring.

\begin{prop}
	\thref{Anti-effective equivalence} induces an isomorphism
	\[\Hh_{\Gg} \cong R(\vinberg^I)/([\widetilde{d}_{\rho_{\adj}}]-q).\]
\end{prop}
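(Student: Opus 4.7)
The plan is to deduce the proposition from \thref{Integral Satake isomorphism} together with a character description of $R(\vinberg^I)$. Since $G$ is residually split and $k = k' = \IF_q$, the Galois group $\Gamma$ is trivial and $\sigma = 1$, so \thref{Integral Satake isomorphism} specializes to a canonical isomorphism $\Hh_{\Gg} \cong \IZ[\restrvinberg^I]^{\widehat{G}^I}$, where $\widehat{G}^I$ acts by ordinary conjugation. Moreover, the commutative diagram in the proof of that theorem identifies the composition $R(\vinberg^I) \cong K_0(\MATM_{L^+\Gg}(\Gr_{\Gg})^{\anti}) \xrightarrow{\tr} \Hh_{\Gg}$ with
\begin{equation*}
	R(\vinberg^I) \xrightarrow{\chi} \IZ[\vinberg^I]^{\widehat{G}^I} \twoheadrightarrow \IZ[\restrvinberg^I]^{\widehat{G}^I} \cong \Hh_{\Gg},
\end{equation*}
where $\chi$ is the character map sending a representation to its character function on $\vinberg^I$, and the second map is restriction to the fiber over $q \in \IA^1$ under $\widetilde{d}_{\rho_{\adj}}$. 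Hence it suffices to show this composition is surjective with kernel $([\widetilde{d}_{\rho_{\adj}}]-q)$.

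Surjectivity has already been established in \thref{trace of representations}. For the kernel, I would first verify that the character map $\chi$ is itself an isomorphism. Using the Rees algebra description $\IZ[\vinberg^I] = \bigoplus_{\mu \in (X^*(\widehat{T})^+_{\pos})_I} \fil_\mu\, \IZ[\widehat{G}^I]$ from \thref{rmk-description of invariants}, together with the fact that the $\widehat{G}^I$-invariants in each graded piece are freely spanned by the characters of those irreducible $\widehat{G}^I_{\IQ}$-representations having highest weight bounded by $\mu$, one identifies $\chi$ with the usual character bijection between irreducible representations of the reductive monoid $\vinberg^I$ and a $\IZ$-basis of conjugation-invariant functions on $\vinberg^I$.

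Next, because $\widetilde{d}_{\rho_{\adj}}: \vinberg^I \to \IA^1$ is a monoid homomorphism into a commutative monoid, it is automatically $\widehat{G}^I$-conjugation-invariant. Hence $\IZ[\restrvinberg^I] = \IZ[\vinberg^I]/(\widetilde{d}_{\rho_{\adj}}-q)$ inherits a $\widehat{G}^I$-action, and by exactness of $(-)^{\widehat{G}^I}$ (valid rationally since $\widehat{G}^I_{\IQ}$ is linearly reductive by \cite[Theorem 1.1]{ALRR:Fixed}, and descending integrally via the Rees structure), the surjection $\IZ[\vinberg^I]^{\widehat{G}^I} \twoheadrightarrow \IZ[\restrvinberg^I]^{\widehat{G}^I}$ has kernel exactly $(\widetilde{d}_{\rho_{\adj}}-q)$. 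Pulling back through $\chi^{-1}$ yields the principal ideal $([\widetilde{d}_{\rho_{\adj}}]-q) \subseteq R(\vinberg^I)$, giving the desired isomorphism.

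The main technical point is the integral form of the character isomorphism $\chi$, which over $\IQ$ is standard Tannakian theory for the (possibly disconnected) reductive monoid $\vinberg^I$, but which integrally needs a careful compatibility between the Rees filtration on $\IZ[\vinberg^I]$ and the Schur modules of $\widehat{G}^I$ over $\IZ$. I would cite \cite{XiaoZhu:Vector} for the required statements on the structure of $\IZ[\vinberg]^{\widehat{G}}$, and then take $I$-invariants using \thref{rmk-description of invariants}.
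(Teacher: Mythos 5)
Your proof takes a genuinely different route from the paper's. The paper works directly in the Grothendieck group: it shows the trace map $\psi\colon R(\vinberg^I)\cong K \to \Hh_\Gg$ is surjective via a leading-term argument with $\IC_\mu(\unit)$, observes that $[\widetilde{d}_{\rho_{\adj}}]-q\in\ker\psi$ (both $\IC_0(\unit(-1))$ and $q$ copies of $\IC_0(\unit)$ trace to $q\cdot 1_0$), and then shows the kernel is exactly $([\widetilde{d}_{\rho_{\adj}}]-q)$ by reducing every element of $K$ modulo this ideal to a $\IZ$-linear combination of the $[\IC_\mu(\unit)]$, whose images in $\Hh_\Gg$ are linearly independent. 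No universal Chevalley-type statement is needed.

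The main weak point in your route is the integral character isomorphism $\chi\colon R(\vinberg^I)\to\IZ[\vinberg^I]^{\widehat{G}^I}$. Citing \cite{XiaoZhu:Vector} for the split case and then ``taking $I$-invariants'' is not a formal step: the $I$-invariants of $R(\vinberg)$ are not $R(\vinberg^I)$, nor do invariant functions on $\vinberg$ restrict surjectively to invariant functions on $\vinberg^I$ for free. The correct way to close this is to run the paper's Chevalley restriction argument for $\vinberg^I$ (essentially \thref{Chevalley restriction} pulled back along $\rho_{\adj}^+\colon\IA^1\to(\widehat{T}_{\adj}^+)^I$), identify a $\IZ$-basis of $\IZ[\vinberg^I]^{\widehat{G}^I}$ by Weyl orbit sums times powers of $d$, and then match this with the characters of irreducible $\vinberg^I_\IQ$-representations by an upper-triangularity argument. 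This is certainly doable, but it amounts to roughly the same amount of work as the paper's direct proof, so routing through \thref{Integral Satake isomorphism} buys you nothing in the end. A second, smaller issue: the invocation of exactness of $(-)^{\widehat{G}^I}$ and ``descending integrally via the Rees structure'' to identify the kernel of $\IZ[\vinberg^I]^{\widehat{G}^I}\twoheadrightarrow\IZ[\restrvinberg^I]^{\widehat{G}^I}$ is overkill and imprecise; the simple observation is that $d-q$ is a central non-zero-divisor (by flatness of $d$), so if $(d-q)y$ is $\widehat{G}^I$-invariant then $y$ is already invariant, and the kernel is the principal ideal with no descent argument needed.
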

\begin{proof}
	Let \(K\) denote the Grothendieck ring of the category of compact objects in \(\MTM_{L^+\Gg}(\Gr_{\Gg})^{\anti}\).
	First, we construct a surjective morphism \(K\to \Hh_{\Gg}\), as in \cite[(6.16)]{RicharzScholbach:Satake}.
	By taking the trace of geometric Frobenius as in \cite{Cisinski:Cohomological}, we can associate to each compact object \(M\in \MTM_{L^+\Gg}(\Gr_{\Gg})^{\anti}\) a function \(f_M\) on \(\Gg(\Oo)\backslash G(F) /\Gg(\Oo)\).
	Now, *-pulling back \(M\) to a Schubert cell \(\Gr_{\Gg,\mu}\) gives a compact motive, which is a finite colimit of shifted Tate twists.
	Since the trace of Frobenius of the Tate twist \(\unit(1)\) is \(q^{-1}\), we see that \(f_M\) takes values in \(\IZ[q^{-1}]\), and even in \(\IZ\) as \(M\) was assumed anti-effective.
	In other words, \(f_M\in \Hh_{\Gg}\), which gives the desired ring morphism.
	
	Since \(k=k'\), there is a bijection \(\Gg(\Oo)\backslash G(F) /\Gg(\Oo) \cong X_*(T)_I^+\), so \(\Hh_{\Gg}\) is a free abelian group with basis the characteristic functions \(\{1_\mu\mid \mu\in X_*(T)_I^+\}\)
	Hence, surjectivity follows from the observation that for any \(\mu\in X_*(T)_I\), the pullback \(\iota_\mu^* \IC_\mu(\unit)\) agrees with \(\unit\) up to a shift, so that their trace of Frobenii agree up to multiplication by \(\pm 1\).
	
	Now, \thref{Anti-effective equivalence} induces an isomorphism \(K\cong R(\vinberg^I)\), so that we get a surjection \[\psi\colon R(\vinberg^I)\to \Hh_{\Gg}.\]
	Both \(\IC_0(\unit(-1))\) and \(\bigoplus_q \IC_0(\unit)\) get mapped to \(q\cdot 1_0\in \Hh_{\Gg}\), so that \([\widetilde{d}_{\rho_{\adj}}]-q\) lies in the kernel of \(\phi\).
	On the other hand, 
	\[[\widetilde{d}_{\rho_{\adj}}]^n-q^n = ([\widetilde{d}_{\rho_{\adj}}]-q)\cdot \left(\sum_{i=1}^{n-1} [\widetilde{d}_{\rho_{\adj}}]^{i} \cdot q^{n-1-i}\right)\]
	lies in the ideal generated by \([\widetilde{d}_{\rho_{\adj}}]-q\) for any \(n\geq 0\).
	Since all the \(\IC_\mu(\unit)\) give linearly independent elements in \(K\), we see that the kernel of \(\psi\) is generated by \([\widetilde{d}_{\rho_{\adj}}]-q\), which concludes the proof.
\end{proof}

\begin{rmk}\thlabel{raison d'etre of generic algebra}
	The map \(\IZ[\qq]\to R(\vinberg^I)\colon \qq\mapsto [\widetilde{d}_{\rho_{\adj}}]\) exhibits \(R(\vinberg^I)\) as a \(\IZ[\qq]\)-algebra.
	Moreover, by the proposition above, specializing \(\qq\mapsto q\) recovers an honest very special Hecke algebra.
	Now, by the classification of reductive groups as in \cite[§4]{Tits:Reductive}, for any nonarchimedean local field \(F\) we can find a residually split reductive group \(G\) such that its dual group and inertia action give rise to the same Vinberg monoid \(\vinberg^I\) as above.
	Hence, the proposition above actually implies that for \emph{every} prime power \(q\), the base change \(R(\vinberg^I)\otimes_{\IZ[\qq],\qq\mapsto q} \IZ\) is a very special Hecke algebra.
\end{rmk}

Thus, it makes sense to define the generic Hecke algebra as the representation ring of \(\vinberg^I\).
However, using \thref{Anti-effective equivalence}, we give preference to the following definition, to make the appearance of \(\Gg\) itself more prominant.

\begin{dfn}
	The \emph{generic (spherical) Hecke algebra} associated to \(\Gg\) is
	\[\Hh_{\Gg}(\qq):=K_0(\MTM_{L^+\Gg}(\Gr_{\Gg})^{\anti}),\]
	where the \(\IZ[\qq]\)-algebra structure is given by the negative Tate twist \(\IC_0(\unit(-1))\).
\end{dfn}

\thref{Anti-effective equivalence} then gives an isomorphism \(\Hh_{\Gg}(\qq)\cong R(\vinberg^I)\), which can be viewed as a \emph{generic Satake isomorphism}.
In particular, the definition above agrees with \cite[Definition 6.34]{CassvdHScholbach:Geometric} for split groups.
Now, let \(\Ii\subseteq \Gg\) be an Iwahori model of \(G\).
\begin{dfn}
	The generic Iwahori-Hecke algebra \(\Hh_{\Ii}(\qq)\) is the free \(\IZ[\qq]\)-module with basis \((T_w)_{w\in \tilde{W}}\) indexed by the Iwahori-Weyl group of \(G\), and the unique \(\IZ[\qq]\)-algebra structure given by
	\begin{itemize}
		\item (braid relations) \(T_w T_{w'} = T_{ww'}\) for \(w,w'\in \tilde{W}\) satisfying \(l(ww') = l(w)+l(w')\), and
		\item (quadratic relations) \(T_s^2 = \qq + (\qq-1)T_s\) for any simple reflection \(s\).
	\end{itemize}
\end{dfn}
By \cite{Vigneras:ProI}, specializing \(\qq\) to \(q\) recovers the usual Iwahori-Hecke algebra: \(\Hh_{\Ii}(\qq) \otimes_{\IZ[\qq],\qq\mapsto q} \IZ \cong \Hh_{\Ii}\).
Again, this definition is only sensible when \(G\) is residually split.
In general, one has to consider multiple parameters as in \cite{Vigneras:ProI}.

\begin{rmk}
	As in \cite[Proposition 6.3]{CassvdHScholbach:Central}, one can show \(\Hh_{\Ii}(\qq)\cong K_0(\DTM_{L^+\Ii}(\Fl_{\Ii})^{\anti})\).
	More generally, for any parahoric model \(\Gg'\) of \(G\), the \(\IZ[\qq]\)-algebra \(K_0(\MTM_{L^+\Gg'}(\Fl_{\Gg'})^{\anti})\) is a suitable candidate for a generic parahoric Hecke algebra. 
	However, if \(\Gg\) is neither very special nor an Iwahori, it is not clear that specializing \(\qq\) to an arbitrary prime power yields a classical parahoric Hecke algebra.
\end{rmk}

Now, consider the \emph{generic group algebra} \(\IZ[\qq][X_*(T)_I] \cong \IZ[\qq][X^*(\widehat{T}^I)]\).
We define a \emph{generic Satake transform}
\[\CT^{\cl}(\qq)\colon \Hh_{\Gg}(\qq) \cong R(\vinberg^I) \to R(\widehat{T}^I\times \IA^1) \cong \IZ[\qq][X^*(\widehat{T}^I)],\]
where the middle morphism is given by the restriction of representations as in \thref{trace of representations}.

Next, we want to define a morphism from the generic group algebra to the generic Iwahori-Hecke algebra, at least after inverting \(\qq\).
Recall the generic Bernstein elements \(E(\nu)\) from e.g.~\cite[Corollary 5.47]{Vigneras:ProI}.
Let \(\mu\in X_*(T)_I^+\) be such that \(\mu-\nu\in X_*(T)_I^+\).
Then we define a morphism \(\IZ[\qq][X_*(T)_I] \to \Hh_{\Ii}(\qq)[\qq^{-1}]\) of \(\IZ[\qq]\)-algebras by sending \(\nu\in X_*(T)_I\) to \(\qq^{-\langle 2\rho,\mu\rangle}\cdot E(\mu)\cdot E(\nu-\mu)\) (note that our choice of very special parahoric \(\Gg\) already leads to a preferred choice of orientation, in the terminology of loc.~cit.). 
By the product formula from \cite[Corollary 5.47]{Vigneras:ProI}, this is a ring morphism, and independent of the choice of \(\mu\).
Moreover, by \thref{fiber functor of standard}, the constant term functor adds sufficiently many negative twists, so that the composition
\[\Hh_{\Gg}(\qq) \to \IZ[\qq][X_*(T)_I] \to \Hh_\Ii(\qq)[\qq^{-1}]\]
has its image contained in \(\Hh_{\Ii}(\qq) \subseteq \Hh_{\Ii}(\qq)[\qq^{-1}]\). 
The following can then be seen as a \emph{generic Bernstein isomorphism} for residually split groups.
\begin{thm}\thlabel{generic Bernstein}
	The map \(\Hh_{\Gg}(\qq)\to \Hh_{\Ii}(\qq)\) defined above induces an isomorphism of the generic spherical Hecke algebra with the center of the generic Iwahori-Hecke algebra.
\end{thm}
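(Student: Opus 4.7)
The plan is to combine the generic Satake isomorphism $\Hh_{\Gg}(\qq) \cong R(\vinberg^I)$ from \thref{Anti-effective equivalence} with the Bernstein presentation of the generic Iwahori-Hecke algebra, and conclude the isomorphism statement via a specialization argument on the parameter $\qq$.

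First, I will verify that the image lies in the center of $\Hh_{\Ii}(\qq)$. After inverting $\qq$, the generic Bernstein elements $\{E(\nu)\}_{\nu \in X_*(T)_I}$ generate a commutative subalgebra of $\Hh_{\Ii}(\qq)[\qq^{-1}]$ canonically isomorphic to $\IZ[\qq,\qq^{-1}][X_*(T)_I]$, and the center is identified with its invariants under the relative Weyl group $W_0 = W^I$, cf.~\cite{Vigneras:ProI}. It thus suffices to show the composition $\Hh_{\Gg}(\qq) \to \IZ[\qq,\qq^{-1}][X_*(T)_I]$ lands in the $W_0$-invariants. Under the generic Satake isomorphism, this composition factors through the restriction-of-representations map $R(\vinberg^I) \to R(\widehat{T}^I \times \IA^1)$, which automatically takes values in $R(\widehat{T}^I \times \IA^1)^{W_0}$ by a generic analogue of \thref{Chevalley restriction}, obtained by removing the specialization $d = \rho_{\adj}(q)$ and working with the universal fiber instead.

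Second, I will show the resulting map $\Hh_{\Gg}(\qq) \to Z(\Hh_{\Ii}(\qq))$ is an isomorphism via specialization. Both sides are free $\IZ[\qq]$-modules with bases indexed by $X_*(T)_I^+$: the left by irreducible representations of $\vinberg^I$, the right by $W_0$-orbit sums of Bernstein elements. Restricting to any Bruhat-closed finite subset $W \subseteq X_*(T)_I^+$ yields free $\IZ[\qq]$-modules of finite rank, and the map respects these filtrations. For every prime power $q$, by \thref{raison d'etre of generic algebra} there exists a residually split reductive group whose very special Hecke algebra realizes $\Hh_{\Gg}(\qq) \otimes_{\IZ[\qq], \qq \mapsto q} \IZ$, and similarly for the Iwahori-Hecke algebra. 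After specialization, our map becomes the classical Bernstein isomorphism between the very special Hecke algebra and the center of the Iwahori-Hecke algebra, which follows from combining \thref{Integral Satake isomorphism} with the standard description of the Bernstein center in the residually split case (cf.~\cite{HainesRostami:Satake}). The determinant of the map on each finite truncation is then a polynomial in $\qq$ with integer coefficients taking the value $\pm 1$ at infinitely many integers, hence identically $\pm 1$. This gives the isomorphism on each truncation, and therefore globally.

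The main obstacle will be establishing rigorously the integral (i.e., before inverting $\qq$) structure of both sides and the Bernstein-center identification, since the classical references work over $\IZ[\qq,\qq^{-1}]$. The specialization argument above is designed to bypass this: centrality is checked after inverting $\qq$, and integrality of the map is already known from the construction via \thref{fiber functor of standard}, so the integral isomorphism follows purely from freeness and the infinitely many classical specializations.
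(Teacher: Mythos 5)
Your proposal takes a genuinely different route from the paper. The paper checks both centrality and surjectivity simultaneously by embedding $\Hh_\Gg(\qq)$ and $\Hh_\Ii(\qq)$ into the products $\prod_{q'} \Hh_\Gg(q')$ and $\prod_{q'} \Hh_\Ii(q')$ over all prime powers $q'$, then invoking the classical integral Bernstein isomorphism at each $q'$ together with cartesianness of the resulting square. You instead establish centrality directly via a generic Chevalley restriction $R(\vinberg^I) \to R(\widehat{T}^I \times \IA^1)^{W_0}$ (a clean observation, correct because $\widehat{N}_0$ lies in the group of units of $\vinberg^I$), and then establish bijectivity via a determinant argument over $\IZ[\qq]$. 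This is a more module-theoretic approach; the paper's is more functorial and in particular never needs to know anything about the $\IZ[\qq]$-module structure of $Z(\Hh_\Ii(\qq))$.

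That difference is precisely where your proposal has a gap. Your determinant argument requires the target $Z(\Hh_\Ii(\qq))$, or at least its truncation to a Bruhat-closed finite subset, to be a free $\IZ[\qq]$-module of the same rank as the source, with a basis ($W_0$-orbit sums of Bernstein elements) whose specialization under $\qq\mapsto q$ is a $\IZ$-basis of $Z(\Hh_\Ii(q))$. You assert this but do not justify it, and it is not automatic: taking centers does not in general commute with base change, and $\IZ[\qq]$ is not a PID so submodules of free modules need not be free. Without this input, the determinant you compute does not control what happens after specialization (the basis you chose could degenerate or span a proper submodule of $Z(\Hh_\Ii(q))$), so the "polynomial equal to $\pm 1$ at infinitely many integers" step does not go through. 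This structural fact about the center of the generic Iwahori--Hecke algebra must be extracted from Vign\'eras's work and cited explicitly. Note that the paper's cartesianness argument sidesteps exactly this issue, since it only uses that $\Hh_\Gg(\qq)$ and $\Hh_\Ii(\qq)$ inject into the products of their specializations.

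A secondary issue is the citation for the specialized Bernstein isomorphism: \cite{HainesRostami:Satake} works over $\IC$, so combining it with \thref{Integral Satake isomorphism} only gives the Bernstein isomorphism after tensoring with $\IC$. Both your determinant argument and the paper's argument need $Z(\Hh_\Ii(q'))\cong\Hh_\Gg(q')$ over $\IZ$ at each prime power $q'$; the correct reference is \cite[Theorem 1.2]{Vigneras:ProII}, as in the paper.
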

\begin{proof}
	First, consider the diagram
	\[\begin{tikzcd}
		\Hh_{\Gg}(\qq) \arrow[d, "\qq \mapsto q"'] \arrow[r] & \Hh_{\Ii}(\qq) \arrow[d, "\qq\mapsto q"] \\
		\Hh_{\Gg} \arrow[r] & \Hh_{\Ii},
	\end{tikzcd}\]
	where the bottom map is the unique map making the diagram commute.
	Moreover, by \cite[Theorem 1.2]{Vigneras:ProII}, this map realizes \(\Hh_{\Gg}\) as the center of \(\Hh_{\Ii}\).
	
	Now, for any prime power \(q'\), let us write \(\Hh_{\Gg}(q')\) for the specialization \(\Hh_{\Gg}(\qq) \otimes_{\IZ[\qq],\qq\mapsto q'} \IZ\).
	Then we have an injective map \(\Hh_{\Gg}(\qq)\to \prod_{q'} \Hh_{\Gg}(q')\).
	The same holds for Iwahori-Hecke algebras.
	This gives a cartesian diagram
	\[\begin{tikzcd}
		\Hh_{\Gg}(\qq) \arrow[d, "\qq\mapsto q'"'] \arrow[r] & \Hh_{\Ii}(\qq) \arrow[d, "\qq\mapsto q'"]\\
		\prod_{q'} \Hh_{\Gg}(q')\arrow[r] &  \prod_{q'} \Hh_{\Ii}(q').
	\end{tikzcd}\]
	In particular, all vertical maps are injective, so the fact that each \(\Hh_{\Gg}(q')\) is the center of \(\Hh_{\Ii}(q')\) (by the previous paragraph and \thref{raison d'etre of generic algebra}) implies that \(\Hh_{\Gg}(\qq)\to \Hh_{\Ii}(\qq)\) is injective and has central image.
	On the other hand, each \(\Hh_{\Gg}(\qq)\to \Hh_{\Gg}(q')\) and \(\Hh_{\Ii}(\qq)\to \Hh_{\Ii}(q')\) is surjective.
	Thus cartesianness implies that \(\Hh_{\Gg}(\qq)\) surjects onto the center of \(\Hh_{\Ii}(\qq)\), which concludes the proof.
\end{proof}

For split groups in equal characteristic, a geometric interpretation of this isomorphism can be found in \cite{CassvdHScholbach:Central}, using motivic nearby cycles.
Finally, we mention that the generic Satake and Bernstein isomorphisms should open the door to generalizing \cite{PepinSchmidt:Generic} to ramified (but still residually split) reductive groups.
	
\bibliographystyle{alphaurl}
\bibliography{bib}
	
\end{document}